\documentclass[12pt,dvipsnames]{amsart}
\usepackage{color}
\usepackage[colorlinks=true, pdfstartview=FitV,,, linkcolor=blue, citecolor=blue, urlcolor=blue]{hyperref}
\usepackage{geometry}
\geometry{left=20mm,right=20mm,top=30mm,bottom = 30mm}
\geometry{a4paper}  
\usepackage{amsmath, amsthm, amssymb, shuffle, tikz}
\usepackage{verbatim}
\usepackage{enumitem}
\usepackage[all]{xy}
\usetikzlibrary{arrows}
\usepackage{xcolor}
\usepackage[colorinlistoftodos]{todonotes}

\usepackage{thm-restate}

\definecolor{applegreen}{rgb}{0.55,0.71,0.0}

\definecolor{darkblue}{rgb}{0.0, 0.0, 0.7}

\definecolor{cinnabar}{rgb}{0.89, 0.26, 0.2}
\definecolor{blue-violet}{rgb}{0.54, 0.17, 0.89}

\tolerance=1
\emergencystretch=\maxdimen
\hyphenpenalty=10000
\hbadness=10000

\setlength\parskip{\baselineskip}
\setlength\parindent{0pt}

\newtheorem{Theorem}{Theorem}[section]
\newtheorem{Proposition}[Theorem]{Proposition}
\newtheorem{Corollary}[Theorem]{Corollary}
\newtheorem{Lemma}[Theorem]{Lemma}

\theoremstyle{definition}
\newtheorem{Example}[Theorem]{Example}
\newtheorem{Remark}[Theorem]{Remark}
\newtheorem{Definition}[Theorem]{Definition}
\newtheorem{Problem}[Theorem]{Problem}

\def\SSym{\operatorname{\mathsf{\mathfrak{S}Sym}}}
\def\YSym{\operatorname{\mathsf{\mathcal{Y}Sym}}}
\def\PSym{\operatorname{\mathsf{PSym}}}
    \def\TSym{\operatorname{\mathsf{TSym}}}
\def\STSym{\operatorname{\mathsf{STSym}}}

\def\PQSym{\operatorname{\mathsf{PQSym}}}
\def\NCQSym{\operatorname{\mathsf{NCQSym}}}

\def\WQSym{\operatorname{\mathsf{WQSym}}}

\newcommand{\cH}{\mathcal{H}}

\newcommand{\std}{\operatorname{\mathtt{std}}}
\newcommand{\pack}{\operatorname{\mathtt{pack}}}
\newcommand{\perm}{\text{perm}}
\newcommand{\tree}{\text{tree}}
\newcommand{\ptree}{\text{ptree}}

\newcommand{\GD}{\operatorname{\mathtt{GDes}}}
\newcommand{\Des}{\operatorname{\mathtt{Des}}}
\newcommand{\iInv}{\operatorname{\mathtt{iInv}}}
\newcommand{\Inv}{\operatorname{\mathtt{Inv}}}
\newcommand{\PBT}{\text{PBT}}
\newcommand{\Perm}{\text{Perm}}
\newcommand{\PF}{\text{PF}}
\newcommand{\PT}{\text{PT}}
\newcommand{\PW}{\text{PW}}
\newcommand{\LPT}{\text{LPT}}
\newcommand{\GSP}{\text{GSP}}
\newcommand{\gsp}{\text{gsp}}
\newcommand{\ideg}{\text{ideg}}
\newcommand{\cmpt}{\operatorname{\mathtt{cmpt}}}
\newcommand{\cmpts}{\operatorname{\mathtt{cmpts}}}
\newcommand{\setpar}{\operatorname{\mathtt{setpar}}}
\newcommand{\delrpt}{\operatorname{\mathtt{init}}}
\newcommand{\pw}{\operatorname{\mathtt{pw}}}
\newcommand{\allow}{\operatorname{\mathtt{allow}}}
\newcommand{\Span}{\operatorname{\mathtt{Span}}}

\begin{document}

\title[Hopf algebras of parking functions and decorated planar trees]{Hopf algebras of parking functions and decorated planar trees}

\author[Bergeron]{Nantel Bergeron}
\address[N. Bergeron]
{Department of Mathematics and Statistics \\ York University\\
	Toronto, Ontario, Canada}
\email{bergeron@mathstat.yorku.ca}
\urladdr{\url{http://www.math.yorku.ca/bergeron/}}
\thanks{Bergeron is partially supported by NSERC and the York Research Chair in applied algebra. }

\author[Gonz\'alez D'Le\'on]{Rafael S. Gonz\'alez D'Le\'on }
\address[R.\ S.\ Gonz\'alez D'Le\'on]{Departamento de Matem\'aticas\\Pontificia Universidad Javeriana\\Bogot\'a\\Colombia} 
\email{gonzalezdrafael@javeriana.edu.co}
\urladdr{\url{http://dleon.combinatoria.co}}

\author[Li]{Shu Xiao Li}
\address[S.\ X.\ Li]
{School of Mathematical Sciences \\Dalian University of Technology\\
	Dalian, Liaoning, China}
\email{lishuxiao@dlut.edu.cn}

\author[Pang]{C. Y. Amy Pang}
\address[C. Y. A.\ Pang]{Department of Mathematics\\Hong Kong Baptist University\\Hong Kong} 
\email{amypang@hkbu.edu.hk}
\urladdr{\url{http://amypang.github.io}}

\thanks{Pang and Li are partially supported by the Hong Kong Research Grants Council grant ECS 22300017.}

\author[Vargas]{Yannic Vargas}
\address[Y.\ Vargas]{Departamento de Matem\'aticas\\Instituto Venezolano de Investigaci\'on Cient\'ifica\\Caracas\\Venezuela} 

\address[Y.\ Vargas]{Institut fur Mathematik\\University of Potsdam\\ Campus Golm, Haus 9 Karl-LiebknechtStr. \\24-25 D-14476 Potsdam\\ Germany}
\email{yannicmath@gmail.com}

\thanks{This work started at the Algebraic Combinatorics workshop at the Fields institute.}
\maketitle

\begin{abstract}
	We construct three new combinatorial Hopf algebras based on the Loday-Ronco operations on planar binary trees. The first and second algebras are defined on planar trees and labeled planar trees extending the Loday-Ronco and Malvenuto-Reutenauer Hopf algebras respectively. We show that the latter is bidendriform which implies that is also free, cofree, and self-dual. The third algebra involves a new visualization of parking functions as decorated binary trees; it is also bidendriform, free, cofree, and self-dual, and therefore abstractly isomorphic to the algebra $\PQSym$ of Novelli and Thibon. 
	
	We define partial orders on the objects indexing each of these three Hopf algebras, one of which, when restricting to $(m+1)$-ary trees, coarsens the $m$-Tamari order of Bergeron and Pr\'eville-Ratelle. We show that multiplication of dual fundamental basis elements are given by intervals in each of these orders.
	
	Finally, we use an axiomatized version of the techniques of Aguiar and Sottile on the Malvenuto-Reutenauer Hopf algebra to define a monomial basis on each of our Hopf algebras, and to show that comultiplication is cofree on the monomial elements. This in particular, implies the cofreeness of the Hopf algebra on planar trees. We also find explicit positive formulas for the multiplication on monomial basis and a cancellation-free and grouping-free formula for the antipode of monomial elements.
\end{abstract}

\tableofcontents

\setcounter{tocdepth}{3}

\section{Introduction}

The algebraic structure of a combinatorial Hopf algebra $(\cH,\cdot,\Delta)$  can often be understood in terms of a poset on the underlying family of combinatorial objects indexing a basis. For instance, in the Malvenuto-Reutenauer Hopf algebra of permutations $\SSym$ \cite{MR95}, the product of fundamental basis elements, and of dual-fundamental basis elements, are sums over an interval in the right and left weak orders, respectively. 
Another example is the multiplication in the Loday-Ronco Hopf algebra of binary trees $\YSym^*$ \cite{LR98} which is the sum of an interval in the Tamari order. 
Furthermore, the relationship between these two particular algebras is encoded through poset morphisms going both ways, from permutations to binary trees and vice versa. In \cite{AS05} these morphisms are shown to form a Galois connection between the two posets. These two morphisms also reveal a classical connection between these two families of combinatorial objects: the set of permutations of  $[n]:=\{1,2,\dots, n\}$ is in bijection with the set of increasing binary trees with $n$ internal nodes, which are planar rooted binary trees whose set of internal nodes are decorated with a bijection to $[n]$ in a way that any path away from the root has an increasing sequence of labels. A consequence of this interpretation of permutations as labeled planar binary trees is that forgetting the labels induces a map between permutations and planar binary trees. In Figure \ref{fig:permutations_of_3} we illustrate the $6$ permutations of $[3]$ in one-line notation besides their corresponding increasing binary tree.

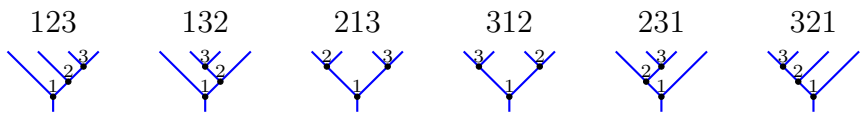
\begin{figure}[h]
    \begin{center}
		\begin{tikzpicture}[scale=0.2,baseline=0pt]
		\draw[blue, thick] (0,-1) -- (0,0);
		\draw[blue, thick] (0,0) -- (3,3);
		\draw[blue, thick] (0,0) -- (-3,3);
		\draw[blue, thick] (1,1) -- (-1,3);
		\draw[blue, thick] (2,2) -- (1,3);
		\filldraw[black] (0,0) circle (5pt)  {};
		\filldraw[black] (1,1) circle (5pt)  {};
		\filldraw[black] (2,2) circle (5pt)  {};
		\node at (0,0.7)[font=\fontsize{7pt}{0}]{$1$};
		\node at (1,1.7)[font=\fontsize{7pt}{0}]{$2$};
		\node at (2,2.7)[font=\fontsize{7pt}{0}]{$3$};
		
		\draw[blue, thick] (10,-1) -- (10,0);
		\draw[blue, thick] (10,0) -- (13,3);
		\draw[blue, thick] (10,0) -- (7,3);
		\draw[blue, thick] (11,1) -- (9,3);
		\draw[blue, thick] (10,2) -- (11,3);
		\filldraw[black] (10,0) circle (5pt)  {};
		\filldraw[black] (11,1) circle (5pt)  {};
		\filldraw[black] (10,2) circle (5pt)  {};
		\node at (10,0.7)[font=\fontsize{7pt}{0}]{$1$};
		\node at (11,1.7)[font=\fontsize{7pt}{0}]{$2$};
		\node at (10,2.7)[font=\fontsize{7pt}{0}]{$3$};
		
		\draw[blue, thick] (20,-1) -- (20,0);
		\draw[blue, thick] (20,0) -- (23,3);
		\draw[blue, thick] (20,0) -- (17,3);
		\draw[blue, thick] (22,2) -- (21,3);
		\draw[blue, thick] (18,2) -- (19,3);
		\filldraw[black] (20,0) circle (5pt)  {};
		\filldraw[black] (22,2) circle (5pt)  {};
		\filldraw[black] (18,2) circle (5pt)  {};
		\node at (20,0.7)[font=\fontsize{7pt}{0}]{$1$};
		\node at (18,2.7)[font=\fontsize{7pt}{0}]{$2$};
		\node at (22,2.7)[font=\fontsize{7pt}{0}]{$3$};
		
		\draw[blue, thick] (30,-1) -- (30,0);
		\draw[blue, thick] (30,0) -- (33,3);
		\draw[blue, thick] (30,0) -- (27,3);
		\draw[blue, thick] (32,2) -- (31,3);
		\draw[blue, thick] (28,2) -- (29,3);
		\filldraw[black] (30,0) circle (5pt)  {};
		\filldraw[black] (32,2) circle (5pt)  {};
		\filldraw[black] (28,2) circle (5pt)  {};
		\node at (30,0.7)[font=\fontsize{7pt}{0}]{$1$};
		\node at (28,2.7)[font=\fontsize{7pt}{0}]{$3$};
		\node at (32,2.7)[font=\fontsize{7pt}{0}]{$2$};
		
		\draw[blue, thick] (40,-1) -- (40,0);
		\draw[blue, thick] (40,0) -- (43,3);
		\draw[blue, thick] (40,0) -- (37,3);
		\draw[blue, thick] (39,1) -- (41,3);
		\draw[blue, thick] (40,2) -- (39,3);
		\filldraw[black] (40,0) circle (5pt)  {};
		\filldraw[black] (39,1) circle (5pt)  {};
		\filldraw[black] (40,2) circle (5pt)  {};
		\node at (40,0.7)[font=\fontsize{7pt}{0}]{$1$};
		\node at (39,1.7)[font=\fontsize{7pt}{0}]{$2$};
		\node at (40,2.7)[font=\fontsize{7pt}{0}]{$3$};
		
		\draw[blue, thick] (50,-1) -- (50,0);
		\draw[blue, thick] (50,0) -- (47,3);
		\draw[blue, thick] (50,0) -- (53,3);
		\draw[blue, thick] (49,1) -- (51,3);
		\draw[blue, thick] (48,2) -- (49,3);
		\filldraw[black] (50,0) circle (5pt)  {};
		\filldraw[black] (49,1) circle (5pt)  {};
		\filldraw[black] (48,2) circle (5pt)  {};
		\node at (50,0.7)[font=\fontsize{7pt}{0}]{$1$};
		\node at (49,1.7)[font=\fontsize{7pt}{0}]{$2$};
		\node at (48,2.7)[font=\fontsize{7pt}{0}]{$3$};
		
			\node at (0,5) {$123$};
	\node at (10,5) {$132$};
	\node at (20,5) {$213$};
	\node at (30,5) {$312$};
	\node at (40,5) {$231$};
	\node at (50,5) {$321$};
		\end{tikzpicture}
	\end{center}
    \caption{Permutations of $[3]$.}
    \label{fig:permutations_of_3}
\end{figure}
\subsection{New combinatorial Hopf algebras on trees}
Inspired on the relationship between $\YSym$ and $\SSym$, we study three new combinatorial Hopf algebras supported on the set $\PT$ of planar rooted trees and $\LPT$ of labeled planar rooted trees. The definition of the multiplication and comultiplication on the fundamental basis of these algebras are generalizations of the classical operations of ``lightening splitting'' and ``grafting" defined on binary trees (see Figure \ref{fig:splitting_by_lightening}).

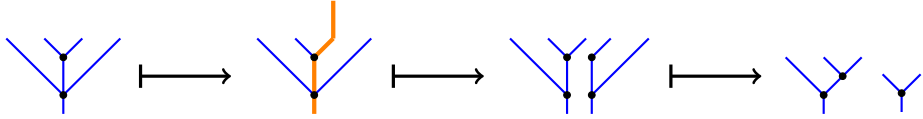
\begin{figure}[h]
    \centering
    \begin{tikzpicture}
	\begin{scope}[scale=0.25,baseline=0pt, shift={(-13.2,0)}]
		\draw[blue, thick] (0,-1) -- (0,0);
		\draw[blue, thick] (0,0) -- (3,3);
		\draw[blue, thick] (0,0) -- (-3,3);
		\draw[blue, thick] (0,0) -- (0,2);
		\draw[blue, thick] (0,2) -- (-1,3);
		\draw[blue, thick] (0,2) -- (1,3);
		\filldraw[black] (0,0) circle (5pt)  {};
		\filldraw[black] (0,2) circle (5pt)  {};
		\end{scope}

		\begin{scope}[scale=0.25,baseline=0pt]
		\draw[orange, ultra thick] (0,-1) -- (0,2);
		\draw[blue, thick] (0,0) -- (3,3);
		\draw[blue, thick] (0,0) -- (-3,3);
		\draw[orange, ultra thick] (1,3) -- (1,5);
		\draw[blue, thick] (0,2) -- (-1,3);
		\draw[orange, ultra thick] (0,2) -- (1,3);
		\filldraw[black] (0,0) circle (5pt)  {};
		\filldraw[black] (0,2) circle (5pt)  {};
		\end{scope}
		\begin{scope}[scale=0.25,baseline=0pt, shift={(13.3,0)}]
		\draw[blue, thick] (0,-1) -- (0,0);
		\draw[blue, thick] (0,0) -- (-3,3);
		\draw[blue, thick] (0,0) -- (0,2);
		\draw[blue, thick] (0,2) -- (-1,3);
		\draw[blue, thick] (0,2) -- (1,3);
		\filldraw[black] (0,0) circle (5pt)  {};
		\filldraw[black] (0,2) circle (5pt)  {};
		\end{scope}
		\begin{scope}[scale=0.25,baseline=0pt, shift={(14.6,0)}]
		\draw[blue, thick] (0,-1) -- (0,0);
		\draw[blue, thick] (0,0) -- (3,3);
		\draw[blue, thick] (0,0) -- (0,2);
		\draw[blue, thick] (0,2) -- (1,3);
		\filldraw[black] (0,0) circle (5pt)  {};
		\filldraw[black] (0,2) circle (5pt)  {};
		\end{scope}
		
		\begin{scope}[scale=0.25,baseline=0pt, shift={(26.8,0)}]
		\draw[blue, thick] (0,-1) -- (0,0);
		\draw[blue, thick] (0,0) -- (-2,2);
		\draw[blue, thick] (0,0) -- (2,2);
		\draw[blue, thick] (1,1) -- (0,2);
		\filldraw[black] (0,0) circle (5pt)  {};
		\filldraw[black] (1,1) circle (5pt)  {};
		\end{scope}
		\begin{scope}[scale=0.25,baseline=0pt, shift={(30.9,0.1)}]
		\draw[blue, thick] (0,-1) -- (0,0);
		\draw[blue, thick] (0,0) -- (1,1);
		\draw[blue, thick] (0,0) -- (-1,1);
		\filldraw[black] (0,0) circle (5pt)  {};
		\end{scope}
		\begin{scope}[shift={(-3.45,0)}]
		\node (v1) at (1,0.25) {};
		\node (v2) at (2.5,0.25) {};
		\draw[|->,very thick]  (v1) edge (v2);	
		\end{scope}
		\begin{scope}[shift={(-0.125,0)}]
		\node (v1) at (1,0.25) {};
		\node (v2) at (2.5,0.25) {};
		\draw[|->,very thick]  (v1) edge (v2);	
		\end{scope}
		\begin{scope}[shift={(3.525,0)}]
		\node (v1) at (1,0.25) {};
		\node (v2) at (2.5,0.25) {};
		\draw[|->,very thick]  (v1) edge (v2);	
		\end{scope}

\end{tikzpicture}
    \caption{Splitting by lightening at the leaf $i+1$ with $i=2$}
    \label{fig:splitting_by_lightening}
\end{figure}

The first two combinatorial Hopf algebras extend directly  $\YSym$ and $\SSym$  to include all planar rooted trees. The fundamental basis in the former Hopf algebra is indexed on $\PT$ extending that of $\YSym$. There are however two different natural extensions of the Hopf structure of $\YSym$ to $\PT$ depending on whether we permit any splitting by lightening at any leaf $i+1$  (which we will denote $t\mapsto ({}^it,t^i)$) or only some splittings which we call ``allowable''. Both constructions preserve the number of leaves but the allowable splitting is defined so that the number of internal nodes is also preserved.

\begin{Theorem}[Proposition \ref{prop:tsym}]\label{thm:iso_first}
	The two Hopf algebra structures on $\PT$ induced by the operation of splitting by lightening are isomorphic.
\end{Theorem}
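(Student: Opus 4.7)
The plan is to produce an explicit Hopf algebra isomorphism $\phi$ between the two structures on $\Span(\PT)$. Since both Hopf algebras share the same underlying vector space with the fundamental basis $\{F_t : t \in \PT\}$, the isomorphism will be specified by giving $\phi(F_t)$ as a linear combination of fundamental basis elements. First I would write down both coproducts carefully and compare them on a generic tree $t$. By definition, the allowable coproduct appears as a summand of the unrestricted one: writing $\Delta_{\mathrm{all}}$ and $\Delta_{\mathrm{full}}$ for the two, we have $\Delta_{\mathrm{full}}(F_t) = \Delta_{\mathrm{all}}(F_t) + R(F_t)$, where $R$ collects the splittings at leaves that are not allowable, i.e.\ those that increase the number of internal nodes.

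Next I would search for $\phi$ of triangular form $\phi(F_t) = F_t + \sum_{t'\neq t} a_{t,t'} F_{t'}$, where $t'$ ranges over trees obtained from $t$ by a combinatorial operation corresponding to the non-allowable splittings (for example, by contracting or re-grafting the extra edge/node produced by such a lightning cut). The coefficients $a_{t,t'}$ would be fixed recursively on the number of internal nodes, imposing the intertwining relation $(\phi \otimes \phi)\circ \Delta_{\mathrm{full}} = \Delta_{\mathrm{all}} \circ \phi$. Because both algebras are graded (by number of leaves) and $\phi$ is, by construction, unitriangular with respect to a natural partial order on $\PT$ refining that grading, invertibility of $\phi$ comes for free.

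The main obstacle is to verify that the map $\phi$ pinned down from the compatibility with coproducts is simultaneously a morphism of algebras. I would attempt this by induction on the number of internal nodes using the recursive grafting structure of planar trees: any tree decomposes canonically as a grafting of smaller trees, and both products (``full'' and ``allowable'') are defined by the same Loday–Ronco grafting rule, just applied after different splittings. Thus once the coproduct side is matched, the product compatibility should reduce to a bookkeeping identity on graftings. As a cleaner alternative, I would try to exhibit a common ``larger'' bidendriform framework containing both structures and show both fit inside it via restriction; the identification then becomes an application of the universal property of a free (or cofree) Hopf algebra on the primitive elements shared by the two structures, which would avoid the explicit construction of $\phi$ altogether.
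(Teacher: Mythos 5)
Your overall strategy -- a unitriangular change of basis on $\Span\{F_t\}$ that intertwines the two coproducts, with invertibility coming from triangularity -- is exactly the shape of the paper's argument, but as written the plan has two genuine gaps. First, the recursive determination of the coefficients $a_{t,t'}$ is not shown to have a consistent solution, and this is where all the work lies. The paper resolves it by constructing a specific partial order on $\PT$ (the cover relation moves a middle branch of a node of arity $\geq 3$ down its leftmost child, creating a new internal node; each connected component is a Boolean lattice) and then taking \emph{all coefficients equal to $1$}: the new basis is $H_t=\sum_{s\geq t}F_s$. The key lemma making this work is that for each $t$ and each leaf position $i+1$ there is a unique minimal $t_{(i)}\geq t$ at which the splitting becomes allowable, and moreover $({}^it_{(i)},t_{(i)}^i)=({}^it,t^i)$; this gives the telescoping identity $\Delta_i^B(H_t)=H_{{}^it}\otimes H_{t^i}$, matching $\Delta_i^A(\tilde F_t)=\tilde F_{{}^it}\otimes\tilde F_{t^i}$. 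Without identifying this order and this lemma, ``impose the intertwining relation and solve recursively'' is not yet a proof that a solution exists.

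Second, the step you flag as the main obstacle -- compatibility with the products -- is handled in the paper not by an induction on graftings but by the observation that in both structures the product $F_s\cdot F_t$ is literally a sum over the terms of the $(\deg s+1)$-fold iterated coproduct of $t$, grafted onto the leaves of $s$; hence once the (iterated) coproducts are intertwined, the products are automatically intertwined. Your ``cleaner alternative'' of placing both structures in a common bidendriform framework is a dead end: the paper explicitly remarks that $\YSym$ does not admit a bidendriform structure, so one should not expect one for $\TSym$, and the freeness/cofreeness universal-property argument you sketch is not available here.
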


\begin{Remark}
    Theorem \ref{thm:iso_first} illustrates a common theme in this area: multiple (a priori) different constructions on the same set of combinatorial objects could lead to isomorphic Hopf algebras. From an algebraic combinatorics point of view, however, the chosen basis and construction is often relevant in the context where these Hopf algebras are being used.
    \end{Remark}

We call $\TSym$ the Hopf algebra on $\PT$ together with the multiplication and comultiplication defined by allowable splitting by lightning.

Our extension of $\SSym$ to the context of planar trees comes from the concept of the Stirling permutations introduced by Gessel and Stanley in \cite{GS78}. A (generalized) Stirling permutation can be viewed as a labeled planar tree whose labels are increasing in any path away from the root (see Figure \ref{fig:stirling_tree_example}).  In particular, for planar trees whose internal nodes all have $3$ children, the generalized Stirling permutations are exactly the Stirling permutations defined in \cite{GS78}. They can also be regarded as $212$ avoiding packed words (as defined in \cite{J99}).

\begin{figure}[h]
    \centering
    \begin{tikzpicture}[scale=0.2,baseline=0pt]

	\draw[blue, thick] (0,-1) -- (0,0);
	\draw[blue, thick] (0,0) -- (-5.25,4);
	\draw[blue, thick] (0,0) -- (7,6);
	\draw[blue, thick] (0,0) -- (0,4);
	\draw[blue, thick] (-5.25,4) -- (-7.25,6);
	\draw[blue, thick] (-5.25,4) -- (-3.25,6);
	\draw[blue, thick] (-4.25,5) -- (-5.25,6);

	\draw[blue, thick] (0,4) -- (2,6);
	\draw[blue, thick] (0,4) -- (0,6);
	\draw[blue, thick] (0,4) -- (-2,6);
	\filldraw[black] (0,0) circle (5pt)  {};
	\filldraw[black] (-5.25,4) circle (5pt)  {};
	\filldraw[black] (0,4) circle (5pt)  {};
	\filldraw[black] (-4.25,5) circle (5pt)  {};
	\node at (-0.4,1)[font=\fontsize{7pt}{0}]{$1$};
	\node at (0.4,1)[font=\fontsize{7pt}{0}]{$1$};
	\node [font=\fontsize{7pt}{0}] at (-5.25,4.7) {$2$};
	\node [font=\fontsize{7pt}{0}] at (-4.25,5.7) {$4$};
	\node [font=\fontsize{7pt}{0}] at (-0.4,5) {$3$};
	\node [font=\fontsize{7pt}{0}] at (0.4,5) {$3$};
	
\node at (0,8) {241331};
\end{tikzpicture}
    \caption{Generalized Stirling permutation and its underlying planar tree}
    \label{fig:stirling_tree_example}
\end{figure}
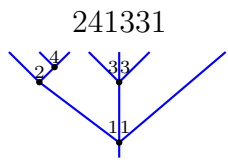

We define $\STSym$ as the Hopf algebra whose fundamental basis is indexed on the set $\GSP$ of generalized Stirling permutation and where its multiplication and comultiplication are defined by allowable splitting of the underlying planar tree.

A Hopf algebra $(\cH,\cdot,\Delta)$ has a  \emph{bidendriform} structure if it can be provided with operations $\ll$, $ \gg$, $\Delta_\ll$, and $\Delta_\gg$ satisfying $\cdot = \ll + \gg$, $\Delta = \Delta_\ll + \Delta_\gg$ and a set of relations described in  Section \ref{sec:bidendriform}. One of the important consequences for a Hopf algebra to be bidendriform is given by Foissy in \cite{F07,F12} where he proves that such an algebra is free, cofree and self-dual. Using this result we prove the following theorem.

\begin{Theorem}[Proposition \ref{prop:STSYMbidendriform} and Corollary \ref{cor:STSym_selfdual}]
    The Hopf algebra $\STSym$ has a bidendriform structure and hence is free, cofree and self-dual.
\end{Theorem}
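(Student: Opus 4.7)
The plan is to adapt Foissy's strategy for establishing the bidendriform structure on $\SSym$, transported to the present setting through the interpretation of generalized Stirling permutations as labeled planar trees.

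I would first define the four operations on the fundamental basis $\{F_u : u \in \GSP\}$ of $\STSym$. For the product, writing $F_u \cdot F_v = \sum_w c^w_{u,v} F_w$ as a sum over the allowable splittings/shuffles, I set $F_u \ll F_v$ to be the subsum over those $w$ whose rightmost leaf is contributed by the tree underlying $u$, and $F_u \gg F_v$ to be the complementary subsum; thus $\cdot = \ll + \gg$. Dually, for $w \in \GSP$ I split $\Delta(F_w) = \sum F_{w_1} \otimes F_{w_2}$ into $\Delta_\ll$ (the rightmost leaf of $w$ lands in $w_1$) and $\Delta_\gg$ (it lands in $w_2$), so $\Delta = \Delta_\ll + \Delta_\gg$. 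Equivalent criteria can be phrased in terms of the position of the largest repeated label, matching the convention used for $\SSym$ in \cite{F07}.

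Next, I would verify the dendriform axioms for $(\ll, \gg)$ and the codendriform axioms for $(\Delta_\ll, \Delta_\gg)$. These three-term relations say, informally, that the rightmost leaf of an iterated product is the rightmost leaf of the rightmost factor, together with the dual statement. For $\SSym$ they reduce to direct bookkeeping on shuffles, and the same bookkeeping transports here once we observe that the allowable splitting, by design, restricts to those shuffles that preserve the Stirling property at each internal node of the underlying planar tree. Hence these axioms follow by the same sort of case analysis as in the permutation case, with the Stirling constraint only removing terms symmetrically from both sides.

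The main obstacle is the mixed bidendriform compatibility axiom, which equates the two half-coproducts of $F_u \ll F_v$ and $F_u \gg F_v$ with certain sums of products of half-coproducts of $F_u$ and $F_v$. I would verify this by a term-by-term matching: each $w$ arising in $F_u \cdot F_v$, together with each deconcatenation of $w$, corresponds on the right-hand side to a four-way splitting of the pair $(u,v)$ into pieces that shuffle below and above a chosen cut. Tracking where the rightmost leaf of $w$ sits relative to this cut, and in which factor it originated, should give the required bijective correspondence. Once the bidendriform axioms are checked, Foissy's structure theorem in \cite{F07,F12} immediately yields that $\STSym$ is free, cofree, and self-dual.
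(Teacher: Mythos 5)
Your overall strategy coincides with the paper's: define half-products and half-coproducts on the fundamental basis, check Foissy's axioms, and invoke \cite{F07,F12} to conclude freeness, cofreeness and self-duality. (The paper in fact only writes down the four operations and defers the verification of the axioms to the reader, so your plan to actually check them goes beyond what is printed.) Your half-products also agree with the paper's: $F_u\ll F_v$ collects the shuffles $w$ with $w_n\leq\ideg(u)$, i.e.\ those whose last letter comes from $u$, and $F_u\gg F_v$ the rest.

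The genuine gap is in your half-coproducts. Deconcatenation of a generalized Stirling permutation $w$ (equivalently, the lightening splitting of the underlying tree) always places the rightmost leaf, and the last letter of the packed word, in the \emph{second} factor $w_2$; so with your criterion $\Delta_\ll$ would be identically zero on $\Delta_+$ and $\Delta_\gg$ would be all of $\Delta_+$. This immediately breaks the bidendriform axioms: for instance the relation $\Delta_\ll(a\ll b)=a'b_\ll'\otimes a''\ll b_\ll''+b_\ll'\otimes a\ll b_\ll''+a'b\otimes a''+b\otimes a$ has the nonzero term $b\otimes a$ on the right while the left side vanishes. Your parenthetical claim that the rightmost-leaf criterion is ``equivalent'' to one phrased via the largest label is also false even for the product (e.g.\ for $u=v=1$, the shuffle $12$ has its last letter from $v$ but the shuffle $21$ has its last letter from $u$, while the maximal letter $2$ comes from $v$ in both). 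The correct splitting of the coproduct, which is what the paper uses, is governed by the position of the \emph{maximal} label: $\Delta_\ll(F_w)=\sum_{w\mapsto(u,v),\,\max(w)\in v}F_u\otimes F_v$ and $\Delta_\gg(F_w)=\sum_{w\mapsto(u,v),\,\max(w)\in u}F_u\otimes F_v$. This is the right dualization of ``last letter'' (under the FQSym-type duality, the last position of a word corresponds to the position of the largest value, not to the last position again). With that correction your verification plan for the dendriform, codendriform and mixed compatibility relations is the intended one, and the conclusion via Foissy's theorems is as you state.
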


Bergeron and Zabrocki show in \cite{BZ09} that the Hopf algebra $\NCQSym=\WQSym^*$ of noncommutative quasisymmetric functions, supported on packed words, is free and cofree. With the interpretation of Stirling permtuations as packed words we give an embedding $\STSym^*\to\NCQSym=\WQSym^*$.

We introduce yet a third new combinatorial Hopf algebra on labeled planar trees, the Hopf algebra of parking functions $\PSym$, in which a parking function is viewed as a pair  $(\sigma,t)\in\Perm_n\times\PBT_n$  of a binary tree and a permutation of the same degree such that $\Des(t)\subseteq\Des(\sigma)$ (see Section \ref{sec:background} for the definitions of these statistics). A consequence of our construction is the existence of natural Hopf algebra epimorphisms to $\SSym$ and $\YSym$ (see Figure \ref{fig:all_hopf_algebra_maps}).

\begin{Theorem}[Proposition \ref{prop:PSYM_bidendriform} and Corollary \ref{cor:PSYM_free_selfdual}]
The Hopf algebra $\PSym$ is a bidendriform bialgebra, and hence is free, cofree, self-dual, and isomorphic to the Hopf algebra $\PQSym$ defined in \cite{NT07}.
\end{Theorem}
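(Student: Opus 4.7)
The plan is to mirror the strategy used for $\STSym$: exhibit explicit half-products $\ll,\gg$ and half-coproducts $\Delta_\ll,\Delta_\gg$ on the fundamental basis of $\PSym$ indexed by pairs $(\sigma,t)\in\Perm_n\times\PBT_n$ with $\Des(t)\subseteq\Des(\sigma)$, verify the seven bidendriform compatibilities of Section~\ref{sec:bidendriform}, and then invoke Foissy's theorem to harvest freeness, cofreeness and self-duality at once. Since $\PSym$ projects onto both $\SSym$ and $\YSym$, and since the bidendriform splittings on those two algebras are controlled by the position of the rightmost letter of a permutation and by the rightmost branch of a binary tree respectively, the natural definition is to split the product $(\sigma,t)\cdot(\tau,s)$ according to whether the final label in the shifted-shuffle lies in the first factor (giving $\ll$) or in the second (giving $\gg$), and to split the coproduct $\Delta(\sigma,t)$ according to whether the rightmost leaf of $t$ is retained on the left tensorand ($\Delta_\ll$) or on the right tensorand ($\Delta_\gg$). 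The compatibility with the descent condition $\Des(t)\subseteq\Des(\sigma)$ must be checked so that all four partial operations stay inside the basis of $\PSym$.

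The main work is the verification of the seven bidendriform axioms. The plan is to lift the verification from the two projections: the compatibility axioms between $\ll,\gg$ and $\Delta_\ll,\Delta_\gg$ are already known to hold on $\SSym$ (by Aguiar–Sottile) and on $\YSym$ (by Foissy), so one reduces each identity to tracking that the descent-containment condition is preserved under shifted shuffles and under the lightning-splitting used for the coproduct. The hardest axiom is typically the ``middle'' one relating $\Delta_\gg$ applied to a $\ll$-product to a sum of $\ll$ and $\gg$ operations on the factors and their coproducts; the strategy is to index both sides by the position of the distinguished rightmost leaf in the combined parking function and match term by term. The compatibility of the descent condition under these operations is the main obstacle, and is handled by the combinatorial observation that if $(\sigma,t)$ satisfies $\Des(t)\subseteq\Des(\sigma)$, then after shifted shuffling or leaf-splitting the inherited tree descents form a subset of the inherited permutation descents, using the same arguments that underlie the well-definedness of the product and coproduct of $\PSym$.

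Once the bidendriform structure is established, Foissy's theorems from \cite{F07,F12} apply verbatim: every bidendriform bialgebra is free as an algebra, cofree as a coalgebra, and self-dual. For the isomorphism with $\PQSym$, the strategy is to appeal to Foissy's rigidity result stating that a connected graded bidendriform bialgebra is determined up to isomorphism by the Hilbert series of its space of totally primitive elements. Since both $\PSym$ and $\PQSym$ are bidendriform bialgebras whose fundamental bases are indexed by parking functions, they share the same graded dimensions; a short dimension count on totally primitive elements (which can be read off from the generating function of parking functions via Foissy's formula) matches the two, yielding the abstract isomorphism $\PSym\cong\PQSym$.
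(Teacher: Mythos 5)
Your overall architecture matches the paper's: exhibit explicit half-products and half-coproducts on the fundamental basis, check the bidendriform axioms, and then invoke Foissy. Your product splitting is exactly the paper's: $F_f\ll F_g$ (resp.\ $F_f\gg F_g$) collects the shuffles $h$ in which the label of the last internal node, $(\pi_\perm(h))(n)$, is $\leq\deg(f)$ (resp.\ $>\deg(f)$), i.e.\ the rightmost node comes from the first (resp.\ second) factor. Your concluding step also matches: both $\PSym$ and $\PQSym$ are bidendriform with the same graded dimensions, so Foissy's rigidity theorem from \cite{F12} gives the abstract isomorphism.

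However, your coproduct splitting is not the paper's and, as stated, does not work. The paper splits $\Delta_+$ according to the \emph{position of the maximal label} $n$ in the permutation part: $\Delta_\ll$ keeps the terms $h\mapsto(f,g)$ with $\deg(f)\geq(\pi_\perm(h))^{-1}(n)$ (the node labeled $n$ lands in the left tensorand) and $\Delta_\gg$ the rest. You instead propose to split according to whether ``the rightmost leaf of $t$'' is retained on the left or the right tensorand. But in any lightning deconcatenation $h\mapsto(f,g)$ with $\deg(f)<\deg(h)$, the rightmost leaf of the tree always ends up in $g$; so your $\Delta_\ll$ would vanish identically on the reduced coproduct, and axiom (12) of Section~\ref{sec:bidendriform} (whose right-hand side contains the nonzero term $b\otimes a$) would fail. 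The criterion must be label-based, not leaf-based.

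There is a second gap in your verification strategy. You propose to ``lift'' the seven axioms from the two projections $\Pi_\perm:\PSym\twoheadrightarrow\SSym$ and $\Pi_\tree:\PSym\twoheadrightarrow\YSym$. This cannot work: the paper explicitly remarks that $\YSym$ does \emph{not} admit a bidendriform bialgebra structure (indeed, by Foissy it would otherwise be self-dual), so there is nothing on the $\YSym$ side to lift. Moreover these maps are surjections, not embeddings, so identities verified in the quotients do not pull back to $\PSym$. The paper's (implicit) route is a direct term-by-term verification of the axioms on $\PSym$ itself, analogous to the one for $\STSym$, which it describes as technical but straightforward; your proof needs to do the same rather than delegate to the images.
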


It is worthy to mention that, though we do not prove it here, our construction of $\PQSym$ generalizes to the context of $m$-parking functions by using $m+1$-ary trees. This construction is likely related to the constructions in \cite{NT20}.
 
The following diagram summarizes the relationships between our three new algebras and other Hopf algebras. Note that the square commutes but the triangle on the left does not.
\begin{figure}[h]
    \begin{center}
	\begin{tikzpicture}
		\node at (0,0) {$\YSym$};
		\node at (3,0) {$\TSym$};
		\node at (3,2) {$\STSym$};
		\node at (0,2) {$\SSym$};
		\draw[black,thick, right hook->] (0.8,0) -- (2.2,0);
		\draw[black,thick, right hook->] (0.8,2) -- (2.2,2);
		\draw[black,thick, ->>] (0,1.5) -- (0,0.5);
		\draw[black,thick, ->>] (3,1.5) -- (3,0.5);
		\node at (0.3,1) {$\Pi$};
		\node at (3.6,1) {$\Pi_\ptree$};
		\node at (-3,1) {$\PSym$};
		\draw[black,thick, ->>] (-2.2,0.8) -- (-0.8,0.2);
		\draw[black,thick, ->>] (-2.2,1.2) -- (-0.8,1.8);
		\node at (-1.8,1.8) {$\Pi_\perm$};
		\node at (-1.7,0.3) {$\Pi_\tree$};
		\node at (-1,1) {$\not \equiv$};
		\node at (1.5,1) {$\equiv$};
	    \node at (6,2) {$\WQSym$};
	    \draw[black,thick, ->>]  (5.2,2) -- (3.8,2);
	\end{tikzpicture}
\end{center}
    \caption{Relations between the Hopf algebras discussed in this work}
    \label{fig:all_hopf_algebra_maps}
\end{figure}
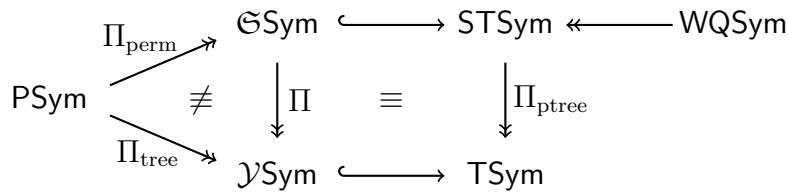

\subsection{Underlying posets and poset maps}
 We define partial orders naturally associated with the three families of combinatorial objects that index the Hopf algebra constructions discussed above.
 
 We define a Tamari order on the set $\PT$ of all planar trees that when restricting to any family of planar $(m+1)$-ary trees for a fixed $m$ coarsens the $m$-Tamari order defined in \cite{BP12} and when $m=1$ it coincides with the classical definition of the Tamari lattice. In the case of labeled planar trees, the set $\GSP$ of Stirling permutations inherits a planar weak order from the left weak order on packed words \cite{V15}. Much in the same flavor as in \cite{AS05} we show that the maps $\pi_{ptree}$ leaving the underlying planar unlabeled tree asociated to a Stirling permutation and a particular section $\iota_{gsp}$ associating to a planar tree its unique $213$-avoiding Stirling permutation are compatible as stated in the following proposition.

 \begin{Proposition}[Corollary \ref{cor:Galois_connection}]\label{prop:galois_connection}
 The maps $\pi_{ptree}$ and $\iota_{gsp}$ form a   Galois connection between the weak order on Stirling permutations and the Tamari order on planar trees. 
 \end{Proposition}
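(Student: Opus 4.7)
My plan is to prove this in the style of Aguiar and Sottile \cite{AS05}, adapting their argument for the classical weak order / Tamari order Galois connection between $\SSym$ and $\YSym$.

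The first step is to establish the section identity $\pi_\ptree \circ \iota_\gsp = \mathrm{id}_{\PT}$. This should be essentially built into the construction of $\iota_\gsp$: the 213-avoiding Stirling permutation associated to a planar tree $t$ is defined so that its underlying planar tree is $t$, so the projection recovers $t$. I would also confirm, by a straightforward recursion on the size of the tree, that 213-avoidance selects a unique representative within each fiber $\pi_\ptree^{-1}(t)$.

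The second step is to verify that both $\pi_\ptree$ and $\iota_\gsp$ are order-preserving, by examining cover relations. For $\pi_\ptree$, a cover in the planar weak order on $\GSP$ is an adjacent transposition in a packed word, and I would check that such a move either stays inside a fiber (so the underlying tree is unchanged) or descends to a Tamari cover on $\PT$. Symmetrically, for $\iota_\gsp$, I would verify that each Tamari cover between planar trees lifts to a saturated chain in the planar weak order between the corresponding 213-avoiding Stirling permutations.

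The key structural step is to show that each fiber $\pi_\ptree^{-1}(t)$ is an interval in the planar weak order with minimum element $\iota_\gsp(t)$. From this, the Galois connection follows formally: since $\iota_\gsp(t)$ is the infimum of $\pi_\ptree^{-1}(t)$ and both maps are monotone, one obtains
\[
\iota_\gsp(t) \leq_W w \iff t \leq_{\mathrm{Tam}} \pi_\ptree(w),
\]
which is exactly the adjunction $\iota_\gsp \dashv \pi_\ptree$. The main obstacle, I expect, is this fiber-is-interval claim. In the classical binary-tree setting it has a clean inversion-set description, but generalized Stirling permutations have repeated labels whose multiplicities depend on the arities of the internal nodes of the underlying tree, so adjacent transpositions inside a fiber must be tracked carefully against both the planar structure and the 213-avoidance condition. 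I would approach this by induction on the number of internal nodes, isolating either the leftmost subtree at the root or the final maximal letter of the Stirling permutation, and using the recursive decomposition to reduce to smaller cases.
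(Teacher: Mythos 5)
Your outline has the right ingredients (order-preservation of both maps plus an extremal property of the fibers), but the key structural claim is stated in the wrong direction, and with it the adjunction itself. You assert that $\iota_\gsp(t)$ is the \emph{minimum} of the fiber $\pi_\ptree^{-1}(t)$ and deduce $\iota_\gsp(t)\leq_{Pw} w \iff t\leq_{PT}\pi_\ptree(w)$. In fact the $213$-avoiding representative is the \emph{maximum} of its fiber: the paper's Lemma \ref{lem:iota-max} shows that any $u$ with $\pi_\ptree(u)=t$ that is not $213$-avoiding admits an $a$ with $T_a(u)>_{Pw}u$ and $\pi_\ptree(T_a(u))=t$, so one can always move \emph{up} inside the fiber until reaching the $213$-avoiding element. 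Consequently the correct adjunction is $\pi_\ptree(w)\leq_{PT} t \iff w\leq_{Pw}\iota_\gsp(t)$ (with $\pi_\ptree$ as lower adjoint), and the equivalence you propose is actually false: take any non-$213$-avoiding $w$ in the fiber of $t$; then $t\leq_{PT}\pi_\ptree(w)=t$ holds while $\iota_\gsp(t)\leq_{Pw} w$ fails since $w<_{Pw}\iota_\gsp(t)$. If you were to pursue your plan of proving the fiber has minimum $\iota_\gsp(t)$, that step would simply fail (the minimum of the fiber is a different, $312$-avoiding, element).

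Once the extremum is corrected, your argument collapses to the paper's: both maps are order-preserving (your cover-relation analysis is exactly Proposition \ref{prop:lsb}; note that a cover in the planar weak order is the value-swap $u\lessdot T_a(u)$ exchanging all occurrences of $a$ and $a+1$, not a positional adjacent transposition), $\pi_\ptree\circ\iota_\gsp=\mathrm{id}$, and $w\leq_{Pw}\iota_\gsp(\pi_\ptree(w))$ for every $w$; the Galois connection then follows in two lines. The fiber-is-an-interval claim, which you identify as the main obstacle, is not needed for the statement at all and would only add work.
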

 
Via the identification of a parking function as a pair $(\sigma,t)\in\Perm_n\times\PBT_n$, parking functions get a natural partial order inherited from the product lattice on the weak and Tamari orders, which we call the parking order.

The definition of partial orders associated to the Hopf algebras $\TSym^*$, $\STSym^*$, and $\PSym^*$ allow us to see multiplication in the dual bases as sums over intervals in these posets. 

\begin{Theorem}[Propositions \ref{prop:interval_multiplication_TSYM}, \ref{prop:interval_multiplication_STSYM}, and \ref{prop:interval_multiplication_PSYM}]\label{prop:dualproduct-all}
	Let $F_f^*$, $F_g^*$ be elements in the dual basis in any of the graded duals $\TSym^*$, $\STSym^*$, or $\PSym^*$. Then,
		$$m(F_f^*\otimes F_g^*)=\sum_{f\backslash g\leq h\leq f/g}F_h^*,$$
where $\leq$ are the planar Tamari, weak and monomial partial orders respectively; and $f/g$ and $f\backslash g$ are obtained by grafting $f$ and $g$ according to Definitions \ref{def:graft-trees} and \ref{def:perm}.
\end{Theorem}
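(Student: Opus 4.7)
The plan is to dualize the coproduct. By the definition of the dual pairing, for any of $\TSym$, $\STSym$, or $\PSym$,
\[
m(F_f^* \otimes F_g^*) \;=\; \sum_h c_{f,g}^h \, F_h^*,
\]
where $c_{f,g}^h$ denotes the coefficient of $F_f \otimes F_g$ in $\Delta F_h$. The task then reduces to determining the support $\cR(f,g) := \{h : c_{f,g}^h \neq 0\}$ of this sum and verifying that each nonzero coefficient equals $1$. Since the coproduct on the fundamental basis is defined as a sum over allowable splittings by lightning of $h$, the coefficient $c_{f,g}^h$ is precisely the number of allowable leaves of $h$ at which the lightning split yields exactly the ordered pair $(f,g)$.

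My first step would be to show that $c_{f,g}^h \in \{0,1\}$: the position of a potential splitting leaf is forced by the leaf counts of $f$ and $g$, and by the positions of the labels in the labeled algebras $\STSym$ and $\PSym$, so at most one leaf of $h$ can witness the factorization. Next I would identify $\cR(f,g)$ combinatorially as the set of objects obtained by reversing a lightning split of the pair $(f,g)$, that is, by grafting $g$ atop $f$ at one of the admissible attachment positions along a distinguished spine of $f$. The extremal attachments, rightmost and leftmost respectively, produce $f/g$ and $f\backslash g$ as in Definitions \ref{def:graft-trees} and \ref{def:perm}.

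The main step, and the main obstacle, is the equality $\cR(f,g) = [f\backslash g, f/g]$ in each of the three posets. One direction, $\cR(f,g) \subseteq [f\backslash g, f/g]$, I would prove by enumerating the admissible graftings in order of attachment position and verifying that consecutive graftings differ by a cover relation in the appropriate poset: the planar Tamari order on $\PT$ for $\TSym^*$, the planar weak order on $\GSP$ for $\STSym^*$, and the parking order for $\PSym^*$. The reverse inclusion is subtler, asserting order convexity of $\cR(f,g)$; it requires a careful analysis of how each cover relation can act on a grafting, showing that moving up from $f\backslash g$ or down from $f/g$ by a covering step never leaves the set of graftings. For $\TSym^*$ this generalizes the classical Loday--Ronco computation in $\YSym^*$, and I would model my argument on that template, using the new description of the planar Tamari order developed in earlier sections. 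For $\STSym^*$ I expect the argument to transfer once one tracks how the Stirling labels interact with the graftings, mirroring the Aguiar--Sottile analysis in $\SSym$~\cite{AS05}. Finally, for $\PSym^*$ the identification of a parking function with a pair $(\sigma,t) \in \Perm_n \times \PBT_n$ and the product structure of the parking order should allow the proof to reduce to the $\SSym$ and $\YSym^*$ interval results applied componentwise, with a cardinality check $|\cR(f,g)| = |[f\backslash g, f/g]|$ serving as a useful sanity bookkeeping throughout.
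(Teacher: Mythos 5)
Your reduction to the coproduct coefficients $c_{f,g}^h$ and the observation that each coefficient is $0$ or $1$ are correct and are implicitly the paper's starting point as well. The problem is that everything after that — the identity $\cR(f,g)=[f\backslash g,f/g]$ — is exactly where the content of the theorem lives, and your plan for it does not hold up. For the inclusion $\cR(f,g)\subseteq[f\backslash g,f/g]$ you propose to enumerate the graftings "in order of attachment position" with consecutive ones differing by a cover relation; but the support of the product is in general \emph{not} a chain, so no such linear enumeration exists. Already in $\SSym^*$ one has $F_{12}^*\cdot F_{12}^*=\sum_{\rho}F_\rho^*$ over the six permutations with $\rho_1<\rho_2$ and $\rho_3<\rho_4$, among which $1423$ and $2314$ are incomparable in the weak order; the graftings are indexed by shuffles of two spines, and one must argue about this whole poset of shuffles, not a sequence. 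For the reverse inclusion (order-convexity of the set of graftings) you offer only that it "requires a careful analysis" — but this is the hardest step, and for $\TSym$ it is further complicated by the allowability restriction on splittings of non-binary trees, which your grafting picture does not address.

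The paper avoids both difficulties by taking a different route. For $\STSym^*$ (Proposition \ref{prop:interval_multiplication_STSYM}) it computes the dual product explicitly in the packed-word realization and then characterizes membership in $[u\backslash v,\,u/v]$ purely in terms of set compositions and inversion sets, via Lemma \ref{lem:iinv} and Theorem \ref{thm:lpw}: a word $w$ lies in the interval iff $\setpar(w)=\setpar(u\backslash v)$ and $\Inv(u\backslash v)\subseteq\Inv(w)\subseteq\Inv(u/v)$, and this visibly coincides with the support of the product, so both inclusions come for free with no cover-relation chasing and no convexity argument. The $\TSym^*$ case is then deduced as Corollary \ref{cor:lst} by pushing the $\STSym^*$ statement through the Galois connection $(\pi_\ptree,\iota_\gsp)$ of Corollary \ref{cor:Galois_connection}, rather than by a direct generalization of Loday--Ronco. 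Only your treatment of $\PSym^*$ matches the paper, which indeed reduces Proposition \ref{prop:interval_multiplication_PSYM} componentwise to Propositions \ref{prop:tree} and \ref{prop:perm} via $\pi_\perm$ and $\pi_\tree$. To repair your proposal you would either need to develop the shuffle-indexed interval argument in full (organizing the graftings by the poset of shuffles and proving convexity), or adopt the paper's strategy of first establishing an inversion-type characterization of the relevant orders.
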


\subsection{Monomial basis and axioms} One way to show that a combinatorial Hopf algebra is free is by constructing an explicit basis with a free multiplication \cite{LR02,BZ09,NT06,NT07,PP18}, often of the form 
\begin{equation}
H_f=\sum_{g\leq f} G_g \label{eq:freebasis}
\end{equation}
where $G$ is the starting basis, and $\leq$ is a partial order on the underlying objects. However, besides \cite{BZ09}, most of the other works do not explore the comultiplication properties of their free $H$ bases -- are the comultiplication coefficients even non-negative? -- so it is not easy to use the new $H$ basis to define Hopf morphisms to other Hopf algebras. Furthermore, despite the common formulas of the type in (\ref{eq:freebasis}), in all these algebras there is no universal result from which these cases may be recovered as easy corollaries.

Aguiar-Sottile \cite{AS05,AS06} took with  $\YSym$ (dual to the Loday-Ronco algebra) and $\SSym$ an alternative direction resulting in a more complete approach. They make the dual construction to (\ref{eq:freebasis}): if $F$ is the starting ``fundamental'' basis (dual to $G$ above), define the ``monomial'' $M$ basis by 
\begin{equation}
M_f=\sum_{g\geq f} \mu(f,g) F_g, \quad \text{ equivalently }\quad F_f =\sum_{g\geq f}M_g. \label{eq:cofreebasis}
\end{equation}
They then show that the monomial basis is cofree, and that the primitive monomial basis elements are indexed by permutations or binary trees with no ``global descent". Playing with M\"obius inversion on the poset structure, one then obtains a positive multiplication formula and a cancellation-free antipode formula for the monomial basis. An extension of these techniques to packed words is key to Vargas' self-duality isomorphism for $\WQSym$ \cite{V20} - it was possible to compare monomial bases on $\WQSym$ and $\NCQSym\cong\WQSym^*$ thanks to the explicit multiplication formula.
These techniques were used also in  \cite{CS08,S20}

In this article we abstract the technique of Aguiar and Sottile by giving a set of axioms (see Section \ref{sec:axioms}) based on the interaction between an underlying poset on the set of combinatorial objects indexing the fundamental basis and the Hopf operations of a combinatorial Hopf algebra defined on the same family of objects. These axioms guarantee that the monomial basis defined via (\ref{eq:cofreebasis}) is cofree. We make abstract definitions of ``shifted concatenation" and ``global descents'' to give a unifying positive multiplication formula and a cancellation-free antipode formula where all terms have the same sign. Our axioms are easily transfered via poset morphisms, meaning that monomial bases may be defined for many combinatorial Hopf algebras on related objects, all in one sweep.

We show that  $\TSym$, $\STSym$ and $\PSym$ satisfy the axioms in Section \ref{sec:axioms}. We show this directly on   $\STSym$ and $\PSym$, and we use a transfer axiom to show that from the calculation in $\STSym$ we can conclude that the axioms are also satisfied in $\TSym$. As a consequence of these axioms, we obtain explicit formulas for the multiplication and comultiplication of monomial basis elements, and a cancellation-free grouping-free formula for their antipode. We have the following theorem.

\begin{Theorem}[Propositions \ref{prop:coproduct-monomial-tsym}--\ref{prop:antipode-monomial-tsym}, \ref{prop:coproduct-monomial-stsym}--\ref{prop:antipode-monomial-stsym},  \ref{prop:coproduct-monomial-parking}--\ref{prop:antipode-monomial-parking}] \label{theorem:general_monomial}
For the monomial bases $\{M_t\}$ of the combinatorial Hopf algebras $\TSym$, $\STSym$ and $\PSym$ defined as in equation \eqref{eq:cofreebasis} we have that 
\begin{enumerate}
    \item The comultiplication of monomial
basis elements is given by
\[
\Delta_{+}(M_{t})=\sum_{i\in\GD(f)}M_{{}^{i}t}\otimes M_{t^{i}}.
\]
\item The product of monomial basis elements has non-negative integer coefficients. 

\item In the antipode image $\mathcal{S}(M_t)$, all
terms have the same sign, equal to the parity of $|\GD(t)|+1$.
\end{enumerate}

\end{Theorem}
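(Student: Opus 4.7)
The plan is to reduce all three parts of the theorem to a single general result: we show that each of $\TSym$, $\STSym$ and $\PSym$ satisfies the axiomatic framework of Section~\ref{sec:axioms}, and then invoke the abstract consequences of those axioms (positive comultiplication, positive multiplication, and sign-uniform antipode for the monomial basis) for each algebra in turn. Thus the work is concentrated in verifying the axioms; the rest is automatic.

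First I would handle $\STSym$ directly. Stirling permutations are packed words avoiding $212$, so the planar weak order on $\GSP$ is the restriction of the left weak order on packed words. For this setting I would explicitly describe: (a) shifted concatenation $f/g$ and $f\backslash g$ in terms of grafting of labeled planar trees, (b) the set $\GD(f)$ of global descents of a Stirling permutation, characterised as those positions $i$ where the splitting $f \mapsto ({}^if,f^i)$ factors $f$ as $({}^if)/(f^i)$, and (c) the interaction between the splitting $f \mapsto ({}^if,f^i)$ and the planar weak order. The key fact to verify is that the interval decomposition of Theorem~\ref{prop:dualproduct-all} is compatible with the splitting, so that the monomial basis defined by M\"obius inversion~\eqref{eq:cofreebasis} is well-behaved. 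Once the axioms hold, the cofreeness and the formulas for $\Delta_+$, for the product $M_f\cdot M_g$, and for the antipode $\mathcal S(M_t)$ follow from the general framework.

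Next I would handle $\PSym$ in parallel fashion, using the product description of parking functions as pairs $(\sigma,t)$ with $\Des(t)\subseteq\Des(\sigma)$ and the parking order inherited from the product of the weak and Tamari orders. The global descents of a parking function should be defined so that they arise exactly from simultaneous global descents of the permutation part and compatible splittings of the tree part; Proposition~\ref{prop:interval_multiplication_PSYM} already gives the interval multiplication of the dual fundamental basis, which is the cornerstone axiom. For $\TSym$ I would avoid repeating the direct verification and instead invoke the transfer axiom: since $\Pi_{\ptree}\colon\STSym\twoheadrightarrow\TSym$ is a Hopf algebra surjection compatible with the two underlying posets (by Proposition~\ref{prop:galois_connection} and its tree-level analogue), the axioms satisfied by $\STSym$ descend to $\TSym$, and the formulas for $M_t\in\TSym$ are obtained by pushing forward those for $\STSym$.

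The main obstacle I anticipate is the verification of the axioms in $\PSym$: unlike the planar weak order and planar Tamari order, the parking order is a product order on a constrained subset of $\Perm_n\times\PBT_n$, so one must check carefully that $f/g$ and $f\backslash g$ in the sense of Definition~\ref{def:perm} indeed bound the interval produced by the multiplication formula, and that the notion of global descent coincides with the naive one coming from the two factors. Once this compatibility is in place, Foissy's bidendriform theorem used in the earlier parts of the paper together with the axiomatic package give items (1)--(3) uniformly: the coproduct formula is the content of the cofreeness axiom, positivity of the multiplication is the standard M\"obius-inversion argument combining the interval-product formula of Theorem~\ref{prop:dualproduct-all} with the shifted concatenation, and the sign-uniform antipode of parity $|\GD(t)|+1$ comes from the Takeuchi-type expansion applied to the primitive/monomial decomposition, where cancellations are eliminated exactly as in the Aguiar--Sottile argument for $\YSym$ and $\SSym$.
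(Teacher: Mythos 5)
Your proposal matches the paper's own strategy: the authors verify axioms ($\Delta$1)--($\Delta$3), ($m$0)--($m$3), ($\mathcal{S}$0)--($\mathcal{S}$3) directly for $\STSym$ (via Theorem~\ref{thm:lpw} and reduction to $\SSym$ through $\delrpt$) and for $\PSym$ (via Lemma~\ref{lem:parking-order} and componentwise reduction to $\SSym$ and $\YSym$), and obtain $\TSym$ by the transfer Theorem~\ref{thm:monomialquotient} applied to $\Pi_{\ptree}\colon\STSym\to\TSym$, exactly as you outline. The only extraneous element is your appeal to Foissy's bidendriform theorem, which the paper uses for freeness/cofreeness/self-duality but not for items (1)--(3), which follow purely from the axiomatic package.
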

As a consequence of Theorem \ref{theorem:general_monomial} we obtain that $\TSym$ is cofree. An interesting open question is to determine if $\TSym$ is also free and self-dual.

\begin{Remark}
    Restricting $\TSym$ to the generating subset of planar $(m+1)$-ary trees for a fixed positive integer $m$ we obtain a subalgebra of $\TSym$ indexed by Fuss-Catalan objects that is likely isomorphic to the Hopf algebra defined in \cite{NT20}.
\end{Remark}

\begin{Remark}
Note that parts (1) and (2) of Theorem \ref{theorem:general_monomial} require independent axioms, so for a general Hopf algebra one can have (1) without having (2), as it is the case of the tableaux algebra studied in \cite{MR21}.
\end{Remark}
The organization of this article is as follows. In section \ref{sec:background}, we review the notation and background on the classical combinatorial Hopf algebras $\YSym$ and $\SSym$. In section \ref{sec:axioms}, we present a list of our monomial basis axioms and its consequences without proofs. Sections  \ref{sec:TSYM}, \ref{sec:STSYM} and \ref{sec:parking_functions} introduce respectively the Hopf algebras of planar trees, generalized Stirling permutations, and parking functions, detailing their algebraic properties and connections with the classical Hopf algebras $\YSym$ and $\SSym$. Lastly in section \ref{sec:proofs}, we provide the proofs for the theorems in section \ref{sec:axioms}.

\section{Background}\label{sec:background}

\subsection{Notations}\label{sec:notations}

In this section, we define a number of operations on planar trees and labeled planar trees. These operations also apply to binary trees and labeled binary trees as special cases.

\begin{Definition}
	A (rooted planar) \textbf{tree} is a rooted tree such that for each node its set of children is totally ordered and each node either has no child, i.e. a \textbf{leaf}, or have at least 2 children i.e. an \textbf{internal node}. By convention, there is an \textbf{empty tree} with no internal node and $1$ leaf, we draw it as ``$\,\begin{tikzpicture}[scale=0.2,baseline=0pt]\draw[blue, thick] (0,-1) -- (0,.5);\end{tikzpicture}\,$". 
	The $i$-th leaf of $t$ reading from left to right is called the $i$-th leaf of $t$  for short. The set of planar trees is denoted by $\PT$.	
	The \textbf{degree} of a tree $t$ is one less than number of leaves in $t$, denoted by $\deg(t)$. The \textbf{internal degree} of $t$ is number of internal nodes of $t$, denoted by $\ideg(t)$. (See for example Figure~\ref{fig:PT4})
\end{Definition}

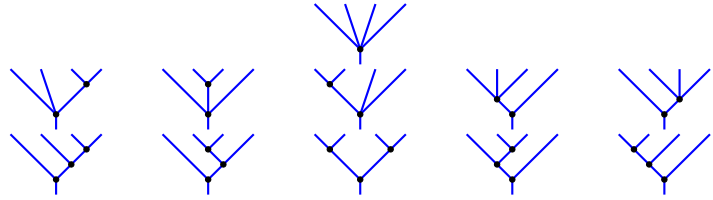
\begin{figure}
\begin{center}
	\begin{tikzpicture}[scale=0.2,baseline=0pt]
	\draw[blue, thick] (0,-1) -- (0,0);
	\draw[blue, thick] (0,0) -- (3,3);
	\draw[blue, thick] (0,0) -- (-3,3);
	\draw[blue, thick] (0,0) -- (-1,3);
	\draw[blue, thick] (0,0) -- (1,3);
	\filldraw[black] (0,0) circle (5pt)  {};
	\end{tikzpicture}
	
	\begin{tikzpicture}[scale=0.2,baseline=0pt]
	\draw[blue, thick] (0,-1) -- (0,0);
	\draw[blue, thick] (0,0) -- (3,3);
	\draw[blue, thick] (0,0) -- (-3,3);
	\draw[blue, thick] (0,0) -- (-1,3);
	\draw[blue, thick] (2,2) -- (1,3);
	\filldraw[black] (0,0) circle (5pt)  {};
	\filldraw[black] (2,2) circle (5pt)  {};
	
	\draw[blue, thick] (10,-1) -- (10,0);
	\draw[blue, thick] (10,0) -- (13,3);
	\draw[blue, thick] (10,0) -- (7,3);
	\draw[blue, thick] (10,0) -- (10,2);
	\draw[blue, thick] (10,2) -- (9,3);
	\draw[blue, thick] (10,2) -- (11,3);
	\filldraw[black] (10,0) circle (5pt)  {};
	\filldraw[black] (10,2) circle (5pt)  {};
	
	\draw[blue, thick] (20,-1) -- (20,0);
	\draw[blue, thick] (20,0) -- (23,3);
	\draw[blue, thick] (20,0) -- (17,3);
	\draw[blue, thick] (20,0) -- (21,3);
	\draw[blue, thick] (18,2) -- (19,3);
	\filldraw[black] (20,0) circle (5pt)  {};
	\filldraw[black] (18,2) circle (5pt)  {};
	
	\draw[blue, thick] (30,-1) -- (30,0);
	\draw[blue, thick] (30,0) -- (33,3);
	\draw[blue, thick] (30,0) -- (27,3);
	\draw[blue, thick] (29,1) -- (31,3);
	\draw[blue, thick] (29,1) -- (29,3);
	\filldraw[black] (30,0) circle (5pt)  {};
	\filldraw[black] (29,1) circle (5pt)  {};
	
	\draw[blue, thick] (40,-1) -- (40,0);
	\draw[blue, thick] (40,0) -- (37,3);
	\draw[blue, thick] (40,0) -- (43,3);
	\draw[blue, thick] (41,1) -- (41,3);
	\draw[blue, thick] (41,1) -- (39,3);
	\filldraw[black] (40,0) circle (5pt)  {};
	\filldraw[black] (41,1) circle (5pt)  {};
	\end{tikzpicture}
	
	\begin{tikzpicture}[scale=0.2,baseline=0pt]
	\draw[blue, thick] (0,-1) -- (0,0);
	\draw[blue, thick] (0,0) -- (3,3);
	\draw[blue, thick] (0,0) -- (-3,3);
	\draw[blue, thick] (1,1) -- (-1,3);
	\draw[blue, thick] (2,2) -- (1,3);
	\filldraw[black] (0,0) circle (5pt)  {};
	\filldraw[black] (1,1) circle (5pt)  {};
	\filldraw[black] (2,2) circle (5pt)  {};
	
	\draw[blue, thick] (10,-1) -- (10,0);
	\draw[blue, thick] (10,0) -- (13,3);
	\draw[blue, thick] (10,0) -- (7,3);
	\draw[blue, thick] (11,1) -- (9,3);
	\draw[blue, thick] (10,2) -- (11,3);
	\filldraw[black] (10,0) circle (5pt)  {};
	\filldraw[black] (11,1) circle (5pt)  {};
	\filldraw[black] (10,2) circle (5pt)  {};
	
	\draw[blue, thick] (20,-1) -- (20,0);
	\draw[blue, thick] (20,0) -- (23,3);
	\draw[blue, thick] (20,0) -- (17,3);
	\draw[blue, thick] (22,2) -- (21,3);
	\draw[blue, thick] (18,2) -- (19,3);
	\filldraw[black] (20,0) circle (5pt)  {};
	\filldraw[black] (22,2) circle (5pt)  {};
	\filldraw[black] (18,2) circle (5pt)  {};
	
	\draw[blue, thick] (30,-1) -- (30,0);
	\draw[blue, thick] (30,0) -- (33,3);
	\draw[blue, thick] (30,0) -- (27,3);
	\draw[blue, thick] (29,1) -- (31,3);
	\draw[blue, thick] (30,2) -- (29,3);
	\filldraw[black] (30,0) circle (5pt)  {};
	\filldraw[black] (29,1) circle (5pt)  {};
	\filldraw[black] (30,2) circle (5pt)  {};
	
	\draw[blue, thick] (40,-1) -- (40,0);
	\draw[blue, thick] (40,0) -- (37,3);
	\draw[blue, thick] (40,0) -- (43,3);
	\draw[blue, thick] (39,1) -- (41,3);
	\draw[blue, thick] (38,2) -- (39,3);
	\filldraw[black] (40,0) circle (5pt)  {};
	\filldraw[black] (39,1) circle (5pt)  {};
	\filldraw[black] (38,2) circle (5pt)  {};
	\end{tikzpicture}
\end{center}	
\caption{\sl Planar trees with 4 leaves, i.e. with degree 3.
The trees in row $i$ have $i$ internal nodes.}\label{fig:PT4}
\end{figure}

\begin{Definition}
	A \textbf{labeled (planar) tree} is a tree $f$ together with a map $\kappa_f:\{\text{internal nodes of }f\}\to\mathbb{N}$ that assigns a natural number to each internal node of $f$. The set of labeled trees is denoted by $\LPT$. By abuse of notation, we use the symbol $f$ for both labeled tree and its underlying tree. If $\kappa_f$ is an injection, then we may define the \textbf{standardization} of $f$, denoted by $\std(f)$, to be the labeled tree whose underlying tree is the same as $f$ and the labels are renumbered to $\{1,2,\dots,\ideg(f)\}$ while keeping the relative order.
\end{Definition}

In this paper, we view permutations, parking functions and increasing trees as labeled  trees. We use the symbols $s,t,r$ for trees, $\sigma,\tau,\rho$ for permutations, $f,g,h$ for parking functions and $u,v,w$ for generalized Stirling permutations.

We describe below some operations on trees, that will be used to define the comultiplication and multiplication of the associated Hopf algebras.

An orange lightening hits the tree $t$ at a leaf, and goes along the path from that leaf to the root. This lightening splits the tree into two pieces, and by contracting unnecessary nodes, we obtain an ordered pair of two trees $(t_1,t_2)$. We denote this by $t\mapsto(t_1,t_2)$ if $(t_1,t_2)$, and call it a \textbf{deconcatenation} of $t$. When it is useful to specify that the lightening is at the $(i+1)$-th leaf, so $\deg t_1 =i$, we may write $({}^it,t^i)$  in place of $(t_1,t_2)$.

\begin{Example}\label{ex:split}
	Let $t=\begin{tikzpicture}[scale=0.25,baseline=0pt]
		\draw[blue, thick] (0,-1) -- (0,0);
		\draw[blue, thick] (0,0) -- (3,3);
		\draw[blue, thick] (0,0) -- (-3,3);
		\draw[blue, thick] (0,0) -- (0,2);
		\draw[blue, thick] (0,2) -- (-1,3);
		\draw[blue, thick] (0,2) -- (1,3);
		\filldraw[black] (0,0) circle (5pt)  {};
		\filldraw[black] (0,2) circle (5pt)  {};
		\end{tikzpicture}$. To calculate ${}^2 t$ and $t^2$, consider a lightening hitting at the third leaf:
	
	\begin{center}
		\begin{tikzpicture}[scale=0.25,baseline=0pt]
		\draw[blue, thick] (0,-1) -- (0,0);
		\draw[blue, thick] (0,0) -- (3,3);
		\draw[blue, thick] (0,0) -- (-3,3);
		\draw[blue, thick] (0,0) -- (0,2);
		\draw[blue, thick] (0,2) -- (-1,3);
		\draw[blue, thick] (0,2) -- (1,3);
		\filldraw[black] (0,0) circle (5pt)  {};
		\filldraw[black] (0,2) circle (5pt)  {};
		\end{tikzpicture}$\quad\mapsto\quad$
		\begin{tikzpicture}[scale=0.25,baseline=0pt]
		\draw[orange, ultra thick] (0,-1) -- (0,2);
		\draw[blue, thick] (0,0) -- (3,3);
		\draw[blue, thick] (0,0) -- (-3,3);
		\draw[orange, ultra thick] (1,3) -- (1,5);
		\draw[blue, thick] (0,2) -- (-1,3);
		\draw[orange, ultra thick] (0,2) -- (1,3);
		\filldraw[black] (0,0) circle (5pt)  {};
		\filldraw[black] (0,2) circle (5pt)  {};
		\end{tikzpicture}$\quad\mapsto\quad$
		\begin{tikzpicture}[scale=0.25,baseline=0pt]
		\draw[blue, thick] (0,-1) -- (0,0);
		\draw[blue, thick] (0,0) -- (-3,3);
		\draw[blue, thick] (0,0) -- (0,2);
		\draw[blue, thick] (0,2) -- (-1,3);
		\draw[blue, thick] (0,2) -- (1,3);
		\filldraw[black] (0,0) circle (5pt)  {};
		\filldraw[black] (0,2) circle (5pt)  {};
		\end{tikzpicture},
		\begin{tikzpicture}[scale=0.25,baseline=0pt]
		\draw[blue, thick] (0,-1) -- (0,0);
		\draw[blue, thick] (0,0) -- (3,3);
		\draw[blue, thick] (0,0) -- (0,2);
		\draw[blue, thick] (0,2) -- (1,3);
		\filldraw[black] (0,0) circle (5pt)  {};
		\filldraw[black] (0,2) circle (5pt)  {};
		\end{tikzpicture}$\quad\mapsto\quad($
		\begin{tikzpicture}[scale=0.25,baseline=0pt]
		\draw[blue, thick] (0,-1) -- (0,0);
		\draw[blue, thick] (0,0) -- (-2,2);
		\draw[blue, thick] (0,0) -- (2,2);
		\draw[blue, thick] (1,1) -- (0,2);
		\filldraw[black] (0,0) circle (5pt)  {};
		\filldraw[black] (1,1) circle (5pt)  {};
		\end{tikzpicture},
		\begin{tikzpicture}[scale=0.25,baseline=0pt]
		\draw[blue, thick] (0,-1) -- (0,0);
		\draw[blue, thick] (0,0) -- (1,1);
		\draw[blue, thick] (0,0) -- (-1,1);
		\filldraw[black] (0,0) circle (5pt)  {};
		\end{tikzpicture}$)$.
	\end{center}
\end{Example}

Similarly, $k-1$ lightenings hitting a tree $t$ split it into $k$ trees, such process is denoted by $t\mapsto(t_1,\dots,t_k)$. Multiple lightenings may hit the same leaf, thus $(t_1,\dots,t_k)$ is determined by a \emph{weak composition} of $\deg(t)+k$ with $k$ parts i.e. a sequence of nonnegative integers $(a_1,a_2,\dots,a_k)$ such that $\sum_{i=1}^k a_i=\deg(t)+k$.

\begin{Example}\label{ex:allowable}
	$$\begin{tikzpicture}[scale=0.25,baseline=0pt]
	\draw[blue, thick] (0,-1) -- (0,0);
	\draw[blue, thick] (0,0) -- (7,7);
	\draw[blue, thick] (0,0) -- (-7,7);
	\draw[blue, thick] (-5,5) -- (-3,7);
	\draw[blue, thick] (-4,6) -- (-5,7);
	\draw[blue, thick] (0,0) -- (1,5);
	\draw[blue, thick] (1,5) -- (-1,7);
	\draw[blue, thick] (1,5) -- (1,7);
	\draw[blue, thick] (1,5) -- (3,7);
	\draw[blue, thick] (6,6) -- (5,7);
	\draw[orange, ultra thick] (0,0) -- (-5,5) -- (-4,6) -- (-5,7) -- (-5,9);
	\draw[red, ultra thick] (0,0) -- (6,6) -- (5,7) -- (5,9);
	\filldraw[black] (0,0) circle (5pt)  {};
	\filldraw[black] (1,5) circle (5pt)  {};
	\filldraw[black] (6,6) circle (5pt)  {};
	\filldraw[black] (-5,5) circle (5pt)  {};
	\filldraw[black] (-4,6) circle (5pt)  {};
	\end{tikzpicture}\quad\mapsto\quad (\begin{tikzpicture}[scale=0.25,baseline=-0.5pt]
	\draw[blue, thick] (0,-1) -- (0,0);
	\draw[blue, thick] (0,0) -- (1,1);
	\draw[blue, thick] (0,0) -- (-1,1);
	\filldraw[black] (0,0) circle (5pt)  {};
	\end{tikzpicture}, \begin{tikzpicture}[scale=0.25,baseline=-0.5pt]
	\draw[blue, thick] (0,-1) -- (0,0);
	\draw[blue, thick] (0,0) -- (5,5);
	\draw[blue, thick] (0,0) -- (-5,5);
	\draw[blue, thick] (-4,4) -- (-3,5);
	\draw[blue, thick] (0,0) -- (1,3);
	\draw[blue, thick] (1,3) -- (-1,5);
	\draw[blue, thick] (1,3) -- (1,5);
	\draw[blue, thick] (1,3) -- (3,5);
	\filldraw[black] (0,0) circle (5pt)  {};
	\filldraw[black] (1,3) circle (5pt)  {};
	\filldraw[black] (-4,4) circle (5pt)  {};
	\end{tikzpicture},\begin{tikzpicture}[scale=0.25,baseline=-0.5pt]
	\draw[blue, thick] (0,-1) -- (0,0);
	\draw[blue, thick] (0,0) -- (1,1);
	\draw[blue, thick] (0,0) -- (-1,1);
	\filldraw[black] (0,0) circle (5pt)  {};
	\end{tikzpicture}).$$
\end{Example}

We say $t\mapsto(t_1,\dots,t_k)$ is \textbf{allowable} if $\ideg(t)=\ideg(t_1)+\cdots+\ideg(t_k)$. The splitting in example \ref{ex:split} is not allowable while the one in example \ref{ex:allowable} is. Note that if $t$ is a binary tree, then all $t\mapsto(t_1,\dots,t_k)$ are allowable.

\begin{Definition} \label{def:graft-trees}
	Let $s$ and $t$ be two trees, we define two operations $/$ and $\backslash$ as follows
	\begin{itemize}
		\item $s/ t$ is the tree obtained by identifying the root of $s$ with the left-most leaf of $t$.
		\item $s\backslash t$ is the tree obtained by identifying the root of $t$ with the right-most leaf of $s$.
	\end{itemize}
\end{Definition}

\begin{Example}
	Let $s=\begin{tikzpicture}[scale=0.2,baseline=0pt]
	\draw[blue, thick] (0,-1) -- (0,0);
	\draw[blue, thick] (0,0) -- (2,2);
	\draw[blue, thick] (0,0) -- (-2,2);
	\draw[blue, thick] (-1,1) -- (0,2);
	\filldraw[black] (0,0) circle (5pt)  {};
	\filldraw[black] (-1,1) circle (5pt)  {};
	\end{tikzpicture}$ and $t=\begin{tikzpicture}[scale=0.2,baseline=0pt]
	\draw[red, thick] (0,-1) -- (0,0);
	\draw[red, thick] (0,0) -- (2,2);
	\draw[red, thick] (0,0) -- (-2,2);
	\draw[red, thick] (1,1) -- (0,2);
	\filldraw[black] (0,0) circle (5pt)  {};
	\filldraw[black] (1,1) circle (5pt)  {};
	\end{tikzpicture}$. Then
	
	$s/t=$\begin{tikzpicture}[scale=0.2,baseline=0pt]
	\draw[red, thick] (0,-1) -- (0,0);
	\draw[red, thick] (0,0) -- (4,4);
	\draw[red, thick] (3,3) -- (2,4);
	\draw[red, thick] (0,0) -- (-2,2);
	\draw[blue, thick] (-2,2) -- (-4,4);
	\draw[blue, thick] (-2,2) -- (0,4);
	\draw[blue, thick] (-3,3) -- (-2,4);
	\filldraw[black] (0,0) circle (5pt)  {};
	\filldraw[black] (3,3) circle (5pt)  {};
	\filldraw[black] (-2,2) circle (5pt)  {};
	\filldraw[black] (-3,3) circle (5pt)  {};
	\end{tikzpicture} and
	$s\backslash t=$\begin{tikzpicture}[scale=0.2,baseline=0pt]
	\draw[blue, thick] (0,-1) -- (0,0);
	\draw[blue, thick] (0,0) -- (-4,4);
	\draw[blue, thick] (-3,3) -- (-2,4);
	\draw[blue, thick] (0,0) -- (2,2);
	\draw[red, thick] (2,2) -- (4,4);
	\draw[red, thick] (2,2) -- (0,4);
	\draw[red, thick] (3,3) -- (2,4);
	\filldraw[black] (0,0) circle (5pt)  {};
	\filldraw[black] (-3,3) circle (5pt)  {};
	\filldraw[black] (3,3) circle (5pt)  {};
	\filldraw[black] (2,2) circle (5pt)  {};
	\end{tikzpicture}.
\end{Example}

It is easy to see that both the above operations are associative. Note also that, if $\deg s=i$, then ${}^i (s/ t)={}^i (s\backslash t)=s$ and $(s/ t)^i=(s\backslash t)^i=t$.

Let $s$ be a tree with $k$ leaves, and $t_1,\dots,t_k$ be $k$ trees, we define $s\shuffle(t_{1},\dots,t_{k})$ to be the tree obtained by identifying the root of $t_i$ with the $i$-th leaf of $s$ for all $i$.

\begin{Example}
	$$\begin{tikzpicture}[scale=0.25,baseline=0pt]
	\draw[blue, thick] (0,-1) -- (0,0);
	\draw[blue, thick] (0,0) -- (-2,2);
	\draw[blue, thick] (0,0) -- (0,2);
	\draw[blue, thick] (0,0) -- (2,2);
	\filldraw[black] (0,0) circle (5pt)  {};
	\end{tikzpicture}\shuffle (\begin{tikzpicture}[scale=0.25,baseline=0pt]
	\draw[orange, thick] (0,-1) -- (0,0);
	\draw[orange, thick] (0,0) -- (-2,2);
	\draw[orange, thick] (1,1) -- (0,2);
	\draw[orange, thick] (0,0) -- (2,2);
	\filldraw[black] (0,0) circle (5pt)  {};
	\filldraw[black] (1,1) circle (5pt)  {};
	\end{tikzpicture},~\begin{tikzpicture}[scale=0.25,baseline=0pt]
	\draw[ForestGreen, thick] (0,-1) -- (0,1);
	\end{tikzpicture}~,~\begin{tikzpicture}[scale=0.25,baseline=0pt]
	\draw[red, thick] (0,-1) -- (0,0);
	\draw[red, thick] (0,0) -- (-2,2);
	\draw[red, thick] (0,0) -- (0,2);
	\draw[red, thick] (0,0) -- (2,2);
	\filldraw[black] (0,0) circle (5pt)  {};
	\end{tikzpicture})=\begin{tikzpicture}[scale=0.2,baseline=0pt]
	\draw[blue, thick] (0,-1) -- (0,0);
	\draw[blue, thick] (0,0) -- (-4,4);
	\draw[blue, thick] (0,0) -- (0,5);
	\draw[blue, thick] (0,0) -- (4,4);
	\draw[orange, thick] (-4,4) -- (-6,6);
	\draw[orange, thick] (-4,4) -- (-2,6);
	\draw[orange, thick] (-3,5) -- (-4,6);
	\draw[ForestGreen, thick] (0,5) -- (0,6);
	\draw[red, thick] (4,4) -- (6,6);
	\draw[red, thick] (4,4) -- (4,6);
	\draw[red, thick] (4,4) -- (2,6);
	\filldraw[black] (0,0) circle (5pt)  {};
	\filldraw[black] (-4,4) circle (5pt)  {};
	\filldraw[black] (4,4) circle (5pt)  {};
	\filldraw[black] (-3,5) circle (5pt)  {};
	\end{tikzpicture}.$$
\end{Example}

We now define similar operations on labeled trees.

Let $f$ be a labeled tree and $f\mapsto(f_1,\dots,f_k)$ be allowable. Note that when the splitting is allowable, each internal node of $f$ exists in exactly one $f_i$ i.e. it does not exist or gets contracted in all other parts. Therefore, $\kappa_f$ induces a labeling on each $f_i$, namely $\kappa_{f_i}(v)=\kappa_f(v)$. By abuse of notation, such operation is denoted by $f\mapsto(f_1,\dots,f_k)$ where $f_i$ are labeled trees. 

Let $f$ be a labeled tree such that $\kappa_f$ is a bijection between its internal nodes and $\{1,\dots,\ideg(f)\}$. Assume $f\mapsto(f_1,f_2)$ is allowable with $\deg(f_1)=i$. We will use a notation where the degree of the split is specified and denote the standardization of $f_1$ by ${}^if$ and the standardization of $f_2$ by $f^i$, i.e. ${}^if=\std(f_1)$ and $f^i=\std(f_2)$.

\begin{Example}
	$$\begin{tikzpicture}[scale=0.25,baseline=0pt]
	\draw[blue, thick] (0,-1) -- (0,0);
	\draw[blue, thick] (0,0) -- (7,7);
	\draw[blue, thick] (0,0) -- (-7,7);
	\draw[blue, thick] (-5,5) -- (-3,7);
	\draw[blue, thick] (-4,6) -- (-5,7);
	\draw[blue, thick] (0,0) -- (1,5);
	\draw[blue, thick] (1,5) -- (-1,7);
	\draw[blue, thick] (1,5) -- (1,7);
	\draw[blue, thick] (1,5) -- (3,7);
	\draw[blue, thick] (6,6) -- (5,7);
	\draw[orange, ultra thick] (0,0) -- (-5,5) -- (-4,6) -- (-5,7) -- (-5,9);
	\draw[red, ultra thick] (0,0) -- (6,6) -- (5,7) -- (5,9);
	\filldraw[black] (0,0) circle (5pt)  {};
	\filldraw[black] (1,5) circle (5pt)  {};
	\filldraw[black] (6,6) circle (5pt)  {};
	\filldraw[black] (-5,5) circle (5pt)  {};
	\filldraw[black] (-4,6) circle (5pt)  {};
	\node at (-0.4,1)[font=\fontsize{7pt}{0}]{$1$};
	\node at (0.5,1)[font=\fontsize{7pt}{0}]{$1$};
	\node at (0.6,6)[font=\fontsize{7pt}{0}]{$3$};
	\node at (1.4,6)[font=\fontsize{7pt}{0}]{$3$};
	\node at (6,6.7)[font=\fontsize{7pt}{0}]{$2$};
	\node at (-4,6.7)[font=\fontsize{7pt}{0}]{$5$};
	\node at (-5,5.7)[font=\fontsize{7pt}{0}]{$4$};
	\end{tikzpicture}\mapsto\  (\begin{tikzpicture}[scale=0.25,baseline=-1pt]
	\draw[blue, thick] (0,-1) -- (0,0);
	\draw[blue, thick] (0,0) -- (1,1);
	\draw[blue, thick] (0,0) -- (-1,1);
	\filldraw[black] (0,0) circle (5pt)  {};
	\node at (0,0.7)[font=\fontsize{7pt}{0}]{$4$};
	\end{tikzpicture}, \begin{tikzpicture}[scale=0.25,baseline=-1pt]
	\draw[blue, thick] (0,-1) -- (0,0);
	\draw[blue, thick] (0,0) -- (5,5);
	\draw[blue, thick] (0,0) -- (-5,5);
	\draw[blue, thick] (-4,4) -- (-3,5);
	\draw[blue, thick] (0,0) -- (1,3);
	\draw[blue, thick] (1,3) -- (-1,5);
	\draw[blue, thick] (1,3) -- (1,5);
	\draw[blue, thick] (1,3) -- (3,5);
	\filldraw[black] (0,0) circle (5pt)  {};
	\filldraw[black] (1,3) circle (5pt)  {};
	\filldraw[black] (-4,4) circle (5pt)  {};
	\node at (-0.3,1)[font=\fontsize{7pt}{0}]{$1$};
	\node at (0.7,1)[font=\fontsize{7pt}{0}]{$1$};
	\node at (0.6,4)[font=\fontsize{7pt}{0}]{$3$};
	\node at (1.4,4)[font=\fontsize{7pt}{0}]{$3$};
	\node at (-4,4.7)[font=\fontsize{7pt}{0}]{$5$};
	\end{tikzpicture},\begin{tikzpicture}[scale=0.25,baseline=-1pt]
	\draw[blue, thick] (0,-1) -- (0,0);
	\draw[blue, thick] (0,0) -- (1,1);
	\draw[blue, thick] (0,0) -- (-1,1);
	\filldraw[black] (0,0) circle (5pt)  {};
	\node at (0,0.7)[font=\fontsize{7pt}{0}]{$2$};
	\end{tikzpicture}).$$
\end{Example}

\begin{Definition}\label{def:perm}
	Let $f$ and $g$ be labeled trees, we define two operations $/$ and $\backslash$ as follows
	\begin{itemize}
		\item $f/g$ is the labeled tree obtained by first shifting labels of $f$ up by $\ideg(g)$, and then identifying the root of $f$ with the left-most leaf of $g$.
		\item $f\backslash g$ is the labeled tree obtained by first shifting labels of $g$ up by $\ideg(f)$, and then identifying the root of $g$ with the right-most leaf of $f$.
	\end{itemize}
\end{Definition}

\begin{Example}
	Let $f=\begin{tikzpicture}[scale=0.2,baseline=0pt]
	\draw[blue, thick] (0,-1) -- (0,0);
	\draw[blue, thick] (0,0) -- (2,2);
	\draw[blue, thick] (0,0) -- (-2,2);
	\draw[blue, thick] (-1,1) -- (0,2);
	\filldraw[black] (0,0) circle (5pt)  {};
	\filldraw[black] (-1,1) circle (5pt)  {};
	\node at (0,0.7)[font=\fontsize{7pt}{0}]{$1$};
	\node at (-1,1.7)[font=\fontsize{7pt}{0}]{$2$};
	\end{tikzpicture}$ and $g=\begin{tikzpicture}[scale=0.2,baseline=0pt]
	\draw[red, thick] (0,-1) -- (0,0);
	\draw[red, thick] (0,0) -- (2,2);
	\draw[red, thick] (0,0) -- (-2,2);
	\draw[red, thick] (1,1) -- (0,2);
	\filldraw[black] (0,0) circle (5pt)  {};
	\filldraw[black] (1,1) circle (5pt)  {};
	\node at (0,0.7)[font=\fontsize{7pt}{0}]{$1$};
	\node at (1,1.7)[font=\fontsize{7pt}{0}]{$2$};
	\end{tikzpicture}$. Then
	
	$f/g=$\begin{tikzpicture}[scale=0.2,baseline=0pt]
	\draw[red, thick] (0,-1) -- (0,0);
	\draw[red, thick] (0,0) -- (4,4);
	\draw[red, thick] (3,3) -- (2,4);
	\draw[red, thick] (0,0) -- (-2,2);
	\draw[blue, thick] (-2,2) -- (-4,4);
	\draw[blue, thick] (-2,2) -- (0,4);
	\draw[blue, thick] (-3,3) -- (-2,4);
	\filldraw[black] (0,0) circle (5pt)  {};
	\filldraw[black] (3,3) circle (5pt)  {};
	\filldraw[black] (-2,2) circle (5pt)  {};
	\filldraw[black] (-3,3) circle (5pt)  {};
	\node at (0,0.7)[font=\fontsize{7pt}{0}]{$1$};
	\node at (3,3.7)[font=\fontsize{7pt}{0}]{$2$};
	\node at (-2,2.7)[font=\fontsize{7pt}{0}]{$3$};
	\node at (-3,3.7)[font=\fontsize{7pt}{0}]{$4$};
	\end{tikzpicture} and
	$f\backslash g=$\begin{tikzpicture}[scale=0.2,baseline=0pt]
	\draw[blue, thick] (0,-1) -- (0,0);
	\draw[blue, thick] (0,0) -- (-4,4);
	\draw[blue, thick] (-3,3) -- (-2,4);
	\draw[blue, thick] (0,0) -- (2,2);
	\draw[red, thick] (2,2) -- (4,4);
	\draw[red, thick] (2,2) -- (0,4);
	\draw[red, thick] (3,3) -- (2,4);
	\filldraw[black] (0,0) circle (5pt)  {};
	\filldraw[black] (-3,3) circle (5pt)  {};
	\filldraw[black] (3,3) circle (5pt)  {};
	\filldraw[black] (2,2) circle (5pt)  {};
	\node at (0,0.7)[font=\fontsize{7pt}{0}]{$1$};
	\node at (-3,3.7)[font=\fontsize{7pt}{0}]{$2$};
	\node at (2,2.7)[font=\fontsize{7pt}{0}]{$3$};
	\node at (3,3.7)[font=\fontsize{7pt}{0}]{$4$};
	\end{tikzpicture}.
\end{Example}

It is easy to see that both the above operations are associative. As for unlabeled trees, we again have that, if $\deg f=i$, then ${}^i (f/ g)={}^i (f\backslash g)=f$ and $(f/ g)^i=(f\backslash g)^i=g$.

Let $f$ be a labeled tree with $k$ leaves, and $g_1,\dots,g_k$ be $k$ labeled trees, we define $f\overleftarrow{\shuffle}(g_{1},\dots,g_{k})$ to be the labeled tree obtained by first shifting the labels of each $g_i$ up by $\ideg(f)$, and then identifying the root of $g_i$ with the $i$-th leaf of $f$ for all $i$.

\begin{Example}
	$$\begin{tikzpicture}[scale=0.25,baseline=0pt]
	\draw[blue, thick] (0,-1) -- (0,0);
	\draw[blue, thick] (0,0) -- (-2,2);
	\draw[blue, thick] (0,0) -- (0,2);
	\draw[blue, thick] (0,0) -- (2,2);
	\filldraw[black] (0,0) circle (5pt)  {};
	\node at (-0.4,1)[font=\fontsize{7pt}{0}]{$1$};
	\node at (0.4,1)[font=\fontsize{7pt}{0}]{$1$};
	\end{tikzpicture}\overleftarrow{\shuffle} (\begin{tikzpicture}[scale=0.25,baseline=0pt]
	\draw[orange, thick] (0,-1) -- (0,0);
	\draw[orange, thick] (0,0) -- (-2,2);
	\draw[orange, thick] (1,1) -- (0,2);
	\draw[orange, thick] (0,0) -- (2,2);
	\filldraw[black] (0,0) circle (5pt)  {};
	\filldraw[black] (1,1) circle (5pt)  {};
	\node at (0,0.7)[font=\fontsize{7pt}{0}]{$1$};
	\node at (1,1.7)[font=\fontsize{7pt}{0}]{$3$};
	\end{tikzpicture},~\begin{tikzpicture}[scale=0.25,baseline=0pt]
	\draw[ForestGreen, thick] (0,-1) -- (0,1);
	\end{tikzpicture}~,~\begin{tikzpicture}[scale=0.25,baseline=0pt]
	\draw[red, thick] (0,-1) -- (0,0);
	\draw[red, thick] (0,0) -- (-2,2);
	\draw[red, thick] (0,0) -- (0,2);
	\draw[red, thick] (0,0) -- (2,2);
	\filldraw[black] (0,0) circle (5pt)  {};
	\node at (-0.4,1)[font=\fontsize{7pt}{0}]{$2$};
	\node at (0.4,1)[font=\fontsize{7pt}{0}]{$2$};
	\end{tikzpicture})=\begin{tikzpicture}[scale=0.2,baseline=0pt]
	\draw[blue, thick] (0,-1) -- (0,0);
	\draw[blue, thick] (0,0) -- (-4,4);
	\draw[blue, thick] (0,0) -- (0,5);
	\draw[blue, thick] (0,0) -- (4,4);
	\draw[orange, thick] (-4,4) -- (-6,6);
	\draw[orange, thick] (-4,4) -- (-2,6);
	\draw[orange, thick] (-3,5) -- (-4,6);
	\draw[ForestGreen, thick] (0,5) -- (0,6);
	\draw[red, thick] (4,4) -- (6,6);
	\draw[red, thick] (4,4) -- (4,6);
	\draw[red, thick] (4,4) -- (2,6);
	\filldraw[black] (0,0) circle (5pt)  {};
	\filldraw[black] (-4,4) circle (5pt)  {};
	\filldraw[black] (4,4) circle (5pt)  {};
	\filldraw[black] (-3,5) circle (5pt)  {};
	\node at (-0.4,1)[font=\fontsize{7pt}{0}]{$1$};
	\node at (0.4,1)[font=\fontsize{7pt}{0}]{$1$};
	\node at (-4,4.7)[font=\fontsize{7pt}{0}]{$2$};
	\node at (-3,5.7)[font=\fontsize{7pt}{0}]{$4$};
	\node at (3.6,5)[font=\fontsize{7pt}{0}]{$3$};
	\node at (4.4,5)[font=\fontsize{7pt}{0}]{$3$};
	\end{tikzpicture}.$$
\end{Example}

Consider the map $\pi:\LPT\to \PT$ by forgetting the node labels. It is easy to see that $\pi$ preserves the operations we defined above.

\begin{Proposition}\label{prop:pi-property}The map $\pi$ satisfies the following properties:
\begin{enumerate}
\item ${}^{i}(\pi f)=\pi\left({}^{i}f\right)$, $(\pi f)^{i}=\pi\left(f^{i}\right)$;
\item $\pi(g/h)=(\pi g)/(\pi h)$;
\item $\pi(g\backslash h)=(\pi g)\backslash (\pi h)$;
\item $\pi(f\overleftarrow{\shuffle}(g_{1},\dots,g_{k}))=(\pi f)\shuffle(\pi g_{1},\dots,\pi g_{k})$.
\end{enumerate}
\end{Proposition}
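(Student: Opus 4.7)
The plan is that this proposition is essentially a bookkeeping verification: in each of the four items, the operation on labeled trees is defined by first performing some purely label-theoretic step (standardizing, or shifting labels by a constant) and then performing the analogous structural operation on the underlying planar trees. Since $\pi$ forgets labels, any label-theoretic step becomes invisible after applying $\pi$. So for each item I would write out the definition of the left-hand side, identify the intermediate labeled tree produced before labels are discarded, and observe that its underlying planar tree is the same structural object that defines the right-hand side.

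For item (1), the labeled operations ${}^{i}f$ and $f^{i}$ are the standardizations of the two labeled pieces obtained by an allowable deconcatenation of $f$ at the $(i+1)$-th leaf. The underlying planar tree of each piece is obtained by cutting $\pi f$ along the lightning path at its $(i+1)$-th leaf and contracting unnecessary nodes, which is precisely the definition of ${}^{i}(\pi f)$ and $(\pi f)^{i}$. Since standardization only renumbers labels and does not alter the underlying tree, applying $\pi$ to ${}^{i}f$ and $f^{i}$ recovers ${}^{i}(\pi f)$ and $(\pi f)^{i}$. For items (2) and (3), the labeled operations $g/h$ and $g\backslash h$ consist of (a) shifting the labels of one factor by $\ideg$ of the other, then (b) identifying the root of one with the appropriate extreme leaf (leftmost or rightmost) of the other. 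Step (a) is invisible to $\pi$, and step (b) is literally the definition of the unlabeled $/$ and $\backslash$ applied to $\pi g$ and $\pi h$. For item (4), the definition of $f\overleftarrow{\shuffle}(g_{1},\dots,g_{k})$ is the same: first shift the labels of each $g_{i}$ up by $\ideg(f)$, then identify the root of $g_{i}$ with the $i$-th leaf of $f$. The shifts disappear under $\pi$, and what remains is exactly $\pi f\shuffle(\pi g_{1},\dots,\pi g_{k})$.

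There is no real obstacle here; the content of the proposition is just the naturality of these four operations with respect to the forgetful map $\pi$. The only point to be careful about is that in item (1) the left-hand side ${}^{i}(\pi f)$ is defined for any deconcatenation of $\pi f$ at the $(i+1)$-th leaf, whereas the right-hand side requires the splitting of $f$ to be allowable so that ${}^{i}f$ and $f^{i}$ are well-defined as labeled trees; the proposition should therefore be read (and proved) under the allowability hypothesis whenever labeled deconcatenations appear. Given this, the proof is a four-part unwinding of the definitions from Section \ref{sec:notations}, with no computation beyond observing that a label-shift followed by $\pi$ equals $\pi$ followed by nothing.
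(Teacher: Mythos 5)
Your proposal is correct and matches the paper's (implicit) argument: the paper gives no proof, stating only that ``it is easy to see that $\pi$ preserves the operations,'' and your four-part unwinding — label-shifts and standardization are invisible to $\pi$, while the structural step in each definition is literally the unlabeled operation — is exactly the intended justification. Your remark that item (1) should be read under the allowability hypothesis (since the induced labeling on the pieces is only defined for allowable splittings) is a fair and correct clarification of the statement.
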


\subsection{Binary trees and the Tamari order}\label{sec:tamari}

\begin{Definition}
	A (rooted planar) \textbf{binary tree} is a tree where each node has either $0$ or $2$ children, the left child and the right child. The set of planar binary trees is denoted by $\PBT$. Let $t$ be a binary tree of degree $n$, we define its \textbf{descent} set to be $\Des(t)=\{1\leq i\leq n-1: \text{the }(i+1)\text{-th leaf of }t\text{ is a right child}\}$.
\end{Definition}

\begin{Example}
	{Binary trees with $3$ internal nodes.
		\begin{center}
		\begin{tikzpicture}[scale=0.2,baseline=0pt]
			\draw[blue, thick] (0,-1) -- (0,0);
			\draw[blue, thick] (0,0) -- (3,3);
			\draw[blue, thick] (0,0) -- (-3,3);
			\draw[blue, thick] (1,1) -- (-1,3);
			\draw[blue, thick] (2,2) -- (1,3);
			\filldraw[black] (0,0) circle (5pt)  {};
			\filldraw[black] (1,1) circle (5pt)  {};
			\filldraw[black] (2,2) circle (5pt)  {};
	
			\draw[blue, thick] (10,-1) -- (10,0);
			\draw[blue, thick] (10,0) -- (13,3);
			\draw[blue, thick] (10,0) -- (7,3);
			\draw[blue, thick] (11,1) -- (9,3);
			\draw[blue, thick] (10,2) -- (11,3);
			\filldraw[black] (10,0) circle (5pt)  {};
			\filldraw[black] (11,1) circle (5pt)  {};
			\filldraw[black] (10,2) circle (5pt)  {};
		
			\draw[blue, thick] (20,-1) -- (20,0);
			\draw[blue, thick] (20,0) -- (23,3);
			\draw[blue, thick] (20,0) -- (17,3);
			\draw[blue, thick] (22,2) -- (21,3);
			\draw[blue, thick] (18,2) -- (19,3);
			\filldraw[black] (20,0) circle (5pt)  {};
			\filldraw[black] (22,2) circle (5pt)  {};
			\filldraw[black] (18,2) circle (5pt)  {};
		
			\draw[blue, thick] (30,-1) -- (30,0);
			\draw[blue, thick] (30,0) -- (33,3);
			\draw[blue, thick] (30,0) -- (27,3);
			\draw[blue, thick] (29,1) -- (31,3);
			\draw[blue, thick] (30,2) -- (29,3);
			\filldraw[black] (30,0) circle (5pt)  {};
			\filldraw[black] (29,1) circle (5pt)  {};
			\filldraw[black] (30,2) circle (5pt)  {};
		
			\draw[blue, thick] (40,-1) -- (40,0);
			\draw[blue, thick] (40,0) -- (37,3);
			\draw[blue, thick] (40,0) -- (43,3);
			\draw[blue, thick] (39,1) -- (41,3);
			\draw[blue, thick] (38,2) -- (39,3);
			\filldraw[black] (40,0) circle (5pt)  {};
			\filldraw[black] (39,1) circle (5pt)  {};
			\filldraw[black] (38,2) circle (5pt)  {};
		\end{tikzpicture}
	\end{center}
	Their descent sets are $\emptyset, \{2\}, \{1\}, \{2\}, \{1,2\}$ respectively.}
\end{Example}

A left rotation at node $x$ is the following operation, where $y$ is the right child of $x$, and $A,B,C$ are branches.

\begin{center}
\begin{tikzpicture}[scale=0.8]
	\node at (0,0) {\begin{tikzpicture}
		\node at (0,0) {$x$};
		\node at (1.5,1.5) {$y$};
		\node at (-1.5,1.5) {$A$};
		\node at (0,3) {$B$};
		\node at (3,3) {$C$};
		
		\draw[ForestGreen,ultra thick] (0.3,0.3) -- (1.2,1.2);
		\draw[ForestGreen,ultra thick] (-0.2,0.3) -- (-1.2,1.2);
		\draw[ForestGreen,ultra thick] (1.8,1.8) -- (2.7,2.7);
		\draw[ForestGreen,ultra thick] (1.2,1.8) -- (0.3,2.7);
		\end{tikzpicture}};
	\node at (4,0) {$\mapsto$};
	\node at (8,0) {\begin{tikzpicture}
		\node at (0,0) {$x$};
		\node at (-1.5,1.5) {$y$};
		\node at (-3,3) {$A$};
		\node at (0,3) {$B$};
		\node at (1.5,1.5) {$C$};
		
		\draw[ForestGreen,very thick] (0.3,0.3) -- (1.2,1.2);
		\draw[ForestGreen,very thick] (-0.3,0.3) -- (-1.2,1.2);
		\draw[ForestGreen,very thick] (-1.8,1.8) -- (-2.7,2.7);
		\draw[ForestGreen,very thick] (-1.2,1.8) -- (-0.3,2.7);
		\end{tikzpicture}};
\end{tikzpicture}
\end{center}

\begin{Definition}
The \textit{Tamari order}, $\leq_{T}$, is a partial order on PBT whose covering relations are left rotations. (see for example Figure~\ref{fig:tamari34})
\end{Definition}

\begin{Remark}
	In the literature, trees are sometimes drawn downwards. Some people use the dual poset as the Tamari poset, depending on reflecting or rotating the tree upwards. Our present convention is the dual of the Tamari order used in \cite{AS06}. 
\end{Remark}

\begin{figure}
	\begin{center}
		\begin{tikzpicture}
		\node at (0,0) {\begin{tikzpicture}[scale=0.25,baseline=0pt]
			\node at (0,0) {\begin{tikzpicture}[scale=0.4]
				\draw[ForestGreen, ultra thick] (-3,6.5) -- (-3,8.5);
				\draw[ForestGreen, ultra thick] (1,1.5) -- (2.7,6);
				\draw[ForestGreen, ultra thick] (1,14) -- (2.7,9.5);
				\draw[ForestGreen, ultra thick] (-1,14) -- (-2.7,12);
				\draw[ForestGreen, ultra thick] (-1,1.5) -- (-2.7,3.5);
				\node () at (0,0){\begin{tikzpicture}[scale=0.2,baseline=0pt]
					\draw[blue, thick] (0,-1) -- (0,0);
					\draw[blue, thick] (0,0) -- (3,3);
					\draw[blue, thick] (2,2) -- (1,3);
					\draw[blue, thick] (1,1) -- (-1,3);
					\draw[blue, thick] (0,0) -- (-3,3);
					\filldraw[black] (0,0) circle (5pt)  {};
					\filldraw[black] (1,1) circle (5pt)  {};
					\filldraw[black] (2,2) circle (5pt)  {};\end{tikzpicture}};
				\node () at (-3,5){\begin{tikzpicture}[scale=0.25,baseline=0pt]
					\draw[blue, thick] (0,-1) -- (0,0);
					\draw[blue, thick] (0,0) -- (3,3);
					\draw[blue, thick] (0,2) -- (1,3);
					\draw[blue, thick] (1,1) -- (-1,3);
					\draw[blue, thick] (0,0) -- (-3,3);
					\filldraw[black] (0,0) circle (5pt)  {};
					\filldraw[black] (1,1) circle (5pt)  {};
					\filldraw[black] (0,2) circle (5pt)  {};\end{tikzpicture}};
				\node () at (-3,10){\begin{tikzpicture}[scale=0.25,baseline=0pt]
					\draw[blue, thick] (0,-1) -- (0,0);
					\draw[blue, thick] (0,0) -- (3,3);
					\draw[blue, thick] (-1,1) -- (1,3);
					\draw[blue, thick] (0,2) -- (-1,3);
					\draw[blue, thick] (0,0) -- (-3,3);
					\filldraw[black] (0,0) circle (5pt)  {};
					\filldraw[black] (-1,1) circle (5pt)  {};
					\filldraw[black] (0,2) circle (5pt)  {};\end{tikzpicture}};
				\node () at (3,7.5){\begin{tikzpicture}[scale=0.25,baseline=0pt]
					\draw[blue, thick] (0,-1) -- (0,0);
					\draw[blue, thick] (0,0) -- (3,3);
					\draw[blue, thick] (2,2) -- (1,3);
					\draw[blue, thick] (-2,2) -- (-1,3);
					\draw[blue, thick] (0,0) -- (-3,3);
					\filldraw[black] (0,0) circle (5pt)  {};
					\filldraw[black] (2,2) circle (5pt)  {};
					\filldraw[black] (-2,2) circle (5pt)  {};\end{tikzpicture}};
				\node () at (0,15){\begin{tikzpicture}[scale=0.25,baseline=0pt]
					\draw[blue, thick] (0,-1) -- (0,0);
					\draw[blue, thick] (0,0) -- (3,3);
					\draw[blue, thick] (-1,1) -- (1,3);
					\draw[blue, thick] (-2,2) -- (-1,3);
					\draw[blue, thick] (0,0) -- (-3,3);
					\filldraw[black] (0,0) circle (5pt)  {};
					\filldraw[black] (-1,1) circle (5pt)  {};
					\filldraw[black] (-2,2) circle (5pt)  {};\end{tikzpicture}};
		\end{tikzpicture}};
			\end{tikzpicture}};
		\node at (8,0) {
		\begin{tikzpicture}[scale=0.25,baseline=0pt]
		\draw[ForestGreen, ultra thick] (-2.2,1.5) -- (-8.7,5.4);
		\draw[ForestGreen, ultra thick] (-10,8) -- (-10,10);
		\draw[ForestGreen, ultra thick] (-10,14) -- (-10,16);
		\draw[ForestGreen, ultra thick] (2.2,1.5) -- (13.7,8.4);
		\draw[ForestGreen, ultra thick] (15,11) -- (15,13);
		\draw[ForestGreen, ultra thick] (15,17) -- (15,19);
		\draw[ForestGreen, ultra thick] (-7.8,19.5) -- (3.7,26.4);
		\draw[ForestGreen, ultra thick] (-2.2,7.5) -- (-8.7,11.4);
		\draw[ForestGreen, ultra thick] (-7.2,16.5) -- (-8.7,17.4);
		\draw[ForestGreen, ultra thick] (2.8,10.5) -- (-3.7,14.4);
		\draw[ForestGreen, ultra thick] (7.8,13.5) -- (1.3,17.4);
		\draw[ForestGreen, ultra thick] (12.8,16.5) -- (6.3,20.4);
		\draw[ForestGreen, ultra thick] (12.8,22.5) -- (6.3,26.4);
		\draw[ForestGreen, ultra thick] (12.8,10.5) -- (11.3,11.4);
		\draw[ForestGreen, ultra thick] (0,2) -- (0,4);
		\draw[ForestGreen, ultra thick] (5,23) -- (5,25);
		\draw[ForestGreen, ultra thick] (2.2,7.5) -- (3.7,8.4);
		\draw[ForestGreen, ultra thick] (7.2,10.5) -- (8.7,11.4);
		\draw[ForestGreen, ultra thick] (-2.8,16.5) -- (-1.3,17.4);
		\draw[ForestGreen, ultra thick] (2.2,19.5) -- (3.7,20.4);
		\draw[ForestGreen, ultra thick] (-7.8,7.5) -- (-5.8,8.7);
		\draw[ForestGreen, ultra thick] (10.7,18.6) -- (13.7,20.4);
		\draw[ForestGreen, ultra thick] (-4.3,9.6) -- (-0.8,11.6);
		\draw[ForestGreen, ultra thick] (0.7,12.6) -- (4.2,14.6);
		\draw[ForestGreen, ultra thick] (5.7,15.6) -- (9.2,17.6);
		\node () at (0,0){\begin{tikzpicture}[scale=0.1]
			\draw[blue, thick] (0,-1) -- (0,0);
			\draw[blue, thick] (0,0) -- (4,4);
			\draw[blue, thick] (0,0) -- (-4,4);
			\draw[blue, thick] (1,1) -- (-2,4);
			\draw[blue, thick] (2,2) -- (0,4);
			\draw[blue, thick] (3,3) -- (2,4);
			\filldraw[black] (0,0) circle (5pt)  {};
			\filldraw[black] (1,1) circle (5pt)  {};
			\filldraw[black] (2,2) circle (5pt)  {};
			\filldraw[black] (3,3) circle (5pt)  {};\end{tikzpicture}};
		\node () at (-10,6){\begin{tikzpicture}[scale=0.1]
			\draw[blue, thick] (0,-1) -- (0,0);
			\draw[blue, thick] (0,0) -- (4,4);
			\draw[blue, thick] (0,0) -- (-4,4);
			\draw[blue, thick] (-3,3) -- (-2,4);
			\draw[blue, thick] (2,2) -- (0,4);
			\draw[blue, thick] (3,3) -- (2,4);
			\filldraw[black] (0,0) circle (5pt)  {};
			\filldraw[black] (-3,3) circle (5pt)  {};
			\filldraw[black] (2,2) circle (5pt)  {};
			\filldraw[black] (3,3) circle (5pt)  {};\end{tikzpicture}};
		\node () at (0,6){\begin{tikzpicture}[scale=0.1]
			\draw[blue, thick] (0,-1) -- (0,0);
			\draw[blue, thick] (0,0) -- (4,4);
			\draw[blue, thick] (0,0) -- (-4,4);
			\draw[blue, thick] (1,1) -- (-2,4);
			\draw[blue, thick] (2,2) -- (0,4);
			\draw[blue, thick] (1,3) -- (2,4);
			\filldraw[black] (0,0) circle (5pt)  {};
			\filldraw[black] (1,1) circle (5pt)  {};
			\filldraw[black] (2,2) circle (5pt)  {};
			\filldraw[black] (1,3) circle (5pt)  {};\end{tikzpicture}};
		\node () at (-10,12){\begin{tikzpicture}[scale=0.1]
			\draw[blue, thick] (0,-1) -- (0,0);
			\draw[blue, thick] (0,0) -- (4,4);
			\draw[blue, thick] (0,0) -- (-4,4);
			\draw[blue, thick] (-3,3) -- (-2,4);
			\draw[blue, thick] (2,2) -- (0,4);
			\draw[blue, thick] (1,3) -- (2,4);
			\filldraw[black] (0,0) circle (5pt)  {};
			\filldraw[black] (-3,3) circle (5pt)  {};
			\filldraw[black] (2,2) circle (5pt)  {};
			\filldraw[black] (1,3) circle (5pt)  {};\end{tikzpicture}};
		\node () at (-10,18){\begin{tikzpicture}[scale=0.1]
			\draw[blue, thick] (0,-1) -- (0,0);
			\draw[blue, thick] (0,0) -- (4,4);
			\draw[blue, thick] (0,0) -- (-4,4);
			\draw[blue, thick] (-3,3) -- (-2,4);
			\draw[blue, thick] (-1,1) -- (2,4);
			\draw[blue, thick] (1,3) -- (0,4);
			\filldraw[black] (0,0) circle (5pt)  {};
			\filldraw[black] (-3,3) circle (5pt)  {};
			\filldraw[black] (-1,1) circle (5pt)  {};
			\filldraw[black] (1,3) circle (5pt)  {};\end{tikzpicture}};
		\node () at (5,9){\begin{tikzpicture}[scale=0.1]
			\draw[blue, thick] (0,-1) -- (0,0);
			\draw[blue, thick] (0,0) -- (4,4);
			\draw[blue, thick] (0,0) -- (-4,4);
			\draw[blue, thick] (1,1) -- (-2,4);
			\draw[blue, thick] (0,2) -- (2,4);
			\draw[blue, thick] (1,3) -- (0,4);
			\filldraw[black] (0,0) circle (5pt)  {};
			\filldraw[black] (1,1) circle (5pt)  {};
			\filldraw[black] (0,2) circle (5pt)  {};
			\filldraw[black] (1,3) circle (5pt)  {};\end{tikzpicture}};
		\node () at (-5,15){\begin{tikzpicture}[scale=0.1]
			\draw[blue, thick] (0,-1) -- (0,0);
			\draw[blue, thick] (0,0) -- (4,4);
			\draw[blue, thick] (0,0) -- (-4,4);
			\draw[blue, thick] (-1,1) -- (2,4);
			\draw[blue, thick] (0,2) -- (-2,4);
			\draw[blue, thick] (1,3) -- (0,4);
			\filldraw[black] (0,0) circle (5pt)  {};
			\filldraw[black] (-1,1) circle (5pt)  {};
			\filldraw[black] (0,2) circle (5pt)  {};
			\filldraw[black] (1,3) circle (5pt)  {};\end{tikzpicture}};
		\node () at (0,18){\begin{tikzpicture}[scale=0.1]
			\draw[blue, thick] (0,-1) -- (0,0);
			\draw[blue, thick] (0,0) -- (4,4);
			\draw[blue, thick] (0,0) -- (-4,4);
			\draw[blue, thick] (-1,1) -- (2,4);
			\draw[blue, thick] (0,2) -- (-2,4);
			\draw[blue, thick] (-1,3) -- (0,4);
			\filldraw[black] (0,0) circle (5pt)  {};
			\filldraw[black] (-1,1) circle (5pt)  {};
			\filldraw[black] (0,2) circle (5pt)  {};
			\filldraw[black] (-1,3) circle (5pt)  {};\end{tikzpicture}};
		\node () at (10,12){\begin{tikzpicture}[scale=0.1]
			\draw[blue, thick] (0,-1) -- (0,0);
			\draw[blue, thick] (0,0) -- (4,4);
			\draw[blue, thick] (0,0) -- (-4,4);
			\draw[blue, thick] (1,1) -- (-2,4);
			\draw[blue, thick] (0,2) -- (2,4);
			\draw[blue, thick] (-1,3) -- (0,4);
			\filldraw[black] (0,0) circle (5pt)  {};
			\filldraw[black] (1,1) circle (5pt)  {};
			\filldraw[black] (0,2) circle (5pt)  {};
			\filldraw[black] (-1,3) circle (5pt)  {};\end{tikzpicture}};
		\node () at (15,9){\begin{tikzpicture}[scale=0.1]
			\draw[blue, thick] (0,-1) -- (0,0);
			\draw[blue, thick] (0,0) -- (4,4);
			\draw[blue, thick] (0,0) -- (-4,4);
			\draw[blue, thick] (1,1) -- (-2,4);
			\draw[blue, thick] (-1,3) -- (0,4);
			\draw[blue, thick] (3,3) -- (2,4);
			\filldraw[black] (0,0) circle (5pt)  {};
			\filldraw[black] (1,1) circle (5pt)  {};
			\filldraw[black] (-1,3) circle (5pt)  {};
			\filldraw[black] (3,3) circle (5pt)  {};\end{tikzpicture}};
		\node () at (15,15){\begin{tikzpicture}[scale=0.1]
			\draw[blue, thick] (0,-1) -- (0,0);
			\draw[blue, thick] (0,0) -- (4,4);
			\draw[blue, thick] (0,0) -- (-4,4);
			\draw[blue, thick] (-2,2) -- (0,4);
			\draw[blue, thick] (-1,3) -- (-2,4);
			\draw[blue, thick] (3,3) -- (2,4);
			\filldraw[black] (0,0) circle (5pt)  {};
			\filldraw[black] (-2,2) circle (5pt)  {};
			\filldraw[black] (-1,3) circle (5pt)  {};
			\filldraw[black] (3,3) circle (5pt)  {};\end{tikzpicture}};
		\node () at (15,21){\begin{tikzpicture}[scale=0.1]
			\draw[blue, thick] (0,-1) -- (0,0);
			\draw[blue, thick] (0,0) -- (4,4);
			\draw[blue, thick] (0,0) -- (-4,4);
			\draw[blue, thick] (-2,2) -- (0,4);
			\draw[blue, thick] (-3,3) -- (-2,4);
			\draw[blue, thick] (3,3) -- (2,4);
			\filldraw[black] (0,0) circle (5pt)  {};
			\filldraw[black] (-2,2) circle (5pt)  {};
			\filldraw[black] (-3,3) circle (5pt)  {};
			\filldraw[black] (3,3) circle (5pt)  {};\end{tikzpicture}};
		\node () at (5,21){\begin{tikzpicture}[scale=0.1]
			\draw[blue, thick] (0,-1) -- (0,0);
			\draw[blue, thick] (0,0) -- (4,4);
			\draw[blue, thick] (0,0) -- (-4,4);
			\draw[blue, thick] (-2,2) -- (0,4);
			\draw[blue, thick] (-1,3) -- (-2,4);
			\draw[blue, thick] (-1,1) -- (2,4);
			\filldraw[black] (0,0) circle (5pt)  {};
			\filldraw[black] (-2,2) circle (5pt)  {};
			\filldraw[black] (-1,3) circle (5pt)  {};
			\filldraw[black] (-1,1) circle (5pt)  {};\end{tikzpicture}};
		\node () at (5,27){\begin{tikzpicture}[scale=0.1]
			\draw[blue, thick] (0,-1) -- (0,0);
			\draw[blue, thick] (0,0) -- (4,4);
			\draw[blue, thick] (0,0) -- (-4,4);
			\draw[blue, thick] (-2,2) -- (0,4);
			\draw[blue, thick] (-3,3) -- (-2,4);
			\draw[blue, thick] (-1,1) -- (2,4);
			\filldraw[black] (0,0) circle (5pt)  {};
			\filldraw[black] (-2,2) circle (5pt)  {};
			\filldraw[black] (-3,3) circle (5pt)  {};
			\filldraw[black] (-1,1) circle (5pt)  {};\end{tikzpicture}};
	\end{tikzpicture}};
	\end{tikzpicture}
\end{center}
   \caption{\sl The Tamari order on binary trees of degree $3$ and $4$.}\label{fig:tamari34}
\end{figure}
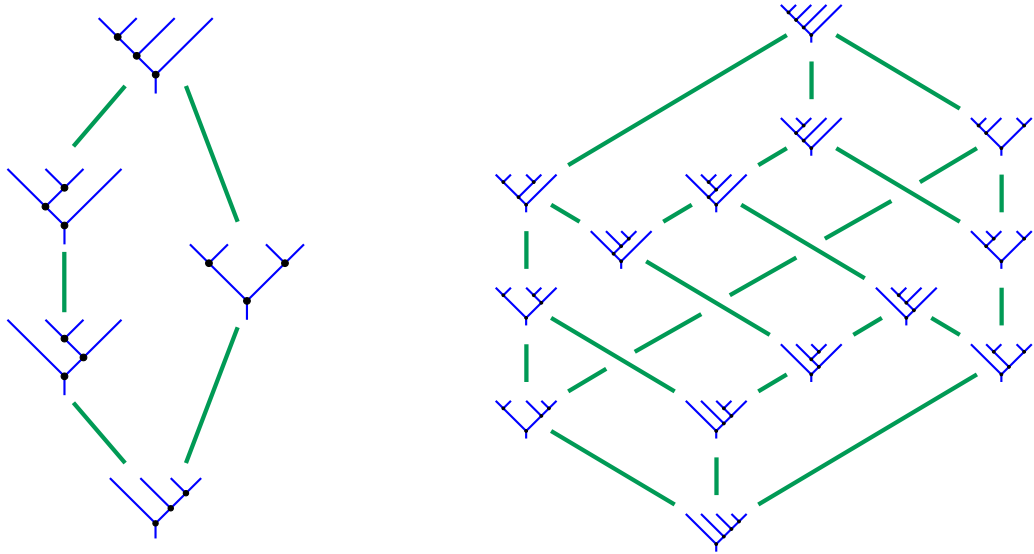

\subsection{Hopf algebra of binary trees}

In this section, we recall the Loday-Ronco Hopf algebra of binary trees \cite{LR98,AS06}.

\begin{Definition}
	As a graded vector space, (the dual of) the Loday-Ronco Hopf algebra, $\YSym$ is  $\displaystyle\bigoplus_{n\geq 0}\Bbbk$-$\PBT_n$ where $\PBT_n$ is the set of binary trees with $n+1$ leaves. The fundamental basis of $\YSym$ is $\{F_t:t\in\PBT\}$.
\end{Definition}

The comultiplication is by deconcatenation
$$\Delta(F_t)=\sum_{t\mapsto(t_1,t_2)}F_{t_1}\otimes F_{t_2}=\sum_{i=0}^{\deg f}F_{{}^it}\otimes F_{t^i}.$$
In what follows, let $\Delta_+$ be defined by $\Delta= Id\otimes 1 + 1\otimes Id \ +\  \Delta_+$.

\begin{Example}
	Consider the binary tree $t=$
	\begin{tikzpicture}[scale=0.2,baseline=5pt]
		\draw[blue, thick] (0,-1) -- (0,0);
		\draw[blue, thick] (0,0) -- (5,5);
		\draw[blue, thick] (0,0) -- (-5,5);
		\draw[blue, thick] (2,2) -- (-1,5);
		\draw[blue, thick] (1,3) -- (3,5);
		\draw[blue, thick] (2,4) -- (1,5);
		\draw[blue, thick] (-4,4) -- (-3,5);
		\filldraw[black] (0,0) circle (5pt)  {};
		\filldraw[black] (2,2) circle (5pt)  {};
		\filldraw[black] (1,3) circle (5pt)  {};
		\filldraw[black] (2,4) circle (5pt)  {};
		\filldraw[black] (-4,4) circle (5pt)  {};
	\end{tikzpicture},
	$$\Delta_+(F_t)=
	F_{\begin{tikzpicture}[scale=0.2,baseline=0pt]
		\draw[blue, thick] (0,-1) -- (0,0);
		\draw[blue, thick] (0,0) -- (1,1);
		\draw[blue, thick] (0,0) -- (-1,1);
		\filldraw[black] (0,0) circle (5pt)  {};
	\end{tikzpicture}}\otimes
	F_{\begin{tikzpicture}[scale=0.2,baseline=0pt]
		\draw[blue, thick] (0,-1) -- (0,0);
		\draw[blue, thick] (0,0) -- (4,4);
		\draw[blue, thick] (0,0) -- (-4,4);
		\draw[blue, thick] (1,1) -- (-2,4);
		\draw[blue, thick] (0,2) -- (2,4);
		\draw[blue, thick] (1,3) -- (0,4);
		\filldraw[black] (0,0) circle (5pt)  {};
		\filldraw[black] (1,1) circle (5pt)  {};
		\filldraw[black] (0,2) circle (5pt)  {};
		\filldraw[black] (1,3) circle (5pt)  {};
	\end{tikzpicture}}+
	F_{\begin{tikzpicture}[scale=0.2,baseline=0pt]
		\draw[blue, thick] (0,-1) -- (0,0);
		\draw[blue, thick] (0,0) -- (2,2);
		\draw[blue, thick] (0,0) -- (-2,2);
		\draw[blue, thick] (-1,1) -- (0,2);
		\filldraw[black] (0,0) circle (5pt)  {};
		\filldraw[black] (-1,1) circle (5pt)  {};
	\end{tikzpicture}}\otimes
	F_{\begin{tikzpicture}[scale=0.2,baseline=0pt]
		\draw[blue, thick] (0,-1) -- (0,0);
		\draw[blue, thick] (0,0) -- (3,3);
		\draw[blue, thick] (0,0) -- (-3,3);
		\draw[blue, thick] (-1,1) -- (1,3);
		\draw[blue, thick] (0,2) -- (-1,3);
		\filldraw[black] (0,0) circle (5pt)  {};
		\filldraw[black] (-1,1) circle (5pt)  {};
		\filldraw[black] (0,2) circle (5pt)  {};
	\end{tikzpicture}}+
	F_{\begin{tikzpicture}[scale=0.2,baseline=0pt]
		\draw[blue, thick] (0,-1) -- (0,0);
		\draw[blue, thick] (0,0) -- (3,3);
		\draw[blue, thick] (0,0) -- (-3,3);
		\draw[blue, thick] (2,2) -- (1,3);
		\draw[blue, thick] (-2,2) -- (-1,3);
		\filldraw[black] (0,0) circle (5pt)  {};
		\filldraw[black] (2,2) circle (5pt)  {};
		\filldraw[black] (-2,2) circle (5pt)  {};
	\end{tikzpicture}}\otimes
	F_{\begin{tikzpicture}[scale=0.2,baseline=0pt]
		\draw[blue, thick] (0,-1) -- (0,0);
		\draw[blue, thick] (0,0) -- (2,2);
		\draw[blue, thick] (0,0) -- (-2,2);
		\draw[blue, thick] (-1,1) -- (0,2);
		\filldraw[black] (0,0) circle (5pt)  {};
		\filldraw[black] (-1,1) circle (5pt)  {};
	\end{tikzpicture}}+
	F_{\begin{tikzpicture}[scale=0.2,baseline=0pt]
		\draw[blue, thick] (0,-1) -- (0,0);
		\draw[blue, thick] (0,0) -- (4,4);
		\draw[blue, thick] (0,0) -- (-4,4);
		\draw[blue, thick] (2,2) -- (0,4);
		\draw[blue, thick] (3,3) -- (2,4);
		\draw[blue, thick] (-3,3) -- (-2,4);
		\filldraw[black] (0,0) circle (5pt)  {};
		\filldraw[black] (2,2) circle (5pt)  {};
		\filldraw[black] (3,3) circle (5pt)  {};
		\filldraw[black] (-3,3) circle (5pt)  {};
	\end{tikzpicture}}\otimes
	F_{\begin{tikzpicture}[scale=0.2,baseline=0pt]
		\draw[blue, thick] (0,-1) -- (0,0);
		\draw[blue, thick] (0,0) -- (1,1);
		\draw[blue, thick] (0,0) -- (-1,1);
		\filldraw[black] (0,0) circle (5pt)  {};
	\end{tikzpicture}}.$$
\end{Example}

It is not hard to check that this comultiplication is coassociative.

To compute the product $F_s\cdot F_t$ in $\YSym$, first deconcatenate $t$ into $\deg(s)+1$ pieces, then attach them to the leaves of $s$:

$$F_s\cdot F_t=\sum_{t\mapsto(t_1,\dots,t_{\deg(s)+1})}F_{s\shuffle (t_1,\dots,t_{\deg(s)+1})}.$$

\begin{Example}
	Let $s=\begin{tikzpicture}[scale=0.2,baseline=0pt]
	\draw[blue, thick] (0,-1) -- (0,0);
	\draw[blue, thick] (0,0) -- (2,2);
	\draw[blue, thick] (0,0) -- (-2,2);
	\draw[blue, thick] (-1,1) -- (0,2);
	\filldraw[black] (0,0) circle (5pt)  {};
	\filldraw[black] (-1,1) circle (5pt)  {};
	\end{tikzpicture}$ and $t=\begin{tikzpicture}[scale=0.2,baseline=0pt]
	\draw[red, thick] (0,-1) -- (0,0);
	\draw[red, thick] (0,0) -- (2,2);
	\draw[red, thick] (0,0) -- (-2,2);
	\draw[red, thick] (1,1) -- (0,2);
	\filldraw[black] (0,0) circle (5pt)  {};
	\filldraw[black] (1,1) circle (5pt)  {};
	\end{tikzpicture}$. Then
	$$F_s\cdot F_t=
	F_{\begin{tikzpicture}[scale=0.2,baseline=0pt]
		\draw[blue, thick] (0,-1) -- (0,0);
		\draw[blue, thick] (0,0) -- (2,2);
		\draw[blue, thick] (0,0) -- (-4,4);
		\draw[red, thick] (2,2) -- (4,4);
		\draw[red, thick] (2,2) -- (0,4);
		\draw[red, thick] (3,3) -- (2,4);
		\draw[blue, thick] (-3,3) -- (-2,4);
		\filldraw[black] (0,0) circle (5pt)  {};
		\filldraw[black] (2,2) circle (5pt)  {};
		\filldraw[black] (3,3) circle (5pt)  {};
		\filldraw[black] (-3,3) circle (5pt)  {};
	\end{tikzpicture}}+
	F_{\begin{tikzpicture}[scale=0.2,baseline=0pt]
		\draw[blue, thick] (0,-1) -- (0,0);
		\draw[blue, thick] (0,0) -- (4,4);
		\draw[blue, thick] (0,0) -- (-4,4);
		\draw[blue, thick] (-1,1) -- (0,2);
		\draw[red, thick] (0,2) -- (2,4);
		\draw[red, thick] (0,2) -- (-2,4);
		\draw[red, thick] (1,3) -- (0,4);
		\filldraw[black] (0,0) circle (5pt)  {};
		\filldraw[black] (-1,1) circle (5pt)  {};
		\filldraw[black] (0,2) circle (5pt)  {};
		\filldraw[black] (1,3) circle (5pt)  {};
	\end{tikzpicture}}+
	F_{\begin{tikzpicture}[scale=0.2,baseline=0pt]
		\draw[blue, thick] (0,-1) -- (0,0);
		\draw[blue, thick] (0,0) -- (4,4);
		\draw[blue, thick] (0,0) -- (-2,2);
		\draw[red, thick] (-2,2) -- (-4,4);
		\draw[blue, thick] (-1,1) -- (2,4);
		\draw[red, thick] (-2,2) -- (0,4);
		\draw[red, thick] (-1,3) -- (-2,4);
		\filldraw[black] (0,0) circle (5pt)  {};
		\filldraw[black] (-1,1) circle (5pt)  {};
		\filldraw[black] (-2,2) circle (5pt)  {};
		\filldraw[black] (-1,3) circle (5pt)  {};
	\end{tikzpicture}}+
	F_{\begin{tikzpicture}[scale=0.2,baseline=0pt]
		\draw[blue, thick] (0,-1) -- (0,0);
		\draw[blue, thick] (0,0) -- (3,3);
		\draw[red, thick] (3,3) -- (4,4);
		\draw[blue, thick] (0,0) -- (-4,4);
		\draw[red, thick] (3,3) -- (2,4);
		\draw[blue, thick] (-2,2) -- (-1,3);
		\draw[red, thick] (-1,3) -- (0,4);
		\draw[red, thick] (-1,3) -- (-2,4);
		\filldraw[black] (0,0) circle (5pt)  {};
		\filldraw[black] (3,3) circle (5pt)  {};
		\filldraw[black] (-2,2) circle (5pt)  {};
		\filldraw[black] (-1,3) circle (5pt)  {};
	\end{tikzpicture}}+
	F_{\begin{tikzpicture}[scale=0.2,baseline=0pt]
		\draw[blue, thick] (0,-1) -- (0,0);
		\draw[blue, thick] (0,0) -- (3,3);
		\draw[red, thick] (3,3) -- (4,4);
		\draw[blue, thick] (0,0) -- (-3,3);
		\draw[red, thick] (-3,3) -- (-4,4);
		\draw[red, thick] (3,3) -- (2,4);
		\draw[blue, thick] (-2,2) -- (0,4);
		\draw[red, thick] (-3,3) -- (-2,4);
		\filldraw[black] (0,0) circle (5pt)  {};
		\filldraw[black] (3,3) circle (5pt)  {};
		\filldraw[black] (-2,2) circle (5pt)  {};
		\filldraw[black] (-3,3) circle (5pt)  {};
	\end{tikzpicture}}+
	F_{\begin{tikzpicture}[scale=0.2,baseline=0pt]
		\draw[blue, thick] (0,-1) -- (0,0);
		\draw[blue, thick] (0,0) -- (4,4);
		\draw[blue, thick] (0,0) -- (-3,3);
		\draw[red, thick] (-3,3) -- (-4,4);
		\draw[blue, thick] (-1,1) -- (1,3);
		\draw[red, thick] (1,3) -- (2,4);
		\draw[red, thick] (-3,3) -- (-2,4);
		\draw[red, thick] (1,3) -- (0,4);
		\filldraw[black] (0,0) circle (5pt)  {};
		\filldraw[black] (-1,1) circle (5pt)  {};
		\filldraw[black] (-3,3) circle (5pt)  {};
		\filldraw[black] (1,3) circle (5pt)  {};
	\end{tikzpicture}}$$
\end{Example}

\begin{Remark}
Note that our present multiplication is in the opposite order of that from \cite{AS06}.
\end{Remark}

It is known that the dual multiplication can be described using intervals on the Tamari order. (This looks different from the formula in 
\cite{LR02} because we took the dual of their convention for the Tamari order.)

\begin{Proposition}\label{prop:tree}\cite{LR02}
	Let $s$ and $t$ be two binary trees, and let $F_s^*$, $F_t^*$ be their dual basis element in the graded dual $\YSym^*$. Then,
	$$m(F_s^*\otimes F_t^*)=\sum_{s\backslash t\leq_{T} u\leq_{T} s/t}F_u^*.$$
\end{Proposition}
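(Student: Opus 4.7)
By duality between multiplication in $\YSym^*$ and comultiplication in $\YSym$, the coefficient of $F_u^*$ in $m(F_s^*\otimes F_t^*)$ equals the coefficient of $F_s\otimes F_t$ in $\Delta(F_u) = \sum_{i=0}^{\deg u} F_{{}^i u}\otimes F_{u^i}$. Since $\deg({}^i u)=i$, the equation ${}^i u = s$ forces $i = \deg s$ and $\deg u = \deg s + \deg t$; hence each $u$ contributes coefficient $0$ or $1$. The proposition therefore reduces to the set-theoretic identity
\[
\{u\in\PBT : {}^{\deg s}u = s\text{ and }u^{\deg s}=t\} \;=\; [s\backslash t,\, s/t]_T.
\]

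That both endpoints lie in the left-hand set is immediate from the identities recorded right after Definition \ref{def:graft-trees}. To see $s\backslash t\leq_T s/t$, I would construct an explicit sequence of left rotations converting $s\backslash t$ into $s/t$: in $s\backslash t$ the subtree $t$ is attached at the rightmost leaf of $s$, while in $s/t$ the subtree $s$ is attached at the leftmost leaf of $t$. Performing a left rotation at each node lying on the path from the $(\deg s+1)$-th leaf to the root shifts the attachment point one step at a time up this separating path, tracing out a chain in the Tamari order.

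For the set-theoretic equality, I would induct on $\ideg(s)+\ideg(t)$. The key observation is that for any $u$ in the left-hand set, the path $P_u$ from leaf $\deg s+1$ to the root of $u$ uniquely records how $s$ is assembled from subtrees hanging to its left and $t$ from subtrees hanging to its right. A left rotation at a node on $P_u$ shifts $P_u$ without changing the pair $({}^{\deg s}u, u^{\deg s})$, while a rotation at a node off $P_u$ lives entirely inside either $s$ or $t$ and therefore alters one of them. Enumerating the trees reachable from $s\backslash t$ by rotations along the separating path then matches them exactly with the Tamari interval $[s\backslash t, s/t]_T$.

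The main technical obstacle is the case analysis verifying that rotations along the separating path remain in the interval and that no other rotations preserve the split. The cleanest way is the recursive decomposition at the root of $s$: writing $s$ as the grafting of its left subtree $s_L$ and right subtree $s_R$ at the root, membership of $u$ in the left-hand set forces $s_L$ to appear as a specified left-hanging subtree, and the residual data is a tree $u'$ satisfying ${}^{\deg s_R}u' = s_R$ and $u'^{\deg s_R} = t$. The inductive hypothesis applied to $(s_R, t)$ pinpoints a sub-Tamari-interval, which, after grafting $s_L$ back in, lifts to the desired interval $[s\backslash t, s/t]_T$. The base case is the empty tree, where both sides reduce to a single tree.
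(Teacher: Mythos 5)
First, note that the paper does not prove this proposition at all: it is quoted from \cite{LR02}, and the closest the paper comes to a proof is the analogous statement for $\STSym$ (Proposition \ref{prop:interval_multiplication_STSYM}), proved via inversion sets of packed words and then pushed down to $\TSym$ through the Galois connection. Your opening reduction is correct and is what any proof must do: dualizing turns the claim into the set-theoretic identity $\{u:{}^{\deg s}u=s,\ u^{\deg s}=t\}=[s\backslash t,\,s/t]_T$, and the two endpoints clearly belong to the left-hand set.

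The genuine gap is in your key dichotomy: ``a left rotation at a node on $P_u$ shifts $P_u$ without changing the pair $({}^{\deg s}u,u^{\deg s})$, while a rotation off $P_u$ alters one of them.'' This is false. Write the rotation at $x$ as $x(A,y(B,C))\mapsto x(y(A,B),C)$ and suppose $x$ lies on $P_u$. If $P_u$ continues from $x$ into $A$, the right piece changes from a subtree of the form $x(\mathrm{acc},y(B,C))$ to $x(y(\mathrm{acc},B),C)$, i.e.\ the rotation induces a left rotation \emph{on} $u^{\deg s}$; symmetrically, if $P_u$ passes through $y$ and then into $C$, the left piece undergoes a rotation. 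Only when $P_u$ goes from $x$ into $y$ and then into $B$ (the path ``turns'' exactly at the edge $xy$) is the pair preserved -- and in that case $P_u$ does not shift at all. So the class of split-preserving covers is strictly smaller than you assert, and the induction built on this dichotomy does not go through. Separately, even with the correct characterization of which covers stay inside the set, ``the trees reachable from $s\backslash t$ by such rotations'' is not automatically the interval $[s\backslash t, s/t]_T$: you still need that the split-preserving set is order-convex and that $s\backslash t$ and $s/t$ are its minimum and maximum. The clean repair is to invoke the monotonicity of $u\mapsto{}^i u$ and $u\mapsto u^i$ (axiom ($\Delta$2) for $\YSym$, Proposition \ref{prop:axiomysym}): if $s\backslash t\leq_T v\leq_T s/t$ then $s={}^i(s\backslash t)\leq_T{}^i v\leq_T{}^i(s/t)=s$ forces ${}^i v=s$ and likewise $v^i=t$, giving one inclusion for free; the other inclusion is exactly the extremality statements of axiom ($\Delta$3) and its dual, which is where the real combinatorial work lies and which your sketch does not yet supply.
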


In \cite{AS06} the authors use the Tamari order to define a cofree basis of $\YSym$.

\begin{Definition}
	The monomial basis $\{M_t\}$ of $\YSym$, is defined by $\displaystyle F_s=\sum_{t\geq_{T} s}M_t$. The monomial basis is well-defined and is indeed a basis because of the triangularity obtained via M\"{o}bius inversion.
\end{Definition}

To define the comultiplication on the monomial basis, we introduce the idea of global descent.

\begin{Definition}
	Let $t$ be a binary tree of degree $n$. Its \textbf{global descent} set is $\GD(t)=\{i\in[n-1]:t=s/r \text{ for some }s, r\text{ and }\deg(s)=i\}$.
\end{Definition}

For example, when $t=\begin{tikzpicture}[scale=0.2,baseline=0pt]
		\draw[blue, thick] (0,-1) -- (0,0);
		\draw[blue, thick] (0,0) -- (7,7);
		\draw[blue, thick] (5,5) -- (3,7);
		\draw[blue, thick] (4,6) -- (5,7);
		\draw[blue, thick] (0,0) -- (-7,7);
		\draw[red, thick] (-6,6) -- (-5,7);
		\draw[blue, thick] (-1,5) -- (-3,7);
		\draw[blue, thick] (-2,6) -- (-1,7);
		\draw[red, thick] (-3,3) -- (1,7);
		\filldraw[black] (0,0) circle (5pt)  {};
		\filldraw[black] (5,5) circle (5pt)  {};
		\filldraw[black] (4,6) circle (5pt)  {};
		\filldraw[black] (-6,6) circle (5pt)  {};
		\filldraw[black] (-1,5) circle (5pt)  {};
		\filldraw[black] (-2,6) circle (5pt)  {};
		\filldraw[black] (-3,3) circle (5pt)  {};
	\end{tikzpicture}$, we have that $\GD(t)=\{1,4\}$, as it is indicated by the red branches.

 \begin{Proposition}[Theorems 5.1, 4.2, and 6.1 \cite{AS06}]
\label{prop:ysym-monomial}  
\begin{enumerate}
    \item The coproduct of monomial
basis elements is given by
\[
\Delta_{+}(M_{t})=\sum_{i\in\GD(f)}M_{{}^{i}t}\otimes M_{t^{i}}.
\]
\item The product of monomial basis elements has non-negative integer coefficients. 

\item In the antipode image $\mathcal{S}(M_t)$, all
terms have the same sign, equal to the parity of $|\GD(t)|+1$.
\end{enumerate}

\end{Proposition}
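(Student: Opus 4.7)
The backbone of the plan is the Möbius-inversion relation $M_t = \sum_{s \geq_T t} \mu_T(t,s) F_s$ obtained from the definition $F_s = \sum_{t \geq_T s} M_t$, combined with the explicit deconcatenation coproduct and shuffle-grafting product on the $F$-basis, and the interval multiplication on $\YSym^*$ from Proposition~\ref{prop:tree}. The central structural input I would first establish is an order-theoretic factorization lemma: for a fixed $i$ with $0 < i < \deg t$, the set $\{s \geq_T t : \deg({}^i s) = i\}$ is nonempty precisely when $i \in \GD(t)$, and in that case the deconcatenation map $s \mapsto ({}^i s, s^i)$ is an isomorphism from this set onto the product $[{}^i t, \hat 1] \times [t^i, \hat 1]$ in the Tamari order. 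This follows from a direct analysis of left-rotation covers, which can never cross a global descent.

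For part (1), I apply $\Delta_+$ term-by-term to $M_t = \sum_{s \geq_T t} \mu_T(t,s) F_s$, using $\Delta_+ F_s = \sum_{i=1}^{\deg s - 1} F_{{}^i s} \otimes F_{s^i}$, and group the result by the split index $i$. By the factorization lemma and the multiplicativity of Möbius functions on product posets, the $i$-th piece becomes $\bigl(\sum_{s_1 \geq_T {}^i t} \mu_T({}^i t, s_1) F_{s_1}\bigr) \otimes \bigl(\sum_{s_2 \geq_T t^i} \mu_T(t^i, s_2) F_{s_2}\bigr) = M_{{}^i t} \otimes M_{t^i}$, while for $i \notin \GD(t)$ the constraint on $\deg({}^i s)$ breaks the product structure and the Möbius sum vanishes, yielding the claimed formula.

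For part (2), rather than unfolding the grafting product directly, I would work dually: since $M_t^* = \sum_{s \leq_T t} F_s^*$, Proposition~\ref{prop:tree} gives $m(M_s^* \otimes M_t^*)$ as a sum of indicators of Tamari intervals $[s' \backslash t', s'/t']$ over $s' \leq s$, $t' \leq t$, which I would reorganize as a sum over $u$ of indicators of lower order ideals and then invert to the $M^*$-basis. The remaining task — showing that the resulting coefficients are nonnegative integers — is the main obstacle. I would attack it by exhibiting an explicit enumeration: identifying the coefficient of $M_u$ with a count of grafting data on the shuffle of $s$ and $t$ whose resulting binary tree is exactly $u$ modulo the interval structure, in the spirit of Aguiar--Sottile's splitting arguments.

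For part (3), I would apply Takeuchi's formula $\mathcal{S}(M_t) = \sum_{k \geq 1}(-1)^k m^{(k-1)} \overline{\Delta}^{(k-1)}(M_t)$. Iterating part (1) shows that $\overline{\Delta}^{(k-1)}(M_t)$ is supported on tensors indexed by chains $i_1 < \cdots < i_{k-1}$ inside $\GD(t)$, so $k$ ranges only up to $|\GD(t)|+1$; together with the positivity of part (2), each summand of $\mathcal{S}(M_t)$ has a definite sign $(-1)^k$ depending on the length of the cutting chain. The delicate point is proving that after all cancellations only the sign $(-1)^{|\GD(t)|+1}$ survives on every monomial that appears. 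I would prove this by induction on $|\GD(t)|$, using the antipode identity $m \circ (\mathcal{S} \otimes \mathrm{id}) \circ \Delta = \eta \epsilon$ together with part (1) to express $\mathcal{S}(M_t)$ recursively in terms of the antipodes $\mathcal{S}(M_{{}^i t})$ for $i \in \GD(t)$, and then tracking how the inductive parities combine. The recursive sign-tracking, and in particular verifying cancellation-freeness rather than merely sign-alternation, is where I expect the bulk of the technical work to lie.
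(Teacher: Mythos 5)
Your part (1) is close in spirit to the standard Aguiar--Sottile argument, but the key lemma is misstated. For binary trees every deconcatenation is allowable, so $\deg({}^{i}s)=i$ holds for every $s\geq_{T}t$ and every $i$; the set $\{s\geq_{T}t:\deg({}^{i}s)=i\}$ is therefore the whole upper interval and is never empty, whether or not $i\in\GD(t)$. The correct factorization statement is: when $i\in\GD(t)$, every $s\geq_{T}t$ also satisfies $i\in\GD(s)$ (axiom ($\mathcal{S}$1)), hence $s={}^{i}s/s^{i}$, and $s\mapsto({}^{i}s,s^{i})$ is a poset isomorphism from $\{s:s\geq_{T}t\}$ onto $\{s_1:s_1\geq_{T}{}^{i}t\}\times\{s_2:s_2\geq_{T}t^{i}\}$ with inverse $(s_1,s_2)\mapsto s_1/s_2$; only then does Möbius multiplicativity apply. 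When $i\notin\GD(t)$ the deconcatenation map is still defined on the whole interval but is not injective, and the vanishing of $\sum_{s\geq_{T}t}\mu(t,s)F_{{}^{i}s}\otimes F_{s^{i}}$ is not a consequence of any degree constraint ``breaking the product structure''; it needs a separate argument, e.g.\ Rota's closure-operator lemma applied to $s\mapsto{}^{i}s/s^{i}$ (here $t$ is not closed), or the dual computation in the last paragraph of the proof of Lemma \ref{lem:kfactor-coproductaxiom}, which verifies that the candidate $M$-basis coproduct reproduces deconcatenation on the $F$-basis.

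The more serious problem is that parts (2) and (3) are not actually proved: you defer ``showing that the resulting coefficients are nonnegative integers'' and ``verifying cancellation-freeness'' as remaining obstacles, but those assertions \emph{are} parts (2) and (3) of the proposition. The missing ideas are exactly the content of Theorems \ref{thm:monomialproduct} and \ref{thm:antipode}: for the product, Möbius inversion over the product of Tamari posets identifies the coefficient of $M_u$ in $M_sM_t$ with the cardinality of a set of shuffles, and the partition $B^{u}_{s,t}=\bigsqcup_{s'\geq s,\,t'\geq t}A^{u}_{s',t'}$ that makes this work rests on grafting being order-preserving and join-compatible, $\zeta(f_1\vee f_2,g_1\vee g_2)\leq\zeta(f_1,g_1)\vee\zeta(f_2,g_2)$ (axiom ($m$3)), which your dual reorganization never invokes. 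For the antipode, the cancellation in Takeuchi's formula is resolved by a second Möbius inversion over the Boolean lattice of subsets of $\GD(t)$, which requires the compatible inclusions $\iota_{R,S}$ of shuffle sets built from the $\backslash$ operation and axioms ($\mathcal{S}$0)--($\mathcal{S}$3); your proposed induction via $m\circ(\mathcal{S}\otimes\mathrm{id})\circ\Delta=\eta\epsilon$ produces terms $\mathcal{S}(M_{{}^{i}t})\cdot M_{t^{i}}$ whose inductive signs $(-1)^{|\GD({}^{i}t)|+1}$ differ as $i$ ranges over $\GD(t)$, so the uniform sign is precisely the cancellation you have not established. Note finally that the paper does not argue on the Tamari lattice directly at all: it verifies the axioms for $\SSym$ (Proposition \ref{prop:axiomssym}) and transfers them to $\YSym$ through the Galois connection of Proposition \ref{prop:pi-property-bin} and Theorem \ref{thm:monomialquotient}, so the order-theoretic facts you need about Tamari intervals are obtained there by projection from the weak order rather than by analyzing left rotations.
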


The references above give exact formulas for the multiplication and antipode; these may be obtained using Proposition \ref{prop:axiomysym} and the theorems in Section \ref{sec:axioms-monomials}.

\subsection{Permutations and weak order}\label{sec:permweak}
In this article we view permutations via their correspondence to labeled binary trees (see for example \cite[Prop. 2.3]{LR98}).

\begin{Definition}
	A \textbf{permutation} of degree $n$ is a labeled binary tree with $n$ internal nodes, where the labeling is a bijection to the set $\{1,2,\dots,n\}$ and such that the label of each internal node is smaller than any of the labels of its children. The set of permutations is denoted by $\Perm$. The \textbf{descent} set of a permutation $\sigma$ is the descent set of its underlying binary tree, denoted by $\Des(\sigma)$. In Figure \ref{fig:permutations_of_3} we illustrate the set of permutations of $[3]$.
\end{Definition}

This definition is equivalent to the classical definition of a permutations as a bijection $\sigma:[n]\rightarrow [n]$. For a permutation $\sigma$, we define $\sigma(i)$ to be the $i$-th label reading from left to right of the open area above the internal nodes. This is the reason we put the label in the area above the node. The descent set of $\sigma$ can be equivalently defined in the classical definition as $\Des(\sigma)=\{i:\sigma(i)>\sigma(i+1)\}$.

\begin{Definition}
	For a permutation $\sigma$, its \textbf{inversion} set is $\Inv(\sigma)=\{(i,j):i<j,\sigma(i)>\sigma(j)\}$. For permutations of $n$, we can construct a partial order by inclusion of inversion sets. This order is called the \textbf{(left) weak order}, denoted by $\leq_{w}$. (see for example Figure~\ref{fig:weak4})
	
	The \textbf{inversed-inversion} set of $\sigma$ is $\iInv(\sigma)=\{(a,b):a<b,\sigma^{-1}(a)>\sigma^{-1}(b)\}$. The weak order can be equivalently defined as: $\sigma$ is covered by $\tau$ if $\tau=T_a\circ\sigma$ and $|\iInv(\tau)|=|\iInv(\sigma)|+1$ where $T_a=(a,a+1)$ is the simple transposition.
\end{Definition}

\begin{figure}
\begin{center}
	\begin{tikzpicture}
	\node at (10,0) {
	\begin{tikzpicture}[scale=0.22,baseline=0pt]
	\draw[ForestGreen, ultra thick] (4,2) -- (12,7);
	\draw[ForestGreen, ultra thick] (-4,2) -- (-12,7);
	\draw[ForestGreen, ultra thick] (0,3) -- (0,5);
	\draw[ForestGreen, ultra thick] (17,10.5) -- (19,16);
	\draw[ForestGreen, ultra thick] (-17,10.5) -- (-19,16);
	\draw[ForestGreen, ultra thick] (3,11) -- (8,16);
	\draw[ForestGreen, ultra thick] (12,11) -- (6.6,13.7);
	\draw[ForestGreen, ultra thick] (5.4,14.3) -- (2,16);
	\draw[ForestGreen, ultra thick] (-3,11) -- (-8,16);
	\draw[ForestGreen, ultra thick] (-12,11) -- (-6.6,13.7);
	\draw[ForestGreen, ultra thick] (-5.4,14.3) -- (-2,16);
	\draw[ForestGreen, ultra thick] (21.5,20.5) -- (24,26);
	\draw[ForestGreen, ultra thick] (18.5,20.5) -- (17.4,23.1);
	\draw[ForestGreen, ultra thick] (16.8,24.2) -- (16,26);
	\draw[ForestGreen, ultra thick] (-21.5,20.5) -- (-24,26);
	\draw[ForestGreen, ultra thick] (-18.5,20.5) -- (-16,26);
	\draw[ForestGreen, ultra thick] (11.5,20.5) -- (23,27);
	\draw[ForestGreen, ultra thick] (13.5,30.5) -- (11,36);
	\draw[ForestGreen, ultra thick] (-1.5,20.5) -- (-2.6,23.1);
	\draw[ForestGreen, ultra thick] (-3.2,24.2) -- (-4,26);
	\draw[ForestGreen, ultra thick] (-8.5,20.5) -- (3,27);
	\draw[ForestGreen, ultra thick] (3.5,30.5) -- (2.4,33.1);
	\draw[ForestGreen, ultra thick] (1.8,34.2) -- (1,36);
	\draw[ForestGreen, ultra thick] (-3.5,30.5) -- (8,37);
	\draw[ForestGreen, ultra thick] (-11.5,20.5) -- (-14,26);
	\draw[ForestGreen, ultra thick] (8.5,20.5) -- (6,26);
	\draw[ForestGreen, ultra thick] (23.5,30.5) -- (21,36);
	\draw[ForestGreen, ultra thick] (-6.5,30.5) -- (-9,36);
	\draw[ForestGreen, ultra thick] (-16.5,30.5) -- (-17.6,33.1);
	\draw[ForestGreen, ultra thick] (-18.2,34.2) -- (-19,36);
	\draw[ForestGreen, ultra thick] (-23.5,30.5) -- (-12,37);
	\draw[ForestGreen, ultra thick] (16.5,30.5) -- (19,36);
	\draw[ForestGreen, ultra thick] (-25,30.5) -- (-22,37);
	\draw[ForestGreen, ultra thick] (-16.5,46) -- (-19,41);
	\draw[ForestGreen, ultra thick] (-12.5,47) -- (-1.5,41);
	\draw[ForestGreen, ultra thick] (-2.5,47) -- (-5.5,44);
	\draw[ForestGreen, ultra thick] (-6.5,43) -- (-8.5,41);
	\draw[ForestGreen, ultra thick] (-12.5,50.5) -- (-3,56);
	\draw[ForestGreen, ultra thick] (16.5,46) -- (19,41);
	\draw[ForestGreen, ultra thick] (12.5,47) -- (1.5,41);
	\draw[ForestGreen, ultra thick] (2.5,47) -- (5.5,44);
	\draw[ForestGreen, ultra thick] (6.5,43) -- (8.5,41);
	\draw[ForestGreen, ultra thick] (12.5,50.5) -- (3,56);
	\draw[ForestGreen, ultra thick] (0,50.5) -- (0,53);
	
	\node () at (0,0){\begin{tikzpicture}[scale=0.2,baseline=0pt]
		\draw[blue, thick] (0,-1) -- (0,0);
		\draw[blue, thick] (0,0) -- (4,4);
		\draw[blue, thick] (0,0) -- (-4,4);
		\draw[blue, thick] (1,1) -- (-2,4);
		\draw[blue, thick] (2,2) -- (0,4);
		\draw[blue, thick] (3,3) -- (2,4);
		\filldraw[black] (0,0) circle (5pt)  {};
		\filldraw[black] (1,1) circle (5pt)  {};
		\filldraw[black] (2,2) circle (5pt)  {};
		\filldraw[black] (3,3) circle (5pt)  {};
		\node at (0,0.7)[font=\fontsize{7pt}{0}]{$1$};
		\node at (1,1.7)[font=\fontsize{7pt}{0}]{$2$};
		\node at (2,2.7)[font=\fontsize{7pt}{0}]{$3$};
		\node at (3,3.7)[font=\fontsize{7pt}{0}]{$4$};
		\end{tikzpicture}};
	\node () at (-15,8){\begin{tikzpicture}[scale=0.2,baseline=0pt]
		\draw[blue, thick] (0,-1) -- (0,0);
		\draw[blue, thick] (0,0) -- (4,4);
		\draw[blue, thick] (0,0) -- (-4,4);
		\draw[blue, thick] (-3,3) -- (-2,4);
		\draw[blue, thick] (2,2) -- (0,4);
		\draw[blue, thick] (3,3) -- (2,4);
		\filldraw[black] (0,0) circle (5pt)  {};
		\filldraw[black] (-3,3) circle (5pt)  {};
		\filldraw[black] (2,2) circle (5pt)  {};
		\filldraw[black] (3,3) circle (5pt)  {};
		\node at (0,0.7)[font=\fontsize{7pt}{0}]{$1$};
		\node at (-3,3.7)[font=\fontsize{7pt}{0}]{$2$};
		\node at (2,2.7)[font=\fontsize{7pt}{0}]{$3$};
		\node at (3,3.7)[font=\fontsize{7pt}{0}]{$4$};
		\end{tikzpicture}};
	\node () at (-20,18){\begin{tikzpicture}[scale=0.2,baseline=0pt]
		\draw[blue, thick] (0,-1) -- (0,0);
		\draw[blue, thick] (0,0) -- (4,4);
		\draw[blue, thick] (0,0) -- (-4,4);
		\draw[blue, thick] (-3,3) -- (-2,4);
		\draw[blue, thick] (2,2) -- (0,4);
		\draw[blue, thick] (3,3) -- (2,4);
		\filldraw[black] (0,0) circle (5pt)  {};
		\filldraw[black] (-3,3) circle (5pt)  {};
		\filldraw[black] (2,2) circle (5pt)  {};
		\filldraw[black] (3,3) circle (5pt)  {};
		\node at (0,0.7)[font=\fontsize{7pt}{0}]{$1$};
		\node at (-3,3.7)[font=\fontsize{7pt}{0}]{$3$};
		\node at (2,2.7)[font=\fontsize{7pt}{0}]{$2$};
		\node at (3,3.7)[font=\fontsize{7pt}{0}]{$4$};
		\end{tikzpicture}};
	\node () at (-25,28){\begin{tikzpicture}[scale=0.2,baseline=0pt]
		\draw[blue, thick] (0,-1) -- (0,0);
		\draw[blue, thick] (0,0) -- (4,4);
		\draw[blue, thick] (0,0) -- (-4,4);
		\draw[blue, thick] (-3,3) -- (-2,4);
		\draw[blue, thick] (2,2) -- (0,4);
		\draw[blue, thick] (3,3) -- (2,4);
		\filldraw[black] (0,0) circle (5pt)  {};
		\filldraw[black] (-3,3) circle (5pt)  {};
		\filldraw[black] (2,2) circle (5pt)  {};
		\filldraw[black] (3,3) circle (5pt)  {};
		\node at (0,0.7)[font=\fontsize{7pt}{0}]{$1$};
		\node at (-3,3.7)[font=\fontsize{7pt}{0}]{$4$};
		\node at (2,2.7)[font=\fontsize{7pt}{0}]{$2$};
		\node at (3,3.7)[font=\fontsize{7pt}{0}]{$3$};
		\end{tikzpicture}};
	\node () at (0,8){\begin{tikzpicture}[scale=0.2,baseline=0pt]
		\draw[blue, thick] (0,-1) -- (0,0);
		\draw[blue, thick] (0,0) -- (4,4);
		\draw[blue, thick] (0,0) -- (-4,4);
		\draw[blue, thick] (1,1) -- (-2,4);
		\draw[blue, thick] (-1,3) -- (0,4);
		\draw[blue, thick] (3,3) -- (2,4);
		\filldraw[black] (0,0) circle (5pt)  {};
		\filldraw[black] (1,1) circle (5pt)  {};
		\filldraw[black] (-1,3) circle (5pt)  {};
		\filldraw[black] (3,3) circle (5pt)  {};
		\node at (0,0.7)[font=\fontsize{7pt}{0}]{$1$};
		\node at (1,1.7)[font=\fontsize{7pt}{0}]{$2$};
		\node at (-1,3.7)[font=\fontsize{7pt}{0}]{$3$};
		\node at (3,3.7)[font=\fontsize{7pt}{0}]{$4$};
		\end{tikzpicture}};
	\node () at (10,18){\begin{tikzpicture}[scale=0.2,baseline=0pt]
		\draw[blue, thick] (0,-1) -- (0,0);
		\draw[blue, thick] (0,0) -- (4,4);
		\draw[blue, thick] (0,0) -- (-4,4);
		\draw[blue, thick] (1,1) -- (-2,4);
		\draw[blue, thick] (-1,3) -- (0,4);
		\draw[blue, thick] (3,3) -- (2,4);
		\filldraw[black] (0,0) circle (5pt)  {};
		\filldraw[black] (1,1) circle (5pt)  {};
		\filldraw[black] (-1,3) circle (5pt)  {};
		\filldraw[black] (3,3) circle (5pt)  {};
		\node at (0,0.7)[font=\fontsize{7pt}{0}]{$1$};
		\node at (1,1.7)[font=\fontsize{7pt}{0}]{$2$};
		\node at (3,3.7)[font=\fontsize{7pt}{0}]{$3$};
		\node at (-1,3.7)[font=\fontsize{7pt}{0}]{$4$};
		\end{tikzpicture}};
	\node () at (15,8){\begin{tikzpicture}[scale=0.2,baseline=0pt]
		\draw[blue, thick] (0,-1) -- (0,0);
		\draw[blue, thick] (0,0) -- (4,4);
		\draw[blue, thick] (0,0) -- (-4,4);
		\draw[blue, thick] (1,1) -- (-2,4);
		\draw[blue, thick] (2,2) -- (0,4);
		\draw[blue, thick] (1,3) -- (2,4);
		\filldraw[black] (0,0) circle (5pt)  {};
		\filldraw[black] (1,1) circle (5pt)  {};
		\filldraw[black] (2,2) circle (5pt)  {};
		\filldraw[black] (1,3) circle (5pt)  {};
		\node at (0,0.7)[font=\fontsize{7pt}{0}]{$1$};
		\node at (1,1.7)[font=\fontsize{7pt}{0}]{$2$};
		\node at (2,2.7)[font=\fontsize{7pt}{0}]{$3$};
		\node at (1,3.7)[font=\fontsize{7pt}{0}]{$4$};
		\end{tikzpicture}};
	\node () at (20,18){\begin{tikzpicture}[scale=0.2,baseline=0pt]
		\draw[blue, thick] (0,-1) -- (0,0);
		\draw[blue, thick] (0,0) -- (4,4);
		\draw[blue, thick] (0,0) -- (-4,4);
		\draw[blue, thick] (1,1) -- (-2,4);
		\draw[blue, thick] (0,2) -- (2,4);
		\draw[blue, thick] (1,3) -- (0,4);
		\filldraw[black] (0,0) circle (5pt)  {};
		\filldraw[black] (1,1) circle (5pt)  {};
		\filldraw[black] (0,2) circle (5pt)  {};
		\filldraw[black] (1,3) circle (5pt)  {};
		\node at (0,0.7)[font=\fontsize{7pt}{0}]{$1$};
		\node at (1,1.7)[font=\fontsize{7pt}{0}]{$2$};
		\node at (0,2.7)[font=\fontsize{7pt}{0}]{$3$};
		\node at (1,3.7)[font=\fontsize{7pt}{0}]{$4$};
		\end{tikzpicture}};
	\node () at (25,28){\begin{tikzpicture}[scale=0.2,baseline=0pt]
		\draw[blue, thick] (0,-1) -- (0,0);
		\draw[blue, thick] (0,0) -- (4,4);
		\draw[blue, thick] (0,0) -- (-4,4);
		\draw[blue, thick] (1,1) -- (-2,4);
		\draw[blue, thick] (0,2) -- (2,4);
		\draw[blue, thick] (-1,3) -- (0,4);
		\filldraw[black] (0,0) circle (5pt)  {};
		\filldraw[black] (1,1) circle (5pt)  {};
		\filldraw[black] (0,2) circle (5pt)  {};
		\filldraw[black] (-1,3) circle (5pt)  {};
		\node at (0,0.7)[font=\fontsize{7pt}{0}]{$1$};
		\node at (1,1.7)[font=\fontsize{7pt}{0}]{$2$};
		\node at (0,2.7)[font=\fontsize{7pt}{0}]{$3$};
		\node at (-1,3.7)[font=\fontsize{7pt}{0}]{$4$};
		\end{tikzpicture}};
	\node () at (15,28){\begin{tikzpicture}[scale=0.2,baseline=0pt]
		\draw[blue, thick] (0,-1) -- (0,0);
		\draw[blue, thick] (0,0) -- (4,4);
		\draw[blue, thick] (0,0) -- (-4,4);
		\draw[blue, thick] (-1,1) -- (2,4);
		\draw[blue, thick] (0,2) -- (-2,4);
		\draw[blue, thick] (1,3) -- (0,4);
		\filldraw[black] (0,0) circle (5pt)  {};
		\filldraw[black] (-1,1) circle (5pt)  {};
		\filldraw[black] (0,2) circle (5pt)  {};
		\filldraw[black] (1,3) circle (5pt)  {};
		\node at (0,0.7)[font=\fontsize{7pt}{0}]{$1$};
		\node at (-1,1.7)[font=\fontsize{7pt}{0}]{$2$};
		\node at (0,2.7)[font=\fontsize{7pt}{0}]{$3$};
		\node at (1,3.7)[font=\fontsize{7pt}{0}]{$4$};
		\end{tikzpicture}};
	\node () at (20,38){\begin{tikzpicture}[scale=0.2,baseline=0pt]
		\draw[blue, thick] (0,-1) -- (0,0);
		\draw[blue, thick] (0,0) -- (4,4);
		\draw[blue, thick] (0,0) -- (-4,4);
		\draw[blue, thick] (-1,1) -- (2,4);
		\draw[blue, thick] (0,2) -- (-2,4);
		\draw[blue, thick] (-1,3) -- (0,4);
		\filldraw[black] (0,0) circle (5pt)  {};
		\filldraw[black] (-1,1) circle (5pt)  {};
		\filldraw[black] (0,2) circle (5pt)  {};
		\filldraw[black] (-1,3) circle (5pt)  {};
		\node at (0,0.7)[font=\fontsize{7pt}{0}]{$1$};
		\node at (-1,1.7)[font=\fontsize{7pt}{0}]{$2$};
		\node at (0,2.7)[font=\fontsize{7pt}{0}]{$3$};
		\node at (-1,3.7)[font=\fontsize{7pt}{0}]{$4$};
		\end{tikzpicture}};
	\node () at (15,48){\begin{tikzpicture}[scale=0.2,baseline=0pt]
		\draw[blue, thick] (0,-1) -- (0,0);
		\draw[blue, thick] (0,0) -- (4,4);
		\draw[blue, thick] (0,0) -- (-4,4);
		\draw[blue, thick] (-2,2) -- (0,4);
		\draw[blue, thick] (-1,3) -- (-2,4);
		\draw[blue, thick] (-1,1) -- (2,4);
		\filldraw[black] (0,0) circle (5pt)  {};
		\filldraw[black] (-2,2) circle (5pt)  {};
		\filldraw[black] (-1,3) circle (5pt)  {};
		\filldraw[black] (-1,1) circle (5pt)  {};
		\node at (0,0.7)[font=\fontsize{7pt}{0}]{$1$};
		\node at (-1,1.7)[font=\fontsize{7pt}{0}]{$2$};
		\node at (-2,2.7)[font=\fontsize{7pt}{0}]{$3$};
		\node at (-1,3.7)[font=\fontsize{7pt}{0}]{$4$};
		\end{tikzpicture}};
	\node () at (0,56){\begin{tikzpicture}[scale=0.2,baseline=0pt]
		\draw[blue, thick] (0,-1) -- (0,0);
		\draw[blue, thick] (0,0) -- (4,4);
		\draw[blue, thick] (0,0) -- (-4,4);
		\draw[blue, thick] (-2,2) -- (0,4);
		\draw[blue, thick] (-3,3) -- (-2,4);
		\draw[blue, thick] (-1,1) -- (2,4);
		\filldraw[black] (0,0) circle (5pt)  {};
		\filldraw[black] (-2,2) circle (5pt)  {};
		\filldraw[black] (-3,3) circle (5pt)  {};
		\filldraw[black] (-1,1) circle (5pt)  {};
		\node at (0,0.7)[font=\fontsize{7pt}{0}]{$1$};
		\node at (-1,1.7)[font=\fontsize{7pt}{0}]{$2$};
		\node at (-2,2.7)[font=\fontsize{7pt}{0}]{$3$};
		\node at (-3,3.7)[font=\fontsize{7pt}{0}]{$4$};
		\end{tikzpicture}};
	\node () at (0,18){\begin{tikzpicture}[scale=0.2,baseline=0pt]
		\draw[blue, thick] (0,-1) -- (0,0);
		\draw[blue, thick] (0,0) -- (4,4);
		\draw[blue, thick] (0,0) -- (-4,4);
		\draw[blue, thick] (-3,3) -- (-2,4);
		\draw[blue, thick] (2,2) -- (0,4);
		\draw[blue, thick] (1,3) -- (2,4);
		\filldraw[black] (0,0) circle (5pt)  {};
		\filldraw[black] (-3,3) circle (5pt)  {};
		\filldraw[black] (2,2) circle (5pt)  {};
		\filldraw[black] (1,3) circle (5pt)  {};
		\node at (0,0.7)[font=\fontsize{7pt}{0}]{$1$};
		\node at (-3,3.7)[font=\fontsize{7pt}{0}]{$2$};
		\node at (2,2.7)[font=\fontsize{7pt}{0}]{$3$};
		\node at (1,3.7)[font=\fontsize{7pt}{0}]{$4$};
		\end{tikzpicture}};
	\node () at (-5,28){\begin{tikzpicture}[scale=0.2,baseline=0pt]
		\draw[blue, thick] (0,-1) -- (0,0);
		\draw[blue, thick] (0,0) -- (4,4);
		\draw[blue, thick] (0,0) -- (-4,4);
		\draw[blue, thick] (-3,3) -- (-2,4);
		\draw[blue, thick] (2,2) -- (0,4);
		\draw[blue, thick] (1,3) -- (2,4);
		\filldraw[black] (0,0) circle (5pt)  {};
		\filldraw[black] (-3,3) circle (5pt)  {};
		\filldraw[black] (2,2) circle (5pt)  {};
		\filldraw[black] (1,3) circle (5pt)  {};
		\node at (0,0.7)[font=\fontsize{7pt}{0}]{$1$};
		\node at (-3,3.7)[font=\fontsize{7pt}{0}]{$3$};
		\node at (2,2.7)[font=\fontsize{7pt}{0}]{$2$};
		\node at (1,3.7)[font=\fontsize{7pt}{0}]{$4$};
		\end{tikzpicture}};
	\node () at (-10,38){\begin{tikzpicture}[scale=0.2,baseline=0pt]
		\draw[blue, thick] (0,-1) -- (0,0);
		\draw[blue, thick] (0,0) -- (4,4);
		\draw[blue, thick] (0,0) -- (-4,4);
		\draw[blue, thick] (-3,3) -- (-2,4);
		\draw[blue, thick] (2,2) -- (0,4);
		\draw[blue, thick] (1,3) -- (2,4);
		\filldraw[black] (0,0) circle (5pt)  {};
		\filldraw[black] (-3,3) circle (5pt)  {};
		\filldraw[black] (2,2) circle (5pt)  {};
		\filldraw[black] (1,3) circle (5pt)  {};
		\node at (0,0.7)[font=\fontsize{7pt}{0}]{$1$};
		\node at (-3,3.7)[font=\fontsize{7pt}{0}]{$4$};
		\node at (2,2.7)[font=\fontsize{7pt}{0}]{$2$};
		\node at (1,3.7)[font=\fontsize{7pt}{0}]{$3$};
		\end{tikzpicture}};
	\node () at (-10,18){\begin{tikzpicture}[scale=0.2,baseline=0pt]
		\draw[blue, thick] (0,-1) -- (0,0);
		\draw[blue, thick] (0,0) -- (4,4);
		\draw[blue, thick] (0,0) -- (-4,4);
		\draw[blue, thick] (-2,2) -- (0,4);
		\draw[blue, thick] (-1,3) -- (-2,4);
		\draw[blue, thick] (3,3) -- (2,4);
		\filldraw[black] (0,0) circle (5pt)  {};
		\filldraw[black] (-2,2) circle (5pt)  {};
		\filldraw[black] (-1,3) circle (5pt)  {};
		\filldraw[black] (3,3) circle (5pt)  {};
		\node at (0,0.7)[font=\fontsize{7pt}{0}]{$1$};
		\node at (-2,2.7)[font=\fontsize{7pt}{0}]{$2$};
		\node at (-1,3.7)[font=\fontsize{7pt}{0}]{$3$};
		\node at (3,3.7)[font=\fontsize{7pt}{0}]{$4$};
		\end{tikzpicture}};
	\node () at (5,28){\begin{tikzpicture}[scale=0.2,baseline=0pt]
		\draw[blue, thick] (0,-1) -- (0,0);
		\draw[blue, thick] (0,0) -- (4,4);
		\draw[blue, thick] (0,0) -- (-4,4);
		\draw[blue, thick] (-2,2) -- (0,4);
		\draw[blue, thick] (-1,3) -- (-2,4);
		\draw[blue, thick] (3,3) -- (2,4);
		\filldraw[black] (0,0) circle (5pt)  {};
		\filldraw[black] (-2,2) circle (5pt)  {};
		\filldraw[black] (-1,3) circle (5pt)  {};
		\filldraw[black] (3,3) circle (5pt)  {};
		\node at (0,0.7)[font=\fontsize{7pt}{0}]{$1$};
		\node at (-2,2.7)[font=\fontsize{7pt}{0}]{$2$};
		\node at (-1,3.7)[font=\fontsize{7pt}{0}]{$4$};
		\node at (3,3.7)[font=\fontsize{7pt}{0}]{$3$};
		\end{tikzpicture}};
	\node () at (0,38){\begin{tikzpicture}[scale=0.2,baseline=0pt]
		\draw[blue, thick] (0,-1) -- (0,0);
		\draw[blue, thick] (0,0) -- (4,4);
		\draw[blue, thick] (0,0) -- (-4,4);
		\draw[blue, thick] (-2,2) -- (0,4);
		\draw[blue, thick] (-1,3) -- (-2,4);
		\draw[blue, thick] (3,3) -- (2,4);
		\filldraw[black] (0,0) circle (5pt)  {};
		\filldraw[black] (-2,2) circle (5pt)  {};
		\filldraw[black] (-1,3) circle (5pt)  {};
		\filldraw[black] (3,3) circle (5pt)  {};
		\node at (0,0.7)[font=\fontsize{7pt}{0}]{$1$};
		\node at (-2,2.7)[font=\fontsize{7pt}{0}]{$3$};
		\node at (-1,3.7)[font=\fontsize{7pt}{0}]{$4$};
		\node at (3,3.7)[font=\fontsize{7pt}{0}]{$2$};
		\end{tikzpicture}};
	\node () at (10,38){\begin{tikzpicture}[scale=0.2,baseline=0pt]
		\draw[blue, thick] (0,-1) -- (0,0);
		\draw[blue, thick] (0,0) -- (4,4);
		\draw[blue, thick] (0,0) -- (-4,4);
		\draw[blue, thick] (-3,3) -- (-2,4);
		\draw[blue, thick] (-1,1) -- (2,4);
		\draw[blue, thick] (1,3) -- (0,4);
		\filldraw[black] (0,0) circle (5pt)  {};
		\filldraw[black] (-3,3) circle (5pt)  {};
		\filldraw[black] (-1,1) circle (5pt)  {};
		\filldraw[black] (1,3) circle (5pt)  {};
		\node at (0,0.7)[font=\fontsize{7pt}{0}]{$1$};
		\node at (-1,1.7)[font=\fontsize{7pt}{0}]{$2$};
		\node at (-3,3.7)[font=\fontsize{7pt}{0}]{$3$};
		\node at (1,3.7)[font=\fontsize{7pt}{0}]{$4$};
		\end{tikzpicture}};
	\node () at (0,48){\begin{tikzpicture}[scale=0.2,baseline=0pt]
		\draw[blue, thick] (0,-1) -- (0,0);
		\draw[blue, thick] (0,0) -- (4,4);
		\draw[blue, thick] (0,0) -- (-4,4);
		\draw[blue, thick] (-3,3) -- (-2,4);
		\draw[blue, thick] (-1,1) -- (2,4);
		\draw[blue, thick] (1,3) -- (0,4);
		\filldraw[black] (0,0) circle (5pt)  {};
		\filldraw[black] (-3,3) circle (5pt)  {};
		\filldraw[black] (-1,1) circle (5pt)  {};
		\filldraw[black] (1,3) circle (5pt)  {};
		\node at (0,0.7)[font=\fontsize{7pt}{0}]{$1$};
		\node at (-1,1.7)[font=\fontsize{7pt}{0}]{$2$};
		\node at (-3,3.7)[font=\fontsize{7pt}{0}]{$4$};
		\node at (1,3.7)[font=\fontsize{7pt}{0}]{$3$};
		\end{tikzpicture}};
	\node () at (-15,28){\begin{tikzpicture}[scale=0.2,baseline=0pt]
		\draw[blue, thick] (0,-1) -- (0,0);
		\draw[blue, thick] (0,0) -- (4,4);
		\draw[blue, thick] (0,0) -- (-4,4);
		\draw[blue, thick] (-2,2) -- (0,4);
		\draw[blue, thick] (-3,3) -- (-2,4);
		\draw[blue, thick] (3,3) -- (2,4);
		\filldraw[black] (0,0) circle (5pt)  {};
		\filldraw[black] (-2,2) circle (5pt)  {};
		\filldraw[black] (-3,3) circle (5pt)  {};
		\filldraw[black] (3,3) circle (5pt)  {};
		\node at (0,0.7)[font=\fontsize{7pt}{0}]{$1$};
		\node at (-2,2.7)[font=\fontsize{7pt}{0}]{$2$};
		\node at (-3,3.7)[font=\fontsize{7pt}{0}]{$3$};
		\node at (3,3.7)[font=\fontsize{7pt}{0}]{$4$};
		\end{tikzpicture}};
	\node () at (-20,38){\begin{tikzpicture}[scale=0.2,baseline=0pt]
		\draw[blue, thick] (0,-1) -- (0,0);
		\draw[blue, thick] (0,0) -- (4,4);
		\draw[blue, thick] (0,0) -- (-4,4);
		\draw[blue, thick] (-2,2) -- (0,4);
		\draw[blue, thick] (-3,3) -- (-2,4);
		\draw[blue, thick] (3,3) -- (2,4);
		\filldraw[black] (0,0) circle (5pt)  {};
		\filldraw[black] (-2,2) circle (5pt)  {};
		\filldraw[black] (-3,3) circle (5pt)  {};
		\filldraw[black] (3,3) circle (5pt)  {};
		\node at (0,0.7)[font=\fontsize{7pt}{0}]{$1$};
		\node at (-2,2.7)[font=\fontsize{7pt}{0}]{$2$};
		\node at (-3,3.7)[font=\fontsize{7pt}{0}]{$4$};
		\node at (3,3.7)[font=\fontsize{7pt}{0}]{$3$};
		\end{tikzpicture}};
	\node () at (-15,48){\begin{tikzpicture}[scale=0.2,baseline=0pt]
		\draw[blue, thick] (0,-1) -- (0,0);
		\draw[blue, thick] (0,0) -- (4,4);
		\draw[blue, thick] (0,0) -- (-4,4);
		\draw[blue, thick] (-2,2) -- (0,4);
		\draw[blue, thick] (-3,3) -- (-2,4);
		\draw[blue, thick] (3,3) -- (2,4);
		\filldraw[black] (0,0) circle (5pt)  {};
		\filldraw[black] (-2,2) circle (5pt)  {};
		\filldraw[black] (-3,3) circle (5pt)  {};
		\filldraw[black] (3,3) circle (5pt)  {};
		\node at (0,0.7)[font=\fontsize{7pt}{0}]{$1$};
		\node at (-2,2.7)[font=\fontsize{7pt}{0}]{$3$};
		\node at (-3,3.7)[font=\fontsize{7pt}{0}]{$4$};
		\node at (3,3.7)[font=\fontsize{7pt}{0}]{$2$};
		\end{tikzpicture}};
	\end{tikzpicture}};
	\end{tikzpicture}
\end{center}
\caption{\sl The weak order on permutations of $[4]$.}\label{fig:weak4}
\end{figure}
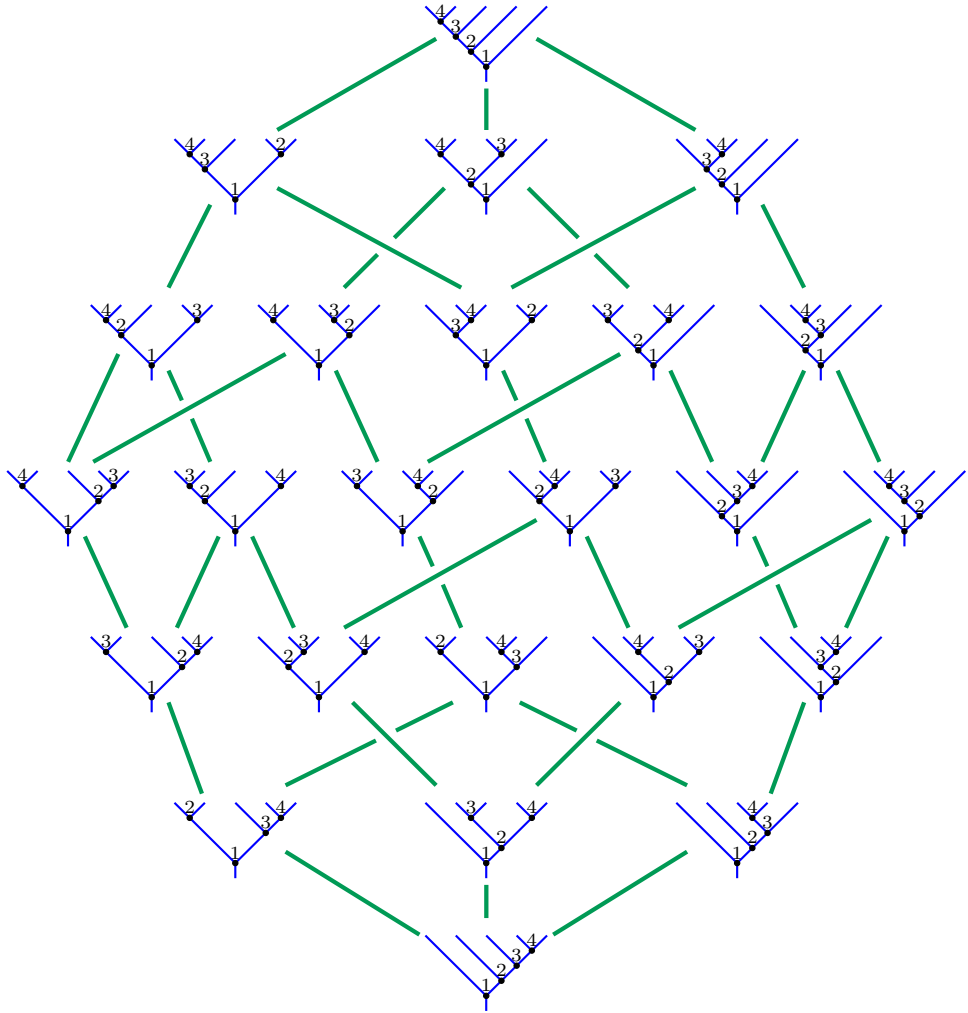

\subsection{Hopf algebra of permutations} \label{sec:ssym}
In this section, we recall the Malvenuto-Reutenauer Hopf algebra of permutations \cite{MR95}.

\begin{Definition}\label{def:ssym}
	As a graded vector space, Malvenuto-Reutenauer Hopf algebra, $\SSym$ is  $\displaystyle\bigoplus_{n\geq 0}\Bbbk\Perm_n$ where $\Perm_n$ is the set of permutations of degree $n$. By convention $\Perm_0$ is the set containing the empty permutation. The fundamental basis of $\SSym$ is $\{F_\sigma:\sigma\in\Perm\}$.
\end{Definition}

The multiplication and comultiplication rules for $\SSym$ are
$$F_\sigma\cdot F_\tau=\sum_{\tau\mapsto(\tau_1,\dots,\tau_{\deg(\sigma)+1})}F_{\sigma\overleftarrow{\shuffle}(\tau_1,\dots,\tau_{\deg(\sigma)+1})},$$
$$\Delta(F_\sigma)=\sum_{i=0}^{\deg(\sigma)}F_{{}^i\sigma}\otimes F_{\sigma^i}=\sum_{\sigma\mapsto(\sigma_1,\sigma_2)}F_{\std(\sigma_1)}\otimes F_{\std(\sigma_2)}.$$

\begin{Example}
		Let $\sigma=\begin{tikzpicture}[scale=0.2,baseline=0pt]
		\draw[blue, thick] (0,-1) -- (0,0);
		\draw[blue, thick] (0,0) -- (2,2);
		\draw[blue, thick] (0,0) -- (-2,2);
		\draw[blue, thick] (-1,1) -- (0,2);
		\filldraw[black] (0,0) circle (5pt)  {};
		\filldraw[black] (-1,1) circle (5pt)  {};
		\node at (0,0.7)[font=\fontsize{7pt}{0}]{$1$};
		\node at (-1,1.7)[font=\fontsize{7pt}{0}]{$2$};
	\end{tikzpicture}$ and $\tau=\begin{tikzpicture}[scale=0.2,baseline=0pt]
		\draw[red, thick] (0,-1) -- (0,0);
		\draw[red, thick] (0,0) -- (2,2);
		\draw[red, thick] (0,0) -- (-2,2);
		\draw[red, thick] (1,1) -- (0,2);
		\filldraw[black] (0,0) circle (5pt)  {};
		\filldraw[black] (1,1) circle (5pt)  {};
		\node at (0,0.7)[font=\fontsize{7pt}{0}]{$1$};
		\node at (1,1.7)[font=\fontsize{7pt}{0}]{$2$};
	\end{tikzpicture}$. Then
		$$F_\sigma\cdot F_\tau=
	F_{\begin{tikzpicture}[scale=0.2,baseline=0pt]
		\draw[blue, thick] (0,-1) -- (0,0);
		\draw[blue, thick] (0,0) -- (2,2);
		\draw[blue, thick] (0,0) -- (-4,4);
		\draw[red, thick] (2,2) -- (4,4);
		\draw[red, thick] (2,2) -- (0,4);
		\draw[red, thick] (3,3) -- (2,4);
		\draw[blue, thick] (-3,3) -- (-2,4);
		\filldraw[black] (0,0) circle (5pt)  {};
		\filldraw[black] (2,2) circle (5pt)  {};
		\filldraw[black] (3,3) circle (5pt)  {};
		\filldraw[black] (-3,3) circle (5pt)  {};
		\node at (0,0.7)[font=\fontsize{7pt}{0}]{$1$};
		\node at (-3,3.7)[font=\fontsize{7pt}{0}]{$2$};
		\node at (2,2.7)[font=\fontsize{7pt}{0}]{$3$};
		\node at (3,3.7)[font=\fontsize{7pt}{0}]{$4$};
	\end{tikzpicture}}+
	F_{\begin{tikzpicture}[scale=0.2,baseline=0pt]
		\draw[blue, thick] (0,-1) -- (0,0);
		\draw[blue, thick] (0,0) -- (4,4);
		\draw[blue, thick] (0,0) -- (-4,4);
		\draw[blue, thick] (-1,1) -- (0,2);
		\draw[red, thick] (0,2) -- (2,4);
		\draw[red, thick] (0,2) -- (-2,4);
		\draw[red, thick] (1,3) -- (0,4);
		\filldraw[black] (0,0) circle (5pt)  {};
		\filldraw[black] (-1,1) circle (5pt)  {};
		\filldraw[black] (0,2) circle (5pt)  {};
		\filldraw[black] (1,3) circle (5pt)  {};
		\node at (0,0.7)[font=\fontsize{7pt}{0}]{$1$};
		\node at (-1,1.7)[font=\fontsize{7pt}{0}]{$2$};
		\node at (0,2.7)[font=\fontsize{7pt}{0}]{$3$};
		\node at (1,3.7)[font=\fontsize{7pt}{0}]{$4$};
	\end{tikzpicture}}+
	F_{\begin{tikzpicture}[scale=0.2,baseline=0pt]
		\draw[blue, thick] (0,-1) -- (0,0);
		\draw[blue, thick] (0,0) -- (4,4);
		\draw[blue, thick] (0,0) -- (-2,2);
		\draw[red, thick] (-2,2) -- (-4,4);
		\draw[blue, thick] (-1,1) -- (2,4);
		\draw[red, thick] (-2,2) -- (0,4);
		\draw[red, thick] (-1,3) -- (-2,4);
		\filldraw[black] (0,0) circle (5pt)  {};
		\filldraw[black] (-1,1) circle (5pt)  {};
		\filldraw[black] (-2,2) circle (5pt)  {};
		\filldraw[black] (-1,3) circle (5pt)  {};
		\node at (0,0.7)[font=\fontsize{7pt}{0}]{$1$};
		\node at (-1,1.7)[font=\fontsize{7pt}{0}]{$2$};
		\node at (-2,2.7)[font=\fontsize{7pt}{0}]{$3$};
		\node at (-1,3.7)[font=\fontsize{7pt}{0}]{$4$};
	\end{tikzpicture}}+
	F_{\begin{tikzpicture}[scale=0.2,baseline=0pt]
		\draw[blue, thick] (0,-1) -- (0,0);
		\draw[blue, thick] (0,0) -- (3,3);
		\draw[red, thick] (3,3) -- (4,4);
		\draw[blue, thick] (0,0) -- (-4,4);
		\draw[red, thick] (3,3) -- (2,4);
		\draw[blue, thick] (-2,2) -- (-1,3);
		\draw[red, thick] (-1,3) -- (0,4);
		\draw[red, thick] (-1,3) -- (-2,4);
		\filldraw[black] (0,0) circle (5pt)  {};
		\filldraw[black] (3,3) circle (5pt)  {};
		\filldraw[black] (-2,2) circle (5pt)  {};
		\filldraw[black] (-1,3) circle (5pt)  {};
		\node at (0,0.7)[font=\fontsize{7pt}{0}]{$1$};
		\node at (-2,2.7)[font=\fontsize{7pt}{0}]{$2$};
		\node at (-1,3.7)[font=\fontsize{7pt}{0}]{$3$};
		\node at (3,3.7)[font=\fontsize{7pt}{0}]{$4$};
	\end{tikzpicture}}+
	F_{\begin{tikzpicture}[scale=0.2,baseline=0pt]
		\draw[blue, thick] (0,-1) -- (0,0);
		\draw[blue, thick] (0,0) -- (3,3);
		\draw[red, thick] (3,3) -- (4,4);
		\draw[blue, thick] (0,0) -- (-3,3);
		\draw[red, thick] (-3,3) -- (-4,4);
		\draw[red, thick] (3,3) -- (2,4);
		\draw[blue, thick] (-2,2) -- (0,4);
		\draw[red, thick] (-3,3) -- (-2,4);
		\filldraw[black] (0,0) circle (5pt)  {};
		\filldraw[black] (3,3) circle (5pt)  {};
		\filldraw[black] (-2,2) circle (5pt)  {};
		\filldraw[black] (-3,3) circle (5pt)  {};
		\node at (0,0.7)[font=\fontsize{7pt}{0}]{$1$};
		\node at (-2,2.7)[font=\fontsize{7pt}{0}]{$2$};
		\node at (-3,3.7)[font=\fontsize{7pt}{0}]{$3$};
		\node at (3,3.7)[font=\fontsize{7pt}{0}]{$4$};
	\end{tikzpicture}}+
	F_{\begin{tikzpicture}[scale=0.2,baseline=0pt]
		\draw[blue, thick] (0,-1) -- (0,0);
		\draw[blue, thick] (0,0) -- (4,4);
		\draw[blue, thick] (0,0) -- (-3,3);
		\draw[red, thick] (-3,3) -- (-4,4);
		\draw[blue, thick] (-1,1) -- (1,3);
		\draw[red, thick] (1,3) -- (2,4);
		\draw[red, thick] (-3,3) -- (-2,4);
		\draw[red, thick] (1,3) -- (0,4);
		\filldraw[black] (0,0) circle (5pt)  {};
		\filldraw[black] (-1,1) circle (5pt)  {};
		\filldraw[black] (-3,3) circle (5pt)  {};
		\filldraw[black] (1,3) circle (5pt)  {};
		\node at (0,0.7)[font=\fontsize{7pt}{0}]{$1$};
		\node at (-1,1.7)[font=\fontsize{7pt}{0}]{$2$};
		\node at (-3,3.7)[font=\fontsize{7pt}{0}]{$3$};
		\node at (1,3.7)[font=\fontsize{7pt}{0}]{$4$};
	\end{tikzpicture}}$$
	Let $\sigma=$
	\begin{tikzpicture}[scale=0.2,baseline=0pt]
		\draw[blue, thick] (0,-1) -- (0,0);
		\draw[blue, thick] (0,0) -- (5,5);
		\draw[blue, thick] (0,0) -- (-5,5);
		\draw[blue, thick] (2,2) -- (-1,5);
		\draw[blue, thick] (1,3) -- (3,5);
		\draw[blue, thick] (2,4) -- (1,5);
		\draw[blue, thick] (-4,4) -- (-3,5);
		\filldraw[black] (0,0) circle (5pt)  {};
		\filldraw[black] (2,2) circle (5pt)  {};
		\filldraw[black] (1,3) circle (5pt)  {};
		\filldraw[black] (2,4) circle (5pt)  {};
		\filldraw[black] (-4,4) circle (5pt)  {};
		\node at (0,0.7)[font=\fontsize{7pt}{0}]{$1$};
		\node at (2,2.7)[font=\fontsize{7pt}{0}]{$2$};
		\node at (1,3.7)[font=\fontsize{7pt}{0}]{$3$};
		\node at (-4,4.7)[font=\fontsize{7pt}{0}]{$4$};
		\node at (2,4.7)[font=\fontsize{7pt}{0}]{$5$};
	\end{tikzpicture}. Then
	$$\Delta_+(F_\sigma)=
	F_{\begin{tikzpicture}[scale=0.2,baseline=0pt]
		\draw[blue, thick] (0,-1) -- (0,0);
		\draw[blue, thick] (0,0) -- (1,1);
		\draw[blue, thick] (0,0) -- (-1,1);
		\filldraw[black] (0,0) circle (5pt)  {};
		\node at (0,0.7)[font=\fontsize{7pt}{0}]{$1$};
	\end{tikzpicture}}\otimes
	F_{\begin{tikzpicture}[scale=0.2,baseline=0pt]
		\draw[blue, thick] (0,-1) -- (0,0);
		\draw[blue, thick] (0,0) -- (4,4);
		\draw[blue, thick] (0,0) -- (-4,4);
		\draw[blue, thick] (1,1) -- (-2,4);
		\draw[blue, thick] (0,2) -- (2,4);
		\draw[blue, thick] (1,3) -- (0,4);
		\filldraw[black] (0,0) circle (5pt)  {};
		\filldraw[black] (1,1) circle (5pt)  {};
		\filldraw[black] (0,2) circle (5pt)  {};
		\filldraw[black] (1,3) circle (5pt)  {};
		\node at (0,0.7)[font=\fontsize{7pt}{0}]{$1$};
		\node at (1,1.7)[font=\fontsize{7pt}{0}]{$2$};
		\node at (0,2.7)[font=\fontsize{7pt}{0}]{$3$};
		\node at (1,3.7)[font=\fontsize{7pt}{0}]{$4$};
	\end{tikzpicture}}+
	F_{\begin{tikzpicture}[scale=0.2,baseline=0pt]
		\draw[blue, thick] (0,-1) -- (0,0);
		\draw[blue, thick] (0,0) -- (2,2);
		\draw[blue, thick] (0,0) -- (-2,2);
		\draw[blue, thick] (-1,1) -- (0,2);
		\filldraw[black] (0,0) circle (5pt)  {};
		\filldraw[black] (-1,1) circle (5pt)  {};
		\node at (0,0.7)[font=\fontsize{7pt}{0}]{$1$};
		\node at (-1,1.7)[font=\fontsize{7pt}{0}]{$2$};
	\end{tikzpicture}}\otimes
	F_{\begin{tikzpicture}[scale=0.2,baseline=0pt]
		\draw[blue, thick] (0,-1) -- (0,0);
		\draw[blue, thick] (0,0) -- (3,3);
		\draw[blue, thick] (0,0) -- (-3,3);
		\draw[blue, thick] (-1,1) -- (1,3);
		\draw[blue, thick] (0,2) -- (-1,3);
		\filldraw[black] (0,0) circle (5pt)  {};
		\filldraw[black] (-1,1) circle (5pt)  {};
		\filldraw[black] (0,2) circle (5pt)  {};
		\node at (0,0.7)[font=\fontsize{7pt}{0}]{$1$};
		\node at (-1,1.7)[font=\fontsize{7pt}{0}]{$2$};
		\node at (0,2.7)[font=\fontsize{7pt}{0}]{$3$};
	\end{tikzpicture}}+
	F_{\begin{tikzpicture}[scale=0.2,baseline=0pt]
		\draw[blue, thick] (0,-1) -- (0,0);
		\draw[blue, thick] (0,0) -- (3,3);
		\draw[blue, thick] (0,0) -- (-3,3);
		\draw[blue, thick] (2,2) -- (1,3);
		\draw[blue, thick] (-2,2) -- (-1,3);
		\filldraw[black] (0,0) circle (5pt)  {};
		\filldraw[black] (2,2) circle (5pt)  {};
		\filldraw[black] (-2,2) circle (5pt)  {};
		\node at (0,0.7)[font=\fontsize{7pt}{0}]{$1$};
		\node at (2,2.7)[font=\fontsize{7pt}{0}]{$2$};
		\node at (-2,2.7)[font=\fontsize{7pt}{0}]{$3$};
	\end{tikzpicture}}\otimes
	F_{\begin{tikzpicture}[scale=0.2,baseline=0pt]
		\draw[blue, thick] (0,-1) -- (0,0);
		\draw[blue, thick] (0,0) -- (2,2);
		\draw[blue, thick] (0,0) -- (-2,2);
		\draw[blue, thick] (-1,1) -- (0,2);
		\filldraw[black] (0,0) circle (5pt)  {};
		\filldraw[black] (-1,1) circle (5pt)  {};
		\node at (0,0.7)[font=\fontsize{7pt}{0}]{$1$};
		\node at (-1,1.7)[font=\fontsize{7pt}{0}]{$2$};
	\end{tikzpicture}}+
	F_{\begin{tikzpicture}[scale=0.2,baseline=0pt]
		\draw[blue, thick] (0,-1) -- (0,0);
		\draw[blue, thick] (0,0) -- (4,4);
		\draw[blue, thick] (0,0) -- (-4,4);
		\draw[blue, thick] (2,2) -- (0,4);
		\draw[blue, thick] (3,3) -- (2,4);
		\draw[blue, thick] (-3,3) -- (-2,4);
		\filldraw[black] (0,0) circle (5pt)  {};
		\filldraw[black] (2,2) circle (5pt)  {};
		\filldraw[black] (3,3) circle (5pt)  {};
		\filldraw[black] (-3,3) circle (5pt)  {};
		\node at (0,0.7)[font=\fontsize{7pt}{0}]{$1$};
		\node at (2,2.7)[font=\fontsize{7pt}{0}]{$2$};
		\node at (-3,3.7)[font=\fontsize{7pt}{0}]{$3$};
		\node at (3,3.7)[font=\fontsize{7pt}{0}]{$4$};
	\end{tikzpicture}}\otimes
	F_{\begin{tikzpicture}[scale=0.2,baseline=0pt]
		\draw[blue, thick] (0,-1) -- (0,0);
		\draw[blue, thick] (0,0) -- (1,1);
		\draw[blue, thick] (0,0) -- (-1,1);
		\filldraw[black] (0,0) circle (5pt)  {};
		\node at (0,0.7)[font=\fontsize{7pt}{0}]{$1$};
	\end{tikzpicture}}$$
\end{Example}

It is known that the dual multiplication can be described using intervals on the weak order. (Note that \cite{LR02} visualizes permutations as decreasing trees, so their definition of $\sigma\backslash \tau$ is what we call $\sigma/\tau$, and vice versa.)

\begin{Proposition}\label{prop:perm}\cite{LR02}
	Let $F_\sigma$ and $F_\tau$ be two permutations, and let $F_\sigma^*$, $F_\tau^*$ be their dual basis element in the graded dual $\SSym^*$. Then,
	$$m(F_\sigma^*\otimes F_\tau^*)=\sum_{\sigma\backslash \tau\leq_{w} \rho\leq_{w} \sigma/\tau}F_\rho^*.$$
\end{Proposition}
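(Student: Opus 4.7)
The statement is a dualization identity, so the first move is to unwind what $m(F_\sigma^*\otimes F_\tau^*)$ means. By the duality pairing,
\[
\langle m(F_\sigma^*\otimes F_\tau^*),\, F_\rho\rangle \;=\; \langle F_\sigma^*\otimes F_\tau^*,\, \Delta(F_\rho)\rangle,
\]
and the deconcatenation formula $\Delta(F_\rho)=\sum_{i=0}^{\deg\rho}F_{{}^i\rho}\otimes F_{\rho^i}$ from Definition~\ref{def:ssym} forces the pairing to equal $1$ precisely when $i=\deg\sigma=:n$ together with ${}^n\rho=\sigma$ and $\rho^n=\tau$, and $0$ otherwise. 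Hence
\[
m(F_\sigma^*\otimes F_\tau^*)=\sum_{\rho:\;{}^n\rho=\sigma,\;\rho^n=\tau} F_\rho^*,
\]
with coefficients $1$. The remaining task is to identify this set of $\rho$ with the weak-order interval $[\sigma\backslash\tau,\,\sigma/\tau]$.

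Next I translate the condition into classical one-line notation. Writing $\sigma=\sigma_1\cdots\sigma_n$, $\tau=\tau_1\cdots\tau_m$, the permutations $\rho$ with ${}^n\rho=\sigma$ and $\rho^n=\tau$ are exactly those of length $n+m$ obtained by choosing an $n$-subset $S\subseteq[n+m]$ of values to place in the first $n$ positions in the relative order prescribed by $\sigma$, and placing the complementary values in the last $m$ positions in the order prescribed by $\tau$. From Definition~\ref{def:perm} one checks directly that $\sigma\backslash\tau$ corresponds to $S=\{1,\dots,n\}$, giving $\sigma_1\cdots\sigma_n(\tau_1+n)\cdots(\tau_m+n)$, while $\sigma/\tau$ corresponds to $S=\{m+1,\dots,n+m\}$, giving $(\sigma_1+m)\cdots(\sigma_n+m)\tau_1\cdots\tau_m$.

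Now I compute inversion sets. Every $\rho$ in the set shares the same ``internal'' inversions: those between two positions in $[1,n]$ (determined by $\sigma$) and those between two positions in $[n+1,n+m]$ (determined by $\tau$). The only freedom lies in the ``cross'' inversions, namely pairs $(a,b)$ with $a\le n<b$. For $\sigma\backslash\tau$ there are none; for $\sigma/\tau$ every such pair is an inversion. For a general $\rho$ in the set, the cross inversions correspond to an order ideal in the product poset of positions $[1,n]\times[n+1,n+m]$, parametrized by the subset $S$. In particular $\iInv(\sigma\backslash\tau)\subseteq\iInv(\rho)\subseteq\iInv(\sigma/\tau)$ for every $\rho$ in our set, giving one inclusion of the interval.

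The remaining step, which is the real content, is to show that every $\rho$ in the weak-order interval $[\sigma\backslash\tau,\sigma/\tau]$ actually satisfies ${}^n\rho=\sigma$ and $\rho^n=\tau$. The plan is to produce, for any cover $\rho\lessdot T_a\rho$ in the interval, the value $a$ and $a+1$ must lie on opposite sides of the cut (one in positions $[1,n]$, the other in $[n+1,n+m]$); otherwise swapping them either leaves the interval or violates one of the standardization conditions. One then builds a saturated chain from $\sigma\backslash\tau$ to any chosen $\rho$ in the set by toggling cross-inversions one at a time, each such toggle staying inside the set since it only changes one pair in $[1,n]\times[n+1,n+m]$. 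The main obstacle is precisely this bookkeeping: verifying that the local moves allowed by the weak order interact correctly with the global constraint of fixed standardizations on each half. Once this is done, the interval description matches the sum produced above, completing the proof.
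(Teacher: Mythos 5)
Your overall route is sound, and in fact the paper does not prove this proposition at all: it is cited to \cite{LR02}, so the closest internal comparison is the paper's proof of the analogous statement for $\STSym$ (Proposition~\ref{prop:interval_multiplication_STSYM}), which proceeds exactly as you do — dualize the deconcatenation coproduct, describe the resulting set of $\rho$, and sandwich inversion sets between those of $\sigma\backslash\tau$ and $\sigma/\tau$. Your first four steps are correct. The one place you stop short is the converse inclusion (every $\rho$ in the interval lies in the sum), which you outline via covers $T_a$ and saturated chains of cross-inversion toggles and then flag as unfinished bookkeeping. That machinery is unnecessary: since the paper defines $\leq_w$ by inclusion of inversion sets, $\sigma\backslash\tau\leq_w\rho\leq_w\sigma/\tau$ gives $\Inv(\sigma\backslash\tau)\subseteq\Inv(\rho)\subseteq\Inv(\sigma/\tau)$ directly, and you have already observed that $\Inv(\sigma\backslash\tau)$ and $\Inv(\sigma/\tau)$ agree on all pairs of positions lying in the same half. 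Hence for $i<j\leq n$ one gets $\rho(i)>\rho(j)$ if and only if $\sigma(i)>\sigma(j)$, i.e.\ $\std(\rho(1)\cdots\rho(n))=\sigma$, and likewise on positions $n+1,\dots,n+m$ for $\tau$; this is precisely the condition ${}^{n}\rho=\sigma$, $\rho^{n}=\tau$. So the gap you identify as "the real content" closes in two lines from facts you already established, and no analysis of cover relations is needed.
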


The weak order also leads to a cofree basis of $\SSym$.

\begin{Definition}\cite{AS05}
	The monomial basis of $\SSym$, $\{M_\sigma\}$, is defined using M\"obius inversion by the formula  $\displaystyle F_\tau=\sum_{\sigma\geq_{w} \tau}M_\sigma$. 
\end{Definition}

In \cite{AS05}, the authors provide formulas for the coproduct, product and antipode of these monomial basis elements. As in the case for $\YSym$,
the idea of global descent is important.

\begin{Definition} \label{def:permutation-globaldescent}
	Let $\sigma$ be a permutation of degree $n$. Its global descent set is $\GD(\sigma)=\{i\in[n-1]:\sigma=\tau/\rho \text{ for some }\tau, \rho\text{ and }\deg(\tau)=i\}$. Equivalently, $\GD(\sigma)=\{i\in[n-1]:\sigma(a)>\sigma(b)\text{ for all }a\leq i< b\}$.
\end{Definition}

For example, when $\sigma=\begin{tikzpicture}[scale=0.2,baseline=0pt]
		\draw[blue, thick] (0,-1) -- (0,0);
		\draw[blue, thick] (0,0) -- (7,7);
		\draw[blue, thick] (5,5) -- (3,7);
		\draw[blue, thick] (4,6) -- (5,7);
		\draw[blue, thick] (0,0) -- (-7,7);
		\draw[blue, thick] (-6,6) -- (-5,7);
		\draw[blue, thick] (-1,5) -- (-3,7);
		\draw[blue, thick] (-2,6) -- (-1,7);
		\draw[red, thick] (-3,3) -- (1,7);
		\filldraw[black] (0,0) circle (5pt)  {};
		\filldraw[black] (5,5) circle (5pt)  {};
		\filldraw[black] (4,6) circle (5pt)  {};
		\filldraw[black] (-6,6) circle (5pt)  {};
		\filldraw[black] (-1,5) circle (5pt)  {};
		\filldraw[black] (-2,6) circle (5pt)  {};
		\filldraw[black] (-3,3) circle (5pt)  {};
		\node at (0,0.7)[font=\fontsize{7pt}{0}]{$1$};
		\node at (5,5.7)[font=\fontsize{7pt}{0}]{$2$};
		\node at (4,6.7)[font=\fontsize{7pt}{0}]{$3$};
		\node at (-6,6.7)[font=\fontsize{7pt}{0}]{$5$};
		\node at (-1,5.7)[font=\fontsize{7pt}{0}]{$6$};
		\node at (-2,6.7)[font=\fontsize{7pt}{0}]{$7$};
		\node at (-3,3.7)[font=\fontsize{7pt}{0}]{$4$};
	\end{tikzpicture}$, we have that $\GD(\sigma)=\{4\}$, as indicated by the red branch.
	
\begin{Proposition} \label{prop:coproduct-monomial-ssym} [Theorems 3.1, 4.1, and 5.5 \cite{AS05}]
\begin{enumerate}
\item The coproduct of monomial
basis elements in $\SSym$ is given by
\[
\Delta_{+}(M_{\sigma})=\sum_{i\in\GD(\sigma)}M_{{}^{i}\sigma}\otimes M_{\sigma^{i}}.
\]
\item The product of monomial basis elements has non-negative
integer coefficients.
\item In the antipode image $\mathcal{S}(M_\sigma)$, all
terms have the same sign, being the parity of $|\GD(\sigma)|+1$.
\end{enumerate}
\end{Proposition}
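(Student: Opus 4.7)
The plan is to follow the strategy of Aguiar and Sottile \cite{AS05}, which proceeds by iterated M\"obius inversion on the left weak order, combined with a structural lemma that aligns deconcatenation with the weak order. Alternatively, one can verify that $\SSym$ satisfies the monomial basis axioms of Section \ref{sec:axioms} and obtain the three statements as corollaries of the general theorems proved there; both routes will pass through essentially the same key lemma.

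For part (1), start from the fundamental coproduct $\Delta_+(F_\tau)=\sum_{i=1}^{\deg\tau-1} F_{{}^i\tau}\otimes F_{\tau^i}$ and substitute the inversion $M_\sigma=\sum_{\tau\geq_w\sigma}\mu_w(\sigma,\tau)F_\tau$, then re-expand each $F_{{}^i\tau}$ and $F_{\tau^i}$ back in the $M$ basis. The coefficient of $M_\alpha\otimes M_\beta$ becomes an alternating sum of $\mu_w$ values indexed by pairs $(\tau,i)$ with $\tau\geq_w\sigma$, ${}^i\tau\leq_w\alpha$, $\tau^i\leq_w\beta$. The central structural lemma to prove is that the deconcatenation ${}^i(\cdot),(\cdot)^i$ respects the weak order: for fixed $i$ the map $\tau\mapsto({}^i\tau,\tau^i)$ gives a poset embedding whose image is a product of weak-order intervals, and this embedding is a bijection on the fiber over a given pair exactly when $i$ is a global descent of the reassembled permutation ${}^i\tau/\tau^i$. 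Combined with the well-known fact that the weak order is ranked by inversions and locally Boolean along the relevant slices, this forces the alternating sum to vanish unless $i\in\GD(\sigma)$, $\alpha={}^i\sigma$, and $\beta=\sigma^i$, yielding the stated formula.

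For part (2), dualize Proposition \ref{prop:perm} to write the product in the fundamental basis as a sum over weak-order intervals, change bases via $F_\rho=\sum_{\pi\geq_w\rho}M_\pi$, and apply M\"obius inversion once more. The coefficient of $M_\pi$ in $M_\sigma\cdot M_\tau$ then becomes $\sum_{\rho}\mu_w(\rho,\pi)$ where $\rho$ ranges over the interval $[\sigma\backslash\tau,\sigma/\tau]\cap[-\infty,\pi]$. Non-negativity of such sums reduces to an EL-shelling (equivalently a rank-preserving matching) argument on subintervals of the weak order, giving a positive formula in which each nonzero term counts a concrete combinatorial object (this is the content of \cite[Thm.~4.1]{AS05}).

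For part (3), apply Takeuchi's formula $\mathcal{S}(M_\sigma)=\sum_{k\geq 1}(-1)^k\, m^{(k-1)}\circ \bar{\Delta}^{(k-1)}(M_\sigma)$ using the iterated coproduct furnished by part (1): each iteration splits at a global descent, so $\bar{\Delta}^{(k-1)}(M_\sigma)$ is supported on ordered $k$-tuples indexed by chains in the Boolean lattice $2^{\GD(\sigma)}$. Pairing this with the positive product formula from (2) yields a cancellation-free, grouping-free expansion in which every term acquires a common sign $(-1)^{|\GD(\sigma)|+1}$, since the contributing chain lengths and the $(-1)^k$ have matched parity. The main obstacle throughout is the structural lemma of part (1)---making precise how the weak order, the deconcatenation, and the notion of global descent fit into a single compatible framework; once that lemma is established, (2) and (3) follow by standard Hopf-algebraic manipulations.
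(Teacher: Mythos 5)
Your outline for part (1) is essentially sound and matches the route the paper takes: the paper itself only cites \cite{AS05} for this Proposition, but its abstract machinery (Theorem \ref{thm:monomialcoproduct}, verified for $\SSym$ in Proposition \ref{prop:axiomssym}) reproves it, and the key lemma you isolate --- that for each $i$ the map $\tau\mapsto({}^{i}\tau,\tau^{i})$ restricts to a bijection from $\{\tau\geq_{w}\sigma : i\in\GD(\tau)\}$ onto the product of up-sets above $({}^{i}\sigma,\sigma^{i})$, with $/$ as inverse --- is exactly axioms ($\Delta$2)--($\Delta$3) and the bijection used at the end of the proof of Lemma \ref{lem:kfactor-coproductaxiom}. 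Parts (2) and (3), however, contain genuine gaps. For (2) you start from the wrong product formula: in $\SSym$ the fundamental basis multiplies by shifted shuffles, $F_{\sigma}\cdot F_{\tau}=\sum_{\zeta}F_{\zeta(\sigma,\tau)}$; it is only in the graded dual $\SSym^{*}$ that the product is the interval sum of Proposition \ref{prop:perm}, and dualizing that interval formula yields the \emph{coproduct} of $\SSym$, not its product. Moreover, even granting a formula for $F_{\sigma}\cdot F_{\tau}$, your expression $\sum_{\rho}\mu_{w}(\rho,\pi)$ over $\rho\in[\sigma\backslash\tau,\sigma/\tau]$ with $\rho\leq_{w}\pi$ omits the M\"obius inversion on the inputs $M_{\sigma}=\sum_{\sigma'\geq_{w}\sigma}\mu(\sigma,\sigma')F_{\sigma'}$ and $M_{\tau}$, so at best it computes the coefficient of $M_{\pi}$ in $F_{\sigma}\cdot F_{\tau}$. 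The actual non-negativity comes from a M\"obius inversion over the product poset of pairs $(\sigma',\tau')\geq(\sigma,\tau)$, partitioning the sets $B^{\rho}_{\sigma',\tau'}$ of shuffles with $\zeta(\sigma',\tau')\leq_{w}\rho$ into the disjoint sets $A^{\rho}_{\sigma',\tau'}$ of (\ref{eq:a-def}); this uses the lattice property ($m$0) and the order/join compatibility of shuffles ($m$2)--($m$3), not a shelling of weak-order intervals, and sums of M\"obius values over sub-ideals of intervals are not non-negative in general.

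For (3), the sentence asserting that ``every term acquires a common sign since the contributing chain lengths and the $(-1)^{k}$ have matched parity'' is precisely the claim to be proved, not an argument for it. Takeuchi's formula together with part (1) gives $\mathcal{S}(M_{\sigma})=\sum_{S\subseteq\GD(\sigma)}(-1)^{|S|+1}\sum_{\rho}|A^{\rho}_{\sigma|S}|M_{\rho}$, in which the summands for $S\subsetneq\GD(\sigma)$ carry signs opposite to the $S=\GD(\sigma)$ term whenever the parities of $|S|$ and $|\GD(\sigma)|$ differ, so a priori there is massive cancellation. Resolving it requires a second M\"obius inversion on the Boolean lattice of subsets of $\GD(\sigma)$, which in turn requires comparing shuffles of $\sigma|R$ with shuffles of $\sigma|S$ for $R\subseteq S$ via the injections $\iota_{R,S}$ built from the associative operation $\backslash$ and the comparison $\sigma\backslash\tau\leq_{w}\sigma/\tau$ (axioms ($\mathcal{S}$0)--($\mathcal{S}$3), resting on \cite[Prop.~2.16.iii]{AS05}). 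Without establishing the disjoint decomposition $A^{\rho}_{\sigma|S}=\bigsqcup_{R\subseteq S}\iota_{R,S}(C^{\rho}_{\sigma|R})$ as in Theorem \ref{thm:antipode}, your argument does not yield a common sign.
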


In Examples \ref{ex:monomialcoef} and \ref{ex:below} we illustrate how to calculate the product and the antipode in the monomial basis.

Recall from the end of Section \ref{sec:notations} the forgetful projection $\pi:\LPT\to \PT$, which restricts to $\pi:\Perm\to\PBT$ sending a permutation to its underlying binary tree. It follows from Proposition \ref{prop:pi-property} that the projection $\Pi:\SSym\to\YSym$, $F_\sigma\mapsto F_{\pi(\sigma)}$ is a Hopf morphism. 

\begin{Remark}
In \cite{AS06} they construct $\Pi$ by visualising permutations as decreasing trees instead. The two maps are equivalent by transforming our increasing trees into decreasing trees via the map $i \leftrightarrow n-i$ on the labels. This is a Hopf morphism from $\SSym$ to a version of $\SSym$ with multiplication in the opposite order, which explains why our version of $\YSym$ has multiplication in the opposite order from the one in \cite{AS06}.
\end{Remark}

In the opposite direction, there is a section map $\iota:\PBT\rightarrow\Perm$, sending $t$ to the unique $213$-avoiding permutation whose underlying tree is $t$. The construction is given at the end of Section \ref{sec:gsp}, and Lemma \ref{lem:iota-max} shows that this is the maximal preimage of $t$ under $\pi$. 

We note additional well-known order-theoretic properties of the maps $\pi$ and $\iota$, which are special cases of Proposition \ref{prop:lsb} and Lemma \ref{lem:iota-gd} regarding planar trees which we will prove in Section \ref{sec:STSYM}.

\begin{Proposition}\label{prop:pi-property-bin}The maps $\pi:\Perm\rightarrow\PBT$ and  $\iota:\PBT\rightarrow\Perm$ satisfy the following properties:
\begin{enumerate}
	\item $\pi$ is order-preserving: $\sigma \leq_{w} \tau  \implies \pi(\sigma) \leq_{T} \pi (\tau)$;
	\item $\iota$ is order-preserving: $s \leq_{T} t \implies \iota(s) \leq_{w} \iota(t)$;
	\item $\pi$ preserves the least upper bound: $\pi(\sigma\vee \tau)=\pi(\sigma)\vee\pi(\tau)$;
	\item $\iota$ preserves global descents: $\GD(t)=\GD(\iota(t))$.
\end{enumerate}
\end{Proposition}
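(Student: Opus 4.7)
The plan is to treat each of the four claims in turn, observing that (1) and (2) reduce to cover relations while (3) and (4) follow from a Galois-connection-style relationship between $\pi$ and $\iota$. First I would prove (1) by reducing to a weak-order cover $\sigma \lessdot_w \tau$, so $\tau = T_a \circ \sigma$ swaps labels $a$ and $a+1$. Case analysis on the positions of the two nodes labeled $a$ and $a{+}1$ in the increasing tree shows that either they lie in incomparable subtrees of their common ancestor (and $\pi(\sigma) = \pi(\tau)$), or one is an ancestor of the other along the tree so that $a+1$ becomes a right child after the swap (a single left rotation). In both cases $\pi(\sigma) \leq_T \pi(\tau)$.

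For (2), I would reduce to a Tamari cover $s \lessdot_T t$, which is a single left rotation at some node $x$ with right child $y$ and surrounding branches $A,B,C$. Since $\iota$ assigns the unique $213$-avoiding labeling, I would compute explicitly that the labels in $A$, $B$, $C$ are rearranged so as to create exactly the new inverse inversions corresponding to this cover, and deduce $\iota(s) \leq_w \iota(t)$ directly from $\Inv(\iota(s)) \subseteq \Inv(\iota(t))$.

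The main obstacle is (3), preservation of joins. The inclusion $\pi(\sigma \vee \tau) \geq_T \pi(\sigma) \vee \pi(\tau)$ is immediate from (1). For the reverse, I would establish the Galois connection
\[
\pi(\sigma) \leq_T t \iff \sigma \leq_w \iota(t),
\]
which relies on Lemma \ref{lem:iota-max} asserting that $\iota(t)$ is the maximum element of the fiber $\pi^{-1}(t)$ in the weak order. Applying this to $t := \pi(\sigma) \vee \pi(\tau)$ gives $\sigma, \tau \leq_w \iota(t)$, hence $\sigma \vee \tau \leq_w \iota(t)$, and applying $\pi$ back yields $\pi(\sigma \vee \tau) \leq_T t$, as required. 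Verifying the Galois connection is where the real combinatorial work lies; rather than re-deriving it in the binary case, I would invoke (or defer to) the more general Proposition \ref{prop:lsb} established for planar trees in Section \ref{sec:STSYM}.

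For (4), I would prove both inclusions structurally. If $i \in \GD(t)$, write $t = s/r$ with $\deg(s) = i$. A direct check, using that the $213$-avoiding labeling is compatible with the $/$ grafting, shows $\iota(t) = \iota(s)/\iota(r)$, so $i \in \GD(\iota(t))$. Conversely, if $\iota(t) = \tau/\rho$ with $\deg(\tau) = i$, then by Proposition \ref{prop:pi-property}(2), $t = \pi(\iota(t)) = \pi(\tau)/\pi(\rho)$, and this factorization witnesses $i \in \GD(t)$. Once again this instance is subsumed by Lemma \ref{lem:iota-gd}, which we will prove in greater generality for planar trees.
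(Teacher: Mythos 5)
Your proposal is correct and follows essentially the same route as the paper: the paper does not prove this proposition directly but notes it is a special case of Proposition \ref{prop:lsb} and Lemma \ref{lem:iota-gd} (to which you also defer), and obtains (3) from the Galois connection exactly as in Lemma \ref{lem:pipreserve-leastupperbound} and Corollary \ref{cor:Galois_connection}. Your minor variations --- checking $\Inv(\iota(s))\subseteq\Inv(\iota(t))$ directly in (2), and proving $\iota(s/r)=\iota(s)/\iota(r)$ in (4) rather than chasing labels --- are both sound.
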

 
It follows from properties (1) and (2) above that $\pi$ and $\iota$ form a Galois connection (see \cite{AS06} for the definition), which gives the following formula (\cite[Th. 3.1]{AS06}) for the images of monomial basis elements under the projection $\Pi$, which will also follow from our Theorem \ref{thm:monomialquotient} below.

\begin{equation}
\Pi(M_{\sigma})=
\begin{cases}
M_{\pi(\sigma)} & \text{if } \sigma\text{ is $213$-avoiding},\\
0 & \text{otherwise}.
\end{cases}
\end{equation}

\section{Axioms for monomial basis properties}\label{sec:axioms}\label{sec:axioms-monomials}
This section gives abstract conditions on the interaction between the poset of basis elements and the Hopf operations, for the behaviour of ``monomial'' bases seen in $\YSym$ (Proposition \ref{prop:ysym-monomial}) and $\SSym$ (Propositions \ref{prop:coproduct-monomial-ssym}, \ref{prop:product-monomial-ssym}, \ref{prop:antipode-monomial-ssym}). We delay the proofs of the main theorems to Section \ref{sec:proofs} as they are technical. The ideas in this Section mimic the ones used in \cite{AS05,AS06}.

Throughout this section, let $P_{n}$ be a poset for each integer
$n\geq0$. Let $\mathcal{H}$ be a graded vector space with a pair of bases $\{F_{f}:f\in P_{n}\}$ and $\{M_{f}:f\in P_{n}\}$,
related by $F_{f}=\sum_{g\geq f}M_{g}$. We will say that respectively they are the \emph{fundamental} and \emph{monomial} bases.

\subsection{Axioms for the coproduct formula} \label{sec:axioms-coproduct}
Here are the axioms necessary for the comultiplication of the monomial basis
to be ``deconcatenation at global descents''.
\begin{enumerate}
\item[($\Delta$1).] $\mathcal{H}$ is a coalgebra and, for each $f\in P_{n}$, there exists a subset $\allow(f)\subseteq\{1,\dots,n-1\}$
such that
\begin{equation}
\Delta_{+}(F_{f})=\sum_{i\in \allow(f)}F_{{}^{i}f}\otimes F_{f^{i}}\label{eq:fcoproductformula}
\end{equation}
for some ${}^{i}f\in P_{i}$ and $f^{i}\in P_{n-i}$; and if $f'\geq f$,
then $\allow(f')=\allow(f)$. In particular, $\allow$ is constant over the connected components of $P_n$.
\item[($\Delta$2).] If  $f\leq f'\in P_n$, then ${}^{i}f\leq{}^{i}f'$ and $f^{i}\leq f'^{i}$.
\item[($\Delta$3).] Given $g\in P_i$, $h\in P_{n-i}$
there exists a unique $g/h=\max\{f\in P_n|{}^{i}f=g,f^{i}=h\}$,
and, if $g\leq g'\in P_i$, $h\leq h'\in P_{n-i}$, then $g/h\leq g'/h'$.
\end{enumerate}

\begin{Remark} \label{rem:coproductaxiom-poset}
In axiom ($\Delta$1), in place of demanding that $\mathcal{H}$ is already a coalgebra, we may instead specify conditions on the poset and the operations $\allow$, $^if$, $f^i$ so that Equation (\ref{eq:fcoproductformula}) defines a coalgebra structure. These conditions are:
\begin{enumerate}
    \item for $f\in P_n$ and each pair $i<j\in [1,n-1]$, we have $j\in\allow(f)$ and $i\in\allow({}^{j}f)$ if and only if $i\in\allow(f)$ and $j-i\in\allow(f^i)$;
    \item under the condition in (1), $^{i}\left({}^{j}f\right)={}^{i}f$, $\left({}^{j}f\right)^{i} ={}^{j-i}\left(f^{i}\right)$; $ f^{j}=\left(f^{i}\right)^{j-i}$.
\end{enumerate}
\end{Remark}

Under the axioms ($\Delta$1)-($\Delta$3), we may define the global descent
set of each $f\in P_{n}$ by
\[
\GD(f):=\{i\in\allow(f):f={}^{i}f/f^{i}\}.
\]

To describe iterated coproducts, it is necessary to define the deconcatenation
of $f$ into more than two parts, an abstract analogue of the operation
$t\mapsto(t_{1},\dots,t_{k})$ for trees. Given $S\subseteq\allow(f)$,
let $f|S$ be a $k$-tuple defined inductively
by $f|\{i\}:=({}^{i}f,f^{i})$ and
\begin{equation}
f|\{i_{1},\dots,i_{k},j\}:=({}^{j}f|\{i_{1},\dots,i_{k}\},f^{j}) \label{eq:deconcatenation-defn}
\end{equation}
(for $i_{1}<\cdots<i_{k}<j$; Lemma \ref{lem:kfactor-coproductaxiom}
will show that $S\backslash\{j\}\subseteq\allow({}^{j}f)$).). By coassociativity, $f|S$ may be defined
in many equivalent ways. 

\begin{Theorem}\label{thm:monomialcoproduct}  If $\mathcal{H}$
is a coalgebra satisfying axioms ($\Delta$1)-($\Delta$3), then
\begin{align}
\Delta_{+}(M_{f}) & =\sum_{i\in\GD(f)}M_{{}^{i}f}\otimes M_{f^{i}},\label{eq:mcoproductformula}\\
\Delta_{+}^{[k]}(M_{f}) & =\sum_{S\subseteq\GD(f),\ |S|=k-1}M_{(f|S)_{1}}\otimes\cdots\otimes M_{(f|S)_{k}}.\label{eq:mcoproductformula-multi}
\end{align}
 \end{Theorem}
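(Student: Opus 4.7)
The plan is to start from the M\"obius-inverse expression $M_{f}=\sum_{g\geq f}\mu(f,g)F_{g}$ and compute $\Delta_{+}(M_{f})$ directly. By axiom ($\Delta$1), $\allow(g)=\allow(f)$ for every $g\geq f$, so swapping the order of summation yields
$$\Delta_{+}(M_{f})=\sum_{i\in\allow(f)}\sum_{g\geq f}\mu(f,g)\,F_{{}^{i}g}\otimes F_{g^{i}}.$$
Expanding each $F_{a}=\sum_{c\geq a}M_{c}$ and swapping once more gives
$$\Delta_{+}(M_{f})=\sum_{i\in\allow(f)}\sum_{a\in P_{i},\,b\in P_{n-i}}\Bigl(\sum_{\substack{g\geq f\\ {}^{i}g\leq a,\,g^{i}\leq b}}\mu(f,g)\Bigr)\,M_{a}\otimes M_{b}.$$

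The key step is to show that ``${}^{i}g\leq a$ and $g^{i}\leq b$'' is equivalent to ``$g\leq a/b$''. The forward direction is ($\Delta$2) applied to $g\leq a/b$, since ${}^{i}(a/b)=a$ and $(a/b)^{i}=b$ from the definition of $/$ in ($\Delta$3). For the reverse, $g$ itself belongs to the set whose maximum defines $({}^{i}g)/g^{i}$, hence $g\leq({}^{i}g)/g^{i}$, and monotonicity of $/$ in ($\Delta$3) gives $({}^{i}g)/g^{i}\leq a/b$. With this equivalence, the inner sum becomes $\sum_{f\leq g\leq a/b}\mu(f,g)=[f=a/b]$ by the defining property of the M\"obius function. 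Setting $f=a/b$ forces $a={}^{i}f$ and $b=f^{i}$, and combined with $i\in\allow(f)$ this is precisely the condition $i\in\GD(f)$. Thus the only surviving terms are those in (\ref{eq:mcoproductformula}).

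For the iterated formula (\ref{eq:mcoproductformula-multi}), I would induct on $k$ using coassociativity $\Delta_{+}^{[k]}=(\Delta_{+}^{[k-1]}\otimes\mathrm{id})\circ\Delta_{+}$. Applying (\ref{eq:mcoproductformula}) once and then the inductive hypothesis to each $M_{{}^{j}f}$ produces tuples indexed by $j\in\GD(f)$ together with $S'\subseteq\GD({}^{j}f)$ of size $k-2$. Setting $S:=S'\cup\{j\}$ and comparing with the recursive definition in (\ref{eq:deconcatenation-defn}) identifies the resulting tuple of subscripts with $\bigl((f|S)_{1},\dots,(f|S)_{k}\bigr)$, producing exactly the right-hand side of (\ref{eq:mcoproductformula-multi}).

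The main obstacle is verifying that $j\in\GD(f)$ together with $S'\subseteq\GD({}^{j}f)$ is equivalent to $S'\cup\{j\}\subseteq\GD(f)$. This is not formal from coassociativity alone and relies on the coassociativity-compatibility conditions spelled out in Remark \ref{rem:coproductaxiom-poset} together with the forthcoming Lemma \ref{lem:kfactor-coproductaxiom}; the proof reduces to a case analysis on pairs $i<j$ in $S$ and the two-factor identities ${}^{i}({}^{j}f)={}^{i}f$, $({}^{j}f)^{i}={}^{j-i}(f^{i})$, $f^{j}=(f^{i})^{j-i}$, which translate the global-descent condition at level $k-1$ into one at level $k$.
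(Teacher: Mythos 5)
Your proposal is correct and is essentially the paper's argument run in the opposite direction: the paper defines the candidate operation $\Delta_i'(M_f)=M_{{}^if}\otimes M_{f^i}$ for $i\in\GD(f)$ and verifies it reproduces $\Delta_+$ on the fundamental basis via the bijection $g\leftrightarrow({}^ig,g^i)$ between $\{g\geq f: i\in\GD(g)\}$ and $\{(a,b): a\geq{}^if,\ b\geq f^i\}$, whereas you perform the M\"obius inversion explicitly on $M_f$ and collapse the inner sum using the equivalent order-theoretic fact that ${}^ig\leq a$ and $g^i\leq b$ iff $g\leq a/b$. Both hinge on the same use of ($\Delta$2)/($\Delta$3), and both defer the $k$-factor formula to the global-descent restriction property established in Lemma \ref{lem:kfactor-coproductaxiom}, so the proof is sound and matches the paper's route.
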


\subsection{Axioms for the product formula}

Next, we give axioms for the multiplication in a monomial basis to be computable
with the argument of \cite[Th. 4.1]{AS05}. One of the key ingredients
(condition ($m$1) below) is that the product of fundamental basis elements
should be computable by ``shuffles'', which we abstract as follows:

\begin{Definition}\label{def:shuffle} Let $C_{i},C_{j}$ be connected components of $P_{n},P_{m}$
respectively. A \textbf{shuffle} $\zeta$ on $C_{i},C_{j}$ is a
function $\zeta:C_{i}\times C_{j}\rightarrow P_{n+m}$. 

\end{Definition}

Here is the complete list of relevant axioms:
\begin{enumerate}
\item[($m$0).]  Each connected component of each $P_{n}$ admits least upper bounds.
\end{enumerate}

\noindent
For any ordered pair of connected components $C_{i},C_{j}$ of $P_{n},P_{m}$
respectively, there is a set $Sh(C_{i},C_{j})$ of shuffles such
that:
\begin{enumerate}
\item[($m$1).]  $\mathcal{H}$ is an algebra and for $f\in C_{i}$ and $g\in C_{j}$, 
\begin{equation}
F_{f}F_{g}=\sum_{\zeta\in Sh(C_{i},C_{j})}F_{\zeta(f,g)}. \label{eq:fproductformula}
\end{equation}
\item[($m$2).]  For $f\leq f'\in C_{i}$ and $g\leq g'\in C_{j}$ and $\zeta\in Sh(C_{i},C_{j})$,
we have $\zeta(f,g)\leq\zeta(f',g')$.
\item[($m$3).]  For $f_{1},f_{2}\in C_{i}$ and $g_{1},g_{2}\in C_{j}$ and $\zeta\in Sh(C_{i},C_{j})$,
we have $\zeta(f_{1}\vee f_{2},g_{1}\vee g_{2})\leq\zeta(f_{1},g_{1})\vee\zeta(f_{2},g_{2})$.
\end{enumerate}

Fix $\zeta\in Sh(C_{i},C_{j})$.
It follows from ($m$2) that for all $f\in C_{i}$ and all $g\in C_{j}$,
the elements $\zeta(f,g)$ all lie in the same connected component.  Denote this output component
by $\cmpt(\zeta(C_{i},C_{j}))$. Then we may define inductively
\begin{align}
Sh(C_{j_{1}},\dots,C_{j_{k}}) & :=\{(\zeta,\zeta'):\zeta\in Sh(C_{j_{1}},C_{j_{2}}),\zeta'\in Sh(\cmpt(\zeta(C_{j_{1}},C_{j_{2}})),C_{j_{3}},\dots,C_{j_{k}})\},\label{eq:shuffle-multi}\\
(\zeta,\zeta')(f_{1},\dots,f_{k}) & :=\zeta'(\zeta(f_{1},f_{2}),f_{3},\dots,f_{k}).\nonumber 
\end{align}

\begin{Remark}
 \label{rem:productaxiom-poset}
Similar to Remark \ref{rem:coproductaxiom-poset}, we may replace the requirement in axiom ($m$1) that $\mathcal{H}$ is an algebra by the following condition on shuffles
for $f\in C_{i}, g\in C_{j}, h\in C_{k}$:
\begin{align*}
&\{\zeta'(\zeta(f,g),h) :\zeta\in Sh(C_{i},C_{j}),\zeta'\in Sh(\cmpt(\zeta(C_{i},C_{j})),C_{k})\} \\
= & \{\zeta'(f,\zeta(g,h)) :\zeta\in Sh(C_{j},C_{k}),\zeta'\in Sh(C_{i},\cmpt(\zeta(C_{j},C_{k})))\}.
\end{align*}
This ensures Equation (\ref{eq:fproductformula}) defines an associative multiplication.
\end{Remark}

\begin{Example} \label{ex:shuffle}
For $\SSym$ and $\YSym$, we consider the weak order (Section~\ref{sec:permweak}) and the Tamari order (Section~\ref{sec:tamari}), respectively.
In each case, $C_n=P_n$ is a unique connected component.
The set $Sh(C_n,C_m)$ can be identified with the set of sequences $1\le i_1 \le i_2\le \cdots\le i_n\le m+1$.
For $\zeta\in Sh(C_n,C_m)$ we define $\zeta:C_{n}\times C_{m}\rightarrow P_{n+m}$
as $\zeta(f,g)=f\overleftarrow{\shuffle}(g_{1},\dots,g_{k})$,
where $g\mapsto(g_{1},\dots,g_{k})$ is the lightening splitting of $g$ at the leaf positions
given by $\zeta$. \end{Example}

Under the axioms ($m$0)-($m$3), the product coefficients for a monomial basis
is calculated as follows: given $f\in C_{i}$ and $g\in C_{j}$, set
\begin{equation}
A_{f,g}^{h}:=\left\{ \zeta\in Sh(C_{i},C_{j})\middle|\:\begin{array}{c}
\zeta(f,g)\leq h,\\
\mbox{and if }f'\geq f,g'\geq g\mbox{ satisfy }\zeta(f',g')\leq h,\\
\mbox{then }f'=f,g'=g.
\end{array}\right\} ,\label{eq:a-def}
\end{equation}
and, given $f_{1}\in C_{1},f_{2}\in C_{2},\dots,f_{k}\in C_{k}$,
set
\begin{equation}
A_{f_{1},\dots,f_{k}}^{h}:=\left\{ \zeta\in Sh(C_{j_{1}},\dots,C_{j_{k}})\middle|\:\begin{array}{c}
\zeta(f_{1},\dots,f_{k})\leq h,\\
\mbox{and if }f_{i}'\geq f_{i}\mbox{ satisfy }\zeta(f'_{1},\dots,f'_{k})\leq h,\\
\mbox{then }f_{i}'=f_{i}\text{ for all }i.
\end{array}\right\} .\label{eq:a-def-multi}
\end{equation}

Example~\ref{ex:monomialcoef} below shows how to calculate $A_{f,g}^{h}$ for permutations.
To see what the coefficients $\alpha_{f,g}^h=|A_{f,g}^{h}|$ look like for the tree-based
algebras in this paper, see Proposition~\ref{prop:product-monomial-ssym} below.

\begin{Theorem}\label{thm:monomialproduct} If $\mathcal{H}$ is
an algebra satisfying axioms ($m$0)-($m$3), then, for any $f\in P_{i}$ and $g\in P_{n-i}$, 
\[
M_{f}\cdot M_{g}=\sum_{h\in P_{n}}\alpha_{f,g}^{h}M_{h}
\]
 where $\alpha_{f,g}^h=|A_{f,g}^{h}|$ given in (\ref{eq:a-def}). Moreover, for any 
integer $k>0$ and $f_{i}\in P_{n_{i}}$ with $n_{1}+\cdots+ n_{k}=n$, we have
\[
M_{f_{1}}\cdot M_{f_{2}}\cdots M_{f_{k}}=\sum_{h\in P_{n}}\alpha_{f_{1},\dots,f_{k}}^{h} M_{h}
\]
 where $\alpha_{f_{1},\dots,f_{k}}^{h}=|A_{f_{1},\dots,f_{k}}^{h}|$  given  in (\ref{eq:a-def-multi}).\end{Theorem}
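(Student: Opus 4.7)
The plan is to mimic the Aguiar--Sottile argument of \cite[Th. 4.1]{AS05}, which computes the monomial product via Möbius inversion after expressing everything in the fundamental basis. The central algebraic identity to establish, for any $f\in C_i$, $g\in C_j$, and $h\in P_n$, is that the coefficient of $M_h$ in $M_f\cdot M_g$ equals
\[
[M_h]\,M_f M_g \;=\; \sum_{\zeta\in Sh(C_i,C_j)}\;\sum_{\substack{f'\ge f,\; g'\ge g\\ \zeta(f',g')\le h}}\mu(f,f')\,\mu(g,g'),
\]
which one obtains immediately from $M_f=\sum_{f'\ge f}\mu(f,f')F_{f'}$, axiom $(m1)$, and $F_u=\sum_{h\ge u}M_h$. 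The main task is then to show that the inner Möbius double sum equals $1$ when $\zeta\in A_{f,g}^h$ and $0$ otherwise.

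For a fixed $\zeta$, consider the set
\[
S_\zeta:=\{(f',g')\in C_i\times C_j\mid f'\ge f,\; g'\ge g,\; \zeta(f',g')\le h\}.
\]
By axiom $(m3)$, if $(f_1',g_1'),(f_2',g_2')\in S_\zeta$ then $\zeta(f_1'\vee f_2',g_1'\vee g_2')\le \zeta(f_1',g_1')\vee\zeta(f_2',g_2')\le h$, and using $(m0)$ both joins exist; hence $S_\zeta$ is closed under componentwise joins and therefore admits a (unique) maximum $(\hat f_\zeta,\hat g_\zeta)$ whenever it is non-empty. The set $S_\zeta$ is then the downward-closed rectangle $[f,\hat f_\zeta]\times[g,\hat g_\zeta]$ intersected with $\{f'\ge f\}\times\{g'\ge g\}$, which is exactly the product interval $[f,\hat f_\zeta]\times[g,\hat g_\zeta]$. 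Möbius inversion over the product poset gives
\[
\sum_{(f',g')\in S_\zeta}\mu(f,f')\mu(g,g')\;=\;\begin{cases}1 & \text{if }(\hat f_\zeta,\hat g_\zeta)=(f,g),\\ 0 & \text{otherwise,}\end{cases}
\]
and the first alternative is precisely the defining condition of $\zeta\in A_{f,g}^h$ in (\ref{eq:a-def}). Summing over $\zeta$ yields $[M_h]\,M_fM_g=|A_{f,g}^h|$, proving the first claim. Non-negativity of $\alpha_{f,g}^h$ is automatic from this combinatorial description.

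For the multi-factor formula, the cleanest route is induction on $k$ using associativity and the iterated shuffle $Sh(C_{j_1},\dots,C_{j_k})$ defined in (\ref{eq:shuffle-multi}). Assume the result for $k-1$, write $M_{f_1}\cdots M_{f_k}=(M_{f_1}\cdots M_{f_{k-1}})\cdot M_{f_k}$, expand both factors via the inductive hypothesis, and then for each pair $(\zeta'',\zeta)$ with $\zeta''\in Sh(C_{j_1},\dots,C_{j_{k-1}})$ and $\zeta\in Sh(\cmpt(\zeta''(\ldots)),C_{j_k})$, apply the two-factor argument above together with monotonicity $(m2)$ to show that the ``maximal lift'' condition on $(\zeta'',\zeta)$ collapses to the maximal lift condition on the composite shuffle in $A^h_{f_1,\ldots,f_k}$. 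Alternatively, one may run the Möbius argument directly on the $k$-fold product poset: axiom $(m3)$ iterated (via the associativity of shuffles in Remark \ref{rem:productaxiom-poset}) shows that the relevant set
\[
S_{\zeta}:=\{(f_1',\dots,f_k')\mid f_i'\ge f_i,\;\zeta(f_1',\dots,f_k')\le h\}
\]
is join-closed in the product order, so has a unique maximum, and Möbius inversion on a product of intervals again kills every contribution except when that maximum equals $(f_1,\dots,f_k)$.

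The main obstacle I anticipate is verifying carefully that the joins needed in the $k$-factor step really exist inside a single connected component: axiom $(m0)$ guarantees joins only within a component, so one must check that the maximal lifts of $f_i$ under all relevant shuffles stay in $C_{j_i}$. This should follow from the observation that shuffles $\zeta$ are defined on full connected components (Definition \ref{def:shuffle}), so the set $S_\zeta$ is by construction contained in $C_{j_1}\times\cdots\times C_{j_k}$, and $(m3)$ supplies closure under the componentwise join. Once this bookkeeping is straight, the proof is an essentially formal consequence of $(m0)$--$(m3)$ and Möbius inversion.
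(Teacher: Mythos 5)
Your proposal is correct and follows essentially the same route as the paper's proof: both expand the monomials in the fundamental basis, apply ($m$1), and then use ($m$0), ($m$2), ($m$3) to collapse the resulting M\"obius double sum to $|A_{f,g}^{h}|$ --- your evaluation of the inner sum over the product interval $[f,\hat f_\zeta]\times[g,\hat g_\zeta]$ is just a repackaging of the paper's partition $B_{f,g}^{h}=\bigsqcup_{f'\geq f,\,g'\geq g}A_{f',g'}^{h}$, both resting on the existence of a unique maximal $(f',g')$ for each fixed shuffle $\zeta$. For the $k$-factor statement the paper runs the two-factor argument verbatim with the $k$-factor axioms ($m1'$)--($m3'$) supplied by Lemma \ref{lem:kfactor-productaxiom}, which is exactly your second (direct) alternative; that is the route to prefer, since the inductive decomposition $(M_{f_1}\cdots M_{f_{k-1}})\cdot M_{f_k}$ you sketch first would additionally require identifying $\sum_{h'}\alpha_{f_1,\dots,f_{k-1}}^{h'}\alpha_{h',f_k}^{h}$ with $|A_{f_1,\dots,f_k}^{h}|$, a step you do not justify.
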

 
 For our algebras of trees, the theorem above takes a form similar to the following proposition.
 
 \begin{Proposition} \label{prop:product-monomial-ssym} \cite[Th. 4.1]{AS05}
The product of monomial basis elements in $\SSym$ has non-negative
integer coefficients. More specifically, if $\sigma$ has $k$ leaves,
then $M_{\sigma}\cdot M_{\tau}=\sum_{\rho}\alpha_{\sigma,\tau}^{\rho}M_{\rho}$,
where $\alpha_{\sigma,\tau}^{\rho}$ is the number of splittings $\tau\mapsto(\tau_{1},\dots,\tau_{k})$ that satisfy the following
conditions:
\begin{enumerate}
\item $\sigma\overleftarrow{\shuffle}(\tau_{1},\dots,\tau_{k})\leq_{w}\rho$;
\item for $\sigma'\geq_{w}\sigma$ and $\tau'\geq_{w}\tau$, if
$\tau'\mapsto(\tau'_{1},\dots,\tau'_{k})$ satisfies $\deg\tau'_{i}=\deg\tau_{i}$
and $\sigma'\overleftarrow{\shuffle}(\tau'_{1},\dots,\tau'_{k})\leq_{w}\rho$,
then $\sigma'=\sigma$ and $\tau'=\tau$.
\end{enumerate}
\end{Proposition}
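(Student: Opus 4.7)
The plan is to derive this proposition as a direct instance of Theorem \ref{thm:monomialproduct}, by verifying that the axioms ($m$0)--($m$3) of Section \ref{sec:axioms-monomials} hold for $\SSym$ equipped with the left weak order on each $\Perm_n$.

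Axiom ($m$0) is the classical fact that the weak order on $\Perm_n$ is a lattice; as each $\Perm_n$ is connected, we set $C_n=\Perm_n$. Axiom ($m$1) holds since the product rule recalled in Section \ref{sec:ssym} is already expressed as a sum over shuffles: splittings $\tau\mapsto(\tau_1,\dots,\tau_{\deg\sigma+1})$ are indexed by weak compositions $\mathbf{a}=(a_1,\dots,a_{\deg\sigma+1})$ of $\deg\tau$, and each such composition determines a shuffle $\zeta_\mathbf{a}\in Sh(\Perm_n,\Perm_m)$ via $\zeta_\mathbf{a}(\sigma,\tau)=\sigma\overleftarrow{\shuffle}(\tau_1,\dots,\tau_{\deg\sigma+1})$, exactly as in Example \ref{ex:shuffle}.

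The key technical step is to analyze $\iInv(\rho)$ where $\rho=\zeta_\mathbf{a}(\sigma,\tau)$. After shifting $\tau$'s labels up by $n=\deg\sigma$, every label of $\rho$ arising from $\tau$ strictly exceeds every label arising from $\sigma$; consequently $\iInv(\rho)$ splits as a disjoint union of a copy of $\iInv(\sigma)$, a shifted copy of $\iInv(\tau)$, and a set $S_\mathbf{a}$ of cross-inversions $(a,b)$ with $a\leq n<b$ depending only on $\mathbf{a}$. Axiom ($m$2) is immediate from this decomposition, since increasing $\sigma$ or $\tau$ in the weak order only enlarges the first two parts while $S_\mathbf{a}$ stays fixed. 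For axiom ($m$3), the desired inclusion $\iInv(\zeta_\mathbf{a}(\sigma_1\vee\sigma_2,\tau_1\vee\tau_2))\subseteq\iInv(\zeta_\mathbf{a}(\sigma_1,\tau_1)\vee\zeta_\mathbf{a}(\sigma_2,\tau_2))$ reduces via the same decomposition to showing that the biclosure operation computing weak-order joins, applied to the three-part union on the right-hand side, recovers $\iInv(\sigma_1\vee\sigma_2)$ on the $\sigma$-labels and $\iInv(\tau_1\vee\tau_2)$ on the shifted $\tau$-labels; this holds because biclosure within either group of labels does not see the cross-inversions in $S_\mathbf{a}$, as the two groups are totally separated in value.

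With the four axioms in hand, Theorem \ref{thm:monomialproduct} yields $M_\sigma\cdot M_\tau=\sum_\rho|A_{\sigma,\tau}^\rho|\,M_\rho$. The translation to the statement of the proposition is now straightforward: each shuffle $\zeta_\mathbf{a}$ is a splitting of $\tau$ with prescribed part-degrees $\mathbf{a}$, the condition $\zeta_\mathbf{a}(\sigma,\tau)\leq_w\rho$ of \eqref{eq:a-def} is condition (1), and the maximality clause of \eqref{eq:a-def} is condition (2) once one observes that preserving the shuffle $\zeta_\mathbf{a}$ is equivalent to preserving the degrees of the parts $\tau'_i$. The main obstacle is the verification of axiom ($m$3); everything else is essentially bookkeeping.
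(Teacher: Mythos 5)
Your overall strategy coincides with the paper's: Proposition \ref{prop:product-monomial-ssym} is obtained by verifying axioms ($m$0)--($m$3) for $\SSym$ and invoking Theorem \ref{thm:monomialproduct}. The difference is that the paper (in Proposition \ref{prop:axiomssym}) outsources ($m$2) and ($m$3) to \cite[Prop.~2.10]{AS05}, whereas you attempt to prove them directly from a decomposition of inversion sets. That is a legitimate and more self-contained plan, and your translation of conditions (1)--(2) of the statement into the definition of $A_{f,g}^{h}$ in (\ref{eq:a-def}) is correct.

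However, the key decomposition is false as you state it, because you apply it to the wrong inversion statistic. For $\rho=\sigma\overleftarrow{\shuffle}(\tau_1,\dots,\tau_k)$ the set $\iInv(\rho)$ of \emph{value} pairs out of order does not split with a cross part depending only on $\mathbf{a}$: take $\sigma=12$, $\tau=1$, and the shuffle placing the $\tau$-letter in the middle slot, so that $\zeta_{\mathbf{a}}(12,1)=132$ and $\zeta_{\mathbf{a}}(21,1)=231$; the cross parts of $\iInv$ are $\{(2,3)\}$ and $\{(1,3)\}$ respectively, which are incomparable, and indeed $\iInv(132)\not\subseteq\iInv(231)$ even though $132\leq_{w}231$. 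The underlying issue is that, with the conventions of Section \ref{sec:permweak}, $\leq_{w}$ is inclusion of $\Inv$ (position pairs), not of $\iInv$, and it is $\Inv(\rho)$ that decomposes the way you want: which positions of $\rho$ carry $\sigma$-letters and which carry $\tau$-letters is determined by $\mathbf{a}$ alone, every $\tau$-letter exceeds every $\sigma$-letter in value after the shift, so the cross part of $\Inv(\rho)$ is exactly the set of position pairs (a $\tau$-slot preceding a $\sigma$-slot) and depends only on $\mathbf{a}$. With $\Inv$ in place of $\iInv$ your arguments for ($m$2) and ($m$3) do go through; for ($m$3) one uses, as in the proof of Lemma \ref{lem:parking-order}, that $\Inv(\alpha\vee\beta)$ is the transitive closure of $\Inv(\alpha)\cup\Inv(\beta)$, together with the observation that closure steps internal to the $\sigma$-positions, or internal to the $\tau$-positions, are also closure steps for $\rho$. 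So the gap is repairable, but as written the central claim on which ($m$2) and ($m$3) rest is incorrect.
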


\begin{Example}\label{ex:monomialcoef}
We compute $\alpha_{\sigma,\tau}^{\rho}$, the
coefficient of $M_{\rho}$ in the product $M_{\sigma}\cdot M_{\tau}$,
for $\sigma=\begin{tikzpicture}[scale=0.2,baseline=0pt]
		\draw[blue, thick] (0,-1) -- (0,0);
		\draw[blue, thick] (0,0) -- (2,2);
		\draw[blue, thick] (0,0) -- (-2,2);
		\draw[blue, thick] (-1,1) -- (0,2);
		\filldraw[black] (0,0) circle (5pt)  {};
		\filldraw[black] (-1,1) circle (5pt)  {};
		\node at (0,0.7)[font=\fontsize{7pt}{0}]{$1$};
		\node at (-1,1.7)[font=\fontsize{7pt}{0}]{$2$};
	\end{tikzpicture}$,
$\tau=\begin{tikzpicture}[scale=0.2,baseline=0pt]
		\draw[blue, thick] (0,-1) -- (0,0);
		\draw[blue, thick] (0,0) -- (2,2);
		\draw[blue, thick] (0,0) -- (-2,2);
		\draw[blue, thick] (1,1) -- (0,2);
		\filldraw[black] (0,0) circle (5pt)  {};
		\filldraw[black] (1,1) circle (5pt)  {};
		\node at (0,0.7)[font=\fontsize{7pt}{0}]{$1$};
		\node at (1,1.7)[font=\fontsize{7pt}{0}]{$2$};
	\end{tikzpicture}$,
$\rho = \begin{tikzpicture}[scale=0.2,baseline=0pt]
		\draw[blue, thick] (0,-1) -- (0,0);
		\draw[blue, thick] (0,0) -- (4,4);
		\draw[blue, thick] (0,0) -- (-4,4);
		\draw[blue, thick] (-2,2) -- (0,4);
		\draw[blue, thick] (-1,3) -- (-2,4);
		\draw[blue, thick] (-1,1) -- (2,4);
		\filldraw[black] (0,0) circle (5pt)  {};
		\filldraw[black] (-2,2) circle (5pt)  {};
		\filldraw[black] (-1,3) circle (5pt)  {};
		\filldraw[black] (-1,1) circle (5pt)  {};
		\node at (0,0.7)[font=\fontsize{7pt}{0}]{$1$};
		\node at (-1,1.7)[font=\fontsize{7pt}{0}]{$2$};
		\node at (-2,2.7)[font=\fontsize{7pt}{0}]{$3$};
		\node at (-1,3.7)[font=\fontsize{7pt}{0}]{$4$};
		\end{tikzpicture}$.
Since $\sigma$ has $k=3$ leaves, let us describe the choices of $\tau\mapsto(\tau_{1},\tau_{2},\tau_{3})$ such that
$\sigma\overleftarrow{\shuffle}(\tau_{1},\tau_{2},\tau_{3})\leq_w \rho$, i.e., 
the ones that satisfy condition (1) in Proposition \ref{prop:product-monomial-ssym}:
\begin{alignat*}{2}
\sigma\overleftarrow{\shuffle}\Big(\,
	\begin{tikzpicture}[scale=0.2,baseline=0pt]
		\draw[blue, thick] (0,-1) -- (0,0);
		\draw[blue, thick] (0,0) -- (2,2);
		\draw[blue, thick] (0,0) -- (-2,2);
		\draw[blue, thick] (1,1) -- (0,2);
		\filldraw[black] (0,0) circle (5pt)  {};
		\filldraw[black] (1,1) circle (5pt)  {};
		\node at (0,0.7)[font=\fontsize{7pt}{0}]{$1$};
		\node at (1,1.7)[font=\fontsize{7pt}{0}]{$2$};
	\end{tikzpicture},
	\begin{tikzpicture}[scale=0.25,baseline=0pt]
		\draw[blue, thick] (0,-1) -- (0,1);
	\end{tikzpicture},
	\begin{tikzpicture}[scale=0.25,baseline=0pt]
		\draw[blue, thick] (0,-1) -- (0,1);
	\end{tikzpicture}\,\Big) & 
	=\begin{tikzpicture}[scale=0.2,baseline=0pt]
		\draw[blue, thick] (0,-1) -- (0,0);
		\draw[blue, thick] (0,0) -- (4,4);
		\draw[blue, thick] (0,0) -- (-4,4);
		\draw[blue, thick] (-2,2) -- (0,4);
		\draw[blue, thick] (-1,3) -- (-2,4);
		\draw[blue, thick] (-1,1) -- (2,4);
		\filldraw[black] (0,0) circle (5pt)  {};
		\filldraw[black] (-2,2) circle (5pt)  {};
		\filldraw[black] (-1,3) circle (5pt)  {};
		\filldraw[black] (-1,1) circle (5pt)  {};
		\node at (0,0.7)[font=\fontsize{7pt}{0}]{$1$};
		\node at (-1,1.7)[font=\fontsize{7pt}{0}]{$2$};
		\node at (-2,2.7)[font=\fontsize{7pt}{0}]{$3$};
		\node at (-1,3.7)[font=\fontsize{7pt}{0}]{$4$};
		\end{tikzpicture}; \qquad&
\sigma\overleftarrow{\shuffle}\Big(\,
	\begin{tikzpicture}[scale=0.25,baseline=0pt]
		\draw[blue, thick] (0,-1) -- (0,1);
	\end{tikzpicture},
	\begin{tikzpicture}[scale=0.2,baseline=0pt]
		\draw[blue, thick] (0,-1) -- (0,0);
		\draw[blue, thick] (0,0) -- (2,2);
		\draw[blue, thick] (0,0) -- (-2,2);
		\draw[blue, thick] (1,1) -- (0,2);
		\filldraw[black] (0,0) circle (5pt)  {};
		\filldraw[black] (1,1) circle (5pt)  {};
		\node at (0,0.7)[font=\fontsize{7pt}{0}]{$1$};
		\node at (1,1.7)[font=\fontsize{7pt}{0}]{$2$};
	\end{tikzpicture},
	\begin{tikzpicture}[scale=0.25,baseline=0pt]
		\draw[blue, thick] (0,-1) -- (0,1);
	\end{tikzpicture}\,\Big) & 
	=\begin{tikzpicture}[scale=0.2,baseline=0pt]
		\draw[blue, thick] (0,-1) -- (0,0);
		\draw[blue, thick] (0,0) -- (4,4);
		\draw[blue, thick] (0,0) -- (-4,4);
		\draw[blue, thick] (-1,1) -- (2,4);
		\draw[blue, thick] (0,2) -- (-2,4);
		\draw[blue, thick] (1,3) -- (0,4);
		\filldraw[black] (0,0) circle (5pt)  {};
		\filldraw[black] (-1,1) circle (5pt)  {};
		\filldraw[black] (0,2) circle (5pt)  {};
		\filldraw[black] (1,3) circle (5pt)  {};
		\node at (0,0.7)[font=\fontsize{7pt}{0}]{$1$};
		\node at (-1,1.7)[font=\fontsize{7pt}{0}]{$2$};
		\node at (0,2.7)[font=\fontsize{7pt}{0}]{$3$};
		\node at (1,3.7)[font=\fontsize{7pt}{0}]{$4$};
		\end{tikzpicture};\\
\sigma\overleftarrow{\shuffle}\Big(\,
	\begin{tikzpicture}[scale=0.25,baseline=0pt]
		\draw[blue, thick] (0,-1) -- (0,1);
	\end{tikzpicture},
	\begin{tikzpicture}[scale=0.25,baseline=0pt]
		\draw[blue, thick] (0,-1) -- (0,1);
	\end{tikzpicture},
	\begin{tikzpicture}[scale=0.2,baseline=0pt]
		\draw[blue, thick] (0,-1) -- (0,0);
		\draw[blue, thick] (0,0) -- (2,2);
		\draw[blue, thick] (0,0) -- (-2,2);
		\draw[blue, thick] (1,1) -- (0,2);
		\filldraw[black] (0,0) circle (5pt)  {};
		\filldraw[black] (1,1) circle (5pt)  {};
		\node at (0,0.7)[font=\fontsize{7pt}{0}]{$1$};
		\node at (1,1.7)[font=\fontsize{7pt}{0}]{$2$};
	\end{tikzpicture}\,\Big) &  
	=\begin{tikzpicture}[scale=0.2,baseline=0pt]
		\draw[blue, thick] (0,-1) -- (0,0);
		\draw[blue, thick] (0,0) -- (4,4);
		\draw[blue, thick] (0,0) -- (-4,4);
		\draw[blue, thick] (-3,3) -- (-2,4);
		\draw[blue, thick] (2,2) -- (0,4);
		\draw[blue, thick] (3,3) -- (2,4);
		\filldraw[black] (0,0) circle (5pt)  {};
		\filldraw[black] (-3,3) circle (5pt)  {};
		\filldraw[black] (2,2) circle (5pt)  {};
		\filldraw[black] (3,3) circle (5pt)  {};
		\node at (0,0.7)[font=\fontsize{7pt}{0}]{$1$};
		\node at (-3,3.7)[font=\fontsize{7pt}{0}]{$2$};
		\node at (2,2.7)[font=\fontsize{7pt}{0}]{$3$};
		\node at (3,3.7)[font=\fontsize{7pt}{0}]{$4$};
		\end{tikzpicture}; \qquad&
\sigma\overleftarrow{\shuffle}\Big(\,
	\begin{tikzpicture}[scale=0.25,baseline=0pt]
		\draw[blue, thick] (0,-1) -- (0,1);
	\end{tikzpicture},
	\begin{tikzpicture}[scale=0.2,baseline=0pt]
		\draw[blue, thick] (0,-1) -- (0,0);
		\draw[blue, thick] (0,0) -- (1,1);
		\draw[blue, thick] (0,0) -- (-1,1);
		\filldraw[black] (0,0) circle (5pt)  {};
		\node at (0,0.7)[font=\fontsize{7pt}{0}]{$1$};
	\end{tikzpicture},
	\begin{tikzpicture}[scale=0.2,baseline=0pt]
		\draw[blue, thick] (0,-1) -- (0,0);
		\draw[blue, thick] (0,0) -- (1,1);
		\draw[blue, thick] (0,0) -- (-1,1);
		\filldraw[black] (0,0) circle (5pt)  {};
		\node at (0,0.7)[font=\fontsize{7pt}{0}]{$2$};
	\end{tikzpicture}\,\Big) & 
	=\begin{tikzpicture}[scale=0.2,baseline=0pt]
		\draw[blue, thick] (0,-1) -- (0,0);
		\draw[blue, thick] (0,0) -- (4,4);
		\draw[blue, thick] (0,0) -- (-4,4);
		\draw[blue, thick] (-2,2) -- (0,4);
		\draw[blue, thick] (-1,3) -- (-2,4);
		\draw[blue, thick] (3,3) -- (2,4);
		\filldraw[black] (0,0) circle (5pt)  {};
		\filldraw[black] (-2,2) circle (5pt)  {};
		\filldraw[black] (-1,3) circle (5pt)  {};
		\filldraw[black] (3,3) circle (5pt)  {};
		\node at (0,0.7)[font=\fontsize{7pt}{0}]{$1$};
		\node at (-2,2.7)[font=\fontsize{7pt}{0}]{$2$};
		\node at (-1,3.7)[font=\fontsize{7pt}{0}]{$3$};
		\node at (3,3.7)[font=\fontsize{7pt}{0}]{$4$};
		\end{tikzpicture}. 
\end{alignat*}
To check condition (2), we apply the above four shuffles to any $\sigma'\geq_{w}\sigma$
and any $\tau'\geq_{w}\tau$. Because $\sigma$ is already maximal,
we only need to consider $\tau'=\begin{tikzpicture}[scale=0.2,baseline=0pt]
		\draw[blue, thick] (0,-1) -- (0,0);
		\draw[blue, thick] (0,0) -- (2,2);
		\draw[blue, thick] (0,0) -- (-2,2);
		\draw[blue, thick] (-1,1) -- (0,2);
		\filldraw[black] (0,0) circle (5pt)  {};
		\filldraw[black] (-1,1) circle (5pt)  {};
		\node at (0,0.7)[font=\fontsize{7pt}{0}]{$1$};
		\node at (-1,1.7)[font=\fontsize{7pt}{0}]{$2$};
	\end{tikzpicture}$:
\begin{alignat*}{2}
\sigma \overleftarrow{\shuffle}\Big(\,
	\begin{tikzpicture}[scale=0.2,baseline=0pt]
		\draw[blue, thick] (0,-1) -- (0,0);
		\draw[blue, thick] (0,0) -- (2,2);
		\draw[blue, thick] (0,0) -- (-2,2);
		\draw[blue, thick] (-1,1) -- (0,2);
		\filldraw[black] (0,0) circle (5pt)  {};
		\filldraw[black] (-1,1) circle (5pt)  {};
		\node at (0,0.7)[font=\fontsize{7pt}{0}]{$1$};
		\node at (-1,1.7)[font=\fontsize{7pt}{0}]{$2$};
	\end{tikzpicture},
	\begin{tikzpicture}[scale=0.25,baseline=0pt]
		\draw[blue, thick] (0,-1) -- (0,1);
	\end{tikzpicture},
	\begin{tikzpicture}[scale=0.25,baseline=0pt]
		\draw[blue, thick] (0,-1) -- (0,1);
	\end{tikzpicture}\,\Big) & 
	=\begin{tikzpicture}[scale=0.2,baseline=0pt]
		\draw[blue, thick] (0,-1) -- (0,0);
		\draw[blue, thick] (0,0) -- (4,4);
		\draw[blue, thick] (0,0) -- (-4,4);
		\draw[blue, thick] (-2,2) -- (0,4);
		\draw[blue, thick] (-3,3) -- (-2,4);
		\draw[blue, thick] (-1,1) -- (2,4);
		\filldraw[black] (0,0) circle (5pt)  {};
		\filldraw[black] (-2,2) circle (5pt)  {};
		\filldraw[black] (-3,3) circle (5pt)  {};
		\filldraw[black] (-1,1) circle (5pt)  {};
		\node at (0,0.7)[font=\fontsize{7pt}{0}]{$1$};
		\node at (-1,1.7)[font=\fontsize{7pt}{0}]{$2$};
		\node at (-2,2.7)[font=\fontsize{7pt}{0}]{$3$};
		\node at (-3,3.7)[font=\fontsize{7pt}{0}]{$4$};
		\end{tikzpicture} \not\leq_{w}\rho; \qquad&
		\sigma\overleftarrow{\shuffle}\Big(\,
	\begin{tikzpicture}[scale=0.25,baseline=0pt]
		\draw[blue, thick] (0,-1) -- (0,1);
	\end{tikzpicture},
	\begin{tikzpicture}[scale=0.2,baseline=0pt]
		\draw[blue, thick] (0,-1) -- (0,0);
		\draw[blue, thick] (0,0) -- (2,2);
		\draw[blue, thick] (0,0) -- (-2,2);
		\draw[blue, thick] (-1,1) -- (0,2);
		\filldraw[black] (0,0) circle (5pt)  {};
		\filldraw[black] (-1,1) circle (5pt)  {};
		\node at (0,0.7)[font=\fontsize{7pt}{0}]{$1$};
		\node at (-1,1.7)[font=\fontsize{7pt}{0}]{$2$};
	\end{tikzpicture},
	\begin{tikzpicture}[scale=0.25,baseline=0pt]
		\draw[blue, thick] (0,-1) -- (0,1);
	\end{tikzpicture}\,\Big) & 
	=\begin{tikzpicture}[scale=0.2,baseline=0pt]
		\draw[blue, thick] (0,-1) -- (0,0);
		\draw[blue, thick] (0,0) -- (4,4);
		\draw[blue, thick] (0,0) -- (-4,4);
		\draw[blue, thick] (-1,1) -- (2,4);
		\draw[blue, thick] (0,2) -- (-2,4);
		\draw[blue, thick] (-1,3) -- (0,4);
		\filldraw[black] (0,0) circle (5pt)  {};
		\filldraw[black] (-1,1) circle (5pt)  {};
		\filldraw[black] (0,2) circle (5pt)  {};
		\filldraw[black] (-1,3) circle (5pt)  {};
		\node at (0,0.7)[font=\fontsize{7pt}{0}]{$1$};
		\node at (-1,1.7)[font=\fontsize{7pt}{0}]{$2$};
		\node at (0,2.7)[font=\fontsize{7pt}{0}]{$3$};
		\node at (-1,3.7)[font=\fontsize{7pt}{0}]{$4$};
		\end{tikzpicture} \leq_{w}\rho ;\\
\sigma\overleftarrow{\shuffle}\Big(\,
	\begin{tikzpicture}[scale=0.25,baseline=0pt]
		\draw[blue, thick] (0,-1) -- (0,1);
	\end{tikzpicture},
	\begin{tikzpicture}[scale=0.25,baseline=0pt]
		\draw[blue, thick] (0,-1) -- (0,1);
	\end{tikzpicture},
	\begin{tikzpicture}[scale=0.2,baseline=0pt]
		\draw[blue, thick] (0,-1) -- (0,0);
		\draw[blue, thick] (0,0) -- (2,2);
		\draw[blue, thick] (0,0) -- (-2,2);
		\draw[blue, thick] (-1,1) -- (0,2);
		\filldraw[black] (0,0) circle (5pt)  {};
		\filldraw[black] (-1,1) circle (5pt)  {};
		\node at (0,0.7)[font=\fontsize{7pt}{0}]{$1$};
		\node at (-1,1.7)[font=\fontsize{7pt}{0}]{$2$};
	\end{tikzpicture}\,\Big) &  
	=\begin{tikzpicture}[scale=0.2,baseline=0pt]
		\draw[blue, thick] (0,-1) -- (0,0);
		\draw[blue, thick] (0,0) -- (4,4);
		\draw[blue, thick] (0,0) -- (-4,4);
		\draw[blue, thick] (-3,3) -- (-2,4);
		\draw[blue, thick] (2,2) -- (0,4);
		\draw[blue, thick] (1,3) -- (2,4);
		\filldraw[black] (0,0) circle (5pt)  {};
		\filldraw[black] (-3,3) circle (5pt)  {};
		\filldraw[black] (2,2) circle (5pt)  {};
		\filldraw[black] (1,3) circle (5pt)  {};
		\node at (0,0.7)[font=\fontsize{7pt}{0}]{$1$};
		\node at (-3,3.7)[font=\fontsize{7pt}{0}]{$2$};
		\node at (2,2.7)[font=\fontsize{7pt}{0}]{$3$};
		\node at (1,3.7)[font=\fontsize{7pt}{0}]{$4$};
		\end{tikzpicture} \leq_{w}\rho ; \qquad&
\sigma\overleftarrow{\shuffle}\Big(\,
	\begin{tikzpicture}[scale=0.25,baseline=0pt]
		\draw[blue, thick] (0,-1) -- (0,1);
	\end{tikzpicture},
	\begin{tikzpicture}[scale=0.2,baseline=0pt]
		\draw[blue, thick] (0,-1) -- (0,0);
		\draw[blue, thick] (0,0) -- (1,1);
		\draw[blue, thick] (0,0) -- (-1,1);
		\filldraw[black] (0,0) circle (5pt)  {};
		\node at (0,0.7)[font=\fontsize{7pt}{0}]{$2$};
	\end{tikzpicture},
	\begin{tikzpicture}[scale=0.2,baseline=0pt]
		\draw[blue, thick] (0,-1) -- (0,0);
		\draw[blue, thick] (0,0) -- (1,1);
		\draw[blue, thick] (0,0) -- (-1,1);
		\filldraw[black] (0,0) circle (5pt)  {};
		\node at (0,0.7)[font=\fontsize{7pt}{0}]{$1$};
	\end{tikzpicture}\,\Big) & 
	=\begin{tikzpicture}[scale=0.2,baseline=0pt]
		\draw[blue, thick] (0,-1) -- (0,0);
		\draw[blue, thick] (0,0) -- (4,4);
		\draw[blue, thick] (0,0) -- (-4,4);
		\draw[blue, thick] (-2,2) -- (0,4);
		\draw[blue, thick] (-1,3) -- (-2,4);
		\draw[blue, thick] (3,3) -- (2,4);
		\filldraw[black] (0,0) circle (5pt)  {};
		\filldraw[black] (-2,2) circle (5pt)  {};
		\filldraw[black] (-1,3) circle (5pt)  {};
		\filldraw[black] (3,3) circle (5pt)  {};
		\node at (0,0.7)[font=\fontsize{7pt}{0}]{$1$};
		\node at (-2,2.7)[font=\fontsize{7pt}{0}]{$2$};
		\node at (-1,3.7)[font=\fontsize{7pt}{0}]{$4$};
		\node at (3,3.7)[font=\fontsize{7pt}{0}]{$3$};
		\end{tikzpicture} \leq_{w}\rho\,.
\end{alignat*}

Hence only the first shuffle satisfies (2) and $\alpha_{\sigma,\tau}^{\rho}=1$.
\end{Example}

\subsection{Axioms for the antipode formula} \label{sec:axioms-antipode}

In the following, we require  that $\mathcal{H}$ is a Hopf algebra satisfying axioms ($\Delta$1)-($\Delta$3) and ($m$0)-($m$3). 

\begin{Remark}\label{rem:translationrequirements}
    Similar to Remarks \ref{rem:coproductaxiom-poset} and \ref{rem:productaxiom-poset}, we may translate the requirement that $\mathcal{H}$ is a Hopf algebra into the following conditions on the poset operations: for $x \in C, y \in C'$,
    \begin{align*}
        \{^i(\zeta(x,y)) | \zeta\in Sh(C,C')\}&=\{ \zeta'({}^jx,{}^{i-j}y) | \zeta'\in Sh(\cmpt({}^jx),\cmpt({}^{i-j}y)), j\in [1,i-1]\}; \\
        \{(\zeta(x,y))^i | \zeta\in Sh(C,C')\} &=\{ \zeta'(x^j,y^{i-j}) | \zeta'\in Sh(\cmpt(x^j),\cmpt(y^{i-j})), j\in [1,i-1] \}.
    \end{align*}
    This will ensure the compatibility between the multiplication and comultiplication, defined by Equations \ref{eq:fproductformula} and \ref{eq:fcoproductformula}.
\end{Remark}

Fix $f\in P_{n}$ and $S\subseteq\GD(f)$. Let $f|S=(f_{1},\dots,f_{k})$ denote the splitting of $f$ at $S$
and let $C_i$ denote the connected component containing $f_{i}$, then we write $\cmpts(f|S)$
for the sequence $(C_{1},\dots,C_{k})$. From (\ref{eq:shuffle-multi}), each fixed shuffle $\zeta\in Sh(C_{1},C_{2})$
defines an injection 
\begin{align*}
 Sh(\cmpt(\zeta(C_{1},C_{2})),C_{3},\dots,C_{k})&\hookrightarrow Sh(C_{1},C_{2},C_{3},\dots,C_{k})\\
  \zeta'&\mapsto (\zeta,\zeta')\,.
  \end{align*}
Similarly, by associativity of the multiplication, each fixed shuffle $\zeta\in Sh(C_{i},C_{i+1})$
defines an injection 
 $$Sh(C_{1},\dots,C_{i-1},\cmpt(\zeta(C_{i},C_{i+1})),C_{i+2},\dots,C_{k})\hookrightarrow Sh(C_{1},\dots,C_{k}).$$

Takeuchi's antipode formula involves shuffles of $f|S$ for different $S$; we need to compare shuffles of $f|S$ with shuffles of $f|R$ in order to resolve the cancelation of terms. To do so, Axiom ($\mathcal{S}0$) below will ask for a well chosen $\zeta_*\in Sh(C,C')$ such that the operation defined by
$f\backslash g=\zeta_*(f,g)$ is associative and $f\backslash g\leq f/g$ for $f/g$ defined by ($\Delta$3).
Lemma \ref{lem:nesting-for-antipode}
will then show that the composition of  injections as above gives well-defined
maps 
$$\iota_{R,S}:Sh(\cmpts(f|R))\hookrightarrow Sh(\cmpts(f|S)),$$
whenever $R\subseteq S\subseteq\GD(f)$. As an example: if $S=\{i_{1},i_{2},i_{3},i_{4},i_{5},i_{6}\}$
and $R=\{i_{1},i_{3},i_{4}\}$, then $\zeta\in Sh(\cmpts(f|R))$ takes
as input four entries. Then $\iota_{R,S}(\zeta)\in Sh(\cmpts(f|S))$
is defined by
\[
(\iota_{R,S}\zeta)(h_{1},\dots,h_{7})=\zeta(h_{1},h_{2}\backslash h_{3},h_{4},h_{5}\backslash h_{6}\backslash h_{7}).
\]
\begin{enumerate}
\item[($\mathcal{S}$0).]  There is a binary associative operation $(f,g)\mapsto f\backslash g$
such that, for any connected components $C_{i}$ and $C_{j}$ we have a choice of $\zeta_*\in Sh(C_{i},C_{j})$ that gives $f\backslash g=\zeta_*(f,g)$ for all $f\in C_{i}$ and $g\in C_{j}$. 
Furthermore, $f\backslash g\leq f/g$.
Then we require that for any $R_{1},R_{2}\subseteq S\subseteq\GD(f)$, if $\zeta\in Sh(\cmpts(f|S))$
satisfies $\zeta=\iota_{R_{1},S}(\zeta_{1})=\iota_{R_{2},S}(\zeta_{2})$
for some $\zeta_{1}\in Sh(\cmpts(f|R_{1}))$ and  $\zeta_{2}\in Sh(\cmpts(f|R_{2}))$,
then it is possible to find $\zeta'\in Sh(\cmpts(f|R_{1}\cap R_{2}))$ such that $\zeta=\iota_{R_{1}\cap R_{2},S}(\zeta')$. 
\item[($\mathcal{S}$1).]  If $f'\geq f\in P_{n}$, then $\GD(f')\supseteq\GD(f)$;
\item[($\mathcal{S}$2).]  If $f_{i}'\geq f_{i}\in P_{n_{i}}$ for each $i$, then $f'_{1}/\cdots/f'_{k}\leq f_{1}/\cdots/f_{k}\vee f'_{1}\backslash\cdots\backslash f'_{k}$.
\item[($\mathcal{S}$3).]  For any $i_{1}<j_{1}<i_{2}<j_{2}<\cdots<i_{l}<j_{l}$, 
\begin{align*}
f_{1}/\cdots/f_{k} & \leq\left[(f_{1}/\cdots/f_{i_{1}})\backslash f_{i_{1}+1}\backslash\cdots\backslash f_{j_{1}-1}\backslash(f_{j_{1}}/\cdots/f_{i_{2}})\backslash f_{i_{2}+1}\backslash\cdots\backslash f_{j_{2}-1}\backslash\cdots\right]\\
 & \qquad\vee\left[f_{1}\backslash\cdots\backslash f_{i_{1}-1}\backslash(f_{i_{1}}/\cdots/f_{j_{1}})\backslash f_{j_{1}+1}\backslash\cdots\backslash f_{i_{2}-1}\backslash(f_{i_{2}}/\cdots/f_{j_{2}})\backslash\cdots\right].
\end{align*}
 \end{enumerate}

Under these conditions, the antipode formula for monomial basis elements
involves the following quantity: given $f,h\in P_{n}$, let $\beta_{f}^{h}$
be the number of $\zeta\in Sh(\cmpts(f|\GD(f)))$ satisfying the following three
conditions:
\begin{equation}
\begin{array}{ll}
\text{(i)} & \zeta(f|\GD(f))\leq h;\\
\text{(ii)} & \mbox{if }f'\geq f\mbox{ satisfies }\zeta(f'|\GD(f))\leq h,\mbox{ then }f=f';\\
\text{(iii)} & \mbox{if }\zeta=\iota_{\GD(f)\backslash\{i\},\GD(f)}\zeta' \\ &\qquad\text{ for some }i\in\GD(f)\text{ and  }\zeta'\in Sh(\cmpts(f|\GD(f)\backslash\{i\})),\\
 &\mbox{then }\zeta'(f|(\GD(f)\backslash\{i\}))\not\leq h.
\end{array}\label{eq:c-def-GD-2}
\end{equation}

\begin{Theorem}\label{thm:antipode} If $\mathcal{H}$ is a connected
Hopf algebra satisfying conditions ($\Delta$1)-($\Delta$3), ($m$0)-($m$3) and ($\mathcal{S}$0)-($\mathcal{S}$3), then the antipode is given by 
\[
\mathcal{S}(M_{f})=(-1)^{|\GD(f)|+1}\sum_{h} \beta_{f}^{h} M_{h}
\]
 where $\beta_{f}^{h}$ is defined in (\ref{eq:c-def-GD-2}).\end{Theorem}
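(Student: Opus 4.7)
The plan is to start from Takeuchi's formula $\mathcal{S}(x)=\sum_{k\geq 1}(-1)^k m^{[k-1]}\circ \Delta_+^{[k]}(x)$ and substitute the iterated-coproduct formula of Theorem \ref{thm:monomialcoproduct} together with the iterated-product formula of Theorem \ref{thm:monomialproduct}. After collecting the coefficient of $M_h$, this gives
$$\mathcal{S}(M_f)=\sum_h\left(\sum_{S\subseteq \GD(f)}(-1)^{|S|+1}\alpha^{h}_{f|S}\right) M_h,$$
where $\alpha^h_{f|S}=|A^h_{(f|S)_1,\dots,(f|S)_{|S|+1}}|$ counts shuffles $\zeta\in Sh(\cmpts(f|S))$ satisfying the maximality condition of Theorem \ref{thm:monomialproduct}. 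The goal is then to prove that the inner bracket equals $(-1)^{|\GD(f)|+1}\beta_f^h$, i.e.\ every cancellation in the alternating sum is accounted for, and that the surviving terms carry a single uniform sign.

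The key device is the family of injections $\iota_{R,S}:Sh(\cmpts(f|R))\hookrightarrow Sh(\cmpts(f|S))$ constructed in axiom ($\mathcal{S}$0) from the associative operation $\backslash$ together with the choice $\zeta_*$. These allow every shuffle appearing on the right-hand side to be lifted to a shuffle in the single set $Sh(\cmpts(f|\GD(f)))$, so that we may reindex the alternating sum by pairs $(\zeta,S)$ with $\zeta\in\iota_{S,\GD(f)}(Sh(\cmpts(f|S)))$. The inequality $f\backslash g\leq f/g$ together with axioms ($\mathcal{S}$2) and ($\mathcal{S}$3) ensures that the conditions ``$\zeta'(f|S)\leq h$'' and ``$(\iota_{S,\GD(f)}\zeta')(f|\GD(f))\leq h$'' are equivalent under this lifting, so counts match after reindexing.

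For each fixed lifted shuffle $\zeta\in Sh(\cmpts(f|\GD(f)))$, let $\mathcal{R}_\zeta=\{R\subseteq \GD(f):\zeta\in\iota_{R,\GD(f)}(Sh(\cmpts(f|R)))\}$. The intersection property in the second half of ($\mathcal{S}$0) guarantees that $\mathcal{R}_\zeta$ is closed under intersection, so it has a unique minimum element $R_\zeta$, and the subsets $S\subseteq\GD(f)$ whose shuffles lift to $\zeta$ are precisely the $S$ with $R_\zeta\subseteq S\subseteq\GD(f)$. The contribution of $\zeta$ to the coefficient of $M_h$ is therefore
$$\left(\sum_{R_\zeta\subseteq S\subseteq\GD(f)}(-1)^{|S|+1}\right)=\begin{cases}(-1)^{|\GD(f)|+1}, & R_\zeta=\GD(f),\\ 0, & R_\zeta\subsetneq\GD(f),\end{cases}$$
by the standard alternating-sum identity on a Boolean interval. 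Using axiom ($\mathcal{S}$1) the conditions (i) and (ii) of $\beta_f^h$ in (\ref{eq:c-def-GD-2}) are exactly the surviving translations of ``$\zeta(f|S)\leq h$'' and the maximality condition of Theorem \ref{thm:monomialproduct}, while condition (iii) is precisely the statement $R_\zeta=\GD(f)$ (otherwise $\zeta$ would factor through the removal of some $i\in\GD(f)\setminus R_\zeta$).

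The main obstacle will be the careful bookkeeping for the translation between the maximality condition of Theorem \ref{thm:monomialproduct}, which is stated with respect to the particular splitting $f|S$, and condition (ii) of (\ref{eq:c-def-GD-2}), which refers to the full splitting at $\GD(f)$. Axiom ($\mathcal{S}$2) is the precise tool that compares $f_1'/\cdots/f_k'$ to $f_1/\cdots/f_k\vee f_1'\backslash\cdots\backslash f_k'$, and it is this inequality (together with the least-upper-bound axiom ($m$0)) that guarantees: if a shuffled image is $\leq h$ and no upward move of the inputs keeps it $\leq h$ on any particular $f|S$, then the same remains true after lifting to $f|\GD(f)$, and conversely. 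The more intricate axiom ($\mathcal{S}$3) is what prevents ``cross contributions'' between different splittings from escaping the injection framework, and its role in the cancellation will need to be checked against each configuration of alternating $i_1<j_1<i_2<\cdots$ arising in the iterated application of ($\mathcal{S}$0). Once these verifications are in place the antipode formula reduces to the claimed expression, with sign $(-1)^{|\GD(f)|+1}$ coming from the unique surviving $S=\GD(f)$.
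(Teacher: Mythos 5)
Your opening moves coincide with the paper's: Takeuchi's formula, the iterated coproduct and product formulas, the reduction to showing that $\sum_{S\subseteq\GD(f)}(-1)^{|S|+1}\alpha^{h}_{f|S}=(-1)^{|\GD(f)|+1}\beta_f^h$, and the use of the injections $\iota_{R,S}$ from ($\mathcal{S}$0) to compare shuffle sets for different $S$ via an alternating sum over a Boolean interval. But there is a genuine error at the heart of your reindexing: you assert that the conditions ``$\zeta'(f|S)\leq h$'' and ``$(\iota_{S,\GD(f)}\zeta')(f|\GD(f))\leq h$'' are \emph{equivalent} under lifting. They are not. Lemma \ref{lem:nesting-for-antipode} gives only one inequality, $(\iota_{R,S}\zeta)(f|S)\leq \zeta(f|R)$, so the lifted evaluation can be $\leq h$ while the unlifted one is not. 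Consequently, for a fixed lifted shuffle $\zeta$, membership of its preimage in $A_{f|S}^h$ genuinely depends on $S$, and the set of contributing $S$ is \emph{not} the interval $[R_\zeta,\GD(f)]$ where $R_\zeta$ is determined purely by which injections $\zeta$ factors through. Your cancellation argument therefore discards the wrong terms: it would count only those $\zeta$ that do not factor through $\iota_{R,\GD(f)}$ for any proper $R$, whereas condition (iii) of (\ref{eq:c-def-GD-2}) keeps a $\zeta$ that does factor as $\iota_{\GD(f)\setminus\{i\},\GD(f)}\zeta'$ provided $\zeta'(f|\GD(f)\setminus\{i\})\not\leq h$. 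The discrepancy is visible in Example \ref{ex:below}: both surviving shuffles there factor through proper subsets of $\GD(\sigma)$ (that is exactly why condition (3) is a nontrivial check that replacing $\backslash$ by $/$ yields something $\not\leq_w\tau$), yet both are counted and $\gamma_\sigma^\tau=2$; your rule would return $0$.

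The repair is the content of the paper's Lemmas \ref{lem:product-multi}--\ref{lem:antipodecondition}. One must attach to each $\zeta\in A_{f|S}^h$ the minimal $R\subseteq S$ such that $\zeta=\iota_{R,S}(\zeta')$ \emph{with} $\zeta'(f|R)\leq h$; this family is nonempty (take $R=S$), closed under intersection by the combination of ($\mathcal{S}$0) and the $A$-set version of ($m$3) proved as ($\mathcal{S}$3${}^*$), and upward closed by ($\mathcal{S}$2${}^*$) --- and one must separately verify, via ($\mathcal{S}$2${}^*$b) and ($\mathcal{S}$3${}^*$b), that the maximality-of-$f$ condition is inherited along these liftings, which is where axioms ($\mathcal{S}$2) and ($\mathcal{S}$3) actually enter (through the comparison of $f_1'/\cdots/f_k'$ with $(f_1/\cdots/f_k)\vee(f_1'\backslash\cdots\backslash f_k')$). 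This yields the partition $A_{f|S}^{h}=\bigsqcup_{R\subseteq S}\iota_{R,S}(C_{f|R}^{h})$ with $C$ defined by evaluation conditions, not factorization conditions, and only then does the M\"obius/alternating-sum step produce $\beta_f^h=|C_{f|\GD(f)}^h|$ together with the final reduction of condition (iii) to singleton deletions $R=\GD(f)\setminus\{i\}$.
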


\begin{Remark}\label{rem:s0trees} In this paper we are particularly interested in Hopf algebras with bases indexed by special types of labeled trees.
When the labeling is closed under the operations defined in Definition~\ref{def:perm}, we have that such algebras will satisfy ($\mathcal{S}$0).
To see this, we have $\zeta=\iota_{R,S}\zeta'$ if and only if, in $\zeta(h_{1},\dots,h_{k})$,
all nodes of $h_{i}$ occur to the left of $h_{i+1}$, for each
$i$ where $\deg h_{1}+\cdots+\deg h_{i}\in S\backslash R$. (This
observation is used towards the end of Example~\ref{ex:below}.) So if $\zeta=\iota_{R_{1},S}(\zeta_{1})=\iota_{R_{2},S}(\zeta_{2})$,
then, in $\zeta(h_{1},\dots,h_{k})$, all nodes of $h_{i}$ occur
to the left of $h_{i+1}$, for each $i$ where $\deg h_{1}+\cdots+\deg h_{i}\in(S\backslash R_{1})\cup(S\backslash R_{2})=S\backslash(R_{1}\cap R_{2})$.
\end{Remark}

\begin{Proposition} \label{prop:antipode-monomial-ssym} \cite[Th. 5.5]{AS05}
 In the antipode of monomial basis elements in
$\SSym$, all terms have the same sign. More specifically, if $\sigma\mapsto(\sigma_{1},\dots,\sigma_{k})$
is the result of splitting $\sigma$ at all its global descents, then
$\mathcal{S}(M_{\sigma})=(-1)^{\GD(\sigma)+1}\sum_{\tau}\gamma_{\sigma}^{\tau}M_{\tau}$
where $\gamma_{\sigma}^{\tau}$ is the number of splitting sets $\sigma_{2}\mapsto(\sigma_{2}^{(1)},\dots,\sigma_{2}^{(k_{2})}),\sigma_{3}\mapsto(\sigma_{3}^{(1)},\dots,\sigma_{3}^{(k_{3})}),\dots,\sigma_{k}\mapsto(\sigma_{k}^{(1)},\dots,\sigma_{k}^{(k_{k})})$
satisfying the following conditions:
\begin{enumerate}
\item[{\rm (1)}]
 $
\Big(\cdots\big((\sigma_{1}\overleftarrow{\shuffle}(\sigma_{2}^{(1)},\dots,\sigma_{2}^{(k_{2})}))\overleftarrow{\shuffle}(\sigma_{3}^{(1)},\dots,\sigma_{3}^{(k_{3})})\big)\cdots\Big)\overleftarrow{\shuffle}(\sigma_{k}^{(1)},\dots,\sigma_{k}^{(k_{k})})\leq_{w}\tau;
 $ \hfill\break and for now on we always assume that such iterated shuffle is performed from left to right and will not put the parenthesis.
\item[{\rm (2)}] For each $\sigma'>\sigma$, consider ${\sigma'}\mapsto({\sigma'}_{1},\dots,{\sigma'}_{k})$
and ${\sigma'}_{i}\mapsto ({\sigma'}_{i}^{(1)},\dots,{\sigma'}_{i}^{(k_{i})})$ for $2\leq i \leq k$
with  $\deg {\sigma'}_{i}^{(j)}=\deg {\sigma}_{i}^{(j)}$. Then 
   $${\sigma'}_{1}\overleftarrow{\shuffle}({\sigma'}_{2}^{(1)},\dots,{\sigma'}_{2}^{(k_{2})})\overleftarrow{\shuffle}({\sigma'}_{3}^{(1)},\dots,{\sigma'}_{3}^{(k_{3})})\cdots\overleftarrow{\shuffle}({\sigma'}_{k}^{(1)},\dots,{\sigma'}_{k}^{(k_{k})})\not\leq_{w}\tau;$$
\item[{\rm (3)}] Take any $i$ where all nodes from $\sigma_{i}$ appear to the left of all nodes from $\sigma_{i-1}$, so that the left hand side of (1)
may be computed as 
\begin{align*}
\sigma_{1}\overleftarrow{\shuffle}(\sigma_{2}^{(1)},\dots,\sigma_{2}^{(k_{2})})
     \cdots
     &\overleftarrow{\shuffle}(\sigma_{i-2}^{(1)},\dots,\sigma_{i-2}^{(k_{i-2})}) \\
     &\overleftarrow{\shuffle}((\sigma_{i-1}\backslash\sigma_{i})^{(1)},\dots,(\sigma_{i-1}\backslash\sigma_{i})^{(k')}) \\
   &\overleftarrow{\shuffle}(\sigma_{i+1}^{(1)},\dots,\sigma_{i+1}^{(k_{i+1})})\cdots\overleftarrow{\shuffle}(\sigma_{k}^{(1)},\dots,\sigma_{k}^{(k_{k})}),
   \end{align*}
for some splitting $\sigma_{i-1}\backslash\sigma_{i}\mapsto((\sigma_{i-1}\backslash\sigma_{i})^{(1)},\dots,(\sigma_{i-1}\backslash\sigma_{i})^{(k')})$.
Replacing $\sigma_{i-1}\backslash\sigma_{i}$ with $\sigma_{i-1}/\sigma_{i}$  gives a result $\not\leq_{w}\tau$.
\end{enumerate}
\end{Proposition}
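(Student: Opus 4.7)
The plan is to deduce this proposition as a concrete instance of the abstract antipode formula in Theorem \ref{thm:antipode}, applied to $\SSym$ equipped with the weak order. The work therefore splits into two parts: first, verify that $\SSym$ satisfies axioms $(\Delta 1)$–$(\Delta 3)$, $(m0)$–$(m3)$, and $(\mathcal{S}0)$–$(\mathcal{S}3)$; second, translate the abstract clauses (i)–(iii) defining $\beta_f^h$ in \eqref{eq:c-def-GD-2} into the explicit conditions (1)–(3) of the statement, so that the resulting count equals $\gamma_\sigma^\tau$.

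The coproduct and product axioms are essentially bookkeeping once the right choices are recorded. For $(\Delta 1)$–$(\Delta 3)$, take $\allow(\sigma)=[1,n-1]$ for every $\sigma\in\Perm_n$, use the coproduct of Section \ref{sec:ssym} as the formula \eqref{eq:fcoproductformula}, and note that the standardized factors ${}^i\sigma$ and $\sigma^i$ are monotone in the weak order by the inversion-table characterization, while $\sigma/\tau$ of Definition \ref{def:perm} is visibly the unique maximal preimage of a prescribed factorization. For $(m0)$–$(m3)$, identify the shuffle sets with those of Example \ref{ex:shuffle} so that $(m1)$ becomes exactly the product of Definition \ref{def:ssym}; $(m0)$ uses the fact that each weak order is a lattice; and $(m2)$–$(m3)$ reduce to the observation that applying a fixed shuffle to larger inputs enlarges the inversion set and does so subadditively under joins.

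The delicate part is the antipode axioms. For $(\mathcal{S}0)$, the distinguished shuffle $\zeta_*$ is the one placing all internal nodes of the first permutation to the left of those of the second, giving $\zeta_*(\sigma,\tau)=\sigma\backslash\tau$ of Definition \ref{def:perm}; the inequality $\sigma\backslash\tau\leq_w\sigma/\tau$ follows from comparing inversion tables, and the compatibility condition on $\iota_{R_1,S},\iota_{R_2,S}$ is handled by Remark \ref{rem:s0trees}. Axiom $(\mathcal{S}1)$ follows immediately from Definition \ref{def:permutation-globaldescent}. The main obstacle is $(\mathcal{S}2)$ and $(\mathcal{S}3)$, mixed inequalities combining $/$, $\backslash$, and $\vee$; I would prove both by splitting the inversions of the left-hand side into those internal to a single block $\sigma_i$ (which are forced, since the blocks are fixed, to occur on the right-hand side via the unshifted $/$-chain) and those between distinct blocks (which appear on the right-hand side via the $\backslash$-chain, or via the alternating $\backslash$/$/$ chain in $(\mathcal{S}3)$ depending on which of the two interval families contains the pair).

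With the axioms in place, Theorem \ref{thm:antipode} yields $\mathcal{S}(M_\sigma)=(-1)^{|\GD(\sigma)|+1}\sum_\tau \beta_\sigma^\tau M_\tau$. It remains to identify $\beta_\sigma^\tau$ with $\gamma_\sigma^\tau$. Clause (i) of \eqref{eq:c-def-GD-2} unpacks to condition (1), since iterating shuffles of the main blocks $\sigma_i$ at their global-descent positions exactly realizes an element of $Sh(\cmpts(\sigma|\GD(\sigma)))$. Clause (ii) is condition (2), because any $\sigma'>_w\sigma$ whose blocks have the prescribed degrees gives $\sigma'|\GD(\sigma)$ with the matching standardizations $\sigma'^{(j)}_i$. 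Finally, clause (iii) is condition (3): by Remark \ref{rem:s0trees}, the image of $\iota_{\GD(\sigma)\setminus\{i\},\GD(\sigma)}$ consists of those shuffles in which the two blocks adjacent to the $i$-th global descent are kept side-by-side, and discarding that descent replaces the corresponding $\backslash$ by a $/$; failure of $\leq_w\tau$ under this replacement is precisely clause (iii). This completes the identification and hence the proof.
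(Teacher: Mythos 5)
Your proposal follows essentially the same route as the paper: the paper derives this statement from Theorem \ref{thm:antipode} via Proposition \ref{prop:axiomssym}, which verifies axioms ($\Delta$1)--($\Delta$3), ($m$0)--($m$3), ($\mathcal{S}$0)--($\mathcal{S}$3) for $\SSym$ (mostly by citing \cite[Props.\ 2.5, 2.10, 2.16, Lem.\ 2.14]{AS05} where you instead sketch direct inversion-set arguments), and the translation of clauses (i)--(iii) of \eqref{eq:c-def-GD-2} into conditions (1)--(3) matches the paper's framework and Example \ref{ex:below}. The only point to tighten is your sketch of ($\mathcal{S}$3): non-adjacent block pairs need not lie in a single $/$-interval of either chain, so the between-block inversions are recovered only after taking the transitive closure of $\Inv(A)\cup\Inv(B)$ that computes $\Inv(A\vee B)$, exactly the step the paper outsources to \cite[Prop.\ 2.16.iii]{AS05}.
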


\begin{Example}\label{ex:below}
Let us compute $\gamma_{\sigma}^{\tau}$, the coefficient
of $M_{\tau}$ (up to sign) in the antipode $S(M_{\sigma})$, for
$\sigma=\begin{tikzpicture}[scale=0.2,baseline=0pt]
		\draw[blue, thick] (0,-1) -- (0,0);
		\draw[blue, thick] (0,0) -- (4,4);
		\draw[blue, thick] (0,0) -- (-4,4);
		\draw[blue, thick] (-3,3) -- (-2,4);
		\draw[blue, thick] (-1,1) -- (2,4);
		\draw[blue, thick] (1,3) -- (0,4);
		\filldraw[black] (0,0) circle (5pt)  {};
		\filldraw[black] (-3,3) circle (5pt)  {};
		\filldraw[black] (-1,1) circle (5pt)  {};
		\filldraw[black] (1,3) circle (5pt)  {};
		\node at (0,0.7)[font=\fontsize{7pt}{0}]{$1$};
		\node at (-1,1.7)[font=\fontsize{7pt}{0}]{$2$};
		\node at (-3,3.7)[font=\fontsize{7pt}{0}]{$4$};
		\node at (1,3.7)[font=\fontsize{7pt}{0}]{$3$};
	\end{tikzpicture}$ and 
$\tau=\begin{tikzpicture}[scale=0.2,baseline=0pt]
		\draw[blue, thick] (0,-1) -- (0,0);
		\draw[blue, thick] (0,0) -- (4,4);
		\draw[blue, thick] (0,0) -- (-4,4);
		\draw[blue, thick] (-3,3) -- (-2,4);
		\draw[blue, thick] (2,2) -- (0,4);
		\draw[blue, thick] (3,3) -- (2,4);
		\filldraw[black] (0,0) circle (5pt)  {};
		\filldraw[black] (-3,3) circle (5pt)  {};
		\filldraw[black] (2,2) circle (5pt)  {};
		\filldraw[black] (3,3) circle (5pt)  {};
		\node at (0,0.7)[font=\fontsize{7pt}{0}]{$1$};
		\node at (-3,3.7)[font=\fontsize{7pt}{0}]{$2$};
		\node at (2,2.7)[font=\fontsize{7pt}{0}]{$3$};
		\node at (3,3.7)[font=\fontsize{7pt}{0}]{$4$};
	\end{tikzpicture}$. Splitting $\sigma$ at its global descents gives 
$$\sigma \mapsto \Big(
\sigma_1 = \begin{tikzpicture}[scale=0.2,baseline=0pt]
		\draw[blue, thick] (0,-1) -- (0,0);
		\draw[blue, thick] (0,0) -- (1,1);
		\draw[blue, thick] (0,0) -- (-1,1);
		\filldraw[black] (0,0) circle (5pt)  {};
		\node at (0,0.7)[font=\fontsize{7pt}{0}]{$1$};
	\end{tikzpicture},
\sigma_2 = \begin{tikzpicture}[scale=0.2,baseline=0pt]
		\draw[blue, thick] (0,-1) -- (0,0);
		\draw[blue, thick] (0,0) -- (2,2);
		\draw[blue, thick] (0,0) -- (-2,2);
		\draw[blue, thick] (1,1) -- (0,2);
		\filldraw[black] (0,0) circle (5pt)  {};
		\filldraw[black] (1,1) circle (5pt)  {};
		\node at (0,0.7)[font=\fontsize{7pt}{0}]{$1$};
		\node at (1,1.7)[font=\fontsize{7pt}{0}]{$2$};
	\end{tikzpicture},
\sigma_3 = \begin{tikzpicture}[scale=0.2,baseline=0pt]
		\draw[blue, thick] (0,-1) -- (0,0);
		\draw[blue, thick] (0,0) -- (1,1);
		\draw[blue, thick] (0,0) -- (-1,1);
		\filldraw[black] (0,0) circle (5pt)  {};
		\node at (0,0.7)[font=\fontsize{7pt}{0}]{$1$};
	\end{tikzpicture} \Big).$$ 
As $\sigma_{1}$ has 2 leaves and $\sigma_{2}$ has 3 leaves, we consider
$\sigma_{2}\mapsto(\sigma_{2}^{(1)},\sigma_{2}^{(2)})$ and $\sigma_{3}\mapsto(\sigma_{3}^{(1)},\sigma_{3}^{(2)},\sigma_{3}^{(3)},\sigma_{3}^{(4)})$
for the left hand side of Proposition~\ref{prop:antipode-monomial-ssym}  (1) to be computable.
Out of the three possibilities for $\sigma_{2}\mapsto(\sigma_{2}^{(1)},\sigma_{2}^{(2)})$
and four possibilities for $\sigma_{3}\mapsto(\sigma_{3}^{(1)},\sigma_{3}^{(2)},\sigma_{3}^{(3)},\sigma_{3}^{(4)})$,
twelve in total, the only two that satisfy Proposition~\ref{prop:antipode-monomial-ssym} (1) are
\begin{equation} \label{antipodeexample}
\begin{tikzpicture}[scale=0.2,baseline=0pt]
		\draw[blue, thick] (0,-1) -- (0,0);
		\draw[blue, thick] (0,0) -- (1,1);
		\draw[blue, thick] (0,0) -- (-1,1);
		\filldraw[black] (0,0) circle (5pt)  {};
		\node at (0,0.7)[font=\fontsize{7pt}{0}]{$1$};
	\end{tikzpicture}
\overleftarrow{\shuffle}\Big(~
\begin{tikzpicture}[scale=0.2,baseline=0pt]
		\draw[red, thick] (0,-1) -- (0,0);
		\draw[red, thick] (0,0) -- (1,1);
		\draw[red, thick] (0,0) -- (-1,1);
		\filldraw[black] (0,0) circle (5pt)  {};
		\node at (0,0.7)[font=\fontsize{7pt}{0}]{$1$};
	\end{tikzpicture}~,~
\begin{tikzpicture}[scale=0.2,baseline=0pt]
		\draw[red, thick] (0,-1) -- (0,0);
		\draw[red, thick] (0,0) -- (1,1);
		\draw[red, thick] (0,0) -- (-1,1);
		\filldraw[black] (0,0) circle (5pt)  {};
		\node at (0,0.7)[font=\fontsize{7pt}{0}]{$2$};
	\end{tikzpicture}~\Big) 
\overleftarrow{\shuffle}\Big(~
	\begin{tikzpicture}[scale=0.25,baseline=0pt]
		\draw[ForestGreen, thick] (0,-1) -- (0,1);
	\end{tikzpicture}~,~
	\begin{tikzpicture}[scale=0.25,baseline=0pt]
		\draw[ForestGreen, thick] (0,-1) -- (0,1);
	\end{tikzpicture}~,~
	\begin{tikzpicture}[scale=0.25,baseline=0pt]
		\draw[ForestGreen, thick] (0,-1) -- (0,1);
	\end{tikzpicture}~,~
	\begin{tikzpicture}[scale=0.2,baseline=0pt]
		\draw[ForestGreen, thick] (0,-1) -- (0,0);
		\draw[ForestGreen, thick] (0,0) -- (1,1);
		\draw[ForestGreen, thick] (0,0) -- (-1,1);
		\filldraw[black] (0,0) circle (5pt)  {};
		\node at (0,0.7)[font=\fontsize{7pt}{0}]{$1$};
	\end{tikzpicture}~\Big) 
	=\begin{tikzpicture}[scale=0.2,baseline=0pt]
		\draw[blue, thick] (0,-1) -- (0,0);
		\draw[blue, thick] (0,0) -- (3,3);
		\draw[blue, thick] (0,0) -- (-3,3);
		\draw[red, thick] (-3,3) -- (-4,4);
		\draw[red, thick] (-3,3) -- (-2,4);
		\draw[red, thick] (3,3) -- (2,4);
		\draw[red, thick] (3,3) -- (4,4);
		\draw[ForestGreen, thick] (-4,4) -- (-5,5);
		\draw[ForestGreen, thick] (-2,4) -- (-1,5);
		\draw[ForestGreen, thick] (2,4) -- (1,5);
		\draw[ForestGreen, thick] (4,4) -- (3,5);
		\draw[ForestGreen, thick] (4,4) -- (5,5);
		\filldraw[black] (0,0) circle (5pt)  {};
		\filldraw[black] (-3,3) circle (5pt)  {};
		\filldraw[black] (3,3) circle (5pt)  {};
		\filldraw[black] (4,4) circle (5pt)  {};
		\node at (0,0.7)[font=\fontsize{7pt}{0}]{$1$};
		\node at (-3,3.7)[font=\fontsize{7pt}{0}]{$2$};
		\node at (3,3.7)[font=\fontsize{7pt}{0}]{$3$};
		\node at (4,4.7)[font=\fontsize{7pt}{0}]{$4$};
	\end{tikzpicture}; \qquad
	\begin{tikzpicture}[scale=0.2,baseline=0pt]
		\draw[blue, thick] (0,-1) -- (0,0);
		\draw[blue, thick] (0,0) -- (1,1);
		\draw[blue, thick] (0,0) -- (-1,1);
		\filldraw[black] (0,0) circle (5pt)  {};
		\node at (0,0.7)[font=\fontsize{7pt}{0}]{$1$};
	\end{tikzpicture}
\overleftarrow{\shuffle}\Big(~
	\begin{tikzpicture}[scale=0.25,baseline=0pt]
		\draw[red, thick] (0,-1) -- (0,1);
	\end{tikzpicture}~,~
	\begin{tikzpicture}[scale=0.2,baseline=0pt]
		\draw[red, thick] (0,-1) -- (0,0);
		\draw[red, thick] (0,0) -- (2,2);
		\draw[red, thick] (0,0) -- (-2,2);
		\draw[red, thick] (1,1) -- (0,2);
		\filldraw[black] (0,0) circle (5pt)  {};
		\filldraw[black] (1,1) circle (5pt)  {};
		\node at (0,0.7)[font=\fontsize{7pt}{0}]{$1$};
		\node at (1,1.7)[font=\fontsize{7pt}{0}]{$2$};
	\end{tikzpicture}~\Big) 
\overleftarrow{\shuffle}\Big(~
	\begin{tikzpicture}[scale=0.25,baseline=0pt]
		\draw[ForestGreen, thick] (0,-1) -- (0,1);
	\end{tikzpicture}~,~
	\begin{tikzpicture}[scale=0.25,baseline=0pt]
		\draw[ForestGreen, thick] (0,-1) -- (0,1);
	\end{tikzpicture}~,~
	\begin{tikzpicture}[scale=0.25,baseline=0pt]
		\draw[ForestGreen, thick] (0,-1) -- (0,1);
	\end{tikzpicture}~,~
	\begin{tikzpicture}[scale=0.2,baseline=0pt]
		\draw[ForestGreen, thick] (0,-1) -- (0,0);
		\draw[ForestGreen, thick] (0,0) -- (1,1);
		\draw[ForestGreen, thick] (0,0) -- (-1,1);
		\filldraw[black] (0,0) circle (5pt)  {};
		\node at (0,0.7)[font=\fontsize{7pt}{0}]{$1$};
	\end{tikzpicture}~\Big) 
	=\begin{tikzpicture}[scale=0.2,baseline=0pt]
		\draw[blue, thick] (0,-1) -- (0,0);
		\draw[blue, thick] (0,0) -- (1,1);
		\draw[red, thick] (1,1) -- (3,3);
		\draw[ForestGreen, thick] (3,3) -- (4,4);
		\draw[blue, thick] (0,0) -- (-1,1);
		\draw[red, thick] (1,1) -- (-1,3);
		\draw[red, thick] (2,2) -- (1,3);
		\draw[red, thick] (-1,1) -- (-3,3);
		\draw[ForestGreen, thick] (-3,3) -- (-4,4);
		\draw[ForestGreen, thick] (-1,3) -- (-2,4);
		\draw[ForestGreen, thick] (1,3) -- (0,4);
		\draw[ForestGreen, thick] (3,3) -- (2,4);
		\filldraw[black] (0,0) circle (5pt)  {};
		\filldraw[black] (1,1) circle (5pt)  {};
		\filldraw[black] (2,2) circle (5pt)  {};
		\filldraw[black] (3,3) circle (5pt)  {};
		\node at (0,0.7)[font=\fontsize{7pt}{0}]{$1$};
		\node at (1,1.7)[font=\fontsize{7pt}{0}]{$2$};
		\node at (2,2.7)[font=\fontsize{7pt}{0}]{$3$};
		\node at (3,3.7)[font=\fontsize{7pt}{0}]{$4$};
		\end{tikzpicture}.
\end{equation}To check condition (2): the only $\sigma'>\sigma$ is $\sigma'=\begin{tikzpicture}[scale=0.2,baseline=0pt]
		\draw[blue, thick] (0,-1) -- (0,0);
		\draw[blue, thick] (0,0) -- (4,4);
		\draw[blue, thick] (0,0) -- (-4,4);
		\draw[blue, thick] (-2,2) -- (0,4);
		\draw[blue, thick] (-3,3) -- (-2,4);
		\draw[blue, thick] (-1,1) -- (2,4);
		\filldraw[black] (0,0) circle (5pt)  {};
		\filldraw[black] (-2,2) circle (5pt)  {};
		\filldraw[black] (-3,3) circle (5pt)  {};
		\filldraw[black] (-1,1) circle (5pt)  {};
		\node at (0,0.7)[font=\fontsize{7pt}{0}]{$1$};
		\node at (-1,1.7)[font=\fontsize{7pt}{0}]{$2$};
		\node at (-2,2.7)[font=\fontsize{7pt}{0}]{$3$};
		\node at (-3,3.7)[font=\fontsize{7pt}{0}]{$4$};
\end{tikzpicture}$, and the analogous calculations are
\begin{equation*}
\begin{tikzpicture}[scale=0.2,baseline=0pt]
		\draw[blue, thick] (0,-1) -- (0,0);
		\draw[blue, thick] (0,0) -- (1,1);
		\draw[blue, thick] (0,0) -- (-1,1);
		\filldraw[black] (0,0) circle (5pt)  {};
		\node at (0,0.7)[font=\fontsize{7pt}{0}]{$1$};
	\end{tikzpicture}
\overleftarrow{\shuffle}\Big(
\begin{tikzpicture}[scale=0.2,baseline=0pt]
		\draw[blue, thick] (0,-1) -- (0,0);
		\draw[blue, thick] (0,0) -- (1,1);
		\draw[blue, thick] (0,0) -- (-1,1);
		\filldraw[black] (0,0) circle (5pt)  {};
		\node at (0,0.7)[font=\fontsize{7pt}{0}]{$2$};
	\end{tikzpicture},
\begin{tikzpicture}[scale=0.2,baseline=0pt]
		\draw[blue, thick] (0,-1) -- (0,0);
		\draw[blue, thick] (0,0) -- (1,1);
		\draw[blue, thick] (0,0) -- (-1,1);
		\filldraw[black] (0,0) circle (5pt)  {};
		\node at (0,0.7)[font=\fontsize{7pt}{0}]{$1$};
	\end{tikzpicture}\Big) 
\overleftarrow{\shuffle}\Big(
	\begin{tikzpicture}[scale=0.25,baseline=0pt]
		\draw[blue, thick] (0,-1) -- (0,1);
	\end{tikzpicture},
	\begin{tikzpicture}[scale=0.25,baseline=0pt]
		\draw[blue, thick] (0,-1) -- (0,1);
	\end{tikzpicture},
	\begin{tikzpicture}[scale=0.25,baseline=0pt]
		\draw[blue, thick] (0,-1) -- (0,1);
	\end{tikzpicture},
	\begin{tikzpicture}[scale=0.2,baseline=0pt]
		\draw[blue, thick] (0,-1) -- (0,0);
		\draw[blue, thick] (0,0) -- (1,1);
		\draw[blue, thick] (0,0) -- (-1,1);
		\filldraw[black] (0,0) circle (5pt)  {};
		\node at (0,0.7)[font=\fontsize{7pt}{0}]{$1$};
	\end{tikzpicture}\Big) 
	=\begin{tikzpicture}[scale=0.2,baseline=0pt]
		\draw[blue, thick] (0,-1) -- (0,0);
		\draw[blue, thick] (0,0) -- (4,4);
		\draw[blue, thick] (0,0) -- (-4,4);
		\draw[blue, thick] (-3,3) -- (-2,4);
		\draw[blue, thick] (2,2) -- (0,4);
		\draw[blue, thick] (3,3) -- (2,4);
		\filldraw[black] (0,0) circle (5pt)  {};
		\filldraw[black] (-3,3) circle (5pt)  {};
		\filldraw[black] (2,2) circle (5pt)  {};
		\filldraw[black] (3,3) circle (5pt)  {};
		\node at (0,0.7)[font=\fontsize{7pt}{0}]{$1$};
		\node at (-3,3.7)[font=\fontsize{7pt}{0}]{$3$};
		\node at (2,2.7)[font=\fontsize{7pt}{0}]{$2$};
		\node at (3,3.7)[font=\fontsize{7pt}{0}]{$4$};
	\end{tikzpicture}\not \leq_{w}\tau; \qquad
\begin{tikzpicture}[scale=0.2,baseline=0pt]
		\draw[blue, thick] (0,-1) -- (0,0);
		\draw[blue, thick] (0,0) -- (1,1);
		\draw[blue, thick] (0,0) -- (-1,1);
		\filldraw[black] (0,0) circle (5pt)  {};
		\node at (0,0.7)[font=\fontsize{7pt}{0}]{$1$};
	\end{tikzpicture}
\overleftarrow{\shuffle}\Big(
	\begin{tikzpicture}[scale=0.25,baseline=0pt]
		\draw[blue, thick] (0,-1) -- (0,1);
	\end{tikzpicture},
	\begin{tikzpicture}[scale=0.2,baseline=0pt]
		\draw[blue, thick] (0,-1) -- (0,0);
		\draw[blue, thick] (0,0) -- (2,2);
		\draw[blue, thick] (0,0) -- (-2,2);
		\draw[blue, thick] (-1,1) -- (0,2);
		\filldraw[black] (0,0) circle (5pt)  {};
		\filldraw[black] (-1,1) circle (5pt)  {};
		\node at (0,0.7)[font=\fontsize{7pt}{0}]{$1$};
		\node at (-1,1.7)[font=\fontsize{7pt}{0}]{$2$};
	\end{tikzpicture}\Big) 
\overleftarrow{\shuffle}\Big(
	\begin{tikzpicture}[scale=0.25,baseline=0pt]
		\draw[blue, thick] (0,-1) -- (0,1);
	\end{tikzpicture},
	\begin{tikzpicture}[scale=0.25,baseline=0pt]
		\draw[blue, thick] (0,-1) -- (0,1);
	\end{tikzpicture},
	\begin{tikzpicture}[scale=0.25,baseline=0pt]
		\draw[blue, thick] (0,-1) -- (0,1);
	\end{tikzpicture},
	\begin{tikzpicture}[scale=0.2,baseline=0pt]
		\draw[blue, thick] (0,-1) -- (0,0);
		\draw[blue, thick] (0,0) -- (1,1);
		\draw[blue, thick] (0,0) -- (-1,1);
		\filldraw[black] (0,0) circle (5pt)  {};
		\node at (0,0.7)[font=\fontsize{7pt}{0}]{$1$};
	\end{tikzpicture}\Big) 
	=\begin{tikzpicture}[scale=0.2,baseline=0pt]
		\draw[blue, thick] (0,-1) -- (0,0);
		\draw[blue, thick] (0,0) -- (4,4);
		\draw[blue, thick] (0,0) -- (-4,4);
		\draw[blue, thick] (1,1) -- (-2,4);
		\draw[blue, thick] (-1,3) -- (0,4);
		\draw[blue, thick] (3,3) -- (2,4);
		\filldraw[black] (0,0) circle (5pt)  {};
		\filldraw[black] (1,1) circle (5pt)  {};
		\filldraw[black] (-1,3) circle (5pt)  {};
		\filldraw[black] (3,3) circle (5pt)  {};
		\node at (0,0.7)[font=\fontsize{7pt}{0}]{$1$};
		\node at (1,1.7)[font=\fontsize{7pt}{0}]{$2$};
		\node at (-1,3.7)[font=\fontsize{7pt}{0}]{$3$};
		\node at (3,3.7)[font=\fontsize{7pt}{0}]{$4$};
		\end{tikzpicture} \not \leq_{w}\tau,
\end{equation*}
so both lightening-split-choices satisfy (2). 

To check condition (3): for the shuffle on the left in (\ref{antipodeexample}),
all nodes of \textcolor{ForestGreen}{$\sigma_3$} are on the right
of all nodes of \textcolor{red}{$\sigma_2$}, so it may be computed
using $\sigma_{2}\backslash\sigma_{3}$; indeed for the splitting $\sigma_{2}\backslash\sigma_{3} = \begin{tikzpicture}[scale=0.2,baseline=0pt]
		\draw[blue, thick] (0,-1) -- (0,0);
		\draw[blue, thick] (0,0) -- (3,3);
		\draw[blue, thick] (0,0) -- (-3,3);
		\draw[blue, thick] (1,1) -- (-1,3);
		\draw[blue, thick] (2,2) -- (1,3);
		\filldraw[black] (0,0) circle (5pt)  {};
		\filldraw[black] (1,1) circle (5pt)  {};
		\filldraw[black] (2,2) circle (5pt)  {};
		\node at (0,0.7)[font=\fontsize{7pt}{0}]{$1$};
		\node at (1,1.7)[font=\fontsize{7pt}{0}]{$2$};
		\node at (2,2.7)[font=\fontsize{7pt}{0}]{$3$};
\end{tikzpicture}
\mapsto \Big(
\begin{tikzpicture}[scale=0.2,baseline=0pt]
		\draw[blue, thick] (0,-1) -- (0,0);
		\draw[blue, thick] (0,0) -- (1,1);
		\draw[blue, thick] (0,0) -- (-1,1);
		\filldraw[black] (0,0) circle (5pt)  {};
		\node at (0,0.7)[font=\fontsize{7pt}{0}]{$1$};
	\end{tikzpicture},
	\begin{tikzpicture}[scale=0.2,baseline=0pt]
		\draw[blue, thick] (0,-1) -- (0,0);
		\draw[blue, thick] (0,0) -- (2,2);
		\draw[blue, thick] (0,0) -- (-2,2);
		\draw[blue, thick] (1,1) -- (0,2);
		\filldraw[black] (0,0) circle (5pt)  {};
		\filldraw[black] (1,1) circle (5pt)  {};
		\node at (0,0.7)[font=\fontsize{7pt}{0}]{$2$};
		\node at (1,1.7)[font=\fontsize{7pt}{0}]{$3$};
		\end{tikzpicture}
\Big)$ we have 	$\begin{tikzpicture}[scale=0.2,baseline=0pt]
		\draw[blue, thick] (0,-1) -- (0,0);
		\draw[blue, thick] (0,0) -- (1,1);
		\draw[blue, thick] (0,0) -- (-1,1);
		\filldraw[black] (0,0) circle (5pt)  {};
		\node at (0,0.7)[font=\fontsize{7pt}{0}]{$1$};
	\end{tikzpicture}
\overleftarrow{\shuffle} \Big(
\begin{tikzpicture}[scale=0.2,baseline=0pt]
		\draw[blue, thick] (0,-1) -- (0,0);
		\draw[blue, thick] (0,0) -- (1,1);
		\draw[blue, thick] (0,0) -- (-1,1);
		\filldraw[black] (0,0) circle (5pt)  {};
		\node at (0,0.7)[font=\fontsize{7pt}{0}]{$1$};
	\end{tikzpicture},
	\begin{tikzpicture}[scale=0.2,baseline=0pt]
		\draw[blue, thick] (0,-1) -- (0,0);
		\draw[blue, thick] (0,0) -- (2,2);
		\draw[blue, thick] (0,0) -- (-2,2);
		\draw[blue, thick] (1,1) -- (0,2);
		\filldraw[black] (0,0) circle (5pt)  {};
		\filldraw[black] (1,1) circle (5pt)  {};
		\node at (0,0.7)[font=\fontsize{7pt}{0}]{$2$};
		\node at (1,1.7)[font=\fontsize{7pt}{0}]{$3$};
		\end{tikzpicture}
\Big)
=\begin{tikzpicture}[scale=0.2,baseline=0pt]
		\draw[blue, thick] (0,-1) -- (0,0);
		\draw[blue, thick] (0,0) -- (2,2);
		\draw[blue, thick] (0,0) -- (-3,3);
		\draw[blue, thick] (-3,3) -- (-4,4);
		\draw[blue, thick] (-3,3) -- (-2,4);
		\draw[blue, thick] (2,2) -- (0,4);
		\draw[blue, thick] (2,2) -- (3,3);
		\draw[blue, thick] (3,3) -- (2,4);
		\draw[blue, thick] (3,3) -- (4,4);
		\filldraw[black] (0,0) circle (5pt)  {};
		\filldraw[black] (-3,3) circle (5pt)  {};
		\filldraw[black] (2,2) circle (5pt)  {};
		\filldraw[black] (3,3) circle (5pt)  {};
		\node at (0,0.7)[font=\fontsize{7pt}{0}]{$1$};
		\node at (-3,3.7)[font=\fontsize{7pt}{0}]{$2$};
		\node at (2,2.7)[font=\fontsize{7pt}{0}]{$3$};
		\node at (3,3.7)[font=\fontsize{7pt}{0}]{$4$};
	\end{tikzpicture}$. Compare with the same calculation using $\sigma_{2}/\sigma_{3}$: $\sigma_{2}/\sigma_{3} = \begin{tikzpicture}[scale=0.2,baseline=0pt]
		\draw[blue, thick] (0,-1) -- (0,0);
		\draw[blue, thick] (0,0) -- (3,3);
		\draw[blue, thick] (0,0) -- (-3,3);
		\draw[blue, thick] (-1,1) -- (1,3);
		\draw[blue, thick] (0,2) -- (-1,3);
		\filldraw[black] (0,0) circle (5pt)  {};
		\filldraw[black] (-1,1) circle (5pt)  {};
		\filldraw[black] (0,2) circle (5pt)  {};
		\node at (0,0.7)[font=\fontsize{7pt}{0}]{$1$};
		\node at (-1,1.7)[font=\fontsize{7pt}{0}]{$2$};
		\node at (0,2.7)[font=\fontsize{7pt}{0}]{$3$};
\end{tikzpicture}
\mapsto \Big(
\begin{tikzpicture}[scale=0.2,baseline=0pt]
		\draw[blue, thick] (0,-1) -- (0,0);
		\draw[blue, thick] (0,0) -- (1,1);
		\draw[blue, thick] (0,0) -- (-1,1);
		\filldraw[black] (0,0) circle (5pt)  {};
		\node at (0,0.7)[font=\fontsize{7pt}{0}]{$2$};
	\end{tikzpicture},
	\begin{tikzpicture}[scale=0.2,baseline=0pt]
		\draw[blue, thick] (0,-1) -- (0,0);
		\draw[blue, thick] (0,0) -- (2,2);
		\draw[blue, thick] (0,0) -- (-2,2);
		\draw[blue, thick] (-1,1) -- (0,2);
		\filldraw[black] (0,0) circle (5pt)  {};
		\filldraw[black] (-1,1) circle (5pt)  {};
		\node at (0,0.7)[font=\fontsize{7pt}{0}]{$3$};
		\node at (-1,1.7)[font=\fontsize{7pt}{0}]{$1$};
	\end{tikzpicture}\Big) $ and 	$\begin{tikzpicture}[scale=0.2,baseline=0pt]
		\draw[blue, thick] (0,-1) -- (0,0);
		\draw[blue, thick] (0,0) -- (1,1);
		\draw[blue, thick] (0,0) -- (-1,1);
		\filldraw[black] (0,0) circle (5pt)  {};
		\node at (0,0.7)[font=\fontsize{7pt}{0}]{$1$};
	\end{tikzpicture}
\overleftarrow{\shuffle} \Big(
\begin{tikzpicture}[scale=0.2,baseline=0pt]
		\draw[blue, thick] (0,-1) -- (0,0);
		\draw[blue, thick] (0,0) -- (1,1);
		\draw[blue, thick] (0,0) -- (-1,1);
		\filldraw[black] (0,0) circle (5pt)  {};
		\node at (0,0.7)[font=\fontsize{7pt}{0}]{$2$};
	\end{tikzpicture},
	\begin{tikzpicture}[scale=0.2,baseline=0pt]
		\draw[blue, thick] (0,-1) -- (0,0);
		\draw[blue, thick] (0,0) -- (2,2);
		\draw[blue, thick] (0,0) -- (-2,2);
		\draw[blue, thick] (-1,1) -- (0,2);
		\filldraw[black] (0,0) circle (5pt)  {};
		\filldraw[black] (-1,1) circle (5pt)  {};
		\node at (0,0.7)[font=\fontsize{7pt}{0}]{$3$};
		\node at (-1,1.7)[font=\fontsize{7pt}{0}]{$1$};
	\end{tikzpicture}\Big) 
=\begin{tikzpicture}[scale=0.2,baseline=0pt]
		\draw[blue, thick] (0,-1) -- (0,0);
		\draw[blue, thick] (0,0) -- (4,4);
		\draw[blue, thick] (0,0) -- (-4,4);
		\draw[blue, thick] (-3,3) -- (-2,4);
		\draw[blue, thick] (2,2) -- (0,4);
		\draw[blue, thick] (1,3) -- (2,4);
		\filldraw[black] (0,0) circle (5pt)  {};
		\filldraw[black] (-3,3) circle (5pt)  {};
		\filldraw[black] (2,2) circle (5pt)  {};
		\filldraw[black] (1,3) circle (5pt)  {};
		\node at (0,0.7)[font=\fontsize{7pt}{0}]{$1$};
		\node at (-3,3.7)[font=\fontsize{7pt}{0}]{$3$};
		\node at (2,2.7)[font=\fontsize{7pt}{0}]{$2$};
		\node at (1,3.7)[font=\fontsize{7pt}{0}]{$4$};
		\end{tikzpicture} \not \leq_{w}\tau$, so this lightening-split-choice satisfies (3).

Because a permutation is completely determined by its node labels,
without reference to the binary tree, the above calculation can be
entirely computed using the node labels only. (But the tree calculation
is necessary on similar algebras with unlabeled trees, or where the
node labels do not determine the tree.) Let $d_{i}=\deg\sigma_{1}+\cdots+\deg\sigma_{i}$,
i.e. $d_{i}$ is the $i$th element of $\GD(\sigma)$. If $\{d_{i-1}+1,d_{i-1}+2,\dots,d_{i}\}$
appear setwise to the left of $\{d_{i}+1,d_{i}+2,\dots,d_{i+1}\}$
(in the example, 2 and 3 are both to the left of 4), then ``calculating
with $\sigma_{i}/\sigma_{i+1}$ instead of $\sigma_{i}\backslash\sigma_{i+1}$''
amounts to replacing $d_{i}+1,d_{i}+2,\dots,d_{i+1}$ with $d_{i-1}+1,d_{i-1}+2,\dots,d_{i-1}+(d_{i+1}-d_{i})$
(e.g. 4 with 2) and $d_{i-1}+1,d_{i-1}+2,\dots,d_{i}$ with $d_{i-1}+(d_{i+1}-d_{i})+1,\dots d_{i+1}$
(e.g. 2, 3 with 3, 4). We check condition (3) for the shuffle on the
right of (\ref{antipodeexample}) using this expedited method. Here,
1 appears to the left of 2 and 3, and $3124\not\leq_{w}\tau$;
2 and 3 appear to the left of 4, and $1342\not\leq_{w}\tau$,
so this lightening-split-choice also satisfies (3). 

Hence $\gamma_{\sigma}^{\tau}=2$. \end{Example}

\subsection{The inheritance of the axioms through subalgebras and quotienting}

This section contains two theorems for transferring  the above properties of monomial bases to subalgebras and quotient algebras. 

In this section, we will
assume that  $\mathcal{H}$ is a connected
Hopf algebra satisfying conditions ($\Delta$1)-($\Delta$3), ($m$0)-($m$3) and ($\mathcal{S}$0)-($\mathcal{S}$3).

First, we consider subalgebras that are spanned by induced subposets:

\begin{Theorem}\label{thm:monomialsubposet} Assume that $Q_{n}\subseteq P_{n}$ for each $n$, with the induced order, and
write $Q=\bigsqcup Q_{n}$.
\begin{itemize}
\item If for all $f,g\in Q$, we have ${}^{i}f,f^{i}\in Q$ for all
$i\in\allow(f)$ and $f/g\in Q$, then $\Span\{F_{f}:f\in Q\}$ is
a subcoalgebra of $\mathcal{H}$ satisfying ($\Delta$1)-($\Delta$3).
\item If for all $f,g\in Q$,
we have $\zeta(f,g)\in Q$ for all $\zeta\in Sh(\cmpt(f),\cmpt(g))$
and, when defined,  $f\vee g\in Q$, then $\Span\{F_{f}:f\in Q\}$ is a subalgebra of
$\mathcal{H}$ satisfying ($m$0)-($m$3).
\item If all above hypotheses on $Q$ hold,
and for all $f,g\in Q$, we have $f\backslash g\in Q$. Then $\Span\{F_{f}:f\in Q\}$
satisfies ($\mathcal{S}$0)-($\mathcal{S}$3).
\end{itemize}
\end{Theorem}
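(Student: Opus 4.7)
The plan is to verify each axiom for the substructure directly by restricting the relevant operations to $Q$ and invoking the closure hypotheses. The guiding observation is that whenever a structural constant (like ${}^i f$, $f/g$, $\zeta(f,g)$, $f\backslash g$, or $f\vee g$) computed in $P$ happens to land inside $Q$, the induced order on $Q$ makes it automatically serve the same role there: every maximum, least upper bound, or inequality that held in $P$ continues to hold in $Q$ without modification.

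For the coalgebra part, I would first observe that the formula \eqref{eq:fcoproductformula} applied to $F_f$ with $f\in Q$ produces a sum of terms $F_{{}^if}\otimes F_{f^i}$ which, by the hypothesis ${}^if,f^i\in Q$, lies in the tensor square of $\Span\{F_f:f\in Q\}$. This closure makes the subspace a subcoalgebra. Axiom ($\Delta$1) holds with the same $\allow(f)$, and ($\Delta$2) is inherited verbatim from the induced order. For ($\Delta$3), the key point is that $f/g\in Q$ by hypothesis and remains the maximum of $\{h\in P_n:{}^ih=f,h^i=g\}$; restricting to elements of $Q_n$ only shrinks the set, so $f/g$ is still its maximum.

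For the algebra part, the multiplication formula \eqref{eq:fproductformula} stays within the subspace by the closure hypothesis on shuffles. The main subtlety is axiom ($m$0): connected components of $Q_n$ under the induced order may be strictly finer than those of $P_n$, but if $f,g$ lie in the same connected component of $Q_n$, then the zigzag chain witnessing this is a chain in $P_n$, so $f\vee_P g$ is defined in $P$, and by the ``when defined'' hypothesis it lies in $Q$, where it functions as the least upper bound. Axioms ($m$2)--($m$3) are inequalities preserved by any induced order. For the antipode axioms, closure of $Q$ under $\backslash$ ensures the distinguished shuffles $\zeta_*$ restrict correctly and give an associative operation on $Q$. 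The conditions on the nesting maps $\iota_{R,S}$ in ($\mathcal{S}$0) depend only on the combinatorial structure of the shuffles and the values of $\zeta_*$, so they transfer directly. Finally, ($\mathcal{S}$1)--($\mathcal{S}$3) are inherited as order-theoretic inequalities, once one notes that the joins appearing on the right-hand sides stay in $Q$ by the closure hypotheses on $/$, $\backslash$, and $\vee$.

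The main obstacle is handling the possible mismatch between connected components of $P_n$ and $Q_n$, and ensuring that every join appearing in the axioms remains in $Q$. Once one verifies that pairs of elements connected in $Q_n$ are automatically connected in $P_n$ and that joins of $Q$-elements (when defined in $P$) stay in $Q$, all remaining verifications reduce to inequalities or equalities already established in $\mathcal{H}$, so no additional computation is required beyond citing the corresponding properties in $P$.
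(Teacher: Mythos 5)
Your proposal is correct and takes essentially the same approach as the paper, which simply declares the proof trivial after noting that closure of $Q$ under deconcatenation and $/$ makes $\GD$ agree on $Q$ and $P$; your verification spells out that triviality, correctly isolating the only points needing any care (components of $Q_n$ refining those of $P_n$, and joins of $Q$-elements landing back in $Q$, where they remain least upper bounds for the induced order).
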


The proof is trivial. Note that, because $Q$ is closed under ``deconcatenation''
and ``maximal concatenation'' $/$, the meaning of $\GD$ is the
same in $Q$ as in $P$. However, unless $Q$ is obtained as a collection of upper order-ideals in $P$,
the monomial basis elements of $\Span\{F_{f}:f\in Q\}$ created using
$Q$ are not monomial basis elements of $\mathcal{H}$.

The next situation of inheritance of axioms is more technical but more interesting. 
For example, it will be used to pass from labeled to unlabeled trees. For the rest
of this section, we will assume we have a second graded connected Hopf algebra $\bar{\mathcal{H}}$ defined  by a poset as follow.
 Let $\bar{P}_{n}$ be posets for each integer  $n\geq0$.
Assume that $\bar{\mathcal{H}}$,  as  a graded vector
space, has a fundamental basis $\{F_{t}:t\in\bar{P}_{n}\}$. In both ${\mathcal{H}}$ and  $\bar{\mathcal{H}}$  we  define 
 a monomial basis $\{M_{f}:f\in P_{n}\}$ (resp. $\{M_{t}:t\in\bar{P}_{n}\}$), related by $F_{f}=\sum_{g\geq f}M_{g}$ (resp. $F_{t}=\sum_{s\geq t}M_{s}$).
Let $\pi_{n}:P_{n}\rightarrow\bar{P}_{n}$ be a sequence of surjective order-preserving
maps, and let $\Pi:\mathcal{H}\rightarrow\bar{\mathcal{H}}$ given
by $\Pi(F_{f})=F_{\pi_{\deg f}(f)}$ be the induced linear map. One
key axiom that  we need is the following:
\begin{enumerate}
\item[($\pi$0).]  For each $n$, there exists maps $\iota_{n}:\bar{P}_{n}\rightarrow P_{n}$
given by $\iota_{n}(t)=\max\{f\in P_{n}:\pi_{n}(f)=t\}$, and $\iota_{n}$
is order-preserving.
\end{enumerate}

Axiom ($\pi$0) requires that, for each $t\in \bar{P}_{n}$, it is possible to compute $\max\{f\in P_{n}:\pi_{n}(f)=t\}$. In particular, $\{f\in P_{n}:\pi_{n}(f)=t\}$ must lie in the same connected component of $P_{n}$.
Axiom ($\pi$0) implies that $\pi_n$ and $\iota_n$ form a \emph{Galois connection}, i.e. $\pi_n(f)\leq s$ if and only if $f\leq \iota_n(s)$. It follows that the connected components $C_{i}$ of $P_{n}$ are in bijection with the connected components $\bar{C}_{i}$ of $\bar{P}_{n}$. Since $\pi_n$ is part of a Galois connection it preserves least upper bounds, which we record as the following lemma.

\begin{Lemma}\label{lem:pipreserve-leastupperbound} If axiom ($\pi$0)
is satisfied, then $\pi_{n}(f\vee g)=\pi_{n}f\vee\pi_{n}g$ for all
$f,g\in C_{i}\subseteq P_{n}$. \qed \end{Lemma}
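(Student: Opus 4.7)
The plan is to invoke the standard fact that in a Galois connection the lower adjoint preserves all existing joins, and apply it to the pair $(\pi_n,\iota_n)$ which, as noted just before the lemma, satisfies the adjunction $\pi_n(f)\leq s \iff f\leq \iota_n(s)$ for all $f\in P_n$, $s\in \bar P_n$.

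First, I would establish the easy inequality $\pi_n(f\vee g) \geq \pi_n(f)\vee \pi_n(g)$. Since $f\vee g \geq f$ and $f\vee g \geq g$, and $\pi_n$ is order-preserving by hypothesis, we get $\pi_n(f\vee g)\geq \pi_n(f)$ and $\pi_n(f\vee g)\geq \pi_n(g)$, so $\pi_n(f\vee g)$ is an upper bound of $\{\pi_n(f),\pi_n(g)\}$ and is therefore at least their least upper bound (assuming it exists; it does by the remarks on connected components).

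For the reverse inequality, let $s:=\pi_n(f)\vee\pi_n(g)$. By definition of join, $\pi_n(f)\leq s$ and $\pi_n(g)\leq s$. Applying the Galois connection in the direction $\pi_n(x)\leq s \Rightarrow x\leq \iota_n(s)$, we obtain $f\leq \iota_n(s)$ and $g\leq \iota_n(s)$. Hence $\iota_n(s)$ is an upper bound of $\{f,g\}$, so $f\vee g \leq \iota_n(s)$. Applying the Galois connection in the reverse direction $x\leq \iota_n(s)\Rightarrow \pi_n(x)\leq s$, we conclude $\pi_n(f\vee g)\leq s=\pi_n(f)\vee\pi_n(g)$, as desired.

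There is no real obstacle here; the entire content of the lemma is the general abstract nonsense that lower adjoints preserve joins, and the only thing worth double-checking is that $\pi_n(f)\vee\pi_n(g)$ indeed makes sense, which follows from the observation preceding the lemma that $\pi_n$ maps connected components bijectively onto connected components, together with axiom ($m$0) applied to $\bar P_n$ (or more precisely, its analogue for $\bar{\mathcal H}$, which is implicit in the parallel setup of the two Hopf algebras).
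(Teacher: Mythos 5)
Your proof is correct and is exactly the argument the paper has in mind: the paper states this lemma with no written proof, remarking only that $\pi_n$ is part of a Galois connection and therefore preserves least upper bounds, which is precisely the standard adjunction argument you spell out. One small caution on your final paragraph: the paper derives axiom ($m$0) for $\bar{P}_{n}$ \emph{as a consequence} of this lemma, so rather than presupposing that $\pi_{n}(f)\vee\pi_{n}(g)$ exists, it is cleaner to let $s$ be an arbitrary upper bound of $\{\pi_{n}(f),\pi_{n}(g)\}$ and run your adjunction step to get $\pi_{n}(f\vee g)\leq s$; this shows $\pi_{n}(f\vee g)$ \emph{is} the least upper bound, establishing its existence and the identity simultaneously and avoiding any circularity.
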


An immediate consequence of this Lemma is that ($m$0) is satisfied for $\bar{P}_{n}$. 
The other relevant axioms that we require from $\pi$ to be able to transfer axioms
($\Delta$1)-($\Delta$3), ($m$0)-($m$3), ($\mathcal{S}$0)-($\mathcal{S}$3) from $\mathcal{H}$ to  $\bar{\mathcal{H}}$ are:
\begin{enumerate}
\item[($\pi\Delta$1).]  for $t\in\bar{P}_{n}$, we have a map $\allow(t)\subseteq\{1,\ldots,n-1\}$ such that
$$\displaystyle\Delta_{+}(F_{t})=\sum_{i\in\allow(t)}F_{{}^{i}t}\otimes F_{t^{i}}$$
for some ${}^{i}t\in\bar{P}_{i}$, $t^{i}\in\bar{P}_{n-i}$; and for all $f\in P_{n}$, we have $\allow(f)=\allow(\pi_{n}f)$.
\item[($\pi\Delta$2).]  $\pi$ commutes with deconcatenation: ${}^{i}(\pi_{n}f)=\pi_{i}\left({}^{i}f\right)$,
$(\pi_{n}f)^{i}=\pi_{n-i}\left(f^{i}\right)$ for $f\in P_{n}$.
\item[($\pi m$1).]  For all connected components $\bar{C}_{i},\bar{C}_{j}$ of $\bar{P}_{n},\bar{P}_{m}$, we have a set $Sh(\bar{C}_{i},\bar{C}_{j})$ of shuffles such that, 
for $s\in\bar{C}_{i}$ and $t\in\bar{C}_{j}$, we have $F_{s}F_{t}=\sum_{\bar{\zeta}\in Sh(\bar{C}_{i},\bar{C}_{j})}F_{\bar{\zeta}(s,t)}$.
Axiom ($\pi$0) gives us corresponding connected components ${C}_{i},{C}_{j}$ of ${P}_{n},{P}_{m}$ and we require a 
bijection $Sh(C_{i},C_{j})\rightarrow Sh(\bar{C}_{i},\bar{C}_{j})$.
\item[($\pi m$2).]  For all $\zeta\in Sh(C_{i},C_{j})$, in bijection with $\bar{\zeta}\in Sh(\bar{C}_{i},\bar{C}_{j})$, and for all $f\in C_{i}$, $g\in C_{j}$,
we have $\pi_{n}(\zeta(f,g))=\bar{\zeta}(\pi_{n}(f),\pi_{m}(g))$. 
\item[($\pi\mathcal{S}$1).]  $\GD(\iota_{n}(t))=\GD(t)$ for $t\in\bar{P}_{n}$.
\item[($\pi\mathcal{S}$2).]  $\pi_{n}(g/h)=\pi_{i}(g)/\pi_{n-i}(h)$ for all $g\in P_{i}$, $h\in P_{n-i}$.
\end{enumerate}

\begin{Theorem}\label{thm:monomialquotient} For  $\mathcal{H},\bar{\mathcal{H}}$, if the maps $\pi_n\colon P_n\to\bar{P}_n$
satisfy ($\pi$0), then
\begin{equation}
\Pi(M_{f})=\begin{cases}
M_{t} & \text{if }f=\iota(t)\text{ for some }t;\\
0 & \text{otherwise}.
\end{cases}\label{eq:quotientmonomial}
\end{equation}
Furthermore:
\begin{itemize}
\item If  ($\pi\Delta$1)-($\pi\Delta$2) are satisfied, then $\Pi$ is a coalgebra morphism,
and $\bar{\mathcal{H}}$ also satisfies axioms ($\Delta$1)-($\Delta$3), with
$s/t$ defined as $\pi_{n}(\iota_{i}(s)/\iota_{n-i}(t))$ for $s\in\bar{P}_{i}$
and $t\in\bar{P}_{n-i}$.
\item If ($\pi m$1)-($\pi m$2) are satisfied, then $\Pi$ is an algebra morphism,
and $\bar{\mathcal{H}}$ also satisfies ($m$0)-($m$3).
\item If  ($\pi\Delta$1)-($\pi\Delta$2),
($\pi m$1)-($\pi m$2) and ($\pi\mathcal{S}$1)-($\pi\mathcal{S}$2) are satisfied, then $\bar{\mathcal{H}}$ also
satisfies ($\mathcal{S}$0)-($\mathcal{S}$3).
\end{itemize}
\end{Theorem}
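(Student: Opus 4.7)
The plan is to first establish the pointwise formula~(\ref{eq:quotientmonomial}) for $\Pi(M_f)$, and then use it together with the $(\pi\cdot)$-axioms to transfer each family of axioms from $\mathcal{H}$ to $\bar{\mathcal{H}}$. I would begin by observing that axiom ($\pi 0$) makes $(\pi,\iota)$ a Galois connection: from $\iota(t) = \max\pi^{-1}(t)$ and order-preservation of $\iota$, the inequality $f \leq \iota(\pi(f))$ and order-preservation of $\pi$ together yield $\pi(f) \leq t \Leftrightarrow f \leq \iota(t)$. Consequently, $\pi$ preserves least upper bounds as in Lemma~\ref{lem:pipreserve-leastupperbound}.

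To prove~(\ref{eq:quotientmonomial}), apply $\Pi$ to $F_f = \sum_{g \geq f} M_g$, giving $F_{\pi(f)} = \sum_{g \geq f} \Pi(M_g)$, and induct downward on $f$ in the (finite) upper order ideal above it. The base case of maximal $f$ works because $\pi$ sends maximal elements to maximal elements (otherwise $f = \iota(t) > \iota(\pi(f)) \geq f$ gives a contradiction). For the inductive step, split into cases according to whether $f = \iota(\pi(f))$: by the Galois property, the restriction of $g \mapsto \pi(g)$ to $\{g > f : g = \iota(\pi(g))\}$ is a bijection onto $\{t > \pi(f)\}$ in the first case and onto $\{t \geq \pi(f)\}$ in the second, so $\sum_{g > f}\Pi(M_g)$ telescopes to $F_{\pi(f)} - M_{\pi(f)}$ or $F_{\pi(f)}$ respectively, forcing $\Pi(M_f) = M_{\pi(f)}$ or $0$.

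For the first bullet, ($\pi\Delta 1$)--($\pi\Delta 2$) immediately give that $\Pi$ is a coalgebra morphism on the $F$-basis. To verify ($\Delta 1$)--($\Delta 3$) in $\bar{\mathcal{H}}$, set $s/t := \pi(\iota(s)/\iota(t))$; axiom ($\pi\Delta 2$) yields ${}^i(s/t) = s$ and $(s/t)^i = t$, and the max property is proved by lifting any $r$ with ${}^i r = s$, $r^i = t$ to $\iota(r)$. Since ${}^i\iota(r)$ and $\iota(r)^i$ are preimages of $s$ and $t$ respectively, the Galois property gives ${}^i\iota(r) \leq \iota(s)$ and $\iota(r)^i \leq \iota(t)$, so ($\Delta 3$) in $P$ produces $\iota(r) \leq ({}^i\iota(r))/(\iota(r)^i) \leq \iota(s)/\iota(t)$, and projecting by $\pi$ gives $r \leq s/t$. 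Monotonicity and constancy of $\allow$ on connected components then follow from order-preservation of $\iota$ and $\pi$.

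For the second bullet, ($\pi m 1$)--($\pi m 2$) imply $\Pi$ is an algebra morphism, and each of ($m 0$)--($m 3$) in $\bar P$ is verified by lifting via $\iota$, applying the corresponding $P$-axiom, and projecting via $\pi$, using $\pi(\iota(s_1)\vee\iota(s_2)) = s_1 \vee s_2$ to swap the order of $\iota$ and $\vee$ wherever needed. For the third bullet, define $s\backslash t := \pi(\iota(s)\backslash \iota(t))$; well-definedness, associativity, and the general identity $\pi(g\backslash h) = \pi(g)\backslash\pi(h)$ all follow from ($\pi m 2$) applied to the shuffle $\zeta_*$ representing $\backslash$. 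Combined with the analogous identity $\pi(g/h) = \pi(g)/\pi(h)$ (which is precisely ($\pi\mathcal{S} 2$)) and ($\pi\mathcal{S} 1$), the inequalities ($\mathcal{S} 1$)--($\mathcal{S} 3$) in $\bar P$ transfer from their $P$-counterparts by the same lift-and-project strategy. The main obstacle I anticipate will be the nested condition on $\iota_{R_1,S}$ and $\iota_{R_2,S}$ in ($\mathcal{S} 0$): it holds in $P$ by hypothesis, but transferring it to $\bar P$ requires checking that the bijection $Sh(C_i,C_j)\to Sh(\bar C_i,\bar C_j)$ from ($\pi m 1$) is compatible with the inductive shuffle composition~(\ref{eq:shuffle-multi}), which must be spelled out explicitly.
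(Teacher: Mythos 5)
Your proposal is correct and follows essentially the same route as the paper: the Galois connection furnished by ($\pi$0) does all the work for equation (\ref{eq:quotientmonomial}), and each family of axioms on $\bar{\mathcal{H}}$ is transferred by lifting via $\iota$, applying the corresponding axiom in $P$, and projecting via $\pi$ (using that $\pi$ preserves least upper bounds). The only cosmetic difference is that the paper proves (\ref{eq:quotientmonomial}) by defining the candidate map via that formula and checking in one line that it agrees with $\Pi$ on the $F$-basis, whereas your downward induction carries out the same M\"obius inversion by hand.
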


See Section 7.4 for the proof. 
\begin{Remark}
  Note that this theorem transfers the full axioms from $\mathcal{H}$ to $\bar{\mathcal{H}}$, in contrast to \cite{AS06} where they only transfer the monomial basis formulas (for coproduct, product and antipode). The formula transfer uses only that $\pi_n$ and $\iota_n$ form a Galois connection, but transferring the axioms seems to require the stronger ($\pi$0) axiom. 
\end{Remark}

\subsection{Axioms on  \texorpdfstring{$\SSym$}{} and  \texorpdfstring{$\YSym$}{}}

Before using all the developed axiomatic machinery on new Hopf algebras we illustrate first how the axioms are satisfied by $\SSym$ and $\YSym$. We will use that the axioms are satisfied by these two Hopf algebras in our construction of a Hopf algebra on parking functions in Section \ref{sec:parking_functions}.

\begin{Proposition}\label{prop:axiomssym} $\SSym$ satisfies axioms
($\Delta$1)-($\Delta$3), ($m$0)-($m$3), ($\mathcal{S}$0)-($\mathcal{S}$3).\end{Proposition}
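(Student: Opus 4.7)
The plan is to verify each axiom by appealing to well-known structural properties of the left weak order on $\Perm_n$ and the explicit product/coproduct formulas from Section \ref{sec:ssym}. Since $\Perm_n$ is a connected poset under the weak order, axiom ($\Delta$1) is immediate with $\allow(\sigma)=[n-1]$, and the formula matches the one from Definition \ref{def:ssym}. Axiom ($\Delta$2) follows from the classical compatibility of prefix/suffix standardization with weak order: when $\sigma\leq_w \sigma'$ we have $\Inv(\sigma)\subseteq\Inv(\sigma')$, and the inversions of $^i\sigma$ (resp.\ $\sigma^i$) are precisely those inversions of $\sigma$ whose positions lie both at most $i$ (resp.\ both above $i$). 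Axiom ($\Delta$3) records the existence and monotonicity of $\sigma/\tau$ from Definition \ref{def:perm}, which is direct from the construction.

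For the multiplicative axioms: ($m$0) is the well-known lattice property of weak order. Axiom ($m$1) follows from the shuffle description of multiplication recalled in Example \ref{ex:shuffle} ($\Perm_n$ is a single connected component). Axiom ($m$2) is proven by an inversion-count argument: the inversions of $\sigma\overleftarrow{\shuffle}(\tau_1,\dots,\tau_k)$ decompose into inversions of $\sigma$, inversions within each $\tau_i$ (after a label shift), and the positionally-forced inversions coming from the shuffling pattern; each part is monotone. Axiom ($m$3) then follows from ($m$2) together with the existence of joins from ($m$0).

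For the antipode axioms: $\sigma\backslash\tau$ from Definition \ref{def:perm} is associative and satisfies $\sigma\backslash\tau\leq_w \sigma/\tau$ (direct comparison of inversion sets), so the first part of ($\mathcal{S}$0) holds. Since a permutation is a labeled binary tree, the compatibility condition on $\iota_{R_1,S}$ and $\iota_{R_2,S}$ follows from Remark \ref{rem:s0trees}: when $\zeta$ is simultaneously realized with the left-to-right ordering restrictions coming from $S\setminus R_1$ and from $S\setminus R_2$, it is realized with those coming from $S\setminus(R_1\cap R_2)$, giving the required $\zeta'$. Axiom ($\mathcal{S}$1) follows from the characterization of $\GD(\sigma)$ in Definition \ref{def:permutation-globaldescent}: if every value in positions $1,\dots,i$ exceeds every value in positions $i{+}1,\dots,n$ for $\sigma$, then passing to $\sigma'\geq_w\sigma$ can only add inversions, hence cannot violate this separation.

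The main obstacle will be the join-theoretic inequalities ($\mathcal{S}$2) and ($\mathcal{S}$3). For both, the strategy is to compare inversion sets: one shows directly that the inversion set of the left-hand side is contained in the union of the inversion sets of the two terms on the right, hence in their join. For ($\mathcal{S}$2), an inversion of $\sigma'_1/\cdots/\sigma'_k$ is either internal to some $\sigma'_a$ (appearing in $\sigma'_1\backslash\cdots\backslash\sigma'_k$) or a ``global'' inversion crossing the junctions (appearing in $\sigma_1/\cdots/\sigma_k$ since the junction pattern is maximal there). For ($\mathcal{S}$3), the alternating pattern of $/$ and $\backslash$ on the right-hand side covers exactly the pairs of positions that are inverted by $\sigma_1/\cdots/\sigma_k$: each cross-block inversion $(a,b)$ lies in a $/$-block of one of the two right-hand-side expressions. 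Writing these inversion-set inclusions carefully, and checking that the interleaving patterns in ($\mathcal{S}$3) do exhaust all cross-block inversions, is the technical core of the proof.
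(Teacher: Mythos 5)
Your overall strategy (replacing the paper's citations to \cite{AS05} with direct inversion-set computations) is reasonable, and most of the easy axioms are handled correctly. But there are two genuine problems.

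The serious one is ($m$3). You claim it ``follows from ($m$2) together with the existence of joins from ($m$0)'', but this deduction gives the \emph{wrong direction}: monotonicity ($m$2) applied to $f_1\leq f_1\vee f_2$ and $g_1\leq g_1\vee g_2$ yields $\zeta(f_1,g_1)\leq\zeta(f_1\vee f_2,g_1\vee g_2)$ and likewise for $(f_2,g_2)$, hence $\zeta(f_1,g_1)\vee\zeta(f_2,g_2)\leq\zeta(f_1\vee f_2,g_1\vee g_2)$. Axiom ($m$3) asserts the \emph{reverse} inequality, which is the nontrivial content (together with ($m$2) it says the shuffle maps preserve joins). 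This requires an actual argument — for instance, that $\Inv(\sigma\vee\tau)$ is the transitive closure of $\Inv(\sigma)\cup\Inv(\tau)$ and that the shuffle construction is compatible with taking this closure. The paper sidesteps this by citing \cite[Prop.~2.10]{AS05}, which is precisely this nontrivial statement; your proposal has no substitute for it.

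A second, smaller issue is in ($\mathcal{S}$3): the claim that ``each cross-block inversion $(a,b)$ lies in a $/$-block of one of the two right-hand-side expressions'' is false. The $/$-runs of the two expressions are the intervals $[1,i_1],[j_1,i_2],\dots$ and $[i_1,j_1],[i_2,j_2],\dots$; a pair of blocks such as $(1,j_1)$ (with $i_1>1$) lies in no single run of either expression, and cross-run pairs joined by $\backslash$ are not inversions. So $\Inv(f_1/\cdots/f_k)$ is \emph{not} contained in the plain union of the two inversion sets; it is only contained in the transitive closure of that union (e.g.\ $(1,j_1)$ follows from $(1,i_1)$ in the first term and $(i_1,j_1)$ in the second). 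Since the join's inversion set is exactly that transitive closure, the conclusion still holds, but your stated strategy of ``containment in the union'' must be amended to ``containment in the transitive closure'' and the two-step chains must be exhibited. The paper instead derives both ($\mathcal{S}$2) and ($\mathcal{S}$3) as instances of \cite[Prop.~2.16.iii]{AS05} by a careful choice of the splitting sets $R$ and $S$, which packages this transitivity argument once and for all.
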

\begin{proof}
Axiom ($\Delta$1) is clear. ($\Delta$2) follows from \cite[Prop. 2.5.i]{AS05}.
($\Delta$3) follows from \cite[Prop. 2.5.ii]{AS05}: the first case shows
that $/$ is order-preserving, and the third case gives the maximality.
That $/$ is a one-sided inverse to $\sigma\mapsto({}^{i}\sigma,\sigma^{i})$
is clear from the definitions. 

Axiom ($m$0) is clear. Axiom ($m$1) is from the definition of the algebra (see Definition~\ref{def:ssym}). Axioms ($m$2) and ($m$3) are \cite[Prop. 2.10]{AS05}.

Axiom ($\mathcal{S}$0) follows from Remark \ref{rem:s0trees} and ($\mathcal{S}$1)
follows from \cite[Lem. 2.14]{AS05}. 

The intuition behind ($\mathcal{S}$2) is that all inversions of $\sigma'_{1}/\cdots/\sigma'_{k}$
are either ``within'' an $\sigma'_{i}$, in which case they are
in $\sigma'_{1}\backslash\cdots\backslash\sigma'_{k}$, or ``between''
two $\sigma'_{i}$s, in which case they are in $\sigma_{1}/\cdots/\sigma_{k}$.
It may be rigorously deduced from \cite[Prop. 2.16.iii]{AS05} as
follows: define $\sigma'=\sigma'_{1}/\cdots/\sigma'_{k}$ and $\sigma=\sigma_{1}/\cdots/\sigma_{k}$
(here $\sigma_{i},\sigma_{i}'$ play the role of $f_{i},f'_{i}$).
By axiom $\Delta3$, $\sigma'\geq\sigma$, so $\sigma'\vee\sigma=\sigma'$.
Let $R=\emptyset$ and $S=\{\deg\sigma_{1}+\cdots+\deg\sigma_{i}:i\in\{1,\dots,k\}\}$;
note that $S\subseteq\GD(\sigma)$ and $\sigma|S=(\sigma_{1},\dots,\sigma_{k})$.
Then \cite[Prop. 2.16.iii]{AS05} says $\sigma'_{S}\vee\sigma_{R}=(\sigma\vee\sigma')_{R\cap S}$,
where their notation $\sigma_{S}$ denotes $\sigma_{1}\backslash\cdots\backslash\sigma_{k}$.
Thus $\sigma_{R}=\sigma=\sigma_{1}/\cdots/\sigma_{k}$, $\sigma'_{S}=\sigma'_{1}\backslash\cdots\backslash\sigma'_{k}$,
$(\sigma\vee\sigma')_{R\cap S}=\sigma\vee\sigma'=\sigma'=\sigma'_{1}/\cdots/\sigma'_{k}$.

Axiom ($\mathcal{S}$3) also follows from \cite[Prop. 2.16.iii]{AS05},
by taking $\sigma=\sigma'=\sigma_{1}/\cdots/\sigma_{k}$, $R=\{d_{1},\dots,d_{i_{1}-1}\}\cup\{d_{j_{1}},\dots,d_{i_{2}-1}\}\cup\cdots\cup\{d_{j_{l-1}},\dots,d_{i_{l}-1}\}$,
$S=\{d_{i_{1}},\dots,d_{j_{1}-1}\}\cup\{d_{i_{2}},\dots,d_{j_{2}-1}\}\cup\cdots\cup\{d_{i_{l}},\dots,d_{k}\}$,
where $d_{i}=\deg f_{1}+\cdots+\deg f_{i}$.
\end{proof}
\begin{Proposition}\label{prop:axiomysym} $\YSym$ satisfies axioms
($\Delta$1)-($\Delta$3), ($m$0)-($m$3), ($\mathcal{S}$0)-($\mathcal{S}$3).\end{Proposition}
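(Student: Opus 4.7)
The plan is to transfer the axioms from $\SSym$ (already established in Proposition \ref{prop:axiomssym}) to $\YSym$ by applying Theorem \ref{thm:monomialquotient} to the surjection $\Pi\colon\SSym\twoheadrightarrow\YSym$, $F_\sigma\mapsto F_{\pi(\sigma)}$, induced by the forgetful map $\pi\colon\Perm\to\PBT$. So we need to verify that $\pi$ satisfies the hypotheses ($\pi$0), ($\pi\Delta$1)-($\pi\Delta$2), ($\pi m$1)-($\pi m$2) and ($\pi\mathcal{S}$1)-($\pi\mathcal{S}$2) of that theorem, and the axioms for $\YSym$ will follow automatically.

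First I would check ($\pi$0): the section $\iota\colon\PBT\to\Perm$ sending $t$ to its unique $213$-avoiding permutation provides the maximum of each fibre of $\pi$ (by Lemma \ref{lem:iota-max}), and $\iota$ is order-preserving (Proposition \ref{prop:pi-property-bin}(2)). Axioms ($\pi\Delta$1) and ($\pi\Delta$2) are essentially repackaging the combinatorial observation that deconcatenation of an increasing binary tree at the $(i+1)$-th leaf is compatible with forgetting labels: the formula for $\Delta_+(F_t)$ in $\YSym$ has the same shape as for $\Delta_+(F_\sigma)$ in $\SSym$, with $\allow(t)=\{1,\ldots,\deg(t)-1\}$, and ${}^i(\pi\sigma)=\pi({}^i\sigma)$, $(\pi\sigma)^i=\pi(\sigma^i)$ follow from Proposition \ref{prop:pi-property}(1).

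For ($\pi m$1)-($\pi m$2), the set of shuffles $Sh(C_n,C_m)$ for both algebras is indexed by the same combinatorial data as described in Example \ref{ex:shuffle} (a weak composition governing how to deconcatenate the right factor and graft at the leaves of the left factor). This gives the required bijection of shuffle sets, and Proposition \ref{prop:pi-property}(4) gives $\pi(\sigma\overleftarrow{\shuffle}(\tau_1,\dots,\tau_k))=(\pi\sigma)\shuffle(\pi\tau_1,\dots,\pi\tau_k)$, which is exactly the commutativity requirement ($\pi m$2). Axiom ($\pi\mathcal{S}$2) asserts that $\pi$ commutes with $/$; this is Proposition \ref{prop:pi-property}(2), and ($\pi\mathcal{S}$1) is Proposition \ref{prop:pi-property-bin}(4).

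Once these conditions are verified, Theorem \ref{thm:monomialquotient} delivers that $\Pi$ is a Hopf morphism and that $\YSym$ inherits axioms ($\Delta$1)-($\Delta$3), ($m$0)-($m$3), ($\mathcal{S}$0)-($\mathcal{S}$3) from $\SSym$. The main (mild) obstacle is simply keeping track of the bookkeeping between the $\SSym$ and $\YSym$ sides of each axiom: in particular ($\pi m$1) requires one to confirm that the set of allowable shuffles is literally in bijection rather than merely surjecting, but this is transparent from the description of shuffles via weak compositions in Example \ref{ex:shuffle}, which depends only on the tree structure and not on its labels.
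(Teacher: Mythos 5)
Your proposal is correct and follows essentially the same route as the paper: bootstrap from $\SSym$ via Theorem \ref{thm:monomialquotient}, using Proposition \ref{prop:pi-property-bin} to supply ($\pi$0) and Propositions \ref{prop:pi-property} and \ref{prop:pi-property-bin} for the remaining transfer axioms. Your write-up is in fact more explicit than the paper's about which part of which proposition verifies each $\pi$-axiom, but the argument is the same.
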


\begin{proof}
Use Theorem \ref{thm:monomialquotient} to bootstrap from $\SSym$,
as Proposition \ref{prop:pi-property-bin} provides the necessary
maps $\pi$ and $\iota$ satisfying ($\pi$0). It is clear that $\YSym$
satisfies axioms ($\pi\Delta$1) and ($\pi m$1). Axioms ($\pi\Delta$2),
($\pi m$2), ($\pi\mathcal{S}$1) and ($\pi\mathcal{S}$2) follow from Propositions
\ref{prop:pi-property} and \ref{prop:pi-property-bin}.
\end{proof}

\section{Hopf algebra of planar trees}\label{sec:TSYM}

\subsection{Two isomorphic Hopf structures}

In this section, we define two Hopf structures on the free vector space spanned by planar trees.

\begin{Definition}
	As a graded vector space, the Hopf algebra of planar trees, $\TSym$ is $\displaystyle\bigoplus_{n\geq 0}\Bbbk\PT_{n}$ where $\PT_{n}$ is the set of planar trees with $n+1$ leaves. By convention $\PT_{0}$ is the span of the empty tree. We denote two bases for $\TSym$ by $\{G_t:t\in\PT\}$ and $\{F_t:t\in\PT\}$.
\end{Definition}

Recall that a splitting $t\mapsto(t_1,t_2)$ is \textbf{allowable} if $\ideg(t)=\ideg(t_1)+\ideg(t_2)$. We can define two comultiplication structures on $\TSym$.
$$\Delta^A(\tilde{F}_t)=\sum_{t\mapsto(t_1,t_2)}\tilde{F}_{t_1}\otimes \tilde{F}_{t_2},$$
$$\Delta^B(F_t)=\sum_{\substack{\text{allowable} \\ t\mapsto(t_1,t_2)}}F_{t_1}\otimes F_{t_2}.$$

Then, we can define two multiplication structures on $\TSym$. Let $s,t\in \PT$,
$$m^A(\tilde{F}_s\otimes \tilde{F}_t)=\sum_{t\mapsto(t_1,\dots,t_{\deg(s)+1})}\tilde{F}_{s\shuffle (t_1,\dots,t_{\deg(s)+1})},$$
$$m^B(F_s\otimes F_t)=\sum_{\substack{\text{allowable}\\ t\mapsto(t_1,\dots,t_{\deg(s)+1})}}F_{s\shuffle (t_1,\dots,t_{\deg(s)+1})}.$$

It is not hard to see that both $(m^A,\Delta^A)$ and $(m^B,\Delta^B)$ preserve the degree, and $(m^B,\Delta^B)$ preserves the internal degree.

\begin{Example}
	Let $t=\begin{tikzpicture}[scale=0.2,baseline=0pt]
	\draw[blue, thick] (0,-1) -- (0,0);
	\draw[blue, thick] (0,0) -- (4,4);
	\draw[blue, thick] (0,0) -- (-4,4);
	\draw[blue, thick] (0,0) -- (1,3);
	\draw[blue, thick] (1,3) -- (2,4);
	\draw[blue, thick] (1,3) -- (0,4);
	\draw[blue, thick] (-3,3) -- (-2,4);
	\filldraw[black] (0,0) circle (5pt)  {};
	\filldraw[black] (1,3) circle (5pt)  {};
	\filldraw[black] (-3,3) circle (5pt)  {};
	\end{tikzpicture}$.
	
	$\Delta^A_+(\tilde{F}_t)=
	\tilde{F}_{\begin{tikzpicture}[scale=0.25,baseline=0pt]
	\draw[blue, thick] (0,-1) -- (0,0);
	\draw[blue, thick] (0,0) -- (1,1);
	\draw[blue, thick] (0,0) -- (-1,1);
	\filldraw[black] (0,0) circle (5pt)  {};
	\end{tikzpicture}}\otimes
	\tilde{F}_{\begin{tikzpicture}[scale=0.25,baseline=0pt]
	\draw[blue, thick] (0,-1) -- (0,0);
	\draw[blue, thick] (0,0) -- (-3,3);
	\draw[blue, thick] (0,0) -- (3,3);
	\draw[blue, thick] (0,0) -- (0,2);
	\draw[blue, thick] (0,2) -- (1,3);
	\draw[blue, thick] (0,2) -- (-1,3);
	\filldraw[black] (0,0) circle (5pt)  {};
	\filldraw[black] (0,2) circle (5pt)  {};
	\end{tikzpicture}}+
	\tilde{F}_{\begin{tikzpicture}[scale=0.25,baseline=0pt]
	\draw[blue, thick] (0,-1) -- (0,0);
	\draw[blue, thick] (0,0) -- (-2,2);
	\draw[blue, thick] (0,0) -- (2,2);
	\draw[blue, thick] (-1,1) -- (0,2);
	\filldraw[black] (0,0) circle (5pt)  {};
	\filldraw[black] (-1,1) circle (5pt)  {};
	\end{tikzpicture}}\otimes
	\tilde{F}_{\begin{tikzpicture}[scale=0.25,baseline=0pt]
	\draw[blue, thick] (0,-1) -- (0,0);
	\draw[blue, thick] (0,0) -- (-2,2);
	\draw[blue, thick] (0,0) -- (2,2);
	\draw[blue, thick] (-1,1) -- (0,2);
	\filldraw[black] (0,0) circle (5pt)  {};
	\filldraw[black] (-1,1) circle (5pt)  {};
	\end{tikzpicture}}+
	\tilde{F}_{\begin{tikzpicture}[scale=0.25,baseline=0pt]
	\draw[blue, thick] (0,-1) -- (0,0);
	\draw[blue, thick] (0,0) -- (-3,3);
	\draw[blue, thick] (0,0) -- (3,3);
	\draw[blue, thick] (2,2) -- (1,3);
	\draw[blue, thick] (-2,2) -- (-1,3);
	\filldraw[black] (0,0) circle (5pt)  {};
	\filldraw[black] (2,2) circle (5pt)  {};
	\filldraw[black] (-2,2) circle (5pt)  {};
	\end{tikzpicture}}\otimes
	\tilde{F}_{\begin{tikzpicture}[scale=0.25,baseline=0pt]
	\draw[blue, thick] (0,-1) -- (0,0);
	\draw[blue, thick] (0,0) -- (1,1);
	\draw[blue, thick] (0,0) -- (-1,1);
	\filldraw[black] (0,0) circle (5pt)  {};
	\end{tikzpicture}}$,
	
	$\Delta^B_+(F_t)=
	F_{\begin{tikzpicture}[scale=0.25,baseline=0pt]
	\draw[blue, thick] (0,-1) -- (0,0);
	\draw[blue, thick] (0,0) -- (1,1);
	\draw[blue, thick] (0,0) -- (-1,1);
	\filldraw[black] (0,0) circle (5pt)  {};
	\end{tikzpicture}}\otimes
	F_{\begin{tikzpicture}[scale=0.25,baseline=0pt]
	\draw[blue, thick] (0,-1) -- (0,0);
	\draw[blue, thick] (0,0) -- (-3,3);
	\draw[blue, thick] (0,0) -- (3,3);
	\draw[blue, thick] (0,0) -- (0,2);
	\draw[blue, thick] (0,2) -- (1,3);
	\draw[blue, thick] (0,2) -- (-1,3);
	\filldraw[black] (0,0) circle (5pt)  {};
	\filldraw[black] (0,2) circle (5pt)  {};
	\end{tikzpicture}}$.

\end{Example}

\begin{Example}
	Let $s=$\begin{tikzpicture}[scale=0.25,baseline=0pt]
	\draw[blue, thick] (0,-1) -- (0,0);
	\draw[blue, thick] (0,0) -- (-2,2);
	\draw[blue, thick] (0,0) -- (2,2);
	\draw[blue, thick] (1,1) -- (0,2);
	\filldraw[black] (0,0) circle (5pt)  {};
	\filldraw[black] (1,1) circle (5pt)  {};
	\end{tikzpicture}, $t=$\begin{tikzpicture}[scale=0.25,baseline=0pt]
	\draw[red, thick] (0,-1) -- (0,0);
	\draw[red, thick] (0,0) -- (-2,2);
	\draw[red, thick] (0,0) -- (2,2);
	\draw[red, thick] (0,0) -- (0,2);
	\filldraw[black] (0,0) circle (5pt)  {};
	\end{tikzpicture}.
	
	$m^A(\tilde{F}_s\otimes \tilde{F}_t)=
	\tilde{F}_{\begin{tikzpicture}[scale=0.2,baseline=0pt]
		\draw[blue, thick] (0,-1) -- (0,0);
		\draw[blue, thick] (0,0) -- (-4,4);
		\draw[blue, thick] (0,0) -- (2,2);
		\draw[blue, thick] (1,1) -- (-2,4);
		\draw[red, thick] (2,2) -- (0,4);
		\draw[red, thick] (2,2) -- (2,4);
		\draw[red, thick] (2,2) -- (4,4);
		\filldraw[black] (0,0) circle (5pt)  {};
		\filldraw[black] (1,1) circle (5pt)  {};
		\filldraw[black] (2,2) circle (5pt)  {};
	\end{tikzpicture}}+
	\tilde{F}_{\begin{tikzpicture}[scale=0.2,baseline=0pt]
		\draw[blue, thick] (0,-1) -- (0,0);
		\draw[blue, thick] (0,0) -- (-4,4);
		\draw[blue, thick] (0,0) -- (4,4);
		\draw[blue, thick] (1,1) -- (0,2);
		\draw[red, thick] (0,2) -- (-2,4);
		\draw[red, thick] (0,2) -- (0,4);
		\draw[red, thick] (0,2) -- (2,4);
		\filldraw[black] (0,0) circle (5pt)  {};
		\filldraw[black] (1,1) circle (5pt)  {};
		\filldraw[black] (0,2) circle (5pt)  {};
	\end{tikzpicture}}+
	\tilde{F}_{\begin{tikzpicture}[scale=0.2,baseline=0pt]
		\draw[blue, thick] (0,-1) -- (0,0);
		\draw[blue, thick] (0,0) -- (-4,4);
		\draw[blue, thick] (0,0) -- (4,4);
		\draw[blue, thick] (3,3) -- (2,4);
		\draw[red, thick] (-2,2) -- (0,4);
		\draw[red, thick] (-2,2) -- (-2,4);
		\draw[red, thick] (-2,2) -- (-4,4);
		\filldraw[black] (0,0) circle (5pt)  {};
		\filldraw[black] (-2,2) circle (5pt)  {};
		\filldraw[black] (3,3) circle (5pt)  {};
	\end{tikzpicture}}+
	\tilde{F}_{\begin{tikzpicture}[scale=0.2,baseline=0pt]
		\draw[blue, thick] (0,-1) -- (0,0);
		\draw[blue, thick] (0,0) -- (-4,4);
		\draw[blue, thick] (0,0) -- (3,3);
		\draw[blue, thick] (1,1) -- (-1,3);
		\draw[red, thick] (-1,3) -- (-2,4);
		\draw[red, thick] (-1,3) -- (0,4);
		\draw[red, thick] (3,3) -- (2,4);
		\draw[red, thick] (3,3) -- (4,4);
		\filldraw[black] (0,0) circle (5pt)  {};
		\filldraw[black] (1,1) circle (5pt)  {};
		\filldraw[black] (-1,3) circle (5pt)  {};
		\filldraw[black] (3,3) circle (5pt)  {};
	\end{tikzpicture}}+
	\tilde{F}_{\begin{tikzpicture}[scale=0.2,baseline=0pt]
		\draw[blue, thick] (0,-1) -- (0,0);
		\draw[blue, thick] (0,0) -- (-3,3);
		\draw[blue, thick] (0,0) -- (3,3);
		\draw[blue, thick] (2,2) -- (0,4);
		\draw[red, thick] (-3,3) -- (-4,4);
		\draw[red, thick] (-3,3) -- (-2,4);
		\draw[red, thick] (3,3) -- (2,4);
		\draw[red, thick] (3,3) -- (4,4);
		\filldraw[black] (0,0) circle (5pt)  {};
		\filldraw[black] (2,2) circle (5pt)  {};
		\filldraw[black] (3,3) circle (5pt)  {};
		\filldraw[black] (-3,3) circle (5pt)  {};
	\end{tikzpicture}}+
	\tilde{F}_{\begin{tikzpicture}[scale=0.2,baseline=0pt]
		\draw[blue, thick] (0,-1) -- (0,0);
		\draw[blue, thick] (0,0) -- (-3,3);
		\draw[blue, thick] (0,0) -- (4,4);
		\draw[blue, thick] (2,2) -- (1,3);
		\draw[red, thick] (-3,3) -- (-4,4);
		\draw[red, thick] (-3,3) -- (-2,4);
		\draw[red, thick] (1,3) -- (0,4);
		\draw[red, thick] (1,3) -- (2,4);
		\filldraw[black] (0,0) circle (5pt)  {};
		\filldraw[black] (2,2) circle (5pt)  {};
		\filldraw[black] (1,3) circle (5pt)  {};
		\filldraw[black] (-3,3) circle (5pt)  {};
	\end{tikzpicture}}$,
	
	$m^B(F_s\otimes F_t)=
	F_{\begin{tikzpicture}[scale=0.2,baseline=0pt]
		\draw[blue, thick] (0,-1) -- (0,0);
		\draw[blue, thick] (0,0) -- (-4,4);
		\draw[blue, thick] (0,0) -- (2,2);
		\draw[blue, thick] (1,1) -- (-2,4);
		\draw[red, thick] (2,2) -- (0,4);
		\draw[red, thick] (2,2) -- (2,4);
		\draw[red, thick] (2,2) -- (4,4);
		\filldraw[black] (0,0) circle (5pt)  {};
		\filldraw[black] (1,1) circle (5pt)  {};
		\filldraw[black] (2,2) circle (5pt)  {};
	\end{tikzpicture}}+
	F_{\begin{tikzpicture}[scale=0.2,baseline=0pt]
		\draw[blue, thick] (0,-1) -- (0,0);
		\draw[blue, thick] (0,0) -- (-4,4);
		\draw[blue, thick] (0,0) -- (4,4);
		\draw[blue, thick] (1,1) -- (0,2);
		\draw[red, thick] (0,2) -- (-2,4);
		\draw[red, thick] (0,2) -- (0,4);
		\draw[red, thick] (0,2) -- (2,4);
		\filldraw[black] (0,0) circle (5pt)  {};
		\filldraw[black] (1,1) circle (5pt)  {};
		\filldraw[black] (0,2) circle (5pt)  {};
	\end{tikzpicture}}+
	F_{\begin{tikzpicture}[scale=0.2,baseline=0pt]
		\draw[blue, thick] (0,-1) -- (0,0);
		\draw[blue, thick] (0,0) -- (-4,4);
		\draw[blue, thick] (0,0) -- (4,4);
		\draw[blue, thick] (3,3) -- (2,4);
		\draw[red, thick] (-2,2) -- (0,4);
		\draw[red, thick] (-2,2) -- (-2,4);
		\draw[red, thick] (-2,2) -- (-4,4);
		\filldraw[black] (0,0) circle (5pt)  {};
		\filldraw[black] (-2,2) circle (5pt)  {};
		\filldraw[black] (3,3) circle (5pt)  {};
	\end{tikzpicture}}$.
\end{Example}

\begin{Proposition}\label{prop:tsym}
	$(\TSym,m^A,\Delta^A)$ and $(\TSym,m^B,\Delta^B)$ are both graded Hopf algebras with respect to the degree. $(\TSym,m^B,\Delta^B)$ is a graded Hopf algebra with respect to the internal degree as well  (so it is bigraded).
\end{Proposition}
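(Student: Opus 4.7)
The plan is to verify the standard bialgebra axioms for each of the two structures $(m^A,\Delta^A)$ and $(m^B,\Delta^B)$: coassociativity, associativity, and bialgebra compatibility; the antipode and full Hopf axioms then follow automatically from the graded-connected structure.

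Coassociativity of $\Delta^A$ and $\Delta^B$ reduces to the claim that a double lightening splitting $t \mapsto (t_1,t_2,t_3)$, obtained by first splitting at the $(i+1)$-th leaf and then splitting one of the resulting pieces at another leaf, is independent of the order in which the lightenings are applied. This is immediate from the geometric picture: the two lightenings carve $t$ into three canonical pieces. For $\Delta^B$ it is essential to note that allowability is a local condition (no internal node is contracted at any split), which is preserved under iteration, so $\Delta^B$ inherits coassociativity from $\Delta^A$. Associativity of $m^A$ and $m^B$ is then a formal consequence of coassociativity of $\Delta$ together with the associativity of the grafting operation $\shuffle$: expanding $(F_r \cdot F_s) \cdot F_t$ and $F_r \cdot (F_s \cdot F_t)$ both produce a sum indexed by triple deconcatenations and nested graftings that match under the canonical identification.

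The main obstacle is the bialgebra compatibility $\Delta(F_s\cdot F_t)=\Delta(F_s)\cdot\Delta(F_t)$. I would prove it by a direct combinatorial analysis extending the standard Loday-Ronco argument for $\YSym$. Fix a generic term $F_{s\shuffle(t_1,\ldots,t_k)}$ in $F_s\cdot F_t$ and a leaf position $p$ at which to apply the lightening. Two cases occur: either $p$ lies inside some $t_j$, so the lightening travels up through $t_j$ and then continues through the ``spine'' of $s$ to the root, yielding a left piece of the form $s'\shuffle(t_1,\ldots,t_{j-1},{}^{p'}t_j)$ for some split of $s$ at the leaf where $t_j$ was grafted, and a right piece of the form $s''\shuffle(t_j^{p'},t_{j+1},\ldots,t_k)$; or $p$ is itself a leaf of $s$ sitting between two consecutive $t_j$'s, in which case the lightening stays within $s$ and simply distributes the tuple $(t_1,\ldots,t_k)$ between the two halves of $s$. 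Summing over all such data and regrouping according to the split of $s$ and the split of $t$ produces term-by-term the expansion of $\Delta(F_s)\cdot\Delta(F_t)$. For structure B, the same bijection works, and the crucial point is that a split of $s\shuffle(t_1,\ldots,t_k)$ is allowable precisely when the induced splits of $s$ and of the relevant $t_j$ are both allowable; this ensures the restricted operations remain mutually compatible.

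Finally, both structures are manifestly graded by $\deg$ (one less than the number of leaves), and structure B is in addition graded by $\ideg$ because both allowable lightening splittings and the grafting operation $\shuffle$ preserve the total number of internal nodes. Since $\TSym_0$ is one-dimensional, spanned by the empty tree $\,\begin{tikzpicture}[scale=0.2,baseline=0pt]\draw[blue, thick] (0,-1) -- (0,.5);\end{tikzpicture}\,$, each structure is a graded connected bialgebra and hence admits a unique antipode, giving a Hopf algebra structure.
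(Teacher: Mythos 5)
Your proposal is correct and follows essentially the same route as the paper: direct verification of coassociativity, associativity, and the bialgebra compatibility via the combinatorial identification between deconcatenations of a grafted tree $s\shuffle(t_1,\dots,t_k)$ and compatible deconcatenations of $s$ and $t$, together with the observation that this identification respects allowability (hence preserves $\ideg$) for structure B, and the graded-connected argument for the antipode. The only caveat is that associativity of $m^A$, $m^B$ is not literally a \emph{formal} consequence of coassociativity alone --- it rests on the same grafting/deconcatenation compatibility you spell out in the next paragraph --- but since you state that bijection explicitly, the substance of the argument is complete.
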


\begin{proof}
	It is clear that $\Delta^A$ and $\Delta^B$ are both coassociative. To see associativity, let $r,s,t$ be trees of degree $a-1,b-1,c-1$ respectively, then for $(\TSym,m^A,\Delta^A)$,
	\begin{align*}
	\tilde{F}_r\cdot (\tilde{F}_s\cdot \tilde{F}_t)&=\sum_{t\mapsto(t_1,\dots,t_b)}\sum_{\substack{u=s\shuffle(t_1,\dots,t_b) \\ u\mapsto(u_1,\dots,u_a)}}\tilde{F}_{r\shuffle(u_1,\dots,u_a)}\\
	&=\sum_{\substack{s \mapsto (s_1,\dots,s_a) \\ t \mapsto(t_1,\dots,t_{a+b-1})}}\tilde{F}_{\left(r\shuffle(s_1,\dots,s_a)\right)\shuffle(t_1,\dots,t_{a+b-1})}\\
	&=(\tilde{F}_r\cdot \tilde{F}_s)\cdot \tilde{F}_t.
	\end{align*}
	The second equality can be visualized using the following example.
		
		Let $r=\begin{tikzpicture}[scale=0.25,baseline=0pt]
			\draw[blue, thick] (0,-1) -- (0,0);
			\draw[blue, thick] (0,0) -- (-2,2);
			\draw[blue, thick] (0,0) -- (2,2);
			\draw[blue, thick] (0,0) -- (0,2);
			\filldraw[black] (0,0) circle (5pt)  {};
		\end{tikzpicture}$, $s=\begin{tikzpicture}[scale=0.25,baseline=0pt]
			\draw[red, thick] (0,-1) -- (0,0);
			\draw[red, thick] (0,0) -- (-4,4);
			\draw[red, thick] (0,0) -- (4,4);
			\draw[red, thick] (-3,3) -- (-2,4);
			\draw[red, thick] (2,2) -- (0,4);
			\draw[red, thick] (1,3) -- (2,4);
			\filldraw[black] (-3,3) circle (5pt)  {};
			\filldraw[black] (1,3) circle (5pt)  {};
			\filldraw[black] (2,2) circle (5pt)  {};
		\end{tikzpicture}$ and $t=\begin{tikzpicture}[scale=0.25,baseline=0pt]
			\draw[ForestGreen, thick] (0,-1) -- (0,0);
			\draw[ForestGreen, thick] (0,0) -- (-3,3);
			\draw[ForestGreen, thick] (0,0) -- (3,3);
			\draw[ForestGreen, thick] (0,0) -- (-1,3);
			\draw[ForestGreen, thick] (2,2) -- (1,3);
			\filldraw[black] (0,0) circle (5pt)  {};
			\filldraw[black] (2,2) circle (5pt)  {};
		\end{tikzpicture}$.
		
		The choice $t\mapsto\left(~
		\begin{tikzpicture}[scale=0.25,baseline=0pt]
			\draw[ForestGreen, thick] (0,-1) -- (0,1);
		\end{tikzpicture}~,~
		\begin{tikzpicture}[scale=0.25,baseline=0pt]
			\draw[ForestGreen, thick] (0,-1) -- (0,0);
			\draw[ForestGreen, thick] (0,0) -- (-2,2);
			\draw[ForestGreen, thick] (0,0) -- (2,2);
			\draw[ForestGreen, thick] (0,0) -- (0,2);
			\filldraw[black] (0,0) circle (5pt)  {};
		\end{tikzpicture}~,~
		\begin{tikzpicture}[scale=0.25,baseline=0pt]
			\draw[ForestGreen, thick] (0,-1) -- (0,1);
		\end{tikzpicture}~,~
		\begin{tikzpicture}[scale=0.25,baseline=0pt]
			\draw[ForestGreen, thick] (0,-1) -- (0,0);
			\draw[ForestGreen, thick] (0,0) -- (-1,1);
			\draw[ForestGreen, thick] (0,0) -- (1,1);
			\filldraw[black] (0,0) circle (5pt)  {};
		\end{tikzpicture}~,~
		\begin{tikzpicture}[scale=0.25,baseline=0pt]
			\draw[ForestGreen, thick] (0,-1) -- (0,1);
		\end{tikzpicture}~\right)$ makes $u=
		\begin{tikzpicture}[scale=0.25,baseline=0pt]
			\draw[red, thick] (0,-1) -- (0,0);
			\draw[red, thick] (0,0) -- (-5,5);
			\draw[red, thick] (0,0) -- (6,6);
			\draw[red, thick] (-4,4) -- (-3,5);
			\draw[ForestGreen, thick] (-5,5) -- (-7,7);
			\draw[ForestGreen, thick] (-3,5) -- (-5,7);
			\draw[ForestGreen, thick] (-3,5) -- (-3,7);
			\draw[ForestGreen, thick] (-3,5) -- (-1,7);
			\draw[ForestGreen, thick] (6,6) -- (7,7);
			\draw[ForestGreen, thick] (2,6) -- (1,7);
			\draw[red, thick] (4,4) -- (2,6);
			\draw[red, thick] (3,5) -- (4,6);
			\draw[ForestGreen, thick] (4,6) -- (3,7);
			\draw[ForestGreen, thick] (4,6) -- (5,7);
			\filldraw[black] (0,0) circle (5pt)  {};
			\filldraw[black] (-4,4) circle (5pt)  {};
			\filldraw[black] (-3,5) circle (5pt)  {};
			\filldraw[black] (4,4) circle (5pt)  {};
			\filldraw[black] (3,5) circle (5pt)  {};
			\filldraw[black] (4,6) circle (5pt)  {};
		\end{tikzpicture}$
		
		and the choice $u\mapsto\left(~
		\begin{tikzpicture}[scale=0.25,baseline=0pt]
			\draw[red, thick] (0,-1) -- (0,0);
			\draw[red, thick] (0,0) -- (-1,1);
			\draw[red, thick] (0,0) -- (1,1);
			\draw[ForestGreen, thick] (-1,1) -- (-2,2);
			\draw[ForestGreen, thick] (1,1) -- (2,2);
			\draw[ForestGreen, thick] (1,1) -- (0,2);
			\filldraw[black] (0,0) circle (5pt)  {};
			\filldraw[black] (1,1) circle (5pt)  {};
		\end{tikzpicture}~,~
		\begin{tikzpicture}[scale=0.25,baseline=0pt]
			\draw[red, thick] (0,-1) -- (0,0);
			\draw[red, thick] (0,0) -- (3,3);
			\draw[red, thick] (2,2) -- (1,3);
			\draw[red, thick] (0,0) -- (-3,3);
			\draw[ForestGreen, thick] (-3,3) -- (-4,4);
			\draw[ForestGreen, thick] (-3,3) -- (-2,4);
			\draw[ForestGreen, thick] (1,3) -- (0,4);
			\draw[ForestGreen, thick] (3,3) -- (4,4);
			\filldraw[black] (0,0) circle (5pt)  {};
			\filldraw[black] (2,2) circle (5pt)  {};
			\filldraw[black] (-3,3) circle (5pt)  {};
		\end{tikzpicture}~,~
		\begin{tikzpicture}[scale=0.25,baseline=0pt]
			\draw[red, thick] (0,-1) -- (0,0);
			\draw[red, thick] (0,0) -- (1,1);
			\draw[red, thick] (0,0) -- (-1,1);
			\draw[ForestGreen, thick] (1,1) -- (2,2);
			\draw[ForestGreen, thick] (-1,1) -- (-2,2);
			\draw[ForestGreen, thick] (-1,1) -- (0,2);
			\filldraw[black] (0,0) circle (5pt)  {};
			\filldraw[black] (-1,1) circle (5pt)  {};
		\end{tikzpicture}~\right)$ yields
		$\begin{tikzpicture}[scale=0.25,baseline=0pt]
			\draw[blue, thick] (0,-1) -- (0,0);
			\draw[blue, thick] (0,0) -- (-8,8);
			\draw[red, thick] (-8,8) -- (-9,9);
			\draw[red, thick] (-8,8) -- (-7,9);
			\draw[ForestGreen, thick] (-7,9) -- (-8,10);
			\draw[ForestGreen, thick] (-7,9) -- (-6,10);
			\draw[ForestGreen, thick] (-9,9) -- (-10,10);
			\draw[blue, thick] (0,0) -- (0,6);
			\draw[red, thick] (0,6) -- (-3,9);
			\draw[red, thick] (0,6) -- (3,9);
			\draw[red, thick] (2,8) -- (1,9);
			\draw[ForestGreen, thick] (-3,9) -- (-2,10);
			\draw[ForestGreen, thick] (-3,9) -- (-4,10);
			\draw[ForestGreen, thick] (1,9) -- (0,10);
			\draw[ForestGreen, thick] (3,9) -- (4,10);
			\draw[blue, thick] (0,0) -- (8,8);
			\draw[red, thick] (8,8) -- (9,9);
			\draw[red, thick] (8,8) -- (7,9);
			\draw[ForestGreen, thick] (7,9) -- (8,10);
			\draw[ForestGreen, thick] (7,9) -- (6,10);
			\draw[ForestGreen, thick] (9,9) -- (10,10);
			\filldraw[black] (0,0) circle (5pt)  {};
			\filldraw[black] (-8,8) circle (5pt)  {};
			\filldraw[black] (-7,9) circle (5pt)  {};
			\filldraw[black] (0,6) circle (5pt)  {};
			\filldraw[black] (-3,9) circle (5pt)  {};
			\filldraw[black] (2,8) circle (5pt)  {};
			\filldraw[black] (8,8) circle (5pt)  {};
			\filldraw[black] (7,9) circle (5pt)  {};
		\end{tikzpicture}$.
		
		On the other hand, the choice $s\mapsto\left(~
		\begin{tikzpicture}[scale=0.25,baseline=0pt]
			\draw[red, thick] (0,-1) -- (0,0);
			\draw[red, thick] (0,0) -- (-1,1);
			\draw[red, thick] (0,0) -- (1,1);
			\filldraw[black] (0,0) circle (5pt)  {};
		\end{tikzpicture}~,~
		\begin{tikzpicture}[scale=0.25,baseline=0pt]
			\draw[red, thick] (0,-1) -- (0,0);
			\draw[red, thick] (0,0) -- (-2,2);
			\draw[red, thick] (0,0) -- (2,2);
			\draw[red, thick] (1,1) -- (0,2);
			\filldraw[black] (0,0) circle (5pt)  {};
			\filldraw[black] (1,1) circle (5pt)  {};
		\end{tikzpicture}~,~
		\begin{tikzpicture}[scale=0.25,baseline=0pt]
			\draw[red, thick] (0,-1) -- (0,0);
			\draw[red, thick] (0,0) -- (-1,1);
			\draw[red, thick] (0,0) -- (1,1);
			\filldraw[black] (0,0) circle (5pt)  {};
		\end{tikzpicture}~\right)$ makes $
		\begin{tikzpicture}[scale=0.25,baseline=0pt]
			\draw[blue, thick] (0,-1) -- (0,0);
			\draw[blue, thick] (0,0) -- (-5,5);
			\draw[blue, thick] (0,0) -- (5,5);
			\draw[red, thick] (-5,5) -- (-6,6);
			\draw[red, thick] (-5,5) -- (-4,6);
			\draw[red, thick] (5,5) -- (4,6);
			\draw[red, thick] (5,5) -- (6,6);
			\draw[blue, thick] (0,0) -- (0,4);
			\draw[red, thick] (0,4) -- (-2,6);
			\draw[red, thick] (0,4) -- (2,6);
			\draw[red, thick] (1,5) -- (0,6);
			\filldraw[black] (0,0) circle (5pt)  {};
			\filldraw[black] (-5,5) circle (5pt)  {};
			\filldraw[black] (5,5) circle (5pt)  {};
			\filldraw[black] (0,4) circle (5pt)  {};
			\filldraw[black] (1,5) circle (5pt)  {};
		\end{tikzpicture}$
		
		and the choice $t\mapsto\left(~
		\begin{tikzpicture}[scale=0.25,baseline=0pt]
			\draw[ForestGreen, thick] (0,-1) -- (0,1);
		\end{tikzpicture}~,~
		\begin{tikzpicture}[scale=0.25,baseline=0pt]
			\draw[ForestGreen, thick] (0,-1) -- (0,0);
			\draw[ForestGreen, thick] (0,0) -- (-1,1);
			\draw[ForestGreen, thick] (0,0) -- (1,1);
			\filldraw[black] (0,0) circle (5pt)  {};
		\end{tikzpicture}~,~
		\begin{tikzpicture}[scale=0.25,baseline=0pt]
			\draw[ForestGreen, thick] (0,-1) -- (0,0);
			\draw[ForestGreen, thick] (0,0) -- (-1,1);
			\draw[ForestGreen, thick] (0,0) -- (1,1);
			\filldraw[black] (0,0) circle (5pt)  {};
		\end{tikzpicture}~,~
		\begin{tikzpicture}[scale=0.25,baseline=0pt]
			\draw[ForestGreen, thick] (0,-1) -- (0,1);
		\end{tikzpicture}~,~
		\begin{tikzpicture}[scale=0.25,baseline=0pt]
			\draw[ForestGreen, thick] (0,-1) -- (0,1);
		\end{tikzpicture}~,~
		\begin{tikzpicture}[scale=0.25,baseline=0pt]
			\draw[ForestGreen, thick] (0,-1) -- (0,0);
			\draw[ForestGreen, thick] (0,0) -- (-1,1);
			\draw[ForestGreen, thick] (0,0) -- (1,1);
			\filldraw[black] (0,0) circle (5pt)  {};
		\end{tikzpicture}~,~
		\begin{tikzpicture}[scale=0.25,baseline=0pt]
			\draw[ForestGreen, thick] (0,-1) -- (0,1);
		\end{tikzpicture}~\right)$ yields
		$\begin{tikzpicture}[scale=0.25,baseline=0pt]
			\draw[blue, thick] (0,-1) -- (0,0);
			\draw[blue, thick] (0,0) -- (-8,8);
			\draw[red, thick] (-8,8) -- (-9,9);
			\draw[red, thick] (-8,8) -- (-7,9);
			\draw[ForestGreen, thick] (-7,9) -- (-8,10);
			\draw[ForestGreen, thick] (-7,9) -- (-6,10);
			\draw[ForestGreen, thick] (-9,9) -- (-10,10);
			\draw[blue, thick] (0,0) -- (0,6);
			\draw[red, thick] (0,6) -- (-3,9);
			\draw[red, thick] (0,6) -- (3,9);
			\draw[red, thick] (2,8) -- (1,9);
			\draw[ForestGreen, thick] (-3,9) -- (-2,10);
			\draw[ForestGreen, thick] (-3,9) -- (-4,10);
			\draw[ForestGreen, thick] (1,9) -- (0,10);
			\draw[ForestGreen, thick] (3,9) -- (4,10);
			\draw[blue, thick] (0,0) -- (8,8);
			\draw[red, thick] (8,8) -- (9,9);
			\draw[red, thick] (8,8) -- (7,9);
			\draw[ForestGreen, thick] (7,9) -- (8,10);
			\draw[ForestGreen, thick] (7,9) -- (6,10);
			\draw[ForestGreen, thick] (9,9) -- (10,10);
			\filldraw[black] (0,0) circle (5pt)  {};
			\filldraw[black] (-8,8) circle (5pt)  {};
			\filldraw[black] (-7,9) circle (5pt)  {};
			\filldraw[black] (0,6) circle (5pt)  {};
			\filldraw[black] (-3,9) circle (5pt)  {};
			\filldraw[black] (2,8) circle (5pt)  {};
			\filldraw[black] (8,8) circle (5pt)  {};
			\filldraw[black] (7,9) circle (5pt)  {};
		\end{tikzpicture}$.

	It is left to see the compatibility. Let $s,t$ be two trees, $\deg(s)=k$. We use the notation $\Delta_i(s)=\Delta_{i,k-i}(s)={}^{(i)}s\otimes s^{(k-i)}$, then for $(\TSym,m^A,\Delta^A)$,
	\begin{align*}
		\Delta_i(\tilde{F}_s\cdot \tilde{F}_t)&=\sum_{t\mapsto(t_1,\dots,t_{k+1})}\Delta_i\tilde{F}_{s\shuffle (t_1,\dots,t_{k+1})}\\
		&=\sum_{t\mapsto(t_1,\dots,t_{k+2})}\sum_{\deg(t_1)+\cdots+\deg(t_{n+1})=i-n}\tilde{F}_{^{(n)}s\shuffle (t_1,\dots,t_{n+1})}\otimes \tilde{F}_{s^{(k-n)}\shuffle(t_{n+2},\dots,t_{k+2})}\\
		&=\sum_{n=0}^i\Delta_n(\tilde{F}_s)\cdot\Delta_{i-n}(\tilde{F}_t).
	\end{align*}
	
	The second equality can be visualized using the following example.

		Let $s=\begin{tikzpicture}[scale=0.25,baseline=0pt]
			\draw[red, thick] (0,-1) -- (0,0);
			\draw[red, thick] (0,0) -- (-4,4);
			\draw[red, thick] (0,0) -- (4,4);
			\draw[red, thick] (-3,3) -- (-2,4);
			\draw[red, thick] (2,2) -- (0,4);
			\draw[red, thick] (1,3) -- (2,4);
			\filldraw[black] (-3,3) circle (5pt)  {};
			\filldraw[black] (1,3) circle (5pt)  {};
			\filldraw[black] (2,2) circle (5pt)  {};
		\end{tikzpicture}$, $t=\begin{tikzpicture}[scale=0.25,baseline=0pt]
			\draw[ForestGreen, thick] (0,-1) -- (0,0);
			\draw[ForestGreen, thick] (0,0) -- (-3,3);
			\draw[ForestGreen, thick] (0,0) -- (3,3);
			\draw[ForestGreen, thick] (0,0) -- (-1,3);
			\draw[ForestGreen, thick] (2,2) -- (1,3);
			\filldraw[black] (0,0) circle (5pt)  {};
			\filldraw[black] (2,2) circle (5pt)  {};
		\end{tikzpicture}$ and $i=2$.
		
		The choice $t\mapsto\left(~
		\begin{tikzpicture}[scale=0.25,baseline=0pt]
			\draw[ForestGreen, thick] (0,-1) -- (0,1);
		\end{tikzpicture}~,~
		\begin{tikzpicture}[scale=0.25,baseline=0pt]
			\draw[ForestGreen, thick] (0,-1) -- (0,0);
			\draw[ForestGreen, thick] (0,0) -- (-2,2);
			\draw[ForestGreen, thick] (0,0) -- (2,2);
			\draw[ForestGreen, thick] (0,0) -- (0,2);
			\filldraw[black] (0,0) circle (5pt)  {};
		\end{tikzpicture}~,~
		\begin{tikzpicture}[scale=0.25,baseline=0pt]
			\draw[ForestGreen, thick] (0,-1) -- (0,1);
		\end{tikzpicture}~,~
		\begin{tikzpicture}[scale=0.25,baseline=0pt]
			\draw[ForestGreen, thick] (0,-1) -- (0,0);
			\draw[ForestGreen, thick] (0,0) -- (-1,1);
			\draw[ForestGreen, thick] (0,0) -- (1,1);
			\filldraw[black] (0,0) circle (5pt)  {};
		\end{tikzpicture}~,~
		\begin{tikzpicture}[scale=0.25,baseline=0pt]
			\draw[ForestGreen, thick] (0,-1) -- (0,1);
		\end{tikzpicture}~\right)$ gives $
		\begin{tikzpicture}[scale=0.25,baseline=0pt]
			\draw[red, thick] (0,-1) -- (0,0);
			\draw[red, thick] (0,0) -- (-5,5);
			\draw[red, thick] (0,0) -- (6,6);
			\draw[red, thick] (-4,4) -- (-3,5);
			\draw[ForestGreen, thick] (-5,5) -- (-7,7);
			\draw[ForestGreen, thick] (-3,5) -- (-5,7);
			\draw[ForestGreen, thick] (-3,5) -- (-3,7);
			\draw[ForestGreen, thick] (-3,5) -- (-1,7);
			\draw[ForestGreen, thick] (6,6) -- (7,7);
			\draw[red, thick] (4,4) -- (2,6);
			\draw[ForestGreen, thick] (2,6) -- (1,7);
			\draw[red, thick] (3,5) -- (4,6);
			\draw[ForestGreen, thick] (4,6) -- (3,7);
			\draw[ForestGreen, thick] (4,6) -- (5,7);
			\filldraw[black] (0,0) circle (5pt)  {};
			\filldraw[black] (-4,4) circle (5pt)  {};
			\filldraw[black] (-3,5) circle (5pt)  {};
			\filldraw[black] (4,4) circle (5pt)  {};
			\filldraw[black] (3,5) circle (5pt)  {};
			\filldraw[black] (4,6) circle (5pt)  {};
		\end{tikzpicture}\mapsto\left(~
		\begin{tikzpicture}[scale=0.25,baseline=0pt]
			\draw[red, thick] (0,-1) -- (0,0);
			\draw[red, thick] (0,0) -- (-1,1);
			\draw[red, thick] (0,0) -- (1,1);
			\draw[ForestGreen, thick] (-1,1) -- (-2,2);
			\draw[ForestGreen, thick] (1,1) -- (2,2);
			\draw[ForestGreen, thick] (1,1) -- (0,2);
			\filldraw[black] (0,0) circle (5pt)  {};
			\filldraw[black] (1,1) circle (5pt)  {};
		\end{tikzpicture}~,~
		\begin{tikzpicture}[scale=0.25,baseline=0pt]
			\draw[red, thick] (0,-1) -- (0,0);
			\draw[red, thick] (0,0) -- (-4,4);
			\draw[ForestGreen, thick] (-4,4) -- (-5,5);
			\draw[ForestGreen, thick] (-4,4) -- (-3,5);
			\draw[red, thick] (0,0) -- (4,4);
			\draw[red, thick] (2,2) -- (0,4);
			\draw[ForestGreen, thick] (0,4) -- (-1,5);
			\draw[red, thick] (1,3) -- (2,4);
			\draw[ForestGreen, thick] (4,4) -- (5,5);
			\draw[ForestGreen, thick] (2,4) -- (1,5);
			\draw[ForestGreen, thick] (2,4) -- (3,5);
			\filldraw[black] (0,0) circle (5pt)  {};
			\filldraw[black] (-4,4) circle (5pt)  {};
			\filldraw[black] (2,2) circle (5pt)  {};
			\filldraw[black] (1,3) circle (5pt)  {};
			\filldraw[black] (2,4) circle (5pt)  {};
		\end{tikzpicture}~\right)$. On the other hand, the choice $t\mapsto\left(~
		\begin{tikzpicture}[scale=0.25,baseline=0pt]
			\draw[ForestGreen, thick] (0,-1) -- (0,1);
		\end{tikzpicture}~,~
		\begin{tikzpicture}[scale=0.25,baseline=0pt]
			\draw[ForestGreen, thick] (0,-1) -- (0,0);
			\draw[ForestGreen, thick] (0,0) -- (-1,1);
			\draw[ForestGreen, thick] (0,0) -- (1,1);
			\filldraw[black] (0,0) circle (5pt)  {};
		\end{tikzpicture}~,~
		\begin{tikzpicture}[scale=0.25,baseline=0pt]
			\draw[ForestGreen, thick] (0,-1) -- (0,0);
			\draw[ForestGreen, thick] (0,0) -- (-1,1);
			\draw[ForestGreen, thick] (0,0) -- (1,1);
			\filldraw[black] (0,0) circle (5pt)  {};
		\end{tikzpicture}~,~
		\begin{tikzpicture}[scale=0.25,baseline=0pt]
			\draw[ForestGreen, thick] (0,-1) -- (0,1);
		\end{tikzpicture}~,~
		\begin{tikzpicture}[scale=0.25,baseline=0pt]
			\draw[ForestGreen, thick] (0,-1) -- (0,0);
			\draw[ForestGreen, thick] (0,0) -- (-1,1);
			\draw[ForestGreen, thick] (0,0) -- (1,1);
			\filldraw[black] (0,0) circle (5pt)  {};
		\end{tikzpicture}~,~
		\begin{tikzpicture}[scale=0.25,baseline=0pt]
			\draw[ForestGreen, thick] (0,-1) -- (0,1);
		\end{tikzpicture}~\right)$ makes $n=1$ and hence $s\mapsto\left(~
		\begin{tikzpicture}[scale=0.25,baseline=0pt]
			\draw[red, thick] (0,-1) -- (0,0);
			\draw[red, thick] (0,0) -- (1,1);
			\draw[red, thick] (0,0) -- (-1,1);
			\filldraw[black] (0,0) circle (5pt)  {};
		\end{tikzpicture}~,~
		\begin{tikzpicture}[scale=0.25,baseline=0pt]
			\draw[red, thick] (0,-1) -- (0,0);
			\draw[red, thick] (0,0) -- (-3,3);
			\draw[red, thick] (1,1) -- (-1,3);
			\draw[red, thick] (0,2) -- (1,3);
			\draw[red, thick] (0,0) -- (3,3);
			\filldraw[black] (0,0) circle (5pt)  {};
			\filldraw[black] (1,1) circle (5pt)  {};
			\filldraw[black] (0,2) circle (5pt)  {};
		\end{tikzpicture}~\right)$.
	
	The argument above works for $(\TSym,m^B,\Delta^B)$ as well, by taking the sum of all allowable splittings.
	
	Alternatively, one may prove the associativity of the multiplication and the compatibility using operadic methods, such as \cite[Th. 3.4]{FLS13}.
\end{proof}

\begin{Proposition}\label{prop:iso}
	The two Hopf structures $(\TSym,m^A,\Delta^A)$ and $(\TSym,m^B,\Delta^B)$ are isomorphic.
\end{Proposition}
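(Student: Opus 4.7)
The plan is to construct an explicit linear isomorphism $\Phi : (\TSym, m^A, \Delta^A) \to (\TSym, m^B, \Delta^B)$ preserving both Hopf operations. The motivation is the following key observation: each non-allowable splitting $t \mapsto (t_1, t_2)$ is non-allowable precisely because the lightning passes between two consecutive children of some internal node $v$ of $t$, effectively ``duplicating'' $v$ into two copies $v_L \in t_1$ and $v_R \in t_2$ (each possibly contracted). Equivalently, such a splitting corresponds to an allowable splitting of a \emph{refinement} $s$ of $t$, obtained from $t$ by replacing $v$ with two internal nodes $v_L, v_R$ connected by a new internal edge in a planar-compatible way.

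My first step would be to introduce the \emph{refinement poset} on $\PT_n$, where $s \succeq t$ means that $t$ is obtained from $s$ by contracting some internal edges; equivalently, $s$ refines the internal nodes of $t$. Under this order, corollas are the minimum and planar binary trees (on which all splittings are allowable) are the maximum elements. I would then define
$$\Phi(\tilde{F}_t) = \sum_{s \succeq t} c_{t,s}\, F_s$$
with integer coefficients $c_{t,s}$ determined by the compatibility requirement in step (ii) below; the values $c_{t,t} \ne 0$ guarantee triangularity so that $\Phi$ is a bijection.

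The three steps to verify are: (i) $\Phi$ is a bijection, which follows from lower-triangularity in the refinement order; (ii) $\Phi$ commutes with coproducts, $(\Phi\otimes\Phi)\circ \Delta^A = \Delta^B \circ \Phi$, which follows from a bijective matching between the non-allowable splits of $t$ (summed with sign/multiplicity via $c_{t,s}$) and the allowable splits of its refinements $s$; (iii) $\Phi$ commutes with multiplications, $\Phi\circ m^A = m^B \circ (\Phi\otimes\Phi)$, by an analogous matching argument applied to the shuffle operation $s\shuffle(t_1,\dots,t_{k+1})$.

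The main obstacle, as usual in comparing two Hopf algebra structures on the same graded vector space, is pinning down the precise coefficients $c_{t,s}$ and the bijective matching in step (ii). Exploratory computations in low degree (corollas with $3$ and $4$ leaves) indicate that $c_{t,s}$ is not uniformly $1$: the contributions from non-allowable splittings passing through a single internal node of large arity generate cancellation relations that force $c_{t,s}$ to involve signed M\"obius-type corrections or counts of ``consistent binary refinements'' of $t$. Once the right coefficients are identified, the multiplicative and comultiplicative compatibilities (ii)--(iii) reduce to routine bookkeeping mirroring the coassociativity and associativity arguments in the proof of Proposition \ref{prop:tsym}. If a closed form proves elusive, one can proceed via an operadic route (as sketched in the alternative proof of Proposition \ref{prop:tsym} using the methods of \cite{FLS13}): both structures arise from distinct but equivalent operadic presentations of planar trees, and the comparison functor yields the desired Hopf isomorphism abstractly.
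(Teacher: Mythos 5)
Your overall architecture is the same as the paper's: a triangular change of basis $\tilde F_t\mapsto\sum_s c_{t,s}F_s$ supported on refinements of $t$, with bijectivity from triangularity and compatibility checked on $\Delta$ and then on $m$ via iterated coproducts. But there is a genuine gap at exactly the point you defer: the coefficients. Your guess that they require ``signed M\"obius-type corrections'' is wrong, and your choice of index poset (all edge contractions) is the reason you are seeing spurious cancellations in low degree. For the degree-$2$ corolla $c$, the full refinement poset has two maximal binary refinements $t_L,t_R$, and $\Delta^B_+(F_c+F_{t_L}+F_{t_R})=2\,F_Y\otimes F_Y$ while $(\Phi\otimes\Phi)\Delta^A_+(\tilde F_c)=F_Y\otimes F_Y$; so summing over \emph{all} refinements with coefficient $1$ fails, and this is what forces you toward signed corrections. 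The paper's resolution is not to change the coefficients but to change the order: it defines a partial order on $\PT$ whose covers create a new internal node by moving a \emph{middle} branch of a node $x$ onto a new node attached at the left child of $x$, so that each connected component is a boolean lattice and, crucially, for every tree $t$ and every leaf position $i+1$ there is a \emph{unique minimal} $t_{(i)}\geq t$ at which the splitting at that leaf becomes allowable, with $({}^it,t^i)=({}^it_{(i)},t_{(i)}^{\,i})$. With that order, $H_t:=\sum_{s\geq t}F_s$ (all coefficients equal to $1$) satisfies
\[
\Delta_i^B(H_t)=\sum_{s\geq t_{(i)}}\Delta_i^B(F_s)=H_{{}^it}\otimes H_{t^i},
\]
and $H_t\mapsto\tilde F_t$ is the desired isomorphism. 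Until you identify this order (or otherwise pin down the $c_{t,s}$ and verify the matching), steps (ii) and (iii) of your plan are not established; the operadic fallback is likewise unsubstantiated, since \cite{FLS13} is invoked in the proof of Proposition \ref{prop:tsym} only to show each structure is a Hopf algebra, not to compare the two.
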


\begin{proof}
	We define a partial order $\leq$ on planar trees. The covering relation is as follows. Fix a tree $t$ and an internal node $x$ of degree at least 3. Fix a branch of $x$ in the middle, and move it along the left child of $x$ that creates a new internal node $y$. This process gives a new tree $s$ and we define the covering relation $t\lessdot s$.
See Figure~\ref{fig:PTcover_and_order} to visualize the covering relation and an example of connected component of the partial order.
	
\begin{figure}
	\begin{center}
		\begin{tikzpicture}
		\node at (0,0){\begin{tikzpicture}
			\node at (0,0) {$x$};
			\node at (1.5,1.3) {$B$};
			\node at (-1.5,1.3) {$A$};
			
			\filldraw[Mahogany,ultra thick] (0,0.3) -- (0.5,1.2) -- (1.2,1.2);
			\filldraw[ForestGreen,ultra thick] (0,0.3) -- (-1.2,1.2) -- (0.5,1.2);
			
			\node at (0,2) {$\uparrow$};
			
			\node at (0,3) {$x$};
			\node at (-3,5.9) {$A$};
			\node at (1.5,4.5) {$B$};
			\filldraw[black] (-1.5,4.5) circle (3pt)  {};
			
			\draw[ForestGreen,very thick] (-0.3,3.3) -- (-1.2,4.2);
			\filldraw[Mahogany,very thick] (0.2,3.3) -- (1.2,4.2) -- (-0.8,4.2);
			\filldraw[ForestGreen,ultra thick] (-1.5,4.8) -- (-2.7,5.7) -- (-0.3,5.7);
			\end{tikzpicture}};
		\node at (8,0){
		\begin{tikzpicture}[scale=0.25,baseline=0pt]
		\draw[Mahogany, ultra thick] (-2.2,2) -- (-8.7,6);
		\draw[Mahogany, ultra thick] (-2.2,10) -- (-8.7,14);
		\draw[Mahogany, ultra thick] (7.8,10) -- (1.3,14);
		\draw[Mahogany, ultra thick] (7.8,18) -- (1.3,22);
		
		\draw[ForestGreen, ultra thick] (2.2,2) -- (8.7,6);
		\draw[ForestGreen, ultra thick] (2.2,10) -- (4.475,11.4);
		\draw[ForestGreen, ultra thick] (5.45,12) -- (8.7,14);
		\draw[ForestGreen, ultra thick] (-7.8,10) -- (-5.525,11.4);
		\draw[ForestGreen, ultra thick] (-4.55,12) -- (-1.3,14);
		\draw[ForestGreen, ultra thick] (-7.8,18) -- (-1.3,22);
		
		\draw[YellowOrange, ultra thick] (0,2.3) -- (0,5.7);
		\draw[YellowOrange, ultra thick] (-10,10.3) -- (-10,13.7);
		\draw[YellowOrange, ultra thick] (10,10.3) -- (10,13.7);
		\draw[YellowOrange, ultra thick] (0,18.3) -- (0,21.7);
		
		\node () at (0,0){\begin{tikzpicture}[scale=0.1]
			\draw[blue, thick] (0,-1) -- (0,0);
			\draw[blue, thick] (0,0) -- (6,6);
			\draw[blue, thick] (0,0) -- (-6,6);
			\draw[blue, thick] (0,0) -- (-2,4);
			\draw[blue, thick] (-2,4) -- (-4,6);
			\draw[blue, thick] (-2,4) -- (-2,6);
			\draw[blue, thick] (-2,4) -- (0,6);
			\draw[blue, thick] (0,0) -- (2,6);
			\draw[blue, thick] (5,5) -- (4,6);
			\filldraw[black] (0,0) circle (5pt)  {};
			\filldraw[black] (5,5) circle (5pt)  {};
			\filldraw[black] (-2,4) circle (5pt)  {};\end{tikzpicture}};
		\node () at (-10,8){\begin{tikzpicture}[scale=0.1]
			\draw[blue, thick] (0,-1) -- (0,0);
			\draw[blue, thick] (0,0) -- (6,6);
			\draw[blue, thick] (0,0) -- (-6,6);
			\draw[blue, thick] (-2,4) -- (-4,6);
			\draw[blue, thick] (-2,4) -- (-2,6);
			\draw[blue, thick] (-3,3) -- (0,6);
			\draw[blue, thick] (0,0) -- (2,6);
			\draw[blue, thick] (5,5) -- (4,6);
			\filldraw[black] (0,0) circle (5pt)  {};
			\filldraw[black] (5,5) circle (5pt)  {};
			\filldraw[black] (-3,3) circle (5pt)  {};
			\filldraw[black] (-2,4) circle (5pt)  {};\end{tikzpicture}};
		\node () at (0,8){\begin{tikzpicture}[scale=0.1]
			\draw[blue, thick] (0,-1) -- (0,0);
			\draw[blue, thick] (0,0) -- (6,6);
			\draw[blue, thick] (0,0) -- (-6,6);
			\draw[blue, thick] (-2,2) -- (-2,4);
			\draw[blue, thick] (-2,4) -- (-4,6);
			\draw[blue, thick] (-2,4) -- (-2,6);
			\draw[blue, thick] (-2,4) -- (0,6);
			\draw[blue, thick] (-2,2) -- (2,6);
			\draw[blue, thick] (5,5) -- (4,6);
			\filldraw[black] (0,0) circle (5pt)  {};
			\filldraw[black] (-2,2) circle (5pt)  {};
			\filldraw[black] (5,5) circle (5pt)  {};
			\filldraw[black] (-2,4) circle (5pt)  {};\end{tikzpicture}};
		\node () at (10,8){\begin{tikzpicture}[scale=0.1]
			\draw[blue, thick] (0,-1) -- (0,0);
			\draw[blue, thick] (0,0) -- (6,6);
			\draw[blue, thick] (0,0) -- (-6,6);
			\draw[blue, thick] (0,0) -- (-2,4);
			\draw[blue, thick] (-2,4) -- (-4,6);
			\draw[blue, thick] (-3,5) -- (-2,6);
			\draw[blue, thick] (-2,4) -- (0,6);
			\draw[blue, thick] (0,0) -- (2,6);
			\draw[blue, thick] (5,5) -- (4,6);
			\filldraw[black] (0,0) circle (5pt)  {};
			\filldraw[black] (5,5) circle (5pt)  {};
			\filldraw[black] (-3,5) circle (5pt)  {};
			\filldraw[black] (-2,4) circle (5pt)  {};\end{tikzpicture}};
		\node () at (-10,16){\begin{tikzpicture}[scale=0.1]
			\draw[blue, thick] (0,-1) -- (0,0);
			\draw[blue, thick] (0,0) -- (6,6);
			\draw[blue, thick] (0,0) -- (-6,6);
			\draw[blue, thick] (-2,4) -- (-4,6);
			\draw[blue, thick] (-2,4) -- (-2,6);
			\draw[blue, thick] (-3,3) -- (0,6);
			\draw[blue, thick] (-2,2) -- (2,6);
			\draw[blue, thick] (5,5) -- (4,6);
			\filldraw[black] (0,0) circle (5pt)  {};
			\filldraw[black] (-2,2) circle (5pt)  {};
			\filldraw[black] (5,5) circle (5pt)  {};
			\filldraw[black] (-3,3) circle (5pt)  {};
			\filldraw[black] (-2,4) circle (5pt)  {};\end{tikzpicture}};
		\node () at (0,16){\begin{tikzpicture}[scale=0.1]
			\draw[blue, thick] (0,-1) -- (0,0);
			\draw[blue, thick] (0,0) -- (6,6);
			\draw[blue, thick] (0,0) -- (-6,6);
			\draw[blue, thick] (-2,4) -- (-4,6);
			\draw[blue, thick] (-3,5) -- (-2,6);
			\draw[blue, thick] (-3,3) -- (0,6);
			\draw[blue, thick] (0,0) -- (2,6);
			\draw[blue, thick] (5,5) -- (4,6);
			\filldraw[black] (0,0) circle (5pt)  {};
			\filldraw[black] (-3,3) circle (5pt)  {};
			\filldraw[black] (5,5) circle (5pt)  {};
			\filldraw[black] (-3,5) circle (5pt)  {};
			\filldraw[black] (-2,4) circle (5pt)  {};\end{tikzpicture}};
		\node () at (10,16){\begin{tikzpicture}[scale=0.1]
			\draw[blue, thick] (0,-1) -- (0,0);
			\draw[blue, thick] (0,0) -- (6,6);
			\draw[blue, thick] (0,0) -- (-6,6);
			\draw[blue, thick] (-2,2) -- (-2,4);
			\draw[blue, thick] (-2,4) -- (-4,6);
			\draw[blue, thick] (-3,5) -- (-2,6);
			\draw[blue, thick] (-2,4) -- (0,6);
			\draw[blue, thick] (-2,2) -- (2,6);
			\draw[blue, thick] (5,5) -- (4,6);
			\filldraw[black] (0,0) circle (5pt)  {};
			\filldraw[black] (-2,2) circle (5pt)  {};
			\filldraw[black] (5,5) circle (5pt)  {};
			\filldraw[black] (-3,5) circle (5pt)  {};
			\filldraw[black] (-2,4) circle (5pt)  {};\end{tikzpicture}};
		\node () at (0,24){\begin{tikzpicture}[scale=0.1]
			\draw[blue, thick] (0,-1) -- (0,0);
			\draw[blue, thick] (0,0) -- (6,6);
			\draw[blue, thick] (0,0) -- (-6,6);
			\draw[blue, thick] (-2,4) -- (-4,6);
			\draw[blue, thick] (-3,5) -- (-2,6);
			\draw[blue, thick] (-3,3) -- (0,6);
			\draw[blue, thick] (-2,2) -- (2,6);
			\draw[blue, thick] (5,5) -- (4,6);
			\filldraw[black] (0,0) circle (5pt)  {};
			\filldraw[black] (-3,3) circle (5pt)  {};
			\filldraw[black] (-2,2) circle (5pt)  {};
			\filldraw[black] (5,5) circle (5pt)  {};
			\filldraw[black] (-3,5) circle (5pt)  {};
			\filldraw[black] (-2,4) circle (5pt)  {};\end{tikzpicture}};
		\end{tikzpicture}};
	\end{tikzpicture}
	\end{center}
	\caption{\sl Covering relation and a connected component of the partial order on planar trees needed for Proposition~\ref{prop:iso}}\label{fig:PTcover_and_order}
	\end{figure}
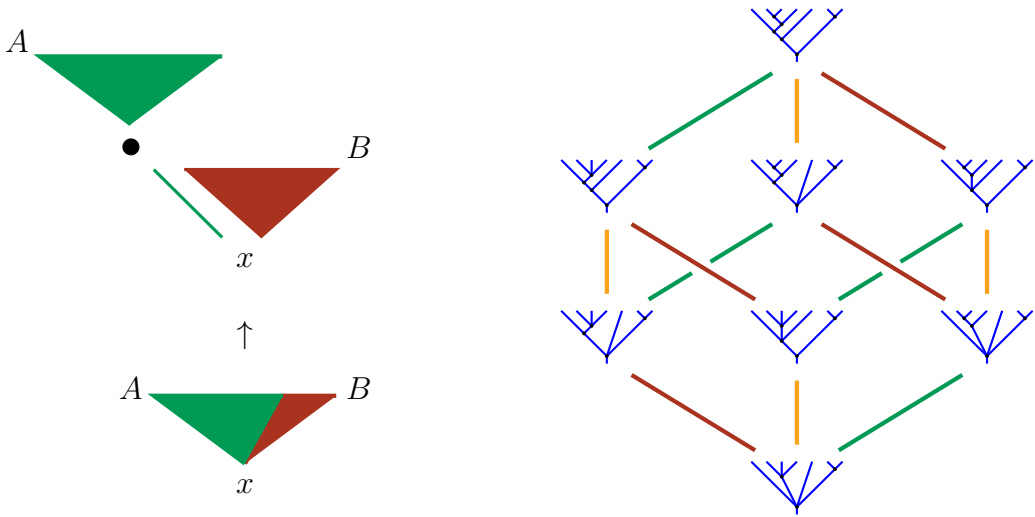

	Applying the basic move at each internal node of a tree $t$ generates a boolean poset. Since the basic moves at two different internal nodes are independent of each other, we see that each connected component in this order is a boolean poset.
	
	Fix a tree $t$, and an integer $i\in [1,\deg t-1]$. Construct the tree $t_{(i)}\geq t$ by applying the above cover relation to every node along the path from the $i+1$th leaf to the root, for the branch that is along this path. Then $t_{(i)}$ is a unique minimal tree  such that $t_{(i)}\geq t$ and the splitting at the $(i+1)$-th leaf of $t_{(i)}$ is allowable. Moreover, splitting $t_{(i)}$ and $t$ at the $(i+1)$-th leaf give the same result i.e. $({}^it,t^i)=({}^it_{(i)},t^i_{(i)})$. As an example, if we consider $t$ to be the bottom tree \begin{tikzpicture}[scale=0.1]
			\draw[blue, thick] (0,-1) -- (0,0);
			\draw[blue, thick] (0,0) -- (6,6);
			\draw[blue, thick] (0,0) -- (-6,6);
			\draw[blue, thick] (0,0) -- (-2,4);
			\draw[blue, thick] (-2,4) -- (-4,6);
			\draw[blue, thick] (-2,4) -- (-2,6);
			\draw[blue, thick] (-2,4) -- (0,6);
			\draw[blue, thick] (0,0) -- (2,6);
			\draw[blue, thick] (5,5) -- (4,6);
			\filldraw[black] (0,0) circle (5pt)  {};
			\filldraw[black] (5,5) circle (5pt)  {};
			\filldraw[black] (-2,4) circle (5pt)  {};\end{tikzpicture} in Figure \ref{fig:PTcover_and_order}, then \begin{tikzpicture}[scale=0.1]
			\draw[blue, thick] (0,-1) -- (0,0);
			\draw[blue, thick] (0,0) -- (6,6);
			\draw[blue, thick] (0,0) -- (-6,6);
			\draw[blue, thick] (-2,4) -- (-4,6);
			\draw[blue, thick] (-3,5) -- (-2,6);
			\draw[blue, thick] (-3,3) -- (0,6);
			\draw[blue, thick] (0,0) -- (2,6);
			\draw[blue, thick] (5,5) -- (4,6);
			\filldraw[black] (0,0) circle (5pt)  {};
			\filldraw[black] (-3,3) circle (5pt)  {};
			\filldraw[black] (5,5) circle (5pt)  {};
			\filldraw[black] (-3,5) circle (5pt)  {};
			\filldraw[black] (-2,4) circle (5pt)  {};\end{tikzpicture} is the tree $t_{(2)}$.

	We define a new basis $\{H_t\}$ such that $H_t=\displaystyle\sum_{s\geq t} F_s$. Then,
	
	$$\Delta_i^B(H_t)=\sum_{s\geq t}\Delta_i^B(F_s)=\sum_{s\geq t_{(i)}}\Delta_i^B(F_s)=\sum_{s_1\geq {}^it,s_2\geq t^i}F_{s_1}\otimes F_{s_2}=H_{{}^it}\otimes H_{t^i}.$$
	
	By construction, we know that $\Delta_i^A(\tilde{F}_t)=\tilde{F}_{{}^it}\otimes \tilde{F}_{t^i}$. And since the two multiplications $m^A$ and $m^B$ are based on the composed comultiplication $\Delta^A\circ\cdots\circ\Delta^A$ and $\Delta^B\circ\cdots\circ\Delta^B$, we have an isomorphism $(\TSym,m^B,\Delta^B)\to(\TSym,m^A,\Delta^A)$ such that $H_t\mapsto \tilde{F}_t$.
\end{proof}

In the rest of this paper, we only consider the operations $m^B,\Delta^B$, which we will denote by $m,\Delta$.

\begin{Remark}
	The Hopf algebra $\YSym$ does not have a bidendriform bialgebra structure. Hence, we do not expect one for $\TSym$.
\end{Remark}

\subsection{Planar Tamari order and monomial basis}

A left rotation is the operation on planar trees described in Figure~\ref{fig:Planar_Tamari}

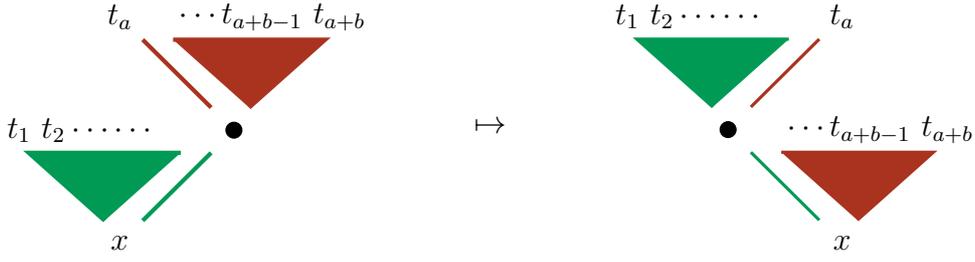
\begin{figure}
\begin{center}
	\begin{tikzpicture}
	\node at (0,0) {\begin{tikzpicture}
		\node at (0,0) {$x$};
		\filldraw[black] (1.5,1.5) circle (3pt)  {};
		\node at (-0.5,1.5) {$t_1~t_2\cdots\cdots$};
		\node at (0,3) {$t_a$};
		\node at (2,3) {$\cdots t_{a+b-1}~t_{a+b}$};
		
		\draw[ForestGreen,ultra thick] (0.3,0.3) -- (1.2,1.2);
		\filldraw[ForestGreen,ultra thick] (-0.2,0.3) -- (-1.2,1.2) -- (0.8,1.2);
		\filldraw[Mahogany,ultra thick] (1.7,1.8) -- (2.7,2.7) -- (0.7,2.7);
		\draw[Mahogany,ultra thick] (1.2,1.8) -- (0.3,2.7);
		\end{tikzpicture}};
	\node at (4,0) {$\mapsto$};
	\node at (8,0) {\begin{tikzpicture}
		\node at (0,0) {$x$};
		\filldraw[black] (-1.5,1.5) circle (3pt)  {};
		\node at (-2,3) {$t_1~t_2\cdots\cdots$};
		\node at (0,3) {$t_a$};
		\node at (0.5,1.5) {$\cdots t_{a+b-1}~t_{a+b}$};
		
		\filldraw[Mahogany,very thick] (0.2,0.3) -- (1.2,1.2) -- (-0.8,1.2);
		\draw[ForestGreen,very thick] (-0.3,0.3) -- (-1.2,1.2);
		\filldraw[ForestGreen,very thick] (-1.7,1.8) -- (-2.7,2.7) -- (-0.7,2.7);
		\draw[Mahogany,very thick] (-1.2,1.8) -- (-0.3,2.7);
		\end{tikzpicture}};
\end{tikzpicture}
\end{center}
\caption{\sl A left rotation: the cover relation of planar Tamari order}\label{fig:Planar_Tamari}
\end{figure}

\begin{Definition}\label{def:lpt}
We define the planar Tamari order on planar trees as follow. A cover relation $s\lessdot_{PT}t$ is defined if $t$ is obtained from $s$ by doing a left rotation at a node $x$, where $y$ is the right most child of $x$, and $B$ is the left most branch of $y$ as depicted Figure~\ref{fig:Planar_Tamari}. The planar left Tamari of size $n$ is the transitive closure of this relation. For example see Figure~\ref{fig:PTamari4} and ~\ref{fig:PTamari5}.
\end{Definition}

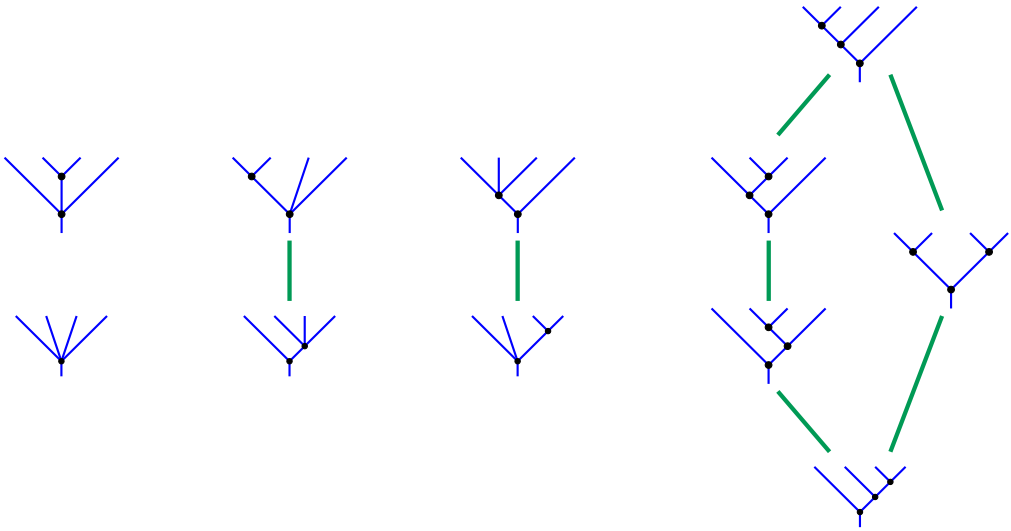
\begin{figure}
\begin{center}
\begin{tikzpicture}
\node at (0,0) {\begin{tikzpicture}[scale=0.4]
	\node () at (0,0){\begin{tikzpicture}[scale=0.2,baseline=0pt]
		\draw[blue, thick] (0,-1) -- (0,0);
		\draw[blue, thick] (0,0) -- (3,3);
		\draw[blue, thick] (0,0) -- (1,3);
		\draw[blue, thick] (0,0) -- (-1,3);
		\draw[blue, thick] (0,0) -- (-3,3);
		\filldraw[black] (0,0) circle (5pt)  {};\end{tikzpicture}};
	\node () at (0,5){\begin{tikzpicture}[scale=0.25,baseline=0pt]
		\draw[blue, thick] (0,-1) -- (0,0);
		\draw[blue, thick] (0,0) -- (3,3);
		\draw[blue, thick] (0,2) -- (1,3);
		\draw[blue, thick] (0,2) -- (-1,3);
		\draw[blue, thick] (0,0) -- (-3,3);
		\draw[blue, thick] (0,0) -- (0,2);
		\filldraw[black] (0,0) circle (5pt)  {};
		\filldraw[black] (0,2) circle (5pt)  {};\end{tikzpicture}};
\end{tikzpicture}};
\node at (3,0) {\begin{tikzpicture}[scale=0.4]
\draw[ForestGreen, ultra thick] (0,1.5) -- (0,3.5);
\node () at (0,0){\begin{tikzpicture}[scale=0.2,baseline=0pt]
	\draw[blue, thick] (0,-1) -- (0,0);
	\draw[blue, thick] (0,0) -- (3,3);
	\draw[blue, thick] (1,1) -- (1,3);
	\draw[blue, thick] (1,1) -- (-1,3);
	\draw[blue, thick] (0,0) -- (-3,3);
	\filldraw[black] (0,0) circle (5pt)  {};
	\filldraw[black] (1,1) circle (5pt)  {};\end{tikzpicture}};
\node () at (0,5){\begin{tikzpicture}[scale=0.25,baseline=0pt]
	\draw[blue, thick] (0,-1) -- (0,0);
	\draw[blue, thick] (0,0) -- (3,3);
	\draw[blue, thick] (0,0) -- (1,3);
	\draw[blue, thick] (-2,2) -- (-1,3);
	\draw[blue, thick] (0,0) -- (-3,3);
	\filldraw[black] (0,0) circle (5pt)  {};
	\filldraw[black] (-2,2) circle (5pt)  {};\end{tikzpicture}};
\end{tikzpicture}};
\node at (6,0) {\begin{tikzpicture}[scale=0.4]
\draw[ForestGreen, ultra thick] (0,1.5) -- (0,3.5);
\node () at (0,0){\begin{tikzpicture}[scale=0.2,baseline=0pt]
\draw[blue, thick] (0,-1) -- (0,0);
\draw[blue, thick] (0,0) -- (3,3);
\draw[blue, thick] (2,2) -- (1,3);
\draw[blue, thick] (0,0) -- (-1,3);
\draw[blue, thick] (0,0) -- (-3,3);
\filldraw[black] (0,0) circle (5pt)  {};
\filldraw[black] (2,2) circle (5pt)  {};\end{tikzpicture}};
\node () at (0,5){\begin{tikzpicture}[scale=0.25,baseline=0pt]
\draw[blue, thick] (0,-1) -- (0,0);
\draw[blue, thick] (0,0) -- (3,3);
\draw[blue, thick] (-1,1) -- (1,3);
\draw[blue, thick] (-1,1) -- (-1,3);
\draw[blue, thick] (0,0) -- (-3,3);
\filldraw[black] (0,0) circle (5pt)  {};
\filldraw[black] (-1,1) circle (5pt)  {};\end{tikzpicture}};
\end{tikzpicture}};
\node at (10.5,0) {\begin{tikzpicture}[scale=0.4]
\draw[ForestGreen, ultra thick] (-3,6.5) -- (-3,8.5);
\draw[ForestGreen, ultra thick] (1,1.5) -- (2.7,6);
\draw[ForestGreen, ultra thick] (1,14) -- (2.7,9.5);
\draw[ForestGreen, ultra thick] (-1,14) -- (-2.7,12);
\draw[ForestGreen, ultra thick] (-1,1.5) -- (-2.7,3.5);
\node () at (0,0){\begin{tikzpicture}[scale=0.2,baseline=0pt]
\draw[blue, thick] (0,-1) -- (0,0);
\draw[blue, thick] (0,0) -- (3,3);
\draw[blue, thick] (2,2) -- (1,3);
\draw[blue, thick] (1,1) -- (-1,3);
\draw[blue, thick] (0,0) -- (-3,3);
\filldraw[black] (0,0) circle (5pt)  {};
\filldraw[black] (1,1) circle (5pt)  {};
\filldraw[black] (2,2) circle (5pt)  {};\end{tikzpicture}};
\node () at (-3,5){\begin{tikzpicture}[scale=0.25,baseline=0pt]
\draw[blue, thick] (0,-1) -- (0,0);
\draw[blue, thick] (0,0) -- (3,3);
\draw[blue, thick] (0,2) -- (1,3);
\draw[blue, thick] (1,1) -- (-1,3);
\draw[blue, thick] (0,0) -- (-3,3);
\filldraw[black] (0,0) circle (5pt)  {};
\filldraw[black] (1,1) circle (5pt)  {};
\filldraw[black] (0,2) circle (5pt)  {};\end{tikzpicture}};
\node () at (-3,10){\begin{tikzpicture}[scale=0.25,baseline=0pt]
\draw[blue, thick] (0,-1) -- (0,0);
\draw[blue, thick] (0,0) -- (3,3);
\draw[blue, thick] (-1,1) -- (1,3);
\draw[blue, thick] (0,2) -- (-1,3);
\draw[blue, thick] (0,0) -- (-3,3);
\filldraw[black] (0,0) circle (5pt)  {};
\filldraw[black] (-1,1) circle (5pt)  {};
\filldraw[black] (0,2) circle (5pt)  {};\end{tikzpicture}};
\node () at (3,7.5){\begin{tikzpicture}[scale=0.25,baseline=0pt]
\draw[blue, thick] (0,-1) -- (0,0);
\draw[blue, thick] (0,0) -- (3,3);
\draw[blue, thick] (2,2) -- (1,3);
\draw[blue, thick] (-2,2) -- (-1,3);
\draw[blue, thick] (0,0) -- (-3,3);
\filldraw[black] (0,0) circle (5pt)  {};
\filldraw[black] (2,2) circle (5pt)  {};
\filldraw[black] (-2,2) circle (5pt)  {};\end{tikzpicture}};
\node () at (0,15){\begin{tikzpicture}[scale=0.25,baseline=0pt]
\draw[blue, thick] (0,-1) -- (0,0);
\draw[blue, thick] (0,0) -- (3,3);
\draw[blue, thick] (-1,1) -- (1,3);
\draw[blue, thick] (-2,2) -- (-1,3);
\draw[blue, thick] (0,0) -- (-3,3);
\filldraw[black] (0,0) circle (5pt)  {};
\filldraw[black] (-1,1) circle (5pt)  {};
\filldraw[black] (-2,2) circle (5pt)  {};\end{tikzpicture}};
\end{tikzpicture}};
\end{tikzpicture}
\end{center}
\caption{Planar Tamari order on trees with $4$ leaves.}\label{fig:PTamari4}
\end{figure}

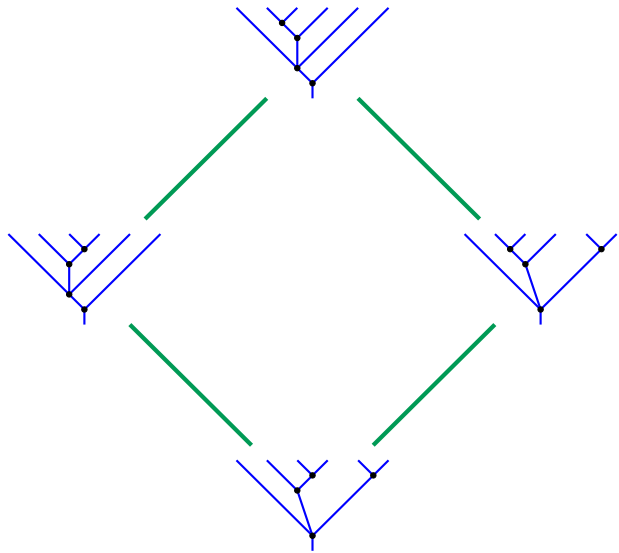
\begin{figure}
\begin{center}
\begin{tikzpicture}
\node at (0,0) {\begin{tikzpicture}
\node () at (0,0){\begin{tikzpicture}[scale=0.2,baseline=0pt]
\draw[blue, thick] (0,-1) -- (0,0);
\draw[blue, thick] (0,0) -- (5,5);
\draw[blue, thick] (4,4) -- (3,5);
\draw[blue, thick] (-1,3) -- (1,5);
\draw[blue, thick] (0,4) -- (-1,5);
\draw[blue, thick] (-1,3) -- (-3,5);
\draw[blue, thick] (0,0) -- (-5,5);
\draw[blue, thick] (0,0) -- (-1,3);
\filldraw[black] (0,0) circle (5pt)  {};
\filldraw[black] (4,4) circle (5pt)  {};
\filldraw[black] (-1,3) circle (5pt)  {};
\filldraw[black] (0,4) circle (5pt)  {};\end{tikzpicture}};
\node () at (-3,3){\begin{tikzpicture}[scale=0.2,baseline=0pt]
\draw[blue, thick] (0,-1) -- (0,0);
\draw[blue, thick] (0,0) -- (5,5);
\draw[blue, thick] (-1,1) -- (3,5);
\draw[blue, thick] (-1,3) -- (1,5);
\draw[blue, thick] (0,4) -- (-1,5);
\draw[blue, thick] (-1,3) -- (-3,5);
\draw[blue, thick] (0,0) -- (-5,5);
\draw[blue, thick] (-1,1) -- (-1,3);
\filldraw[black] (0,0) circle (5pt)  {};
\filldraw[black] (-1,1) circle (5pt)  {};
\filldraw[black] (-1,3) circle (5pt)  {};
\filldraw[black] (0,4) circle (5pt)  {};\end{tikzpicture}};
\node () at (3,3){\begin{tikzpicture}[scale=0.2,baseline=0pt]
\draw[blue, thick] (0,-1) -- (0,0);
\draw[blue, thick] (0,0) -- (5,5);
\draw[blue, thick] (4,4) -- (3,5);
\draw[blue, thick] (-1,3) -- (1,5);
\draw[blue, thick] (-2,4) -- (-1,5);
\draw[blue, thick] (-1,3) -- (-3,5);
\draw[blue, thick] (0,0) -- (-5,5);
\draw[blue, thick] (0,0) -- (-1,3);
\filldraw[black] (0,0) circle (5pt)  {};
\filldraw[black] (4,4) circle (5pt)  {};
\filldraw[black] (-1,3) circle (5pt)  {};
\filldraw[black] (-2,4) circle (5pt)  {};\end{tikzpicture}};
\node () at (0,6){\begin{tikzpicture}[scale=0.2,baseline=0pt]
\draw[blue, thick] (0,-1) -- (0,0);
\draw[blue, thick] (0,0) -- (5,5);
\draw[blue, thick] (-1,1) -- (3,5);
\draw[blue, thick] (-1,3) -- (1,5);
\draw[blue, thick] (-2,4) -- (-1,5);
\draw[blue, thick] (-1,3) -- (-3,5);
\draw[blue, thick] (0,0) -- (-5,5);
\draw[blue, thick] (-1,1) -- (-1,3);
\filldraw[black] (0,0) circle (5pt)  {};
\filldraw[black] (-1,1) circle (5pt)  {};
\filldraw[black] (-1,3) circle (5pt)  {};
\filldraw[black] (-2,4) circle (5pt)  {};\end{tikzpicture}};

\draw[ForestGreen,ultra thick] (0.8,0.8) -- (2.4,2.4);
\draw[ForestGreen,ultra thick] (-0.8,0.8) -- (-2.4,2.4);
\draw[ForestGreen,ultra thick] (-2.2,3.8) -- (-0.6,5.4);
\draw[ForestGreen,ultra thick] (2.2,3.8) -- (0.6,5.4);
\end{tikzpicture}};
\end{tikzpicture}
\end{center}
\caption{Another connected component of planar Tamari order.}\label{fig:PTamari5}
\end{figure}

\begin{Proposition}\label{prop:interval_multiplication_TSYM}
Let $F_s$ and $F_t$ be two basis elements in $\TSym$, and let $F_s^*$, $F_t^*$ be their dual basis elements in the graded dual $\TSym^*$. Then,
$$m(F_s^*\otimes F_t^*)=\sum_{s\backslash t\leq_{PT} r\leq_{PT} s/t}F_r^*.$$
\end{Proposition}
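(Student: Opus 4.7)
The dual product is controlled by the coproduct: the coefficient of $F_r^\ast$ in $m(F_s^\ast\otimes F_t^\ast)$ equals the multiplicity of $F_s\otimes F_t$ in $\Delta(F_r)$. By definition of $\Delta=\Delta^B$, this multiplicity counts allowable splittings of $r$ yielding the ordered pair $(s,t)$; since an allowable splitting is determined by its leaf position and must occur at position $\deg s+1$ to yield $(s,t)$, the coefficient lies in $\{0,1\}$. The proposition therefore reduces to the set-theoretic identity
\[
R(s,t):=\bigl\{r\in\PT_{\deg s+\deg t}\,:\,r\text{ admits an allowable splitting yielding }(s,t)\bigr\}\;=\;[s\backslash t,\,s/t]_{PT}.
\]

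For $r\in R(s,t)$, I would analyze the cut path from leaf $\deg s+1$ up to the root of $r$. Allowability forces the path at each internal node to enter from either its leftmost child (an \emph{$L$-entry}) or its rightmost child (an \emph{$R$-entry}), and reading these labels from root to leaf yields a word $w\in\{L,R\}^\ast$. Tracing the deconcatenation shows that the $R$-subsequence rebuilds the right spine of $s$ together with its left-hanging subtrees, while the $L$-subsequence rebuilds the left spine of $t$ together with its right-hanging subtrees. This sets up a bijection between $R(s,t)$ and the set of shuffles of the right spine of $s$ with the left spine of $t$. Direct inspection of the definitions of $/$ and $\backslash$ shows that $s/t$ corresponds to the all-$L$'s-then-all-$R$'s shuffle (read root to leaf) while $s\backslash t$ corresponds to the opposite extreme; both endpoints therefore lie in $R(s,t)$.

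The main obstacle is to verify that the planar Tamari order restricted to $R(s,t)$ is exactly the adjacent-swap order on spine shuffles, from which the equality $R(s,t)=[s\backslash t,\,s/t]_{PT}$ will follow. The heart of the argument is a case analysis of a left rotation applied to $r\in R(s,t)$: a rotation performed off the cut path modifies either the $s$-side or the $t$-side of the splitting and therefore produces an element of some $R(s',t')$ with $(s',t')\neq(s,t)$; a rotation that swaps an adjacent root-to-leaf $LR$-pattern on the cut path to $RL$ leaves the pair $(s,t)$ unchanged and realizes a planar Tamari cover inside $R(s,t)$; and any other rotation along the cut path destroys allowability. Combining this with a chain-length comparison---the induced swap lattice on $R(s,t)$ has saturated chains of length equal to the product of the two spine lengths, matching the maximum length of a saturated chain in $[s\backslash t,s/t]_{PT}$---identifies the induced poset with the full Tamari interval. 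The delicate step is the rotation case analysis itself: unlike a binary rotation, a single planar left rotation redistributes many children simultaneously, so careful bookkeeping is required to confirm that only the ``clean'' adjacent-swap rotations preserve the allowability of the splitting into $(s,t)$ and that every planar Tamari cover between two elements of $R(s,t)$ arises in this way.
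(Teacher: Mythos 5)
Your reduction to the coproduct, the characterization of allowability (every node on the cut path must be entered from its leftmost or rightmost child), the resulting bijection between $R(s,t)$ and shuffles of the right spine of $s$ with the left spine of $t$, and the identification of $s/t$ and $s\backslash t$ as the two extreme shuffles are all sound. Note that this is a genuinely different route from the paper, which never analyzes planar rotations directly: it proves the analogous interval formula in $\STSym$ using inversion sets and the description of connected components of the planar weak order on packed words (Lemma \ref{lem:iinv}, Theorem \ref{thm:lpw}, Proposition \ref{prop:interval_multiplication_STSYM}), and then transfers the statement to $\TSym$ through the Galois connection $(\pi_{\ptree},\iota_{\gsp})$ in Corollary \ref{cor:lst}.

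However, your plan has two gaps. First, the claimed trichotomy for rotations is incorrect: no left rotation applied to an $r\in R(s,t)$ destroys allowability. If $x$ and its rightmost child $y$ are consecutive path nodes with root-to-leaf pattern $RR$, the rotation keeps the splitting allowable and lands in $R(s',t)$ where $s\lessdot_{PT} s'$; a rotation at an $L$-node of the path lands in $R(s,t')$ with $t\lessdot_{PT} t'$; only the $RL\to LR$ case (you have the direction reversed --- left rotations go \emph{up} and convert root-to-leaf $RL$ to $LR$, consistent with $s/t=L^qR^p$ being the top) preserves $(s,t)$. Second, and more seriously, the concluding chain-length comparison cannot deliver the containment $[s\backslash t,\,s/t]_{PT}\subseteq R(s,t)$: a subposet can share both extremes and the maximal chain length of an ambient interval without being equal to it (consider $\{\emptyset,\{1\},\{1,2\}\}$ inside the Boolean lattice $B_2$), and you would in any case need to know the chain lengths of the Tamari interval in advance. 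The correct way to close the argument from your setup is monotonicity: the corrected case analysis shows every cover $r\lessdot r'$ with $r\in R(s,t)$ has $r'\in R(s',t')$ with $s\leq_{PT} s'$ and $t\leq_{PT} t'$ (and dually for cocovers), so for any $r$ in the interval a saturated chain from $s\backslash t$ up to $r$ forces $r\in R(s',t')$ with $(s',t')\geq (s,t)$ componentwise, while a saturated chain from $r$ up to $s/t$ forces $(s',t')\leq(s,t)$; hence $r\in R(s,t)$. With that replacement your argument goes through.
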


The proof will be given as Corollary \ref{cor:lst}, after introducing a partial order on generalized Stirling permutations.

\begin{Definition}
The monomial basis of $\TSym$ are defined as the elements satisfying $\displaystyle F_s=\sum_{t\geq_{PT} s}M_t$. The monomial basis is unique by triangularity and it forms a basis via M\"{o}bius inversion.
\end{Definition}

\begin{Definition}
Let $t$ be a planar tree of degree $n$. Its \textbf{global descent} set is $\GD(t)=\{i\in[n-1]:t=s/r\text{ for some }s,r\text{ and }\deg(s)=i\}$.
\end{Definition}

\begin{Proposition} \label{prop:coproduct-monomial-tsym} The coproduct of
monomial basis elements of $\TSym$ is given by 
\[
\Delta_{+}(M_{t})=\sum_{i\in\GD(f)}M_{{}^{i}t}\otimes M_{t^{i}}.
\]
\end{Proposition}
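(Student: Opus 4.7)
The plan is to derive the formula as a direct application of Theorem \ref{thm:monomialcoproduct}; it therefore suffices to verify that $\TSym$, equipped with the planar Tamari order of Definition \ref{def:lpt}, satisfies the coproduct axioms ($\Delta$1)--($\Delta$3). Once these axioms are in place, the stated formula is immediate from \eqref{eq:mcoproductformula}, since the global descent set $\GD(t)$ defined in the axiomatic framework (via $t={}^{i}t/t^{i}$) coincides with the global descent set of Definition \ref{def:lpt}, where $/$ is the grafting of Definition \ref{def:graft-trees}.

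The cleanest route to the axioms is by transfer from $\STSym$ using Theorem \ref{thm:monomialquotient} applied to the projection $\Pi_{\ptree}:\STSym\to\TSym$ which forgets the Stirling labeling. In Section \ref{sec:STSYM} we will establish the axioms directly for $\STSym$ with its planar weak order; granting this, the transfer hypotheses reduce to the following. Axiom ($\pi$0) is provided by Proposition \ref{prop:galois_connection}: the section $\iota_{gsp}$ sending a planar tree to its unique $213$-avoiding Stirling permutation realises $\iota_{gsp}(t)=\max\{u\in\GSP:\pi_{\ptree}(u)=t\}$, and the Galois connection gives the required order-preservation. Axioms ($\pi\Delta$1) and ($\pi\Delta$2) hold because both coproducts are defined by allowable lightening-splittings and $\pi_{\ptree}$ commutes with such splittings by Proposition \ref{prop:pi-property}; the allowable positions depend only on the underlying planar tree, so $\allow(u)=\allow(\pi_{\ptree}(u))$.

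Alternatively, one may verify ($\Delta$1)--($\Delta$3) directly on $\TSym$ as follows. For ($\Delta$1), one shows that the set of positions $i$ at which a splitting of $t$ is allowable depends only on the connected component of $t$ in the planar Tamari order: a left rotation as in Figure \ref{fig:Planar_Tamari} is a local rearrangement at a node $x$ and its rightmost child $y$, and a direct check shows that the ``leftmost-or-rightmost exit at every node on the path from leaf $i+1$'' characterisation of allowability is preserved. For ($\Delta$2) it suffices to treat a covering relation $t\lessdot_{PT}t'$ and case-analyse whether the rotated nodes $x,y$ lie in $^{i}t$, in $t^{i}$, or straddle leaf $i+1$; in the straddling case, allowability forces the rotation to descend to a left rotation in one of the two pieces. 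For ($\Delta$3), the tree $t_1/t_2$ of Definition \ref{def:graft-trees} manifestly deconcatenates back to $(t_1,t_2)$, and maximality follows because any other $t$ with $^{i}t=t_1$, $t^i=t_2$ arises by rotations that move branches of $t_1$ across leaf $i+1$ into $t_2$.

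The main obstacle in the direct approach is axiom ($\Delta$2), which requires tracking how a left rotation interacts with a splitting whose lightening path can pass through the rotated nodes; this case analysis is manageable but tedious. Going via the transfer from $\STSym$ avoids this, since at the labelled level the splitting is governed by the label structure and the planar weak order, making monotonicity more transparent; this is the route we take.
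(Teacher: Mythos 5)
Your proposal is correct and follows essentially the same route as the paper: the paper also deduces the formula by transferring the axioms from $\STSym$ to $\TSym$ via Theorem \ref{thm:monomialquotient}, using Proposition \ref{prop:lsb} and Lemma \ref{lem:iota-max} for axiom ($\pi$0) and Proposition \ref{prop:pi-property} for ($\pi\Delta$1)--($\pi\Delta$2). The sketched direct verification of ($\Delta$1)--($\Delta$3) on $\TSym$ is a reasonable aside but is not needed, and you correctly discard it in favour of the transfer argument.
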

\begin{Proposition}\label{prop:product-monomial-tsym} The product
of monomial basis elements of $\TSym$ is given by $M_{s}\cdot M_{t}=\sum_{r}\alpha_{s,t}^{r}M_{r}$,
where $\alpha_{s,t}^{r}=|A_{s,t}^{r}|$ is defined as in (\ref{eq:a-def}).
\end{Proposition}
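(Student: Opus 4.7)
The proof proposal is to invoke the axiomatic machinery of Section \ref{sec:axioms}, specifically Theorem \ref{thm:monomialproduct}, which reduces the statement to verifying axioms ($m$0)-($m$3) (together with the already-needed coproduct axioms) for $\TSym$. Once those axioms hold, the product formula with $\alpha_{s,t}^r = |A_{s,t}^r|$ is automatic.

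Rather than verify ($m$0)-($m$3) directly on $\TSym$, the plan is to exploit the surjective Hopf morphism $\Pi_\ptree\colon \STSym \twoheadrightarrow \TSym$ from Figure \ref{fig:all_hopf_algebra_maps} and apply the transfer result Theorem \ref{thm:monomialquotient}. Concretely, first I would establish the axioms directly on $\STSym$ (this is where the bulk of the combinatorial work lives, and is carried out in Section \ref{sec:STSYM} toward Proposition \ref{prop:product-monomial-stsym}). Then to push the axioms down to $\TSym$, I need to check the three transfer hypotheses: ($\pi$0), which requires that $\pi_\ptree\colon \GSP \to \PT$ admits the order-preserving section $\iota_\gsp$ sending a planar tree to its unique $213$-avoiding Stirling permutation --- this is exactly Proposition \ref{prop:galois_connection}; ($\pi m$1), which demands a bijection between the shuffle sets on corresponding connected components, which follows because in both algebras a shuffle amounts to an allowable lightening splitting followed by a grafting at the leaves of the other tree, and this operation is blind to the labels; and ($\pi m$2), the compatibility $\pi_\ptree(\zeta(u,v)) = \bar\zeta(\pi_\ptree(u), \pi_\ptree(v))$, which is a direct consequence of Proposition \ref{prop:pi-property}(4).

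Given these three hypotheses together with the coproduct side ($\pi\Delta$1)--($\pi\Delta$2), Theorem \ref{thm:monomialquotient} delivers ($m$0)-($m$3) on $\TSym$ automatically: the least upper bounds required by ($m$0) exist by Lemma \ref{lem:pipreserve-leastupperbound} applied to the connected components of $\PT$; the fundamental multiplication in $\TSym$ is exactly the shuffle sum by construction, giving ($m$1); and the monotonicity properties ($m$2) and ($m$3) descend from $\STSym$ because $\pi_\ptree$ is a poset morphism preserving joins within connected components. With ($m$0)-($m$3) in hand, Theorem \ref{thm:monomialproduct} immediately yields the claimed formula $M_s \cdot M_t = \sum_r \alpha_{s,t}^r M_r$.

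The step I expect to be the main obstacle is the clean verification of axiom ($m$3) upstairs in $\STSym$, since it requires understanding how joins in the planar weak order interact with allowable lightening splittings and shuffles on generalized Stirling permutations; once that is in hand, the descent to $\TSym$ is essentially formal. A secondary subtlety is that the bijection between shuffle sets for ($\pi m$1) must be verified component-by-component, since shuffles in this paper are indexed by pairs of components rather than by $\PT$ or $\GSP$ themselves; however, the connected components of $\GSP$ are in bijection with those of $\PT$ via $\pi_\ptree$ by ($\pi$0), so this is a bookkeeping check rather than a conceptual one.
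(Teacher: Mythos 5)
Your proposal is correct and follows essentially the same route as the paper: the paper also deduces the result by verifying the axioms of Theorem \ref{thm:monomialquotient} with $\mathcal{H}=\STSym$ and $\bar{\mathcal{H}}=\TSym$, using Proposition \ref{prop:lsb} for ($\pi$0), observing that ($\pi m$1) is clear, and deriving ($\pi m$2) from Proposition \ref{prop:pi-property}, after which Theorem \ref{thm:monomialproduct} gives the formula. Your identification of the real work as the verification of ($m$3) in $\STSym$ (via Lemma \ref{lem:delrptshuffle} and the corresponding axiom in $\SSym$) matches where the paper places it, in the proof of Proposition \ref{prop:product-monomial-stsym}.
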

\begin{Proposition}\label{prop:antipode-monomial-tsym} The
antipode of monomial basis elements in $\TSym$ is given by $\mathcal{S}(M_{t})=(-1)^{\GD(t)+1}\sum_{s}\beta_{t}^{s}M_{s}$
where $\beta_{t}^{s}$ is defined as in (\ref{eq:c-def-GD-2}).
\end{Proposition}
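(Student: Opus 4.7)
The plan is to invoke the general antipode machinery developed in Section~\ref{sec:axioms-antipode}, specifically Theorem~\ref{thm:antipode}, which produces exactly the claimed cancellation-free formula once the axioms ($\Delta$1)--($\Delta$3), ($m$0)--($m$3) and ($\mathcal{S}$0)--($\mathcal{S}$3) are in place. Therefore the entire content of this proposition reduces to verifying these nine axioms for $\TSym$. Rather than checking them directly, I would exploit the surjective Hopf morphism $\Pi_\ptree\colon\STSym\twoheadrightarrow\TSym$ that forgets the labels of a generalized Stirling permutation, together with the transfer theorem (Theorem~\ref{thm:monomialquotient}), which is designed for precisely this kind of bootstrapping.

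First I would confirm that $\STSym$ itself satisfies the full axiom list; this is Propositions~\ref{prop:coproduct-monomial-stsym}--\ref{prop:antipode-monomial-stsym} together with their underlying calculations in Section~\ref{sec:STSYM}, which the paper proves directly using the weak order on packed words. Once that is in hand, I would apply Theorem~\ref{thm:monomialquotient} to the forgetful projection $\pi_\ptree\colon\GSP\to\PT$, checking the transfer axioms ($\pi$0), ($\pi\Delta$1)--($\pi\Delta$2), ($\pi m$1)--($\pi m$2), and ($\pi\mathcal{S}$1)--($\pi\mathcal{S}$2). The conditions ($\pi\Delta$1)--($\pi\Delta$2) and ($\pi m$1)--($\pi m$2) follow from the compatibility of deconcatenation, grafting, and allowable splittings with the forgetful map $\pi$, which is the content of Proposition~\ref{prop:pi-property}; axiom ($\pi\mathcal{S}$2) is a special case of this same list, while ($\pi\mathcal{S}$1) is the planar-tree analogue of Proposition~\ref{prop:pi-property-bin}(4) and will be established in Section~\ref{sec:STSYM} (Lemma~\ref{lem:iota-gd}).

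The technical heart of the argument is axiom ($\pi$0): I would show that for every planar tree $t\in\PT$ the preimage $\pi_\ptree^{-1}(t)\subseteq\GSP$ lies in a single connected component of the planar weak order and admits a unique maximum, namely the $213$-avoiding Stirling permutation $\iota_\gsp(t)$ on $t$. The order-preservation $\iota_\gsp(s)\le\iota_\gsp(t)$ whenever $s\le_{PT}t$ is exactly Proposition~\ref{prop:galois_connection}, so the main thing to check is the maximality claim. This will be the real obstacle: the analogue for binary trees is classical (the section sends $t$ to the max of its fibre in left weak order), but for planar trees one must verify that labelling by the $213$-avoiding rule still yields a Stirling permutation maximal in weak order among all valid labellings, which is a combinatorial statement about how internal-node labels interact with the children-order structure of $t$.

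Once all transfer axioms are verified, Theorem~\ref{thm:monomialquotient} delivers ($\Delta$1)--($\Delta$3), ($m$0)--($m$3), ($\mathcal{S}$0)--($\mathcal{S}$3) for $\TSym$ for free, and Theorem~\ref{thm:antipode} then yields the antipode formula $\mathcal{S}(M_t)=(-1)^{|\GD(t)|+1}\sum_s\beta_t^s M_s$ with $\beta_t^s$ as in~\eqref{eq:c-def-GD-2}. In particular, all surviving coefficients share the sign $(-1)^{|\GD(t)|+1}$, giving the desired cancellation-free, grouping-free description.
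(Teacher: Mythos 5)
Your proposal matches the paper's own proof essentially step for step: the paper also derives this proposition by verifying the transfer axioms of Theorem~\ref{thm:monomialquotient} for the projection $\Pi_\ptree\colon\STSym\to\TSym$, citing Propositions~\ref{prop:coproduct-monomial-stsym}--\ref{prop:antipode-monomial-stsym} for the axioms on $\STSym$, Proposition~\ref{prop:lsb} and Lemma~\ref{lem:iota-max} for ($\pi$0), and Proposition~\ref{prop:pi-property} together with Lemma~\ref{lem:iota-gd} for the remaining $\pi$-axioms. You correctly identify the maximality of the $213$-avoiding labelling (Lemma~\ref{lem:iota-max}) as the key technical point underlying ($\pi$0).
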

\begin{proof}[Proof of Propositions \ref{prop:coproduct-monomial-tsym}-\ref{prop:antipode-monomial-tsym}]
We check the axioms of Theorem \ref{thm:monomialquotient}, for the
case $\bar{\mathcal{H}}=\TSym$ and $\mathcal{H}=\STSym$, the algebra
of generalized Stirling permutations of the next section. This would
provide the axioms for Theorems \ref{thm:monomialcoproduct}, \ref{thm:monomialproduct}
and \ref{thm:antipode}, which give respectively the results above.

The proofs of Propositions \ref{prop:coproduct-monomial-stsym}, \ref{prop:product-monomial-stsym}
and \ref{prop:antipode-monomial-stsym} below show that $\STSym$
satisfies all necessary axioms for Theorem \ref{thm:monomialquotient}.
Proposition \ref{prop:lsb} checks the conditions on $\pi$ and $\iota$,
and axiom ($\pi$0). It is clear that $\TSym$ satisfies axioms ($\pi\Delta$1)
and ($\pi m$1), and axioms ($\pi\Delta$2), ($\pi m$2), ($\pi\mathcal{S}$1)
and ($\pi\mathcal{S}$2) follow from Propositions \ref{prop:pi-property},
and Lemma \ref{lem:iota-gd}.
\end{proof}

\begin{Remark}
	When all internal nodes of a tree $t$ have $m+1$ children, we obtain $(m+1)$-ary trees. Restricting our planar Tamari order to $(m+1)$-ary trees gives a partial order that coarsens the $m$-Tamari order defined in \cite{BP12}, and the associated Hopf subalgebra of $\TSym$ is isomorphic to the one defined in \cite{NT20}. Moreover, for every given subset of positive integers $A \subseteq \mathbb{P}$ we obtain a Hopf subalgebra of $\TSym$ restricting to trees whose internal nodes are all in $A$. This is  related to a partition, studied by Ceballos and Gonz\'alez D'Le\'on in \cite{CG19}, of the set of planar trees  in terms of compositions, which they call \emph{signatures}. An open question is to find further Hopf constructions that are compatible with the notion of signature. 
\end{Remark}

\section{Hopf algebra of generalized Stirling permutations}\label{sec:STSYM}

\subsection{Generalized Stirling permutations and packed words}\label{sec:gsp}

In this section, we extend $\TSym$ to the context of generalized Stirling permutations, and we study their connections with packed words and ordered set compositions.

\begin{Definition}
	A \emph{generalized Stirling permutation} $u$ of degree $n$ is a labeled tree with $n+1$ leaves and $\kappa_u$ is a bijection from the set of internal nodes to $\{1,2,\dots,\ideg(u)\}$ such that the label of each internal node is less than the labels of its children. The set of generalized Stirling permutations is denoted by $\GSP$. See for example Figure~\ref{fig:GS3}.
\end{Definition}

\begin{figure}
\begin{center}
	\begin{tikzpicture}[scale=0.2,baseline=0pt]
	\draw[blue, thick] (0,-1) -- (0,0);
	\draw[blue, thick] (0,0) -- (3,3);
	\draw[blue, thick] (0,0) -- (-3,3);
	\draw[blue, thick] (0,0) -- (-1,3);
	\draw[blue, thick] (0,0) -- (1,3);
	\filldraw[black] (0,0) circle (5pt)  {};
	\node at (-1,1.5)[font=\fontsize{7pt}{0}]{$1$};
	\node at (0,1.5)[font=\fontsize{7pt}{0}]{$1$};
	\node at (1,1.5)[font=\fontsize{7pt}{0}]{$1$};
	\end{tikzpicture}
	
	\begin{tikzpicture}[scale=0.2,baseline=0pt]
	\draw[blue, thick] (0,-1) -- (0,0);
	\draw[blue, thick] (0,0) -- (3,3);
	\draw[blue, thick] (0,0) -- (-3,3);
	\draw[blue, thick] (0,0) -- (-1,3);
	\draw[blue, thick] (2,2) -- (1,3);
	\filldraw[black] (0,0) circle (5pt)  {};
	\filldraw[black] (2,2) circle (5pt)  {};
	\node at (-1,1.5)[font=\fontsize{7pt}{0}]{$1$};
	\node at (0.5,1.5)[font=\fontsize{7pt}{0}]{$1$};
	\node at (2,2.7)[font=\fontsize{7pt}{0}]{$2$};
	
	\draw[blue, thick] (10,-1) -- (10,0);
	\draw[blue, thick] (10,0) -- (13,3);
	\draw[blue, thick] (10,0) -- (7,3);
	\draw[blue, thick] (10,0) -- (10,2);
	\draw[blue, thick] (10,2) -- (9,3);
	\draw[blue, thick] (10,2) -- (11,3);
	\filldraw[black] (10,0) circle (5pt)  {};
	\filldraw[black] (10,2) circle (5pt)  {};
	\node at (9.5,1)[font=\fontsize{7pt}{0}]{$1$};
	\node at (10.5,1)[font=\fontsize{7pt}{0}]{$1$};
	\node at (10,2.7)[font=\fontsize{7pt}{0}]{$2$};
	
	\draw[blue, thick] (20,-1) -- (20,0);
	\draw[blue, thick] (20,0) -- (23,3);
	\draw[blue, thick] (20,0) -- (17,3);
	\draw[blue, thick] (20,0) -- (21,3);
	\draw[blue, thick] (18,2) -- (19,3);
	\filldraw[black] (20,0) circle (5pt)  {};
	\filldraw[black] (18,2) circle (5pt)  {};
	\node at (21,1.5)[font=\fontsize{7pt}{0}]{$1$};
	\node at (19.5,1.5)[font=\fontsize{7pt}{0}]{$1$};
	\node at (18,2.7)[font=\fontsize{7pt}{0}]{$2$};
	
	\draw[blue, thick] (30,-1) -- (30,0);
	\draw[blue, thick] (30,0) -- (33,3);
	\draw[blue, thick] (30,0) -- (27,3);
	\draw[blue, thick] (29,1) -- (31,3);
	\draw[blue, thick] (29,1) -- (29,3);
	\filldraw[black] (30,0) circle (5pt)  {};
	\filldraw[black] (29,1) circle (5pt)  {};
	\node at (30,0.7)[font=\fontsize{7pt}{0}]{$1$};
	\node at (28.5,2)[font=\fontsize{7pt}{0}]{$2$};
	\node at (29.5,2)[font=\fontsize{7pt}{0}]{$2$};
	
	\draw[blue, thick] (40,-1) -- (40,0);
	\draw[blue, thick] (40,0) -- (37,3);
	\draw[blue, thick] (40,0) -- (43,3);
	\draw[blue, thick] (41,1) -- (41,3);
	\draw[blue, thick] (41,1) -- (39,3);
	\filldraw[black] (40,0) circle (5pt)  {};
	\filldraw[black] (41,1) circle (5pt)  {};
	\node at (40,0.7)[font=\fontsize{7pt}{0}]{$1$};
	\node at (40.5,2)[font=\fontsize{7pt}{0}]{$2$};
	\node at (41.5,2)[font=\fontsize{7pt}{0}]{$2$};
	\end{tikzpicture}
	
	\begin{tikzpicture}[scale=0.2,baseline=0pt]
	\draw[blue, thick] (0,-1) -- (0,0);
	\draw[blue, thick] (0,0) -- (3,3);
	\draw[blue, thick] (0,0) -- (-3,3);
	\draw[blue, thick] (1,1) -- (-1,3);
	\draw[blue, thick] (2,2) -- (1,3);
	\filldraw[black] (0,0) circle (5pt)  {};
	\filldraw[black] (1,1) circle (5pt)  {};
	\filldraw[black] (2,2) circle (5pt)  {};
	\node at (0,0.7)[font=\fontsize{7pt}{0}]{$1$};
	\node at (1,1.7)[font=\fontsize{7pt}{0}]{$2$};
	\node at (2,2.7)[font=\fontsize{7pt}{0}]{$3$};
	
	\draw[blue, thick] (10,-1) -- (10,0);
	\draw[blue, thick] (10,0) -- (13,3);
	\draw[blue, thick] (10,0) -- (7,3);
	\draw[blue, thick] (11,1) -- (9,3);
	\draw[blue, thick] (10,2) -- (11,3);
	\filldraw[black] (10,0) circle (5pt)  {};
	\filldraw[black] (11,1) circle (5pt)  {};
	\filldraw[black] (10,2) circle (5pt)  {};
	\node at (10,0.7)[font=\fontsize{7pt}{0}]{$1$};
	\node at (11,1.7)[font=\fontsize{7pt}{0}]{$2$};
	\node at (10,2.7)[font=\fontsize{7pt}{0}]{$3$};
	
	\draw[blue, thick] (20,-1) -- (20,0);
	\draw[blue, thick] (20,0) -- (23,3);
	\draw[blue, thick] (20,0) -- (17,3);
	\draw[blue, thick] (22,2) -- (21,3);
	\draw[blue, thick] (18,2) -- (19,3);
	\filldraw[black] (20,0) circle (5pt)  {};
	\filldraw[black] (22,2) circle (5pt)  {};
	\filldraw[black] (18,2) circle (5pt)  {};
	\node at (20,0.7)[font=\fontsize{7pt}{0}]{$1$};
	\node at (18,2.7)[font=\fontsize{7pt}{0}]{$2$};
	\node at (22,2.7)[font=\fontsize{7pt}{0}]{$3$};
	
	\draw[blue, thick] (30,-1) -- (30,0);
	\draw[blue, thick] (30,0) -- (33,3);
	\draw[blue, thick] (30,0) -- (27,3);
	\draw[blue, thick] (32,2) -- (31,3);
	\draw[blue, thick] (28,2) -- (29,3);
	\filldraw[black] (30,0) circle (5pt)  {};
	\filldraw[black] (32,2) circle (5pt)  {};
	\filldraw[black] (28,2) circle (5pt)  {};
	\node at (30,0.7)[font=\fontsize{7pt}{0}]{$1$};
	\node at (32,2.7)[font=\fontsize{7pt}{0}]{$2$};
	\node at (28,2.7)[font=\fontsize{7pt}{0}]{$3$};
	
	\draw[blue, thick] (40,-1) -- (40,0);
	\draw[blue, thick] (40,0) -- (43,3);
	\draw[blue, thick] (40,0) -- (37,3);
	\draw[blue, thick] (39,1) -- (41,3);
	\draw[blue, thick] (40,2) -- (39,3);
	\filldraw[black] (40,0) circle (5pt)  {};
	\filldraw[black] (39,1) circle (5pt)  {};
	\filldraw[black] (40,2) circle (5pt)  {};
	\node at (40,0.7)[font=\fontsize{7pt}{0}]{$1$};
	\node at (39,1.7)[font=\fontsize{7pt}{0}]{$2$};
	\node at (40,2.7)[font=\fontsize{7pt}{0}]{$3$};
	
	\draw[blue, thick] (50,-1) -- (50,0);
	\draw[blue, thick] (50,0) -- (47,3);
	\draw[blue, thick] (50,0) -- (53,3);
	\draw[blue, thick] (49,1) -- (51,3);
	\draw[blue, thick] (48,2) -- (49,3);
	\filldraw[black] (50,0) circle (5pt)  {};
	\filldraw[black] (49,1) circle (5pt)  {};
	\filldraw[black] (48,2) circle (5pt)  {};
	\node at (50,0.7)[font=\fontsize{7pt}{0}]{$1$};
	\node at (49,1.7)[font=\fontsize{7pt}{0}]{$2$};
	\node at (48,2.7)[font=\fontsize{7pt}{0}]{$3$};
	\end{tikzpicture}
	\caption{Generalized Stirling permutations of degree 3.}\label{fig:GS3}
	\end{center}
\end{figure}
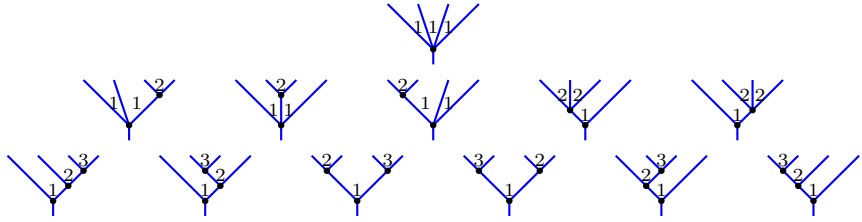

For each internal node $x$, we give the label $\kappa(x)$ to all regions between its children. Let $u$ be a generalized Stirling permutation of degree $n$, by reading the labels of the regions from left to right, we get a word $\mathbf{w}(u)=\mathbf{w}(u)_1\cdots \mathbf{w}(u)_n$. For example, let $u=$\begin{tikzpicture}[scale=0.2,baseline=0pt]
\draw[blue, thick] (0,-1) -- (0,0);
\draw[blue, thick] (0,0) -- (7,7);
\draw[blue, thick] (0,0) -- (-7,7);
\draw[blue, thick] (-5,5) -- (-3,7);
\draw[blue, thick] (-4,6) -- (-5,7);
\draw[blue, thick] (0,0) -- (1,5);
\draw[blue, thick] (1,5) -- (-1,7);
\draw[blue, thick] (1,5) -- (1,7);
\draw[blue, thick] (1,5) -- (3,7);
\draw[blue, thick] (6,6) -- (5,7);
\filldraw[black] (0,0) circle (5pt)  {};
\filldraw[black] (1,5) circle (5pt)  {};
\filldraw[black] (6,6) circle (5pt)  {};
\filldraw[black] (-5,5) circle (5pt)  {};
\filldraw[black] (-4,6) circle (5pt)  {};
\node at (-0.35,1.2)[font=\fontsize{7pt}{0}]{$1$};
\node at (0.55,1.2)[font=\fontsize{7pt}{0}]{$1$};
\node at (0.55,6.2)[font=\fontsize{7pt}{0}]{$3$};
\node at (1.45,6.2)[font=\fontsize{7pt}{0}]{$3$};
\node at (6,6.8)[font=\fontsize{7pt}{0}]{$2$};
\node at (-4,6.8)[font=\fontsize{7pt}{0}]{$5$};
\node at (-5,5.8)[font=\fontsize{7pt}{0}]{$4$};
\end{tikzpicture}. Then $\mathbf{w}(u)=4513312$. By abuse of notation, we use $u_i$ to mean $\mathbf{w}(u)_i$ for generalized Stirling permutation $u$.

\begin{Remark}
	Via $\mathbf{w}$, the set of generalized Stirling permutations in which all internal nodes have $3$ children is in bijection with the set of Stirling permutations defined in \cite{GS78}.
\end{Remark}

\begin{Definition}
	A word $w=w_1w_2\cdots w_n$ is a packed word if $w_i$ is a positive integer for $i=1,\dots, n$ and for each $1\leq a<w_i$, there exists a $j$ such that $w_j=a$. The set of packed words is denoted by $\PW$.
\end{Definition}

\begin{Remark}
	A packed word $w$ can be seen as a surjection $f_w\colon [n]\to[k]$  where $f_w(i)=w_i$ for some $k$. The sequence  $w^{-1}=(f_w^{-1}(1),f_w^{-1}(2), \ldots,f^{-1}(k))$ gives us an ordered set partition of $[n]$ with non-empty parts. We call such ordered set partition a {\sl set composition}. We thus have a bijection $w\leftrightarrow w^{-1}$  between pack words with maximal entry $k$ and set compositions with $k$ parts. See for example Figure~\ref{fig:PW_SC}. This bijection extend the notion of $w^{-1}$ for permutations. 
\end{Remark}

\begin{figure}
$$\begin{array}{llllllcllllll}
	&&&&&&w\leftrightarrow w^{-1}&&&&&&\\
		 111&&&&&&&&&&&& 123\\ \\
	 112 &121 &  211 &  221 &  122 &  212   & & 12|3 &  13|2 &  23|1 &  3|12  & 1|23 &  2|13\\ \\
         123&  132&  213&  312&  231&  321& & 1|2|3& 1|3|2&  2|1|3&  2|3|1&  3|1|2&  3|2|1\\
\end{array}$$
	\caption{\sl Packed words of length $3$ and their corresponding set compositions on the right. We use the shorthand notation $13|2$ instead of $(\{1,3\},\{2\})$}
	\label{fig:PW_SC}
\end{figure}

\begin{Definition}
	A word $w$ is said to be \emph{$212$-avoiding} if whenever $w_i=w_k$ and $i<j<k$, we must have $w_j\ge w_i$. A word $w$ is said to be \emph{$213$-avoiding} if whenever $i<j<k$, we must not have $w_k>w_i>w_j$.
\end{Definition}

\begin{Lemma}\label{lem:212}
	A packed word $w$ is $212$-avoiding if and only if there is a generalized Stirling permutation $u$ such that $w=\mathbf{w}(u)$.
\end{Lemma}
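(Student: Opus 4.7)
\textbf{Proof plan for Lemma \ref{lem:212}.} The plan is to establish the two directions by matching the recursive tree structure of a generalized Stirling permutation to the ``blocks of minima'' structure forced on a $212$-avoiding packed word. I would first handle the forward direction, then give a recursive construction for the converse.

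For the forward direction, let $u \in \GSP$ and set $w = \mathbf{w}(u)$. The word $w$ is packed because the labeling $\kappa_u$ is a bijection to $\{1,\dots,\ideg(u)\}$ and every internal node has at least two children, so each label $\ell \in \{1,\dots,\ideg(u)\}$ contributes at least one copy of $\ell$ to $w$. To verify $212$-avoidance, suppose $w_i = w_k = a$ with $i<k$, and let $x$ be the internal node with $\kappa_u(x)=a$; the positions $i$ and $k$ both lie in regions between consecutive children of $x$. Any position $j$ strictly between $i$ and $k$ in the reading of $w$ corresponds either to another region of $x$ (giving $w_j = a$) or to a region strictly inside one of the subtrees rooted at an intermediate child of $x$, in which case $w_j$ is the label of a proper descendant of $x$ and hence exceeds $a$. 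Either way $w_j \geq a = w_i$, so $w$ is $212$-avoiding.

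For the converse, I would proceed by induction on the length of $w$. Given a nonempty $212$-avoiding packed word $w$ with maximum value $m$, the minimum value $1$ appears, say at positions $p_1 < \cdots < p_k$. A key preliminary observation, to be recorded as a short lemma, is that $212$-avoidance forces all occurrences of any given value $v$ to lie inside a single one of the $k+1$ sub-words $w^{(0)},\dots,w^{(k)}$ obtained by cutting $w$ at the positions of $1$: otherwise one finds an occurrence of $v$, then of some value $v' < v$ (for instance $1$), then of $v$ again, producing a $212$-pattern. Consequently, the label sets of the sub-words partition $\{2,\dots,m\}$, and each standardized sub-word $\std(w^{(i)})$ is a shorter $212$-avoiding packed word. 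By induction each $\std(w^{(i)})$ equals $\mathbf{w}(u_i)$ for some generalized Stirling permutation $u_i$; re-inflating the labels of $u_i$ back to their original values in $\{2,\dots,m\}$ (still increasing along every root-to-leaf path, since standardization preserves relative order) yields labeled trees $\tilde u_i$. I then form $u$ by attaching $\tilde u_i$ at the $(i+1)$-st leaf of a root with label $1$ and $k+1$ children, and empty $\tilde u_i$ just gives a leaf.

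To finish, one checks two items: first, that $u$ is a genuine generalized Stirling permutation, which amounts to noting that the root has $\geq 2$ children (it has $k+1 \geq 2$), that the label $1$ is strictly less than all labels in any $\tilde u_i$ (which are in $\{2,\dots,m\}$), and that $\kappa_u$ is a bijection to $\{1,\dots,m\}$ (immediate from the label partition above); second, that $\mathbf{w}(u) = w$, which follows by reading regions: the root contributes the $k$ copies of $1$ at the correct positions, and the regions within the $(i+1)$-st child reproduce $w^{(i)}$ by the inductive hypothesis combined with the relabeling. The main subtlety is the preliminary observation that occurrences of each value cluster into a single sub-word, so I would prove it carefully before invoking it in the recursion; once that is in hand, both directions run smoothly.
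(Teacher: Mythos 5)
Your proposal is correct and follows essentially the same route as the paper: the forward direction is a direct check on regions, and the converse is an induction that cuts $w$ at the occurrences of the minimal letter $1$ and grafts the recursively obtained trees onto a root labeled $1$. The only difference is that you make explicit the clustering observation (all occurrences of a fixed value lie in a single block between consecutive $1$'s), which the paper uses implicitly; this is a worthwhile clarification but not a different argument.
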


\begin{proof}
	Clearly, for any generalized Stirling permutation $u$, $\mathbf{w}(u)$ must be $212$-avoiding. Conversely, if a packed word $w$ is $212$-avoiding, we can write $w$ as $W_11W_21\cdots 1W_k$ where $W_i$ are subwords of $w$ that does not contain $1$. Since $W_i$ are $212$-avoiding, by induction on length of the packed words, we have corresponding generalized Stirling permutation $\mathbf{t}(W_i)$ (with labels shifted up). Then $w$ corresponds to the generalized Stirling permutation \begin{tikzpicture}[scale=0.2,baseline=0pt]
	\draw[blue, thick] (0,-1) -- (0,0);
	\draw[blue, thick] (0,0) -- (5,5);
	\draw[blue, thick] (0,0) -- (-1,5);
	\draw[blue, thick] (0,0) -- (0.7,2);
	\draw[blue, thick] (0,0) -- (-5,5);
	\filldraw[black] (0,0) circle (5pt)  {};
	\node at (-0.7,1.3)[font=\fontsize{7pt}{0}]{$1$};
	\node at (-5,5.7)[font=\fontsize{7pt}{0}]{$\mathbf{t}(W_1)$};
	\node at (5,5.7)[font=\fontsize{7pt}{0}]{$\mathbf{t}(W_k)$};
	\node at (-1,5.7)[font=\fontsize{7pt}{0}]{$\mathbf{t}(W_2)$};
	\node at (1,2.7)[font=\fontsize{7pt}{0}]{$\cdots$};
	\end{tikzpicture}.
\end{proof}

\begin{Remark}
 From the characterization in Lemma~\ref{lem:212} we deduce, by removing the largest entry in a generalized Stirling permutation, that the numbers $a_n$ of generalized Stirling permutation with degree $n$ satisfy the recurrence given by $a_0=1$ and
 $$ a_n = \sum_{k=1}^n (n-k+1) a_{n-k}.$$
From this we conclude that  $a_n = (n+1)a_{n-1}$ for $n\ge 1$ and hence $a_n=\frac{(n+1)!}{2}$.
\end{Remark}

Let $u$ be a $212$-avoiding packed word, we denote by $\mathbf{t}(u)$ the unique generalized Stirling permutation such that $\mathbf{w}(\mathbf{t}(u))=u$.

By forgetting the labels, we have a forgetful map $\pi_\ptree:\GSP\to\PT$ (this map agrees with $\pi$ of Proposition \ref{prop:pi-property}). In the other direction, the map $\iota_\gsp:\PT\to\GSP$ sends a tree $t$ to the unique $213$-avoiding generalized Stirling permutation $\iota_\gsp(t)$ such that $\pi_\ptree\circ\iota_\gsp(t)=t$. Indeed, the generalized Stirling permutation $\iota_\gsp(t)$ can be constructed by induction on the number of internal nodes as follows. The root of $t$ has label $1$. Suppose the root of $t$ has $k$ children, $x_1,\dots,x_k$. Let $s_i$ be the subtree of $t$ rooted at $x_i$. Then, we label $s_k$ by $\{2,\dots,\ideg(s_k)+1\}$ such that $s_k$ is $213$-avoiding. Such labeling exists by induction hypothesis. Similarly, we label $s_{k-1}$ by $\{\ideg(s_k)+2,\dots,\ideg(s_{k-1})+\ideg(s_{k})+1\}$ such that $s_{k-1}$ is $213$-avoiding. We keep doing this process to all $s_i$, and we obtain $\iota_\gsp(t)$.
If $t$ is a binary tree, then the $213$-avoiding permutation $\iota_\gsp(t)$ was denoted $\iota(t)$ in Section \ref{sec:ssym}.

Using a similar construction, there is a unique 312-avoiding generalized Stirling permutation $u$ such that $\pi_\ptree(u)=t$.

\subsection{Hopf algebra structure}

We define a Hopf structure on generalized Stirling permutations, and recall the Hopf structure on packed words \cite{NT06} dual to the Hopf algebra on set composition \cite{BZ09}. We then analyse their connections.

\begin{Definition}
	As a graded vector space, the Hopf algebra of generalized Stirling permutations, $\STSym$ is  $\displaystyle\bigoplus_{n\geq 0}\Bbbk\GSP_n$ where $\GSP_n$ is the set of generalized Stirling permutations of degree $n$. By convention $\GSP_0$ is the span of the empty generalized Stirling permutation. The fundamental basis of $\STSym$ is $\{F_u:u\in\GSP\}$.
\end{Definition}

The multiplication is given by shifted shuffle i.e. if $u$ and $v$ are generalized Stirling permutations and $u$ has $k$ leaves, then
$$\displaystyle m(F_u\otimes F_v)=\sum_{\substack{\text{allowable} \\ v\mapsto\left(v_1,\dots,v_k\right)}}F_{u\overleftarrow{\shuffle} \left(v_1,\dots,v_k\right)}.$$

The comultiplication is given by deconcatenation followed by standardization i.e.
$$\Delta(F_u)=\sum_{\substack{\text{allowable} \\ u\mapsto(u_1,u_2)}}F_{\std(u_1)}\otimes F_{\std(u_2)}.$$

\begin{Example}
	Let $u=$\begin{tikzpicture}[scale=0.25,baseline=0pt]
	\draw[blue, thick] (0,-1) -- (0,0);
	\draw[blue, thick] (0,0) -- (-2,2);
	\draw[blue, thick] (0,0) -- (2,2);
	\draw[blue, thick] (1,1) -- (0,2);
	\filldraw[black] (0,0) circle (5pt)  {};
	\filldraw[black] (1,1) circle (5pt)  {};
	\node at (0,0.7)[font=\fontsize{7pt}{0}]{$1$};
	\node at (1,1.7)[font=\fontsize{7pt}{0}]{$2$};
	\end{tikzpicture}, $v=$\begin{tikzpicture}[scale=0.25,baseline=0pt]
	\draw[red, thick] (0,-1) -- (0,0);
	\draw[red, thick] (0,0) -- (-2,2);
	\draw[red, thick] (0,0) -- (2,2);
	\draw[red, thick] (0,0) -- (0,2);
	\filldraw[black] (0,0) circle (5pt)  {};
	\node at (-0.4,1)[font=\fontsize{7pt}{0}]{$1$};
	\node at (0.4,1)[font=\fontsize{7pt}{0}]{$1$};
	\end{tikzpicture}.
	
	$m(F_u\otimes F_v)=
	F_{\begin{tikzpicture}[scale=0.2,baseline=0pt]
	\draw[blue, thick] (0,-1) -- (0,0);
	\draw[blue, thick] (0,0) -- (-4,4);
	\draw[blue, thick] (0,0) -- (2,2);
	\draw[blue, thick] (1,1) -- (-2,4);
	\draw[red, thick] (2,2) -- (0,4);
	\draw[red, thick] (2,2) -- (2,4);
	\draw[red, thick] (2,2) -- (4,4);
	\filldraw[black] (0,0) circle (5pt)  {};
	\filldraw[black] (1,1) circle (5pt)  {};
	\filldraw[black] (2,2) circle (5pt)  {};
	\node at (0,0.7)[font=\fontsize{7pt}{0}]{$1$};
	\node at (1,1.7)[font=\fontsize{7pt}{0}]{$2$};
	\node at (1.6,3)[font=\fontsize{7pt}{0}]{$3$};
	\node at (2.4,3)[font=\fontsize{7pt}{0}]{$3$};
	\end{tikzpicture}}+
	F_{\begin{tikzpicture}[scale=0.2,baseline=0pt]
	\draw[blue, thick] (0,-1) -- (0,0);
	\draw[blue, thick] (0,0) -- (-4,4);
	\draw[blue, thick] (0,0) -- (4,4);
	\draw[blue, thick] (1,1) -- (0,2);
	\draw[red, thick] (0,2) -- (-2,4);
	\draw[red, thick] (0,2) -- (0,4);
	\draw[red, thick] (0,2) -- (2,4);
	\filldraw[black] (0,0) circle (5pt)  {};
	\filldraw[black] (1,1) circle (5pt)  {};
	\filldraw[black] (0,2) circle (5pt)  {};
	\node at (0,0.7)[font=\fontsize{7pt}{0}]{$1$};
	\node at (1,1.7)[font=\fontsize{7pt}{0}]{$2$};
	\node at (-0.4,3)[font=\fontsize{7pt}{0}]{$3$};
	\node at (0.4,3)[font=\fontsize{7pt}{0}]{$3$};
	\end{tikzpicture}}+
	F_{\begin{tikzpicture}[scale=0.2,baseline=0pt]
	\draw[blue, thick] (0,-1) -- (0,0);
	\draw[blue, thick] (0,0) -- (-4,4);
	\draw[blue, thick] (0,0) -- (4,4);
	\draw[blue, thick] (3,3) -- (2,4);
	\draw[red, thick] (-2,2) -- (0,4);
	\draw[red, thick] (-2,2) -- (-2,4);
	\draw[red, thick] (-2,2) -- (-4,4);
	\filldraw[black] (0,0) circle (5pt)  {};
	\filldraw[black] (-2,2) circle (5pt)  {};
	\filldraw[black] (3,3) circle (5pt)  {};
	\node at (0,0.7)[font=\fontsize{7pt}{0}]{$1$};
	\node at (3,3.7)[font=\fontsize{7pt}{0}]{$2$};
	\node at (-1.6,3)[font=\fontsize{7pt}{0}]{$3$};
	\node at (-2.4,3)[font=\fontsize{7pt}{0}]{$3$};
	\end{tikzpicture}}$.
	
	Let $u=$\begin{tikzpicture}[scale=0.25,baseline=0pt]
	\draw[blue, thick] (0,-1) -- (0,0);
	\draw[blue, thick] (0,0) -- (7,7);
	\draw[blue, thick] (0,0) -- (-7,7);
	\draw[blue, thick] (-5,5) -- (-3,7);
	\draw[blue, thick] (-4,6) -- (-5,7);
	\draw[blue, thick] (0,0) -- (1,5);
	\draw[blue, thick] (1,5) -- (-1,7);
	\draw[blue, thick] (1,5) -- (1,7);
	\draw[blue, thick] (1,5) -- (3,7);
	\draw[blue, thick] (6,6) -- (5,7);
	\filldraw[black] (0,0) circle (5pt)  {};
	\filldraw[black] (1,5) circle (5pt)  {};
	\filldraw[black] (6,6) circle (5pt)  {};
	\filldraw[black] (-5,5) circle (5pt)  {};
	\filldraw[black] (-4,6) circle (5pt)  {};
	\node at (-0.4,1)[font=\fontsize{7pt}{0}]{$1$};
	\node at (0.5,1)[font=\fontsize{7pt}{0}]{$1$};
	\node at (0.6,6)[font=\fontsize{7pt}{0}]{$3$};
	\node at (1.4,6)[font=\fontsize{7pt}{0}]{$3$};
	\node at (6,6.7)[font=\fontsize{7pt}{0}]{$2$};
	\node at (-4,6.7)[font=\fontsize{7pt}{0}]{$5$};
	\node at (-5,5.7)[font=\fontsize{7pt}{0}]{$4$};
	\end{tikzpicture}.
	
	$\Delta_+(F_u)=
	F_{\begin{tikzpicture}[scale=0.25,baseline=0pt]
		\draw[blue, thick] (0,-1) -- (0,0);
		\draw[blue, thick] (0,0) -- (1,1);
		\draw[blue, thick] (0,0) -- (-1,1);
		\filldraw[black] (0,0) circle (5pt)  {};
		\node at (0,0.7)[font=\fontsize{7pt}{0}]{$1$};
	\end{tikzpicture}}\otimes
	F_{\begin{tikzpicture}[scale=0.25,baseline=0pt]
		\draw[blue, thick] (0,-1) -- (0,0);
		\draw[blue, thick] (0,0) -- (6,6);
		\draw[blue, thick] (0,0) -- (-6,6);
		\draw[blue, thick] (-5,5) -- (-4,6);
		\draw[blue, thick] (0,0) -- (0,4);
		\draw[blue, thick] (0,4) -- (0,6);
		\draw[blue, thick] (0,4) -- (-2,6);
		\draw[blue, thick] (0,4) -- (2,6);
		\draw[blue, thick] (5,5) -- (4,6);
		\filldraw[black] (0,0) circle (5pt)  {};
		\filldraw[black] (0,4) circle (5pt)  {};
		\filldraw[black] (5,5) circle (5pt)  {};
		\filldraw[black] (-5,5) circle (5pt)  {};
		\node at (-0.4,1)[font=\fontsize{7pt}{0}]{$1$};
		\node at (0.4,1)[font=\fontsize{7pt}{0}]{$1$};
		\node at (-0.4,5)[font=\fontsize{7pt}{0}]{$3$};
		\node at (0.4,5)[font=\fontsize{7pt}{0}]{$3$};
		\node at (5,5.7)[font=\fontsize{7pt}{0}]{$2$};
		\node at (-5,5.7)[font=\fontsize{7pt}{0}]{$4$};
	\end{tikzpicture}}+
	F_{\begin{tikzpicture}[scale=0.25,baseline=0pt]
		\draw[blue, thick] (0,-1) -- (0,0);
		\draw[blue, thick] (0,0) -- (-2,2);
		\draw[blue, thick] (0,0) -- (2,2);
		\draw[blue, thick] (1,1) -- (0,2);
		\filldraw[black] (0,0) circle (5pt)  {};
		\filldraw[black] (1,1) circle (5pt)  {};
		\node at (0,0.7)[font=\fontsize{7pt}{0}]{$1$};
		\node at (1,1.7)[font=\fontsize{7pt}{0}]{$2$};
	\end{tikzpicture}}\otimes
	F_{\begin{tikzpicture}[scale=0.25,baseline=0pt]
		\draw[blue, thick] (0,-1) -- (0,0);
		\draw[blue, thick] (0,0) -- (5,5);
		\draw[blue, thick] (0,0) -- (-5,5);
		\draw[blue, thick] (0,0) -- (-1,3);
		\draw[blue, thick] (-1,3) -- (-3,5);
		\draw[blue, thick] (-1,3) -- (-1,5);
		\draw[blue, thick] (-1,3) -- (1,5);
		\draw[blue, thick] (4,4) -- (3,5);
		\filldraw[black] (0,0) circle (5pt)  {};
		\filldraw[black] (-1,3) circle (5pt)  {};
		\filldraw[black] (4,4) circle (5pt)  {};
		\node at (-0.6,1)[font=\fontsize{7pt}{0}]{$1$};
		\node at (0.4,1)[font=\fontsize{7pt}{0}]{$1$};
		\node at (-1.4,4)[font=\fontsize{7pt}{0}]{$3$};
		\node at (-0.6,4)[font=\fontsize{7pt}{0}]{$3$};
		\node at (4,4.7)[font=\fontsize{7pt}{0}]{$2$};
	\end{tikzpicture}}+
	F_{\begin{tikzpicture}[scale=0.25,baseline=0pt]
		\draw[blue, thick] (0,-1) -- (0,0);
		\draw[blue, thick] (0,0) -- (6,6);
		\draw[blue, thick] (0,0) -- (-6,6);
		\draw[blue, thick] (-4,4) -- (-2,6);
		\draw[blue, thick] (-3,5) -- (-4,6);
		\draw[blue, thick] (0,0) -- (2,4);
		\draw[blue, thick] (2,4) -- (0,6);
		\draw[blue, thick] (2,4) -- (2,6);
		\draw[blue, thick] (2,4) -- (4,6);
		\filldraw[black] (0,0) circle (5pt)  {};
		\filldraw[black] (2,4) circle (5pt)  {};
		\filldraw[black] (-4,4) circle (5pt)  {};
		\filldraw[black] (-3,5) circle (5pt)  {};
		\node at (-0.2,1)[font=\fontsize{7pt}{0}]{$1$};
		\node at (1.5,2)[font=\fontsize{7pt}{0}]{$1$};
		\node at (1.6,5)[font=\fontsize{7pt}{0}]{$2$};
		\node at (2.4,5)[font=\fontsize{7pt}{0}]{$2$};
		\node at (-3,5.7)[font=\fontsize{7pt}{0}]{$4$};
		\node at (-4,4.7)[font=\fontsize{7pt}{0}]{$3$};
	\end{tikzpicture}}\otimes
	F_{\begin{tikzpicture}[scale=0.25,baseline=0pt]
		\draw[blue, thick] (0,-1) -- (0,0);
		\draw[blue, thick] (0,0) -- (1,1);
		\draw[blue, thick] (0,0) -- (-1,1);
		\filldraw[black] (0,0) circle (5pt)  {};
		\node at (0,0.7)[font=\fontsize{7pt}{0}]{$1$};
	\end{tikzpicture}}$.
\end{Example}

Under these operations, $\STSym$ forms a Hopf algebra. By forgetting the labels, we have a Hopf projection $\Pi_\ptree:\STSym\to\TSym$, $F_u\mapsto F_{\pi_\ptree(u)}$.

\begin{Proposition}
	$\STSym$ is a Hopf algebra.
\end{Proposition}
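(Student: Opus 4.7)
The plan is to verify the bialgebra axioms (associativity of the multiplication, coassociativity of the comultiplication, and compatibility between them) directly, essentially by mirroring the proof of Proposition \ref{prop:tsym} for $\TSym$ while carefully bookkeeping the labels. The antipode then exists automatically since $\STSym$ is graded connected with $\STSym_0 = \Bbbk$ spanned by the empty generalized Stirling permutation.

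First, both operations stay inside $\STSym$. For the product, given $u \in \GSP$ and $v \in \GSP$ with degrees $n,m$, an allowable splitting $v \mapsto (v_1,\dots,v_k)$ partitions the internal nodes of $v$ among the $v_i$ and therefore the labeling $\kappa_v$ restricts to injective labelings on the pieces; after shifting the labels of the shuffled pieces up by $\ideg(u)$ as in Definition \ref{def:perm}, the tree $u \overleftarrow{\shuffle}(v_1,\dots,v_k)$ inherits a bijective labeling onto $\{1,\dots,\ideg(u)+\ideg(v)\}$ which is increasing along every root-to-leaf path (the only new ancestor relations are between labels from $u$ and labels from the $v_i$, and the shift guarantees the inequality). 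For the coproduct, allowable splitting induces injective labelings on the two pieces, and standardization produces generalized Stirling permutations.

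For associativity, I will show that both $(F_u \cdot F_v) \cdot F_w$ and $F_u \cdot (F_v \cdot F_w)$ expand as the same sum over pairs consisting of an allowable splitting $v \mapsto (v_1,\dots,v_a)$ of $v$ into $a := \deg u + 1$ pieces and an allowable splitting $w \mapsto (w_1,\dots,w_{a+b-1})$ of $w$ into $a+b-1$ pieces with $b := \deg v + 1$; the underlying planar trees match by the associativity argument in Proposition \ref{prop:tsym}, and the labels match because in both orders the original labels of $u$ land in $\{1,\dots,\ideg(u)\}$, those of $v$ in $\{\ideg(u)+1,\dots,\ideg(u)+\ideg(v)\}$, and those of $w$ in $\{\ideg(u)+\ideg(v)+1,\dots,\ideg(u)+\ideg(v)+\ideg(w)\}$. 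Coassociativity is similar: iterated allowable splittings of $u$ into three pieces are in bijection whether performed left-to-right or right-to-left, and since standardization depends only on the relative order of labels, the two standardizations of each piece coincide. The compatibility axiom $\Delta \circ m = (m \otimes m) \circ (\id \otimes \tau \otimes \id) \circ (\Delta \otimes \Delta)$ is checked by the same bijective matching of shuffle-then-split data with split-then-shuffle data used in Proposition \ref{prop:tsym}, together with the compatibility of standardization with label shifting.

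The main obstacle is not conceptual but notational: one must verify that standardization commutes with allowable splitting in the appropriate sense. However, this is transparent once one observes that for an allowable splitting $u \mapsto (u_1,u_2)$ the internal nodes of $u$ partition into those of $u_1$ and those of $u_2$, so $\kappa_{\std(u_1)}$ and $\kappa_{\std(u_2)}$ are determined by the relative order of $\kappa_u$ on each part; this relative order is preserved under every shift-and-shuffle operation. As an alternative route, one may invoke Proposition \ref{prop:pi-property}, which shows that the forgetful map $\pi_\ptree$ intertwines $/$, $\backslash$, $\overleftarrow{\shuffle}$, and deconcatenation with their unlabeled counterparts; combined with Proposition \ref{prop:tsym}, this reduces each Hopf axiom on $\STSym$ to the corresponding axiom on $\TSym$ plus the routine verification that the standardized labels behave consistently on both sides of each identity.
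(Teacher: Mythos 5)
Your proposal is correct and follows essentially the same route as the paper: the paper's proof likewise transfers the associativity, coassociativity, and compatibility arguments from Proposition \ref{prop:tsym} on $\TSym$, noting that one only needs to check that the labels are shifted and standardized correctly. Your additional remarks on closure under the operations and on standardization commuting with allowable splitting are the same "routine verification" the paper leaves implicit.
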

\begin{proof}
	The associativity, coassociativity and compatibility follow from the ones for $\TSym$. One only need to check that the labels are shifted and standardized correctly. The arguments in the proof of Proposition \ref{prop:tsym} can be transferred almost identically, by using allowable splitting, shifted shuffle and standardization in proper places.
\end{proof}

We then have the following commutative square of Hopf algebras.

\begin{center}
	\begin{tikzpicture}
		\node at (0,0) {$\YSym$};
		\node at (3,0) {$\TSym$};
		\node at (3,2) {$\STSym$};
		\node at (0,2) {$\SSym$};
		\draw[black,thick, right hook->] (0.8,0) -- (2.2,0);
		\draw[black,thick, right hook->] (0.8,2) -- (2.2,2);
		\draw[black,thick, ->>] (0,1.5) -- (0,0.5);
		\draw[black,thick, ->>] (3,1.5) -- (3,0.5);
	\end{tikzpicture}
\end{center}

\subsection{Connections to packed words}

We first recall that the dual Hopf algebra of packed words is $\NCQSym=\displaystyle\WQSym^*=\bigoplus_{n\geq 0}\Bbbk\PW_n^{-1}$ where $\PW_n$ is the set of packed words of length $n$. We use the $Q$ basis $\{Q_{u^{-1}}:u\in\PW\}$ as introduced in \cite{BZ09}.

For a word $w_1 \cdots w_n$, let $\pack(w_1 \cdots w_n)$ be the unique packed word $u_1\cdots u_n$ such that $u_i< u_j$ if and only if $w_i< w_j$. The product and coproduct formula for the $Q$ basis are as follows.

$$Q_{u^{-1}}\cdot Q_{v^{-1}}=\sum_{\substack{
	w=w_1\cdots w_{n+m},\\
	\pack(w_1\cdots w_n)=u,\\
	\pack(w_{n+1}\cdots w_{n+m})=v,\\
	\{w_1,\dots,w_n\}\cap\{w_{n+1},\dots,w_{n+m}\}=\emptyset}}Q_{w^{-1}},$$
where $n=\deg(u)$ and $m=\deg(v)$, and
$$\Delta(Q_{w^{-1}})=\sum_{i=0}^{n+m}\sum_{\substack{u_1<\cdots<u_n\\ v_1<\cdots<v_m\\ w_{u_j}\leq i,w_{v_j}>i}}Q_{\pack(w_{u_1}\cdots w_{u_n})^{-1}}\otimes Q_{\pack(w_{v_1}\cdots w_{v_m})^{-1}},$$
where $\deg(w)=n+m$.

\begin{Proposition}\label{prop:wqsym-embedding}
	The linear map $\STSym^*\to\NCQSym$ defined by $F_u^*\mapsto Q_{u^{-1}}$ is a Hopf embedding.
\end{Proposition}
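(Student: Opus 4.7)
The plan is to reduce both sides of the claim to word-level statements via Lemma~\ref{lem:212}: a generalized Stirling permutation $u\in\GSP$ is identified with its packed word $\mathbf{w}(u)$, which is $212$-avoiding. Injectivity and well-definedness of $\Phi\colon F_u^*\mapsto Q_{u^{-1}}$ are immediate from this bijection and from the linear independence of the $Q$-basis of $\NCQSym$.

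The crux of the argument is a word-level description of $\STSym$'s coproduct: \emph{an allowable deconcatenation $w\mapsto(w_1,w_2)$ with $\deg w_1=i$ exists iff the prefix $\mathbf{w}(w)_1\cdots \mathbf{w}(w)_i$ and the suffix $\mathbf{w}(w)_{i+1}\cdots \mathbf{w}(w)_{\deg w}$ have disjoint value sets, and in that case $\std(w_1), \std(w_2)$ correspond under $\mathbf{w}$ to the packings of these two sub-words.} This is the main technical obstacle: it requires observing that each internal node of $w$ labels a contiguous block of consecutive letters in $\mathbf{w}(w)$, so that ``no internal node is duplicated across the cut'' (the definition of allowable) becomes exactly ``the two halves of the word have disjoint values''. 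Once established, dualising gives $F_u^*\cdot F_v^*=\sum_w c_{u,v}^w F_w^*$ where $c_{u,v}^w=1$ precisely when $\pack(\mathbf{w}(w)_1\cdots \mathbf{w}(w)_n)=\mathbf{w}(u)$, $\pack(\mathbf{w}(w)_{n+1}\cdots \mathbf{w}(w)_{n+m})=\mathbf{w}(v)$, and the halves have disjoint values---this is exactly the product rule defining $Q_{u^{-1}}\cdot Q_{v^{-1}}$ in $\NCQSym$. Moreover every such $w$ is automatically $212$-avoiding, since a $212$-pattern would force a repeated letter across the two disjoint halves.

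For the coproduct, the analogous analysis uses the shifted shuffle product $F_u\cdot F_v$: the resulting word interleaves $\mathbf{w}(u)$ (supported on labels $\{1,\dots,\ideg u\}$) with $\mathbf{w}(v)$ shifted up by $\ideg u$ (supported on labels $\{\ideg u+1,\dots,\ideg w\}$). Hence for fixed $w$ the factorisations $w=u\overleftarrow{\shuffle}(v_1,\dots,v_k)$ are parametrised by the threshold $\kappa=\ideg u\in\{0,\dots,\ideg w\}$: $\mathbf{w}(u)$ is the sub-word of letters $\le\kappa$, and $\mathbf{w}(v)$ is the sub-word of letters $>\kappa$ shifted down by $\kappa$. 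For each $\kappa$ one must construct $v$ and the allowable splitting by gluing the maximal ``$>\kappa$''-subtrees of $w$ along the appropriate leaves of $u$; their inherited labels, shifted down by $\kappa$, assemble canonically into a valid gen.\ Stirling labeling, which makes the factorisation unique. This matches $\Delta(Q_{w^{-1}})$ termwise under the identification $i=\kappa$, completing the proof that $\Phi$ is a Hopf embedding.
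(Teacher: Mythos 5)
Your proposal follows essentially the same route as the paper: the paper's proof simply records the two word-level formulas (the product $F_u\cdot F_v$ as the shifted shuffle of $\mathbf{w}(u)$ and $\mathbf{w}(v)$ intersected with $\GSP$, and the coproduct of $F_w$ as the sum over deconcatenations of $\mathbf{w}(w)$ at positions where the prefix and suffix have disjoint value sets, followed by packing), then dualises and compares with the $Q$-basis formulas --- exactly the reduction you carry out, just with fewer details spelled out.

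One justification in your write-up is incorrect, although the statement it is meant to support is true. You assert that each internal node of $w$ labels a \emph{contiguous} block of consecutive letters of $\mathbf{w}(w)$; this already fails for the paper's example $\mathbf{w}(u)=4513312$, where the two occurrences of the letter $1$ sit at positions $3$ and $6$ with the letters $3,3$ between them. The correct reason that ``allowable at position $i$'' is equivalent to ``the prefix $\mathbf{w}(w)_1\cdots\mathbf{w}(w)_i$ and the suffix have disjoint value sets'' does not use contiguity: a node $x$ survives as an internal node of the left (resp.\ right) piece of the lightening exactly when it keeps at least two children there, i.e.\ when at least one of its regions --- at least one occurrence of the label of $x$ --- lies in the prefix (resp.\ suffix); hence $x$ is duplicated, violating $\ideg(w)=\ideg(w_1)+\ideg(w_2)$, if and only if its letter occurs on both sides of the cut. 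With that repair, the rest of your argument --- the automatic $212$-avoidance of the terms appearing in the $Q$-basis product of two $212$-avoiding words, and the existence and uniqueness of the factorisation of $w$ for each threshold $\kappa=\ideg(u)$ on the coproduct side --- is sound and matches the paper.
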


\begin{proof}
	In terms of packed words, the product and coproduct of the $F$ basis are given by
	
	$$F_u\cdot F_v=\sum_{w\in (u\overline{\shuffle} v)\cap \GSP}F_w,$$
	where $u\overline{\shuffle} v$ denotes the shifted shuffle i.e. the set of shuffles of the two words $u_1\cdots u_{\deg(u)}$ and $(v_1+\deg(u))\cdots(v_{\deg(v)}+\deg(u))$.
	$$\Delta(F_w)=\sum_{\substack{0\leq i\leq n \\ \{w_1,\dots,w_i\}\cap\{w_{i+1},\dots,w_n\}=\emptyset}} F_{\pack(w_1\cdots w_i)}\otimes F_{\pack(w_{i+1}\cdots w_n)},$$
	where $n=\deg(w)$.
	
	The proof is completed by rewriting the formula above in the dual basis, and comparing to the formula of $Q$ basis.
\end{proof}

\begin{Remark}
	For a generalized Stirling permutation $u$, the underlying set partition of $u^{-1}$ is a noncrossing partition i.e. whenever $a<b<c<d$ and $a,c\in B_i$ for some $i$, we must not have $b,d\in B_j$ for some $j$.
	Hence the embedding  $\STSym^*\hookrightarrow \NCQSym $ can be further refined with the Hopf subalgebra of non-crossing set compositions inside the Hopf algebra of set compositions.
	Using the basis $Q$ of~\cite{BZ09} we have that the subspace $\NCQSym^{nc}$ spanned by $Q_B$ where the parts of $B$ are non-crossing is a Hopf subalgebra. Hence
	$$\STSym^*\hookrightarrow \NCQSym^{nc}\hookrightarrow \NCQSym .$$
		\end{Remark}

\begin{Problem} Taking the dual in the embedding of Proposition~\ref{prop:wqsym-embedding} gives us a projection $\WQSym\to\STSym$, given by 
\[Q_{u^{-1}}^* \mapsto 
\begin{cases}F_u & \text{if } u \in \GSP,\\ 
0 & \text{otherwise}.
\end{cases}\]
In light of Corollary~\ref{cor:STSym_selfdual}, it is reasonable to ask for finding an explicit Hopf embedding $\STSym\to\WQSym$.

\end{Problem}

\subsection{Bidendriform structure}\label{sec:bidendriform}
In this section we show that $\STSym$ is free, cofree and self-dual. We achieve these by giving $\STSym$ a bidendriform structure.

A Hopf algebra $(\cH,\cdot,\Delta)$ has a \emph{bidendriform} structure if it has the operations $\ll, \gg,\Delta_\ll,\Delta_\gg$ such that
\begin{enumerate}
	\item $a\cdot b=a\ll b+a\gg b$
	\item $(a\ll b)\ll c = a\ll(b\cdot c)$
	\item $a\gg (b\gg c)=(a\cdot b)\gg c$
	\item $a\gg(b\ll c) = (a\gg b)\ll c$
	\item $\Delta_+=\Delta_\ll+\Delta_\gg$
	\item $(\Delta_\ll \otimes id)\circ\Delta_\ll=(id\otimes\Delta_+)\circ\Delta_\ll$
	\item $(id \otimes \Delta_\gg)\circ\Delta_\gg=(\Delta_+\otimes id)\circ\Delta_\gg$
	\item $(\Delta_\gg \otimes id)\circ\Delta_\ll=(id\otimes\Delta_\ll)\circ\Delta_\gg$
	\item $\Delta_\gg(a\gg b)=a'b_\gg'\otimes a''\gg b_\gg''+a'\otimes a''\gg b+b_\gg'\otimes a\gg b_\gg''+ab_\gg'\otimes b_\gg''+a\otimes b$
	\item $\Delta_\gg(a\ll b)=a'b_\gg'\otimes a''\ll b_\gg''+a'\otimes a''\ll b+b_\gg'\otimes a\ll b_\gg''$
	\item $\Delta_\ll(a\gg b)=a'b_\ll'\otimes a''\gg b_\ll''+b_\ll'\otimes a\gg b_\ll''+ab_\ll'\otimes b_\ll''$
	\item $\Delta_\ll(a\ll b)=a'b_\ll'\otimes a''\ll b_\ll''+b_\ll'\otimes a\ll b_\ll''+a'b\otimes a''+b\otimes a$
\end{enumerate}
where the pairs $(x',x''),(x_\ll',x_\ll''),(x_\gg',x_\gg '')$ are all possible elements in $\Delta(x),\Delta_\ll(x),\Delta_\gg(x)$, as in Sweedler's notation.

We now present a bidendriform structure on $\STSym$, but leave the details for the interested reader.

\begin{enumerate}
	\item $\displaystyle F_u\ll F_v:=\sum_{w\in u\shuffle v,w_n\leq |u|}F_w$
	\item $\displaystyle F_u\gg F_v:=\sum_{w\in u\shuffle v,w_n> |u|}F_w$
	\item $\displaystyle\Delta_\ll(F_w):=\sum_{w\mapsto(u,v),\max(w)\in v}F_u\otimes F_v$
	\item $\displaystyle\Delta_\gg(F_w):=\sum_{w\mapsto(u,v),\max(w)\in u}F_u\otimes F_v.$
\end{enumerate}

\begin{Proposition}\label{prop:STSYMbidendriform}
The operations $\ll, \gg,\Delta_\ll,\Delta_\gg$ defined above puts a bidendriform structure on $\STSym$.
\end{Proposition}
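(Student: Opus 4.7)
The plan is to verify the twelve bidendriform axioms by direct combinatorial case analysis on shifted shuffles and allowable deconcatenations of generalized Stirling permutations, mirroring the proofs of Foissy and Novelli--Thibon for $\WQSym$. I would first dispense with the trivial partition axioms (1) and (5): in any shifted shuffle the last letter lies in exactly one of the two factors, and in any allowable deconcatenation the internal node carrying the maximum label sits in exactly one of the two pieces. I would then handle the dendriform algebra axioms (2)--(4) by tracking which original factor contributes the last letter across iterated shuffles of three words; for example, in $(F_u\ll F_v)\ll F_c$ the two $\ll$-selections together force the final last letter to come from $u$, matching the selection rule on the right-hand side $F_u\ll(F_v\cdot F_c)$. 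The dendriform coalgebra axioms (6)--(8) follow from the dual bookkeeping: tracking the position of the maximum label through iterated allowable deconcatenations, and using that standardization preserves which piece contains each internal node.

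The main obstacle is the four mixed compatibility axioms (9)--(12), which require coordinating two distinct pieces of combinatorial information simultaneously: the position of the last letter in a shuffle and the position of the maximum label in a subsequent or prior deconcatenation. The key observation is that in a shifted shuffle $u\overline{\shuffle} v$ the maximum label of the result is always $|u|+|v|$, the top label of the shifted $v$; this single anchor node drives the case analysis. For axiom (9), $\Delta_\gg(F_u\gg F_v)$, one splits into cases according to where the deconcatenation cut lies relative to (i) the last-$u$-letter position in the shuffle and (ii) the unique $v$-node bearing label $|u|+|v|$; the resulting configurations reassemble into exactly the five terms on the right-hand side of the axiom. Axioms (10)--(12) are verified by parallel case analyses of analogous complexity, each splitting into four or five configurations according to the relative positions of the ``last letter'' and ``maximum label'' anchors.

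As a sanity check and complementary route, one can transfer the bidendriform structure from $\NCQSym$ through the embedding $\STSym^*\hookrightarrow\NCQSym$ of Proposition \ref{prop:wqsym-embedding}. Since both structures are governed by the position of the last letter and of the maximum label, verifying compatibility reduces to checking that the image, which is indexed by $212$-avoiding packed words (Lemma \ref{lem:212}), is closed under the four bidendriform operations of $\NCQSym$; this closure is itself a combinatorial statement about $212$-avoidance being preserved by the relevant selections of shuffles and deconcatenations.
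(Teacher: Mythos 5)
Your overall strategy is exactly the one the paper intends (the paper omits the verification entirely, calling it ``technical and lengthy, yet straightforward''), and your two anchors --- the position of the last letter of a shuffle and the node carrying the maximal label in a deconcatenation --- are the right drivers for all twelve axioms. However, one concrete step fails as you state it: the claim that the configurations ``reassemble into exactly the five terms on the right-hand side'' of axiom (9) is false for the operations \emph{as literally defined in the paper}. Take $a=b=F_1$. Then $F_1\gg F_1=F_{12}$, and the unique nontrivial allowable split of $F_{12}$ puts the maximal label $2$ in the \emph{second} piece, so $\Delta_\gg(F_1\gg F_1)=0$; but the right-hand side of (9) reduces to the term $a\otimes b=F_1\otimes F_1\neq 0$. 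The source of the failure is systematic, not an isolated slip: in any term of $F_u\cdot F_v$ the maximal label is carried by a node of the shifted $v$ (your own anchor observation), so the ``trivial'' split of $u\backslash v$ at position $\deg u$, which must produce the term $a\otimes b$ in (9) and hence must lie in the half-coproduct paired with $\gg$, always has its maximum in the \emph{right} tensor factor. With the paper's definitions that split contributes to $\Delta_\ll$, not $\Delta_\gg$. The same mismatch propagates to (6)--(8) (e.g.\ $(\Delta_\ll\otimes id)\circ\Delta_\ll(F_{213})=0$ while $(id\otimes\Delta_+)\circ\Delta_\ll(F_{213})=F_1\otimes F_1\otimes F_1$) and to (10)--(12).

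The repair is a pure relabelling: interchange the definitions of $\Delta_\ll$ and $\Delta_\gg$ (i.e.\ $\Delta_\ll$ should collect the splits with $\max(w)\in u$ and $\Delta_\gg$ those with $\max(w)\in v$), so that the pair $(\gg,\Delta_\gg)$ matches Foissy's $(\succ,\Delta_\succ)$ convention in which the last letter and the maximal label both live on the ``$v$ side.'' After this swap your case analysis goes through verbatim and the Proposition (and hence freeness, cofreeness and self-duality via Foissy) is unaffected, but you should state the swap explicitly rather than assert the axioms for the operations as printed. Your third paragraph is a reasonable supplementary route, with two caveats: you must check not only that the span of $Q_{u^{-1}}$ for $212$-avoiding $u$ is closed under the four half-operations of $\NCQSym$, but also that the operations induced on $\STSym^*$ by restriction dualize to the specific operations the Proposition is about; and since the embedding lands in the dual, you need the (true, but citable) fact that the dual of a bidendriform bialgebra is bidendriform to transport the structure back to $\STSym$.
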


The proof of this Proposition is technical and lengthy, yet straightforward.  It would take too much space to include here and we prefer to leave it for the interested reader.  The following corollary follows from the results of Foissy in \cite{F07,F12}.

\begin{Corollary}\label{cor:STSym_selfdual}
The Hopf algebra $\STSym$ is free, cofree and self-dual.
\end{Corollary}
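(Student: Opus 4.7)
The plan is to invoke Foissy's theorem directly. In \cite{F07,F12}, Foissy proves that any connected graded bidendriform bialgebra over a field of characteristic zero is automatically free as an algebra, cofree as a coalgebra, and isomorphic to its graded dual (i.e., self-dual). Since Proposition \ref{prop:STSYMbidendriform} has just equipped $\STSym$ with the four operations $\ll$, $\gg$, $\Delta_\ll$, $\Delta_\gg$ satisfying the twelve bidendriform axioms, the only remaining verification is that $\STSym$ is connected as a graded bialgebra, which is immediate from the fact that $\GSP_0$ consists only of the empty generalized Stirling permutation, so $\STSym_0 \cong \Bbbk$.

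First I would state Foissy's theorem explicitly: a connected graded bidendriform bialgebra $(\cH, \cdot, \Delta, \ll, \gg, \Delta_\ll, \Delta_\gg)$ is free, cofree, and self-dual as a Hopf algebra. Then I would observe that $\STSym$ is a connected graded Hopf algebra (it is graded by $\deg$, it has a unit, a counit, an antipode, and $\STSym_0 = \Bbbk$), and that Proposition \ref{prop:STSYMbidendriform} supplies the required dendriform and codendriform decompositions. Combining these two statements yields the corollary.

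There is essentially no obstacle here: all the work has been done in Proposition \ref{prop:STSYMbidendriform}, whose (omitted) verification of the twelve bidendriform axioms is the genuinely technical content. The only thing worth emphasizing in the write-up is that the multiplicative/comultiplicative splittings are compatible with the grading (so that Foissy's hypotheses are met degree by degree), which follows from the definitions since $\ll$, $\gg$ preserve total degree and $\Delta_\ll$, $\Delta_\gg$ split the degree additively. Therefore the entire proof reduces to a single sentence citing \cite{F07,F12} together with Proposition \ref{prop:STSYMbidendriform}.
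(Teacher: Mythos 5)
Your proposal matches the paper exactly: the paper likewise derives this corollary directly from Proposition \ref{prop:STSYMbidendriform} together with Foissy's results in \cite{F07,F12}, with no additional argument. The extra remarks on connectedness and compatibility with the grading are correct and harmless.
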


\begin{Problem}
	Find an explicit Hopf isomorphism $\STSym\to\STSym^*$.
\end{Problem}

\subsection{Weak order on packed words}
Our partial order on generalized Stirling permutations follows from the (left) weak order of packed words, as defined in \cite{V15}.

For a packed word $w$ we denote by $w^{-1}(i)$ the $i$th component of $w^{-1}$. For example, if $w=1322122$, then $w^{-1}=15|3467|2$ and $w^{-1}(2)=\{3,4,6,7\}$.
We define the \textbf{inversed-inversion} set of a packed word $w$ to be 
 $$\iInv(w)=\{(a,b):a<b\text{ and }\min w^{-1}(a) > \max w^{-1}(b)\}. $$
In the example above $\iInv(1322122)=\{(2,3)\}$.
 
 For a packed word $w$ define $T_a(w)$ as the  packed word obtained from $w$ by interchanging all $a$'s and $a+1$'s, i.e. if $u=T_a(w)$  then
$$u_i=\begin{cases}
a & \text{if }w_i=a+1,\\
a+1 & \text{if }w_i=a,\\
w_i & \text{otherwise.}
\end{cases}$$

 The planar weak order $\leq_{Pw}$ on packed words is as follows: $u$ is covered by $w$ if and only if there exists $a$ such that $|\iInv(w)| = |\iInv(u)|+1$ and  $u=T_a(w)$.
 
By abuse of notations, we also use $\leq_{Pw}$ as the partial order on generalized Stirling permutations i.e. $u\leq_{Pw} v$ if and only if $\mathbf{w}(u)\leq_{Pw}\mathbf{w}(v)$.

The following proposition shows that, if $T_a(w)$ covers $w$, then all occurrences of $a$ in $w$ are to the left of all occurences of $a+1$.

\begin{Proposition}\label{prop:lpw} For a packed word $w$, $T_a(w)$ covers $w$ if and only if $\max w^{-1}(a)<\min w^{-1}(a+1)$.
In particular, if there exists $i<j<k$ with $w_i=w_k=a$, $w_j=a+1$, then $w$ and $T_a(w)$ are not comparable.
\end{Proposition}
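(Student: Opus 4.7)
The plan is to track how the inversed-inversion set changes when we swap $a$ and $a+1$. I would compare $\iInv(w)$ and $\iInv(T_a(w))$ pair by pair. The key observation is that for any pair $(c,d)$ with $c<d$ and $\{c,d\}\cap\{a,a+1\}=\emptyset$, the sets $w^{-1}(c)$ and $w^{-1}(d)$ are unchanged by the swap, so such pairs contribute identically. For pairs $(b,a)$ and $(b,a+1)$ with $b<a$, note that $T_a(w)^{-1}(a)=w^{-1}(a+1)$ and $T_a(w)^{-1}(a+1)=w^{-1}(a)$, which immediately gives the bijection $(b,a)\in\iInv(w) \iff (b,a+1)\in\iInv(T_a(w))$ and $(b,a+1)\in\iInv(w) \iff (b,a)\in\iInv(T_a(w))$. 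The same symmetric exchange happens for pairs $(a,c)$ and $(a+1,c)$ with $c>a+1$. Hence the combined contributions of all such pairs are preserved under the swap.

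The only pair whose status can truly change is $(a,a+1)$ itself. By definition, $(a,a+1)\in\iInv(w)$ if and only if $\min w^{-1}(a)>\max w^{-1}(a+1)$, i.e.\ every $a+1$ occurs before every $a$ in $w$; whereas $(a,a+1)\in\iInv(T_a(w))$ if and only if $\min w^{-1}(a+1)>\max w^{-1}(a)$, i.e.\ every $a$ occurs before every $a+1$ in $w$. These two conditions are mutually exclusive, and both fail exactly when the positions of $a$ and $a+1$ in $w$ are interleaved. Combining with the first step, this yields the trichotomy
\[
|\iInv(T_a(w))|-|\iInv(w)|\in\{-1,0,+1\},
\]
with $+1$ precisely when $\max w^{-1}(a)<\min w^{-1}(a+1)$, $-1$ precisely when $\max w^{-1}(a+1)<\min w^{-1}(a)$, and $0$ precisely in the interleaved case. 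Since the covering relation was defined by $T_a(w)$ covering $w$ exactly when $|\iInv(T_a(w))|=|\iInv(w)|+1$, the main equivalence follows.

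For the ``in particular'' statement, suppose $i<j<k$ with $w_i=w_k=a$ and $w_j=a+1$. Then the positions are interleaved, so by the trichotomy $|\iInv(T_a(w))|=|\iInv(w)|$. Since every covering relation in the planar weak order strictly increases $|\iInv|$ by one, any chain of covers between $w$ and $T_a(w)$ would force $|\iInv|$ to differ strictly, a contradiction. Therefore $w$ and $T_a(w)$ are incomparable in $\leq_{Pw}$.

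The main obstacle is the careful bookkeeping in the first step: verifying that pairs involving exactly one of $a,a+1$ swap status in a perfectly symmetric way so that their total contribution to $|\iInv|$ is preserved. This is straightforward but must be laid out for all three cases ($b<a$, $c>a+1$, and the diagonal pair $(a,a+1)$) to isolate the single pair responsible for the change in cardinality.
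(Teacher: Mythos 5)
Your proof is correct and follows essentially the same route as the paper's: both track $\iInv$ pair by pair, observe that every pair other than $(a,a+1)$ swaps status symmetrically under $T_a$, and reduce the covering condition to the status of the single pair $(a,a+1)$, with the interleaved case giving $|\iInv(T_a(w))|=|\iInv(w)|$ and hence incomparability. The only cosmetic difference is that you phrase the conclusion as an explicit trichotomy $\{-1,0,+1\}$, which the paper leaves implicit.
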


\begin{proof}
	For any $b$ and $c$ satisfying $b<a$ and $a+1<c$  we have that,
	\begin{enumerate}
		\item $(b,a)\in\iInv(w)$ if and only if $(b,a+1)\in\iInv(T_a(w))$,
		\item $(b,a+1)\in\iInv(w)$ if and only if $(b,a)\in\iInv(T_a(w))$,
		\item $(a,c)\in\iInv(w)$ if and only if $(a+1,c)\in\iInv(T_a(w))$,
		\item $(a+1,c)\in\iInv(w)$ if and only if $(a,c)\in\iInv(T_a(w))$,
		\item $(b,c)\in\iInv(w)$ if and only if $(b,c)\in\iInv(T_a(w))$.
	\end{enumerate}
	Hence, $|\iInv(T_a(w))|=|\iInv(w)|+1$ if and only if $(a,a+1)\notin\iInv(w)$ and $(a,a+1)\in\iInv (T_a(w))$. By definition of $\iInv$, the second condition is equivalent to $\min  T_a(w)^{-1}(a)>\max T_a(w)^{-1}(a+1)$, i.e. $\min w^{-1}(a+1)>\max w^{-1}(a)$, which also ensures $(a,a+1)\notin\iInv(w)$.
	
	If there exists $i<j<k$ with $w_i=w_k=a$, $w_j=a+1$, then $(a,a+1)\notin\iInv(w)$ and $(a,a+1)\not\in\iInv T_a(w)$, hence $|\iInv(T_a(w))|=|\iInv(w)|$ so that $T_a(w)$ and $w$ are incomparable.
\end{proof}

The goal of the remainder of this section is Theorem \ref{thm:lpw}, describing the connected components under the weak order and  relating this order on packed words to the weak order of permutations.

\begin{Definition}
	For each packed word $w$, let $w^{-1} = (B_1,B_2,\ldots,B_k)$ with $B_i=w^{-1}(i)$. We define $\setpar(w)=\{B_1,B_2,\dots,B_k\}$ to be the underlying set partition of $w^{-1}$, and 
	$\delrpt(w)=\sigma$ to be the \emph{initial permutation}, obtained by taking only the first appearance of each letter in $w$. In other words, $\delrpt(w)=\sigma$ is the unique permutation $\sigma\colon[k]\to[k]$ such that $\min B_{\sigma(1)}<\min B_{\sigma(2)}<\cdots<\min B_{\sigma(k)}$. (Our notation follows \cite{G16}, note that \cite{V20} calls this the left-standardization.)
Given a set composition $B=(B_1,\dots,B_k)$ we denote by $w=B^{-1}$ the packed word such that $B=w^{-1}$.
\end{Definition}

\begin{Definition}
	Given a set partition  ${\mathcal B}=\{B_1,B_2,\dots,B_k\}$ such that $\min B_1<\cdots<\min B_k$ and a permutation $\sigma\colon[k]\to[k]$,
	we define $\pw({\mathcal B},\sigma) = (B_{\sigma^{-1}(1)},B_{\sigma^{-1}(2)},\dots,B_{\sigma^{-1}(k)})^{-1}$. For $w=\pw({\mathcal B},\sigma)$ we
	have $\setpar(w)={\mathcal B}$ and $\delrpt(w)=\sigma$.
\end{Definition}

\begin{figure}
	\begin{center}
		\begin{tikzpicture}[scale=0.8]
		\node at (0,0) {\begin{tikzpicture}[scale=0.48]
			\draw[ForestGreen, ultra thick] (0,1.5) -- (0,3.5);
			\draw[ForestGreen, ultra thick] (0,6.5) -- (0,8.5);
			
			\node () at (0,0) {$1213$};
			\node () at (0,5) {$1312$};
			\node () at (0,10) {$2321$};
			\end{tikzpicture}};
		\node at (5,0) {\begin{tikzpicture}[scale=0.4]
			\draw[ForestGreen, ultra thick] (5,1.7) -- (5,3);
			\draw[ForestGreen, ultra thick] (5,6.7) -- (5,8);
			
			\node () at (5,0) {\begin{tikzpicture}[scale=0.2,baseline=0pt]
				\draw[blue, thick] (0,-1) -- (0,0);
				\draw[blue, thick] (0,0) -- (4,4);
				\draw[blue, thick] (3,3) -- (2,4);
				\draw[blue, thick] (0,0) -- (-1,3);
				\draw[blue, thick] (-1,3) -- (0,4);
				\draw[blue, thick] (-1,3) -- (-2,4);
				\draw[blue, thick] (0,0) -- (-4,4);
				\filldraw[black] (0,0) circle (5pt)  {};
				\filldraw[black] (-1,3) circle (5pt)  {};
				\filldraw[black] (3,3) circle (5pt)  {};
				\node at (-0.6,1)[font=\fontsize{7pt}{0}]{$1$};
				\node at (0.4,1)[font=\fontsize{7pt}{0}]{$1$};
				\node at (3,3.7)[font=\fontsize{7pt}{0}]{$3$};
				\node at (-1,3.7)[font=\fontsize{7pt}{0}]{$2$};
				\end{tikzpicture}};
			\node () at (5,5) {\begin{tikzpicture}[scale=0.2,baseline=0pt]
				\draw[blue, thick] (0,-1) -- (0,0);
				\draw[blue, thick] (0,0) -- (4,4);
				\draw[blue, thick] (3,3) -- (2,4);
				\draw[blue, thick] (0,0) -- (-1,3);
				\draw[blue, thick] (-1,3) -- (0,4);
				\draw[blue, thick] (-1,3) -- (-2,4);
				\draw[blue, thick] (0,0) -- (-4,4);
				\filldraw[black] (0,0) circle (5pt)  {};
				\filldraw[black] (-1,3) circle (5pt)  {};
				\filldraw[black] (3,3) circle (5pt)  {};
				\node at (-0.6,1)[font=\fontsize{7pt}{0}]{$1$};
				\node at (0.4,1)[font=\fontsize{7pt}{0}]{$1$};
				\node at (3,3.7)[font=\fontsize{7pt}{0}]{$2$};
				\node at (-1,3.7)[font=\fontsize{7pt}{0}]{$3$};
				\end{tikzpicture}};
			\node () at (5,10) {\begin{tikzpicture}[scale=0.2,baseline=0pt]
				\draw[blue, thick] (0,-1) -- (0,0);
				\draw[blue, thick] (0,0) -- (4,4);
				\draw[blue, thick] (-1,1) -- (2,4);
				\draw[blue, thick] (-1,1) -- (-1,3);
				\draw[blue, thick] (-1,3) -- (0,4);
				\draw[blue, thick] (-1,3) -- (-2,4);
				\draw[blue, thick] (0,0) -- (-4,4);
				\filldraw[black] (0,0) circle (5pt)  {};
				\filldraw[black] (-1,3) circle (5pt)  {};
				\filldraw[black] (-1,1) circle (5pt)  {};
				\node at (-1.5,2)[font=\fontsize{7pt}{0}]{$2$};
				\node at (-0.5,2)[font=\fontsize{7pt}{0}]{$2$};
				\node at (0,0.7)[font=\fontsize{7pt}{0}]{$1$};
				\node at (-1,3.7)[font=\fontsize{7pt}{0}]{$3$};
				\end{tikzpicture}};
	\end{tikzpicture}};
\node at (10,0) {\begin{tikzpicture}[scale=0.48]
	\draw[ForestGreen, ultra thick] (0,1.5) -- (0,3.5);
	\draw[ForestGreen, ultra thick] (0,6.5) -- (0,8.5);
	
	\node () at (0,0) {$123$};
	\node () at (0,5) {$132$};
	\node () at (0,10) {$231$};
	\end{tikzpicture}};
\end{tikzpicture}
\end{center}
\caption{\sl Connected components of packed words and generalized Stirling permutations with underlying set partition $\{\{1,3\},\{2\},\{4\}\}$, and their image under $\delrpt$.}
\end{figure}
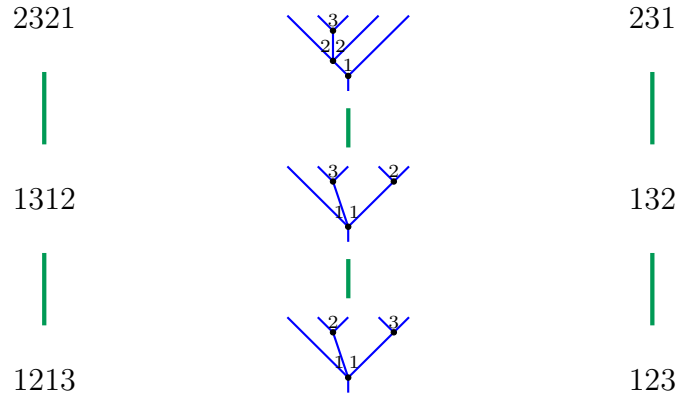

Since $\setpar(T_a(w))=\setpar(w)$, we have that $u$ and $w$ are incomparable if $\setpar(u)\neq\setpar(w)$. Hence, in each connected component, all elements have the same underlying set partition. But the converse is not true e.g. 121 and $212$ are not comparable, by Proposition \ref{prop:lpw}. The correct statement as it will be seen in Theorem~\ref{thm:lpw} is: two packed words are in the same connected component if and only if they have the same underlying set partition and the same nested inversions. 

\begin{Definition}
	Let $w$ be a packed word, $\delrpt(w)=\sigma$ and $\setpar(w)=\{B_1,\dots,B_k\}$ such that $\min B_1<\cdots<\min B_k$. Let $b_i=\min B_i$ for each $i$. We define the \textbf{inversion} set of $w$ to be $\Inv(w)=\Inv(\sigma)=\{(i,j):i<j\text{ and }w_{b_i}>w_{b_j}\}$. In particular, when $w$ is a permutation, its inversion set is consistent with the inversion set as a permutation.
	
	For $\setpar(w)=\{B_1,\dots,B_k\}$, two blocks $B_i,B_j$ are \textbf{nested} if $\min B_i<\min B_j<\max B_i$.
\end{Definition}

\begin{Lemma}\label{lem:iinv2}
	If $u$ and $w$ are packed words, then $u\leq_{Pw}w$ implies $\Inv(u)\subseteq\Inv(w)$.
\end{Lemma}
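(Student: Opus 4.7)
The plan is to reduce to a single covering relation $u \lessdot w = T_a(u)$ and then track carefully how the inversion statistics of the initial permutations $\delrpt(u)$ and $\delrpt(w)$ differ. By transitivity of $\leq_{Pw}$ and of set inclusion, the general statement will then follow from the covering case by induction on the length of a saturated chain from $u$ to $w$.

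For the covering case, the key observation is that $T_a$ only relabels the values $a$ and $a+1$ in $u$, leaving the underlying set partition unchanged; that is, $\setpar(w)=\setpar(u)$. Therefore, if we write $\setpar(u)=\{B_1,\dots,B_k\}$ with $\min B_1<\cdots<\min B_k$ and put $b_l=\min B_l$, the indexing of the blocks and the values $b_l$ are the same in both $u$ and $w$. Let $i$ and $j$ be the unique indices with $u_{b_i}=a$ and $u_{b_j}=a+1$. By Proposition \ref{prop:lpw}, the cover relation forces $\max u^{-1}(a)<\min u^{-1}(a+1)$, so in particular $b_i<b_j$ and hence $i<j$. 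Consequently $\delrpt(w)$ agrees with $\delrpt(u)$ except in positions $i$ and $j$, where the values $a$ and $a+1$ have been interchanged.

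The remainder of the proof will be a short case analysis on pairs $(l,m)$ with $l<m$: for pairs not involving $i$ or $j$ nothing changes; for $(l,i)$ or $(i,m)$ with $m\ne j$, and symmetrically for $(l,j)$ or $(j,m)$ with $l\ne i$, the third value $c=\sigma(l)$ or $\sigma(m)$ lies in $\{1,\dots,k\}\setminus\{a,a+1\}$, so $c>a$ if and only if $c>a+1$, which shows the inversion status is preserved. The only pair whose status changes is $(i,j)$ itself: $\sigma_u(i)=a<a+1=\sigma_u(j)$ while $\sigma_w(i)=a+1>a=\sigma_w(j)$, so $(i,j)\in\Inv(w)\setminus\Inv(u)$. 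Thus $\Inv(w)=\Inv(u)\sqcup\{(i,j)\}$, and in particular $\Inv(u)\subseteq\Inv(w)$.

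There is no real obstacle here; the only thing to be attentive to is that the re-indexing by minima produces the same $B_l$'s for $u$ and $w$, and that the hypothesis of Proposition \ref{prop:lpw} delivers $i<j$ in the ordering after re-indexing. Once these two bookkeeping points are in place the casework is mechanical, and this extra detail ($\Inv(w)=\Inv(u)\sqcup\{(i,j)\}$ in a covering step) will also be useful for Theorem \ref{thm:lpw} characterizing the connected components.
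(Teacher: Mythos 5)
Your proof is correct and follows essentially the same route as the paper's: reduce to a single cover $u \lessdot T_a(u)$, use Proposition \ref{prop:lpw} to place the blocks of $a$ and $a+1$ in the right order, and observe that exactly the one pair $(i,j)$ is added to the inversion set before concluding by transitivity. The paper states the key identity $\Inv(T_a(u))=\Inv(u)\cup\{(i,j)\}$ without the case analysis you spell out, but the argument is the same.
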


\begin{proof}
	Let $\setpar(u)=\{B_1,\dots,B_k\}$ such that $\min B_1<\cdots<\min B_k$. If $u<_{Pw} T_a(u)$ for some $a$, then $a\in B_i$, $a+1\in B_j$ for some $i<j$ and $\max B_i<\min B_j$. Then, $\Inv(T_a(u))=\Inv(u)\cup\{(i,j)\}$.
	The transitive closure of the relation implies the statement.
\end{proof}

\begin{Lemma}\label{lem:iinv}
	If $u$ and $w$ are packed words such that
	\begin{enumerate}
		\item $\setpar(u)=\setpar(w)=\{B_1,\dots,B_k\}$,
		\item If $B_i,B_j$ are nested, then either $(i,j)\in\Inv(u)\cap\Inv(w)$ or $(i,j)\notin\Inv(u)\cup\Inv(w)$.
	\end{enumerate}
	Then, $\Inv(u)\subseteq\Inv(w)$ implies $u\leq_{Pw}w$.
\end{Lemma}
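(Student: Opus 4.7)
The plan is to induct on the cardinality of $\Inv(w)\setminus \Inv(u)$. In the base case $\Inv(u)=\Inv(w)$, the initial permutations $\delrpt(u)$ and $\delrpt(w)$ coincide (since they are determined by their inversion sets); combined with $\setpar(u)=\setpar(w)$, this forces $u=w$, so $u\leq_{Pw}w$ trivially.

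For the inductive step, I will exhibit some $a$ such that $u\lessdot_{Pw} T_a(u)$ and $\Inv(T_a(u))\subseteq \Inv(w)$, and then show hypotheses (1)--(2) still hold for the pair $T_a(u),w$. First, since $\Inv(u)\subsetneq \Inv(w)$, viewing $\Inv$ as the classical inversion set of the initial permutation, we have $\delrpt(u)<_w\delrpt(w)$ in the classical left weak order on $S_k$. Hence there exists $a$ such that $T_a\,\delrpt(u)$ covers $\delrpt(u)$ and $\Inv(T_a\,\delrpt(u))\subseteq \Inv(\delrpt(w))=\Inv(w)$; equivalently, writing $\sigma=\delrpt(u)$, there are indices $i<j$ with $\sigma(i)=a$, $\sigma(j)=a+1$, and the sole new inversion $(i,j)$ lies in $\Inv(w)$. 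Translating back, $a\in B_i$ and $a+1\in B_j$, so $\min B_i=b_i$, $\min B_j=b_j$, and in particular $\min B_i<\min B_j$.

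The key obstacle is to verify the stronger condition $\max B_i<\min B_j$, which (by Proposition \ref{prop:lpw}) is what makes $T_a(u)$ actually cover $u$ in the packed-word weak order. Suppose not; then $\min B_i<\min B_j<\max B_i$, so $B_i$ and $B_j$ are nested. By hypothesis (2), the pair $(i,j)$ lies in both $\Inv(u)$ and $\Inv(w)$ or in neither. But $(i,j)\notin \Inv(u)$ because $\sigma(i)=a<a+1=\sigma(j)$, while $(i,j)\in \Inv(w)$ by construction — contradiction. Hence $\max B_i<\min B_j$, and $u\lessdot_{Pw} T_a(u)$ with $\Inv(T_a(u))=\Inv(u)\cup\{(i,j)\}$.

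Finally I need to confirm the hypotheses pass to the new pair $T_a(u),w$. Clearly $\setpar(T_a(u))=\setpar(u)=\setpar(w)$, giving (1). For (2), only the new inversion $(i,j)$ differs between $\Inv(u)$ and $\Inv(T_a(u))$; but we just showed $B_i,B_j$ are \emph{not} nested, so nested pairs have the same status in $\Inv(T_a(u))$ as in $\Inv(u)$, and (2) persists. Since $|\Inv(w)\setminus \Inv(T_a(u))|=|\Inv(w)\setminus \Inv(u)|-1$, the inductive hypothesis gives $T_a(u)\leq_{Pw}w$, and therefore $u\lessdot_{Pw}T_a(u)\leq_{Pw}w$, completing the induction.
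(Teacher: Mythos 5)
Your proof is correct and follows essentially the same route as the paper's: induct on the gap between $\Inv(u)$ and $\Inv(w)$, lift a cover $\delrpt(u)\lessdot T_a\,\delrpt(u)\leq\delrpt(w)$ in the weak order on $S_k$, and use hypothesis (2) to rule out nesting of $B_i,B_j$ so that Proposition \ref{prop:lpw} gives a genuine cover $u\lessdot_{Pw}T_a(u)$. Your explicit check that hypotheses (1)--(2) persist for the pair $(T_a(u),w)$ is a detail the paper leaves implicit, but it is not a different argument.
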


\begin{proof} Assume $\min B_1 <\cdots< \min B_k$.

Let $\tau=\delrpt(w)$. If we have $\Inv(\sigma)=\Inv(u)\subseteq \Inv(w)=\Inv(\tau)$, then $\sigma$ is less than $\tau$ in the weak order. We proceed by induction on $m=\ell(\tau)-\ell(\sigma)$. If $m=0$ the result is obvious.
If $m>0$, then we can find $a$ such that $\sigma \lessdot (a,a+1)\sigma\leq \tau$ in the weak order and hence $$\Inv(\sigma)\subset  \Inv((a,a+1)\sigma)=\Inv(\sigma)\cup\{(\sigma^{-1}(a),\sigma^{-1}(a+1))\}\subseteq \tau.$$
Let $(i,j)=(\sigma^{-1}(a),\sigma^{-1}(a+1))$. The blocks $B_i$ and $B_j$ are not nested according to the assumption (2).
Since $i<j$, we have $\min B_i <\min B_j$, and we must have $\min B_i \leq \max B_i <\min B_j$, otherewise they would be nested.
Hence, using Proposition~\ref{prop:lpw}, we have that $u\lessdot_{Pw} T_a(u)$, and we can conlude from the induction hypothesis. 
\end{proof}

\begin{Proposition}\label{prop:delrpt-setpar}
	If $u$ and $w$ are packed words such that condition (2) in Lemma \ref{lem:iinv} does not hold, then $u$ and $w$ are incomparable.
\end{Proposition}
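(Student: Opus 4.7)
Assume, as implicit in the failure of condition~(2), that $\setpar(u)=\setpar(w)=\{B_1,\dots,B_k\}$ with $\min B_1<\cdots<\min B_k$ (if these set partitions differed, Lemma~\ref{lem:iinv2} together with the fact that $T_a$-moves preserve $\setpar$ would already give incomparability). The failure of~(2) then produces indices $i<j$ for which $B_i$ and $B_j$ are nested, meaning $\min B_i<\min B_j<\max B_i$, and for which exactly one of $(i,j)\in\Inv(u)$, $(i,j)\in\Inv(w)$ holds. Without loss of generality suppose $(i,j)\in\Inv(u)\setminus\Inv(w)$.

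The first direction is immediate: if $u\leq_{Pw} w$, then Lemma~\ref{lem:iinv2} forces $\Inv(u)\subseteq\Inv(w)$, which contradicts $(i,j)\in\Inv(u)\setminus\Inv(w)$.

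The core of the argument is ruling out $w\leq_{Pw} u$. The plan is to take any saturated chain $w=w^{(0)}\lessdot_{Pw} w^{(1)}\lessdot_{Pw}\cdots\lessdot_{Pw} w^{(m)}=u$ and track the letters $v_i^{(t)}$ and $v_j^{(t)}$ that $w^{(t)}$ assigns to the blocks $B_i$ and $B_j$; since each cover is a $T_a$-move, $\setpar$ is preserved along the chain, so this tracking is well defined. By assumption $v_i^{(0)}<v_j^{(0)}$ and $v_i^{(m)}>v_j^{(m)}$, so there is some first step $t$ where the relative order of these two values flips. Because only adjacent letters swap under a single $T_a$-move, this flip must occur when $\{v_i^{(t)},v_j^{(t)}\}=\{a,a+1\}$ with $v_i^{(t)}=a$ and $v_j^{(t)}=a+1$.

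The final (and main) step is to derive a contradiction from this swap. In $w^{(t)}$ the positions labelled $a$ form exactly the block $B_i$ and those labelled $a+1$ form exactly $B_j$, because each value corresponds to a unique block of the common set partition. For $T_a$ to be a valid cover at this step, Proposition~\ref{prop:lpw} requires $\max B_i=\max w^{(t)-1}(a)<\min w^{(t)-1}(a+1)=\min B_j$, directly contradicting the nesting $\min B_j<\max B_i$. Hence no such chain exists, so $w\not\leq_{Pw} u$, and combining with the first direction yields that $u$ and $w$ are incomparable. The only delicate point to spell out carefully is that $\setpar$ and the indexing of blocks is preserved along a saturated chain, so that the tracking $t\mapsto(v_i^{(t)},v_j^{(t)})$ makes sense and the final appeal to Proposition~\ref{prop:lpw} is applied to the correct pair of blocks.
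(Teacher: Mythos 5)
Your proof is correct and follows essentially the same route as the paper's: one direction is immediate from Lemma \ref{lem:iinv2}, and the other is ruled out by showing that the relative order of the two letters occupying the nested blocks $B_i$ and $B_j$ cannot flip along any chain of covers, because a cover $T_a$ swapping exactly those two values would require $\max B_i<\min B_j$ by Proposition \ref{prop:lpw}, contradicting nestedness. The paper phrases the same invariant in terms of three positions $p<q<r$ with $p,r\in B_i$, $q\in B_j$ and cites the ``in particular'' clause of Proposition \ref{prop:lpw}, but the underlying argument is identical.
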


\begin{proof} 	 By assumption, there is some $B_i,B_j$ nested, such that $(i,j)\in\Inv(u)$ and $(i,j)\notin\Inv(w)$ (by exchanging the roles of $u$ and $w$ if necessary). By Lemma \ref{lem:iinv2}, $u \not \leq _{Pw} w$ because $\Inv(u)\nsubseteq\Inv(w)$. Since $B_i,B_j$ are nested, we can find $p<q<r$ such that $p,r\in B_i$ and $q\in B_j$. Then we have that $u_p=u_r>u_q$ and $w_p=w_r<w_q$. We claim that for all $w\leq_{Pw}v$, we must have $v_p=v_r<v_q$. It suffices to check for $v=T_a(w)$.
	
	Suppose not, i.e. $v_p=v_r>v_q$. Since $v=T_a(w)$, the only possibility is that $w_p=w_r=v_q=a$ and $w_q=v_p=v_r=a+1$. But then by Proposition \ref{prop:lpw}, $v$ and $w$ are incomparable. Therefore, $w$ is not less than $u$ either.
\end{proof}

\begin{Theorem}\label{thm:lpw}
	Fix a set partition $\{B_1,\dots,B_k\}$ with $\min B_1<\cdots<\min B_k$. Let $X=\{(i,j):i<j,B_i,B_j\text{ are nested}\}$. Then each $A\subseteq X$ induces a connected component (possibly empty) of the planar weak order consisting of packed words $w$ such that $A\subseteq\Inv(w)$, $(X\setminus A)\cap\Inv(w)=\emptyset$ and $\setpar(w)=\{B_1,\dots,B_k\}$.
	
	Hence each connected component in the planar weak order is a bounded lattice, and is isomorphic to an interval of the weak order. In particular,
	\begin{enumerate}
		\item $u\vee w=\pw(\setpar(u),\delrpt(u)\vee\delrpt(w))$,
		\item $u\land w=\pw(\setpar(u),\delrpt(u)\land\delrpt(w))$.
	\end{enumerate}
\end{Theorem}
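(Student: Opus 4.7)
The plan is to organize the proof in three main steps: identify the components with subsets of $S_k$ via $\delrpt$, establish an interval structure on the image, and deduce the lattice formulas.

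First, I will combine Proposition \ref{prop:delrpt-setpar} and Lemma \ref{lem:iinv2} to show that any two packed words that share a set partition $\mathcal{B}=\{B_1,\ldots,B_k\}$ but differ in the value of $X\cap\Inv(\cdot)$ are incomparable in $\leq_{Pw}$. Consequently every connected component lies inside a single class
\[
\mathcal{C}_{\mathcal{B},A}=\{w:\setpar(w)=\mathcal{B},\ X\cap\Inv(w)=A\}
\]
for some $A\subseteq X$. Within such a class, Lemma \ref{lem:iinv} applies (hypothesis (2) is automatic from $X\cap\Inv=A$ on both sides), and combined with Lemma \ref{lem:iinv2} yields the equivalence $u\leq_{Pw}w \iff \Inv(u)\subseteq\Inv(w)$. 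Since $\pw(\mathcal{B},\delrpt(w))=w$ and $\Inv(w)=\Inv(\delrpt(w))$, the restriction of $\delrpt$ to $\mathcal{C}_{\mathcal{B},A}$ is an injective order-embedding into the weak order on $S_k$, with image
\[
I_{\mathcal{B},A}=\{\sigma\in S_k:\Inv(\sigma)\cap X=A\}.
\]

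Second, and this is the main obstacle, I need to show that $I_{\mathcal{B},A}$, when non-empty, is an interval in the weak order on $S_k$. I plan to verify this by proving closure under the weak-order meet and join, using the standard characterization of $\Inv(\sigma\vee\tau)$ as the smallest biclosed superset of $\Inv(\sigma)\cup\Inv(\tau)$, and dually for the meet. The delicate point is ensuring that a pair $(i,k)\in X\setminus A$ cannot be forced into $\Inv(\sigma\vee\tau)$ by transitive closure from pairs in $\Inv(\sigma)\cup\Inv(\tau)$, even though neither $\sigma$ nor $\tau$ inverts it. To rule this out I will exploit a structural feature of nested blocks: whenever $(i,k)\in X$, meaning $\min B_k<\max B_i$, the relation $\min B_j<\min B_k<\max B_i$ forces $(i,j)\in X$ for every $i<j<k$. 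Combined with the constraint $X\cap\Inv(\sigma)=X\cap\Inv(\tau)=A$, this propagates the $A$-membership along any biclosure chain passing through such intermediate indices, forcing $(i,k)\in A$ and contradicting the assumption. The intersection-side (meet) argument is dual.

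Third, once the interval property is established, $\delrpt$ becomes a lattice isomorphism from $\mathcal{C}_{\mathcal{B},A}$ onto an interval of $S_k$, with inverse $\sigma\mapsto\pw(\mathcal{B},\sigma)$. The stated join and meet formulas follow at once. Connectedness of each non-empty $\mathcal{C}_{\mathcal{B},A}$ is automatic: any cover $\sigma\lessdot T_a\sigma$ inside $I_{\mathcal{B},A}$ must have $(\sigma^{-1}(a),\sigma^{-1}(a+1))\notin X$ (otherwise $X\cap\Inv$ would change), and Proposition \ref{prop:lpw} then lifts this to a cover in $\leq_{Pw}$, producing a saturated chain in $\mathcal{C}_{\mathcal{B},A}$ between any two of its elements.
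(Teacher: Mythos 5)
Your proposal is correct and follows essentially the same route as the paper's proof: both reduce to the weak order on permutations via $\delrpt$, characterize the candidate components by the value of $X\cap\Inv(\cdot)$ using Propositions \ref{prop:lpw} and \ref{prop:delrpt-setpar}, and establish closure under join and meet by ruling out that a nested pair $(i,k)\in X\setminus A$ is forced into $\Inv(\sigma\vee\tau)$ by transitive closure, via the key observation that $(i,k)\in X$ and $i<j<k$ imply $(i,j)\in X$. Your chain-propagation is a slightly more explicit version of the paper's two-step transitivity argument, and your lifting of covers to prove connectedness of each class makes explicit a step the paper leaves to the reader.
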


\begin{proof}
	We only need to check that $u\vee w$ and $u\land w$ are in the connected component of $u$ and $w$. The rest follows from Propositions \ref{prop:lpw} and \ref{prop:delrpt-setpar} and the observation that $\Inv(\delrpt(w))=\Inv(w)$. A connected component $W$ in the planar weak order corrsponds to the interval $\{\delrpt(w):w\in W\}$ in the weak order.
	
	We show the case of $u\vee w$, and omit the case of $u\land w$. Suppose not, that means we can find $B_i,B_j\in\setpar(u)=\{B_1,\dots,B_k\}$ nested such that
	\begin{enumerate}
		\item $(i,j)\in \Inv(u)\cap\Inv(w)$ but $(i,j)\notin\Inv(u\vee w)$, or
		\item $(i,j)\notin \Inv(u)\cup\Inv(w)$ but $(i,j)\in\Inv(u\vee w)$.
	\end{enumerate}
	Case (1) cannot be true because $\Inv(u) = \Inv(\delrpt u) \subseteq \Inv( (\delrpt u) \vee (\delrpt w) )=\Inv(u\vee w)$. In case (2), recall the well-known fact that $\Inv(u\vee w)$ equals the transitive closure of $\Inv(u)\cup\Inv(w)$. Hence, we can find $i<h<j$ such that $(i,h)$ and $(h,j)$ are each in either $\Inv(u)$ or $\Inv(w)$ (or both). Without loss of generality, we can assume $(i,h)\in\Inv(u)$. If $(h,j)\in \Inv(u)$ also, then by transitivity $(i,j)\in \Inv(u)$, contradicting (2). So we must have $(h,j)\in \Inv(w)$, and by the same transitivity argument we must have $(i,h)\notin\Inv(w)$. Now since $B_i,B_j$ are nested, $\min B_i<\min B_{h}<\min B_j<\max B_i$. That means $B_i$ and $B_{h}$ are also nested. But then by Proposition \ref{prop:delrpt-setpar}, $u$ and $w$ are not comparable, which is a contradiction.
\end{proof}

\subsection{Planar weak order on generalized Stirling permutations}

Recall from the end of Section \ref{sec:gsp} the maps $\pi_\ptree:\GSP\to\PT$ that forgets the labels, and $\iota_\gsp:\PT\to\GSP$ sending a planar tree $t$ to the unique $213$-avoiding generalized Stirling permutation with underlying planar tree $t$.

\begin{Lemma}\label{lem:iota-max}
The generalized Stirling permutation $\iota_\gsp(t)$ is the maximal preimage of $t$ under $\pi_\ptree$.
\end{Lemma}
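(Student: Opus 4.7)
The plan is to establish maximality via Theorem \ref{thm:lpw} together with a direct comparison of inversion sets. I will first observe that for every preimage $u \in \pi_\ptree^{-1}(t)$, the unordered set partition $\setpar(u)$ is determined by $t$ alone: for each internal node $v$ of $t$, the block $P_v$ of positions at which the label $\kappa(v)$ appears consists of the gaps between $v$'s consecutive children in the reading order of $t$. Two such blocks $P_v, P_{v'}$ are nested in the sense of Theorem \ref{thm:lpw} precisely when $v$ is a proper ancestor of $v'$ and $v'$ lies in a child of $v$ that is neither the first nor the last; in this configuration the generalized Stirling condition $\kappa(v) < \kappa(v')$ forces the pair to lie outside $\Inv(u)$. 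Hence every preimage $u$ has the same nested-inversion set (namely, the empty set), so Theorem \ref{thm:lpw} places all preimages of $t$ in a single connected component of the planar weak order.

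Within this component, Lemma \ref{lem:iinv} reduces the problem to showing that $\Inv(u) \subseteq \Inv(\iota_\gsp(t))$ for every preimage $u$. I will classify each ordered pair $(v, v')$ of distinct internal nodes of $t$ with $\min P_v < \min P_{v'}$ (so that $P_v = B_i$ and $P_{v'} = B_j$ with $i < j$ in the min-ordering) into three cases: (a) $v'$ is a descendant of $v$, hence $v'$ lies in a non-first child of $v$; (b) $v$ is a descendant of $v'$, hence $v$ lies in the first child of $v'$; or (c) $v$ and $v'$ are incomparable with least common ancestor $w$, and $v$ sits in an earlier child-subtree of $w$ than $v'$. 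In cases (a) and (b) the Stirling condition rigidly pins down whether $(i,j) \in \Inv(u)$ regardless of the labeling, so the contribution of these pairs to $\Inv(u)$ agrees across all preimages of $t$; only case (c) pairs are sensitive to the labeling, and then $(i,j) \in \Inv(u)$ iff $\kappa(v) > \kappa(v')$.

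The final step uses the recursive construction of $\iota_\gsp$: at every internal node $w$ of $t$ with children $c_1, \ldots, c_k$, the labels assigned to the subtrees form contiguous intervals, with the subtree under $c_k$ receiving the smallest interval, the subtree under $c_{k-1}$ the next, and so on. Consequently, for any incomparable pair $v, v'$ with LCA $w$ and $v$ in $c_p$, $v'$ in $c_q$ with $p < q$, every label used in the $c_p$-subtree strictly exceeds every label used in the $c_q$-subtree, so $\kappa(v) > \kappa(v')$; that is, $\iota_\gsp(t)$ realizes every case (c) inversion. Combining with the rigidity of cases (a) and (b), we deduce $\Inv(u) \subseteq \Inv(\iota_\gsp(t))$ for any preimage $u$, and Lemma \ref{lem:iinv} then yields $u \leq_{Pw} \iota_\gsp(t)$, establishing maximality.

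The main technical hurdle is the positional bookkeeping in the classification above: one must carefully determine which of $P_v$ or $P_{v'}$ has the smaller minimum as a function of the ancestor-descendant-or-LCA relation (including the refinement into first, middle, or last child), and then translate each configuration into a precise statement about $\Inv$. Once this dictionary is set up correctly, the monotone interval assignment produced by the recursive definition of $\iota_\gsp(t)$ makes the required inversion inclusion essentially immediate.
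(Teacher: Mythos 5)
Your proof is correct, but it takes a genuinely different route from the paper's. The paper argues by local improvement: if a preimage $u$ of $t$ is not $213$-avoiding, one extracts a $213$-pattern whose ``$2$'' and ``$3$'' carry \emph{consecutive} values $a$ and $a+1$ with a smaller letter positioned between them (by an iterative descent on the value gap); the intermediate smaller letter guarantees the nodes labeled $a$ and $a+1$ are not in a parent--child relation, so $T_a(u)$ is again a preimage of $t$ and $T_a(u)>_{Pw}u$. Hence every maximal preimage is $213$-avoiding, and uniqueness of the $213$-avoiding preimage finishes the argument. You instead make a global comparison: you note $\setpar$ is determined by $t$, identify the nested pairs with ancestor--descendant pairs through a middle child (where the increasing-label condition forces non-inversion in \emph{every} preimage, so Theorem \ref{thm:lpw} puts the whole fiber in one connected component), and then check $\Inv(u)\subseteq\Inv(\iota_\gsp(t))$ by classifying node pairs into the rigid ancestor/descendant cases and the label-sensitive incomparable case, which the interval structure of $\iota_\gsp$ always realizes as an inversion; Lemma \ref{lem:iinv} then gives $u\leq_{Pw}\iota_\gsp(t)$. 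Your dictionary between tree configurations and block positions is set up correctly (in particular, incomparable nodes never give nested blocks, and a descendant through a first or last child gives a rigidly determined, non-nested pair), and there is no circularity since Theorem \ref{thm:lpw} and Lemma \ref{lem:iinv} precede this lemma. The paper's proof is shorter and needs only Proposition \ref{prop:lpw}; yours costs more bookkeeping but yields extra structure -- it exhibits the fiber $\pi_\ptree^{-1}(t)$ as sitting inside a single component with an explicit description of which inversions are forced and which are free, which meshes naturally with the Galois connection of Corollary \ref{cor:Galois_connection}.
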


\begin{proof}
If $\pi_\ptree(u)=t$ and $u$ is not $213$-avoiding, we claim that we can find $a$ and $i<m<j$ such that $u_i=a$, $u_j=a+1$ and $u_m<a$. We can find $i<m<j$ such that $u_j>u_i>u_m$. We search iteratively on $u_j-u_i$. If $u_j-u_i>1$, then we can find $u_n=u_i+1$ for some $n$. If $n>m$, then the triple $(i,m,n)$ is what we want. If $n<m$, then the triple $(n,m,j)$ satisfies $u_j>u_n>u_m$ and $u_j-u_n<u_j-u_i$. The claim will be fulfilled by keep doing this process.

In this case, the node with label $a+1$ is not on top of the node with label $a$ i.e. $\pi_\ptree(T_a(u))=t$. Since $T_a(u)\geq_{Pw} u$, the statement follows.
\end{proof}

\begin{Proposition}\label{prop:lsb}
	Let $u,v$ be generalized Stirling permutations and $s,t$ be trees. Then
\begin{enumerate}
	\item if $u\leq_{Pw}v$, then $\pi_\ptree(u)\leq_{PT}\pi_\ptree(v)$;
	\item if $s\leq_{PT}t$, then $\iota_\gsp(s)\leq_{Pw}\iota_\gsp(t)$.
\end{enumerate}
\end{Proposition}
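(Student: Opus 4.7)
Both parts reduce to checking the inequality on cover relations. For (1), the planar weak order is generated by covers $u\lessdot_{Pw} T_a(u)$, characterised by Proposition \ref{prop:lpw} as $\max u^{-1}(a) < \min u^{-1}(a+1)$, so I would fix such a cover and aim to show that $\pi_\ptree(T_a(u))$ is either equal to $\pi_\ptree(u)$ or lies exactly one left rotation above it. For (2), since $\leq_{PT}$ is generated by the single-rotation covers of Definition \ref{def:lpt}, it suffices to treat one cover $s\lessdot_{PT} t$, and my plan is to bootstrap the argument by combining part (1) with Lemma \ref{lem:iota-max}.

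To prove (1), I would let $\alpha$ and $\beta$ denote the internal nodes of $u$ carrying the labels $a$ and $a+1$. Since labels strictly increase from the root, $\beta$ cannot be an ancestor of $\alpha$, so either (A) $\alpha$ is a proper ancestor of $\beta$, or (B) the two are incomparable. Inspecting the left-to-right DFS traversal that reads off $\mathbf{w}(u)$, the hypothesis that all $a$'s precede all $a+1$'s forces in Case (A) that $\beta$ sit in the rightmost-child subtree of $\alpha$; the absence of any integer label strictly between $a$ and $a+1$ then forces $\beta$ to be exactly the rightmost child of $\alpha$. In Case (B), the same inspection forces $\alpha$'s subtree to lie strictly to the left of $\beta$'s subtree within their least common ancestor. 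In Case (B), the label swap preserves every root-to-leaf chain (each meets at most one of $\alpha,\beta$), so $T_a(u)$ shares the underlying tree of $u$ and $\pi_\ptree(u)=\pi_\ptree(T_a(u))$. In Case (A), writing $\mathbf{w}(u)$ as a concatenation of the traversals of $\alpha$'s children $c_1,\ldots,c_{k-1},\beta$ interleaved with the $k-1$ occurrences of $a$, and unpacking the traversal of $\beta$ as its children $d_1,\ldots,d_{l+1}$ interleaved with the $l$ occurrences of $a+1$, I would apply the swap and reparse the resulting word as a GSP; a direct inspection shows that the new tree has $\beta$ moved to become the leftmost child of $\alpha$, carrying $c_1,\ldots,c_{k-1},d_1$ as its children, while $d_2,\ldots,d_{l+1}$ become the new right siblings of $\beta$ under $\alpha$. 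This is precisely the left rotation at $\alpha$ from Definition \ref{def:lpt}, so $\pi_\ptree(u)\lessdot_{PT}\pi_\ptree(T_a(u))$.

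For (2), given a cover $s\lessdot_{PT} t$ realised by a left rotation at a node $x$ with rightmost child $y$ (which is internal by hypothesis, since it has a leftmost branch $B$), the right-to-left DFS construction of $\iota_\gsp$ assigns $y$ the label immediately following that of $x$; call these labels $\alpha$ and $\alpha+1$. Setting $v:=T_\alpha(\iota_\gsp(s))$, the hypotheses of Case (A) in part (1) are met, so—once I confirm that $v$ is a valid GSP—I obtain $\iota_\gsp(s)\lessdot_{Pw} v$ together with $\pi_\ptree(v)=t$. Lemma \ref{lem:iota-max} then yields $v\leq_{Pw}\iota_\gsp(t)$: its proof constructs an explicit ascending chain of $T_a$-moves from any preimage of $t$ up to the unique $213$-avoiding preimage $\iota_\gsp(t)$. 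Chaining gives $\iota_\gsp(s)\leq_{Pw}\iota_\gsp(t)$. The main obstacle I anticipate is verifying that $v=T_\alpha(\iota_\gsp(s))$ is $212$-avoiding. A case analysis on the value of a hypothetical equal pair $v_i=v_k$ reduces every situation to a $212$-pattern already present in $\iota_\gsp(s)$—contradicting $\iota_\gsp(s)\in\GSP$—except for the case $u_i=u_k=\alpha$, $u_j=\alpha+1$ with $i<j<k$; this last case is ruled out by the structural fact that, as $y$ is the rightmost child of $x$, every region of $y$ in $\mathbf{w}(\iota_\gsp(s))$ lies strictly after every region of $x$, so no such $j$ can occur between two $\alpha$-positions.
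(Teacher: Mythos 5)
Your proof is correct and follows essentially the same route as the paper's: reduce to cover relations; split part (1) according to whether the nodes labeled $a$ and $a+1$ are incomparable (underlying tree unchanged) or in the parent--child relation (exactly one left rotation); and obtain part (2) by applying $T_a$ to $\iota_\gsp(s)$ at the rotation node, identifying its image under $\pi_\ptree$ with $t$, and finishing with Lemma \ref{lem:iota-max}. The only cosmetic differences are that the paper phrases the case split of (1) in terms of the word (whether a letter smaller than $a$ occurs between the block of $a$'s and the block of $(a+1)$'s) rather than the tree, and in (2) it sidesteps your $212$-avoidance check by constructing the intermediate element directly as a labeled tree.
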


\begin{proof}
We begin with (1). Let $w$ be a $212$-avoiding packed word, and $u$ covered by $w$ in the planar weak order. It suffices to show that $\pi_\ptree(\mathbf{t}(u))\leq_{PT}\pi_\ptree(\mathbf{t}(w))$. Let $u=T_a(w)$, $x=\min\{i:w_i=a\}$, $y=\max\{i:w_i=a+1\}$. We must have $y<x$ because otherwise, $w$ and $u$ are not comparable by Proposition \ref{prop:lpw}. Then we have two cases.

Case 1. There exists $y<m<x$ such that $w_m<a$. In this case, the node with label $a+1$ is not on top of the node with label $a$. Then $u$ and $w$ have the same underlying tree.

Case 2. For any $y<m<x$, $w_m>a+1$. In this case, the node with label $a+1$ is on the left most leaf of the node with label $a$. Then, $\pi_\ptree(u)$ is covered by $\pi_\ptree(w)$ in the planar Tamari order, as shown in Definition \ref{def:lpt}.

To prove (2), let $s,t$ be trees and $s$ covered by $t$ in the planar Tamari order. Suppose $t$ is obtained from $s$ by a left rotation at node $x$, and let $y$ be the right most child of $x$ in $s$ i.e. $y$ is the left most child of $x$ in $t$. Let $\kappa_{\iota_{\gsp}(s)}(x)=a$, by construction, we must have $\kappa_{\iota_{\gsp}(s)}(y)=a+1$ because $\iota_{\gsp}(s)$ is $213$-avoiding. Let $u$ be the labeled tree such that $\pi_\ptree(u)=t$ and $\kappa_u(x)=a$, $\kappa_u(y)=a+1$ and $\kappa_u(i)=\kappa_{\iota_{\gsp}(s)}(i)$ for all internal nodes $i$. Then, $u$ is well defined and $u=T_a(\iota_{\gsp}(s))$.

By Lemma \ref{lem:iota-max}, we know that $\iota_{\gsp}(t)$ is maximal. Therefore, $\iota_{\gsp}(s)\leq_{Pw}u\leq_{Pw}\iota_{\gsp}(t)$.
\end{proof}

From Lemma \ref{lem:iota-max} and Proposition \ref{prop:lsb} we obtain the following corollary.
\begin{Corollary}\label{cor:Galois_connection}
The maps $\pi_{ptree}$ and $\iota_{gsp}$ form a   Galois connection between the weak order on Stirling permutations and the Tamari order on planar trees. 
\end{Corollary}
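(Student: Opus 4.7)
The plan is to verify the standard characterization of a Galois connection, namely that for all $u\in\GSP$ and $t\in\PT$,
\[
\pi_{ptree}(u)\leq_{PT} t \quad\Longleftrightarrow\quad u\leq_{Pw}\iota_{gsp}(t),
\]
using only the two facts already established just above: Proposition \ref{prop:lsb}, which tells us that both $\pi_{ptree}$ and $\iota_{gsp}$ are order-preserving; and Lemma \ref{lem:iota-max}, which identifies $\iota_{gsp}(t)$ as the maximum element of the fiber $\pi_{ptree}^{-1}(t)$.

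First, I would extract from Lemma \ref{lem:iota-max} the two ``unit/counit'' relations. Since $\iota_{gsp}(t)$ is by definition a generalized Stirling permutation whose underlying planar tree is $t$, we have the identity $\pi_{ptree}\circ\iota_{gsp}=\mathrm{id}_{\PT}$. Dually, for any $u\in\GSP$, the element $u$ lies in the fiber $\pi_{ptree}^{-1}(\pi_{ptree}(u))$, so by the maximality statement of Lemma \ref{lem:iota-max} we get the inequality $u\leq_{Pw}\iota_{gsp}(\pi_{ptree}(u))$.

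Next, I would deduce each direction of the equivalence by composing these relations with the order-preservation statements from Proposition \ref{prop:lsb}. For the forward direction, assume $\pi_{ptree}(u)\leq_{PT} t$; applying the order-preserving map $\iota_{gsp}$ yields $\iota_{gsp}(\pi_{ptree}(u))\leq_{Pw}\iota_{gsp}(t)$, and composing with the unit inequality $u\leq_{Pw}\iota_{gsp}(\pi_{ptree}(u))$ gives $u\leq_{Pw}\iota_{gsp}(t)$. For the reverse direction, assume $u\leq_{Pw}\iota_{gsp}(t)$; applying the order-preserving map $\pi_{ptree}$ and using $\pi_{ptree}(\iota_{gsp}(t))=t$ immediately gives $\pi_{ptree}(u)\leq_{PT} t$.

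There is no real obstacle here, since the substantive content of the corollary is already contained in Proposition \ref{prop:lsb} and Lemma \ref{lem:iota-max}; the proof is a purely formal deduction from these two ingredients. The only mildly subtle point is to be explicit that $\pi_{ptree}(\iota_{gsp}(t))=t$ holds on the nose (not just $\leq_{PT}$), which is immediate from the construction of $\iota_{gsp}$ as a section of $\pi_{ptree}$ described at the end of Section \ref{sec:gsp}.
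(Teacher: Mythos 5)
Your proof is correct and follows essentially the same route as the paper's: both directions are deduced from the order-preservation statements of Proposition \ref{prop:lsb} together with the maximality of $\iota_{gsp}(t)$ in its fiber from Lemma \ref{lem:iota-max}, exactly as in the paper. Your explicit isolation of the identities $\pi_{ptree}\circ\iota_{gsp}=\mathrm{id}$ and $u\leq_{Pw}\iota_{gsp}(\pi_{ptree}(u))$ is just a slightly more formal packaging of the same argument.
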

\begin{proof}
Assume $w \leq_{Pw} \iota_{gsp}(t)$. Then $\pi_{ptree}(w)\leq_{PT} \pi_{ptree}(\iota_{gsp} t)= t$, because $\pi_{ptree}$ is order-preserving.

Conversely, assume $\pi_{ptree}(w) \leq_{PT} t$. Then $w\leq_{Pw} \iota_{gsp}(\pi_{ptree} w)\leq_{Pw} \iota_{gsp}(t)$ - the first inequality is by Lemma \ref{lem:iota-max}, and the second because $\iota_{gsp}$ is order-preserving.
\end{proof}

\begin{Proposition}\label{prop:interval_multiplication_STSYM}

Let $F_u$ and $F_v$ be two basis elements in $\STSym$, and let $F_u^*$, $F_v^*$ be their dual basis elements in the graded dual $\STSym^*$. Then,
$$m(F_u^*\otimes F_v^*)=\sum_{u\backslash v\leq_{Pw} w\leq_{Pw} u/v}F_w^*.$$
\end{Proposition}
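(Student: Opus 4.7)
By duality, the coefficient of $F_w^*$ in $m(F_u^*\otimes F_v^*)$ equals the coefficient of $F_u\otimes F_v$ in $\Delta(F_w)$. Since the coproduct $\Delta(F_w)$ sums over allowable deconcatenations $w\mapsto(w_1,w_2)$ followed by standardization, and since for each degree $i=\deg u$ there is at most one such deconcatenation (the split at the $(i+1)$-th leaf), this coefficient equals $1$ exactly when that split is allowable with $\std({}^{i}w)=u$ and $\std(w^{i})=v$, and $0$ otherwise. Thus the proposition reduces to showing that the set
\begin{equation*}
A := \left\{w\in\GSP: \text{the split of }w\text{ at the }(i+1)\text{-th leaf is allowable},\ \std({}^{i}w)=u,\ \std(w^{i})=v\right\}
\end{equation*}
coincides with the planar weak order interval $[u\backslash v,\ u/v]$.

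The endpoints $u\backslash v$ and $u/v$ lie in $\GSP\cap A$: a direct check from Definition \ref{def:perm} shows that their labels satisfy the increasing-along-paths condition, and that the split at the $(i+1)$-th leaf is allowable (the cut path exits each internal node via the leftmost or rightmost child only) and recovers $u,v$ up to standardization. For a general $w\in A$, allowability forces every internal node on the cut path to appear on only one side, which is equivalent to the statement that every label of $w$ has positions contained entirely in $[i]$ or entirely in $[i+1,n]$. Writing $U_\bullet$ and $V_\bullet$ for the parts of $\setpar(u),\setpar(v)$, this yields $\setpar(w)=\{U_1,\dots,U_{\ideg u},\, V_1+i,\dots,V_{\ideg v}+i\}$, matching $\setpar(u\backslash v)=\setpar(u/v)$. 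Since $U$-parts and $V$-parts occupy disjoint position ranges, no mixed nested pair occurs, and every nested pair of $\setpar(w)$ is internal to either $u$ or $v$. Standardization preserves relative order, so the inversion status of each nested pair in $w$ matches the corresponding one in $u$ or in $v$, and hence also in $u\backslash v$ and $u/v$. By Theorem \ref{thm:lpw}, all of $A\cup\{u\backslash v,u/v\}$ lies in a single connected component of the planar weak order.

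Within this connected component, Theorem \ref{thm:lpw} gives an order isomorphism to a weak order interval of $S_{\ideg u + \ideg v}$ via $\delrpt$. A direct unwinding of the definitions yields
\begin{equation*}
\delrpt(u\backslash v)=\delrpt(u)\backslash\delrpt(v), \qquad \delrpt(u/v)=\delrpt(u)/\delrpt(v).
\end{equation*}
Moreover, for $w\in A$, the initial permutation $\rho=\delrpt(w)$ satisfies $\std({}^{\ideg u}\rho)=\delrpt(u)$ and $\std(\rho^{\ideg u})=\delrpt(v)$, and conversely every such $\rho$ lifts uniquely to some $w\in A$ via the fixed set partition using the map $\pw$. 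Proposition \ref{prop:perm} applied to $\delrpt(u)$ and $\delrpt(v)$ identifies these $\rho$ with the weak order interval $[\delrpt(u)\backslash\delrpt(v),\,\delrpt(u)/\delrpt(v)]$, and transferring back through the order isomorphism identifies $A$ with $[u\backslash v, u/v]$ in the planar weak order, as required.

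The main obstacle is the bookkeeping in transferring between generalized Stirling permutations and their initial permutations; in particular, one must verify that every $\rho$ in the relevant weak order interval lifts to a $212$-avoiding packed word so that the lift lies in $\GSP$. This holds because $212$-avoidance is equivalent to the absence of inversions on all nested pairs of the underlying set partition, and this condition is constant on each connected component of the planar weak order by Theorem \ref{thm:lpw}.
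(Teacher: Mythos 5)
Your argument is genuinely different from the paper's. The paper works on the product side: it invokes the explicit packed-word product formula from Proposition \ref{prop:wqsym-embedding} and verifies directly that each word $w$ occurring there satisfies $\setpar(w)=\setpar(u\backslash v)=\setpar(u/v)$ and $\Inv(u\backslash v)\subseteq\Inv(w)\subseteq\Inv(u/v)$, concluding via Lemmas \ref{lem:iinv2} and \ref{lem:iinv}. You instead dualize to the coproduct, show the fibre $A$ sits in a single connected component, and transport everything through the $\delrpt$ isomorphism of Theorem \ref{thm:lpw} so that the known interval description for $\SSym$ (Proposition \ref{prop:perm}) does the work. This is a clean reduction, and it treats both inclusions ($A\subseteq[u\backslash v,u/v]$ and the converse) symmetrically, where the paper's write-up is terse about the converse.

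However, the final paragraph rests on a false assertion: $212$-avoidance is \emph{not} equivalent to the absence of inversions on nested pairs. The packed word $w=1212$ has $\setpar(w)=\{\{1,3\},\{2,4\}\}$, whose unique nested pair is non-inverted because $\delrpt(1212)=12$ has no inversions, yet $1212$ contains the pattern $212$. This is precisely the step that guarantees your lifts $\pw(\mathcal{B},\rho)$ land in $\GSP$, without which the surjectivity of $A\to[\delrpt(u)\backslash\delrpt(v),\delrpt(u)/\delrpt(v)]$ is unproved, so it must be repaired. Two repairs are available: (a) the equivalence \emph{does} hold when the underlying set partition is noncrossing, which $\mathcal{B}=\setpar(u\backslash v)$ is (see the remark following Proposition \ref{prop:wqsym-embedding}); a crossing configuration is exactly what is needed to produce a $212$ occurrence with no inverted nested pair. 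Or (b) check directly that $212$-avoidance is preserved by the covering moves $w\lessdot T_a(w)$ of the planar weak order (using $\max w^{-1}(a)<\min w^{-1}(a+1)$), so that it is constant on connected components and the lifts inherit it from $u\backslash v$. With either fix your proof is complete.
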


\begin{proof}
Recall from Proposition \ref{prop:wqsym-embedding} that
$$F_u^*\cdot F_v^*=\sum_{\substack{
		w=w_1\cdots w_{n+m},\\
		\pack(w_1\cdots w_n)=u,\\
		\pack(w_{n+1}\cdots w_{n+m})=v,\\
		\{w_1,\dots,w_n\}\cap\{w_{n+1},\dots,w_{n+m}\}=\emptyset}}F_w^*,$$
where $n=\deg(u)$ and $m=\deg(v)$.

For each $w$ in the summand above, clearly $\setpar(w)=\setpar(u\backslash v)=\setpar(u/v)$. Hence $w$, $u\backslash v$ and $u/v$ must be in the same connected component by Theorem \ref{thm:lpw}. 

Moreover, since $\pack(w_1\cdots w_n)=u$, for $i<j\leq n$, $(i,j)\in\Inv(w)$ if and only if $(i,j)\in\Inv(u)\cap\Inv(u\backslash v)\cap\Inv(u/v)$. Similarly, for $n<i<j$, $(i,j)\in\Inv(w)$ if and only if $(i-n,j-n)\in\Inv(v)$ if and only if $(i,j)\in\Inv(u\backslash v)\cap\Inv(u/v)$.

For $i\leq n<j$, $(i,j)\notin\Inv(u\backslash v)$ and $(i,j)\in\Inv(u/v)$ whenever $i=\min B_a$, $j=\min B_b$ for some $B_a, B_b\in\setpar(w)$. Therefore, $\Inv(u\backslash v)\subseteq\Inv(w)\subseteq\Inv(u/v)$. By Lemma \ref{lem:iinv}, we must have $u\backslash v\leq_{Pw} w\leq_{Pw} u/v$.
\end{proof}

\begin{Corollary}\label{cor:lst}
Let $F_s$ and $F_t$ be two basis elements in $\TSym$, and let $F_s^*$, $F_t^*$ be their dual basis elements in the graded dual $\TSym^*$. Then,
$$m(F_s^*\otimes F_t^*)=\sum_{s\backslash t\leq_{PT} r\leq_{PT} s/t}F_r^*.$$
\end{Corollary}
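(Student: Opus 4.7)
The plan is to pull back the claim through the Hopf epimorphism $\Pi_\ptree\colon\STSym\to\TSym$. Its graded dual $\Pi_\ptree^*\colon\TSym^*\hookrightarrow\STSym^*$ is an injective algebra morphism given by $\Pi_\ptree^*(F_r^*)=\sum_{u\in\pi_\ptree^{-1}(r)}F_u^*$, so by injectivity it suffices to verify the stated identity after applying $\Pi_\ptree^*$ to both sides. The payoff is that inside $\STSym^*$ we already have an explicit product formula, namely Proposition~\ref{prop:interval_multiplication_STSYM}, whereas we have not yet computed the product in $\TSym^*$.

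Unfolding the two sides, the identity to verify in $\STSym^*$ is
$$\sum_{\substack{u\in\pi_\ptree^{-1}(s)\\ v\in\pi_\ptree^{-1}(t)}}\ \sum_{w\in[u\backslash v,\,u/v]_{Pw}} F_w^* \;=\; \sum_{w:\ \pi_\ptree(w)\in[s\backslash t,\,s/t]_{PT}} F_w^*,$$
and I will compare coefficients of $F_w^*$ for a fixed $w$. From the disjoint-shuffle characterization of $[u\backslash v,u/v]_{Pw}$ (Proposition~\ref{prop:wqsym-embedding}), the pair $(u,v)$ with $w\in[u\backslash v,u/v]_{Pw}$ is forced to be $(\pack(w_1\cdots w_n),\pack(w_{n+1}\cdots w_{n+m}))$ with $n=\deg s$ and $m=\deg t$, so the coefficient on the left is either $0$ or $1$; it equals $1$ precisely when this split of $w$ has disjoint letter sets and the two packings have underlying trees $s$ and $t$. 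On the $\PT$-side this translates into: the split of $r=\pi_\ptree(w)$ at the $(n+1)$-th leaf is allowable with parts $(s,t)$. The forward implication of the resulting equivalence is easy: given an allowable split of $r$ with parts $(s,t)$, the pair $(u,v)$ is manufactured from any $w\in\pi_\ptree^{-1}(r)$, and applying the order-preserving map $\pi_\ptree$ (Proposition~\ref{prop:lsb}(1)) together with $\pi_\ptree(u\backslash v)=s\backslash t$ and $\pi_\ptree(u/v)=s/t$ (Proposition~\ref{prop:pi-property}) yields $s\backslash t\leq_{PT}r\leq_{PT}s/t$.

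The reverse implication is the main technical step. Assuming $r\in[s\backslash t,s/t]_{PT}$, I lift via $\iota_\gsp$: since $\iota_\gsp$ is order-preserving (Proposition~\ref{prop:lsb}(2)), I obtain $\iota_\gsp(s\backslash t)\leq_{Pw}\iota_\gsp(r)\leq_{Pw}\iota_\gsp(s/t)$. A short check shows $\iota_\gsp(s/t)=\iota_\gsp(s)/\iota_\gsp(t)$: in $\iota_\gsp(s)/\iota_\gsp(t)$ the shift makes every label of $\iota_\gsp(s)$ strictly larger than every label of $\iota_\gsp(t)$, and these two label blocks occupy disjoint positional ranges in the associated word, so no $213$-pattern can span the seam and $\iota_\gsp(s)/\iota_\gsp(t)$ inherits $213$-avoidance from $\iota_\gsp(s)$ and $\iota_\gsp(t)$. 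The main obstacle is that the analogous identity for $\backslash$ fails: a strict descent inside $\iota_\gsp(s)$ pairs with any shifted-up label of $\iota_\gsp(t)$ to produce a $213$-pattern across the seam, so $\iota_\gsp(s)\backslash\iota_\gsp(t)\neq\iota_\gsp(s\backslash t)$ in general. I will sidestep this asymmetry by using only the weaker inequality $\iota_\gsp(s)\backslash\iota_\gsp(t)\leq_{Pw}\iota_\gsp(s\backslash t)$, which follows from the maximality of $\iota_\gsp$ on each fibre (Lemma~\ref{lem:iota-max}). Combined with the upper bound, this places $\iota_\gsp(r)$ in $[\iota_\gsp(s)\backslash\iota_\gsp(t),\,\iota_\gsp(s)/\iota_\gsp(t)]_{Pw}$, and Proposition~\ref{prop:interval_multiplication_STSYM} then gives that $\iota_\gsp(r)$ is a disjoint shuffle of $\iota_\gsp(s)$ and $\iota_\gsp(t)$ at position $n$. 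Projecting this structure through $\pi_\ptree$ shows that the split of $r$ at the $(n+1)$-th leaf is allowable with parts $(s,t)$, which is exactly the missing implication and completes the proof.
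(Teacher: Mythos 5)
Your proof is correct and follows essentially the same route as the paper's: both reduce the computation to $\STSym$ via the Hopf morphism $\Pi_\ptree$ and Proposition \ref{prop:interval_multiplication_STSYM}, and both use Propositions \ref{prop:lsb} and \ref{prop:pi-property} together with Lemma \ref{lem:iota-max} for the two implications --- the paper simply works with the coproduct $\Delta(F_r)=(\Pi_\ptree\otimes\Pi_\ptree)\Delta(F_{\iota_\gsp(r)})$ in $\TSym$ rather than dually with the injective algebra map $\Pi_\ptree^*$ into $\STSym^*$. Your explicit check that $\iota_\gsp(s/t)=\iota_\gsp(s)/\iota_\gsp(t)$ while only the inequality $\iota_\gsp(s)\backslash\iota_\gsp(t)\leq_{Pw}\iota_\gsp(s\backslash t)$ holds is a detail the paper leaves implicit in its citation of those propositions.
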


\begin{proof}
Fix a tree $r$ and let $w=\iota_\gsp(r)$. Since $\Pi_\ptree:\STSym\to\TSym$ is a Hopf morphism, by Proposition \ref{prop:interval_multiplication_STSYM}, we have
$$\Delta(F_r)=(\Pi_\ptree\otimes\Pi_\ptree)\circ\Delta(F_w)=\sum_{u\backslash v\leq_{Pw} w\leq_{Pw} u/v}F_{\pi_\ptree(u)}\otimes F_{\pi_\ptree(v)}.$$

Fix a pair $u,v$ such that $u\backslash v\leq_{Pw} w\leq_{Pw} u/v$, let $\pi_\ptree(u)=s$ and $\pi_{\ptree}(v)=t$. By Propositions \ref{prop:lsb} and \ref{prop:pi-property}, we have $s\backslash t\leq_{PT} r\leq_{PT}s/t$.

Conversely, fix a pair $s,t$ such that $s\backslash t\leq_{PT} r\leq_{Pw}s/t$, let $\iota_\gsp(s)=u$ and $\iota_{\gsp}(t)=v$. By Propositions \ref{prop:lsb} and \ref{prop:pi-property}, we have $u\backslash v\leq_{Pw} w\leq_{Pw}u/v$.

Finally, from the definition of comultiplication, we have $\langle F_u^*\otimes F_v^*,\Delta(F_w)\rangle=0$ or $1$ and $\langle F_s^*\otimes F_t^*,\Delta(F_r)\rangle=0$ or $1$. Therefore,
$$\Delta(F_r)=\sum_{u\backslash v\leq_{Pw} w\leq_{Pw} u/v}F_{\pi_\ptree(u)}\otimes F_{\pi_\ptree(v)}=\sum_{s\backslash t\leq_{PT} r\leq_{PT} s/t}F_s\otimes F_t.$$
\end{proof}

\subsection{Monomial basis}

The planar weak order allows the definition of a monomial basis for $\STSym$, exactly analogous to our previous algebras.

\begin{Definition}
The monomial basis of $\STSym$ are defined as the elements satisfying $\displaystyle F_u=\sum_{v\geq_{Pw} u}M_v$. The monomial basis is unique by triangularity and it forms a basis via M\"{o}bius inversion.
\end{Definition}

Note that, because of Theorem \ref{thm:monomialquotient}, the image
of monomial basis elements under the Hopf projection $\Pi_{\ptree}$
are either monomials or zero:
\[
\Pi(M_{u})=\begin{cases}
M_{\pi_{\ptree}(u)} & \text{if }u\text{ is $213$-avoiding};\\
0 & \text{otherwise}.
\end{cases}
\]

The rest of this section gives the usual formulas for the coproduct, product and antipode of the monomial basis.

\begin{Definition}
Let $u$ be a generalized Stirling permutation of degree $n$. Its global descent set is $\GD(u)=\{i\in[n-1]:u=v/w \text{ for some }v, w\text{ and }\deg(v)=i\}$. Equivalently, $\GD(u)=\{i\in[n-1]:u_a>u_b\text{ for all }a\leq i< b\}$.
\end{Definition}

The following two lemmas are useful for checking the axioms of Section \ref{sec:axioms}:

\begin{Lemma}\label{lem:iota-gd}
	The map $\iota_\gsp$ preserves global descents i.e. $\GD(t)=\GD(\iota_\gsp(t))$ for a tree $t$.
\end{Lemma}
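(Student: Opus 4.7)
The plan is to prove the two inclusions $\GD(\iota_\gsp(t))\subseteq\GD(t)$ and $\GD(t)\subseteq\GD(\iota_\gsp(t))$ separately, using that $\pi_\ptree$ (for the first) and $\iota_\gsp$ (for the second) each commute with the grafting operation $/$.

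For the forward inclusion, I will first observe the general fact $\GD(u)\subseteq\GD(\pi_\ptree(u))$ for every generalized Stirling permutation $u$. Indeed, if $i\in\GD(u)$ then $u=v/w$ with $\deg v=i$, so by Proposition \ref{prop:pi-property}(2) we have $\pi_\ptree(u)=\pi_\ptree(v)/\pi_\ptree(w)$ with $\deg\pi_\ptree(v)=i$, whence $i\in\GD(\pi_\ptree(u))$. Applying this to $u=\iota_\gsp(t)$ and using $\pi_\ptree\circ\iota_\gsp=\mathrm{id}$ yields $\GD(\iota_\gsp(t))\subseteq\GD(t)$.

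For the reverse inclusion, the key claim is that $\iota_\gsp$ likewise commutes with $/$, namely
\[
\iota_\gsp(s/r)=\iota_\gsp(s)/\iota_\gsp(r)
\]
for all planar trees $s,r$. Granting this, if $i\in\GD(t)$ then $t={}^{i}t/t^{i}$ with $\deg{}^{i}t=i$, so $\iota_\gsp(t)=\iota_\gsp({}^{i}t)/\iota_\gsp(t^{i})$ with $\deg\iota_\gsp({}^{i}t)=i$, giving $i\in\GD(\iota_\gsp(t))$. To establish the claim I proceed by induction on $\ideg(r)$ following the recursive definition of $\iota_\gsp$ given at the end of Section \ref{sec:gsp}. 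The root of $s/r$ equals the root of $r$ and receives label $1$ in both constructions. If the root of $r$ has children with subtrees $T_1,\dots,T_k$ (left to right), then the non-leftmost subtrees $T_2,\dots,T_k$ are unaffected by the grafting and, being labelled right-to-left starting from $2$, receive the same labels in $\iota_\gsp(s/r)$ as in $\iota_\gsp(r)$. The leftmost subtree of $s/r$ is $s/T_1$ and, by the right-to-left rule, receives the final block of labels $\{L,L+1,\dots,L+\ideg(T_1)+\ideg(s)-1\}$, where $L=2+\sum_{j\geq 2}\ideg(T_j)$ so that $L+\ideg(T_1)-1=\ideg(r)$; by the induction hypothesis applied inside this block, the $T_1$-nodes receive exactly the labels of $\iota_\gsp(r)$ restricted to $T_1$, while the $s$-nodes receive the labels of $\iota_\gsp(s)$ shifted up by $\ideg(r)$. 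This is precisely the result of shifting the labels of $\iota_\gsp(s)$ up by $\ideg(r)$ and then identifying its root with the leftmost leaf of $\iota_\gsp(r)$, i.e.\ $\iota_\gsp(s)/\iota_\gsp(r)$ as in Definition \ref{def:perm}.

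The main obstacle is the bookkeeping in this induction: one must verify that the recursive right-to-left labelling of $\iota_\gsp(s/r)$ isolates the $s$-nodes onto the top $\ideg(s)$ labels while leaving the remaining nodes with the same labels they would receive in $\iota_\gsp(r)$. Once this is checked at the level of the decomposition of the root's subtrees, the induction inside the leftmost subtree $s/T_1$ closes the argument.
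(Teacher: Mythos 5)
Your proof is correct, and the forward inclusion $\GD(\iota_\gsp(t))\subseteq\GD(t)$ is the same observation the paper makes (the paper dismisses it as immediate from the definitions; your version via $\pi_\ptree(v/w)=\pi_\ptree(v)/\pi_\ptree(w)$ and $\pi_\ptree\circ\iota_\gsp=\mathrm{id}$ is a clean way to say it). For the reverse inclusion, however, you take a genuinely different route. The paper works at the level of the word $\mathbf{w}(u)$: if $i\in\GD(t)$ then, because labels increase along branches and all of ${}^it$ sits above the node owning region $i+1$, one gets $u_a>u_{i+1}$ for all $a\leq i$, and then $213$-avoidance upgrades this to $u_a>u_b$ for all $a\leq i<b$ (otherwise $u_a,u_{i+1},u_b$ would form the forbidden pattern). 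Your argument instead establishes the stronger structural identity $\iota_\gsp(s/r)=\iota_\gsp(s)/\iota_\gsp(r)$ by induction on $\ideg(r)$, tracking the right-to-left block labelling of the recursive construction, and then reads off the inclusion from $t={}^{i}t/t^{i}$. The paper's argument is shorter and exploits the pattern-avoidance characterization directly; yours costs more bookkeeping (which you handle correctly, including the base case where the leftmost subtree is a leaf) but buys a reusable commutation fact for $\iota_\gsp$ with grafting that the paper never states explicitly, and from which the lemma falls out formally. Both are valid.
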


\begin{proof}
Let $u=\iota_\gsp(t)$. $\GD(\iota_\gsp(t))\subseteq\GD(t)$ follows directly from the definitions of global descents. If $i\in\GD(t)$, since labels in generalized Stirling permutations are increasing along branches, for any $a\leq i$, we have $u_a>u_{i+1}$. Then, $213$-avoiding property implies that for any $a\leq i<b$, we must have $u_a>u_b$ i.e. $i\in\GD(u)$.
\end{proof}

\begin{Lemma}\label{lem:delrptshuffle}Given generalized Stirling permutations $u,v$ and a
fixed allowable splitting $v\mapsto(v_{1},\dots,v_{k})$, then $\delrpt(u\overleftarrow{\shuffle}(v_{1},\dots,v_{k}))=\delrpt(u)\overleftarrow{\shuffle}(\sigma_{1},\dots,\sigma_{l})$
where $\sigma=\delrpt(v)$ and $\sigma\mapsto(\sigma_{1},\dots,\sigma_{l})$
is some splitting where $\deg\sigma_{1},\dots,\deg\sigma_{l}$ depend
only on $\deg v_{1},\dots,\deg v_{k}$ and on $\setpar(u)$, $\setpar(v)$.
\end{Lemma}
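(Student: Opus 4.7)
My plan is to work with both sides of the claimed equality at the level of their underlying packed words. The starting point is an explicit formula for the word of a shuffled tree: if $u$ is a labeled tree with $\mathbf{w}(u)=u^1u^2\cdots u^n$ (so $n=\deg u$ and $k=n+1$) and $v_1,\dots,v_k$ are labeled trees grafted at the leaves of $u$, then reading the regions of the shuffle from left to right yields
\[
\mathbf{w}(u\overleftarrow{\shuffle}(v_1,\dots,v_k))=\mathbf{w}(v_1)'\,u^{1}\,\mathbf{w}(v_2)'\,u^{2}\,\cdots\,u^{n}\,\mathbf{w}(v_{n+1})',
\]
where $\mathbf{w}(v_i)'$ denotes $\mathbf{w}(v_i)$ with all labels shifted up by $\ideg u$. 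This holds because the lowest common ancestor of two consecutive new leaves coming from different $v_i$'s is the same as the LCA of the corresponding leaves of $u$, while LCAs inside a single $v_i$ are unchanged. The same formula applied to the right side of the lemma gives $\mathbf{w}(\delrpt(u)\overleftarrow{\shuffle}(\sigma_1,\dots,\sigma_l))=\mathbf{w}(\sigma_1)'\,\delta^{1}\,\mathbf{w}(\sigma_2)'\,\delta^{2}\,\cdots\,\delta^{l-1}\,\mathbf{w}(\sigma_l)'$, where $\delta^{1}\cdots\delta^{l-1}=\delrpt(u)$ as a one-line word. Comparing these two formulas reduces the lemma to a word-combinatorial identity.

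Next I compute $\delrpt$ of the left-hand word by scanning left to right. Because the splitting $v\mapsto(v_1,\dots,v_k)$ is allowable, every label of $v$ lies in a unique $\mathbf{w}(v_i)$, so the (shifted) alphabets of the subwords $\mathbf{w}(v_i)'$ are pairwise disjoint from one another and from the alphabet of $\mathbf{w}(u)$. Consequently, first occurrences in the above shuffle word are exactly the first occurrences inside each $\mathbf{w}(v_i)'$ together with those $u^{j}$ that are first occurrences in $\mathbf{w}(u)$. Let $\tau_i$ denote the first-occurrence subsequence of $\mathbf{w}(v_i)$ (written in $v$'s inherited labels, before shifting), and let $q_1<\cdots<q_{l-1}$ be the positions of new letters in $\mathbf{w}(u)$, where $l=\ideg u+1$. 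Then $\delrpt$ of the left side equals $\tau_1'\cdots\tau_{q_1}'\cdot u^{q_1}\cdot\tau_{q_1+1}'\cdots\tau_{q_2}'\cdot u^{q_2}\cdots u^{q_{l-1}}\cdot\tau_{q_{l-1}+1}'\cdots\tau_k'$. Setting $\sigma_j:=\tau_{q_{j-1}+1}\cdots\tau_{q_j}$ (with $q_0=0$ and $q_l=k$) gives exactly the pattern produced by the right-hand word formula above, with the $\delta^{j}$'s identified with the $u^{q_j}$'s. Moreover, because the concatenation $\mathbf{w}(v)=\mathbf{w}(v_1)\cdots\mathbf{w}(v_k)$ uses disjoint alphabets for each factor, we have $\delrpt(v)=\tau_1\tau_2\cdots\tau_k$, so the grouped sequences $(\sigma_1,\dots,\sigma_l)$ do arise from a legitimate $l$-part splitting of $\sigma=\delrpt(v)$, performed at the cut positions $|\sigma_1|,|\sigma_1|+|\sigma_2|,\dots$ and inheriting labels from $\sigma$ (no standardization is needed, matching the paper's convention for $k$-part splittings).

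Finally I verify the dependency claim for $\deg\sigma_j=\sum_{i\in\text{group }j}\ideg v_i$. The cut indices $q_j$ are by definition the ordered minima of the blocks of $u^{-1}$, so they depend only on $\setpar(u)$. For each $\ideg v_i$, allowability of the splitting forces every block of $\setpar(v)$ to lie entirely inside one subword $\mathbf{w}(v_i)$, and membership of a block in the $i$-th subword is determined solely by whether its positions fall within the interval $(\deg v_1+\cdots+\deg v_{i-1},\,\deg v_1+\cdots+\deg v_i]$; hence $\ideg v_i$ is a function of $(\deg v_1,\dots,\deg v_k)$ and $\setpar(v)$. Combining the two, $\deg\sigma_j$ depends only on $(\deg v_1,\dots,\deg v_k)$, $\setpar(u)$ and $\setpar(v)$, as required. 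I do not foresee any serious obstacle beyond careful bookkeeping of indices and keeping track of the non-standardized label convention in the shuffle operation.
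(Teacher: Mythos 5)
Your proposal is correct and follows essentially the same route as the paper's proof: write the shuffle as the interleaved word $\mathbf{w}(v_1)'u^1\mathbf{w}(v_2)'\cdots$, use allowability to get disjoint alphabets, extract the first-occurrence subwords $\tau_i$, and group them between consecutive first occurrences of letters of $u$ (your cut positions $q_j$ are the paper's $i_j=\min B_j$). The only differences are cosmetic — you justify the word formula via LCAs and note explicitly that the grouping is a legitimate splitting of $\sigma$, points the paper leaves implicit.
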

\begin{proof}
If $u=u_{1}\cdots u_{k-1}$, then, as words, $u\overleftarrow{\shuffle}(v_{1},\dots,v_{k})$
is $v_{1}^{\uparrow}u_{1}v_{2}^{\uparrow}u_{2}\cdots v_{k-1}^{\uparrow}u_{k-1}v_{k}^{\uparrow}$,
where $v_{i}^{\uparrow}$ denotes that each number in $v_{i}$ is
shifted up by $\max u$. (Note that $u_{i}$ is a letter but $v_{i}$
is a word.) Since $v\mapsto(v_{1},\dots,v_{k})$ is allowable, the
letters $v_{i}$ and $v_j$ are distinct when $i\ne j$. Moreover, because of the shifting,
the letters in $v_{1}^{\uparrow},\dots,v_{k}^{\uparrow}$ are also
different from $u_{1},\dots,u_{k-1}$. Let $\setpar(u)=\{B_1,\ldots,B_\ell\}$ and let
$i_j=\min B_j$ assuming that $1=i_1<i_2<\cdots<i_\ell$.
In particular $\delrpt(u)=u_1u_{i_{2}}\cdots u_{i_{\ell}}$ for some $\ell\le k-1$.
We proceed in the same way with $v=v_1v_2\cdots v_k$ to get $\delrpt(v)=\sigma=\tau_1\cdots\tau_k$ where $\tau_j$ (a word, possibly empty) 
is the first occurrence of the distinct letters of $v$ that happen in the word $v_j$. We then have
\begin{align*}
\delrpt(u\overleftarrow{\shuffle}(v_{1},\dots,v_{k})) & =( \tau_{1}^{\uparrow})u_{1}(\tau_{2}^{\uparrow}\cdots \tau_{i_{2}}^{\uparrow})u_{i_{2}}(\tau_{i_{2}+1}^{\uparrow}\cdots \tau_{i_{3}}^{\uparrow})u_{i_{3}}\cdots
   u_{i_{\ell}}(\tau_{i_{\ell}+1}^{\uparrow}\cdots \tau_{k}^{\uparrow})\\
 & =\delrpt(u)\overleftarrow{\shuffle}(\tau_{1},\,\tau_{2}\cdots \tau_{i_{2}},\ \ldots\ ,\ \tau_{i_{\ell}+1}\cdots \tau_{k})
\end{align*}
If we let $\sigma_{1}= \tau_{1}$ and $\sigma_{j}=\tau_{i_{j-1}+1}\cdots \tau_{i_{j}}$, then
 $\sigma\mapsto(\sigma_{1},\dots,\sigma_{l})$ is a valid splitting and 
$\delrpt(u\overleftarrow{\shuffle}(v_{1},\dots,v_{k}))=\delrpt(u)\overleftarrow{\shuffle}(\sigma_{1},\dots,\sigma_{l})$ as desired.
The $\deg \sigma_j$ is the number of letters in $\tau_{i_{j-1}+1}\cdots \tau_{i_{j}}$. This depends on the $i_j$'s (built from $\setpar(u)$),
 the corresponding indexes in $v$ (built from $\setpar(v)$) and where they fall in the decomposition $v=v_1v_2\cdots v_k$
 (which depend on on the size of $v_i$, namely $\deg(v_i)$).
\end{proof}

\begin{Proposition}\label{prop:coproduct-monomial-stsym} The coproduct
in the monomial basis of $\STSym$ is:
\[
\Delta_{+}(M_{u})=\sum_{i\in\GD(u)}M_{{}^{i}u}\otimes M_{u^{i}}.
\]
\end{Proposition}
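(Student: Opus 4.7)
The plan is to invoke Theorem \ref{thm:monomialcoproduct}, which reduces the result to verifying axioms ($\Delta$1)--($\Delta$3) for $\STSym$ equipped with the planar weak order $\leq_{Pw}$. Since $\STSym$ is already known to be a coalgebra (by the previous proposition establishing the Hopf algebra structure), there is no need to invoke Remark \ref{rem:coproductaxiom-poset}; it suffices to check the three order-theoretic axioms.

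For ($\Delta$1), by the definition of $\Delta$ on $\STSym$, one sets $\allow(u)\subseteq\{1,\dots,\deg u-1\}$ to be the set of indices $i$ at which the splitting of $u$ at the $(i+1)$-st leaf is allowable, i.e.\ preserves $\ideg$. This immediately gives the desired formula, with $^{i}u:=\std(u_1)$ and $u^i:=\std(u_2)$. The nontrivial part is to show that $\allow$ is constant on the connected components of $\leq_{Pw}$. I will do this at the level of covering relations: if $u'=T_a(u)$ covers $u$, then following the case analysis in the proof of Proposition \ref{prop:lsb}, either (i) the underlying planar tree is unchanged (in which case $\allow(u)=\allow(u')$ is obvious, since allowability depends only on $\pi_\ptree$), or (ii) $\pi_\ptree(u')$ is obtained from $\pi_\ptree(u)$ by a single left rotation at the node labeled $a$, with the node labeled $a+1$ becoming its right child. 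One checks directly from the leaf structure in Definition \ref{def:lpt} that a left rotation permutes leaves only within the subtree rooted at the rotated node and does not affect which of the remaining leaves allow an internal-degree-preserving split. Hence $\allow(u')=\allow(u)$ in both cases.

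For ($\Delta$2), suppose $u\leq_{Pw}u'$ and $i\in\allow(u)=\allow(u')$. Using the covering-relation reduction, it suffices to handle the case $u'=T_a(u)$. Because the splitting at leaf $i+1$ is allowable in both $u$ and $u'$, the pair $(a,a+1)$ lies entirely in one of the two parts. Applying Lemma \ref{lem:delrptshuffle}--style reasoning to the deconcatenation (using the analogue statement that $\delrpt$ commutes with deconcatenation followed by standardization, which follows from the same book-keeping on first occurrences), one obtains that $^{i}u'$ is either equal to $^{i}u$ or equal to $T_{b}(^{i}u)$ for the appropriate shifted index $b$, and similarly for $u^{i}$; in either case the inequalities $^{i}u\leq_{Pw}{}^{i}u'$ and $u^{i}\leq_{Pw}u'^{i}$ follow from Theorem \ref{thm:lpw} by comparing inversion sets and underlying set partitions of the standardizations.

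For ($\Delta$3), given $u\in\GSP_i$ and $v\in\GSP_{n-i}$, I claim $u/v$ (Definition \ref{def:perm}) is the required maximum. That $^{i}(u/v)=u$ and $(u/v)^{i}=v$ is built into the definition of $/$. Maximality: if $w\in\GSP_n$ satisfies $^{i}w=u$ and $w^{i}=v$, then $\setpar(w)$ refines $\setpar(u/v)$ only through identifications of blocks forced by allowability; comparing to Theorem \ref{thm:lpw} and using that in $u/v$ every pair $(a,b)$ with $a\le i<b$ is a (global) inversion (since labels of $v$ are shifted up by $\ideg(u)$), one sees $\Inv(w)\subseteq\Inv(u/v)$ and the nesting condition of Lemma \ref{lem:iinv} is met, so $w\leq_{Pw}u/v$. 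Order-preservation of $/$ is then immediate from the labeled grafting construction together with Theorem \ref{thm:lpw}. The main obstacle is the case analysis in ($\Delta$2), where one must carefully track how the simple transposition $T_a$ behaves under the standardization built into the coproduct; Lemma \ref{lem:delrptshuffle} (or its direct analogue for deconcatenation) is the right technical tool to keep this bookkeeping under control.
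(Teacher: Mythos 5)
Your overall architecture is the paper's: verify ($\Delta$1)--($\Delta$3) for the planar weak order and invoke Theorem \ref{thm:monomialcoproduct}. But two of your verifications have real problems. In ($\Delta$2), the claim that allowability of the split at $i$ forces ``the pair $(a,a+1)$ to lie entirely in one of the two parts'' is false: allowability only says that each block individually does not straddle position $i$, and the block of $a$ can sit entirely in $\{1,\dots,i\}$ while the block of $a+1$ sits entirely in $\{i+1,\dots,n\}$. This straddling configuration is not excluded by allowability --- it is one of the three cases the paper treats (there ${}^{i}(T_a u)={}^{i}u$ and $(T_a u)^{i}=u^{i}$, because standardization erases the swap), while the fourth conceivable case ($a$-block entirely right of $i$, $(a+1)$-block entirely left) is the one ruled out by Proposition \ref{prop:lpw} since $u\lessdot T_a(u)$. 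Your dichotomy ``equal or $T_b$ of it'' happens to be the correct conclusion, but the case analysis offered in support of it is incomplete, and you still owe the check that when ${}^{i}u'=T_b({}^{i}u)$ one actually has ${}^{i}u\leq_{Pw}T_b({}^{i}u)$: this needs the covering condition $\max u^{-1}(a)<\min u^{-1}(a+1)$ to survive restriction and standardization, which a bare appeal to Theorem \ref{thm:lpw} does not supply. Lemma \ref{lem:delrptshuffle} concerns shuffles, not deconcatenation, so it is not the right tool here.

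For ($\Delta$1), your reduction to ``tree unchanged'' versus ``single left rotation,'' with the assertion that a rotation does not change the set of allowable split positions, is true but is precisely the part that needs proof; you only say ``one checks directly.'' The paper avoids the tree geometry entirely: $i\notin\allow(u)$ iff there exist $j\le i<k$ with $u_j=u_k$ in the word $\mathbf{w}(u)$, and $T_a$ preserves all coincidences of letters, so $\allow(T_a u)=\allow(u)$ for any $a$ --- a one-line word-level argument with no case split on the underlying tree. Your ($\Delta$3) is sound in substance, but the maximality of $u/v$ is already the content of Proposition \ref{prop:interval_multiplication_STSYM} (the set $\{w:{}^{i}w=u,\,w^{i}=v\}$ is the interval $[u\backslash v,\,u/v]$), which the paper simply cites; and order-preservation of $/$ is obtained there from the explicit identities $(T_a u)/v=T_a(u/v)$ and $u/(T_a v)=T_{a+\max u}(u/v)$ rather than from a general appeal to the grafting construction.
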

\begin{proof}
We check the axioms in Theorem \ref{thm:monomialcoproduct}. To see
($\Delta$1), note that $i\not\in\allow(u)$ if and only there is
$j,k$ with $j\leq i<k$ and $u_{j}=u_{k}$. Then, in $u'=T_{a}(u)$
covering $u$, we have $u'_{j}=u'_{k}$, so $i\not\in\allow(u')$
either. 

To check ($\Delta$2): consider $u'=T_{a}(u)$ covering $u$, and $i\in\allow(u)$,
so either $\max u^{-1}(a) \le i$ or $\min u^{-1}(a) >i$, 
and similarly for $a+1$. Thus there are four scenarios:
\begin{itemize}
\item $\max u^{-1}(a)\le i$ and $\max u^{-1}(a+1)\leq i$: then
${}^{i}(T_{a}u)=T_{b}({}^{i}u)$ for some $b$ (being the image of
$a$ under $\std({}^{i}u)$) and $(T_{a}u)^{i}=u^{i}$; 
\item $\max u^{-1}(a)>i$ and $\max u^{-1}(a+1)>i$: then ${}^{i}(T_{a}u)={}^{i}u$
and $(T_{a}u)^{i}=T_{b}(u^{i})$ for some $b$ (being the image of
$a$ under $\std(u^{i})$) ; 
\item $\max u^{-1}(a)\leq i$ and $\max u^{-1}(a+1)>i$: then ${}^{i}(T_{a}u)={}^{i}u$
and $(T_{a}u)^{i}=u^{i}$; 
\item $\max u^{-1}(a)>i$ and $\max u^{-1}(a+1)\leq i$:
this case is not possible by Proposition \ref{prop:lpw}. 
\end{itemize}
So, in all cases, ${}^{i}(T_{a}u)\geq_{Pw}{}^{i}u$ and $(T_{a}u)^{i}\geq_{Pw}u^{i}$.

To check ($\Delta$3): $u/v=\max\{w|{}^{i}w=u,w^{i}=v\}$
follows from Proposition \ref{prop:interval_multiplication_STSYM}.
And $/$ is order-preserving
because $(T_{a}u)/v=T_{a}(u/v)$, and $u/(T_{a}v)=T_{a+\max u}(u/v)$.
\end{proof}

\begin{Proposition}\label{prop:product-monomial-stsym} The product
of monomial basis elements in $\STSym$ is given by $M_{u}\cdot M_{v}=\sum_{w}\alpha_{u,v}^{w} M_{w}$,
where $\alpha_{u,v}^{w} =|A_{u,v}^{w}|$ is defined as in (\ref{eq:a-def}).
\end{Proposition}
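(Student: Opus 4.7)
The plan is to verify the axioms ($m$0)-($m$3) of Section \ref{sec:axioms} and then apply Theorem \ref{thm:monomialproduct}. Axiom ($m$0) is immediate from Theorem \ref{thm:lpw}, which identifies each connected component of $(\GSP_n,\leq_{Pw})$ with an interval in the weak order on permutations via $\delrpt$, hence such components are bounded lattices. Axiom ($m$1) is the definition of the multiplication on $\STSym$ in the fundamental basis: each allowable splitting $v\mapsto(v_1,\dots,v_k)$ with prescribed sizes $(\deg v_1,\dots,\deg v_k)$ determines a shuffle $\zeta(u,v)=u\overleftarrow{\shuffle}(v_1,\dots,v_k)$, and these $\zeta$ form the set $Sh(\cmpt(u),\cmpt(v))$.

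The main work is in ($m$2) and ($m$3), and the strategy is to \emph{transport} these from the corresponding properties of $\SSym$ (Proposition \ref{prop:axiomssym}) using the $\delrpt$-projection. The key tool is Lemma \ref{lem:delrptshuffle}: if $\zeta$ is a shuffle on generalized Stirling permutations determined by an allowable splitting of $v$ into pieces of sizes $(d_1,\dots,d_k)$, then
\[
\delrpt(\zeta(u,v)) \;=\; \delrpt(u)\,\overleftarrow{\shuffle}\,(\sigma_1,\dots,\sigma_l)
\]
for a splitting of $\delrpt(v)$ whose part sizes depend only on $(d_1,\dots,d_k)$, $\setpar(u)$, $\setpar(v)$. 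In particular, for $u'\geq_{Pw} u$ and $v'\geq_{Pw} v$ (so that $\setpar(u')=\setpar(u)$ and $\setpar(v')=\setpar(v)$), the shuffle $\zeta$ produces on the $\delrpt$-side \emph{the same} shuffle of permutations. Since $\setpar(\zeta(u,v))$ and $\setpar(\zeta(u',v'))$ coincide (both determined by $\setpar(u)$, $\setpar(v)$, and the shape of $\zeta$), Theorem \ref{thm:lpw} reduces the verification of ($m$2) and ($m$3) to the corresponding statements for $\SSym$ applied to $\delrpt(u),\delrpt(u'),\delrpt(v),\delrpt(v')$ and the induced shuffle of permutations. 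For ($m$2), $\SSym$ gives $\delrpt(\zeta(u,v))\leq_w \delrpt(\zeta(u',v'))$, hence $\zeta(u,v)\leq_{Pw} \zeta(u',v')$. For ($m$3), one applies the permutation inequality $\zeta(\delrpt(u_1)\vee\delrpt(u_2),\delrpt(v_1)\vee\delrpt(v_2))\leq \zeta(\delrpt(u_1),\delrpt(v_1))\vee\zeta(\delrpt(u_2),\delrpt(v_2))$, combined with the fact from Theorem \ref{thm:lpw}(1) that $\delrpt$ commutes with the join $\vee$ within a connected component.

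The main obstacle is the subtle bookkeeping in Lemma \ref{lem:delrptshuffle}: one must confirm that the shuffle produced on the $\delrpt$-side is \emph{independent} of the choice of $u',v'$ within the connected components of $u,v$, so that it defines a genuine injection $Sh(\cmpt(u),\cmpt(v))\hookrightarrow Sh(\cmpt(\delrpt u),\cmpt(\delrpt v))$ compatible with the $\leq_{Pw}$ and $\leq_w$ orders. Once this is in hand, Theorem \ref{thm:monomialproduct} yields $M_u\cdot M_v=\sum_w \alpha^w_{u,v} M_w$ with $\alpha^w_{u,v}=|A^w_{u,v}|$, proving the proposition.
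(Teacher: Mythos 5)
Your overall strategy is the paper's: verify axioms ($m$0)--($m$3) and invoke Theorem \ref{thm:monomialproduct}, with ($m$0) coming from Theorem \ref{thm:lpw}, ($m$1) from identifying shuffles with allowable split positions (determined by $\setpar(v)$), and ($m$3) transported from $\SSym$ via Lemma \ref{lem:delrptshuffle} and Theorem \ref{thm:lpw}. The one place you diverge is ($m$2). The paper proves it by a direct computation on covering relations: if $T_a(u)$ covers $u$ then $(T_a u)\overleftarrow{\shuffle}(v_1,\dots,v_k)=T_a(u\overleftarrow{\shuffle}(v_1,\dots,v_k))$, and if $T_a(v)$ covers $v$ then the shuffle equals $T_{a+\max u}$ applied to $u\overleftarrow{\shuffle}(v_1,\dots,v_k)$; monotonicity of $\zeta$ is then immediate without leaving the component. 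You instead route ($m$2) through $\delrpt$ as well. That works, but it needs one more ingredient than you state: to pass from $\delrpt(\zeta(u,v))\leq_w\delrpt(\zeta(u',v'))$ back to $\zeta(u,v)\leq_{Pw}\zeta(u',v')$ via Theorem \ref{thm:lpw} (or Lemma \ref{lem:iinv}), the two outputs must lie in the same connected component, and equality of $\setpar$ alone does not guarantee this (compare $121$ and $212$): you also need agreement on inversions between nested blocks. This does hold here --- nested pairs internal to $u$ (resp.\ $v$) are inherited from $u,u'$ (resp.\ $v,v'$), which lie in a common component, and a $u$-block nesting a $v$-block is never an inversion because the $v$-labels are shifted above $\max u$ --- but it should be said explicitly. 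The paper's direct $T_a$ computation for ($m$2) buys exactly the avoidance of this check; your version buys uniformity with the ($m$3) argument.
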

\begin{proof}
We check the axioms in Theorem \ref{thm:monomialproduct}. Axiom ($m$0)
is true because of Theorem \ref{thm:lpw}. To see ($m$1): the set $Sh(\setpar(u),\setpar(v))$ correspond to the set
of sequences $0\le i_1\le i_2\le\cdots\le i_ {\deg u-1}\le \deg v$ of allowable split positions of
$v$. These allowable split positions depend only on $\setpar(v)=\{B_1,\ldots,B_k\}$,
as $i_j$ is not allowed if and only if $\min B_p < i_j < \max B_p$ for some $p$.

To see ($m$2): suppose $\deg u=k-1$ and $v\mapsto(v_{1},\dots,v_{k})$
is an allowable splitting. If $T_{a}(u)$ covers $u$, then $(T_{a}u)\overleftarrow{\shuffle}(v_{1},\dots,v_{k})=T_{a}(u\overleftarrow{\shuffle}(v_{1},\dots,v_{k}))$.
If $T_{a}(v)$ covers $v$, then $T_{a}(v)\mapsto(T_{a}(v_{1}),\dots,T_{a}(v_{k}))$
and $u\overleftarrow{\shuffle}(T_{a}(v_{1}),\dots,T_{a}(v_{k}))=T_{a+\max u}(u\overleftarrow{\shuffle}(v_{1},\dots,v_{k}))$.

To check ($m$3): consider comparable $u,u'$ with $k$ leaves, and
comparable $v,v'$. Fix $\zeta\in Sh(\setpar(u),\setpar(v))$, i.e.
fix $k-1$ allowable splitting positions (possibly repeated) in $v$,
and compute $v\mapsto(v_{1},\dots,v_{k})$, $v'\mapsto(v'_{1},\dots,v'_{k})$
and $v\vee v'\mapsto((v\vee v')_{1},\dots,(v\vee v')_{k})$ all at
this same multiset of lightening positions. Then $\zeta(u,v)=u\overleftarrow{\shuffle}(v_{1},\dots,v_{k})$,
and similarly for $u',v'$ and for $u\vee u',v\vee v'$. Because $\setpar(u)=\setpar(u')$
and $\setpar(v)=\setpar(v')$, and because the split positions are
fixed, so $\setpar(u\overleftarrow{\shuffle}(v_{1},\dots,v_{k}))=\setpar(u'\overleftarrow{\shuffle}(v'_{1},\dots,v'_{k}))=\setpar((u\vee u')\overleftarrow{\shuffle}((v\vee v')_{1},\dots,(v\vee v')_{k}))$.
So, by Theorem \ref{thm:lpw}, it suffices to show 
 $$\delrpt((u\vee u')\overleftarrow{\shuffle}((v\vee v')_{1},\dots,(v\vee v')_{k}))\leq\delrpt(u\overleftarrow{\shuffle}(v_{1},\dots,v_{k}))\vee\delrpt(u'\overleftarrow{\shuffle}(v'_{1},\dots,v'_{k})).$$
This follows from axiom ($m$3) in $\SSym$ and Lemma \ref{lem:delrptshuffle}. 
\end{proof}
\begin{Proposition}\label{prop:antipode-monomial-stsym} The
antipode of monomial basis elements in $\STSym$ is given by $\mathcal{S}(M_{u})=(-1)^{\GD(u)+1}\sum_{v}\beta_{u}^{v}M_{v}$
where $\beta_{f}^{g}$ is defined as in (\ref{eq:c-def-GD-2}).
\end{Proposition}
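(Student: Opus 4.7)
The plan is to apply Theorem~\ref{thm:antipode}, which requires verifying axioms ($\mathcal{S}$0)--($\mathcal{S}$3) for $\STSym$. The earlier axioms ($\Delta$1)--($\Delta$3) and ($m$0)--($m$3) have already been established in Propositions~\ref{prop:coproduct-monomial-stsym} and~\ref{prop:product-monomial-stsym}.

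First, axiom ($\mathcal{S}$0) is immediate from Remark~\ref{rem:s0trees}: the set $\GSP$ is closed under both $/$ and $\backslash$ from Definition~\ref{def:perm}, because the prescribed label shifting preserves the relative order of labels along every root-to-leaf path. The inequality $u\backslash v \leq_{Pw} u/v$ needed by ($\mathcal{S}$0) is part of Proposition~\ref{prop:interval_multiplication_STSYM}. For ($\mathcal{S}$1), I would argue by induction on cover relations: suppose $T_a(u)$ covers $u$, so by Proposition~\ref{prop:lpw} all occurrences of $a$ in $u$ precede all occurrences of $a+1$. If $i \in \GD(u)$, then the labels appearing in positions $\leq i$ form an upper set $\{L+1,\ldots,\ideg u\}$ for some threshold $L$. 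The only way $T_a$ could destroy the global descent at $i$ is the boundary case $a = L$, $a+1 = L+1$ with $a$'s on the right and $a+1$'s on the left; but that case would force an $a+1$ to precede some $a$, contradicting the cover condition. Hence $\GD(u) \subseteq \GD(T_au)$, and ($\mathcal{S}$1) follows by transitivity.

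For axioms ($\mathcal{S}$2) and ($\mathcal{S}$3), I would transfer each inequality from the corresponding axiom in $\SSym$ (Proposition~\ref{prop:axiomssym}) via the initial-permutation map $\delrpt$. The crucial preliminary observation is that $\setpar(u/v) = \setpar(u\backslash v)$ depends only on $\setpar(u)$ and $\setpar(v)$: both equal $\setpar(u) \sqcup \{B + \deg u : B \in \setpar(v)\}$, since in either case the positions occupied by labels coming from $u$ are $[1,\deg u]$ and those coming from $v$ are $[\deg u + 1, \deg u + \deg v]$. Consequently, every expression built from a fixed sequence $u_1,\ldots,u_k$ by iterated $/$ and $\backslash$ has the same underlying set partition and lies in a single connected component of $\GSP$ under $\leq_{Pw}$. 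Inside such a component, Theorem~\ref{thm:lpw} reduces $\leq_{Pw}$-comparisons to $\leq_w$-comparisons of initial permutations via $\delrpt$.

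The remaining ingredient is that $\delrpt$ intertwines the Hopf operations. From the word descriptions $u/v \leftrightarrow u^{\uparrow}v$ and $u\backslash v \leftrightarrow uv^{\uparrow}$ (where $\uparrow$ denotes label shifting by the appropriate amount), reading off first occurrences yields
\[
\delrpt(u/v) = \delrpt(u)/\delrpt(v), \qquad \delrpt(u\backslash v) = \delrpt(u)\backslash\delrpt(v),
\]
while $\delrpt(u\vee v) = \delrpt(u)\vee\delrpt(v)$ is part of Theorem~\ref{thm:lpw}. Applying $\delrpt$ to both sides of ($\mathcal{S}$2) and ($\mathcal{S}$3) therefore reduces these axioms directly to the versions already established for $\SSym$. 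The main bookkeeping obstacle lies in iterating the $\delrpt$-compatibility across the more intricate mixed $/$-and-$\backslash$ expressions of ($\mathcal{S}$3), but this is routine once the two-variable identities above are in hand. With all the axioms verified, Theorem~\ref{thm:antipode} delivers the stated cancellation-free, uniformly-signed antipode formula.
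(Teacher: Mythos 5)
Your proposal is correct and follows essentially the same route as the paper: it verifies axioms ($\mathcal{S}$0)--($\mathcal{S}$3) for Theorem~\ref{thm:antipode}, obtaining ($\mathcal{S}$0) from Remark~\ref{rem:s0trees}, ($\mathcal{S}$1) by the same analysis of which pairs $\{u_j,u_k\}$ a cover $T_a$ can affect, and ($\mathcal{S}$2)--($\mathcal{S}$3) by reducing to the $\SSym$ axioms through $\delrpt$ together with the equality of underlying set partitions and Theorem~\ref{thm:lpw}. The paper's proof is the same argument with the ($\mathcal{S}$2) computation written out explicitly and ($\mathcal{S}$3) likewise deferred as ``proved similarly.''
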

\begin{proof}
We check the axioms in Theorem \ref{thm:antipode}. Axioms ($\Delta$1)-($\Delta$3)
and ($m$0)-($m$3) were already checked in the previous two proofs.

Axiom ($\mathcal{S}$0) follows from Remark \ref{rem:s0trees}. To see
($\mathcal{S}$1): suppose $T_{a}(u)\geq_{Pw}u$ and $i\in\GD(u)$.
So $j\leq i<k\implies u_{j}>u_{k}$ . By Proposition \ref{prop:lpw},
$T_{a}(u)\geq_{Pw}u$ means that we cannot have $u_{j}=a+1$ and $u_{k}=a$.
Thus, $\{u_{j},u_{k}\}\neq\{a,a+1\}$ and so $T_{a}(u_{j})>T_{a}(u_{k})$.

To see ($\mathcal{S}$2): given $u'_{i}\geq_{Pw}u_{i}$ for each $i$,
first note that $\setpar(u'_{i})=\setpar(u_{i})$, so 
\begin{equation}
\setpar(u'_{1}/\cdots/u'_{k})=\setpar(u_{1}/\cdots/u_{k})=\setpar(u'_{1}\backslash\cdots\backslash u'_{k}).\label{eq:setparS3}
\end{equation}
 Also, $\delrpt(u'_{i})\geq\delrpt(u_{i})$, so by axiom ($\mathcal{S}$2)
in $\SSym$, 
\[
(\delrpt u'_{1})/\cdots/(\delrpt u'_{k})\leq_{w}[(\delrpt u_{1})/\cdots/(\delrpt u_{k})]\vee[(\delrpt u'_{1})\backslash\cdots\backslash(\delrpt u'_{k})].
\]
Clearly $\delrpt(v/w)=\delrpt(v)/\delrpt(w)$ and $\delrpt(v\backslash w)=\delrpt(v)\backslash\delrpt(w)$,
so 
\begin{align*}
\delrpt(u'_{1}/\cdots/u'_{k}) & \leq_{w}\delrpt(u_{1}/\cdots/u_{k})\vee\delrpt(u'_{1}\backslash\cdots\backslash u'_{k})\\
 & \leq_{w}\delrpt((u_{1}/\cdots/u_{k})\vee(u'_{1}\backslash\cdots\backslash u'_{k}))
\end{align*}
using Theorem \ref{thm:lpw} and (\ref{eq:setparS3}). Combining with
$\setpar(u'_{1}/\cdots/u'_{k})=\setpar((u_{1}/\cdots/u_{k})\vee(u'_{1}\backslash\cdots\backslash u'_{k}))$
gives the result.

Axiom ($\mathcal{S}$3) is proved similarly. 
\end{proof}

\section{Hopf algebra of parking functions}\label{sec:parking_functions}

\subsection{Parking functions}

Recall that a \emph{parking function} of degree $n$ is an integer sequence $(p_1,\dots,p_n)$ where $p_{j_i}\le i$ for all $i$, whenever $p_{j_1}\le p_{j_2}\le \cdots \le p_{j_n}$ is its nonincreasing rearrangement. An equivalent definition of a parking function is that of a Dyck path of length $2n$ with a bijection between north steps and $[n]$ such that the labels of consecutive $N$ steps are increasing from bottom to top. 

In this section, we have an alternative view of parking functions as labeled binary trees satisfying certain conditions.

\begin{Definition}
	A \textbf{parking function} of degree $n$ is a pair $(\sigma,t)\in\Perm_n\times\PBT_n$ such that $\Des(t)\subseteq\Des(\sigma)$. We write a parking function $f=(\sigma,t)$ as a binary tree $t$ with labels on the internal nodes such that the $i$-th internal node in-order traversal is labeled $\sigma(i)$. Recall that \emph{in-order traversal} of a binary tree is the one where recursively we visit first the left subtree in-order then the root and then the right subtree in-order. The set of parking functions is denoted by $\PF$.
\end{Definition}

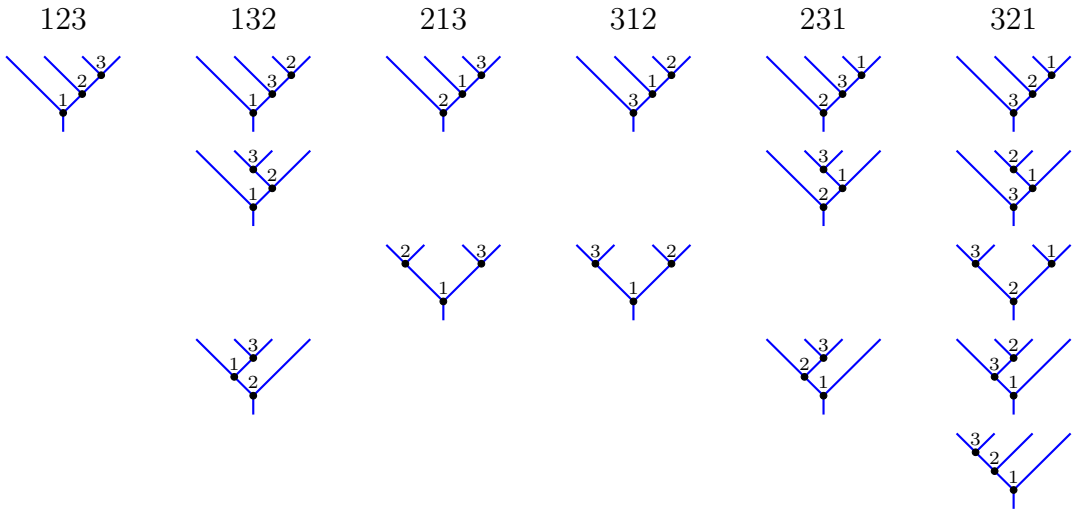
\begin{figure}
	\begin{center}
	\begin{tikzpicture}[scale=0.25,baseline=0pt]
	\node at (0,5) {$123$};
	\node at (10,5) {$132$};
	\node at (20,5) {$213$};
	\node at (30,5) {$312$};
	\node at (40,5) {$231$};
	\node at (50,5) {$321$};
	
	\draw[blue, thick] (0,-1) -- (0,0);
	\draw[blue, thick] (0,0) -- (3,3);
	\draw[blue, thick] (0,0) -- (-3,3);
	\draw[blue, thick] (1,1) -- (-1,3);
	\draw[blue, thick] (2,2) -- (1,3);
	\filldraw[black] (0,0) circle (5pt)  {};
	\filldraw[black] (1,1) circle (5pt)  {};
	\filldraw[black] (2,2) circle (5pt)  {};
	\node at (0,0.7)[font=\fontsize{7pt}{0}]{$1$};
	\node at (1,1.7)[font=\fontsize{7pt}{0}]{$2$};
	\node at (2,2.7)[font=\fontsize{7pt}{0}]{$3$};
	
	\draw[blue, thick] (10,-1) -- (10,0);
	\draw[blue, thick] (10,0) -- (13,3);
	\draw[blue, thick] (10,0) -- (7,3);
	\draw[blue, thick] (11,1) -- (9,3);
	\draw[blue, thick] (12,2) -- (11,3);
	\filldraw[black] (10,0) circle (5pt)  {};
	\filldraw[black] (11,1) circle (5pt)  {};
	\filldraw[black] (12,2) circle (5pt)  {};
	\node at (10,0.7)[font=\fontsize{7pt}{0}]{$1$};
	\node at (11,1.7)[font=\fontsize{7pt}{0}]{$3$};
	\node at (12,2.7)[font=\fontsize{7pt}{0}]{$2$};
	
	\draw[blue, thick] (20,-1) -- (20,0);
	\draw[blue, thick] (20,0) -- (23,3);
	\draw[blue, thick] (20,0) -- (17,3);
	\draw[blue, thick] (21,1) -- (19,3);
	\draw[blue, thick] (22,2) -- (21,3);
	\filldraw[black] (20,0) circle (5pt)  {};
	\filldraw[black] (21,1) circle (5pt)  {};
	\filldraw[black] (22,2) circle (5pt)  {};
	\node at (20,0.7)[font=\fontsize{7pt}{0}]{$2$};
	\node at (21,1.7)[font=\fontsize{7pt}{0}]{$1$};
	\node at (22,2.7)[font=\fontsize{7pt}{0}]{$3$};
	
	\draw[blue, thick] (30,-1) -- (30,0);
	\draw[blue, thick] (30,0) -- (33,3);
	\draw[blue, thick] (30,0) -- (27,3);
	\draw[blue, thick] (31,1) -- (29,3);
	\draw[blue, thick] (32,2) -- (31,3);
	\filldraw[black] (30,0) circle (5pt)  {};
	\filldraw[black] (31,1) circle (5pt)  {};
	\filldraw[black] (32,2) circle (5pt)  {};
	\node at (30,0.7)[font=\fontsize{7pt}{0}]{$3$};
	\node at (31,1.7)[font=\fontsize{7pt}{0}]{$1$};
	\node at (32,2.7)[font=\fontsize{7pt}{0}]{$2$};
	
	\draw[blue, thick] (40,-1) -- (40,0);
	\draw[blue, thick] (40,0) -- (43,3);
	\draw[blue, thick] (40,0) -- (37,3);
	\draw[blue, thick] (41,1) -- (39,3);
	\draw[blue, thick] (42,2) -- (41,3);
	\filldraw[black] (40,0) circle (5pt)  {};
	\filldraw[black] (41,1) circle (5pt)  {};
	\filldraw[black] (42,2) circle (5pt)  {};
	\node at (40,0.7)[font=\fontsize{7pt}{0}]{$2$};
	\node at (41,1.7)[font=\fontsize{7pt}{0}]{$3$};
	\node at (42,2.7)[font=\fontsize{7pt}{0}]{$1$};
	
	\draw[blue, thick] (50,-1) -- (50,0);
	\draw[blue, thick] (50,0) -- (53,3);
	\draw[blue, thick] (50,0) -- (47,3);
	\draw[blue, thick] (51,1) -- (49,3);
	\draw[blue, thick] (52,2) -- (51,3);
	\filldraw[black] (50,0) circle (5pt)  {};
	\filldraw[black] (51,1) circle (5pt)  {};
	\filldraw[black] (52,2) circle (5pt)  {};
	\node at (50,0.7)[font=\fontsize{7pt}{0}]{$3$};
	\node at (51,1.7)[font=\fontsize{7pt}{0}]{$2$};
	\node at (52,2.7)[font=\fontsize{7pt}{0}]{$1$};
	
	\draw[blue, thick] (10,-6) -- (10,-5);
	\draw[blue, thick] (10,-5) -- (13,-2);
	\draw[blue, thick] (10,-5) -- (7,-2);
	\draw[blue, thick] (11,-4) -- (9,-2);
	\draw[blue, thick] (10,-3) -- (11,-2);
	\filldraw[black] (10,-5) circle (5pt)  {};
	\filldraw[black] (11,-4) circle (5pt)  {};
	\filldraw[black] (10,-3) circle (5pt)  {};
	\node at (10,-4.3)[font=\fontsize{7pt}{0}]{$1$};
	\node at (11,-3.3)[font=\fontsize{7pt}{0}]{$2$};
	\node at (10,-2.3)[font=\fontsize{7pt}{0}]{$3$};
	
	\draw[blue, thick] (40,-6) -- (40,-5);
	\draw[blue, thick] (40,-5) -- (43,-2);
	\draw[blue, thick] (40,-5) -- (37,-2);
	\draw[blue, thick] (41,-4) -- (39,-2);
	\draw[blue, thick] (40,-3) -- (41,-2);
	\filldraw[black] (40,-5) circle (5pt)  {};
	\filldraw[black] (41,-4) circle (5pt)  {};
	\filldraw[black] (40,-3) circle (5pt)  {};
	\node at (40,-4.3)[font=\fontsize{7pt}{0}]{$2$};
	\node at (41,-3.3)[font=\fontsize{7pt}{0}]{$1$};
	\node at (40,-2.3)[font=\fontsize{7pt}{0}]{$3$};
	
	\draw[blue, thick] (50,-6) -- (50,-5);
	\draw[blue, thick] (50,-5) -- (53,-2);
	\draw[blue, thick] (50,-5) -- (47,-2);
	\draw[blue, thick] (51,-4) -- (49,-2);
	\draw[blue, thick] (50,-3) -- (51,-2);
	\filldraw[black] (50,-5) circle (5pt)  {};
	\filldraw[black] (51,-4) circle (5pt)  {};
	\filldraw[black] (50,-3) circle (5pt)  {};
	\node at (50,-4.3)[font=\fontsize{7pt}{0}]{$3$};
	\node at (51,-3.3)[font=\fontsize{7pt}{0}]{$1$};
	\node at (50,-2.3)[font=\fontsize{7pt}{0}]{$2$};

	\draw[blue, thick] (20,-11) -- (20,-10);
	\draw[blue, thick] (20,-10) -- (23,-7);
	\draw[blue, thick] (20,-10) -- (17,-7);
	\draw[blue, thick] (22,-8) -- (21,-7);
	\draw[blue, thick] (18,-8) -- (19,-7);
	\filldraw[black] (20,-10) circle (5pt)  {};
	\filldraw[black] (22,-8) circle (5pt)  {};
	\filldraw[black] (18,-8) circle (5pt)  {};
	\node at (20,-9.3)[font=\fontsize{7pt}{0}]{$1$};
	\node at (22,-7.3)[font=\fontsize{7pt}{0}]{$3$};
	\node at (18,-7.3)[font=\fontsize{7pt}{0}]{$2$};
	
	\draw[blue, thick] (30,-11) -- (30,-10);
	\draw[blue, thick] (30,-10) -- (33,-7);
	\draw[blue, thick] (30,-10) -- (27,-7);
	\draw[blue, thick] (32,-8) -- (31,-7);
	\draw[blue, thick] (28,-8) -- (29,-7);
	\filldraw[black] (30,-10) circle (5pt)  {};
	\filldraw[black] (32,-8) circle (5pt)  {};
	\filldraw[black] (28,-8) circle (5pt)  {};
	\node at (30,-9.3)[font=\fontsize{7pt}{0}]{$1$};
	\node at (32,-7.3)[font=\fontsize{7pt}{0}]{$2$};
	\node at (28,-7.3)[font=\fontsize{7pt}{0}]{$3$};
	
	\draw[blue, thick] (50,-11) -- (50,-10);
	\draw[blue, thick] (50,-10) -- (53,-7);
	\draw[blue, thick] (50,-10) -- (47,-7);
	\draw[blue, thick] (52,-8) -- (51,-7);
	\draw[blue, thick] (48,-8) -- (49,-7);
	\filldraw[black] (50,-10) circle (5pt)  {};
	\filldraw[black] (52,-8) circle (5pt)  {};
	\filldraw[black] (48,-8) circle (5pt)  {};
	\node at (50,-9.3)[font=\fontsize{7pt}{0}]{$2$};
	\node at (52,-7.3)[font=\fontsize{7pt}{0}]{$1$};
	\node at (48,-7.3)[font=\fontsize{7pt}{0}]{$3$};
	
	\draw[blue, thick] (10,-16) -- (10,-15);
	\draw[blue, thick] (10,-15) -- (13,-12);
	\draw[blue, thick] (10,-15) -- (7,-12);
	\draw[blue, thick] (9,-14) -- (11,-12);
	\draw[blue, thick] (10,-13) -- (9,-12);
	\filldraw[black] (10,-15) circle (5pt)  {};
	\filldraw[black] (9,-14) circle (5pt)  {};
	\filldraw[black] (10,-13) circle (5pt)  {};
	\node at (10,-14.3)[font=\fontsize{7pt}{0}]{$2$};
	\node at (9,-13.3)[font=\fontsize{7pt}{0}]{$1$};
	\node at (10,-12.3)[font=\fontsize{7pt}{0}]{$3$};
	
	\draw[blue, thick] (40,-16) -- (40,-15);
	\draw[blue, thick] (40,-15) -- (43,-12);
	\draw[blue, thick] (40,-15) -- (37,-12);
	\draw[blue, thick] (39,-14) -- (41,-12);
	\draw[blue, thick] (40,-13) -- (39,-12);
	\filldraw[black] (40,-15) circle (5pt)  {};
	\filldraw[black] (39,-14) circle (5pt)  {};
	\filldraw[black] (40,-13) circle (5pt)  {};
	\node at (40,-14.3)[font=\fontsize{7pt}{0}]{$1$};
	\node at (39,-13.3)[font=\fontsize{7pt}{0}]{$2$};
	\node at (40,-12.3)[font=\fontsize{7pt}{0}]{$3$};
	
	\draw[blue, thick] (50,-16) -- (50,-15);
	\draw[blue, thick] (50,-15) -- (53,-12);
	\draw[blue, thick] (50,-15) -- (47,-12);
	\draw[blue, thick] (49,-14) -- (51,-12);
	\draw[blue, thick] (50,-13) -- (49,-12);
	\filldraw[black] (50,-15) circle (5pt)  {};
	\filldraw[black] (49,-14) circle (5pt)  {};
	\filldraw[black] (50,-13) circle (5pt)  {};
	\node at (50,-14.3)[font=\fontsize{7pt}{0}]{$1$};
	\node at (49,-13.3)[font=\fontsize{7pt}{0}]{$3$};
	\node at (50,-12.3)[font=\fontsize{7pt}{0}]{$2$};

	\draw[blue, thick] (50,-21) -- (50,-20);
	\draw[blue, thick] (50,-20) -- (47,-17);
	\draw[blue, thick] (50,-20) -- (53,-17);
	\draw[blue, thick] (49,-19) -- (51,-17);
	\draw[blue, thick] (48,-18) -- (49,-17);
	\filldraw[black] (50,-20) circle (5pt)  {};
	\filldraw[black] (49,-19) circle (5pt)  {};
	\filldraw[black] (48,-18) circle (5pt)  {};
	\node at (50,-19.3)[font=\fontsize{7pt}{0}]{$1$};
	\node at (49,-18.3)[font=\fontsize{7pt}{0}]{$2$};
	\node at (48,-17.3)[font=\fontsize{7pt}{0}]{$3$};
	\end{tikzpicture}
	\end{center}
	\caption{\sl 
	Parking functions of degree $3$. Elements from the same column have the same underlying permutation.}\label{fig:pf}
\end{figure}

We present a bijection from our definition of parking functions to the classical one.

Given a parking function $(\sigma,t)$ of degree $n$, the Dyck path is constructed from $t$ as follows. We order the $2n+1$ nodes (including internal nodes and leaves) of $t$ in \emph{preorder}, that is, we visit first the root, then the left subtree in preorder and then the right subtree in preorder. 
Let $x_1x_2\cdots x_{2n}$ be the nodes of $t$ ignoring the right most leaf. From the order of the nodes, we construct Dyck path from $(n,n)$, the $n$ internal nodes correspond to west steps and the first $n$ leaves correspond to south steps. Then, we label of the $i$-th $N$ step, reading from top to bottom, by $\sigma(i)$.

\begin{figure}
	\begin{center}
		\begin{tikzpicture}[scale=0.25,baseline=0pt]
			\draw[gray, thin] (0,0) -- (0,3);
			\draw[gray, thin] (1,0) -- (1,3);
			\draw[gray, thin] (2,0) -- (2,3);
			\draw[gray, thin] (3,0) -- (3,3);
			\draw[gray, thin] (0,0) -- (3,0);
			\draw[gray, thin] (0,1) -- (3,1);
			\draw[gray, thin] (0,2) -- (3,2);
			\draw[gray, thin] (0,3) -- (3,3);
			\draw[red, ultra thick] (0,0) -- (0,1);
			\draw[red, ultra thick] (0,1) -- (1,1);
			\draw[red, ultra thick] (1,1) -- (1,2);
			\draw[red, ultra thick] (1,2) -- (2,2);
			\draw[red, ultra thick] (2,2) -- (2,3);
			\draw[red, ultra thick] (2,3) -- (3,3);
			\node at (0.5,0.4)[font=\fontsize{7pt}{0}]{$3$};
			\node at (1.5,1.4)[font=\fontsize{7pt}{0}]{$2$};
			\node at (2.5,2.4)[font=\fontsize{7pt}{0}]{$1$};
			
			\draw[gray, thin] (10,0) -- (10,3);
			\draw[gray, thin] (11,0) -- (11,3);
			\draw[gray, thin] (12,0) -- (12,3);
			\draw[gray, thin] (13,0) -- (13,3);
			\draw[gray, thin] (10,0) -- (13,0);
			\draw[gray, thin] (10,1) -- (13,1);
			\draw[gray, thin] (10,2) -- (13,2);
			\draw[gray, thin] (10,3) -- (13,3);
			\draw[red, ultra thick] (10,0) -- (10,1);
			\draw[red, ultra thick] (10,1) -- (11,1);
			\draw[red, ultra thick] (11,1) -- (11,2);
			\draw[red, ultra thick] (11,2) -- (12,2);
			\draw[red, ultra thick] (12,2) -- (12,3);
			\draw[red, ultra thick] (12,3) -- (13,3);
			\node at (10.5,0.4)[font=\fontsize{7pt}{0}]{$2$};
			\node at (11.5,1.4)[font=\fontsize{7pt}{0}]{$3$};
			\node at (12.5,2.4)[font=\fontsize{7pt}{0}]{$1$};
			
			\draw[gray, thin] (20,0) -- (20,3);
			\draw[gray, thin] (21,0) -- (21,3);
			\draw[gray, thin] (22,0) -- (22,3);
			\draw[gray, thin] (23,0) -- (23,3);
			\draw[gray, thin] (20,0) -- (23,0);
			\draw[gray, thin] (20,1) -- (23,1);
			\draw[gray, thin] (20,2) -- (23,2);
			\draw[gray, thin] (20,3) -- (23,3);
			\draw[red, ultra thick] (20,0) -- (20,1);
			\draw[red, ultra thick] (20,1) -- (21,1);
			\draw[red, ultra thick] (21,1) -- (21,2);
			\draw[red, ultra thick] (21,2) -- (22,2);
			\draw[red, ultra thick] (22,2) -- (22,3);
			\draw[red, ultra thick] (22,3) -- (23,3);
			\node at (20.5,0.4)[font=\fontsize{7pt}{0}]{$3$};
			\node at (21.5,1.4)[font=\fontsize{7pt}{0}]{$1$};
			\node at (22.5,2.4)[font=\fontsize{7pt}{0}]{$2$};
			
			\draw[gray, thin] (30,0) -- (30,3);
			\draw[gray, thin] (31,0) -- (31,3);
			\draw[gray, thin] (32,0) -- (32,3);
			\draw[gray, thin] (33,0) -- (33,3);
			\draw[gray, thin] (30,0) -- (33,0);
			\draw[gray, thin] (30,1) -- (33,1);
			\draw[gray, thin] (30,2) -- (33,2);
			\draw[gray, thin] (30,3) -- (33,3);
			\draw[red, ultra thick] (30,0) -- (30,1);
			\draw[red, ultra thick] (30,1) -- (31,1);
			\draw[red, ultra thick] (31,1) -- (31,2);
			\draw[red, ultra thick] (31,2) -- (32,2);
			\draw[red, ultra thick] (32,2) -- (32,3);
			\draw[red, ultra thick] (32,3) -- (33,3);
			\node at (30.5,0.4)[font=\fontsize{7pt}{0}]{$2$};
			\node at (31.5,1.4)[font=\fontsize{7pt}{0}]{$1$};
			\node at (32.5,2.4)[font=\fontsize{7pt}{0}]{$3$};
			
			\draw[gray, thin] (40,0) -- (40,3);
			\draw[gray, thin] (41,0) -- (41,3);
			\draw[gray, thin] (42,0) -- (42,3);
			\draw[gray, thin] (43,0) -- (43,3);
			\draw[gray, thin] (40,0) -- (43,0);
			\draw[gray, thin] (40,1) -- (43,1);
			\draw[gray, thin] (40,2) -- (43,2);
			\draw[gray, thin] (40,3) -- (43,3);
			\draw[red, ultra thick] (40,0) -- (40,1);
			\draw[red, ultra thick] (40,1) -- (41,1);
			\draw[red, ultra thick] (41,1) -- (41,2);
			\draw[red, ultra thick] (41,2) -- (42,2);
			\draw[red, ultra thick] (42,2) -- (42,3);
			\draw[red, ultra thick] (42,3) -- (43,3);
			\node at (40.5,0.4)[font=\fontsize{7pt}{0}]{$1$};
			\node at (41.5,1.4)[font=\fontsize{7pt}{0}]{$3$};
			\node at (42.5,2.4)[font=\fontsize{7pt}{0}]{$2$};
			
			\draw[gray, thin] (50,0) -- (50,3);
			\draw[gray, thin] (51,0) -- (51,3);
			\draw[gray, thin] (52,0) -- (52,3);
			\draw[gray, thin] (53,0) -- (53,3);
			\draw[gray, thin] (50,0) -- (53,0);
			\draw[gray, thin] (50,1) -- (53,1);
			\draw[gray, thin] (50,2) -- (53,2);
			\draw[gray, thin] (50,3) -- (53,3);
			\draw[red, ultra thick] (50,0) -- (50,1);
			\draw[red, ultra thick] (50,1) -- (51,1);
			\draw[red, ultra thick] (51,1) -- (51,2);
			\draw[red, ultra thick] (51,2) -- (52,2);
			\draw[red, ultra thick] (52,2) -- (52,3);
			\draw[red, ultra thick] (52,3) -- (53,3);
			\node at (50.5,0.4)[font=\fontsize{7pt}{0}]{$1$};
			\node at (51.5,1.4)[font=\fontsize{7pt}{0}]{$2$};
			\node at (52.5,2.4)[font=\fontsize{7pt}{0}]{$3$};

			\draw[gray, thin] (10,-5) -- (10,-2);
			\draw[gray, thin] (11,-5) -- (11,-2);
			\draw[gray, thin] (12,-5) -- (12,-2);
			\draw[gray, thin] (13,-5) -- (13,-2);
			\draw[gray, thin] (10,-5) -- (13,-5);
			\draw[gray, thin] (10,-4) -- (13,-4);
			\draw[gray, thin] (10,-3) -- (13,-3);
			\draw[gray, thin] (10,-2) -- (13,-2);
			\draw[red, ultra thick] (10,-5) -- (10,-3);
			\draw[red, ultra thick] (10,-3) -- (12,-3);
			\draw[red, ultra thick] (12,-3) -- (12,-2);
			\draw[red, ultra thick] (12,-2) -- (13,-2);
			\node at (10.5,-4.6)[font=\fontsize{7pt}{0}]{$2$};
			\node at (10.5,-3.6)[font=\fontsize{7pt}{0}]{$3$};
			\node at (12.5,-2.6)[font=\fontsize{7pt}{0}]{$1$};
			
			\draw[gray, thin] (40,-5) -- (40,-2);
			\draw[gray, thin] (41,-5) -- (41,-2);
			\draw[gray, thin] (42,-5) -- (42,-2);
			\draw[gray, thin] (43,-5) -- (43,-2);
			\draw[gray, thin] (40,-5) -- (43,-5);
			\draw[gray, thin] (40,-4) -- (43,-4);
			\draw[gray, thin] (40,-3) -- (43,-3);
			\draw[gray, thin] (40,-2) -- (43,-2);
			\draw[red, ultra thick] (40,-5) -- (40,-3);
			\draw[red, ultra thick] (40,-3) -- (42,-3);
			\draw[red, ultra thick] (42,-3) -- (42,-2);
			\draw[red, ultra thick] (42,-2) -- (43,-2);
			\node at (40.5,-4.6)[font=\fontsize{7pt}{0}]{$1$};
			\node at (40.5,-3.6)[font=\fontsize{7pt}{0}]{$3$};
			\node at (42.5,-2.6)[font=\fontsize{7pt}{0}]{$2$};
			
			\draw[gray, thin] (50,-5) -- (50,-2);
			\draw[gray, thin] (51,-5) -- (51,-2);
			\draw[gray, thin] (52,-5) -- (52,-2);
			\draw[gray, thin] (53,-5) -- (53,-2);
			\draw[gray, thin] (50,-5) -- (53,-5);
			\draw[gray, thin] (50,-4) -- (53,-4);
			\draw[gray, thin] (50,-3) -- (53,-3);
			\draw[gray, thin] (50,-2) -- (53,-2);
			\draw[red, ultra thick] (50,-5) -- (50,-3);
			\draw[red, ultra thick] (50,-3) -- (52,-3);
			\draw[red, ultra thick] (52,-3) -- (52,-2);
			\draw[red, ultra thick] (52,-2) -- (53,-2);
			\node at (50.5,-4.6)[font=\fontsize{7pt}{0}]{$1$};
			\node at (50.5,-3.6)[font=\fontsize{7pt}{0}]{$2$};
			\node at (52.5,-2.6)[font=\fontsize{7pt}{0}]{$3$};
			
			\draw[gray, thin] (20,-10) -- (20,-7);
			\draw[gray, thin] (21,-10) -- (21,-7);
			\draw[gray, thin] (22,-10) -- (22,-7);
			\draw[gray, thin] (23,-10) -- (23,-7);
			\draw[gray, thin] (20,-10) -- (23,-10);
			\draw[gray, thin] (20,-9) -- (23,-9);
			\draw[gray, thin] (20,-8) -- (23,-8);
			\draw[gray, thin] (20,-7) -- (23,-7);
			\draw[red, ultra thick] (20,-10) -- (20,-9);
			\draw[red, ultra thick] (20,-9) -- (21,-9);
			\draw[red, ultra thick] (21,-9) -- (21,-7);
			\draw[red, ultra thick] (21,-7) -- (23,-7);
			\node at (20.5,-9.6)[font=\fontsize{7pt}{0}]{$3$};
			\node at (21.5,-8.6)[font=\fontsize{7pt}{0}]{$1$};
			\node at (21.5,-7.6)[font=\fontsize{7pt}{0}]{$2$};
			
			\draw[gray, thin] (30,-10) -- (30,-7);
			\draw[gray, thin] (31,-10) -- (31,-7);
			\draw[gray, thin] (32,-10) -- (32,-7);
			\draw[gray, thin] (33,-10) -- (33,-7);
			\draw[gray, thin] (30,-10) -- (33,-10);
			\draw[gray, thin] (30,-9) -- (33,-9);
			\draw[gray, thin] (30,-8) -- (33,-8);
			\draw[gray, thin] (30,-7) -- (33,-7);
			\draw[red, ultra thick] (30,-10) -- (30,-9);
			\draw[red, ultra thick] (30,-9) -- (31,-9);
			\draw[red, ultra thick] (31,-9) -- (31,-7);
			\draw[red, ultra thick] (31,-7) -- (33,-7);
			\node at (30.5,-9.6)[font=\fontsize{7pt}{0}]{$2$};
			\node at (31.5,-8.6)[font=\fontsize{7pt}{0}]{$1$};
			\node at (31.5,-7.6)[font=\fontsize{7pt}{0}]{$3$};
			
			\draw[gray, thin] (50,-10) -- (50,-7);
			\draw[gray, thin] (51,-10) -- (51,-7);
			\draw[gray, thin] (52,-10) -- (52,-7);
			\draw[gray, thin] (53,-10) -- (53,-7);
			\draw[gray, thin] (50,-10) -- (53,-10);
			\draw[gray, thin] (50,-9) -- (53,-9);
			\draw[gray, thin] (50,-8) -- (53,-8);
			\draw[gray, thin] (50,-7) -- (53,-7);
			\draw[red, ultra thick] (50,-10) -- (50,-9);
			\draw[red, ultra thick] (50,-9) -- (51,-9);
			\draw[red, ultra thick] (51,-9) -- (51,-7);
			\draw[red, ultra thick] (51,-7) -- (53,-7);
			\node at (50.5,-9.6)[font=\fontsize{7pt}{0}]{$1$};
			\node at (51.5,-8.6)[font=\fontsize{7pt}{0}]{$2$};
			\node at (51.5,-7.6)[font=\fontsize{7pt}{0}]{$3$};

			\draw[gray, thin] (10,-15) -- (10,-12);
			\draw[gray, thin] (11,-15) -- (11,-12);
			\draw[gray, thin] (12,-15) -- (12,-12);
			\draw[gray, thin] (13,-15) -- (13,-12);
			\draw[gray, thin] (10,-15) -- (13,-15);
			\draw[gray, thin] (10,-14) -- (13,-14);
			\draw[gray, thin] (10,-13) -- (13,-13);
			\draw[gray, thin] (10,-12) -- (13,-12);
			\draw[red, ultra thick] (10,-15) -- (10,-13);
			\draw[red, ultra thick] (10,-13) -- (11,-13);
			\draw[red, ultra thick] (11,-13) -- (11,-12);
			\draw[red, ultra thick] (11,-12) -- (13,-12);
			\node at (10.5,-14.6)[font=\fontsize{7pt}{0}]{$2$};
			\node at (10.5,-13.6)[font=\fontsize{7pt}{0}]{$3$};
			\node at (11.5,-12.6)[font=\fontsize{7pt}{0}]{$1$};
			
			\draw[gray, thin] (40,-15) -- (40,-12);
			\draw[gray, thin] (41,-15) -- (41,-12);
			\draw[gray, thin] (42,-15) -- (42,-12);
			\draw[gray, thin] (43,-15) -- (43,-12);
			\draw[gray, thin] (40,-15) -- (43,-15);
			\draw[gray, thin] (40,-14) -- (43,-14);
			\draw[gray, thin] (40,-13) -- (43,-13);
			\draw[gray, thin] (40,-12) -- (43,-12);
			\draw[red, ultra thick] (40,-15) -- (40,-13);
			\draw[red, ultra thick] (40,-13) -- (41,-13);
			\draw[red, ultra thick] (41,-13) -- (41,-12);
			\draw[red, ultra thick] (41,-12) -- (43,-12);
			\node at (40.5,-14.6)[font=\fontsize{7pt}{0}]{$1$};
			\node at (40.5,-13.6)[font=\fontsize{7pt}{0}]{$3$};
			\node at (41.5,-12.6)[font=\fontsize{7pt}{0}]{$2$};
			
			\draw[gray, thin] (50,-15) -- (50,-12);
			\draw[gray, thin] (51,-15) -- (51,-12);
			\draw[gray, thin] (52,-15) -- (52,-12);
			\draw[gray, thin] (53,-15) -- (53,-12);
			\draw[gray, thin] (50,-15) -- (53,-15);
			\draw[gray, thin] (50,-14) -- (53,-14);
			\draw[gray, thin] (50,-13) -- (53,-13);
			\draw[gray, thin] (50,-12) -- (53,-12);
			\draw[red, ultra thick] (50,-15) -- (50,-13);
			\draw[red, ultra thick] (50,-13) -- (51,-13);
			\draw[red, ultra thick] (51,-13) -- (51,-12);
			\draw[red, ultra thick] (51,-12) -- (53,-12);
			\node at (50.5,-14.6)[font=\fontsize{7pt}{0}]{$1$};
			\node at (50.5,-13.6)[font=\fontsize{7pt}{0}]{$2$};
			\node at (51.5,-12.6)[font=\fontsize{7pt}{0}]{$3$};

			\draw[gray, thin] (50,-20) -- (50,-17);
			\draw[gray, thin] (51,-20) -- (51,-17);
			\draw[gray, thin] (52,-20) -- (52,-17);
			\draw[gray, thin] (53,-20) -- (53,-17);
			\draw[gray, thin] (50,-20) -- (53,-20);
			\draw[gray, thin] (50,-19) -- (53,-19);
			\draw[gray, thin] (50,-18) -- (53,-18);
			\draw[gray, thin] (50,-17) -- (53,-17);
			\draw[red, ultra thick] (50,-20) -- (50,-17);
			\draw[red, ultra thick] (50,-17) -- (53,-17);
			\node at (50.5,-19.6)[font=\fontsize{7pt}{0}]{$1$};
			\node at (50.5,-18.6)[font=\fontsize{7pt}{0}]{$2$};
			\node at (50.5,-17.6)[font=\fontsize{7pt}{0}]{$3$};
		\end{tikzpicture}
	\end{center}
	\caption{Labeled Dyck paths of degree $3$, the positions are in correspondence with the positions of parking functions in Figure \ref{fig:pf}.}\label{fig:pf_classic}
\end{figure}
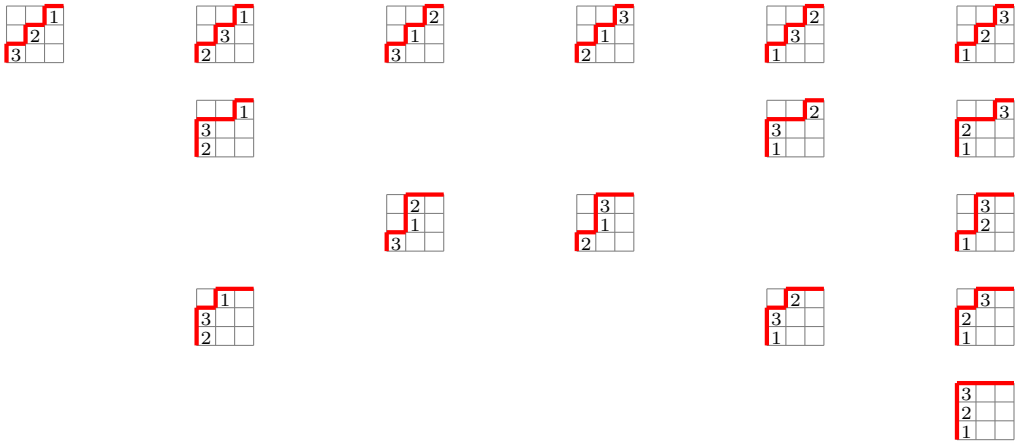

\subsection{Hopf algebra of parking functions}

We define a Hopf structure on parking functions.

\begin{Definition}
	As a graded vector space, the Hopf algebra of parking functions, $\PSym$ is  $\displaystyle\bigoplus_{n\geq 0}\Bbbk\PF_n$ where $\PF_n$ is the set of parking functions of degree $n$. By convention $\PF_0$ is the span of the empty parking function. The fundamental basis of $\PSym$ is defined as $\{F_f:f\in\PF\}$.
\end{Definition}

For $f\in \PF_n$ we let $\allow(f)=\{1,2,\ldots,n-1\}$. In particular,
the comultiplication and multiplication rules for $\PSym$ are
$$\Delta(F_f)=\sum_{i=0}^{\deg(f)}F_{^if}\otimes F_{f^i}=\sum_{f\mapsto(f_1,f_2)}F_{\std(f_1)}\otimes F_{\std(f_2)},$$
$$F_f\cdot F_g=\sum_{g\mapsto(g_1,\dots,g_{\deg(f)+1})}F_{f\overleftarrow{\shuffle}(g_1,\dots,g_{\deg(f)+1})}.$$

\begin{Example}
	Let $f=\begin{tikzpicture}[scale=0.2,baseline=0pt]
	\draw[blue, thick] (0,-1) -- (0,0);
	\draw[blue, thick] (0,0) -- (2,2);
	\draw[blue, thick] (0,0) -- (-2,2);
	\draw[blue, thick] (-1,1) -- (0,2);
	\filldraw[black] (0,0) circle (5pt)  {};
	\filldraw[black] (-1,1) circle (5pt)  {};
	\node at (-1,1.7)[font=\fontsize{7pt}{0}]{$2$};
	\node at (0,0.7)[font=\fontsize{7pt}{0}]{$1$};
	\end{tikzpicture}$ and $g=\begin{tikzpicture}[scale=0.2,baseline=0pt]
	\draw[red, thick] (0,-1) -- (0,0);
	\draw[red, thick] (0,0) -- (2,2);
	\draw[red, thick] (0,0) -- (-2,2);
	\draw[red, thick] (1,1) -- (0,2);
	\filldraw[black] (0,0) circle (5pt)  {};
	\filldraw[black] (1,1) circle (5pt)  {};
	\node at (0,0.7)[font=\fontsize{7pt}{0}]{$1$};
	\node at (1,1.7)[font=\fontsize{7pt}{0}]{$2$};
	\end{tikzpicture}$. Then
	$$F_f\cdot F_g=
	F_{\begin{tikzpicture}[scale=0.2,baseline=0pt]
	\draw[blue, thick] (0,-1) -- (0,0);
	\draw[blue, thick] (0,0) -- (2,2);
	\draw[blue, thick] (0,0) -- (-4,4);
	\draw[red, thick] (2,2) -- (0,4);
	\draw[red, thick] (2,2) -- (4,4);
	\draw[red, thick] (3,3) -- (2,4);
	\draw[blue, thick] (-3,3) -- (-2,4);
	\filldraw[black] (0,0) circle (5pt)  {};
	\filldraw[black] (2,2) circle (5pt)  {};
	\filldraw[black] (3,3) circle (5pt)  {};
	\filldraw[black] (-3,3) circle (5pt)  {};
	\node at (-3,3.7)[font=\fontsize{7pt}{0}]{$2$};
	\node at (0,0.7)[font=\fontsize{7pt}{0}]{$1$};
	\node at (2,2.7)[font=\fontsize{7pt}{0}]{$3$};
	\node at (3,3.7)[font=\fontsize{7pt}{0}]{$4$};
	\end{tikzpicture}}+
	F_{\begin{tikzpicture}[scale=0.2,baseline=0pt]
	\draw[blue, thick] (0,-1) -- (0,0);
	\draw[blue, thick] (0,0) -- (4,4);
	\draw[blue, thick] (0,0) -- (-4,4);
	\draw[blue, thick] (-1,1) -- (0,2);
	\draw[red, thick] (0,2) -- (-2,4);
	\draw[red, thick] (0,2) -- (2,4);
	\draw[red, thick] (1,3) -- (0,4);
	\filldraw[black] (0,0) circle (5pt)  {};
	\filldraw[black] (-1,1) circle (5pt)  {};
	\filldraw[black] (0,2) circle (5pt)  {};
	\filldraw[black] (1,3) circle (5pt)  {};
	\node at (-1,1.7)[font=\fontsize{7pt}{0}]{$2$};
	\node at (0,0.7)[font=\fontsize{7pt}{0}]{$1$};
	\node at (0,2.7)[font=\fontsize{7pt}{0}]{$3$};
	\node at (1,3.7)[font=\fontsize{7pt}{0}]{$4$};
	\end{tikzpicture}}+
	F_{\begin{tikzpicture}[scale=0.2,baseline=0pt]
	\draw[blue, thick] (0,-1) -- (0,0);
	\draw[blue, thick] (0,0) -- (4,4);
	\draw[blue, thick] (0,0) -- (-2,2);
	\draw[blue, thick] (-1,1) -- (2,4);
	\draw[red, thick] (-2,2) -- (0,4);
	\draw[red, thick] (-2,2) -- (-4,4);
	\draw[red, thick] (-1,3) -- (-2,4);
	\filldraw[black] (0,0) circle (5pt)  {};
	\filldraw[black] (-1,1) circle (5pt)  {};
	\filldraw[black] (-2,2) circle (5pt)  {};
	\filldraw[black] (-1,3) circle (5pt)  {};
	\node at (-2,2.7)[font=\fontsize{7pt}{0}]{$3$};
	\node at (-1,3.7)[font=\fontsize{7pt}{0}]{$4$};
	\node at (-1,1.7)[font=\fontsize{7pt}{0}]{$2$};
	\node at (0,0.7)[font=\fontsize{7pt}{0}]{$1$};
	\end{tikzpicture}}+
	F_{\begin{tikzpicture}[scale=0.2,baseline=0pt]
	\draw[blue, thick] (0,-1) -- (0,0);
	\draw[blue, thick] (0,0) -- (3,3);
	\draw[blue, thick] (0,0) -- (-4,4);
	\draw[red, thick] (3,3) -- (2,4);
	\draw[red, thick] (3,3) -- (4,4);
	\draw[blue, thick] (-2,2) -- (-1,3);
	\draw[red, thick] (-1,3) -- (-2,4);
	\draw[red, thick] (-1,3) -- (0,4);
	\filldraw[black] (0,0) circle (5pt)  {};
	\filldraw[black] (3,3) circle (5pt)  {};
	\filldraw[black] (-2,2) circle (5pt)  {};
	\filldraw[black] (-1,3) circle (5pt)  {};
	\node at (-2,2.7)[font=\fontsize{7pt}{0}]{$2$};
	\node at (-1,3.7)[font=\fontsize{7pt}{0}]{$3$};
	\node at (0,0.7)[font=\fontsize{7pt}{0}]{$1$};
	\node at (3,3.7)[font=\fontsize{7pt}{0}]{$4$};
	\end{tikzpicture}}+
	F_{\begin{tikzpicture}[scale=0.2,baseline=0pt]
	\draw[blue, thick] (0,-1) -- (0,0);
	\draw[blue, thick] (0,0) -- (3,3);
	\draw[blue, thick] (0,0) -- (-3,3);
	\draw[red, thick] (3,3) -- (2,4);
	\draw[red, thick] (3,3) -- (4,4);
	\draw[blue, thick] (-2,2) -- (0,4);
	\draw[red, thick] (-3,3) -- (-2,4);
	\draw[red, thick] (-3,3) -- (-4,4);
	\filldraw[black] (0,0) circle (5pt)  {};
	\filldraw[black] (3,3) circle (5pt)  {};
	\filldraw[black] (-2,2) circle (5pt)  {};
	\filldraw[black] (-3,3) circle (5pt)  {};
	\node at (-3,3.7)[font=\fontsize{7pt}{0}]{$3$};
	\node at (-2,2.7)[font=\fontsize{7pt}{0}]{$2$};
	\node at (0,0.7)[font=\fontsize{7pt}{0}]{$1$};
	\node at (3,3.7)[font=\fontsize{7pt}{0}]{$4$};
	\end{tikzpicture}}+
	F_{\begin{tikzpicture}[scale=0.2,baseline=0pt]
	\draw[blue, thick] (0,-1) -- (0,0);
	\draw[blue, thick] (0,0) -- (4,4);
	\draw[blue, thick] (0,0) -- (-3,3);
	\draw[blue, thick] (-1,1) -- (1,3);
	\draw[red, thick] (-3,3) -- (-2,4);
	\draw[red, thick] (-3,3) -- (-4,4);
	\draw[red, thick] (1,3) -- (0,4);
	\draw[red, thick] (1,3) -- (2,4);
	\filldraw[black] (0,0) circle (5pt)  {};
	\filldraw[black] (-1,1) circle (5pt)  {};
	\filldraw[black] (-3,3) circle (5pt)  {};
	\filldraw[black] (1,3) circle (5pt)  {};
	\node at (-3,3.7)[font=\fontsize{7pt}{0}]{$3$};
	\node at (-1,1.7)[font=\fontsize{7pt}{0}]{$2$};
	\node at (1,3.7)[font=\fontsize{7pt}{0}]{$4$};
	\node at (0,0.7)[font=\fontsize{7pt}{0}]{$1$};
	\end{tikzpicture}}$$
	Let $f=$
	\begin{tikzpicture}[scale=0.2,baseline=0pt]
	\draw[blue, thick] (0,-1) -- (0,0);
	\draw[blue, thick] (0,0) -- (5,5);
	\draw[blue, thick] (0,0) -- (-5,5);
	\draw[blue, thick] (2,2) -- (-1,5);
	\draw[blue, thick] (1,3) -- (3,5);
	\draw[blue, thick] (2,4) -- (1,5);
	\draw[blue, thick] (-4,4) -- (-3,5);
	\filldraw[black] (0,0) circle (5pt)  {};
	\filldraw[black] (2,2) circle (5pt)  {};
	\filldraw[black] (1,3) circle (5pt)  {};
	\filldraw[black] (2,4) circle (5pt)  {};
	\filldraw[black] (-4,4) circle (5pt)  {};
	\node at (-4,4.7)[font=\fontsize{7pt}{0}]{$4$};
	\node at (0,0.7)[font=\fontsize{7pt}{0}]{$2$};
	\node at (1,3.7)[font=\fontsize{7pt}{0}]{$3$};
	\node at (2,4.7)[font=\fontsize{7pt}{0}]{$5$};
	\node at (2,2.7)[font=\fontsize{7pt}{0}]{$1$};
	\end{tikzpicture}. Then
	$$\Delta_+(F_f)=
	F_{\begin{tikzpicture}[scale=0.2,baseline=0pt]
	\draw[blue, thick] (0,-1) -- (0,0);
	\draw[blue, thick] (0,0) -- (1,1);
	\draw[blue, thick] (0,0) -- (-1,1);
	\filldraw[black] (0,0) circle (5pt)  {};
	\node at (0,0.7)[font=\fontsize{7pt}{0}]{$1$};
	\end{tikzpicture}}\otimes
	F_{\begin{tikzpicture}[scale=0.2,baseline=0pt]
	\draw[blue, thick] (0,-1) -- (0,0);
	\draw[blue, thick] (0,0) -- (4,4);
	\draw[blue, thick] (0,0) -- (-4,4);
	\draw[blue, thick] (1,1) -- (-2,4);
	\draw[blue, thick] (0,2) -- (2,4);
	\draw[blue, thick] (1,3) -- (0,4);
	\filldraw[black] (0,0) circle (5pt)  {};
	\filldraw[black] (1,1) circle (5pt)  {};
	\filldraw[black] (0,2) circle (5pt)  {};
	\filldraw[black] (1,3) circle (5pt)  {};
	\node at (0,0.7)[font=\fontsize{7pt}{0}]{$2$};
	\node at (0,2.7)[font=\fontsize{7pt}{0}]{$3$};
	\node at (1,3.7)[font=\fontsize{7pt}{0}]{$4$};
	\node at (1,1.7)[font=\fontsize{7pt}{0}]{$1$};
	\end{tikzpicture}}+
	F_{\begin{tikzpicture}[scale=0.2,baseline=0pt]
	\draw[blue, thick] (0,-1) -- (0,0);
	\draw[blue, thick] (0,0) -- (2,2);
	\draw[blue, thick] (0,0) -- (-2,2);
	\draw[blue, thick] (-1,1) -- (0,2);
	\filldraw[black] (0,0) circle (5pt)  {};
	\filldraw[black] (-1,1) circle (5pt)  {};
	\node at (-1,1.7)[font=\fontsize{7pt}{0}]{$2$};
	\node at (0,0.7)[font=\fontsize{7pt}{0}]{$1$};
	\end{tikzpicture}}\otimes
	F_{\begin{tikzpicture}[scale=0.2,baseline=0pt]
	\draw[blue, thick] (0,-1) -- (0,0);
	\draw[blue, thick] (0,0) -- (3,3);
	\draw[blue, thick] (0,0) -- (-3,3);
	\draw[blue, thick] (-1,1) -- (1,3);
	\draw[blue, thick] (0,2) -- (-1,3);
	\filldraw[black] (0,0) circle (5pt)  {};
	\filldraw[black] (-1,1) circle (5pt)  {};
	\filldraw[black] (0,2) circle (5pt)  {};
	\node at (0,0.7)[font=\fontsize{7pt}{0}]{$1$};
	\node at (-1,1.7)[font=\fontsize{7pt}{0}]{$2$};
	\node at (0,2.7)[font=\fontsize{7pt}{0}]{$3$};
	\end{tikzpicture}}+
	F_{\begin{tikzpicture}[scale=0.2,baseline=0pt]
	\draw[blue, thick] (0,-1) -- (0,0);
	\draw[blue, thick] (0,0) -- (3,3);
	\draw[blue, thick] (0,0) -- (-3,3);
	\draw[blue, thick] (2,2) -- (1,3);
	\draw[blue, thick] (-2,2) -- (-1,3);
	\filldraw[black] (0,0) circle (5pt)  {};
	\filldraw[black] (2,2) circle (5pt)  {};
	\filldraw[black] (-2,2) circle (5pt)  {};
	\node at (-2,2.7)[font=\fontsize{7pt}{0}]{$3$};
	\node at (0,0.7)[font=\fontsize{7pt}{0}]{$1$};
	\node at (2,2.7)[font=\fontsize{7pt}{0}]{$2$};
	\end{tikzpicture}}\otimes
	F_{\begin{tikzpicture}[scale=0.2,baseline=0pt]
	\draw[blue, thick] (0,-1) -- (0,0);
	\draw[blue, thick] (0,0) -- (2,2);
	\draw[blue, thick] (0,0) -- (-2,2);
	\draw[blue, thick] (-1,1) -- (0,2);
	\filldraw[black] (0,0) circle (5pt)  {};
	\filldraw[black] (-1,1) circle (5pt)  {};
	\node at (-1,1.7)[font=\fontsize{7pt}{0}]{$2$};
	\node at (0,0.7)[font=\fontsize{7pt}{0}]{$1$};
	\end{tikzpicture}}+
	F_{\begin{tikzpicture}[scale=0.2,baseline=0pt]
	\draw[blue, thick] (0,-1) -- (0,0);
	\draw[blue, thick] (0,0) -- (4,4);
	\draw[blue, thick] (0,0) -- (-4,4);
	\draw[blue, thick] (2,2) -- (0,4);
	\draw[blue, thick] (3,3) -- (2,4);
	\draw[blue, thick] (-3,3) -- (-2,4);
	\filldraw[black] (0,0) circle (5pt)  {};
	\filldraw[black] (2,2) circle (5pt)  {};
	\filldraw[black] (3,3) circle (5pt)  {};
	\filldraw[black] (-3,3) circle (5pt)  {};
	\node at (-3,3.7)[font=\fontsize{7pt}{0}]{$3$};
	\node at (0,0.7)[font=\fontsize{7pt}{0}]{$1$};
	\node at (2,2.7)[font=\fontsize{7pt}{0}]{$2$};
	\node at (3,3.7)[font=\fontsize{7pt}{0}]{$4$};
	\end{tikzpicture}}\otimes
	F_{\begin{tikzpicture}[scale=0.2,baseline=0pt]
	\draw[blue, thick] (0,-1) -- (0,0);
	\draw[blue, thick] (0,0) -- (1,1);
	\draw[blue, thick] (0,0) -- (-1,1);
	\filldraw[black] (0,0) circle (5pt)  {};
	\node at (0,0.7)[font=\fontsize{7pt}{0}]{$1$};
	\end{tikzpicture}}$$
\end{Example}
\begin{Proposition}
	$\PSym$ with multiplication and comultiplication defined above is a Hopf algebra.
\end{Proposition}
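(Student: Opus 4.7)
The plan is in two stages: first verify that the operations send parking functions to (sums of) parking functions, and then deduce the Hopf axioms from the same computations that establish the structure on $\SSym$, since the operations on parking functions viewed as labeled binary trees are formally the $\SSym$ operations restricted to this subset of labeled binary trees.

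For the comultiplication, given $f = (\sigma, t) \in \PF_n$ and $i \in [0, n]$, the key identity is $\Des({}^i t) = \Des(t) \cap [1, i-1]$, and analogously $\Des({}^i \sigma) = \Des(\sigma) \cap [1, i-1]$. These hold because splitting a binary tree at leaf $i+1$ preserves the right-child status of the first $i$ leaves (they and their parents remain in ${}^i t$ intact), and standardization of a prefix of a permutation preserves descent positions within that prefix. The symmetric calculation handles $(\sigma^i, t^i)$. Thus $\Des(t) \subseteq \Des(\sigma)$ implies the corresponding inclusions for both halves, so the output lies in $\PF_i \otimes \PF_{n-i}$.

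For the multiplication, given $g = (\tau, s)$ and an allowable splitting $g \mapsto (g_1, \ldots, g_{k+1})$ with $k = \deg \sigma$, I would decompose the descents of the shuffled tree $T = t \shuffle (s_1, \ldots, s_{k+1})$ into three types: descents interior to some $s_j$ (inherited from $\Des(s_j) \subseteq \Des(\tau_j)$); right-child status of the rightmost leaf of a nontrivial $s_j$ for $j < k+1$; and descents of $t$ contributed at grafting points where $s_j$ is the single-leaf tree. Using $\Des(t) \subseteq \Des(\sigma)$ together with the structure of $\overleftarrow{\shuffle}$, each descent of $T$ corresponds to a descent at the matching position of $\Sigma = \sigma \overleftarrow{\shuffle}(\tau_1, \ldots, \tau_{k+1})$, so the shuffled pair is a parking function.

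Once well-definedness is in hand, the Hopf axioms reduce to the same formal identities that establish $\SSym$ as a Hopf algebra. Associativity of $\cdot$ and coassociativity of $\Delta$ are combinatorial identities about iterated shifted shuffles and splittings of labeled binary trees, and the bialgebra compatibility follows from the same two-sided computation illustrated in the proof of Proposition \ref{prop:tsym} for $\TSym$; one writes down $\Delta(F_f \cdot F_g)$ two ways and checks the resulting tensor equation by a bijection between index sets of shuffle-then-split and split-then-shuffle. The antipode exists automatically since $\PSym$ is graded connected. The principal technical obstacle is the descent bookkeeping in step (i), particularly at grafting boundaries, where one must carefully distinguish the cases of trivial versus nontrivial $s_j$ and verify that the right-child status at the top of each piece is consistently reflected by the shifted labeling of the shuffled permutation.
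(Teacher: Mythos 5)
Your proposal is correct and follows essentially the same strategy as the paper's own proof: first establish closure of the operations under the parking condition $\Des(t)\subseteq\Des(\sigma)$ by descent bookkeeping (the paper compresses your case analysis for the product into the observation that the only new tree-descents occur at the positions connecting a piece of $g$ to $f$, where the shifted labels force a descent of the permutation), and then inherit associativity, coassociativity and compatibility from $\SSym$ and $\YSym$. Your closure argument is spelled out in more detail than the paper's two-sentence version, but the content is the same.
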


\begin{proof}
	The multiplication and comultiplication are closed in $\PSym$. The comultiplication does not create any new descent in the underlying tree. The new descents created by the product $f\cdot g$ are at the positions connecting a part of $g$ to $f$. Hence the corresponding permutations always have a descent at those positions because the labels of $g$ get shifted up.
	
	The associativity, coassociativity and compatibility follow directly from those of $\YSym$ and $\SSym$.
\end{proof}

\begin{Remark}
	One may also define a similar structure on labeled binary trees whose internal nodes are labeled with $\{1,2,\dots,n\}$ with no restriction. This results in a Hopf algebra on pairs of permutations and binary trees with the same degree.
\end{Remark}

\begin{Remark}
	By forgetting the labelings and the binary trees, we have two maps $\pi_\tree:\PF\to\PBT$ and $\pi_\perm:\PF\to\Perm$. These lead to two Hopf morphisms $\Pi_\tree:\PSym\to\YSym$, $F_f\mapsto F_{\pi_\tree(f)}$ and $\Pi_\perm:\PSym\to\SSym$, $F_f\mapsto F_{\pi_\perm(f)}$.
	
	However, note that if $\Pi$ is the map from Proposition~\ref{prop:pi-property-bin} then we have that $\Pi_\tree\neq\Pi\circ\Pi_\perm$, since in a parking function $(\sigma, t)$, the underlying tree of $\sigma$ and $t$ do not have to be the same.
\end{Remark}

\subsection{Bidendriform structure}

In this section we show that $\PSym$ is free, self-dual and isomorphic to $\PQSym$ defined in \cite{NT07}. We achieve these by giving $\PSym$ a bidendriform structure.

We introduce the following operations on $\PSym$.

\begin{enumerate}
	\item $\displaystyle F_f\ll F_g=\sum_{\substack{h\in f\shuffle g \\ (\pi_\perm(h))(n)\leq\deg(f)}}F_h$
	\item $\displaystyle F_f\gg F_g=\sum_{{h\in f\shuffle g \\ (\pi_\perm(h))(n)>\deg(f)}}F_h$
	\item $\displaystyle\Delta_\ll(F_h)=\sum_{\substack{h\mapsto(f,g)\\ \deg(f)\geq(\pi_\perm(h))^{-1}(n)}}F_f\otimes F_g$
	\item $\displaystyle\Delta_\gg(F_h)=\sum_{\substack{h\mapsto(f,g)\\ \deg(f)<(\pi_\perm(h))^{-1}(n)}}F_f\otimes F_g$
\end{enumerate}
where $n=\deg(f)+\deg(g)$ in (1) and (2) while $n=\deg(h)$ in (3) and (4).

\begin{Proposition}\label{prop:PSYM_bidendriform}
The operations $\ll, \gg,\Delta_\ll,\Delta_\gg$ defined above puts a bidendriform structure on $\PSym$.	
\end{Proposition}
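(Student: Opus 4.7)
The plan is to verify each of the twelve bidendriform axioms from Section~\ref{sec:bidendriform}, following the same template that established the bidendriform structure on $\STSym$ in Proposition~\ref{prop:STSYMbidendriform}. The core observation is that the decomposition $\cdot = \ll + \gg$ (respectively $\Delta_+ = \Delta_\ll + \Delta_\gg$) is governed entirely by the location of the maximum label $n$ in the underlying permutation $\pi_\perm(h)$: in $F_f \ll F_g$ this label sits in a position $\leq \deg(f)$ (i.e.\ it came from $f$), whereas in $F_f \gg F_g$ it sits beyond $\deg(f)$ (i.e.\ it came from the shifted $g$). Analogously, $\Delta_\ll$ and $\Delta_\gg$ are distinguished by whether the maximum label ends up in the left or right factor of the deconcatenation. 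This is precisely the classical position-of-max bidendriform structure on $\SSym$, transported to the setting of parking functions via the projection $\Pi_\perm : \PSym \to \SSym$.

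First I would check axioms (1) and (5), the trivial decompositions; both follow immediately from the fact that the maximum label occupies a unique position in each shuffle and sits in a unique factor of each deconcatenation. Next I would establish the dendriform axioms (2)--(4) and the codendriform axioms (6)--(8) by extending the position-of-max decomposition to triple shuffles and triple deconcatenations, respectively: in each identity, both sides count the same set of triple-shuffled (resp.\ triple-deconcatenated) parking functions in which the maximum label resides in a prescribed factor. Since these axioms already hold in $\SSym$, one only needs to verify that when they are lifted to $\PSym$, the tree component is compatible; this reduces to checking that every term arising on either side of an identity is indeed a parking function, which is an immediate consequence of the fact (already established) that $\PSym$ is a Hopf algebra, since this guarantees that the descent-containment condition $\Des(t) \subseteq \Des(\sigma)$ is preserved by all of $\ll, \gg, \Delta_\ll, \Delta_\gg$.

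The main obstacle is the verification of the mixed axioms (9)--(12), which couple half-products with half-coproducts and express the image of a half-product under a half-coproduct in terms of products and coproducts of the pieces. The strategy here is a careful case analysis based jointly on the position of the overall maximum label and on the positions of the maxima of the pieces produced by the coproduct; each of the five or so terms on the right-hand side of, say, axiom (9) corresponds to a specific configuration (maximum remaining whole, maximum split across both factors, maximum originally in $a$ versus $b$, etc.). The computation closely parallels the analogous argument for $\SSym$ due to Foissy, which in turn is the backbone of the omitted proof of Proposition~\ref{prop:STSYMbidendriform}; the additional tree data on $\PSym$ does not complicate the combinatorics because the tree of each piece is determined by the underlying permutation together with the constraint $\Des(t) \subseteq \Des(\sigma)$, and this constraint is stable under all four half-operations. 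I would therefore present one representative case of each of (9)--(12) in detail, tracking both the permutation and tree components, and indicate that the remaining verifications follow the same pattern --- exactly as suggested for $\STSym$ in the preceding section.
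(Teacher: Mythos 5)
First, a point of reference: the paper does not actually prove this proposition --- as with Proposition \ref{prop:STSYMbidendriform}, the verification of the twelve axioms is declared technical but straightforward and is left to the reader --- so your plan can only be judged on its own merits. The overall strategy (a case analysis on shuffle and deconcatenation data, lifted from the known bidendriform structure on $\SSym$) is the right one, but it contains a genuine error at the very start. The condition defining $F_f\ll F_g$ is $(\pi_\perm(h))(n)\leq\deg(f)$, i.e.\ the \emph{value in the last position} of the underlying permutation is at most $\deg(f)$ --- equivalently, the last letter of the shuffle comes from $f$. It is not a condition on the location of the maximum label $n$: after the shift, the label $n$ always originates in $g$, so with your criterion (``the maximum label came from $f$'') the operation $\ll$ would be identically zero whenever $g$ is nonempty, and axioms (1)--(4) would fail outright. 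Only the half-coproducts $\Delta_\ll,\Delta_\gg$ are governed by where the maximum label lands. This asymmetry --- last letter for the products, maximal letter for the coproducts --- is precisely Foissy's structure on $\SSym$ and is what makes the mixed axioms (9)--(12) close up, so getting it backwards is not a cosmetic slip: the case analysis you propose for (9)--(12) must track both statistics simultaneously, not the maximum alone.

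Second, your justification for why the tree component causes no trouble is false as stated: the tree $t$ of a parking function $(\sigma,t)$ is \emph{not} determined by $\sigma$ together with the constraint $\Des(t)\subseteq\Des(\sigma)$ (for $\sigma=321$ in degree $3$, all five binary trees are admissible). The conclusion you want is nevertheless true, but for a different reason: each term of $F_f\cdot F_g$ (resp.\ of $\Delta_+(F_h)$) is indexed by splitting data that determines both the permutation and the tree of the resulting pair, the four half-operations merely select a subset of these terms by a condition on $\pi_\perm$ alone, and the full product and coproduct are already known to preserve the condition $\Des(t)\subseteq\Des(\sigma)$. With these two corrections --- the correct statistic for $\ll,\gg$ and the correct reason the tree data rides along --- your outline (verify (1) and (5) directly, reduce (2)--(4) and (6)--(8) to the corresponding identities in $\SSym$ via the matching of splitting data, and run the two-statistic case analysis for (9)--(12)) is the intended argument.
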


As for the case of $\STSym$, from results of Foissy \cite{F12} regarding Hopf algebras with bidendriform structures we have the following corollary.

\begin{Corollary}\label{cor:PSYM_free_selfdual}
	$\PSym$ is free, self-dual and isomorphic to $\PQSym$ defined in \cite{NT07}.
\end{Corollary}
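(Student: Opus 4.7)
The proof is essentially an application of Foissy's structure theorem for bidendriform bialgebras, together with a dimension count. The plan is as follows.

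First, invoke Proposition \ref{prop:PSYM_bidendriform} to conclude that $\PSym$ is a connected graded bidendriform bialgebra. The main theorem of Foissy in \cite{F07,F12} states that any such bialgebra is simultaneously free as an associative algebra, cofree as a coassociative coalgebra, and self-dual (isomorphic to its graded dual as a Hopf algebra, in fact as a bidendriform bialgebra). Applying this theorem directly yields the freeness, cofreeness, and self-duality assertions for $\PSym$.

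Second, for the isomorphism with $\PQSym$, I would appeal to the rigidity/classification result of Foissy \cite{F12}: the isomorphism class of a connected graded bidendriform bialgebra depends only on its Hilbert series, since such a bialgebra is isomorphic to the free bidendriform bialgebra on its space of totally primitive elements, and the Hilbert series of that space is recoverable from the Hilbert series of the bialgebra via a fixed formal identity. It is shown in \cite{NT07} that $\PQSym$ carries a bidendriform structure, so it suffices to verify that $\PSym$ and $\PQSym$ have the same graded dimensions.

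Third, I would verify this dimension equality. By definition, $\dim \PSym_n$ equals the number of pairs $(\sigma,t)\in\Perm_n\times\PBT_n$ with $\Des(t)\subseteq\Des(\sigma)$. The bijection between such pairs and classical parking functions (equivalently, labeled Dyck paths with increasing labels on consecutive north steps), described explicitly at the beginning of this section and illustrated in Figures~\ref{fig:pf} and \ref{fig:pf_classic}, gives $\dim \PSym_n=(n+1)^{n-1}$. Since $\dim\PQSym_n=(n+1)^{n-1}$ by construction in \cite{NT07}, the Hilbert series coincide and Foissy's classification delivers the bidendriform (hence Hopf) isomorphism $\PSym\cong\PQSym$.

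The only real content beyond citing \cite{F07,F12} and \cite{NT07} is the verification of the bijection counting $\PSym_n$, which is essentially built into our definition of parking functions. Consequently, the ``hard part'' has been absorbed into Proposition \ref{prop:PSYM_bidendriform}; once the bidendriform axioms are in hand, the corollary reduces to a dimension check and an appeal to Foissy's structure theory.
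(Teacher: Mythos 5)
Your proposal is correct and follows essentially the same route as the paper: the paper's proof likewise observes that both $\PSym$ and $\PQSym$ admit bidendriform structures, deduces freeness, cofreeness and self-duality from Foissy's theorem, and concludes the isomorphism from the equality of graded dimensions together with Foissy's rigidity result. Your version merely makes explicit the dimension count $(n+1)^{n-1}$ and the role of the totally primitive elements, both of which the paper leaves implicit.
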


\begin{proof}
	Because $\PSym$ and $\PQSym$ both admit bidendriform structures, by \cite{F07}, they are free, cofree and self-dual. They have the same dimension in each degree, hence they must be isomorphic by \cite{F12}.
\end{proof}

\begin{Problem}
	 A more interesting problem that we leave to the reader is to find an explicit isomorphism between $\PSym$ and $\PQSym$.
\end{Problem}

\subsection{The parking order and monomial basis}
We define a partial order, called the parking order $\leq_{P}$, on parking functions. For two parking functions $f,g$ of the same degree, $f\leq_{P} g$ if $\pi_{\tree}(f)\leq_{T}\pi_{\tree}(g)$ and $\pi_{\perm}(f)\leq_{w}\pi_{\perm}(g)$. For example see Figure~\ref{fig:PK2} for the parking functions of degree 2 and Figure~\ref{fig:PK3} for the parking functions of degree 3.

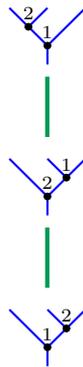
\begin{figure}
			\begin{center}
		\begin{tikzpicture}[scale=0.4]
		\draw[ForestGreen, ultra thick] (0,1.5) -- (0,3.5);
		\draw[ForestGreen, ultra thick] (0,6.5) -- (0,8.5);
		\node () at (0,0){\begin{tikzpicture}[scale=0.25,baseline=0pt]
			\draw[blue, thick] (0,-1) -- (0,0);
			\draw[blue, thick] (0,0) -- (2,2);
			\draw[blue, thick] (0,0) -- (-2,2);
			\draw[blue, thick] (1,1) -- (0,2);
			\filldraw[black] (0,0) circle (5pt)  {};
			\filldraw[black] (1,1) circle (5pt)  {};
			\node at (0,0.7)[font=\fontsize{7pt}{0}]{$1$};
			\node at (1,1.7)[font=\fontsize{7pt}{0}]{$2$};\end{tikzpicture}};
		\node () at (0,5){\begin{tikzpicture}[scale=0.25,baseline=0pt]
			\draw[blue, thick] (0,-1) -- (0,0);
			\draw[blue, thick] (0,0) -- (2,2);
			\draw[blue, thick] (0,0) -- (-2,2);
			\draw[blue, thick] (1,1) -- (0,2);
			\filldraw[black] (0,0) circle (5pt)  {};
			\filldraw[black] (1,1) circle (5pt)  {};
			\node at (0,0.7)[font=\fontsize{7pt}{0}]{$2$};
			\node at (1,1.7)[font=\fontsize{7pt}{0}]{$1$};\end{tikzpicture}};
		\node () at (0,10){\begin{tikzpicture}[scale=0.25,baseline=0pt]
			\draw[blue, thick] (0,-1) -- (0,0);
			\draw[blue, thick] (0,0) -- (2,2);
			\draw[blue, thick] (0,0) -- (-2,2);
			\draw[blue, thick] (-1,1) -- (0,2);
			\filldraw[black] (0,0) circle (5pt)  {};
			\filldraw[black] (-1,1) circle (5pt)  {};
			\node at (-1,1.7)[font=\fontsize{7pt}{0}]{$2$};
			\node at (0,0.7)[font=\fontsize{7pt}{0}]{$1$};\end{tikzpicture}};
	\end{tikzpicture}
	\end{center}
	 \caption{\sl Parking order on parking functions of degree 2.} \label{fig:PK2}
\end{figure}

\begin{figure}	
	\begin{center}
		\begin{tikzpicture}[scale=0.2,baseline=0pt]
		\draw[ForestGreen, ultra thick] (3,3) -- (8,8);
		\draw[ForestGreen, ultra thick] (-3,3) -- (-8,8);
		\draw[ForestGreen, ultra thick] (-10,13) -- (-10,17);
		\draw[ForestGreen, ultra thick] (-7,23) -- (-2,28);
		\draw[ForestGreen, ultra thick] (7,23) -- (2,28);
		\draw[ForestGreen, ultra thick] (10,13) -- (10,17);
		\draw[ForestGreen, ultra thick] (13,13) -- (18,18);
		\draw[ForestGreen, ultra thick] (13,23) -- (18,28);
		\draw[ForestGreen, ultra thick] (3,32) -- (18,39);
		\draw[ForestGreen, ultra thick] (-13,13) -- (-18,18);
		\draw[ForestGreen, ultra thick] (-13,23) -- (-18,28);
		\draw[ForestGreen, ultra thick] (-3,32) -- (-18,39);
		\draw[ForestGreen, ultra thick] (-23,23) -- (-28,28);
		\draw[ForestGreen, ultra thick] (-23,33) -- (-28,38);
		\draw[ForestGreen, ultra thick] (-23,43) -- (-28,48);
		\draw[ForestGreen, ultra thick] (-20,23) -- (-20,27);
		\draw[ForestGreen, ultra thick] (-20,33) -- (-20,37);
		\draw[ForestGreen, ultra thick] (-30,33) -- (-30,37);
		\draw[ForestGreen, ultra thick] (-30,43) -- (-30,47);
		\draw[ForestGreen, ultra thick] (20,23) -- (20,27);
		\draw[ForestGreen, ultra thick] (20,33) -- (20,37);
		\draw[ForestGreen, ultra thick] (17,43) -- (2,58);
		\draw[ForestGreen, ultra thick] (-26,52) -- (-3,59);
		\node () at (0,0){\begin{tikzpicture}[scale=0.2,baseline=0pt]
			\draw[blue, thick] (0,-1) -- (0,0);
			\draw[blue, thick] (0,0) -- (3,3);
			\draw[blue, thick] (0,0) -- (-3,3);
			\draw[blue, thick] (1,1) -- (-1,3);
			\draw[blue, thick] (2,2) -- (1,3);
			\filldraw[black] (0,0) circle (5pt)  {};
			\filldraw[black] (1,1) circle (5pt)  {};
			\filldraw[black] (2,2) circle (5pt)  {};
			\node at (0,0.7)[font=\fontsize{7pt}{0}]{$1$};
			\node at (1,1.7)[font=\fontsize{7pt}{0}]{$2$};
			\node at (2,2.7)[font=\fontsize{7pt}{0}]{$3$};\end{tikzpicture}};
		\node () at (-10,10){\begin{tikzpicture}[scale=0.2,baseline=0pt]
			\draw[blue, thick] (0,-1) -- (0,0);
			\draw[blue, thick] (0,0) -- (3,3);
			\draw[blue, thick] (0,0) -- (-3,3);
			\draw[blue, thick] (1,1) -- (-1,3);
			\draw[blue, thick] (2,2) -- (1,3);
			\filldraw[black] (0,0) circle (5pt)  {};
			\filldraw[black] (1,1) circle (5pt)  {};
			\filldraw[black] (2,2) circle (5pt)  {};
			\node at (0,0.7)[font=\fontsize{7pt}{0}]{$1$};
			\node at (1,1.7)[font=\fontsize{7pt}{0}]{$3$};
			\node at (2,2.7)[font=\fontsize{7pt}{0}]{$2$};\end{tikzpicture}};
		\node () at (10,10){\begin{tikzpicture}[scale=0.2,baseline=0pt]
			\draw[blue, thick] (0,-1) -- (0,0);
			\draw[blue, thick] (0,0) -- (3,3);
			\draw[blue, thick] (0,0) -- (-3,3);
			\draw[blue, thick] (1,1) -- (-1,3);
			\draw[blue, thick] (2,2) -- (1,3);
			\filldraw[black] (0,0) circle (5pt)  {};
			\filldraw[black] (1,1) circle (5pt)  {};
			\filldraw[black] (2,2) circle (5pt)  {};
			\node at (0,0.7)[font=\fontsize{7pt}{0}]{$2$};
			\node at (1,1.7)[font=\fontsize{7pt}{0}]{$1$};
			\node at (2,2.7)[font=\fontsize{7pt}{0}]{$3$};\end{tikzpicture}};
			\node () at (10,20){\begin{tikzpicture}[scale=0.2,baseline=0pt]
			\draw[blue, thick] (0,-1) -- (0,0);
			\draw[blue, thick] (0,0) -- (3,3);
			\draw[blue, thick] (0,0) -- (-3,3);
			\draw[blue, thick] (1,1) -- (-1,3);
			\draw[blue, thick] (2,2) -- (1,3);
			\filldraw[black] (0,0) circle (5pt)  {};
			\filldraw[black] (1,1) circle (5pt)  {};
			\filldraw[black] (2,2) circle (5pt)  {};
			\node at (0,0.7)[font=\fontsize{7pt}{0}]{$3$};
			\node at (1,1.7)[font=\fontsize{7pt}{0}]{$1$};
			\node at (2,2.7)[font=\fontsize{7pt}{0}]{$2$};\end{tikzpicture}};
		\node () at (-10,20){\begin{tikzpicture}[scale=0.2,baseline=0pt]
			\draw[blue, thick] (0,-1) -- (0,0);
			\draw[blue, thick] (0,0) -- (3,3);
			\draw[blue, thick] (0,0) -- (-3,3);
			\draw[blue, thick] (1,1) -- (-1,3);
			\draw[blue, thick] (2,2) -- (1,3);
			\filldraw[black] (0,0) circle (5pt)  {};
			\filldraw[black] (1,1) circle (5pt)  {};
			\filldraw[black] (2,2) circle (5pt)  {};
			\node at (0,0.7)[font=\fontsize{7pt}{0}]{$2$};
			\node at (1,1.7)[font=\fontsize{7pt}{0}]{$3$};
			\node at (2,2.7)[font=\fontsize{7pt}{0}]{$1$};\end{tikzpicture}};
		\node () at (0,30){\begin{tikzpicture}[scale=0.2,baseline=0pt]
			\draw[blue, thick] (0,-1) -- (0,0);
			\draw[blue, thick] (0,0) -- (3,3);
			\draw[blue, thick] (0,0) -- (-3,3);
			\draw[blue, thick] (1,1) -- (-1,3);
			\draw[blue, thick] (2,2) -- (1,3);
			\filldraw[black] (0,0) circle (5pt)  {};
			\filldraw[black] (1,1) circle (5pt)  {};
			\filldraw[black] (2,2) circle (5pt)  {};
			\node at (0,0.7)[font=\fontsize{7pt}{0}]{$3$};
			\node at (1,1.7)[font=\fontsize{7pt}{0}]{$2$};
			\node at (2,2.7)[font=\fontsize{7pt}{0}]{$1$};	\end{tikzpicture}};
		\node () at (20,20){\begin{tikzpicture}[scale=0.2,baseline=0pt]
			\draw[blue, thick] (0,-1) -- (0,0);
			\draw[blue, thick] (0,0) -- (3,3);
			\draw[blue, thick] (0,0) -- (-3,3);
			\draw[blue, thick] (2,2) -- (1,3);
			\draw[blue, thick] (-2,2) -- (-1,3);
			\filldraw[black] (0,0) circle (5pt)  {};
			\filldraw[black] (2,2) circle (5pt)  {};
			\filldraw[black] (-2,2) circle (5pt)  {};
			\node at (-2,2.7)[font=\fontsize{7pt}{0}]{$2$};
			\node at (0,0.7)[font=\fontsize{7pt}{0}]{$1$};
			\node at (2,2.7)[font=\fontsize{7pt}{0}]{$3$};\end{tikzpicture}};
		\node () at (20,30){\begin{tikzpicture}[scale=0.2,baseline=0pt]
			\draw[blue, thick] (0,-1) -- (0,0);
			\draw[blue, thick] (0,0) -- (3,3);
			\draw[blue, thick] (0,0) -- (-3,3);
			\draw[blue, thick] (2,2) -- (1,3);
			\draw[blue, thick] (-2,2) -- (-1,3);
			\filldraw[black] (0,0) circle (5pt)  {};
			\filldraw[black] (2,2) circle (5pt)  {};
			\filldraw[black] (-2,2) circle (5pt)  {};
			\node at (-2,2.7)[font=\fontsize{7pt}{0}]{$3$};
			\node at (0,0.7)[font=\fontsize{7pt}{0}]{$1$};
			\node at (2,2.7)[font=\fontsize{7pt}{0}]{$2$};\end{tikzpicture}};
		\node () at (20,40){\begin{tikzpicture}[scale=0.2,baseline=0pt]
			\draw[blue, thick] (0,-1) -- (0,0);
			\draw[blue, thick] (0,0) -- (3,3);
			\draw[blue, thick] (0,0) -- (-3,3);
			\draw[blue, thick] (2,2) -- (1,3);
			\draw[blue, thick] (-2,2) -- (-1,3);
			\filldraw[black] (0,0) circle (5pt)  {};
			\filldraw[black] (2,2) circle (5pt)  {};
			\filldraw[black] (-2,2) circle (5pt)  {};
			\node at (-2,2.7)[font=\fontsize{7pt}{0}]{$3$};
			\node at (0,0.7)[font=\fontsize{7pt}{0}]{$2$};
			\node at (2,2.7)[font=\fontsize{7pt}{0}]{$1$};\end{tikzpicture}};
		\node () at (-20,20){\begin{tikzpicture}[scale=0.2,baseline=0pt]
			\draw[blue, thick] (0,-1) -- (0,0);
			\draw[blue, thick] (0,0) -- (3,3);
			\draw[blue, thick] (0,0) -- (-3,3);
			\draw[blue, thick] (1,1) -- (-1,3);
			\draw[blue, thick] (0,2) -- (1,3);
			\filldraw[black] (0,0) circle (5pt)  {};
			\filldraw[black] (1,1) circle (5pt)  {};
			\filldraw[black] (0,2) circle (5pt)  {};
			\node at (0,0.7)[font=\fontsize{7pt}{0}]{$1$};
			\node at (0,2.7)[font=\fontsize{7pt}{0}]{$3$};
			\node at (1,1.7)[font=\fontsize{7pt}{0}]{$2$};\end{tikzpicture}};
		\node () at (-20,30){\begin{tikzpicture}[scale=0.2,baseline=0pt]
			\draw[blue, thick] (0,-1) -- (0,0);
			\draw[blue, thick] (0,0) -- (3,3);
			\draw[blue, thick] (0,0) -- (-3,3);
			\draw[blue, thick] (1,1) -- (-1,3);
			\draw[blue, thick] (0,2) -- (1,3);
			\filldraw[black] (0,0) circle (5pt)  {};
			\filldraw[black] (1,1) circle (5pt)  {};
			\filldraw[black] (0,2) circle (5pt)  {};
			\node at (0,0.7)[font=\fontsize{7pt}{0}]{$2$};
			\node at (0,2.7)[font=\fontsize{7pt}{0}]{$3$};
			\node at (1,1.7)[font=\fontsize{7pt}{0}]{$1$};\end{tikzpicture}};
		\node () at (-20,40){\begin{tikzpicture}[scale=0.2,baseline=0pt]
			\draw[blue, thick] (0,-1) -- (0,0);
			\draw[blue, thick] (0,0) -- (3,3);
			\draw[blue, thick] (0,0) -- (-3,3);
			\draw[blue, thick] (1,1) -- (-1,3);
			\draw[blue, thick] (0,2) -- (1,3);
			\filldraw[black] (0,0) circle (5pt)  {};
			\filldraw[black] (1,1) circle (5pt)  {};
			\filldraw[black] (0,2) circle (5pt)  {};
			\node at (0,0.7)[font=\fontsize{7pt}{0}]{$3$};
			\node at (0,2.7)[font=\fontsize{7pt}{0}]{$2$};
			\node at (1,1.7)[font=\fontsize{7pt}{0}]{$1$};\end{tikzpicture}};
		\node () at (-30,30){\begin{tikzpicture}[scale=0.2,baseline=0pt]
			\draw[blue, thick] (0,-1) -- (0,0);
			\draw[blue, thick] (0,0) -- (3,3);
			\draw[blue, thick] (0,0) -- (-3,3);
			\draw[blue, thick] (-1,1) -- (1,3);
			\draw[blue, thick] (0,2) -- (-1,3);
			\filldraw[black] (0,0) circle (5pt)  {};
			\filldraw[black] (-1,1) circle (5pt)  {};
			\filldraw[black] (0,2) circle (5pt)  {};
			\node at (-1,1.7)[font=\fontsize{7pt}{0}]{$1$};
			\node at (0,2.7)[font=\fontsize{7pt}{0}]{$3$};
			\node at (0,0.7)[font=\fontsize{7pt}{0}]{$2$};\end{tikzpicture}};
		\node () at (-30,40){\begin{tikzpicture}[scale=0.2,baseline=0pt]
			\draw[blue, thick] (0,-1) -- (0,0);
			\draw[blue, thick] (0,0) -- (3,3);
			\draw[blue, thick] (0,0) -- (-3,3);
			\draw[blue, thick] (-1,1) -- (1,3);
			\draw[blue, thick] (0,2) -- (-1,3);
			\filldraw[black] (0,0) circle (5pt)  {};
			\filldraw[black] (-1,1) circle (5pt)  {};
			\filldraw[black] (0,2) circle (5pt)  {};
			\node at (-1,1.7)[font=\fontsize{7pt}{0}]{$2$};
			\node at (0,2.7)[font=\fontsize{7pt}{0}]{$3$};
			\node at (0,0.7)[font=\fontsize{7pt}{0}]{$1$};\end{tikzpicture}};
		\node () at (-30,50){\begin{tikzpicture}[scale=0.2,baseline=0pt]
			\draw[blue, thick] (0,-1) -- (0,0);
			\draw[blue, thick] (0,0) -- (3,3);
			\draw[blue, thick] (0,0) -- (-3,3);
			\draw[blue, thick] (-1,1) -- (1,3);
			\draw[blue, thick] (0,2) -- (-1,3);
			\filldraw[black] (0,0) circle (5pt)  {};
			\filldraw[black] (-1,1) circle (5pt)  {};
			\filldraw[black] (0,2) circle (5pt)  {};
			\node at (-1,1.7)[font=\fontsize{7pt}{0}]{$3$};
			\node at (0,2.7)[font=\fontsize{7pt}{0}]{$2$};
			\node at (0,0.7)[font=\fontsize{7pt}{0}]{$1$};\end{tikzpicture}};
		\node () at (0,60){\begin{tikzpicture}[scale=0.2,baseline=0pt]
			\draw[blue, thick] (0,-1) -- (0,0);
			\draw[blue, thick] (0,0) -- (-3,3);
			\draw[blue, thick] (0,0) -- (3,3);
			\draw[blue, thick] (-1,1) -- (1,3);
			\draw[blue, thick] (-2,2) -- (-1,3);
			\filldraw[black] (0,0) circle (5pt)  {};
			\filldraw[black] (-1,1) circle (5pt)  {};
			\filldraw[black] (-2,2) circle (5pt)  {};
			\node at (-2,2.7)[font=\fontsize{7pt}{0}]{$3$};
			\node at (-1,1.7)[font=\fontsize{7pt}{0}]{$2$};
			\node at (0,0.7)[font=\fontsize{7pt}{0}]{$1$};\end{tikzpicture}};
		\end{tikzpicture}
	\end{center}
	\caption{\sl Parking order on parking functions of degree 3.}\label{fig:PK3}
\end{figure}

\begin{Lemma}\label{lem:parking-order}
The poset $\leq_{P}$ admits least upper bounds. More specifically, given two parking functions $f$ and $g$ of the same degree, $f\vee g$ is the parking function $(\pi_{\perm}f \vee \pi_{\perm}g,\pi_{\tree}f \vee \pi_{\tree}g)$.
\end{Lemma}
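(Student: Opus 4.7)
The claim has two parts: verifying that the candidate $h := (\pi_\perm f \vee \pi_\perm g,\ \pi_\tree f \vee \pi_\tree g)$ is actually a parking function, and then that it is the least upper bound. The latter is automatic in a product poset: if $h' = (\rho, r)$ is any parking function with $f, g \leq_P h'$, then $\pi_\perm f$ and $\pi_\perm g$ are both $\leq_w \rho$, forcing $\pi_\perm f \vee \pi_\perm g \leq_w \rho$, and analogously on the tree coordinate, so $h \leq_P h'$. So everything reduces, writing $f = (\sigma, s)$ and $g = (\tau, t)$, to proving that $\Des(s \vee t) \subseteq \Des(\sigma \vee \tau)$, given that $\Des(s) \subseteq \Des(\sigma)$ and $\Des(t) \subseteq \Des(\tau)$.

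The plan is to deduce this from two ``descent is additive under join'' identities:
\[
\Des(\sigma \vee \tau) = \Des(\sigma) \cup \Des(\tau), \qquad \Des(s \vee t) = \Des(s) \cup \Des(t).
\]
Granting both, the desired inclusion is immediate. The permutation identity is elementary: weak-order joins are given by the transitive closure of the union of inversion sets, and a transitive closure cannot generate a new adjacent pair $(i, i+1)$ from pairs that are further apart, so the set of adjacent inversions---i.e.\ the descents---is literally the union.

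To get the tree identity I would not argue directly with left rotations; instead I transfer the permutation identity along the Galois connection $(\pi, \iota)$ of Proposition~\ref{prop:pi-property-bin}. Using $\pi \circ \iota = \operatorname{id}$ together with part (3) of that proposition, $\pi(\iota s \vee \iota t) = s \vee t$; since the descent set of a permutation is by definition the descent set of its underlying tree, this forces $\Des(s \vee t) = \Des(\iota s \vee \iota t)$, which by the permutation case equals $\Des(\iota s) \cup \Des(\iota t) = \Des(s) \cup \Des(t)$. Combining everything yields $\Des(s \vee t) = \Des(s) \cup \Des(t) \subseteq \Des(\sigma) \cup \Des(\tau) = \Des(\sigma \vee \tau)$, as required. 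The main obstacle is just realizing that one should detour through $\iota$ rather than attempt a direct canopy-based argument inside the Tamari lattice.
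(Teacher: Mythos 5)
Your proof is correct and follows essentially the same route as the paper: reduce to showing $\Des(s\vee t)\subseteq\Des(\sigma\vee\tau)$, prove $\Des(\sigma\vee\tau)=\Des(\sigma)\cup\Des(\tau)$ via the transitive-closure description of $\Inv(\sigma\vee\tau)$ and the fact that adjacent inversions cannot be created by transitivity, and then transfer this identity to binary trees through the Galois connection $(\pi,\iota)$ using Proposition \ref{prop:pi-property-bin}(3). The only cosmetic difference is that you spell out the (automatic) least-upper-bound verification in the product poset, which the paper leaves implicit.
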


\begin{proof}
We first make the following claims.

\begin{enumerate}
	\item If $s,t$ are two binary trees of the same degree, then $\Des(s\vee t)=\Des(s)\cup\Des(t)$,
	\item If $\sigma,\tau$ are two permutations of the same degree, then $\Des(\sigma\vee \tau)=\Des(\sigma)\cup\Des(\tau)$.
\end{enumerate}

First note that $(2)\implies(1)$: let $\sigma=\iota(s)$ and $\tau=\iota(t)$ so that $\Des(\sigma)=\Des(\pi \sigma)= \Des(s)$ and similarly $\Des(\tau)=\Des(t)$. So $\Des(s)\cup \Des(t) = \Des(\sigma) \cup \Des(\tau)=\Des(\sigma\vee \tau)=\Des(\pi(\sigma\vee \tau))=\Des(\pi \sigma \vee \pi \tau)=\Des(s\vee t)$, using Proposition \ref{prop:pi-property-bin}.(3) in the second to last equality.

To see $(2)$, first note that from the definition of weak order we have that $\Inv(\sigma)\cup\Inv(\tau)\subseteq\Inv(\sigma\vee\tau)$. In particular, it must also hold that  $\Des(\sigma)\cup\Des(\tau)\subseteq\Des(\sigma\vee\tau)$.

Conversely, as was noted in \cite{M94}, $\Inv(\sigma\vee\tau)$ is the transitive closure of $\Inv(\sigma)\cup\Inv(\tau)$ i.e. $(i,j),(j,k)\in\Inv(\sigma\vee\tau)\implies(i,k)\in\Inv(\sigma\vee\tau)$ . Since $(i,i+1)$ cannot come from transitivity of other inversions, we must have $(i,i+1)\in\Inv(\sigma\vee\tau)\implies (i,i+1)\in\Inv(\sigma)\cup\Inv(\tau)$.

Let $f=(\sigma,s)$ and $g=(\tau,t)$. By definition, we know that $\Des(s)\subseteq\Des(\sigma)$ and $\Des(t)\subseteq\Des(\tau)$. Therefore, by the claims, $\Des(s\vee t)\subseteq\Des(\sigma\vee\tau)$ i.e. $(\sigma\vee\tau,s\vee t)$ is a parking function. Then it is obviously $f\vee g$.
\end{proof}

Let $f,g$ be two parking functions. It is not hard to see that $f/g$ and $f\backslash g$ are parking functions, since the only new descent in the underlying tree is $\deg(f)$ in $f/g$, and it is also a descent in the corresponding permutation because the labels in $f$ are shifted up. In $f\backslash g$ there are no new descents.

\begin{Proposition}\label{prop:interval_multiplication_PSYM}
	Let $f$ and $g$ be two parking functions, and let $F_f^*$, $F_g^*$ be the dual basis element in the graded dual $\PSym^*$. Then,
		$$m(F_f^*\otimes F_g^*)=\sum_{f\backslash g\leq_{P} h\leq_{P} f/g}F_h^*.$$
\end{Proposition}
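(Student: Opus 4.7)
The plan is to reduce the statement to a deconcatenation problem via duality, and then use the fact that a parking function is determined by the pair consisting of its underlying permutation and its underlying binary tree, together with the corresponding results for $\SSym$ and $\YSym$ (Propositions \ref{prop:perm} and \ref{prop:tree}).

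First, by duality, the coefficient of $F_h^*$ in $m(F_f^*\otimes F_g^*)$ equals the coefficient of $F_f\otimes F_g$ in $\Delta(F_h)$, which from the comultiplication formula equals $1$ if $^ih=f$ and $h^i=g$ with $i=\deg f$, and $0$ otherwise. So it suffices to show that, for a parking function $h$, these equalities hold if and only if $f\backslash g\leq_P h\leq_P f/g$.

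Next, I would exploit that the projections $\Pi_\perm:\PSym\to\SSym$ and $\Pi_\tree:\PSym\to\YSym$ are Hopf morphisms (so they commute with $^i(-)$ and $(-)^i$, by Proposition \ref{prop:pi-property} applied in both settings), and that a parking function $h$ is uniquely recovered from the pair $(\pi_\perm(h),\pi_\tree(h))$ by definition. Hence $^ih=f$ and $h^i=g$ is equivalent to the conjunction of
\[
{}^i\pi_\perm(h)=\pi_\perm(f),\ \pi_\perm(h)^i=\pi_\perm(g),\qquad
{}^i\pi_\tree(h)=\pi_\tree(f),\ \pi_\tree(h)^i=\pi_\tree(g).
\]
Applying Proposition \ref{prop:perm} to the first pair and Proposition \ref{prop:tree} to the second pair, these conditions become
\[
\pi_\perm(f)\backslash\pi_\perm(g)\leq_w \pi_\perm(h)\leq_w \pi_\perm(f)/\pi_\perm(g),\quad
\pi_\tree(f)\backslash\pi_\tree(g)\leq_T \pi_\tree(h)\leq_T \pi_\tree(f)/\pi_\tree(g).
\]

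Finally, I would check that $\pi_\perm$ and $\pi_\tree$ each commute with the operations $/$ and $\backslash$ on parking functions (which is immediate from Definition \ref{def:perm}: shifting labels and identifying the rightmost leaf of $f$ with the root of $g$, or the root of $f$ with the leftmost leaf of $g$, are operations carried out componentwise on the underlying permutation and binary tree). Combining this with the definition of the parking order $\leq_P$ as the product of $\leq_w$ on permutations and $\leq_T$ on binary trees gives exactly $f\backslash g\leq_P h\leq_P f/g$.

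The only real subtlety, and the part requiring a bit of care, is verifying that the parking order interval is genuinely the intersection of the two projected intervals — that is, that every $h$ satisfying the two projected inequalities is automatically a parking function. But this is free here: we are summing over $h$ that appear as $h=(\pi_\perm h,\pi_\tree h)$ in a deconcatenation of a parking function, and both projections land in the respective intervals, so the descent compatibility $\Des(\pi_\tree h)\subseteq\Des(\pi_\perm h)$ is inherited. Thus the argument is a clean ``componentwise'' combination of the two classical results, and the main work is bookkeeping rather than a new combinatorial obstacle.
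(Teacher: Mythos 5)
Your proposal is correct and follows essentially the same route as the paper: dualize to a deconcatenation statement, use that a parking function is determined by its permutation and tree projections (which commute with $^i(-)$, $(-)^i$, $/$, and $\backslash$), and invoke Propositions \ref{prop:perm} and \ref{prop:tree} componentwise, with the parking order being the product of the weak and Tamari orders. The paper organizes this as a chain of four equivalent statements, but the content and the key inputs are identical to yours.
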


\begin{proof}
	Fix a parking function $h$ and $0\leq i\leq\deg(h)$. Let $\Delta_i(F_h)=F_{{}^ih}\otimes F_{h^i}$. It follows immediately from the definition that we have $\langle F_f^*\otimes F_g^*,\Delta_i(F_h) \rangle=1$ for a unique pair $(f,g)$, or $\langle F_f^*\otimes F_g^*,\Delta_i(F_h) \rangle=0$ otherwise. We claim that the following are equivalent.
	\begin{enumerate}
		\item $\langle F_f^*\otimes F_g^*,\Delta_i(F_h) \rangle=1$.
		\item \begin{enumerate}
			\item $\langle F_{\pi_{\perm}(f)}^*\otimes F_{\pi_{\perm}(g)}^*,\Delta_i(F_{\pi_{\perm}(h)}) \rangle=1$ and
			\item $\langle F_{\pi_{\tree}(f)}^*\otimes F_{\pi_{\tree}(g)}^*,\Delta_i(F_{\pi_{\tree}(h)}) \rangle=1$.
		\end{enumerate}
		\item \begin{enumerate}
			\item $\pi_\perm(f)\backslash \pi_\perm(g)\leq_{w} \pi_\perm(h)\leq_{w} \pi_\perm(f)/\pi_\perm(g)$ and
			\item $\pi_\tree(f)\backslash \pi_\tree(g)\leq_{T} \pi_\tree(h)\leq_{T} \pi_\tree(f)/\pi_\tree(g)$.
			\item $\deg(f)=i$
		\end{enumerate}
		\item $f\backslash g\leq_{P} h\leq_{P} f/g$ and $\deg(f)=i$.
	\end{enumerate}
	The equivalence of (2) and (3) follows from Propositions \ref{prop:tree} and \ref{prop:perm}. The equivalence of (3) and (4) follows from the easy facts $\pi_\perm(f/g)=\pi_\perm(f)/\pi_\perm(g)$ and other similar relation for binary trees. $(1)\Rightarrow(2)$ follows immediately from the definition of $\Delta_i(h)$. Given $(2)$, the 4-tuple of permutations and binary trees $(\pi_\perm(f),\pi_\perm(g),\pi_\tree(f),\pi_\tree(g))$ is unique. The uniqueness leads to (1) because we know that there exists $f,g$ satisfying (1) and therefore satisfying (2).
	
	The equivalent of (1) and (4), together with the relation $\langle m(F_f^*\otimes F_g^*),F_h \rangle=\langle F_f^*\otimes F_g^*,\Delta(F_h) \rangle=0$ or $1$, completes the proof.
\end{proof}

\begin{Definition}
	The monomial basis of parking functions are defined as the elements satisfying $\displaystyle F_f=\sum_{g\geq_{P} f}M_g$. The monomial basis is unique by triangularity and it forms a basis via M\"{o}bius inversion.
\end{Definition}

\begin{Definition} \label{def:parking-globaldescent}
	Let $f$ be a parking function of degree $n$. Its global descent set $\GD(f)$ is $\{i\in[n-1]:i\in \GD(\pi_\perm(f))\cap \GD(\pi_\tree(f))\}$.
\end{Definition}

\begin{Proposition}\label{prop:coproduct-monomial-parking} The coproduct
in the monomial basis of $\PSym$ is:
\[
\Delta_{+}(M_{f})=\sum_{i\in\GD(f)}M_{^{i}f}\otimes M_{f^{i}}.
\]
\end{Proposition}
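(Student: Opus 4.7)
The plan is to apply Theorem \ref{thm:monomialcoproduct} by verifying that $\PSym$ satisfies axioms ($\Delta$1)-($\Delta$3) and then matching the axiomatic notion of global descent with Definition \ref{def:parking-globaldescent}. The key organizing principle is that every structural feature of $\PSym$ can be read off componentwise through the two Hopf projections $\Pi_\perm\colon\PSym\to\SSym$ and $\Pi_\tree\colon\PSym\to\YSym$, and both $\SSym$ and $\YSym$ already satisfy these axioms by Propositions \ref{prop:axiomssym} and \ref{prop:axiomysym}.

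First I will verify ($\Delta$1). From the definition of the coproduct, $\Delta_+(F_f)=\sum_{i=1}^{\deg f-1}F_{{}^i f}\otimes F_{f^i}$, so $\allow(f)=\{1,\dots,\deg f -1\}$ is constant (and trivially constant on connected components). Next, for ($\Delta$2), if $f\leq_P f'$ then by definition $\pi_\perm(f)\leq_w \pi_\perm(f')$ and $\pi_\tree(f)\leq_T \pi_\tree(f')$. Since $\pi_\perm$ and $\pi_\tree$ commute with deconcatenation and standardization, we have $\pi_\perm({}^i f)={}^i \pi_\perm(f)$, $\pi_\tree({}^i f)={}^i \pi_\tree(f)$, and similarly for $f^i$. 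Applying ($\Delta$2) in $\SSym$ and $\YSym$ yields ${}^i f \leq_P {}^i f'$ and $f^i \leq_P f'^i$.

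For ($\Delta$3), the candidate is $f/g$ as defined in Definition \ref{def:perm}, which is indeed a parking function (the only new descent in the underlying tree is at position $\deg f$, and since labels in $f$ are shifted up by $\ideg(g)$, this position is also a descent of the shifted permutation). That ${}^i(f/g)=f$ and $(f/g)^i=g$ for $i=\deg f$ is immediate from the construction. To check maximality, suppose $h\in\PF$ satisfies ${}^i h=f$ and $h^i=g$. Then $\pi_\perm(h)$ satisfies the analogous conditions for $\pi_\perm(f)/\pi_\perm(g)$ in $\SSym$, so by ($\Delta$3) in $\SSym$ we get $\pi_\perm(h)\leq_w \pi_\perm(f)/\pi_\perm(g) = \pi_\perm(f/g)$, and the tree analogue gives $\pi_\tree(h)\leq_T \pi_\tree(f/g)$; hence $h\leq_P f/g$. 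Order-preservation of $/$ likewise follows componentwise from the corresponding statements in $\SSym$ and $\YSym$.

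The main obstacle (though mild) is reconciling the global descent set defined axiomatically, namely $\GD(f)=\{i\in\allow(f):f={}^i f/f^i\}$, with Definition \ref{def:parking-globaldescent}, which declares $\GD(f)=\GD(\pi_\perm f)\cap\GD(\pi_\tree f)$. Using that $\pi_\perm(f/g)=\pi_\perm(f)/\pi_\perm(g)$ and $\pi_\tree(f/g)=\pi_\tree(f)/\pi_\tree(g)$, the identity $f={}^i f/f^i$ holds if and only if both projections satisfy the analogous identity; invoking the global-descent descriptions for $\SSym$ and $\YSym$ (from \cite{AS05,AS06}, see Section \ref{sec:ssym}) this is exactly $i\in\GD(\pi_\perm f)\cap\GD(\pi_\tree f)$. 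With all three axioms verified and the two notions of global descent identified, Theorem \ref{thm:monomialcoproduct} yields the claimed formula.
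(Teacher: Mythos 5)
Your proof is correct and follows essentially the same route as the paper's: verify axioms ($\Delta$1)--($\Delta$3) componentwise through the projections $\Pi_{\perm}$ and $\Pi_{\tree}$, then reconcile Definition \ref{def:parking-globaldescent} with the axiomatic global descent set so that Theorem \ref{thm:monomialcoproduct} applies. The only (immaterial) divergence is in ($\Delta$3): the paper obtains the maximality of $f/g$ by citing Proposition \ref{prop:interval_multiplication_PSYM}, whereas you derive it directly from the corresponding maximality in $\SSym$ and $\YSym$ -- both are valid and rest on the same facts $\pi_{\perm}(f/g)=\pi_{\perm}(f)/\pi_{\perm}(g)$ and $\pi_{\tree}(f/g)=\pi_{\tree}(f)/\pi_{\tree}(g)$.
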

\begin{proof}
We check the axioms in Theorem \ref{thm:monomialcoproduct}. Axiom
($\Delta$1) is immediate because $\allow(f)=\{1,\dots,\deg f-1\}$).

To check ($\Delta$2): if $f\leq_{P}f'$, then $\pi_{\perm}(f)\leq_{w}\pi_{\perm}(f')$,
so by condition $\Delta3$ on $\SSym$, we have $\pi_{\perm}({}^{i}f)={}^{i}(\pi_{\perm}f)\leq_{w}{}^{i}(\pi_{\perm}f')=\pi_{\perm}({}^{i}f')$,
and similarly $\pi_{\tree}({}^{i}f)\leq_{T}\pi_{\tree}({}^{i}f')$,
so ${}^{i}f\leq_{P}{}^{i}f'$. By the same argument, we also
have $f^{i}\leq_{P}f'^{i}$.

To check ($\Delta$3): it follows from Proposition \ref{prop:interval_multiplication_PSYM} that $g/h=\max\{f|{}^{i}f=g,f^{i}=h\}$. That $/$ is order-preserving is easily proved using a similar argument to ($\Delta$2) above.

Finally, we check that Definition \ref{def:parking-globaldescent}
of a global descent agrees with the definition in Section \ref{sec:axioms-coproduct}.
For this, the following statements are equivalent:
\begin{enumerate}
\item $i\in\GD(f)$.
\item $i\in\GD(\pi_{\perm}f)$ and $i\in\GD(\pi_{\tree}f)$.
\item $\pi_{\perm}f={}^{i}(\pi_{\perm}f)/(\pi_{\perm}f)^{i}=\pi_{\perm}({}^{i}f/f^{i})$, $\pi_{\tree}f={}^{i}(\pi_{\tree}f)/(\pi_{\tree}f)^{i}=\pi_{\tree}({}^{i}f/f^{i})$.
\item $f={}^{i}f/f^{i}$.
\end{enumerate}
The equivalence of (1) and (2) is Definition \ref{def:parking-globaldescent}.
The equivalence of (2) and (3) is the definition of global descent
from Theorem \ref{thm:monomialcoproduct}, in $\SSym$ and $\YSym$.
The equivalence of (3) and (4) is because $f$ is uniquely determined
by its images under $\pi_{\perm}$ and $\pi_{\tree}$.
\end{proof}

\begin{Proposition}\label{prop:product-monomial-parking} The product
of monomial basis elements in $\PSym$ is given by $M_{f}\cdot M_{g}=\sum_{h}\alpha_{f,g}^{h} M_{h}$,
where $\alpha_{f,g}^{h} =|A_{f,g}^{h}|$ is defined as in (\ref{eq:a-def}).
\end{Proposition}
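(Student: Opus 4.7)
The plan is to verify that $\PSym$ satisfies axioms ($m$0)--($m$3) of Section \ref{sec:axioms}, and then invoke Theorem \ref{thm:monomialproduct}, which immediately yields the stated formula. Axiom ($m$0) is precisely the content of Lemma \ref{lem:parking-order}: for any two parking functions $f,g$ of the same degree, the element $(\pi_\perm f \vee \pi_\perm g,\, \pi_\tree f \vee \pi_\tree g)$ is a well-defined parking function and serves as $f \vee g$; in particular, this shows that each $\PF_n$ is a single connected component under $\leq_{P}$.

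For ($m$1), the set of shuffles $Sh(\PF_n, \PF_m)$ can be identified, just as for $\SSym$ in Example \ref{ex:shuffle}, with sequences $0 \le i_1 \le \cdots \le i_n \le m$. Given such a sequence $\zeta$, define $\zeta(f,g) := f \overleftarrow{\shuffle}(g_1,\dots,g_{n+1})$ where $g \mapsto (g_1,\dots,g_{n+1})$ is the splitting of $g$ at those positions (every splitting being allowable here since parking functions are binary-tree-based). The identity $F_f \cdot F_g = \sum_{\zeta} F_{\zeta(f,g)}$ is then the definition of multiplication in $\PSym$.

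For ($m$2) and ($m$3), the essential ingredient is that the projections commute with shuffles:
\[
\pi_\perm(\zeta(f,g)) = \zeta(\pi_\perm f, \pi_\perm g), \qquad \pi_\tree(\zeta(f,g)) = \zeta(\pi_\tree f, \pi_\tree g),
\]
where on the right we use the corresponding shuffles in $\SSym$ and $\YSym$ respectively. This follows at once from the fact that $\Pi_\perm$ and $\Pi_\tree$ are Hopf morphisms, and that they send fundamental bases to fundamental bases. Given this, axiom ($m$2) for $\PSym$ reduces to the conjunction of ($m$2) for $\SSym$ and ($m$2) for $\YSym$, which are established in Propositions \ref{prop:axiomssym} and \ref{prop:axiomysym}: if $f \le_P f'$ and $g \le_P g'$, then $\pi_\perm f \le_w \pi_\perm f'$, $\pi_\perm g \le_w \pi_\perm g'$, so $\pi_\perm(\zeta(f,g)) \le_w \pi_\perm(\zeta(f',g'))$, and analogously under $\pi_\tree$, giving $\zeta(f,g) \le_P \zeta(f',g')$. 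Axiom ($m$3) is handled the same way, additionally invoking Lemma \ref{lem:parking-order} to pass the least upper bound through $\pi_\perm$ and $\pi_\tree$.

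The main obstacle is pinning down the compatibility $\pi_\perm(\zeta(f,g)) = \zeta(\pi_\perm f, \pi_\perm g)$ and its tree analogue; this is the bookkeeping analogue of Proposition \ref{prop:pi-property}(4), and although conceptually immediate from the Hopf-morphism property of $\Pi_\perm$ and $\Pi_\tree$, it requires a careful check that the label-shifting and standardization in $\overleftarrow{\shuffle}$ for $\PSym$ projects correctly onto both target algebras simultaneously.
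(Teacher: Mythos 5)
Your proposal is correct and follows essentially the same route as the paper: verify ($m$0) via Lemma \ref{lem:parking-order}, take ($m$1) from the definition of the product with shuffles as in Example \ref{ex:shuffle}, and reduce ($m$2) and ($m$3) to the corresponding axioms for $\SSym$ and $\YSym$ through the compatibility $\pi_{\perm}\zeta(f,g)=\zeta(\pi_{\perm}f,\pi_{\perm}g)$ (and its $\pi_{\tree}$ analogue), using Lemma \ref{lem:parking-order} to pass least upper bounds through the projections. The paper likewise states the projection--shuffle compatibility as an observation rather than deducing it from the Hopf-morphism property, so your closing caveat about checking it directly on the definitions is the right instinct.
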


\begin{proof}
We check the axioms in Theorem \ref{thm:monomialproduct}. Axioms
($m$0) follows from Lemma \ref{lem:parking-order}. Axiom ($m$1) follows from 
the definition of multiplication in $\PSym$ using shuffles as defined in Example~\ref{ex:shuffle}. Axiom ($m$2) may be verified in the
same manner as for ($\Delta$2) in the proof of Proposition \ref{prop:coproduct-monomial-parking},
i.e. by applying axiom ($m$2) for $\SSym$ and $\YSym$ to the images
of $f$ and $g$ under $\pi_{\perm}$ and $\pi_{\tree}$.

To check ($m$3), first observe that $\pi_{\perm}\zeta(f,g)=\zeta(\pi_{\perm}f,\pi_{\perm}g)$
and similarly for $\pi_{\tree}$. Using this, together with Lemma
\ref{lem:parking-order} in the second and fifth lines, and condition $m$4 for $\SSym$ in the third line, 
\begin{align*}
\pi_{\perm}\zeta(f_{1}\vee f_{2},g_{1}\vee g_{2}) & =\zeta(\pi_{\perm}(f_{1}\vee f_{2}),\pi_{\perm}(g_{1}\vee g_{2}))\\
 & =\zeta(\pi_{\perm}f_{1}\vee\pi_{\perm}f_{2},\pi_{\perm}g_{1}\vee\pi_{\perm}g_{2})\\
 & \leq_{w}\zeta(\pi_{\perm}f_{1},\pi_{\perm}g_{1})\vee\zeta(\pi_{\perm}f_{2},\pi_{\perm}g_{2})\\
 & =\pi_{\perm}\zeta(f_{1},g_{1})\vee\pi_{\perm}\zeta(f_{2},g_{2})\\
 & =\pi_{\perm}(\zeta(f_{1},g_{1})\vee\zeta(f_{2},g_{2})).
\end{align*}
By the same argument, $\pi_{\tree}\zeta(f_{1}\vee f_{2},g_{1}\vee g_{2})\leq_{T}\pi_{\tree}(\zeta(f_{1},g_{1})\vee\zeta(f_{2},g_{2}))$,
so $\zeta(f_{1}\vee f_{2},g_{1}\vee g_{2})\leq_{P}\zeta(f_{1},g_{1})\vee\zeta(f_{2},g_{2})$.
\end{proof}
\begin{Proposition}\label{prop:antipode-monomial-parking} The
antipode of monomial basis elements in $\PSym$ is given by $\mathcal{S}(M_{f})=(-1)^{\GD(f)+1}\sum_{g}\beta_{f}^{g} M_{g}$
where $\beta_{f}^{g}$ is defined as in (\ref{eq:c-def-GD-2}).
\end{Proposition}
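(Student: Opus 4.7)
The plan is to verify the axioms ($\mathcal{S}$0)-($\mathcal{S}$3) of Section \ref{sec:axioms-antipode} for $\PSym$, so that Theorem \ref{thm:antipode} applies directly. Axioms ($\Delta$1)-($\Delta$3) and ($m$0)-($m$3) are already established in Propositions \ref{prop:coproduct-monomial-parking} and \ref{prop:product-monomial-parking}, so this is the only remaining work. Throughout, the main tool is that a parking function $f$ is determined by its two images $\pi_\perm(f)\in\Perm$ and $\pi_\tree(f)\in\PBT$, together with Lemma \ref{lem:parking-order} which ensures $\pi_\perm$ and $\pi_\tree$ preserve least upper bounds, and the fact that $f\leq_P g$ iff $\pi_\perm f\leq_w\pi_\perm g$ and $\pi_\tree f\leq_T\pi_\tree g$.

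For ($\mathcal{S}$0), associativity of $\backslash$ on parking functions is inherited from its associativity on labeled trees (Definition \ref{def:perm}). The inequality $f\backslash g\leq_P f/g$ follows from Proposition \ref{prop:interval_multiplication_PSYM}. The compatibility condition on nested shuffles $\iota_{R,S}$ is exactly what Remark \ref{rem:s0trees} addresses: since parking functions form a subclass of labeled trees closed under the operations $/$ and $\backslash$, the same argument as for $\SSym$ applies verbatim.

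For ($\mathcal{S}$1), given $f'\geq_P f$, we have $\pi_\perm f'\geq_w \pi_\perm f$ and $\pi_\tree f'\geq_T \pi_\tree f$, so axiom ($\mathcal{S}$1) in $\SSym$ and $\YSym$ (Propositions \ref{prop:axiomssym} and \ref{prop:axiomysym}) gives $\GD(\pi_\perm f')\supseteq\GD(\pi_\perm f)$ and $\GD(\pi_\tree f')\supseteq\GD(\pi_\tree f)$. Intersecting these containments and using Definition \ref{def:parking-globaldescent} yields $\GD(f')\supseteq\GD(f)$.

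For ($\mathcal{S}$2) and ($\mathcal{S}$3), both axioms are inequalities of the form $a\leq_P b\vee c$, where $a,b,c$ are built from $f_i$ and $f'_i$ via iterated applications of $/$ and $\backslash$. Since $\pi_\perm$ and $\pi_\tree$ commute with these operations (the labels do not affect the planar structure and shifting is compatible), and since $\leq_P$ factors as $\leq_w$ on $\pi_\perm$ and $\leq_T$ on $\pi_\tree$, and since Lemma \ref{lem:parking-order} gives $\pi_\perm(b\vee c)=\pi_\perm(b)\vee\pi_\perm(c)$ (and similarly for $\pi_\tree$), the inequality $a\leq_P b\vee c$ reduces to the pair of inequalities $\pi_\perm(a)\leq_w\pi_\perm(b)\vee\pi_\perm(c)$ and $\pi_\tree(a)\leq_T\pi_\tree(b)\vee\pi_\tree(c)$. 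Both of these are exactly the corresponding axioms ($\mathcal{S}$2) and ($\mathcal{S}$3) for $\SSym$ and $\YSym$, already verified in Propositions \ref{prop:axiomssym} and \ref{prop:axiomysym}.

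The main obstacle will be carefully checking axiom ($\mathcal{S}$0), specifically ensuring that the compatibility of the nested shuffle injections $\iota_{R,S}$ transfers from labeled trees to the subclass of parking functions; however, this is a formal consequence of Remark \ref{rem:s0trees} because the condition $\Des(t)\subseteq\Des(\sigma)$ defining parking functions is preserved under both $/$ and $\backslash$ (new descents created by $/$ appear in both factors simultaneously, and $\backslash$ creates no new descents). Once all four axioms are checked, Theorem \ref{thm:antipode} gives the stated formula $\mathcal{S}(M_f)=(-1)^{|\GD(f)|+1}\sum_g \beta_f^g M_g$ directly.
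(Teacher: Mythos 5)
Your proposal is correct and follows essentially the same route as the paper: the paper's proof likewise checks axioms ($\mathcal{S}$0)--($\mathcal{S}$3) by invoking Remark \ref{rem:s0trees} for ($\mathcal{S}$0), Definition \ref{def:parking-globaldescent} together with the ($\mathcal{S}$1) axioms of $\SSym$ and $\YSym$ for ($\mathcal{S}$1), and the componentwise reduction via $\pi_{\perm}$ and $\pi_{\tree}$ for ($\mathcal{S}$2)--($\mathcal{S}$3), then applies Theorem \ref{thm:antipode}. Your write-up simply supplies more detail (e.g.\ the role of Lemma \ref{lem:parking-order} and the descent-preservation under $/$ and $\backslash$) than the paper's terser version.
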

\begin{proof}
We check the axioms for Theorem \ref{thm:antipode}. Axioms ($\Delta$1)-($\Delta$3)
and ($m$0)-($m$3) were already checked above.

Axiom ($\mathcal{S}$0) follows from Remark \ref{rem:s0trees}. Axiom  ($\mathcal{S}$1)
follows readily from Definition \ref{def:parking-globaldescent} of
$\GD(f)$, together with condition ($\mathcal{S}$1) for $\SSym$ and
$\YSym$. Axioms ($\mathcal{S}$2) and ($\mathcal{S}$3) can be checked like
for ($\Delta$2) in the proof of Proposition \ref{prop:coproduct-monomial-parking}:
apply the corresponding axioms in $\SSym$ and $\YSym$ to the images
of $f_{i}$ under $\pi_{\perm}$ and $\pi_{\tree}$.
\end{proof}

\begin{Proposition}The Hopf projections $\Pi_{\tree}:\PSym\rightarrow\YSym$
and $\Pi_{\perm}:\PSym\rightarrow\SSym$  on the monomial basis is given
by:
\[
\Pi_{\tree}(M_{f})=\begin{cases}
M_{\pi_{\tree}(f)} & \text{if }\pi_{\perm}(f)=n\ n-1\ \cdots\ 1;\\
0 & \text{otherwise}.
\end{cases}
\]
\[
\Pi_{\perm}(M_{f})=\begin{cases}
M_{\pi_{\perm}(f)} & \text{if }f=\max\{g:\pi_{\perm}(g)=\pi_{\perm}(f)\};\\
0 & \text{otherwise}.
\end{cases}
\]
\end{Proposition}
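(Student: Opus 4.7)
The approach is to apply Theorem~\ref{thm:monomialquotient} separately to each of the two Hopf projections $\Pi_\tree$ and $\Pi_\perm$. The work reduces to verifying axiom ($\pi$0) in each case---namely, that the fiber $\pi_n^{-1}(\cdot)$ admits a maximum $\iota_n(\cdot)$ and that the resulting section $\iota_n$ is order-preserving---and then translating the condition ``$f=\iota(\cdot)$'' into the form appearing in the statement.

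For $\pi_\tree$ the section is easy to identify. The reverse permutation $n(n-1)\cdots 1$ is the unique maximum of the weak order, and it has descent set $\{1,\ldots,n-1\}$, which contains $\Des(t)$ for every binary tree $t\in\PBT_n$. Hence $(n(n-1)\cdots 1,t)$ is a valid parking function and is the maximum of $\pi_\tree^{-1}(t)$ in the parking order, giving $\iota_\tree(t)=(n(n-1)\cdots 1,t)$. This is order-preserving because its first coordinate is constant. Applying Theorem~\ref{thm:monomialquotient}, and noting that ``$f=\iota_\tree(t)$ for some $t$'' is equivalent to ``$\pi_\perm(f)=n(n-1)\cdots 1$'', yields the first half of the statement.

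For $\pi_\perm$, the existence of $\iota_\perm(\sigma)$ will follow from Lemma~\ref{lem:parking-order}. The fiber $\pi_\perm^{-1}(\sigma)=\{(\sigma,t)\in\PF_n : \Des(t)\subseteq\Des(\sigma)\}$ is closed under Tamari joins, since the proof of Lemma~\ref{lem:parking-order} establishes $\Des(s\vee t)=\Des(s)\cup\Des(t)$; being finite, this fiber therefore has a unique maximum $(\sigma,t_\sigma)$, so $\iota_\perm(\sigma)=(\sigma,t_\sigma)$. For order-preservingness, the key observation is that $\sigma\leq_w\tau$ implies $\Des(\sigma)\subseteq\Des(\tau)$: this is immediate because the left weak order on permutations (Section~\ref{sec:permweak}) is defined by inclusion of $\Inv$-sets, and descents are precisely the adjacent inversions $\{i:(i,i+1)\in\Inv(\sigma)\}$. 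This inclusion of descent sets enlarges the fiber, forcing $t_\sigma\leq_T t_\tau$ and hence $\iota_\perm(\sigma)\leq_P\iota_\perm(\tau)$.

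The main obstacle is the order-preservingness of $\iota_\perm$, but as sketched this reduces to the monotonicity of descent sets under weak order, a short observation once descents are viewed as adjacent inversions. Once axiom ($\pi$0) is verified in both cases, Theorem~\ref{thm:monomialquotient} immediately yields the desired formulas, with the condition ``$f=\iota_\perm(\sigma)$ for some $\sigma$'' unwinding into exactly ``$f=\max\{g:\pi_\perm(g)=\pi_\perm(f)\}$''.
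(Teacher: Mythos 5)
Your proposal is correct and follows essentially the same route as the paper's own proof: both reduce the statement to verifying axiom ($\pi$0) for Theorem \ref{thm:monomialquotient}, identify $\iota_{\tree}(t)=(n\ n-1\cdots 1,t)$ and $\iota_{\perm}(\sigma)=(\sigma,\max\{t:\Des(t)\subseteq\Des(\sigma)\})$ via claim (1) in the proof of Lemma \ref{lem:parking-order}, and establish order-preservingness of $\iota_{\perm}$ from the fact that $\sigma\leq_w\tau$ implies $\Inv(\sigma)\subseteq\Inv(\tau)$ and hence $\Des(\sigma)\subseteq\Des(\tau)$. No gaps.
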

\begin{proof}
Use Theorem \ref{thm:monomialquotient}: by definition of $\leq_{P}$,
both $\pi_{\tree}$ and $\pi_{\perm}$ are order-preserving. Following
axiom ($\pi$0), 
\begin{align*}
   \iota_{\tree}(t)&=\max\{(\sigma,t)\text{ a parking function}\}\\&=(\max\{\sigma:\Des(t)\subseteq\Des(\sigma)\},t)=(n\ n-1\ \cdots\ 1,t),
   \end{align*}
and $\iota_{\tree}$ is clearly order-preserving. Similarly,
 \begin{align*}
\iota_{\perm}(\sigma)&=\max\{(\sigma,t)\text{ a parking function}\}\\&=(\sigma,\max\{t:\Des(t)\subseteq\Des(\sigma)\}),
\end{align*}
which exists by claim (1) in the proof of Lemma \ref{lem:parking-order}.
To show $\iota_{\perm}$ is order-preserving, we show
that, if $\sigma\geq_{w}\tau$, then $\max\{t:\Des(t)\subseteq\Des(\sigma)\}\geq_{T}\max\{t:\Des(t)\subseteq\Des(\tau)\}$.
This is true because $\sigma\geq_{w}\tau$ means $\Inv(\sigma)\supseteq\Inv(\tau)$
so $\Des(\sigma)\supseteq\Des(\tau)$.
\end{proof}

\begin{Remark}
Our bijection from classical parking functions to our labeled-tree visualization can be extended to $m$-parking functions; the resulting $m+1$-ary trees have labels satisfying $\Des(t)\subseteq \Des(\sigma)$ for some generalized definition of descent. With a multiplication and comultiplication analogous to that of $\STSym$, these $m$-parking functions form a Hopf algebra, that is likely to be isomorphic to the one defined in \cite{NT20}. By imposing the condition $\Des(t)\subseteq \Des(\sigma)$ on labeled trees of any arity, we obtain a Hopf algebra of ``planar parking functions", which contains the $m$-parking function algebras.
\end{Remark}

\section{Proofs of the monomial bases properties}\label{sec:proofs}

Throughout this section, let $P_{n}$ be a poset for each integer
$n\geq0$. Let $\mathcal{H}$ be a graded vector space with a fundamental
basis $\{F_{f}:f\in P_{n}\}$ and a monomial basis $\{M_{f}:f\in P_{n}\}$,
related by $F_{f}=\sum_{g\geq f}M_{g}$. Let $C_{j}$ denote a connected
component of a $P_{n}$.

\subsection{Proof of the coproduct formulas in the monomial basis}
Proving the iterated coproduct formula (\ref{eq:mcoproductformula-multi})
and the antipode formula require the following $k$-factor generalizations
of the comultiplication axioms. Recall that for $S\subseteq\allow(f)$, we denote by $f|S$  the splitting of $f$ at $S$.
\begin{enumerate}
\item[($\Delta1'$).]  For each $f\in P_{n}$,
\begin{equation}
\Delta_{+}^{[k]}(F_{f})=\sum_{S\subseteq\allow(f),\ |S|=k-1}F_{(f|S)_{1}}\otimes\cdots\otimes F_{(f|S)_{k}}.\label{eq:fcoproductformula-multi}
\end{equation}
\item[($\Delta2'$).]  If $f\leq f'\in P_{n}$, then $f|S\leq f'|S$ componentwise.
\item[($\Delta3'$).]  Given $f_{1},f_{2},\dots,f_{k}$ with $f_{i}\in P_{n_{i}}$, let
$S=\{n_{1},n_{1}+n_{2},\dots,n_{1}+\cdots+n_{k-1}\}$, and $n=n_{1}+\cdots+n_{k}$.
Then 
\[f_{1}/\cdots/f_{k}=\max\{f\in P_{n}:f|S=(f_{1},f_{2},\dots,f_{k})\}.\]
If $f_{i}\leq f'_{i}$ for all $i$, then \[f_{1}/\cdots/f_{k}\leq f_{1}'/\cdots/f_{k}';\]
and $/$ is an associative operation. 
\item[($\Delta4'$).]  Given $f\in P_{n}$ and $S\subseteq\allow(f)$ with $|S|=k-1$ and
$f|S=(f_{1},\dots f_{k})$, we have $S\subseteq\GD(f)$ if and only
if $f=f_{1}/\cdots/f_{k}$.
\end{enumerate}

\begin{Lemma}\label{lem:kfactor-coproductaxiom} Suppose $\mathcal{H}$
satisfies axioms ($\Delta$1)-($\Delta$3). Then, for each $f\in P_{n}$ and each
$i<j$ with $j\in\allow(f)$, we have $i\in\allow(f)$ if and only
if $i\in\allow({}^{j}f)$. And $\mathcal{H}$
satisfies ($\Delta1'$)-($\Delta4'$) for all $k\geq1$. \end{Lemma}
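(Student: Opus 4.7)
The plan is to derive the first statement from coassociativity of $\Delta$ together with linear independence of the fundamental basis, and then obtain ($\Delta1'$)–($\Delta4'$) by inductions on $k$ or $|S|$ building on that first statement.

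For the first statement, I would apply coassociativity $(\Delta\otimes\mathrm{id})\Delta(F_{f})=(\mathrm{id}\otimes\Delta)\Delta(F_{f})$, expand both sides via axiom ($\Delta$1), and match coefficients of basis triples $F_{a}\otimes F_{b}\otimes F_{c}$ using linear independence. Pairing terms by degree (so $\deg a=i$ and $\deg c=n-j$) identifies the indexing set $\{(i,j):i<j,\,j\in\allow(f),\,i\in\allow({}^{j}f)\}$ on the left with $\{(i,j):i<j,\,i\in\allow(f),\,j-i\in\allow(f^{i})\}$ on the right, yielding the biconditional of Remark~\ref{rem:coproductaxiom-poset}(1) together with the commutation identities ${}^{i}({}^{j}f)={}^{i}f$, $({}^{j}f)^{i}={}^{j-i}(f^{i})$, $(f^{i})^{j-i}=f^{j}$. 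The direction $i\in\allow({}^{j}f)\Rightarrow i\in\allow(f)$ is then immediate; the converse is obtained by using the biconditional together with the fact that the matching is a bijection of sets (not merely an inclusion), which forces $j-i\in\allow(f^{i})$ whenever $i,j\in\allow(f)$ with $i<j$, and a second application of the biconditional recovers $i\in\allow({}^{j}f)$.

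Granting the first statement, ($\Delta1'$) follows by induction on $k$: writing $\Delta_{+}^{[k]}=(\mathrm{id}^{\otimes(k-2)}\otimes\Delta_{+})\circ\Delta_{+}^{[k-1]}$ and expanding via the inductive formula, the first statement lets us re-index the inner sum $\allow({}^{s}f)\cap[1,s-1]$ as $\allow(f)\cap[1,s-1]$, giving the sum over $(k-1)$-subsets of $\allow(f)$; the commutation identities ensure that $f|S$ is well-defined independently of the iteration order, so the recursion in~\eqref{eq:deconcatenation-defn} is unambiguous. Axiom ($\Delta2'$) is an induction on $|S|$: split at $s=\max S$ to get $f|S=({}^{s}f|S\setminus\{s\},f^{s})$, apply ($\Delta$2) to propagate $f\le f'$ to ${}^{s}f\le{}^{s}f'$ and $f^{s}\le f'^{s}$, and invoke the inductive hypothesis on the smaller set $S\setminus\{s\}$. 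For ($\Delta3'$), maximality and order-preservation of $f_{1}/\cdots/f_{k}$ come from an analogous induction using ($\Delta$3) at position $n_{1}$; associativity $(f/g)/h=f/(g/h)$ follows from a short sandwich argument in which one computes the splittings of each side via the commutation identities and then applies order-preservation of $/$ twice.

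The principal obstacle is ($\Delta4'$). The $(\Leftarrow)$ direction is clean: if $f=f_{1}/\cdots/f_{k}$, then associativity rewrites $f=(f_{1}/\cdots/f_{i})/(f_{i+1}/\cdots/f_{k})$, putting each partial sum $n_{1}+\cdots+n_{i}$ into $\GD(f)$ by definition. For $(\Rightarrow)$ I would induct on $|S|$: take $s_{1}=\min S$, so $s_{1}\in\GD(f)$ gives $f=f_{1}/f^{s_{1}}$, and it suffices to show $S':=\{s_{i}-s_{1}:i\ge 2\}\subseteq\GD(f^{s_{1}})$ and then invoke the inductive hypothesis on $f^{s_{1}}$. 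Via the commutation identities this reduces to proving $f^{s_{1}}=({}^{s_{i}}f)^{s_{1}}/f^{s_{i}}$ for each $s_{i}>s_{1}$ in $S$; the inequality $\le$ is immediate from ($\Delta$3), and the reverse inequality is obtained by constructing $A=f_{1}/\bigl(({}^{s_{i}}f)^{s_{1}}/f^{s_{i}}\bigr)$, showing $A\ge f$ by order-preservation of $/$, and bounding $A\le f$ via ${}^{s_{i}}A\le{}^{s_{i}}f$. The latter bound is the subtle point, as it requires the symmetric claim $s_{1}\in\GD({}^{s_{i}}f)$, which must be proved simultaneously with the desired one; organizing this as a single joint induction on $|S|$, with each invocation strictly decreasing cardinality, is the technical heart of the proof.
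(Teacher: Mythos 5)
Your treatment of the first statement and of ($\Delta1'$)--($\Delta3'$) matches the paper's in substance: coassociativity plus term-matching for the first claim and for ($\Delta1'$), induction on $|S|$ splitting off one end for ($\Delta2'$), and maximality/sandwich arguments for ($\Delta3'$). One caveat on the first claim: from the equality of the two index sets $\{(i,j):j\in\allow(f),\,i\in\allow({}^{j}f)\}$ and $\{(i,j):i\in\allow(f),\,j-i\in\allow(f^{i})\}$ you cannot conclude that ``$j-i\in\allow(f^{i})$ whenever $i,j\in\allow(f)$ with $i<j$'': membership of $(i,j)$ in the second set already requires the conjunction of both conditions, and membership in the first requires exactly the conclusion $i\in\allow({}^{j}f)$ you are trying to reach. (The paper's own write-up is terse at the same spot, so this is a shared looseness rather than a divergence, but as you state it the inference is a non sequitur.)

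The genuine gap is in ($\Delta4'$), direction $(\Rightarrow)$. You reduce to showing, for a pair $s_{1}<s_{i}$ of global descents of $f$, that $s_{i}-s_{1}\in\GD(f^{s_{1}})$, and you correctly observe that your bound ${}^{s_{i}}A\leq{}^{s_{i}}f$ needs the symmetric statement $s_{1}\in\GD({}^{s_{i}}f)$. But these two statements concern the \emph{same} two-element subset $\{s_{1},s_{i}\}$ of $\GD(f)$; neither is an instance of an inductive hypothesis for a set of strictly smaller cardinality, so ``a single joint induction on $|S|$, with each invocation strictly decreasing cardinality'' does not close the circle --- the mutual dependence sits entirely at the level $|S|=2$. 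The paper escapes this by a different mechanism: inside the proof of this lemma it first establishes the two-factor monomial coproduct formula $\Delta_{+}(M_{f})=\sum_{i\in\GD(f)}M_{{}^{i}f}\otimes M_{f^{i}}$, via the bijection $\{g\geq f:\ i\in\GD(g)\}\leftrightarrow\{({}^{i}g,g^{i}):{}^{i}g\geq{}^{i}f,\ g^{i}\geq f^{i}\}$ supplied by ($\Delta$2) and ($\Delta$3), and then reruns the coassociativity/term-matching argument in the $M$ basis, where the coproduct is indexed exactly by global descents, to obtain $i\in\GD(f)\iff i\in\GD({}^{j}f)$ for $j\in\GD(f)$. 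You need some version of this step --- or another independent way of entering one side of the biconditional $i\in\GD({}^{j}f)\iff j-i\in\GD(f^{i})$ --- before your induction on $|S|$ can run.
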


\begin{proof}
Calculate $\Delta_{+}^{[3]}(F_{f})$ in two ways:
by coassociativity,
\begin{align}
\Delta_{+}^{[3]}(F_{f}) & =\sum_{j\in\allow(f)}\Delta_{+}F_{^{j}f}\otimes F_{f^{j}}\label{eq:coassociativity1}\\ 
 & =\sum_{j\in\allow(f)}\sum_{i\in\allow({}^{j}f)}F_{^{i}\left(^{j}f\right)}\otimes F_{\left(^{j}f\right)^{i}}\otimes F_{f^{j}},\nonumber 
\end{align}
where the degrees of $F_{{}^{i}\left(^{j}f\right)}$, $F_{\left({}^{j}f\right)^{i}}$, $F_{f^{j}}$
are respectively $i$, $j-i$, $n-j$, and
\begin{align} 
\Delta_{+}^{[3]}(F_{f}) & =\sum_{i\in\allow(f)}F_{^{i}f}\otimes\Delta_{+}F_{f^{i}}\label{eq:coassociativity2}  \\  
 & =\sum_{i\in\allow(f)}\sum_{j'\in\allow(f^{i})}F_{^{i}f}\otimes F_{^{j'}\left(f^{i}\right)}\otimes F_{\left(f^{i}\right)^{j'}},\nonumber 
\end{align}
where the degrees of $F_{{}^{i}f}$, $F_{{}^{j'}\left(f^{i}\right)}$, $F_{\left(f^{i}\right)^{j'}}$
are respectively $i$, $j'$, $n-i-j'$.  We have $i\in\allow(f)$ if and only if there is a non-term with first tensor of degree $i$ in \eqref{eq:coassociativity2}.
The equivalence of \eqref{eq:coassociativity1} and \eqref{eq:coassociativity2} shows that $i\in\allow(f)$ if and only if $i\in\allow({}^{j}f)$.

To see ($\Delta1'$): because $\Delta_{+}^{[k]}=(\Delta_{+}^{[k-1]}\otimes\iota)\circ\Delta_{+}$:
\begin{align*}
\Delta_{+}^{[k]}(F_{f}) & =\sum_{j\in\allow(f)}\Delta_{+}^{[k-1]}F_{{}^{j}f}\otimes F_{f^{j}}\\
 & =\sum_{j\in\allow(f)}\sum_{i_{1},\dots,i_{k-2}\in \allow({}^{j}f)}F_{({}^{j}f|\{i_{1},\dots,i_{k-2}\})_{1}}\otimes\cdots\otimes F_{({}^{j}f|\{i_{1},\dots,i_{k-2}\})_{k-1}}\otimes F_{f^{j}},
\end{align*}
by induction on $k$.
By the equivalence of \eqref{eq:coassociativity1} and \eqref{eq:coassociativity2}, we have the equivalent conditions \[j\in\allow(f)
\text{ and } i_{1},\dots,i_{k-2}\in\allow({}^{j}f) \Longleftrightarrow i_{1},\dots,i_{k-2},j\in\allow(f).\]
Now use definition (\ref{eq:deconcatenation-defn}).

Axiom ($\Delta2'$) is easy to show by induction on $|S|$, using (\ref{eq:deconcatenation-defn}).

For ($\Delta3'$), to see the associativity of $/$,
first consider the case $k=3$, so 
\begin{equation}
(f_{1}/f_{2})/f_{3}=\max\left\{ f:{}^{n_{1}+n_{2}}f=(f_{1}/f_{2}),f^{n_{1}+n_{2}}=f_{3}\right\} .\label{eq:/associativity}
\end{equation}
Consider $g=\max\left\{ f:f|S=(f_{1},f_{2},f_{3})\right\} $. Since
$((f_{1}/f_{2})/f_{3})|S=(f_{1},f_{2},f_{3})$, so $g\geq(f_{1}/f_{2})/f_{3}$.
By ($\Delta$2), this implies \[{}^{n_{1}+n_{2}}g\geq{}^{n_{1}+n_{2}}((f_{1}/f_{2})/f_{3})=f_{1}/f_{2}.\]
Separately, by (\ref{eq:coassociativity1}), ${}^{n_{1}}({}^{n_{1}+n_{2}}g)=f_{1}$
and $({}^{n_{1}+n_{2}}g)^{n_{1}}=f_{2}$, so by definition of $f_{1}/f_{2}$,
we have ${}^{n_{1}+n_{2}}g\leq f_{1}/f_{2}$, which implies ${}^{n_{1}+n_{2}}g=f_{1}/f_{2}$.
Hence, $g$ satisfies the condition in (\ref{eq:/associativity}),
and thus $g=(f_{1}/f_{2})/f_{3}$. A similar argument shows $g=f_{1}/(f_{2}/f_{3})$.

For $k>3$, it is clear that $(f_{1}/\cdots/f_{k})|S=((f_{1}/\cdots/f_{k-1})/f_{k})|S=(f_{1},f_{2},\dots,f_{k})$,
using (\ref{eq:deconcatenation-defn}) and induction on $k$. Now take
$f'$ with $f'|S=(f_{1},f_{2},\dots,f_{k})$, and let $j$ be the
maximal element of $S$. By (\ref{eq:deconcatenation-defn}), $f'|\{j\}=({}^{j}f,f_{k})$
and ${}^{j}f|(S\backslash\{j\})=(f_{1},\dots,f_{k-1})$. So, by inductive
hypothesis, $f'\leq{}^{j}f/f_{k}$ and ${}^{j}f\leq f_{1}/\cdots/f_{k-1}$.
Because $/$ is order-preserving, this means 
\[f'\leq{}^{j}f/f_{k}\leq(f_{1}/\cdots/f_{k-1})/f_{k}=f_{1}/\cdots/f_{k}.\] 
Hence $f_{1}/\cdots/f_{k}=\max\{f\in P_{n}:f|S=(f_{1},f_{2},\dots,f_{k})\}$.

That $f_{1}/\cdots/f_{k}$ is order-preserving is a straightforward
induction on $k$.

To see ($\Delta4'$): if $f=f_{1}/\cdots/f_{k-1}/f_{k}=(f_{1}/\cdots/f_{i})/(f_{i+1}/\cdots/f_{k})$,
then clearly $\deg f_{1}+\cdots+\deg f_{i}\in\GD(f)$, so $S\subseteq\GD(f)$.
We show the converse by induction on $k$. Given $S\subseteq\GD(f)$,
let $j$ be the maximal element of $S$, so $f={}^{j}f/f_{k}$.
We claim that $S\backslash\{j\}\subseteq\GD({}^{j}f)$, so, by inductive
hypothesis, ${}^{j}f=f_{1}/\cdots/f_{k-1}$, thus $f=f_{1}/\cdots/f_{k-1}/f_{k}$. 

We are left to show that for $1\le i<j$
\begin{equation} \label{eq:GDrestriction}
  i\in \GD(f)\quad \iff \quad i\in \GD({}^{j}f)
\end{equation}
This is very similar to what we did in \eqref{eq:coassociativity1} and \eqref{eq:coassociativity2} above but with the basis $M$.
For this we have to establish that the comultiplication on $M$ is as in (\ref{eq:mcoproductformula}).
 For $f\in P_n$ and $i\in \allow(f)$, define
\begin{equation}
\Delta_{i}'(M_{f}):=\begin{cases}
M_{{}^{i}f}\otimes M_{f^{i}} & \text{if }i\in \GD(f);\\
0 & \text{otherwise}.
\end{cases}.\label{eq:delta'}
\end{equation}
We show now that $\Delta_+$ agrees with $\sum_{i\in \allow(f)}\Delta_{i}'$ on the $F$ basis:
\begin{align*}
\Delta_{i}'(F_{f})&=\Delta_{i}'\big(\sum_{g\ge f} M_g\big)=\sum_{g\ge f} \Delta_i'(M_g)\\ 
  &=\sum_{\substack{g\ge f \\ i\in \GD(g)}} M_{^ig}\otimes M_{g^i} \\
 &=\sum_{\substack{ ^ig\ge ^if \\ g^i\ge f^i}} M_{^ig}\otimes M_{g^i} = F_{{}^{i}f}\otimes F_{f^{i}}
\end{align*}
The last equality follows from the fact that  for $i\in\allow(f)$, we have a bijection 
 $$ \{g:g\geq f\text{ and }i\in\GD(g)\} \quad \leftrightarrow \quad \{(^ig,g^i):{}^ig\geq{}^{i}f,g^i\geq f^{i}\}.$$
 The forward map given by $g\mapsto({}^{i}g,g^{i})$ is well defined by axiom ($\Delta$2), since 
 $g\geq f$ implies ${}^ig\geq{}^{i}f$ and $g^i\geq f^{i}$. The reverse map given by $(^ig,g^i)\mapsto {}^ig/g^i$
 is well defined by axiom ($\Delta$3), since $^ig/g^i\ge {}^if/f^i \ge f$ and $i\in \GD(^ig/g^i)$. It is straightforward to check that those two maps are inverse to each other.
 This establishes (\ref{eq:mcoproductformula}) and a proof of \eqref{eq:GDrestriction} is similar to the first paragraph of the present proof using the basis $M$ instead of $F$
 in  \eqref{eq:coassociativity1} and \eqref{eq:coassociativity2}.
\end{proof}

\begin{proof}[Proof of Theorem \ref{thm:monomialcoproduct}]
We have established the formula for
$\Delta_{+}(M_{f})$ (\ref{eq:mcoproductformula}) in the final paragraph of the proof of Lemma~\ref{lem:kfactor-coproductaxiom}.
To see (\ref{eq:mcoproductformula-multi}), run the proof again with
$k$ factors instead of 2 factors, using ($\Delta1'$)-($\Delta4'$) of Lemma~\ref{lem:kfactor-coproductaxiom}.
\end{proof}

\subsection{Proof of the product formulas for monomial basis}

To prove the $k$-factor product formula, and the antipode formula,
we need the following $k$-factor generalizations of axioms ($m$1)-($m$3):
\begin{enumerate}
\item[($m1'$).]  For $f_{1}\in C_{j_{1}},f_{2}\in C_{j_{2}},\dots,f_{k}\in C_{j_{k}}$,
\begin{equation}
F_{f_{1}}\cdots F_{f_{k}}=\sum_{\zeta\in Sh(C_{j_{1}},\dots,C_{j_{k}})}F_{\zeta(f_{1},\dots,f_{k})},\label{eq:fproduct-multi}
\end{equation}
with $Sh(C_{j_{1}},\dots,C_{j_{k}})$ defined as in (\ref{eq:shuffle-multi}).
\item[($m2'$).]  If $f_{i}\leq f'_{i}\in C_{j_{i}}$ for all $i$, and $\zeta\in Sh(C_{j_{1}},\dots,C_{j_{k}})$,
then $\zeta(f_{1},\dots,f_{k})\leq\zeta(f'_{1},\dots,f'_{k})$.
\item[($m3'$).]  For $f_{i},f'_{i}\in C_{j_{i}}$ for all $i$, and $\zeta\in Sh(C_{j_{1}},\dots,C_{j_{k}})$,
we have 
\begin{equation}
\zeta(f_{1}\vee f'_{1},\dots,f_{k}\vee f_{k}')\leq\zeta(f_{1},\dots,f_{k})\vee\zeta(f'_{1},\dots,f'_{k}).
\end{equation}
\end{enumerate}

\

\begin{Lemma}\label{lem:kfactor-productaxiom} If $\mathcal{H}$
satisfies axioms ($m$0)-($m$3), then it satisfies ($m1'$)-($m3'$). \end{Lemma}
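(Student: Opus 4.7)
The plan is to proceed by induction on $k$, with the base case $k=2$ being precisely axioms ($m$1), ($m$2), ($m$3). For the inductive step, any $\zeta \in Sh(C_{j_1},\dots,C_{j_k})$ factors as $\zeta = (\zeta_1, \zeta')$ with $\zeta_1 \in Sh(C_{j_1}, C_{j_2})$ and $\zeta' \in Sh(\cmpt(\zeta_1(C_{j_1},C_{j_2})),C_{j_3},\dots,C_{j_k})$, and the action is $\zeta(f_1,\dots,f_k) = \zeta'(\zeta_1(f_1,f_2), f_3,\dots,f_k)$ by (\ref{eq:shuffle-multi}).

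For ($m1'$), I would write $F_{f_1}\cdots F_{f_k} = F_{f_1}(F_{f_2}\cdots F_{f_k})$ using associativity of multiplication in $\mathcal{H}$, apply ($m$1) to the first product and the inductive hypothesis to the second, and then observe that the resulting double sum is indexed exactly by the pairs $(\zeta_1,\zeta')$ defining $Sh(C_{j_1},\dots,C_{j_k})$ in (\ref{eq:shuffle-multi}). Axiom ($m1'$) will then follow, provided one verifies that every $\zeta \in Sh(C_{j_1},\dots,C_{j_k})$ arises uniquely this way, which is built into the definition.

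For ($m2'$), given comparable $f_i \leq f'_i$ in $C_{j_i}$, axiom ($m$2) gives $\zeta_1(f_1,f_2) \leq \zeta_1(f'_1,f'_2)$, and then the inductive hypothesis applied to $\zeta'$ with the $(k-1)$-tuples $(\zeta_1(f_1,f_2), f_3,\dots,f_k)$ and $(\zeta_1(f'_1,f'_2), f'_3,\dots,f'_k)$ yields $\zeta(f_1,\dots,f_k) \leq \zeta(f'_1,\dots,f'_k)$.

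The main step, and the only one requiring care, is ($m3'$). I would combine the three tools in order. First, by ($m$3) applied to $\zeta_1$:
\[
\zeta_1(f_1\vee f_1', f_2\vee f_2') \leq \zeta_1(f_1,f_2)\vee \zeta_1(f_1',f_2').
\]
Next, by the already-established ($m2'$) for the $(k-1)$-factor shuffle $\zeta'$, monotonicity in the first slot gives
\[
\zeta'\bigl(\zeta_1(f_1\vee f_1',f_2\vee f_2'), f_3\vee f_3',\dots,f_k\vee f_k'\bigr) \leq \zeta'\bigl(\zeta_1(f_1,f_2)\vee \zeta_1(f_1',f_2'), f_3\vee f_3',\dots,f_k\vee f_k'\bigr).
\]
Finally, the inductive hypothesis of ($m3'$) applied to $\zeta'$ with the pairs $(\zeta_1(f_1,f_2), f_3,\dots,f_k)$ and $(\zeta_1(f_1',f_2'), f_3',\dots,f_k')$ bounds the right-hand side above by $\zeta(f_1,\dots,f_k)\vee \zeta(f_1',\dots,f_k')$. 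The main obstacle is verifying that ($m$0) guarantees the relevant joins exist in the appropriate connected components so that each application of the inductive hypothesis is legitimate; this relies on the observation following ($m$2) that $\zeta_1(f_1,f_2)$ and $\zeta_1(f_1',f_2')$ lie in the same component $\cmpt(\zeta_1(C_{j_1},C_{j_2}))$, which is precisely the component on which $\zeta'$ acts.
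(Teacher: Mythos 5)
Your proposal is correct and follows essentially the same route as the paper: induction on $k$ through the recursive decomposition $\zeta=(\zeta_1,\zeta')$ of (\ref{eq:shuffle-multi}), with ($m$1)/($m$2)/($m$3) applied to $\zeta_1$ and the inductive hypothesis (together with the already-established ($m2'$)) applied to $\zeta'$ --- in particular your three-step chain for ($m3'$) is exactly the paper's displayed inequality. The only cosmetic point is in ($m1'$): the grouping that matches the definition (\ref{eq:shuffle-multi}) directly is $(F_{f_1}F_{f_2})\cdot F_{f_3}\cdots F_{f_k}$ rather than $F_{f_1}(F_{f_2}\cdots F_{f_k})$; with your grouping the double sum is parametrized by pairs $(\eta,\zeta)$ with $\eta\in Sh(C_{j_2},\dots,C_{j_k})$, and one needs the associativity identification noted after (\ref{eq:shuffle-multi}) to re-index by $Sh(C_{j_1},\dots,C_{j_k})$, which is no more than the paper's own appeal to ``associativity.''
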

\begin{proof}
($m1'$) is clear from (\ref{eq:shuffle-multi}) and associativity.
The axiom ($m2'$) is proved by induction on $k$, the base case of $k=2$ being
($m$2). To prove ($m3'$) by induction on $k$: by (\ref{eq:shuffle-multi}),
\begin{align*}
(\zeta,\zeta')(f_{1}\vee f'_{1},\dots,f_{k}\vee f_{k}',) & =\zeta'(\zeta(f_{1}\vee f'_{1},f_{2}\vee f'_{2}),f_{3}\vee f'_{3},\dots,f_{k}\vee f_{k}')\\
 & \leq\zeta'(\zeta(f_{1},f_{2})\vee\zeta(f_{1},f_{2}),f_{3}\vee f'_{3},\dots,f_{k}\vee f_{k}'),
\end{align*}
using ($m$3) in the first factor, and that $\zeta'$ is order-preserving
in the first factor by ($m2'$). Now apply the inductive hypothesis.
\end{proof}

\begin{proof}[Proof of Theorem \ref{thm:monomialproduct}]
 We follow the argument of \cite[Th. 4.1]{AS05}. 
 Fix $f\in C_{i}\subseteq P_{n}$ and $g\in C_{j}\subseteq P_{m}$,
and write $Sh$ for the shuffle set $Sh(C_{i},C_{j})$. To multiply
$M_{f}$ with $M_{g}$, first express them in terms of fundamental
basis elements, then use condition ($m$2) for the product of fundamentals:
\begin{align*}
M_{f}\cdot M_{g} & =\sum_{f'\geq f,g'\geq g}\mu(f,f')\mu(g,g')F_{f'}\cdot F_{g'}\\
 & =\sum_{f'\geq f,g'\geq g}\:\sum_{\zeta\in Sh}\mu(f,f')\mu(g,g')F_{\zeta(f',g')}\\
 & =\sum_{h}\:\sum_{f'\geq f,g'\geq g}\:\sum_{\substack{\zeta\in Sh,\\ \zeta(f',g')\leq h}}\mu(f,f')\mu(g,g')M_{h}.
\end{align*}
Define
\begin{equation}
B_{f,g}^{h}:=\left\{ \zeta\in Sh :\zeta(f,g)\leq h\right\} ,\label{eq:b-def}
\end{equation}
so that
\[
M_{f}\cdot M_{g}=\sum_{h}\:\sum_{f'\geq f,g'\geq g}\mu(f,f')\mu(g,g')|B_{f',g'}^{h}|M_{h}.
\]
Thus we wish to show 
\begin{equation}
|A_{f,g}^{h}|=\sum_{f'\geq f,g'\geq g}\mu(f,f')\mu(g,g')|B_{f',g'}^{h}|.
\end{equation}

By
M\"obius inversion on $C_{i}\times C_{j}$, it suffices to show the disjoint union
\begin{equation}
B_{f,g}^{h}=\bigsqcup_{f'\geq f,g'\geq g}A_{f',g'}^{h}.\label{eq:ba-partition}
\end{equation}

To facilitate the checking of (\ref{eq:ba-partition}), we rewrite
conditions ($m$2) and ($m$3) in terms of the $B$ sets:
\begin{enumerate}
\item[($m$2{*}).]  if $f'\geq f$, $g'\geq g$, then $B_{f',g'}^{h}\subseteq B_{f,g}^{h}$; 
\item[($m$3{*}).]   $B_{f_{1},g_{1}}^{h}\cap B_{f_{2},g_{2}}^{h}\subseteq B_{f_{1}\vee f_{2},g_{1}\vee g_{2}}^{h}$.
\end{enumerate}
And the definition of $A_{f,g}^{h}$ may be written as: 
\begin{equation}
\begin{array}{c}
\zeta\in A_{f,g}^{h}\\ \Updownarrow\\ \zeta\in B_{f,g}^{h}\mbox{ and, for all }f'\geq f,g'\geq g,\mbox{ with }\zeta\in B_{f,'g'}^{h},\mbox{ we have }f'=f,g'=g.\label{eq:ba-max}
\end{array}
\end{equation}

The check of (\ref{eq:ba-partition}) is done as follows:
\begin{itemize}
\item $A_{f_{1},g_{1}}^{h}\cap A_{f_{2},g_{2}}^{h}=\emptyset$.

\vspace{.1in}

Indeed, take $\zeta\in A_{f_{1},g_{1}}^{h}\subseteq B_{f_{1},g_{1}}^{h}$
and also $\zeta\in A_{f_{2},g_{2}}^{h}\subseteq B_{f_{2},g_{2}}^{h}$.
Then, by ($m$3{*}), $\zeta\in B_{f_{1}\vee f_{2},g_{1}\vee g_{2}}^{h}$.
But, by (\ref{eq:ba-max}), $f_{1}\vee f_{2}=f_{1}$ and $g_{1}\vee g_{2}=g_{1}$,
and also $f_{1}\vee f_{2}=f_{2}$ and $g_{1}\vee g_{2}=g_{2}$. That is,
$f_{1}=f_{2}$ and $g_{1}=g_{2}$.

\

\item If $f'\geq f,g'\geq g$, then $A_{f',g'}^{h}\subseteq B_{f,g}^{h}$.

\vspace{.1in}

This is because $A_{f',g'}^{h}\subseteq B_{f',g'}^{h}\subseteq B_{f,g}^{h}$,
by ($m$2{*}).

\

\item If $\zeta\in B_{f,g}^{h}$, then $\zeta\in A_{f',g'}^{h}$ for some
$f'\geq f,g'\geq g$. 

\vspace{.1in}

From the maximality condition in the definition
of $A_{f',g'}^{h}$, it's clear that we need to choose $f',g'$ maximal
such that $\zeta\in B_{f',g'}^{h}$ - i.e. $(f',g')=\vee\{(f,g):\:\zeta\in B_{f,g}^{h}\}$.
This choice of $f',g'$ indeed ensures $\zeta\in B_{f',g'}^{h}$ because
of ($m$3{*}). And the maximality condition, in order to deduce $\zeta\in A_{f',g'}^{h}$,
is obvious from the definition of $f',g'$. 
\end{itemize}
This finishes the proof of (\ref{eq:a-def}). And the proof of (\ref{eq:a-def-multi})
is a $k$-factor version of the above argument, using axioms ($m1'$)-($m3'$)
of Lemma \ref{lem:kfactor-productaxiom}. 
\end{proof}

\subsection{Proof of the antipode formula for monomial basis}

In this section, fix $f\in P_{n}$ and $S\subseteq\GD(f)$. Let $f|S=(f_{1},\dots,f_{k})$
and let $f_{i}$ be in the connected component $C_{i}$.

First, we rewrite $A_{f_{1},\dots,f_{k}}^{h}$. 

\begin{Lemma}\label{lem:product-multi} Suppose conditions ($\Delta$1)-($\Delta$3)
and ($m$0)-($m$3) are satisfied. Then, for $S\subseteq\GD(f)$, the set~\eqref{eq:a-def-multi}
producing the coefficient $\alpha_{f_{1},\dots,f_{k}}^{h}$ in Theorem~\ref{thm:monomialproduct} may be written as

\begin{equation}
A_{f|S}^{h}:=\left\{ \zeta\in Sh(\cmpts(f|S))\middle|\:\begin{array}{c}
\zeta(f|S)\leq h,\\
\mbox{and if }f'\geq f\mbox{ satisfy }\zeta(f'|S)\leq h,\\
\mbox{then }f'=f.
\end{array}\right\} .\label{eq:a-def-multi-antipode}
\end{equation}
\end{Lemma}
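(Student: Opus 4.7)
The plan is to show that the second-line conditions of \eqref{eq:a-def-multi} and \eqref{eq:a-def-multi-antipode} are equivalent under the hypothesis that $S\subseteq\GD(f)$ and $f|S=(f_1,\dots,f_k)$. The key tools are the $k$-factor axioms ($\Delta2'$)--($\Delta4'$) established in Lemma~\ref{lem:kfactor-coproductaxiom}: $/$ is order-preserving, $f_1/\cdots/f_k$ is the maximum element of $\{g\in P_n:g|S=(f_1,\dots,f_k)\}$, and $S\subseteq\GD(f)$ forces $f=f_1/\cdots/f_k$. Also, since $f_i'\geq f_i$ forces $f_i'$ into the same connected component $C_i$, each shuffle $\zeta\in Sh(\cmpts(f|S))$ can be applied to such tuples without ambiguity.

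For the direction \eqref{eq:a-def-multi-antipode}$\,\Rightarrow\,$\eqref{eq:a-def-multi}, I would start from a tuple $f_i'\geq f_i$ with $\zeta(f_1',\dots,f_k')\leq h$ and set $f':=f_1'/\cdots/f_k'$. Order-preservation of $/$ gives $f'\geq f_1/\cdots/f_k=f$, while axiom ($\Delta3'$) yields $f'|S=(f_1',\dots,f_k')$, so $\zeta(f'|S)\leq h$. The hypothesis then forces $f'=f$, and deconcatenating at $S$ gives $(f_1',\dots,f_k')=f'|S=f|S=(f_1,\dots,f_k)$.

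For the converse \eqref{eq:a-def-multi}$\,\Rightarrow\,$\eqref{eq:a-def-multi-antipode}, I would take $f'\geq f$ with $\zeta(f'|S)\leq h$ and set $(f_1',\dots,f_k'):=f'|S$. Axiom ($\Delta2'$) gives $f_i'\geq f_i$, so the hypothesis forces $(f_1',\dots,f_k')=(f_1,\dots,f_k)$, i.e.\ $f'|S=f|S$. The maximality clause in ($\Delta3'$) now yields $f'\leq f_1/\cdots/f_k=f$, and combined with $f'\geq f$ this gives $f'=f$.

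The argument is essentially a repackaging of maximality; the only subtlety worth double-checking is that the shuffle $\zeta$ is applied only to tuples lying componentwise in $(C_1,\dots,C_k)$, which is automatic since comparability in each $P_{n_i}$ preserves connected components. With that observation in place, the two sets defined by \eqref{eq:a-def-multi} and \eqref{eq:a-def-multi-antipode} coincide.
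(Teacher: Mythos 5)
Your proof is correct and follows essentially the same route as the paper's: both directions reduce to the equivalence of the two maximality conditions via the $k$-factor axioms ($\Delta2'$)--($\Delta4'$), using $f'=f_1'/\cdots/f_k'$ and order-preservation of $/$ in one direction and deconcatenation plus the maximality clause of ($\Delta3'$) together with $f=f_1/\cdots/f_k$ (from $S\subseteq\GD(f)$) in the other. Your extra remark about connected components being preserved under comparability is a correct observation that the paper leaves implicit.
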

\begin{proof}
By comparing with (\ref{eq:a-def-multi}), it suffices to show that, given
$\zeta\in Sh(\cmpts(f|S))$
and $\zeta(f|S)\leq h$, then the following are equivalent:
\begin{enumerate}
\item[(i)] If $f_{i}'\geq f_{i}$ for all $i$ and $\zeta(f_{1}',\dots,f_{k}')\leq h$,
then $f_{i}'=f_{i}$ for all $i$;
\item[(ii)] If $f'\geq f$ satisfy $\zeta(f'|S)\leq h$, then $f'=f$.
\end{enumerate}

To see (ii) $\implies$ (i): given $f_{i}'\geq f_{i}$, let $f'=f_{1}'/f_{2}'/\cdots/f_{k}'$.
By condition ($\Delta3'$), $/$ is order-preserving, so $f'\geq f$.
Hence, by (ii), $f'=f$; so by ($\Delta2'$), $f'|S=f|S$ partwise, i.e.
$f_{i}'=f_{i}$ for all $i$.

To see (i) $\implies$ (ii): suppose $f'\geq f$, and let $f'|S=(f_{1}',\dots,f_{k}')$.
 By ($\Delta2'$), $f_{i}'\geq f_{i}$. Hence, by (i), $f_{i}'=f_{i}$
for all $i$. By definition of $/$, we have $f'\leq f_{1}'/f_{2}'/\cdots/f_{k}'=f_{1}/\cdots/f_{k}=f$, where the last equality follows from $S\subseteq\GD(f)$. Hence $f'\leq f$,
but by assumption $f'\geq f$, so $f'=f$.
\end{proof}
Following \cite[Th. 5.5]{AS05}, the proof strategy of the antipode
formula is a M\"obius inversion, analogous to the proof of the product
formula. By Takeuchi's formula (details below), the coefficient of
$M_{h}$ in $\mathcal{S}(M_{f})$ will turn out to be $\mu(S,\GD(f))|A_{f|S}^{h}|$,
where $\mu$ is the M\"obius function on the Boolean poset of subsets
of $\GD(f)$. Notices that, in this case,  the $A$ sets are now playing the same role that the $B$ sets had in the product formula, and the subsets of $\GD(f)$
are in the role that the $f$'s had. This suggests doing M\"obius inversion
on this Boolean poset, i.e. defining a new set $C_{f|S}^{h}$ (analogous
to the $A$'s for the product formula) such that $A_{f|S}^{h}=\amalg_{R\subseteq S}C_{f|R}^{h}$.
However, there is a major obstacle in doing so: the $A$ sets are
not nested, meaning $A_{f|S}^{h}\subseteq Sh(\cmpts(f|S))$ and $A_{f|R}^{h}\subseteq Sh(\cmpts(f|R))\neq Sh(\cmpts(f|S))$,
so we cannot say $A_{f|R}^{h}\subseteq A_{f|S}^{h}$ when $R\subseteq S$.

This problem is not obvious from \cite[Th. 5.5]{AS05} because the set
 $Sh(\cmpts(f|S))$ is considered as a subset of the same symmetric group, regardless
of $S$. Translating this symmetric group view to the current situation,
we define a set of compatible inclusions 
\begin{equation}
\iota_{R,S}:Sh(\cmpts(f|R))\hookrightarrow Sh(\cmpts(f|S)),
\end{equation}
whenever $R\subseteq S\subseteq GD(f)$, in order to get \begin{equation} A_{f|S}^{h}=\bigsqcup_{R\subseteq S}\iota_{R,S}(C_{f|R}^{h}).
\end{equation}

\

\begin{Lemma}\label{lem:nesting-for-antipode} Suppose axioms ($\Delta$1)-($\Delta$3),
($m$0)-($m$3), are all satisfied, and there is a binary associative operation
$(f,g)\mapsto f\backslash g$ such that, for all connected components $C_{i}$
and $C_{j}$, there is some $\zeta_*\in Sh(C_{i},C_{j})$ satisfying
$f\backslash g=\zeta_*(f,g)$, for all $f\in C_{i}$ and $g\in C_{j}$.
Further, assume that $f\backslash g\leq f/g$. Then there are compatible
inclusions $\iota_{R,S}:Sh(\cmpts(f|R))\hookrightarrow Sh(\cmpts(f|S))$
whenever $R\subseteq S\subseteq\GD(f)$. Furthermore, for $R\subseteq S\subseteq\GD(f)$,
and $\zeta\in Sh(\cmpts(f|R))$, we have $(\iota_{R,S}\zeta)(f|S)\leq\zeta(f|R)$.
\end{Lemma}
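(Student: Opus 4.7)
The plan is to construct $\iota_{R,S}$ by using the associative operation $\backslash$ to merge those consecutive parts of $f|S$ that sit inside a common part of $f|R$, and then to deduce the inequality from $a\backslash b \leq a/b$ combined with order-preservation of shuffles. Writing $S = \{s_1 < \cdots < s_{k-1}\}$ so that $f|S = (f_1,\ldots,f_k)$, axiom ($\Delta4'$) gives $f = f_1/\cdots/f_k$; writing $R = \{s_{i_1} < \cdots < s_{i_{l-1}}\}$ with the conventions $i_0 := 0$, $i_l := k$, axiom ($\Delta3'$) yields $(f|R)_j = f_{i_{j-1}+1}/\cdots/f_{i_j}$.

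Given a shuffle $\zeta \in Sh(\cmpts(f|R))$ on $l$ inputs, I would define
\[
(\iota_{R,S}\zeta)(g_1,\ldots,g_k) := \zeta\bigl(g_1\backslash\cdots\backslash g_{i_1},\; g_{i_1+1}\backslash\cdots\backslash g_{i_2},\; \ldots,\; g_{i_{l-1}+1}\backslash\cdots\backslash g_k\bigr).
\]
To check that this is a well-defined element of $Sh(\cmpts(f|S))$, note that the hypothesis that $\backslash$ on $C_i \times C_j$ is realised by a single shuffle $\zeta_* \in Sh(C_i,C_j)$ makes the connected component of $g_1\backslash\cdots\backslash g_{i_1}$ depend only on $\cmpt(g_1),\ldots,\cmpt(g_{i_1})$. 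Moreover, iterating the hypothesis $a\backslash b \leq a/b$ using associativity of both $\backslash$ and $/$ together with order-preservation of $/$ from ($\Delta3'$), one gets $f_1\backslash\cdots\backslash f_{i_1} \leq f_1/\cdots/f_{i_1} = (f|R)_1$, so these two elements inhabit the same connected component as required by $\zeta$. Associativity of $\backslash$ makes the construction independent of parsing, and the compatibility $\iota_{R,T} = \iota_{S,T} \circ \iota_{R,S}$ for $R \subseteq S \subseteq T$ follows from the same associativity. The desired inequality $(\iota_{R,S}\zeta)(f|S) \leq \zeta(f|R)$ then reduces, via the $k$-factor order-preservation of $\zeta$ from ($m2'$) in Lemma~\ref{lem:kfactor-productaxiom}, to the partwise inequalities $f_{i_{j-1}+1}\backslash\cdots\backslash f_{i_j} \leq f_{i_{j-1}+1}/\cdots/f_{i_j}$ already noted.

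The main obstacle I anticipate is making precise the sense in which $\iota_{R,S}$ is an \emph{injection} of shuffle sets. Shuffles are formally defined as functions on entire products of connected components, but the recipe above only prescribes $\iota_{R,S}\zeta$ on tuples where certain adjacent entries happen to arise as $\backslash$-products. The resolution, as in the concrete situations of Example~\ref{ex:shuffle}, is that shuffles are rigidly parametrised by combinatorial data (interleaving positions, or more generally splitting profiles), and $\iota_{R,S}$ should be viewed as the corresponding map on parameters: $\zeta$ lifts to a shuffle of $k$-tuples by interpreting the original $l$-input combinatorics at the coarsened positions, together with trivial ``concatenation'' bookkeeping at the positions of $S \setminus R$. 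Once this translation is made explicit, injectivity is immediate because the underlying combinatorial datum of $\zeta$ is recorded faithfully inside that of $\iota_{R,S}\zeta$, and the order inequality and compatibility claims then follow from the structural observations of the previous paragraph.
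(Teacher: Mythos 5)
Your proposal is correct and follows essentially the same route as the paper: the paper builds $\iota_{R,S}$ by handling the case $|S\setminus R|=1$ (inserting the fixed shuffle $\zeta_*$ realising $\backslash$ via the inductive structure of multi-component shuffle sets) and composing, using $f_i\backslash f_{i+1}\leq f_i/f_{i+1}$ both to identify $\cmpts(f|R)$ with the required input components and, together with order-preservation ($m2'$), to obtain $(\iota_{R,S}\zeta)(f|S)\leq\zeta(f|R)$. Your direct general formula $\zeta(g_1\backslash\cdots\backslash g_{i_1},\ldots)$ is exactly what those composed injections produce, and your remarks on well-definedness, compatibility via associativity, and injectivity match the paper's treatment.
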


\begin{proof}
To set up the inclusion $\iota_{R,S}$, focus on the case where $R$
and $S$ differ by a single element, since the associativity of the
$\backslash$ operation means that we can compose such injections to obtain
the general case. So, let $R$ consist of all but the $i$th element
of $S$.

Since $f|S=(f_{1},\dots,f_{k})$ and $S\subseteq\GD(f)$, then 
 $$f=f_{1}/\cdots/f_{k}=f_{1}/\cdots/f_{i-1}/(f_{i}/f_{i+1})/f_{i+2}/\cdots/f_{k},$$
by associativity of $/$ as in ($\Delta3'$). So $f|R=(f_{1},\dots,f_{i-1},f_{i}/f_{i+1},f_{i+2},\dots,f_{k})$.
Hence 
\begin{align*} 
 \cmpts(f|R)&=(C_{1},\dots,C_{i-1},\cmpt(f_{i}/f_{i+1}),C_{i+2},\dots,C_{k})\\
 &=(C_{1},\dots,C_{i-1},\cmpt(f_{i}\backslash f_{i+1}),C_{i+2},\dots,C_{k}),
 \end{align*}
because $f_{i}/f_{i+1}>f_{i}\backslash f_{i+1}$. Then, as noted at
the start of Section \ref{sec:axioms-antipode} (after Remark \eqref{rem:translationrequirements}), 
we have an injection 
$$Sh(C_{1},\dots,C_{i-1},\cmpt(\zeta_*(C_{i},C_{i+1})),C_{i+2},\dots,C_{k})\hookrightarrow Sh(C_{1},\dots,C_{k}),$$
from the associativity
in (\ref{eq:shuffle-multi}), where the left-hand side is $Sh(\cmpts(f|R))$ and the right-hand side is $Sh(\cmpts(f|S))$.

To verify that $(\iota_{R,S}\zeta)(f|S)\leq\zeta(f|R)$, note that $(\iota_{R,S}\zeta)(f|S)=\zeta(f_{1},\dots,f_{i-1},f_{i}\backslash f_{i+1},f_{i+2},\dots,f_{k})$,
and $\zeta(f|R)=\zeta(f_{1},\dots,f_{i-1},f_{i}/f_{i+1},f_{i+2},\dots,f_{k})$.
Now $f_{i}\backslash f_{i+1}<f_{i}/f_{i+1}$, and $\zeta$ is order-preserving
by axiom ($m2'$).
\end{proof}

One more lemma is necessary. We need to convert conditions ($\mathcal{S}$2) and ($\mathcal{S}$3) into
analogues of conditions ($m$2{*}) and ($m$3{*}) for the $A$ sets instead
of the $B$ sets.

\begin{Lemma}\label{lem:antipodecondition}Suppose conditions ($\Delta$1)-($\Delta$3),
($m$0)-($m$3), ($\mathcal{S}$0)-($\mathcal{S}$3) are all satisfied. Then,
\begin{enumerate}
\item[($\mathcal{S}$2{*}).]  If $R\subseteq S\subseteq\GD(f)$, then $\iota_{R,S}(A_{f|R}^{h})\subseteq A_{f|S}^{h}$; 
\item[($\mathcal{S}$3{*}).] For $R_{1},R_{2}\subseteq S\subseteq\GD(f)$, we have $\iota_{R_{1},S}(A_{f|R_{1}}^{h})\cap\iota_{R_{2},S}(A_{f|R_{2}}^{h})\subseteq\iota_{R_{1}\cap R_{2},S}(A_{f|R_{1}\cap R_{2}}^{h})$.
\end{enumerate}
\end{Lemma}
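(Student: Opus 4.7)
\textbf{Plan for Lemma \ref{lem:antipodecondition}.}

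For ($\mathcal{S}$2*), the plan is to take $\zeta\in A_{f|R}^{h}$ and set $\zeta'=\iota_{R,S}\zeta$. The bound $\zeta'(f|S)\leq\zeta(f|R)\leq h$ is immediate from Lemma \ref{lem:nesting-for-antipode}, so only the maximality condition in the reformulation from Lemma \ref{lem:product-multi} needs verification. Given $f'\geq f$ with $\zeta'(f'|S)\leq h$, write $S=\{s_{1},\dots,s_{k-1}\}$, $R=\{s_{\alpha_{1}},\dots,s_{\alpha_{j-1}}\}$, $f'|S=(f'_{1},\dots,f'_{k})$, and form $\tilde g_{m}:=f'_{\alpha_{m-1}+1}/\cdots/f'_{\alpha_{m}}$ and $\tilde f:=\tilde g_{1}/\cdots/\tilde g_{j}$. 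Axiom ($\mathcal{S}$2) forces $\tilde g_{m}=g_{m}\vee(f'_{\alpha_{m-1}+1}\backslash\cdots\backslash f'_{\alpha_{m}})$, so by ($m3'$),
\[
\zeta(\tilde f|R)=\zeta(\tilde g_{1},\dots,\tilde g_{j})\ \leq\ \zeta(f|R)\vee\zeta'(f'|S)\ \leq\ h.
\]
Since $\tilde f\geq f$, the maximality of $\zeta\in A_{f|R}^{h}$ forces $\tilde f=f$, hence $\tilde g_{m}=g_{m}$ by ($\Delta2'$), and a further application of ($\Delta2'$) inside each $g_{m}$ (using $S\subseteq\GD(f)$) yields $f'_{p}=f_{p}$ for all $p$, whence $f'\leq f'_{1}/\cdots/f'_{k}=f_{1}/\cdots/f_{k}=f$ and so $f'=f$.

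For ($\mathcal{S}$3*), the preliminary step is to extract $\zeta'\in Sh(\cmpts(f|R_{1}\cap R_{2}))$ from axiom ($\mathcal{S}$0) satisfying $\iota_{R_{1}\cap R_{2},S}(\zeta')=\zeta$, and then use associativity of the inclusions together with injectivity of $\iota_{R_{i},S}$ to deduce $\zeta_{1}=\iota_{R_{1}\cap R_{2},R_{1}}(\zeta')$ and $\zeta_{2}=\iota_{R_{1}\cap R_{2},R_{2}}(\zeta')$. The maximality clause for $\zeta'\in A_{f|R_{1}\cap R_{2}}^{h}$ then falls out immediately: any $f'\geq f$ with $\zeta'(f'|R_{1}\cap R_{2})\leq h$ gives $\zeta_{1}(f'|R_{1})\leq\zeta'(f'|R_{1}\cap R_{2})\leq h$ by Lemma \ref{lem:nesting-for-antipode}, forcing $f'=f$ via $\zeta_{1}\in A_{f|R_{1}}^{h}$.

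The core of ($\mathcal{S}$3*) is the bound $\zeta'(f|R_{1}\cap R_{2})\leq h$. Decomposing $f|(R_{1}\cap R_{2})=(\ell_{1},\dots,\ell_{i})$ and unwinding $\zeta_{r}=\iota_{R_{1}\cap R_{2},R_{r}}\zeta'$, we can write $\zeta_{r}(f|R_{r})=\zeta'(\ell_{1}^{(r)},\dots,\ell_{i}^{(r)})$, where $\ell_{m}^{(r)}$ is the $\backslash/\!/\!-$mix that uses $\backslash$ at positions in $R_{r}\cap I_{m}$ (with $I_{m}$ the interior of $\ell_{m}$ within $S$) and $/$ elsewhere in $I_{m}$. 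Axiom ($m3'$) gives
\[
\zeta'\bigl(\ell_{1}^{(1)}\vee\ell_{1}^{(2)},\dots,\ell_{i}^{(1)}\vee\ell_{i}^{(2)}\bigr)\ \leq\ \zeta_{1}(f|R_{1})\vee\zeta_{2}(f|R_{2})\ \leq\ h,
\]
so the whole problem collapses to the pointwise inequalities $\ell_{m}\leq\ell_{m}^{(1)}\vee\ell_{m}^{(2)}$.

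This last step is the main obstacle, because a direct application of ($\mathcal{S}$3) requires the $\backslash$-positions of the two mixes to form a \emph{complementary} alternating pattern on $I_{m}$, whereas $R_{1}$ and $R_{2}$ need only cover $(R_{1}\cup R_{2})\cap I_{m}$, not all of $I_{m}$. The plan for overcoming this is to pass to the coarser decomposition of $\ell_{m}$ whose boundary set is $(R_{1}\cup R_{2})\cap I_{m}$: at every position of $I_{m}\setminus(R_{1}\cup R_{2})$ both $\ell_{m}^{(1)}$ and $\ell_{m}^{(2)}$ already use $/$, so these positions may be absorbed into the blocks $g'_{s}$ of the coarser partition. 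In this coarser partition, the $\backslash$-positions of the two mixes become exactly $R_{1}\setminus R_{2}$ and $R_{2}\setminus R_{1}$ (disjoint since $R_{1}\cap R_{2}\cap I_{m}=\emptyset$), which together exhaust the boundary set $(R_{1}\cup R_{2})\cap I_{m}$. This complementary pattern is a union of alternating intervals and can therefore be encoded by a choice of indices $i_{1}<j_{1}<i_{2}<j_{2}<\cdots$ matching the format of ($\mathcal{S}$3), yielding the desired $\ell_{m}\leq\ell_{m}^{(1)}\vee\ell_{m}^{(2)}$ and completing the proof.
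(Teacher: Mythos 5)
Your proof is correct and takes essentially the same route as the paper's: Lemma \ref{lem:nesting-for-antipode} handles the $\zeta(f|S)\leq h$ parts and the maximality clause of ($\mathcal{S}$3*), axioms ($\mathcal{S}$2), ($m$2) and ($m$3) give the maximality clause of ($\mathcal{S}$2*), and the core of ($\mathcal{S}$3*) is reduced to ($\mathcal{S}$3) by absorbing the positions of $I_m\setminus(R_1\cup R_2)$, where both mixes use $/$, into coarser blocks so that the remaining $\backslash$-positions form the complementary alternating pattern that ($\mathcal{S}$3) requires (the paper does exactly this, only via a worked example rather than your general description). One small slip: axiom ($\mathcal{S}$2) yields only $\tilde g_{m}\leq g_{m}\vee(f'_{\alpha_{m-1}+1}\backslash\cdots\backslash f'_{\alpha_{m}})$, not equality, but the inequality is all that your displayed chain via ($m2'$) and ($m3'$) actually uses, so nothing breaks.
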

\begin{proof}
The argument below is loosely based on the proofs of Claims 1 and
2 within the proof of \cite[Th. 5.5]{AS05}. Following the idea in \eqref{eq:ba-max}, for each statement ($\mathcal{S}$2{*}) and ($\mathcal{S}$3{*}),
we separate the definition of $A$'s into two parts, namely a condition on $B$'s
and then the maximality of the $f$'s, so these are the four claims to check:
\begin{enumerate}
\item[($\mathcal{S}$2{*}a).]  If $R\subseteq S\subseteq\GD(f)$, then $\iota_{R,S}(B_{f|R}^{h})\subseteq B_{f|S}^{h}$. 

\vspace{.1in}
Equivalently,
 if $\zeta(f|R)\leq h$, then $(\iota_{R,S}\zeta)(f|S)\leq h$, which follows from Lemma \ref{lem:nesting-for-antipode}.
 
\
 
\item[($\mathcal{S}$2{*}b).]  Suppose $R\subseteq S\subseteq\GD(f)$. We have to show:
\begin{equation} \label{eq:statementS2b}
\begin{array}{c}
\text{if $\zeta(f|R)\leq h$ and $\zeta(f'|R)\leq h$ for some $f'\geq f$, then $f'=f$,}\\[3pt]
\Downarrow\\[3pt]
\text{if $\zeta(f|R)\leq h$ and $(\iota_{R,S}\zeta)(f'|S)\leq h$ for some $f'\geq f$, then $f'=f$.}
\end{array}
\end{equation}
Let $S=\{i_{1},\dots,i_{k-1}\}$ and $R=\{i_{j_{1}},\dots,i_{j_{l}}\}$.
If $f|S$ and $f'|S$ are such that $f|S=(f_{1},\dots,f_{k})$ and $f'|S=(f'_{1},\dots,f'_{k})$, we write $f'\geq f$ if $f'_{i}\geq f_{i}$ for every $i$. Since $S\subseteq\GD(f)\subseteq\GD(f')$,
this means $f=f_{1}/\cdots/f_{k}$, and $f|R=(f_{1}/\cdots/f_{j_{1}},f_{j_{1}+1}/\cdots/f_{j_{2}},\dots,f_{j_{l-1}+1}/\cdots/f_{j_{l}})$,
and similarly for $f'$. Then 
\begin{align*}
\zeta(f|R) & =\zeta(f_{1}/\cdots/f_{j_{1}},f_{j_{1}+1}/\cdots/f_{j_{2}},\dots,f_{j_{l-1}+1}/\cdots/f_{j_{l}});\\
\iota_{R,S}\zeta(f'|S) & =\zeta(f'_{1}\backslash\cdots\backslash f'_{j_{1}},f'_{j_{1}+1}\backslash\cdots\backslash f'_{j_{2}},\dots,f'_{j_{l-1}+1}\backslash\cdots\backslash f'_{j_{l}}).
\end{align*}
Using axiom ($m$3), and then axiom ($\mathcal{S}$2) to each input, together
with axiom ($m$2) (that $\zeta$ is order-preserving), we have:
\begin{align*}
\zeta(f|R)\vee\iota_{R,S}\zeta(f'|S) & \geq\zeta((f_{1}/\cdots/f_{j_{1}})\vee(f'_{1}\backslash\cdots\backslash f'_{j_{1}}),\dots,(f_{j_{l-1}+1}/\cdots/f_{j_{l}})\vee(f'_{j_{l-1}+1}\backslash\cdots\backslash f'_{j_{l}}))\\
 & \geq\zeta(f'_{1}/\cdots/f'_{j_{1}},\dots,f'_{j_{l-1}+1}/\cdots/f'_{j_{l}})\\
 & =\zeta(f'|R).
\end{align*}
Thus, if $\zeta(f|R)\leq h$ and $(\iota_{R,S}\zeta)(f'|S)\leq h$,
we must have $\zeta(f'|R)\leq h$. So, using the hypothesis of \eqref{eq:statementS2b}, we conclude that $f'=f$.

\

\item[($\mathcal{S}$3{*}a).] We will show that 
\[\iota_{R_{1},S}(B_{f|R_{1}}^{h})\cap\iota_{R_{2},S}(B_{f|R_{2}}^{h})\subseteq\iota_{R_{1}\cap R_{2},S}(B_{f|R_{1}\cap R_{2}}^{h}),\]
i.e. if $\zeta=\iota_{R_{1},S}(\zeta_{1})=\iota_{R_{2},S}(\zeta_{2})$
satisfies $\zeta_{1}(f|R_{1})\leq h$ and $\zeta_{2}(f|R_{2})\leq h$,
then $\zeta=\iota_{R_{1}\cap R_{2},S}(\zeta')$ and $\zeta'(f|R_{1}\cap R_{2})\leq h$. 

\vspace{.1in}

The claim $\zeta=\iota_{R_{1}\cap R_{2},S}(\zeta')$ is given by ($\mathcal{S}$0).
We note here that $\zeta_{1}=\iota_{R_{1}\cap R_{2},R_{1}}(\zeta')$
and $\zeta_{2}=\iota_{R_{1}\cap R_{2},R_{2}}(\zeta')$, since
\[\iota_{R_{1},S}(\zeta_{1})=\zeta=\iota_{R_{1}\cap R_{2},S}(\zeta')=\iota_{R_{1},S}\iota_{R_{1}\cap R_{2},R_{1}}(\zeta'),\]
and the fact that $\iota_{R_{1},S}$ is injective. 

We explain with an example how condition ($\mathcal{S}$3)
implies 
\[\zeta'(f|R_{1}\cap R_{2})\leq\zeta_{1}(f|R_{1})\vee\zeta_{2}(f|R_{2})\]
(and thus $\zeta'(f|R_{1}\cap R_{2})\leq h$ as required). Suppose
$S=\{i_{1},\dots,i_{6}\}$, $R_{1}=\{i_{1},i_{3,}i_{4}\}$, $R_{2}=\{i_{1},i_{2},i_{6}\}$,
and let $f|S=(f_{1},\dots,f_{7})$. Then $f=f_{1}/\cdots/f_{7}$ and
\begin{align*}
f|R_{1}\cap R_{2} & =(f_{1},f_{2}/f_{3}/f_{4}/f_{5}/f_{6}/f_{7});\\
f|R_{1} & =(f_{1},f_{2}/f_{3},f_{4},f_{5}/f_{6}/f_{7});\\
f|R_{2} & =(f_{1},f_{2},f_{3}/f_{4}/f_{5}/f_{6},f_{7}).
\end{align*}
This implies:
\begin{align}
\zeta'(f|R_{1}\cap R_{2}) & =\zeta'(f_{1},f_{2}/f_{3}/f_{4}/f_{5}/f_{6}/f_{7});\nonumber \\
\zeta_{1}(f|R_{1})=\iota_{R_{1}\cap R_{2},R_{1}}(f|R_{1}) & =\zeta'(f_{1},(f_{2}/f_{3})\backslash f_{4}\backslash(f_{5}/f_{6}/f_{7}));\label{eq:axiomS3R1}\\
\zeta_{2}(f|R_{2})=\iota_{R_{1}\cap R_{2},R_{2}}(f|R_{2}) & =\zeta'(f_{1},f_{2}\backslash(f_{3}/f_{4}/f_{5}/f_{6})\backslash f_{7}).\label{eq:axiomS3R2}
\end{align}
Examine the second input of $\zeta'$ in (\ref{eq:axiomS3R1}) and
(\ref{eq:axiomS3R2}). First note that $f_{5}$ and $f_{6}$ appear
only as $f_{5}/f_{6}$ (because $i_{5}\notin R_{1}\cup R_{2}$), so
regard $f_{5}/f_{6}$ as one object, instead of $f_{5}$ and $f_{6}$
independently. Next, because $i_{2},i_{3},i_{4,}i_{6}\in(R_{1}\cup R_{2})\backslash(R_{1}\cap R_{2})$,
there is a $/$ sign in one of the identities (\ref{eq:axiomS3R1}) and (\ref{eq:axiomS3R2}), and a $\backslash$ sign in the same position of remaining identity (between $f_{2}$ and $f_{3}$; between $f_3$ and $f_4$; between $f_4$ and $f_5/f_6$, and
between $f_{5}/f_{6}$ and $f_{7}$). Thus condition
$S$3 applies to give 
\[
f_{2}/f_{3}/f_{4}/f_{5}/f_{6}/f_{7}\leq(f_{2}/f_{3})\backslash f_{4}\backslash(f_{5}/f_{6}/f_{7})\vee f_{2}\backslash(f_{3}/f_{4}/f_{5}/f_{6})\backslash f_{7}.
\]
(In the general case, we need one such equation for each input position
where the inputs of $\zeta'$ in (\ref{eq:axiomS3R1}) and (\ref{eq:axiomS3R2})
disagree.) So, by conditions ($m$2) and ($m$3):
\[
\zeta'(f_{1}/f_{2}/f_{3}/f_{4}/f_{5}/f_{6}/f_{7})\leq\zeta'(f_{1},(f_{2}/f_{3})\backslash f_{4}\backslash(f_{5}/f_{6}/f_{7}))\vee\zeta'(f_{1},f_{2}\backslash(f_{3}/f_{4}/f_{5}/f_{6})\backslash f_{7}).
\]

\

\item[($\mathcal{S}$3{*}b).]  Suppose $\zeta,\zeta_{1},\zeta_{2},\zeta'$ are as above. We will show that
\begin{equation} \label{eq:statementS3b}
\begin{array}{c}
\text{if $\zeta_{1}(f'|R_{1})\leq h$ and $\zeta_{2}(f'|R_{2})\leq h$ for some $f'\geq f$, then $f'=f$,}\\[3pt]
 \Downarrow\\[3pt]
\text{if $\zeta'(f'|R_{1}\cap R_{2})\leq h$ for some $f'\geq f$, then $f'=f$.}
\end{array}
\end{equation}
Assume $\zeta'(f'|R_{1}\cap R_{2})\leq h$. By Lemma \ref{lem:nesting-for-antipode},
$R_{1}\cap R_{2}\subseteq R_{1}$ means $\zeta_{1}(f'|R_{1})\leq\zeta'(f'|R_{1}\cap R_{2})\leq h$,
and similarly $\zeta_{2}(f'|R_{2})\leq h$. So, by the hypothesis of \eqref{eq:statementS3b},
$f'=f$. 
\end{enumerate}
\end{proof}

\

Now we are finally ready to prove the antipode formula.
\begin{proof}[Proof of Theorem \ref{thm:antipode}]
Takeuchi's formula says 
\[
\mathcal{S}(x)=\sum_{k=1}^{\deg x}(-1)^{k+1}(m\otimes\cdots\otimes m)\circ\Delta_{+}^{[k]}(x).
\]
Using (\ref{eq:mcoproductformula-multi}) and Lemma \ref{lem:product-multi},
\begin{align*}
\mathcal{S}(M_{f}) & =\sum_{S\subseteq\GD(f)}(-1)^{|S|+1}\sum_{h}|A_{f|S}^{h}|M_{h}\\
 & =(-1)^{|\GD(f)|+1}\sum_{h}\sum_{S\subseteq\GD(f)}\mu(S,\GD(f))|A_{f|S}^{h}|M_{h},
\end{align*}

where $\mu$ is relative to the Boolean poset of subsets of $\GD(f)$.
Now interpret ${\displaystyle \sum_{S\subseteq T}\mu(S,T)|A_{f|S}^{h}|}$
by M\"obius inversion on the Boolean poset with $A_{f|S}^{h}$. That is,
we partition $A_{f|S}^{h}$ according to where $S$ is minimal - hence
the third condition in (\ref{eq:c-def}) compared to (\ref{eq:a-def-multi-antipode}).
More specifically, we show that 
\begin{equation}
A_{f|S}^{h}=\bigsqcup_{R\subseteq S}\iota_{R,S}(C_{f|R}^{h}),\label{eq:ac-partition}
\end{equation}
where $C_{f|S}^{h}$ are the shuffles in $Sh(\cmpts(f|S))$ satisfying
\begin{equation}
\begin{array}{ll}
\text{(i)} & \zeta(f|S)\leq h;\\
\text{(ii)} & \mbox{if }f'\geq f\mbox{ satisfies }\zeta(f'|S)\leq h,\mbox{ then }f=f';\\
\text{(iii)} & \mbox{if }\zeta=\iota_{R,S}\zeta'\text{ for some }R\subseteq S\text{ and some }\zeta'\in Sh(\cmpts f|R),\\
 & \text{ and }\zeta'(f|R)\leq h\mbox{, then }R=S.
\end{array}\label{eq:c-def}
\end{equation}

Observe that the definition of $C_{f|S}^{h}$ relative to $A_{f|S}^{h}$
is not exactly like the definition of $B_{f,g}^{h}$ relative to $A_{f,g}^{h}$:
the third condition in the definition of $C_{f|S}^{h}$ only specifies
the minimality of $S$ for $\zeta(f|S)\leq h$, not also for the second
line regarding the maximality of $f$. In other words, 
\begin{equation}\label{eq:ac-min}
\begin{array}{c}
\zeta\in C_{f|S}^{h}\\
\Updownarrow\\
\zeta\in A_{f|S}^{h}\mbox{ and, if }\zeta=\iota_{R,S}(\zeta')\text{ for some }R\subseteq S\text{ with }\zeta\in B_{f|R}^{h},\mbox{ we have }R=S.
\end{array}
\end{equation}
To prove the partitioning analogous to the proof of the product formula
(Theorem \ref{thm:monomialproduct}), we need to change the $B_{f|R}^{h}$
to $A_{f|R}^{h}$ in this definition of $C_{f|S}^{h}$. So we need
to prove that the following are equivalent:
\begin{enumerate}
\item[(i)] $\zeta\in A_{f|S}^{h}$ and, if $\zeta=\iota_{R,S}(\zeta')$ for
some $R\subseteq S$ with $\zeta'\in B_{f|R}^{h}$, we have $R=S$.
\item[(ii)] $\zeta\in A_{f|S}^{h}$ and, if $\zeta=\iota_{R,S}(\zeta')$ for some
$R\subseteq S$ with $\zeta'\in A_{f|R}^{h}$, we have $R=S$.
\end{enumerate}

Since $B_{f|R}^{h}\supseteq A_{f|R}^{h}$, (i) $\implies$ (ii) is immediate.
To show (ii) $\implies$ (i): since the ``conclusion'' for (i) and (ii) are
equal (``we have $R=S$''), it suffices to show that the hypotheses
for (i) imply the hypotheses for (ii). \par
Recall $B_{f|R}^{h}=\amalg_{f'\geq f}A_{f'|R}^{h}$.
So we show
\[
\mbox{if }\zeta\in A_{f|S}^{h}\mbox{, and }\zeta=\iota_{R,S}(\zeta')\text{ for some }R\subseteq S\mbox{ and }\zeta'\in A_{f'|R}^{h}\mbox{ for some }f'\geq f,\mbox{ then }\zeta\in A_{f|R}^{h}.
\]
Here's the proof: $\zeta'\in A_{f'|R}^{h}$ and $\iota_{R,S}(A_{f'|R}^{h})\subseteq A_{f'|S}^{h}$
by condition ($\mathcal{S}$2{*}). So $\zeta\in A_{f|S}^{h}$ and $\zeta\in A_{f'|S}^{h}$.
By the maximality of $f$ in the definition of $A$ (\ref{eq:a-def-multi-antipode}),
this implies $f=f'$. So $\zeta'\in A_{f'|R}^{h}$ actually means
$\zeta'\in A_{f|R}^{h}$.

Having established this equivalent and more useful definition of $C_{f|S}^{h}$,
we can continue the proof of the antipode formula in exactly the same
manner as for the product formula. 
\begin{itemize}
\item For any $R_{1},R_{2}\subseteq S\subseteq\GD(f)$, we have the empty intersection
$\iota_{R_{1},S}(C_{f|R_{1}}^{h})\cap\iota_{R_{2},S}(C_{f|R_{2}}^{h})=\emptyset$.

\vspace{.1in}

Indeed, take $\zeta \in \iota_{R_{1},S}(C_{f|R_{1}}^{h})\cap\iota_{R_{2},S}(C_{f|R_{2}}^{h})$,
so $\zeta=\iota_{R_{1},S}(\zeta_{1})=\iota_{R_{2},S}(\zeta_{2})$
with $\zeta\in C_{f|S}^{h}\subseteq A_{f|S}^{h}$. By condition
($\mathcal{S}$3{*}), there is some $\zeta'\in A_{f|R_{1}\cap R_{2}}^{h}$ with
$\zeta=\iota_{R_{1}\cap R_{2},S}(\zeta')$, and recall from the proof
of ($\mathcal{S}$3{*}a) within Lemma \ref{lem:antipodecondition} that $\zeta_{1}=\iota_{R_{1}\cap R_{2},R_{1}}(\zeta')$
and $\zeta_{2}=\iota_{R_{1}\cap R_{2},R_{2}}(\zeta')$. As $\zeta_{1}\in C_{f|R_{1}}^{h}$,
the minimality condition means $R_{1}\cap R_{2}=R_{1}$, and similarly
$R_{1}\cap R_{2}=R_{2}$. So $R_{1}=R_{2}$.

\

\item if $R\subseteq S$, then $\iota_{R,S}(C_{f|R}^{h})\subseteq A_{f|S}^{h}$.

\vspace{.1in}

This is because $C_{f|R}^{h}\subseteq A_{f|R}^{h}$ and $\iota_{R,S}(A_{f|S}^{h})\subseteq A_{f|S}^{h}$,
by condition ($\mathcal{S}$2{*}).

\

\item if $\zeta\in A_{f|S}^{h}$, then $\zeta\in\iota_{R,S}(C_{f|R}^{h})$
for some $R\subseteq S$.

\vspace{.1in}

From the minimality condition in the definition of $C_{f|R}^{h}$,
it's clear that we need to choose $R$ minimal such that $\zeta\in\iota_{R,S}(C_{f|R}^{h})$
- i.e. $R=\cap\{R':\:\zeta\in\iota_{R',S}(C_{f|R'}^{h})\}$. This
choice of $R$ indeed ensures $\zeta\in\iota_{R,S}(A_{f|R}^{h})$
because of condition ($\mathcal{S}$3{*}). So $\zeta=\iota_{R,S}(\zeta')$ for
some $\zeta'\in A_{f|R}^{h}$. To deduce that $\zeta'\in C_{f|R}^{h}$
(i.e. to see the minimality of $R$), suppose $\zeta'=\iota_{R',R}(\zeta'')$
for some $R'\subseteq R$ and some $\zeta''\in A_{f|R'}^{h}$. Then
\[\zeta=\iota_{R,S}(\zeta')=\iota_{R,S}\iota_{R',R}(\zeta'')=\iota_{R',S}(\zeta''),\]
so $R\subseteq R'$ by definition of $R$. Hence $R'=R$ as required. 
\end{itemize}
Finally, to establish that $\beta_f^h=|C_{f|\GD(f)}^{h}|$, we show the equivalence of the last condition in the definition
of $C_{f|\GD(f)}^{h}$ in (\ref{eq:c-def}) with the last condition in (\ref{eq:c-def-GD-2}).
Clearly, the last condition in (\ref{eq:c-def}) is equivalent to the following:
if $\zeta=\iota_{R,\GD(f)}\zeta'$ for some $R\subsetneq\GD(f)$ and
some $\zeta'\in Sh(\cmpts(f|R))$, then $\zeta'(f|R)\not\leq h$. The statement in (\ref{eq:c-def-GD-2})
says that it is sufficient to check this condition when $R=\GD(f)\backslash\{i\}$,
for some $i$. The reason is that, by Lemma \ref{lem:nesting-for-antipode},
$(\iota_{R,\GD(f)\backslash\{i\}}\zeta')(f|\GD(f)\backslash\{i\})\leq\zeta'(f|R)$,
for any $R\subsetneq\GD(f)$ and $i\in \GD(f)\backslash R$. So, if $\zeta'(f|R)\leq h$,
then necessarily $(\iota_{R,\GD(f)\backslash\{i\}}\zeta')(f|\GD(f)\backslash\{i\})\leq h$.
\end{proof}

\subsection{Proof of the inheritance of axioms through quotienting}

\begin{proof}[Proof of Theorem \ref{thm:monomialquotient}]
Let $\Pi':\mathcal{H}\rightarrow\bar{\mathcal{H}}$ be defined by
(\ref{eq:quotientmonomial}). Then, for $f\in P_{n}$:
\begin{align*}
\Pi'(F_{f}) & =\sum_{g\geq f}\Pi'(M_{g})\\
 & =\sum_{\iota_{n}(t)\geq f}M_{t}.
\end{align*}
We claim this equals $\Pi(F_{f})=F_{\pi_{n}(f)}=\sum_{t\geq\pi_{n}(f)}M_{t}$.
It suffices to show that $\iota_{n}(t)\geq f$ if and only if $t\geq\pi_{n}(f)$, i.e. $\pi_n$ and $\iota_n$ form a Galois connection, which can be proved similar to Corollary \ref{cor:Galois_connection}.

In the first scenario under ``Furthermore'': it follows from ($\pi\Delta$1),
($\pi\Delta$2) and ($\Delta$1) on $\mathcal{H}$ that $\Pi$ is a coalgebra
morphism. To check $\bar{\mathcal{H}}$ satisfies ($\Delta$1): if $s\leq t$,
then $\allow(s)=\allow(\iota_{n}(s))=\allow(\iota_{n}(t))=\allow(t)$,
where the first equality is by ($\pi\Delta$1), because $s=\pi_{n}(\iota_{n}s)$,
and the middle equality is because $\iota_{n}(s)\leq\iota_{n}(t)$.

To check $\bar{\mathcal{H}}$ satisfies ($\Delta$3):
given $s\in\bar{P}_{i},t\in\bar{P}_{n-i}$, first note that 
$${}^{i}(s/t)={}^{i}\pi_n(\iota_{i}(s)/\iota_{n-i}(t))=\pi_n[{}^{i}(\iota_{i}(s)/\iota_{n-i}(t))],$$
by definition of $/$ on $\bar{P}$ and by ($\pi\Delta$2). By definition
of $/$ on $P$, $\pi_n[{}^{i}(\iota_{i}(s)/\iota_{n-i}(t))]=\pi_i\iota_{i}(s)=s$,
and similarly $(s/t)^{i}=t$. To check the maximality, suppose $r\in\bar{P}_{n}$
satisfies ${}^{i}r=s$ and $r^{i}=t$, and let $f=\iota_{n}(r)$.
Then $\pi_{i}({}^{i}f)={}^{i}\pi_{n}(f)={}^{i}r=s$, so the definition
of $\iota_{i}$ implies ${}^{i}f\leq\iota_{i}(s)$. Similarly, $f^{i}\leq\iota_{n-i}(t)$.
So $f\leq{}^{i}f/f^{i}\leq\iota_{i}(s)/\iota_{n-i}(t)$. And $\pi_{n}$
is order-preserving, so $r=\pi_{n}(f)\leq\pi_{n}(\iota_{i}(s)/\iota_{n-i}(t))=s/t$
by definition. That $/$ is order-preserving on $\bar{P}$, and axiom
($\Delta$2), can be proved similarly.

Similarly, in the second scenario, it follows from ($\pi m$1), ($\pi m$2)
and ($m$1) on $\mathcal{H}$ that $\Pi$ is an algebra morphism. Axiom
($m$0) holds for $\bar{\mathcal{H}}$ by Lemma \ref{lem:pipreserve-leastupperbound}.
($m$1) holds for $\bar{\mathcal{H}}$ because of ($\pi m$1). Axiom
($m$2) can be checked in a similar way as ($\Delta$3) above. To deduce
($m$3): given $s_{1},s_{2}\in\bar{C}_{i}\subseteq\bar{P}_{n}$, $t_{1},t_{2}\in\bar{C}_{j}\subseteq\bar{P}_{m}$,
consider $f_{l}=\iota_{n}(s_{l})$, $g_{l}=\iota_{m}(t_{l})$ for
$l=1,2$. Take $\bar{\zeta}\in Sh(\bar{C}_{i},\bar{C}_{j})$ corresponding
to $\zeta\in Sh(C_{i},C_{j})$. Then 
\[ \bar{\zeta}(s_{1},t_{1})=\bar{\zeta}(\pi_{n}(f_{1}),\pi_{m}(g_{1}))=\pi_{n+m}\zeta(f_{1},g_{1})\]
and similarly for $s_{2},t_{2}$,
so 
\[\bar{\zeta}(s_{1},t_{1})\vee\bar{\zeta}(s_{2},t_{2})=\pi_{n+m}\zeta(f_{1},g_{1})\vee\pi_{n+m}\zeta(f_{2},g_{2})=\pi_{n+m}(\zeta(f_{1},g_{1})\vee\zeta(f_{2},g_{2}))\]
by Lemma \ref{lem:pipreserve-leastupperbound}. Because ($m$3) holds
on $\mathcal{H}$ and $\pi_{n+m}$ is order-preserving, this is $\geq\pi_{n+m}(\zeta(f_{1}\vee f_{2},g_{1}\vee g_{2}))=\bar{\zeta}(\pi_{n+m}(f_{1}\vee f_{2}),\pi_{n+m}(g_{1}\vee g_{2}))=\bar{\zeta}(s_{1}\vee t_{1},s_{2}\vee t_{2})$,
using Lemma \ref{lem:pipreserve-leastupperbound} again in the last
equality.

In the third scenario: to show ($\mathcal{S}$0) on $\bar{\mathcal{H}}$,
let $\backslash\in Sh(\bar{C}_{i},\bar{C}_{j})$ be the shuffle corresponding
to $\backslash\in Sh(C_{i},C_{j})$. By ($\pi m$2), we have $\pi_{n}(g\backslash h)=\pi_{i}(g)\backslash\pi_{n-i}(h)$,
so in particular $s\backslash t=\pi_{n}(\iota_{i}(s)\backslash\iota_{n-i}(t))$.
The requirement $s\backslash t\leq s/t$, and ($\mathcal{S}$2) and
($\mathcal{S}$3), can be checked for $\bar{\mathcal{H}}$ by lifting
to $\mathcal{H}$, as for axioms ($\Delta$3) and ($m$3) above. And ($\mathcal{S}$1)
on $\bar{\mathcal{H}}$ follows from ($\pi\mathcal{S}$1) and ($\mathcal{S}$1)
on $\mathcal{H}$.
\end{proof}

\section{Acknowledgements}
We are grateful to the Fields Institute since this project started in the Algebraic Combinatorics Workshop at the Fields institute during the Fall of 2017. We thank Marcelo Aguiar and Aaron Lauve for helpful conversations.


\begin{thebibliography}{99}

	\bibitem[AS05]{AS05} M. Aguiar, F. Sottile,
	\newblock \textit{Structure of the Malvenuto-Reutenauer Hopf algebra of permutations},
	\newblock Advances in Mathematics, 191(2):225--275, 2005.
	
	\bibitem[AS06]{AS06} M. Aguiar, F. Sottile,
	\newblock \textit{Structure of the Loday-Ronco Hopf algebra of trees},
	\newblock Journal of Algebra, 295(2):473--511, 2006.
	
	\bibitem[BP12]{BP12} F. Bergeron, L-F. Pr\'eville-Ratelle, 
	\newblock \textit{Higher trivariate diagonal harmonics via generalized Tamari posets},
	\newblock Journal of Combinatorics, 3(3):317--341, 2012.
		
	\bibitem[BZ05]{BZ09} N. Bergeron, M. Zabrocki,
	\newblock \textit{The Hopf algebras of symmetric functions and quasi-symmetric functions in non-commutative variables are free and co-free},
	\newblock Journal of Algebra and its Applications, 8(4):581--600, 2009. [see also https://arxiv.org/pdf/math/0509265 (2005)]
	
	\bibitem[CG19]{CG19} C. Ceballos, Cesar, R. S. Gonz\'{a}lez D'Le\'{o}n, 
	\newblock \textit{Signature {C}atalan combinatorics},
	\newblock Journal of Combinatorics, 10(4):725--773, 2019.	

	\bibitem[CS08]{CS08} H. Crapo, W Schmitt,
	\newblock \textit{Primitive elements in the matroid-minor {H}opf algebra},
	\newblock Journal of Algebraic Combinatorics, 28(1):43--64, 2008.
	
	\bibitem[F02]{F02} L. Foissy,
	\newblock \textit{Les alg\`ebres de Hopf des arbres enracin\'es, I},
	\newblock Bulletin of Mathematical Sciences, 126:193--239, 2002.
	
	\bibitem[F07]{F07} L. Foissy,
	\newblock \textit{Bidendriform bialgebras, trees, and free quasi-symmetric functions},
	\newblock Journal of Pure and Applied Algebra, 209:439--459, 2007.
	
	\bibitem[F12]{F12} L. Foissy,
	\newblock \textit{Free and cofree Hopf algebras},
	\newblock Journal of Pure and Applied Algebra, 216:480--494, 2012.
	
	\bibitem[FLS13]{FLS13} S. Forcey, A. Lauve, F. Sottile,
	\newblock \textit{Cofree compositions of coalgebras},
	\newblock Annals of Combinatorics, 17:105--130, 2013.	
	
	\bibitem[GS78]{GS78} I. Gessel, R. Stanley,
	\newblock \textit{Stirling polynomials},
	\newblock Journal of Combinatorial Theory, Series A, 24:24--33, 1978.
	
	\bibitem[G16]{G16} R. S. Gonz\'{a}lez D'Le\'{o}n,
	\newblock \textit{On the free {L}ie algebra with multiple brackets},
	\newblock Advances in Applied Mathematics, 79:37--97, 2016.	

	\bibitem[J99]{J99} A. J\"ollenbeck
	\newblock \textit{Nichtkommutative Charaktertheorie der symmetrischen Gruppen},
	\newblock 
 Bayreuther Mathematischen Schriften 56:1-41, 1999.
	
	\bibitem[LR98]{LR98} J-L. Loday, M. Ronco,
	\newblock \textit{Hopf algebra of the planar binary trees},
	\newblock Advances in Mathematics, 139:293--309, 1998.
	
	\bibitem[LR02]{LR02} J-L. Loday, M. Ronco,
	\newblock \textit{Order structure on the algebra of permutations and of planar binary trees},
	\newblock Journal of Algebraic Combinatorics, 15(3):253-270, 2002.
	
	\bibitem[M94]{M94} G. Markowsky,
	\newblock \textit{Permutation lattices revisited},
	\newblock Mathematical Social Sciences, 27:59--72, 1994.
	
	\bibitem[MR95]{MR95} C. Malvenuto, C. Reutenauer,
	\newblock \textit{Duality between quasi-symmetric functions and the Solomon descent algebra},
	\newblock Journal of Algebra, 177:967--982, 1995.	
	
	\bibitem[MR21]{MR21} C. Malvenuto, C. Reutenauer,
	\newblock \textit{Primitive Elements of the Hopf Algebras of Tableaux},
	\newblock https://arxiv.org/abs/2010.06731.	

	\bibitem[NT06]{NT06} J-C. Novelli, J-Y. Thibon,
	\newblock \textit{Polynomial realizations of some trialgebras},
	\newblock Proc. FPSAC'06, San Diego, http://arxiv.org/abs/math.CO/0605061
	
	\bibitem[NT07]{NT07} J-C. Novelli, J-Y. Thibon,
	\newblock \textit{Hopf algebras and dendriform structures arising from parking functions},
	\newblock Fundamenta Mathematicae, 193:189--241, 2007.
	
	\bibitem[NT20]{NT20} J-C. Novelli, J-Y. Thibon,
	\newblock \textit{Hopf algebras of $m$-permutations, $(m+1)$-ary trees, and $m$-parking functions},
	\newblock Advances in Applied Mathematics, 117:A. 102019, 2020.
	
	\bibitem[PP18]{PP18} V. Pilaud, V. Pons,
	\newblock \textit{Permutrees},
	\newblock Algebriac Combinatorics, 1(2):173-224, 2008.
	
	\bibitem[S20]{S20} M. Sanchez,
	\newblock \textit{Poset Hopf Monoids},
	\newblock http://arxiv.org/pdf/2005.13707.
	
	\bibitem[T71]{T71} M. Takeuchi,
	\newblock \textit{Free Hopf algebras generated by coalgebras},
	\newblock Journal of the Mathematical Society of Japan, 23:561--582, 1971.
	
	\bibitem[V15]{V15} Y. Vargas,
	\newblock \textit{Alg\`ebres d\'e Hopf de Mots Tass\'es et de Fonctions Motifs},
	\newblock Thesis.
	
	\bibitem[V20]{V20} Y. Vargas,
	\newblock \textit{Structure of the J\"ollenbeck Hopf algebras of packed words},
	\newblock In preparation.
	



\end{thebibliography}
\end{document}